\def\N{{{\Bbb N}}}
\def\Z{{{\Bbb Z}}}
\def\T{{{\Bbb T}}}
\def\R{{\Bbb R}}
\def\C{{\Bbb C}}
\def\l{{\lambda }}
\def\a{{\alpha }}
\def\D{{\Delta }}
\def\a{{\alpha}}
\def\b{{\beta}}
\def\d{{\delta}}
\def\e{{\varepsilon}}
\def\s{{\sigma}}
\def\vp{{\varphi}}
\def\t{{\theta }}
\def\g{{\gamma }}
\def\w{{\omega }}
\def\p{{\partial }}
\def\){\right)}
\def\({\left(}
\def\supp{\operatorname{supp}}
\def\sign{\operatorname{sign}}
\def\Re{\operatorname{Re}}
\def\Lip{\operatorname{Lip}}
\def\const{\operatorname{const}}
\numberwithin{equation}{section}
\newtheorem{corollary}{Corollary}[section]
\newtheorem{lemma}{Lemma}[section]
\newtheorem{example}{Example}[section]
\newtheorem{theorem}{Theorem}[section]
\newtheorem{proposition}{Proposition}[section]
\newtheorem{remark}{Remark}[section]
\newcommand{\sgn}{\text{sgn}}
\def\R{\Bbb R}
\def\XXint#1#2#3{{\setbox0=\hbox{$#1{#2#3}{\int}$}
     \vcenter{\hbox{$#2#3$}}\kern-.5\wd0}}
\begin{document}

\title[Hardy--Littlewood and Ulyanov inequalities]{
Hardy--Littlewood and Ulyanov inequalities}

\author{Yurii
Kolomoitsev$^*$}
\address{
Yu.
Kolomoitsev,
Universit\"at zu L\"ubeck,
Institut f\"ur Mathematik,
Ratzeburger Allee 160,
23562 L\"ubeck, Germany
}
\email{kolomoitsev@math.uni-luebeck.de}

\author{Sergey Tikhonov}
\address{S.~Tikhonov, ICREA, Centre de Recerca Matem\`{a}tica, and UAB,
Apartat 50~08193 Bellaterra, Barce\-lona, Spain}
\email{stikhonov@crm.cat}

\date{\today}
\subjclass[2010]{
Primary 41A63, 42B15,
26D10; Secondary 46E35, 26A33,
41A17, 26C05, 41A10} \keywords{Moduli of smoothness, $K$-functionals, Ulyanov's inequalities, Hardy--Litlewood--Nikol'skii's inequalities,
embedding theorems, fractional derivatives}

\thanks{$^*$Corresponding author}

\bigskip\bigskip\bigskip\bigskip

\bigskip
\begin{abstract}
We give the full solution of the following  problem: obtain sharp inequalities between the moduli of smoothness
 $\omega_\alpha(f,t)_q$ and $\omega_\beta(f,t)_p$ for $0<p<q\le \infty$. A similar problem for the generalized  $K$-functionals and their realizations between the couples $(L_p, W_p^\psi)$ and $(L_q, W_q^\varphi)$ is also solved.

 The main tool is the new Hardy--Littlewood--Nikol'skii inequalities. More precisely, we obtained the asymptotic behavior of the quantity
$$
\sup_{T_n}
\frac{\Vert \mathcal{D}(\psi)(T_n)\Vert_q}{\Vert
\mathcal{D}(\vp)(T_n)\Vert_p},\qquad 0<p<q\le \infty,
$$
where the supremum is taken over all nontrivial trigonometric polynomials $T_n$ of degree at most $n$
and $\mathcal{D}(\psi), \mathcal{D}(\vp)$ are  the Weyl-type differentiation operators.

 We also prove the Ulyanov and Kolyada-type inequalities in the Hardy spaces.
 Finally, we apply  the obtained estimates to derive new embedding theorems for the Lipschitz and Besov spaces.
\end{abstract}

\maketitle

\newpage

\bigskip\bigskip\bigskip\bigskip

\bigskip
\tableofcontents






\newpage
\bigskip

\section*{Basic notation}

Let $\R^d$ be the $d$-dimensional Euclidean space with elements $x=(x_1,\dots,x_d)$, and $(x,y)=x_1y_1+\dots+x_dy_d$,
 $|x|=(x,x)^{1/2}$, $|x|_1=|x_1|+\dots+|x_d|$, $|x|_\infty=\max_{1\le j\le d}|x_j|$, and $\T^d=\R^d / 2\pi \Z^d$.

In what follows, $X=\R^d$ or $X=\T^d$. As usual, the space
 $L_p(X)$ consists of all measurable functions $f$ such that
for $0<p<\infty$
$$
\Vert f\Vert_{L_p(X)}=\bigg(\int_{X}|f(x)|^p
dx\bigg)^\frac1p<\infty
$$
and for $p=\infty$
$$
\Vert f\Vert_{L_\infty(X)}=\mathrel{\mathop{\mbox{\rm ess\,sup}}_{x\in
X}}|f(x)|<\infty.
$$
Note that  $\Vert f\Vert_{L_p(X)}$ for $0<p<1$ is a quasi-norm satisfying $\Vert f+g\Vert^p_{L_p(X)}\le \Vert f\Vert^p_{L_p(X)}+\Vert g\Vert^p_{L_p(X)}$.  In this paper, we mostly deal with the
$L_p(\T^d)$-setting. In this case, for simplicity, we write $\Vert f\Vert_p=\Vert f\Vert_{L_p(\T^d)}$.

We denote by  $\mathcal{T}_n$ the
space of all trigonometric polynomials of order at most $n$, i.e.,
$$
\mathcal{T}_{n}={\rm span}\left\{e^{i(k,x)}, 
 \,\, k\in \Z^d,\,\,|k|_\infty\le n \right\}.
$$
Also, set
$$
\mathcal{T}=\bigcup_{n\ge 0}\mathcal{T}_n
$$
and
$$
\mathcal{T}'_n=\left\{T\in
\mathcal{T}_n\,:\,T(x)\not\equiv0\quad\text{and}\quad\int_{\T^d}T(x) dx=0 \right\}.
$$
The Fourier transform of $f\in L_1(\R^d)$ is defined by
$$
\mathcal{F}f(\xi)=\widehat{f}(\xi)=\frac1{(2\pi)^{d/2}}\int_{\R^d} f(x)e^{-i(x,\xi)}dx.
$$

Throughout the paper, we use the notation
$$
\, A \lesssim B,
$$
with $A,B\ge 0$, for the
estimate
$\, A \le C\, B,$ where $\, C$ is a positive constant independent of
the essential variables in $\, A$ and $\, B$ (usually, $f$, $\d$, and $n$). 
 If $\, A \lesssim B$
and $\, B \lesssim A$ simultaneously, we write $\, A \asymp B$ and say that $\, A$
is equivalent to $\, B$.
For two function spaces
$\, X$ and $\, Y,$ we will use the notation $$\, Y \hookrightarrow X$$ if
$\, Y \subset X$ and $\, \| f\|_X \lesssim \| f\|_Y$ for all $\, f \in
Y.$


Let $1\le p\le\infty$ and $0<q\le\infty$. In what follows, $p'$ and $q_1$ are given by
$$
\frac{1}{p}+\frac{1}{p'}=1\quad\text{and}\quad q_1:=\left\{
      \begin{array}{ll}
        q, & \hbox{$q<\infty$;} \\
        1, & \hbox{$q=\infty$.}
      \end{array}
    \right.
$$

Finally, for any real number $a$,
$[a]$ be the floor function, that is,
$[a]$
is the largest integer not greater than $a$
and  let $$a_+=\left\{
                                            \begin{array}{ll}
                                              a, & \hbox{$a\ge 0$;} \\
                                              0, & \hbox{otherwise.}
                                            \end{array}
                                          \right.
$$

\newpage

\section{Introduction}

\subsection {Ulyanov inequalities for moduli of smoothness}

The problem of estimating the moduli of smoothness of a function in
$L_q$
in
terms of its moduli of smoothness in
$L_p$
 has a long history. The starting point was a study of embeddings of the Lipschitz spaces 
 \begin{equation*}
 {\textnormal {Lip}} (\alpha,p)
 =
\Big\{f\in L_p(\T)\,:\, \|f(x+h)-f(x)\|_{L_p(\T)}=\mathcal{O}(h^\alpha) \Big\}
\end{equation*}
  (E.~Titchmarsh \cite{tit}, G. H. Hardy
and J. E. Littlewood \cite{hardy-l}, and S. M. Nikol’skii \cite{nikol-book}; see also the references in \cite{besov1, KOLYADA1, KOLYADA2, uly}).
  In particular, the classical
Hardy--Littlewood  embedding  (see~\cite{hardy-l})
 \begin{equation*}
 {\textnormal {Lip}} (\alpha,p)\hookrightarrow {\textnormal {Lip}} (\alpha-\theta,q),\qquad
\end{equation*}
where
$$1\le p<q<\infty, \quad\theta =\frac{1}{p}- \frac{1}{q},\qquad \theta< \alpha\le 1,
$$
can easily be  obtained from the $(L_p, L_q)$ inequality for moduli of smoothness
proved by Ulyanov \cite{uly}
\begin{equation}\label{ul-1}
\omega_k(f, \delta)_{L_q(\T)} \lesssim \left( \int_{0}^{\delta}
\Bigl(t^{-\theta} \omega_k(f,t)_{L_p(\T)} \Bigl)^{q_1} \frac{dt}{t}
\right)^{\frac{1}{q_1}},
\end{equation}
where
$$1
\le p<q\le \infty, \qquad \theta =\frac{1}{p}- \frac{1}{q}.
$$
Nowadays, inequality (\ref{ul-1}) is
known as Ulyanov's inequality.

Here and in what follows,  the $\, k$-th order modulus of smoothness
$\, \omega_{k}(f,\delta)_{L_p(\T^d)}$ is defined in the standard way by
\begin{equation}\label{def-mod}
\omega_{k}(f,\delta)_{L_p(\T^d)}:=\sup_{|h|\le \delta }
\| \Delta_h^k f\|_{L_p(\T^d)},
\end{equation}
where
$$
\Delta_h f(x)= f(x+h) -f(x),\quad \Delta_h^k =
\Delta_h \Delta_h^{k -1},\quad h\in \R^d,\quad d\ge 1.
$$

It was known long ago that
 the straightforward application of the Ulyanov inequality (\ref{ul-1}) for $C^\infty$-functions gives only the estimate $\omega_{k}(f,\delta)_{L_q(\T)}=\mathcal{O}(\delta^{k-\theta}),$ which differs substantially from the best possible estimate $\omega_{k}(f,\delta)_{L_q(\T)}=\mathcal{O}(\delta^{k})$.

To overcome this shortcoming, recently (see~\cite{ST, Treb}) the sharp Ulyanov inequality was established using the fractional moduli of smoothness.
We recall the definition of the modulus of smoothness  $\omega_{\a} (f,\delta)_{L_p(\T^d)}$  of fractional order $\a >0$ of
a~function $\, f \in L_p(\mathbb{T}^d)$, $\,0< p\le\infty$, which appeared for the first time in 1970's (see \cite[p. 788]{But}, \cite{tabeR}):
\begin{equation}\label{def-mod+}
\omega_{\a} (f,\delta)_{L_p(\T^d)}
 :=\sup_{|h|\le \delta }
 \left\|
\Delta_{h}^{\a} f \right\|_{L_p(\T^d)}, 
\end{equation}
where
\begin{equation}\label{def-mod++}
\Delta_{h}^{\a} f(x) =\sum\limits_{\nu=0}^\infty(-1)^{\nu}
\binom{\a}{\nu} f\,\big(x+(\a-\nu) h\big),\quad h\in \R^d,
\end{equation}
and
$
\binom{\a}{\nu}=\frac{\a (\a-1)\dots (\a-\nu+1)}{\nu!}$,\quad
$\binom{\a}{0}=1.
$
It is clear  that for integer $\a$ we deal with the classical definition (\ref{def-mod}).
Also, in the case of $0<p<1$, since
$$
\sum_{\nu=0}^\infty \Big|\binom{\a}{\nu}\Big|^p<\infty\quad\text{for}\quad \a\in\N\cup \big((1/p-1),\infty\big),
$$
it is natural to assume that
$
\a\in\N\cup \big((1/p-1)_+,\infty\big)
$
while defining the fractional modulus of smoothness in $L_p$.
Note that the moduli of smoothness of fractional order
satisfy  all usual properties of the classical moduli of smoothness (see Section~\ref{sec4}).

The sharp Ulyanov inequality for the  fractional moduli of smoothness for
$1<p<q<\infty$ reads as follows:
\begin{equation}\label{ul-11}
\omega_\a(f, \delta)_{L_q(\T^d)} \lesssim \left( \int_{0}^{\delta}
\Bigl(t^{-\theta} \omega_{\a+\t}(f,t)_{L_p(\T^d)} \Bigl)^{q_1} \frac{dt}{t}
\right)^{\frac{1}{q_1}},\qquad \t=d\Big(\frac1p-\frac1q\Big).
\end{equation}
Moreover, this inequality
is sharp over the  class
$$
{\textnormal{Lip}} \,\big(\omega(\cdot),\alpha+\theta,p\big)
 \,
=
\, \Big\{f\in L_p(\T)\,:\,\omega_{k+\theta}(f,\delta)_p= \mathcal{O} \(\omega(\delta)\)\Big\}
$$
(see~\cite{ST}). More precisely,  for any function $\omega \in \Omega_{\alpha  + \theta}$
(i.e., such that $\omega(\delta)$ is non-decreasing
and $\delta^{-\alpha-\theta}\omega(\delta)$ is non-increasing), there exists a function
$$
f_0(x)=f_0(x,p,\omega) \in {\textnormal{Lip}} \,\big(\omega(\cdot),\alpha+\theta,p\big)
$$ such that
for any $q\in (p, \infty)$
and  for any $\delta>0$
\begin{equation*}
\omega_{\alpha}(f_0, \delta)_{L_q(\T)} \geq C \left( \int_{0}^{\delta}
\Bigl(t^{- \theta} \omega(t) \Bigl)^q
\frac{dt}{t} \right)^{\frac{1}{q}},
\end{equation*}
 where a constant  $C$ is independent of $\delta$ and $\omega$.

Nowadays the Ulyanov inequalities are used in approximation theory,
function spaces, and interpolation theory (see, e.g., \cite{besov1, Cohen, devore,
goga, gol, hatr, kol, KOLYADA1, kolyada1,
KOLYADA2, KOLYADA3, netr, PST2016, ST, Ti,Treb}). Very recently
\cite{goga}, sharp Ulyanov inequalities were shown between the
Lorentz–Zygmund spaces $\, L_{p,r}(\log L)^{\alpha -\gamma},\, \a,
\gamma >0,$ and $\, L_{q,s}(\log L)^\alpha $  over $\, \mathbb{T}^d$
in the case  $\, 1<p \le q<\infty,$ and the corresponding embedding
results were established.
In the case of quasi-normed Lebesgue spaces
 $L_p$, $0<p<1$, an Ulyanov inequality  was established only in its not-sharp form given by (\ref{ul-1}) (see~\cite{diti, S} and~\cite[Ch.~3.7]{Cohen}).

It is known (see~\cite{Ti}) that in the limiting cases $p=1, q<\infty$ or $1<p, q=\infty$, inequality~(\ref{ul-11}) does not hold  in general.
The best possible inequality is given by
\begin{equation}\label{ul-1+}
\omega_\a(f, \delta)_{L_q(\T)} \lesssim \left( \int_{0}^{\delta}
\Bigl(t^{-\theta} \big(\ln 2/t\big)^{\max(1/p',1/q)} \omega_{\a+\t}(f,t)_{L_p(\T)} \Bigl)^{q_1} \frac{dt}{t}
\right)^{\frac{1}{q_1}}, 
\end{equation}
where $\big(\ln 2/t\big)^{\max(1/p',1/q)}$ cannot be replaced by $o\big(\ln 2/t\big)^{\max(1/p',1/q)}.$

Interestingly, in the case  $p=1$ and $q=\infty$, inequality  (\ref{ul-11}) is valid in the one-dimensional case (see~\cite{Ti}). Note that in this case $\t=1$ and one can use the classical (non fractional) moduli of smoothness.

Another improvement of the Ulyanov inequality (\ref{ul-1}) was suggested by Kolyada~\cite{kol}:
$$
\delta^{\a-\theta} \left( \int_{\delta}^1 \bigl(t^{\theta-\a} \omega_\a (f,t)_{L_q(\T)} \bigl)^p \frac{dt}{t} \right)^{
\frac{1}{p} } \lesssim \left( \int_0^{\delta}
\bigl(t^{-\theta} \omega_\a (f,t)_{L_p(\T)} \bigl)^q
\frac{dt}{t} \right)^{ \frac{1}{q} },
$$
where
 $1<p<q<\infty$ and $\theta ={1}/{p}- {1}/{q}$. Unlike the one-dimensional case, in the multidimensional case ($d\ge 2$, $\t=d(1/p-1/q)$), the corresponding inequality also holds for $p=1<q<\infty$.

Roughly speaking, the sharp Ulyanov inequality refines
 inequality (\ref{ul-1}) with respect to
 the right-hand side 
 while the Kolyada inequality refines (\ref{ul-1})  with respect to  the left-hand side. Note that the Kolyada inequality is not comparable to the sharp Ulyanov inequality.

In this paper, one of our main goals is to study sharp $(L_p, L_q)$  inequalities of Ulyanov-type with variable parameters for moduli of smoothness, generalized $K$-functionals, and their realizations.
More precisely, we  solve the long-standing problem (see~\cite{KOLYADA1, uly}) of
finding the sharp inequalities between
the moduli of smoothness
 $$\omega_\alpha(f,t)_{L_q(\T^d)}\quad \mbox{and}\quad \omega_\beta(f,t)_{L_p(\T^d)}$$ in the general case: $0<p<q\le \infty$ and $d\ge 1$.

One of our main contributions is the full description of sharp Ulyanov inequalities given by the following result.
\begin{theorem}\label{th1--}
    Let $f\in L_p(\T^d)$, $0<p<q\le\infty$,  $\a \in \N\cup ((1-1/q)_+,\infty)$, and $\a+\g\in \N\cup ((1/p-1)_+,\infty)$.
    Then, for any $\d \in (0,1)$, we have
    \begin{equation}\label{eqlemMM1----------}
        \w_\a(f,\d)_{L_q(\T^d)}\lesssim \frac{\w_{\a+\g}(f,\d)_{L_p(\T^d)}}{\d^\g}\s\(\frac 1\d\)+\(\int_0^\d   \(  \frac{\w_{\a+\g}(f,t)_{L_p(\T^d)}}{t^{d(\frac1p-\frac1q)}} \)^{q_1}\frac {dt}{t}\)^\frac 1{q_1},
    \end{equation}
where 

{\rm (1)} if $0<p\le 1$ and $p<q\le\infty$, then
$$
\s(t)
:=\left\{
         \begin{array}{ll}
           t^{d(\frac1p-1)}, & \hbox{$\g> d\(1-\frac1q\)_+$}; \\
           t^{d(\frac1p-1)}, & \hbox{$\g=d\(1-\frac1q\)_+\ge 1$, $d\ge 2$, and $\a+\g\in \N$}; \\
           t^{d(\frac1p-1)}\ln^\frac1{q_1} (t+1), & \hbox{$\g=d\(1-\frac1q\)_+\ge 1$, $d\ge 2$,  and $\a+\g\not\in \N$}; \\
           t^{d(\frac1p-1)}\ln^\frac1{q} (t+1), & \hbox{$0<\g=d\(1-\frac1q\)_+=1$ and $d=1$}; \\
           t^{d(\frac1p-1)}\ln^\frac1{q} (t+1), & \hbox{$0<\g=d\(1-\frac1q\)_+<1$}; \\
           t^{d(\frac1p-\frac1q)-\g}, & \hbox{$0< \g<d\(1-\frac1q\)_+$};\\
           t^{d(\frac1p-\frac1q)}, & \hbox{$\g=0$,}
         \end{array}
       \right.
$$

{\rm (2)} if $1<p\le q\le\infty$, then
$$
\s(t):=
\left\{
         \begin{array}{ll}
           1, & \hbox{$\g\ge d(\frac1p-\frac1q),\quad q<\infty$}; \\
           1, & \hbox{$\g> \frac dp,\quad q=\infty$}; \\
           \ln^\frac1{p'} (t+1), & \hbox{$\g=\frac dp,\quad q=\infty$}; \\
           t^{d(\frac1p-\frac1q)-\g}, & \hbox{$0\le \g<d(\frac1p-\frac1q)$}.\\
         \end{array}
       \right.
$$
\end{theorem}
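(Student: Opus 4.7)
The plan is to reduce \eqref{eqlemMM1----------} to two largely independent ingredients: the realization of the fractional modulus of smoothness by trigonometric polynomials in $L_p$, and the sharp Hardy--Littlewood--Nikol'skii inequality for Weyl-type differentiation operators $\mathcal{D}^\beta$ (symbol $|\xi|^\beta$), which is the main new tool of the paper and supplies the function $\s$ together with all its case distinctions. Fix $\d \in (0,1)$ and set $n := [1/\d]$. By the realization for the fractional modulus, choose $T_n \in \mathcal{T}_n$ such that
$$
\|f - T_n\|_p + n^{-(\a+\g)} \|\mathcal{D}^{\a+\g} T_n\|_p \lesssim \omega_{\a+\g}(f,\d)_p.
$$
On the $L_q$ side I would combine the $q_1$-subadditivity of $\omega_\a(\cdot,\d)_q$ with the Bernstein-type bound $\omega_\a(T_n, \d)_q \lesssim \d^\a \|\mathcal{D}^\a T_n\|_q$ to get
$$
\omega_\a(f, \d)_q^{q_1} \lesssim \|f - T_n\|_q^{q_1} + \d^{\a q_1} \|\mathcal{D}^\a T_n\|_q^{q_1}.
$$

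For the derivative piece, apply the Hardy--Littlewood--Nikol'skii inequality with $\psi(\xi) = |\xi|^\a$ and $\vp(\xi) = |\xi|^{\a+\g}$; this yields $\|\mathcal{D}^\a T_n\|_q \lesssim n^\a \s(n) \cdot n^{-(\a+\g)} \|\mathcal{D}^{\a+\g} T_n\|_p$, and hence after invoking the $L_p$-realization,
$$
\d^\a \|\mathcal{D}^\a T_n\|_q \lesssim \d^{-\g}\, \s(1/\d)\, \omega_{\a+\g}(f,\d)_p,
$$
which is exactly the first summand of \eqref{eqlemMM1----------}. For the remainder $\|f - T_n\|_q$, use dyadic telescoping: pick near-best $L_p$-approximants $T_{2^k}$ with $\|T_{2^{k+1}} - T_{2^k}\|_p \lesssim E_{2^k}(f)_p \lesssim \omega_{\a+\g}(f, 2^{-k})_p$, apply classical Nikol'skii $\|T_{2^{k+1}} - T_{2^k}\|_q \lesssim 2^{k d (1/p - 1/q)} \|T_{2^{k+1}} - T_{2^k}\|_p$ to each dyadic block, and sum with the $q_1$-(quasi-)triangle inequality; converting the geometric sum to an integral via monotonicity of $\omega_{\a+\g}(f,\cdot)_p$ produces the second summand of \eqref{eqlemMM1----------}.

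The main obstacle is the sharp Hardy--Littlewood--Nikol'skii asymptotics for $\s$ itself. The seven subcases of (1) and the four of (2) are dictated by whether $\g$ exceeds, equals, or falls below the critical Nikol'skii threshold $d(1-1/q)_+$ (or $d/p$ when $p>1$ and $q=\infty$), and the borderline logarithmic corrections require delicate constructions of extremal polynomials whose Fourier mass is concentrated near the critical frequency. A secondary technical difficulty is to carry through the realization argument and the telescoping in the quasi-Banach range $0<p<1$: the admissibility condition $\a+\g \in \N \cup ((1/p-1)_+,\infty)$ is essential so that $\mathcal{D}^{\a+\g}$ is bounded on $\mathcal{T}_n$ in $L_p$, and one must work systematically with the $p$- and $q_1$-(quasi-)triangle inequalities in place of the usual one.
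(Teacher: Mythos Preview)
Your overall strategy—realization, Hardy--Littlewood--Nikol'skii for the polynomial piece, dyadic telescoping plus Nikol'skii for the remainder—is exactly the paper's framework (Theorem~\ref{lemMainMod}). But the specific choice of operator $\mathcal{D}^\beta$ with symbol $|\xi|^\beta$ (the Riesz/fractional Laplacian) creates two genuine problems.

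First, the realization $\omega_\beta(f,\d)_p \asymp \|f-T_n\|_p + \d^\beta\|(-\Delta)^{\beta/2}T_n\|_p$ is only valid for $1<p<\infty$; for $p\le 1$ or $p=\infty$ it fails (this is precisely why the paper introduces the directional-derivative realization, Theorem~\ref{lemKfp<1d}, which holds for all $0<p\le\infty$). So your first step—choosing $T_n$ with $\|f-T_n\|_p + n^{-(\a+\g)}\|\mathcal{D}^{\a+\g}T_n\|_p \lesssim \omega_{\a+\g}(f,\d)_p$—does not follow when $0<p\le 1$, and likewise the bound $\omega_\a(T_n,\d)_q\lesssim \d^\a\|(-\Delta)^{\a/2}T_n\|_q$ is unavailable for $q=\infty$.

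Second, and more subtly: even after switching to the correct directional-derivative framework, the Riesz-symbol HLN (Corollary~\ref{corr++}) gives $\s_\Delta(t)=t^{d(1/p-1)}\ln^{1/q_1}(t+1)$ at the borderline $\g=d(1-1/q)_+$, regardless of whether $\a+\g\in\N$. But Theorem~\ref{th1--} claims \emph{no} logarithm when $\g=d(1-1/q)\ge 1$, $d\ge 2$, and $\a+\g\in\N$. The paper obtains this improvement via a separate directional-derivative HLN estimate (Lemma~\ref{lemPolSob}) that exploits the Gagliardo--Nirenberg--Sobolev embedding $\|T\|_{q^*}\lesssim \sum_j\|\partial_j T\|_1$ (Lemma~\ref{lemVspom}) and crucially requires $\a+\g$ to be an integer so that one can pass through the classical Sobolev norm $\|T\|_{\dot W_1^{\a+\g}}$. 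Your outline contains no mechanism to detect or exploit this integer-order phenomenon, so as written it would deliver the weaker Riesz-type $\s_\Delta$ rather than the stated $\s$ in that subcase.
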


We make several comments on this result.
The result is sharp in the sense that, for given $p$ and $q$ such that $p<q$, there exists a non-trivial function $f_0\in L_q(\T^d)$ for which the left-hand side of~\eqref{eqlemMM1----------} is equivalent to the right-hand side (see Section~\ref{sec8}).

We see that the critical values of the parameter $\gamma$ in the definition of $\s$ in the cases $p>1$ and $p\le 1$ are different: $\g= d(1/p-1/q)$
and
$\g= d(1-1/q)$, correspondingly.

It is important to mention that the sharp form of the Ulyanov inequalities, that is, an optimal choice of the function $\sigma$,
essentially depends on the method  we choose to measure the smoothness or, in other words, how we define a modulus of smoothness.
For simplicity, we explain this in the one dimensional case. On the one hand, it is well known that the classical modulus of smoothness is equivalent to the $K$-functional (for $1\le p\le\infty$) and the realization functional (for $0<p\le\infty$) defined by means of the classical Weyl derivative. In this case, the sharp Ulyanov inequality is given by Theorem~\ref{th1--}. On the other hand, if instead of the Weyl derivative in the definition of the realization concept, we consider the Riesz derivative, then the corresponding modulus of smoothness of order $\a>0$ is given by (see~\cite{RS3})
\begin{equation}\label{eqModulRiesz+++}
    \w_{\langle\a\rangle}(f,\d)_{L_p(\T)}=\sup_{0<h\le \d}\bigg\Vert \sum_{\nu\in \Z\setminus \{0\}} \(-\frac{\b_{|\nu|}(\a)}{\b_0(\a)}\)
f(\cdot+\nu h)-f(\cdot) \bigg\Vert_{L_p(\T)},
\end{equation}
where
$$
\b_m(\a)=\sum_{j=m}^\infty (-1)^{j+1}2^{-2j}\binom{\a/2}{j}\binom{2j}{j-m},\quad m\in \Z_+.
$$

For the modulus~\eqref{eqModulRiesz+++}, a sharp form of Ulyanov inequality in critical  cases, which is of most interest, is different that the one given in Theorem~\ref{th1--}. In particular, for the limiting parameters  $0<p\le 1$ and $q=\infty$ (in this case $\g=1$), the corresponding inequality
reads as follows (see Section~\ref{sec9})
    \begin{equation*}
        \w_{\langle \a\rangle}(f,\d)_{L_q(\T)}\lesssim \frac{\w_{\langle \a+1\rangle}(f,\d)_{L_p(\T)}}{\d}\ln\(\frac 1\d+1\)+
            \int_0^\d
            \bigg(
            \frac{\w_{\langle \a+1\rangle}(f,t)_{L_p(\T)}}{t^{\frac1p-\frac1q}}
            \bigg)\frac{dt}{t},
    \end{equation*}
where $\a+1\in (2\N)\cup (1/p-1,\infty)$.
Note that for $\a\in 2\N$, the modulus~\eqref{eqModulRiesz+++} coincides with the classical modulus of smoothness $\w_{\a}(f,\d)_{L_p(\T)}$.

%
%
%
%
%
%

In the multidimensional case, new interesting effects occur  not only when $q=\infty$. 
In particular, the sharp Ulyanov inequality~\eqref{ul-11} holds not only for $1<p<q<\infty$ but also for $p=1, q\le \infty$, provided that
$\a+d(1-1/q)\in \N$ and $d\ge 2$ (see Corollary~\ref{important corollary}).

%

Finally, we would like to mention two remarks. First, likewise to $(L_p,L_q)$ inequalities for the moduli of smoothness, several authors have studied
a similar problem for  the $K$-functionals, see, e.g., \cite{bsh, DL, devore}. Note also that Ulyanov-type estimates are sometimes called {\it weak type inequalities} (following \cite{devore}, where a general theory based on inequalities of this type was developed).
Second,  inequalities for moduli of smoothness of various order in the same metrics, i.e.,
$\omega_\alpha(f,t)_p$ and $\omega_\beta(f,t)_p$ for $0<p\le \infty$,
have been extensively investigated starting from 1927, when Marchaud published his  paper~\cite{Marchaud}. Sharp inequalities were obtained, in particular, in~\cite{boman, dai-d, ddt, DL, marchaud, PST2016, timan}.

\subsection{Specifics  of  the case $0<p<1$}\label{sec1.2p<1}

One of the main contribution of this paper is a thorough  study of the case $0<p<1$.  We recall that dealing with smoothness properties in $L_p$, $p<1$, differs dramatically from the case $p\ge 1$.

First, let us illustrate this by discussing how the classical Bernstein inequality for the fractional derivatives of trigonometric polynomials (see~\cite{Liz})
\begin{equation}\label{cl-ber}
    \Vert T_n^{(\a)}\Vert_{L_p(\T)}\lesssim n^\a \Vert T_n\Vert_{L_p(\T)},\quad \a>0,\quad 1\le p<\infty,
\end{equation}
changes in $L_p(\T)$, $0<p<1$.

Recall that the fractional derivative of a polynomial $T_n(x)=\sum_{|k|\le n} c_k e^{ikx}\in\mathcal{T}_n$ in the sense of Weyl is given by
$$
T_n^{(\a)}(x)=\sum_{|k|\le n} (ik)^{\a}c_k e^{ikx},\quad (ik)^\a=|k|^\a
e^{\frac{i\pi\a}{2}\sign k}.
$$

The Bernstein inequality in $L_p(\T)$, $0<p<1$, reads as follows (see~\cite{BL} and~\cite{K15, RS} for the multidimensional case).
\begin{proposition}\label{lemBLf}
{\it Let $0<p<1$. Then
\begin{equation*}
\sup_{\Vert T_n\Vert_{L_p(\T)}\le 1}\Vert T_n^{(\a)}\Vert_{L_p(\T)}\asymp \left\{%
\begin{array}{ll}
n^{\a}, & \hbox{$\a\in\mathbb{Z}_+$ or $\a\not\in\mathbb{Z}_+$ and $\a>\frac 1p-1$;} \\
n^{\frac1p-1}, & \hbox{$\a\not\in\mathbb{Z}_+$ and $\a<\frac 1p-1$;}  \\
n^{\frac1p-1}\log^\frac 1p n, & \hbox{$\a=\frac 1p-1\not\in\mathbb{Z}_+$.} \\
\end{array}%
\right.
\end{equation*}
}
\end{proposition}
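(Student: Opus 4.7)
The strategy is to represent the Weyl derivative as a convolution with an explicit kernel, analyze the kernel via Fourier/Poisson techniques, and then match the resulting upper bound with explicit extremizers. Let $\mu\in C_c^\infty(\R)$ satisfy $\mu\equiv 1$ on $[-1,1]$ and $\supp\mu\subset[-2,2]$, and set
$$
K_{n,\a}(x):=\sum_{k\in\Z}\mu(k/n)(ik)^\a e^{ikx},\qquad \phi_\a(\xi):=(i\xi)^\a\mu(\xi).
$$
Since $\mu(k/n)=1$ whenever $|k|\le n$, every $T_n\in\mathcal{T}_n$ satisfies $T_n^{(\a)}=T_n\ast K_{n,\a}$. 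Poisson summation then gives $K_{n,\a}(x)=n\sum_{m\in\Z}\widehat{\phi_\a}(n(x-2\pi m))$, with the $m=0$ term dominant on $[-\pi,\pi]$. The decay of $\widehat{\phi_\a}$ at infinity is dictated by the regularity of $\phi_\a$ at the origin: for $\a\in\N$, $\phi_\a\in C_c^\infty$ and $\widehat{\phi_\a}$ is Schwartz, whereas for non-integer $\a>0$ the H\"older singularity of order $\a$ at $\xi=0$ gives $|\widehat{\phi_\a}(y)|\asymp|y|^{-\a-1}$ as $|y|\to\infty$. Consequently $|K_{n,\a}(x)|\asymp n^{\a+1}$ on $|x|\lesssim 1/n$, and $|K_{n,\a}(x)|\asymp|x|^{-\a-1}$ on $1/n\lesssim|x|\lesssim\pi$ for non-integer $\a$.

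For the upper bound I would distinguish two cases. When $\a\in\N$, iterate the classical $L_p$-Bernstein inequality of Arestov (valid for $0<p<1$ and $\a=1$) to obtain $\|T_n^{(\a)}\|_p\lesssim n^\a\|T_n\|_p$. For non-integer $\a$, the key tool is a polynomial convolution inequality in the quasi-Banach setting (\cite{BL, RS}) that reduces $\|T_n\ast K_{n,\a}\|_p$ to an explicit functional of $K_{n,\a}$. A sharp evaluation of this functional via the two-scale asymptotic above produces the contributions $n^{(\a+1)p-1}$ from $|x|\lesssim 1/n$ and $\int_{1/n}^\pi x^{-(\a+1)p}\,dx$ from the outer region. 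When $(\a+1)p>1$ both are of order $n^{(\a+1)p-1}$, yielding $n^\a$; when $(\a+1)p<1$ the outer integral is bounded and the inner region dominates, yielding $n^{1/p-1}$; when $(\a+1)p=1$ the outer integral contributes $\log n$, yielding $n^{1/p-1}\log^{1/p}n$.

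For the matching lower bounds, the $n^\a$ estimate is immediate from $T_n(x)=e^{inx}$, whose $\a$-th Weyl derivative has $L_p$-quasi-norm exactly $n^\a$. For the $n^{1/p-1}$ estimate in the range $\a\notin\Z_+$, $0<\a<1/p-1$, construct an extremizer whose $\a$-th derivative is (up to smoothing) a Dirichlet-type kernel, e.g.\ $T_n:=K_{n,-\a}$; the same Poisson-summation asymptotic applied to $T_n$ and $T_n^{(\a)}$ yields the ratio $n^{1/p-1}$ after computation of the quasi-norms. The borderline case $\a=1/p-1$ is handled analogously with the logarithmic correction carried through the kernel computation.

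The main obstacle is the polynomial convolution inequality in $L_p(\T)$ for $p<1$: Young's inequality is unavailable, so one must deploy a quasi-Banach substitute that respects the zero-mean structure of $K_{n,\a}$ (since a naive application of $\|K_{n,\a}\|_p$ overshoots the correct Bernstein constant by a factor $n^{1-1/p}$ in the integer regime). A secondary challenge is carefully tracking the borderline logarithmic factor at the critical exponent $\a=1/p-1$, which requires precise matching of the inner and outer contributions to the kernel integral.
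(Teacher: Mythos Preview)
The paper does not actually prove this proposition; it is quoted as a known result with references to Belinsky--Liflyand \cite{BL} (and \cite{K15, RS} for the multivariate case). Your plan is in fact close to the argument in \cite{BL}: represent $T_n^{(\a)}=T_n\ast K_{n,\a}$, use the Nikol'skii-based convolution estimate for polynomials in $L_p$, $0<p<1$,
\[
\|T_n\ast K_{n,\a}\|_p\;\lesssim\; n^{\frac1p-1}\,\|K_{n,\a}\|_p\,\|T_n\|_p,
\]
and then compute $\|K_{n,\a}\|_p$ via the two-zone asymptotic of $\widehat{\phi_\a}$. The paper's Theorem~\ref{lemBL} (the Belinsky--Trigub transference between $\|\widehat{\phi_\a}\|_{L_p(\R)}$ and $\|K_{n,\a}\|_{L_p(\T)}$) and Lemma~\ref{lemRunFour} (the two-sided asymptotic $|\widehat{\phi_\a}(y)|\asymp|y|^{-\a-1}$ for non-integer $\a$) provide exactly the machinery you describe.

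Two concrete corrections. First, when $(\a+1)p<1$ it is the \emph{outer} region that dominates, not the inner one: the inner contribution $n^{(\a+1)p-1}$ tends to $0$, while the outer integral $\int_{1/n}^\pi x^{-(\a+1)p}\,dx$ stays bounded, so $\|K_{n,\a}\|_p\asymp1$ and the convolution bound yields $n^{1/p-1}$. Second, your proposed extremizer $T_n=K_{n,-\a}$ is the wrong way round: it has $T_n^{(\a)}=V_n$ with $\|V_n\|_p\asymp n^{1-1/p}\to0$ while $\|K_{n,-\a}\|_p\asymp1$, giving the ratio $n^{1-1/p}$, not $n^{1/p-1}$. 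The correct extremizer is the de la Vall\'ee Poussin kernel $T_n=V_n$ itself: then $\|V_n\|_p\asymp n^{1-1/p}$ and $\|V_n^{(\a)}\|_p=\|K_{n,\a}\|_p\asymp1$ (respectively $\log^{1/p}n$ at the critical exponent), producing the claimed lower bounds. This is the mechanism used in the paper's Lemma~\ref{lemHLd}(ii) and in \cite{BL}.

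Finally, your worry about ``overshooting'' is unfounded. With the convolution bound above and $\|K_{n,\a}\|_p\asymp n^{\a+1-1/p}$ (valid whenever $\widehat{\phi_\a}\in L_p(\R)$, in particular for $\a\in\N$), the two factors combine to $n^{1/p-1}\cdot n^{\a+1-1/p}=n^{\a}$ exactly; no zero-mean cancellation is needed in the integer regime.
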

%

It is worth mentioning  that if the polynomial $T_n$ is such that ${\rm spec\,}(T_n)\subset [0,n]$, that is, $c_k=0$ for $k<0$, then the Bernstein inequality changes
drastically:  for any  $0<p\le \infty$, one has
$$
\Vert T_n^{(\a)}\Vert_{L_p(\T)}\lesssim n^\a \Vert T_n\Vert_{L_p(\T)},\quad \a>0;
$$
see Belinskii's paper ~\cite{B}.

\smallskip

Second, let us discuss the smoothness of functions in the sense of behaviour of their moduli of smoothness in $L_p$.
As an example, we consider the splines of maximum smoothness.
Denote by $h_m$ the following function sequence:
$h_1(x)=\frac{\pi}{2} \sgn (\cos x),$  $x\in \T, $
and 
$$
h_m(x)=\int_0^x \Big(h_{m-1}(t)-\frac1{2\pi}\int_\T {h}_{m-1}(z)dz \Big)dt, \qquad m=2, 3,\dots.
$$
Note that up to a constant
$$
h_m(x) \sim \sum_{k\in \Z} \frac{ e^{i(2k+1)x}}{(2k+1)^m}.$$
It is well known  (see~\cite[p. 359]{DL}) that, for $0<p\le\infty$ and $l\in \N$, we have
\begin{equation}\label{eqModLp}
    \w_l(h_{m},\d)_{L_p(\T)}\asymp \left\{
                        \begin{array}{ll}
                          \d^{m-1+\frac1p}, & \hbox{$l\ge m$;} \\
                          \d^l, & \hbox{$l<m$.}
                        \end{array}
                      \right.
\end{equation}

Let us compare the classical and sharp Ulyanov inequalities for such functions  in the case   $0<p<1$.
First, we have  $\w_{k}(h_{r+k},\d)_{L_p(\T)}\asymp \d^{k}$, and the standard (not-sharp) Ulyanov inequality (\ref{ul-1}) implies  $\w_{k}(h_{r+k},\d)_{L_q(\T)} \lesssim \d^{k-(1/p-1/q)}$.
At the same time, the sharp Ulyanov inequality (\ref{eqlemMM1----------}) with $\g=r\in \N$ yields
\begin{equation*}
  \begin{split}
            \w_k(h_{r+k},\d)_{L_q(\T)}
       &\lesssim
       \frac{\w_{k+r}(h_{r+k},\d)_{L_p(\T)}}{\d^{r}} \d^{1-\frac1p}\\
       &\qquad\qquad\qquad+
        \left(\int_0^\d\bigg(\frac{\w_{r+k}(h_{r+k},t)_{L_p(\T)}}{t^{\frac1p-\frac1q}}\bigg)^q\frac{dt}{t}\right)^{\frac1q}
 \asymp 
 \d^{k},
   \end{split}
\end{equation*}
which is the best possible estimate since $\w_k(h_{r+k},\d)_{L_q(\T)} \asymp \d^{k}$.
Therefore, the sharp Ulyanov inequality implies better estimates for $L_p$-smooth functions when $0<p<1$.

It is interesting to note that, for any $f\in C^\infty (\T)$ and for all $0<p \le \infty$, one has
$\omega_\gamma(f,\delta)_{L_p(\T)}\asymp \delta^\gamma$. In other words, taking into account~\eqref{eqModLp}, we see that spline functions in the case $0<p<1$ are smoother than
 $C^\infty$-functions
in the sense of the behavior of their moduli of smoothness.
This phenomena disappears if we deal with absolutely continuously functions. More precisely,
if $f^{(\a-1)}\in AC(\T)$, $\a\in \N$, and
$\w_\a(f,\d)_p=o(\d^\a)$ as $\d\to 0$,
then $f\equiv \const$ (see \cite{SKO} and Proposition~\ref{proposition}).

\subsection {Moduli of smoothness, $K$-functionals, and their realizations}

The key approach to obtain the sharp Ulyanov inequalities is to use the realizations of the $K$-functionals.
For simplicity, we start with the one-dimensional case.
It is  known (see~\cite[p.~341]{BeSh}) that if $1\le p \le\infty$, then the classical
modulus of smoothness is equivalent to the $K$-functional given by
$$
 {K}_{\a}(f,\d)_{L_p(\T)}:=\inf_{g^{(\a)}\in L_p(\T)} \(\|f-g\|_{L_p(\T)}+ \delta^\alpha \| g^{(\a)}\|_{L_p(\T)}\),
$$
that is,
\begin{equation}\label{eq.th6.0+}
\omega_\a(f,\delta)_{L_p(\T)}
\asymp {K}_{\a}(f,\d)_{L_p(\T)},\quad 1\le p \le\infty,\quad \a>0.
\end{equation}

This equivalence fails for  $0<p<1$ since in this case $K_\a(f,\d)_{L_p(\T)}\equiv 0$ (see \cite{DHI}). A
suitable substitute for the $K$-functional for $p<1$ is the
realization concept given by
\begin{equation}\label{eqKf2}
\mathcal{R}_{\a}(f,\d)_{L_p(\T)}=\inf_{T\in\mathcal{T}_{[1/\d]}}\(\Vert
f-T\Vert_{L_p(\T)}+\d^{\a}\Vert T^{(\a)}\Vert_{L_p(\T)}\).
\end{equation}

The next result was proved in \cite{DHI} for  $\a\in\N$. For any positive $\a$,
the proof follows from Theorem \ref{lemNSB} below. 

\begin{proposition}\label{lemKfp<1} {\sc(Realization result)}
Let $f\in L_p(\T)$, $0<p\le \infty$, and $\a\in\N\cup \big((1/p-1)_+,\infty\big)$. Then, for any $\d\in (0,1)$, we have
\begin{equation}\label{eq.th6.0}
{\omega}_\a(f,\d)_{L_p(\T)}\asymp \mathcal{R}_\a(f,\d)_{L_p(\T)}.
\end{equation}
\end{proposition}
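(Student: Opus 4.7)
The plan is to deduce the equivalence \eqref{eq.th6.0} from the Nikol'skii--Stechkin--Boas-type inequality that Theorem~\ref{lemNSB} is expected to provide, namely
$$
\omega_\alpha(T_n, 1/n)_{L_p(\T)} \asymp n^{-\alpha}\, \|T_n^{(\alpha)}\|_{L_p(\T)},\qquad T_n \in \mathcal{T}_n,
$$
valid in the full range $0<p\le\infty$, $\alpha\in\N\cup((1/p-1)_+,\infty)$. Once this polynomial-level equivalence is in hand, the realization result reduces to two short arguments.

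For the direction $\omega_\alpha(f,\delta)_{L_p(\T)} \lesssim \mathcal{R}_\alpha(f,\delta)_{L_p(\T)}$, I would fix an arbitrary $T\in\mathcal{T}_{[1/\delta]}$ and apply the quasi-triangle inequality for the fractional modulus of smoothness to the decomposition $f=(f-T)+T$:
$$
\omega_\alpha(f,\delta)_{L_p(\T)} \lesssim \omega_\alpha(f-T,\delta)_{L_p(\T)} + \omega_\alpha(T,\delta)_{L_p(\T)}.
$$
The first term is $\lesssim \|f-T\|_{L_p(\T)}$ directly from \eqref{def-mod++}, since $\sum_\nu |\binom{\alpha}{\nu}|^{\min(1,p)}<\infty$ under the hypothesis on $\alpha$. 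The second term is $\lesssim \delta^\alpha \|T^{(\alpha)}\|_{L_p(\T)}$ by the upper half of Theorem~\ref{lemNSB} applied with $n=[1/\delta]$. Taking the infimum over $T$ concludes this half.

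For the reverse estimate, let $T_n\in\mathcal{T}_n$, $n=[1/\delta]$, be a near-best approximant of $f$, so that $\|f-T_n\|_{L_p(\T)}\le 2 E_n(f)_{L_p(\T)}$. Jackson's inequality (valid for $0<p\le\infty$ and the stated $\alpha$) gives $\|f-T_n\|_{L_p(\T)} \lesssim \omega_\alpha(f,1/n)_{L_p(\T)} \asymp \omega_\alpha(f,\delta)_{L_p(\T)}$. Combining the lower half of Theorem~\ref{lemNSB} with the quasi-triangle inequality for the modulus yields
$$
\delta^\alpha \|T_n^{(\alpha)}\|_{L_p(\T)} \lesssim \omega_\alpha(T_n,1/n)_{L_p(\T)} \lesssim \omega_\alpha(f,1/n)_{L_p(\T)} + \omega_\alpha(f-T_n,1/n)_{L_p(\T)} \lesssim \omega_\alpha(f,\delta)_{L_p(\T)},
$$
where in the last step I use $\omega_\alpha(f-T_n,\cdot)_{L_p(\T)} \lesssim \|f-T_n\|_{L_p(\T)}$ together with the Jackson bound above. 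Adding these two estimates delivers $\mathcal{R}_\alpha(f,\delta)_{L_p(\T)} \le \|f-T_n\|_{L_p(\T)} + \delta^\alpha \|T_n^{(\alpha)}\|_{L_p(\T)} \lesssim \omega_\alpha(f,\delta)_{L_p(\T)}$.

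The substantive obstacle is therefore establishing Theorem~\ref{lemNSB} in the indicated generality. For $p\ge 1$ this is the classical Nikol'skii--Stechkin--Boas equivalence, but for $0<p<1$ one must confront the unusual form of the Bernstein inequality recorded in Proposition~\ref{lemBLf}. The hypothesis $\alpha\in\N\cup((1/p-1)_+,\infty)$ is precisely the threshold ensuring that $\|T_n^{(\alpha)}\|_{L_p(\T)} \lesssim n^\alpha \|T_n\|_{L_p(\T)}$ holds without the extra logarithmic or power loss that would destroy the target equivalence with $\omega_\alpha(T_n,1/n)_{L_p(\T)}$; this same restriction is what makes $\omega_\alpha(\cdot,\delta)_{L_p(\T)}$ well defined via \eqref{def-mod++} in the quasi-Banach range, so it reappears naturally in both halves of the argument.
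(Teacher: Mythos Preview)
Your proposal is correct and follows essentially the same route as the paper: the paper states that the proof ``follows from Theorem~\ref{lemNSB}'' and, in the multidimensional analogue (Theorem~\ref{lemKfp<1d}), carries out precisely the two-sided argument you describe---splitting $f=(f-T)+T$ and invoking the Nikol'skii--Stechkin--Boas equivalence (Corollary~\ref{corNSB}) for the upper bound, and combining Jackson's inequality~\eqref{JacksonSO} with the same polynomial equivalence for the lower bound via a near-best approximant. Your observation that the restriction $\a\in\N\cup((1/p-1)_+,\infty)$ is exactly what keeps both the Bernstein inequality and the definition of $\omega_\a$ well-behaved in the quasi-Banach range is also on point.
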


In the multidimensional case,
an analogue of equivalence (\ref{eq.th6.0+}) holds for the classical moduli of smoothness of integer order (see~\cite[p.~341]{BeSh}). More precisely,  if $1\le p \le\infty$
and  $\a\in\N$, we have for $f\in L_p(\T^d)$
\begin{equation*}
\omega_\a(f,\delta)_{L_p(\T^d)}
\asymp \inf_{g\in \dot W_p^{\a}} \big(\|f-g\|_{L_p(\T^d)}+ \delta^\alpha \|
g\|_{ {\dot W_p^{\a}(\T^d)}}\big),
\end{equation*}
where
$$
\Vert f \Vert_{\dot W_p^{\a}(\T^d)}=\sum_{|\nu|_1=\a}\Vert D^\nu f\Vert_{L_p(\T^d)},\quad {D}^\nu={D}^{\nu_1,\dots, \nu_d}=\frac{\partial^{\nu_1+\dots+\nu_d}}{\partial x_1^{\nu_1}
\dots \partial x_d^{\nu_d}}.
$$

For the fractional moduli of smoothness ($\alpha>0$), Wilmes \cite{Wil} proved that,  for $1<p<\infty$,
\begin{equation}\label{RealKpge1-}
\omega_\a(f,\delta)_{L_p(\T^d)}
\asymp \inf_{(-\Delta)^{\a/2}g \in L_p} \big(\|f-g\|_{L_p(\T^d)}+ \delta^\alpha \|(-\Delta)^{\a/2}g\|_{L_p(\T^d)}\big).
\end{equation}
The corresponding realization result (see, e.g., \cite{goga}) is given by
\begin{equation*}
  \omega_\a(f,\delta)_{L_p(\T^d)}
\asymp \inf_{T\in\mathcal{T}_{[1/\d]}} \big(\|f-T\|_{L_p(\T^d)}+ \delta^\alpha \|(-\Delta)^{\a/2}T\|_{L_p(\T^d)}\big),
\end{equation*}
where
\begin{equation}\label{eqLaplacian}
  (-\D)^{\a/2}T(x)=\sum_{|k|_\infty\le [1/\d]} |k|^\a c_k e^{i(k,x)}.
\end{equation}

We will show (see Theorem \ref{lemKfp<1d} below) that for any $0<p\le \infty$ and for any  $\a\in \N\cup \((1/p-1)_+,\infty\)$ the following realization result holds


\begin{equation}\label{eq.th6.0d++++}
{\omega}_\a(f,\d)_{L_p(\T^d)}\asymp
\inf_{T\in\mathcal{T}_{[1/\d]}}\left(\Vert
f-T\Vert_{L_p(\T^d)}+\d^{\a}\sup_{\xi\in \R^d,\,|\xi|=1}\bigg\Vert \(\frac{\partial}{\partial\xi}\)^{\a} T\bigg\Vert_{L_p(\T^d)}\right),
\end{equation}
where $\(\frac{\partial}{\partial\xi}\)^{\a} T$ is
 the directional derivative of order $\a$, that is, 
$$
\(\frac{\partial}{\partial\xi}\)^{\a} T(x)=\sum_{|k|_\infty\le [1/\d]} {\(i(k,\xi)\)^\a} c_k e^{i(k,x)}.
$$
Equivalence (\ref{eq.th6.0d++++}) is  a crucial relation to obtain sharp Ulyanov-type inequalities in the multidimensional case for all $0<p\le q\le \infty$.


\subsection{Main tool: Hardy--Littlewood--Nikol'skii polynomial inequalities}\label{subsec1.4}

The Hardy--Littlewood inequality for  fractional integrals states that
 for any $f\in L_p(\T^d)$ such that $\int_{\T^d} f(x)dx=0$, we have


\begin{equation}\label{HLfrac}
  \Vert  (-\D)^{-\t/2} f \Vert_{L_q(\T^d)} \lesssim \Vert f \Vert_{L_p(\T^d)}, \quad
 1<p<q<\infty,\quad \t=d\(\frac1p-\frac1q\).
\end{equation}
Remark that~\eqref{HLfrac} does not hold outside the range $1<p<q<\infty$.
On the other hand, the following Nikol'skii inequality (sometimes called the reverse H\"{o}lder inequality)
holds
for any
trigonometric polynomial $T_n$ of degree at most $n$:
\begin{equation}\label{nik}
  \Vert
    T_n \Vert_{L_q(\T^d)} \lesssim  n^\theta \Vert T_n \Vert_{L_p(\T^d)}, \quad
 0<p<q\le \infty,\quad \t=d\(\frac1p-\frac1q\).
\end{equation}
Both inequalities are, in a way, the  limiting cases of the following general estimate:
\begin{equation}\label{nik-vsp}
{\Vert (-\D)^{-\g/2}
T_n\Vert_{L_q(\T^d)}}
\lesssim \s(n) {\Vert
T_n\Vert_{L_p(\T^d)}},
\qquad
 0<p<q\le \infty.
\end{equation}

The key step in our proof of Ulyanov-type inequalities, which is of its own interest, is to obtain a sharp asymptotic behavior of $\s(n)=\s(n, p, q, \g, d)$.
Our result reads as follows (see Corollary \ref{corr++}).

{\it
\textnormal{(1)} \quad If $0<p\le 1$ and $p<q\le\infty$, then
$$
\sup_{T_n\in \mathcal{T}'_{n}
}\frac
{\Vert (-\D)^{-\g/2}
T_n\Vert_{L_q(\T^d)}}
{\Vert T_n\Vert_{L_p(\T^d)}}
\asymp
\left\{
         \begin{array}{ll}
           n^{d(\frac1p-1)}, & \hbox{$\g>d\(1-\frac1q\)_+$}; \\
           n^{d(\frac1p-1)}\ln^\frac1{q_1} (t+1), & \hbox{$0<\g=d\(1-\frac1q\)_+$}; \\
           n^{d(\frac1p-\frac1q)-\g}, & \hbox{$0< \g<d\(1-\frac1q\)_+$};\\
           n^{d(\frac1p-\frac1q)}, & \hbox{$\g=0$.}\\

         \end{array}
       \right.
$$

\textnormal{(2)} \quad If $1<p<q\le \infty$, then
$$
\sup_{T_n\in \mathcal{T}'_{n}
}\frac
{\Vert (-\D)^{-\g/2}
T_n\Vert_{L_q(\T^d)}}
{\Vert T_n\Vert_{L_p(\T^d)}}
\asymp
\left\{
         \begin{array}{ll}
           1, & \hbox{$\g\ge d(\frac1p-\frac1q),\quad q<\infty$}; \\
           1, & \hbox{$\g> \frac dp,\quad q=\infty$}; \\
           \ln^\frac1{p'} (n+1), & \hbox{$\g=\frac dp,\quad q=\infty$}; \\
           n^{d(\frac1p-\frac1q)-\g}, & \hbox{$0\le \g<d(\frac1p-\frac1q)$}.\\
         \end{array}
       \right.
$$
}
In particular, this implies the following analogue of the Hardy--Littlewood inequality (\ref{HLfrac}) for the limiting cases:
 {\it
 If $1\le p<q\le\infty$, $f\in L_p(\T^d)$, and $\int_{\T^d} f(x)dx=0$, then
$$
\Vert (-\D)^{-\g/2}f\Vert_{L_q(\T^d)}\lesssim \Vert f\Vert_{L_p(\T^d)}
$$
holds provided $\g>d(1/p-1/q)$, $p=1$ or/and $q=\infty$.
}

In fact, we will
study a more general problem than (\ref{nik-vsp}) by considering the Weyl-type derivatives defined by homogeneous functions in place of powers of Laplacians.

\subsection {Main goals and work organization} In this paper, our main goals are the following:

\begin{enumerate}
  \item[1.]  To prove Hardy--Littlewood--Nikol'skii polynomial inequalities for the generalized Weyl  derivatives with the whole range of parameters $0<p<q\le \infty$.
  \item[2.]
      To obtain sharp $(L_p, L_q)$  inequalities of Ulyanov-type with variable parameters for moduli of smoothness, generalized $K$-functionals, and their realizations.
  \item[3.]
 To apply Ulyanov inequalities to obtain new optimal embedding theorems of Lipschitz-type and Besov spaces.
   \end{enumerate}


The paper is organized as follows.
In Section~\ref{sec2}, we collect auxiliary results on 
  embedding theorems for Besov and Triebel--Lizorkin spaces, Fourier multiplier theorems, and Hardy--Littlewood theorems on Fourier series with monotone coefficients. Moreover, we obtain a result which connects the $L_p$-norms of the Fourier transform of a continuous function and $L_p$-norm of polynomial generated by this function.

Section \ref{sec3} deals with Nikol'skii--Stechkin--Boas--type inequalities for the relationship between  norms of derivatives and differences of trigonometric polynomials written as follows
$$
\sup_{\xi\in \R^d,\,|\xi|=1}\bigg\Vert \(\frac{\partial}{\partial\xi}\)^{\a} T_n\bigg\Vert_{L_p(\T^d)}
\asymp \d^{-\a}\w_\a (T_n,\d)_{L_p(\T^d)},\quad T_n\in \mathcal{T}_n,
$$
where $0<\d\le \pi/n$, $0<p\le\infty$, and $\a>0$.
In particular, this relation plays a crucial role in obtaining the equivalence between moduli of smoothness  and realization concepts, see (\ref{eq.th6.0d++++}).

Basic properties of fractional multidimensional moduli of smoothness are given in Section~\ref{sec4}.

In Section  \ref{sec5}, we obtain sharp Hardy--Littlewood--Nikol'skii inequalities for the generalized Weyl derivatives of trigonometric polynomials in the  multidimensional case.
In more detail, we find the asymptotic behavior
of
$$
\sup_{{T_n\in \mathcal{T}'_{n}
}}
\frac{\Vert \mathcal{D}(\psi)T_n\Vert_{L_q(\T^d)}}{\Vert
\mathcal{D}(\vp)T_n\Vert_{L_p(\T^d)}},\qquad 0<p<q\le \infty,
$$
where $\mathcal{D}(\eta)$ is  the generalized Weyl-type differentiation operators, i.e.,
$\mathcal{D}(\eta)\,:\, \sum_{\nu}  c_\nu e^{i(\nu,x)}\to {\sum_{\nu\neq 0}} \eta(\nu) c_\nu e^{i(\nu,x)}$, and
$\eta$ is a homogeneous function of a certain degree.

In Section~\ref{sec6}, we study properties of  the generalized $K$-functionals and their realizations and
prove a general form of  sharp Ulyanov inequalities.

Section~\ref{sec7} provides an explicit formula of the sharp Ulyanov inequality for the realization concepts.

Section~\ref{sec8} deals with the sharp Ulyanov inequality for the  moduli of smoothness in one-dimensional and multi-dimensional cases, where we use results of Sections  \ref{sec6} and~\ref{sec7}.

Section ~\ref{sec9} is  devoted to the proof of sharp Ulyanov inequalities for moduli of smoothness and $K$-functional related to the Riesz derivatives.

Section~\ref{sec10} deals with the sharp Ulyanov inequalities, which are obtained using  Marchaud-type inequalities.

In Section~\ref{sec11},  we treat the Ulyanov and Kolyada-type  inequalities in the real and analytic Hardy spaces.

In Section  \ref{sec12}, we prove the sharp Ulyanov inequalities involving derivatives. In particular, we improve the known estimate
 (see~\cite{diti07})
$$
 \omega_{\alpha}(f^{(r)},\delta)_{L_q(\T)}
 \lesssim
 \( \int_0^{\delta} \( t^{- r- (\frac1p-\frac1q)} \omega_{r +
\alpha}(f,t)_{L_p(\T)}\)^{q_1} \frac{dt}{t}
\)^{\frac{1}{q_1}}, \quad  0<p<q\le \infty, 
$$
where $1/p-1/q<\alpha$.

Finally, in Section  \ref{sec13}, we discuss new
embedding theorems for smooth function spaces, which follow from inequalities for moduli of smoothness.

\bigskip
\bigskip

\bigskip
{\bf{Acknowledgements.}}
The first author was supported by the project AFFMA that has received funding from the European Union's Horizon 2020 research and innovation programme under the Marie Sklodowska-Curie grant agreement No~704030.
 The second author
  was partially
supported by MTM 2014-59174-P, 2014 SGR 289, 
and by the CERCA Programme of the Generalitat de Catalunya.

\bigskip

\newpage

\section{Auxiliary results}\label{sec2}
\subsection {Besov and Triebel-Lizorkin spaces and their embeddings}

Let us recall the definition of the Besov space $B_{p,q}^s(\R^d)$ (see, e.g.,~\cite{TribF}). We  consider the Schwartz function
$\vp\in \mathscr{S}(\R^d)$ such that $\supp\vp\subset
\{\xi\in\R^d\,:\,1/2\le |\xi|\le 2\}$, $\vp(\xi)>0$ for $1/2< |\xi|<2$ and
\begin{equation*}
    \sum_{k=-\infty}^\infty \vp(2^{-k}\xi)=1\quad\text{if}\quad \xi\neq0.
\end{equation*}
We also introduce the functions $\vp_k$ and $\psi$ by means of the relations
$$
\mathscr{F}\vp_k(\xi)=\vp(2^{-k}\xi)
$$
and
$$
\mathscr{F}\psi(\xi)=1-\sum_{k=1}^\infty \vp(2^{-k}\xi).
$$

We  say that $f\in\mathscr{S}'(\R^d)$
belongs to the (non-homogeneous)  Besov space $B_{p,q}^s(\R^d)$, $s\in\R$, $0<p,q\le\infty$, if
$$
\Vert f\Vert_{B_{p,q}^s(\R^d)}=\Vert
\psi*f\Vert_{L_p(\R^d)}+\bigg(\sum_{k=1}^\infty 2^{sqk} \Vert
\vp_k* f\Vert_{L_p(\R^d)}^q\bigg)^\frac 1q<\infty
$$
(with the usual modification in the case $q=\infty$).

We will also deal with the Triebel-Lizorkin spaces. We  say that $f\in\mathscr{S}'(\R^d)$,
belongs to the (non-homogeneous) Triebel-Lizorkin space $F_{p,q}^s(\R^d)$, $s\in\R$, $0<p,q\le\infty$, if
$$
\Vert f\Vert_{F_{p,q}^s(\R^d)}=\Vert
\psi*f\Vert_{L_p(\R^d)}+\left\Vert\bigg(\sum_{k=1}^\infty
2^{sqk} |\vp_k* f(\cdot)|^q\bigg)^\frac 1q
\right\Vert_{L_p(\R^d)}<\infty
$$
(with the usual modification in the case $q=\infty$).

In order to define the homogeneous Besov spaces and Triebel-Lizorkin spaces, recall that

$$
\dot{\mathscr{S}}(\R^d)=\left\{\vp\in{\mathscr{S}}(\R^d)\,:\, (D^\nu
\widehat{\vp})(0)=0\,\,\,\text{for\ all}\,\,\,\nu\in\N^d\cup\{0\}\right\},
$$
where $\dot{\mathscr{S}}'(\R^d)$ is the space of all continuous
functionals on $\dot{\mathscr{S}}(\R^d)$. We  say that $f\in
\dot{\mathscr{S}}'(\R^d)$ belongs to the homogeneous Besov space
$\dot B_{p,q}^s(\R^d)$ if
$$
\Vert f\Vert_{\dot B_{p,q}^s(\R^d)}=\bigg(\sum_{k=-\infty}^\infty
2^{sqk} \Vert \vp_k* f\Vert_{L_p(\R^d)}^q\bigg)^\frac 1q<\infty.
$$
Similarly, $f\in \dot{\mathscr{S}}'(\R^d)$ belongs to the
homogeneous Triebel-Lizorkin space $\dot F_{p,q}^s(\R^d)$ if
$$
\Vert f\Vert_{\dot F_{p,q}^s(\R^d)}
=
\left\Vert\bigg(\sum_{k=-\infty}^\infty 2^{sqk}
|\vp_k* f(\cdot)|^q\bigg)^\frac 1q \right\Vert_{L_p(\R^d)}<\infty.
$$

Recall also the following relations between the real Hardy spaces and the
homogeneous Triebel-Lizorkin spaces. We have  $H_p(\R^d)=\dot F_{p,2}^0(\R^d)$ and $\Vert (-\D)^{s/2}
f\Vert_{H_p(\R^d)}=\Vert f\Vert_{\dot F_{p,2}^s(\R^d)}$ for
$0<p<\infty$ (see~\cite[Ch.~5]{TribF}) .

We will use the following embeddings.

\begin{lemma}\label{lemmapluss}
For $0<p<q<\infty$ and $\t=d(1/p-1/q)<\a$, we have
  \begin{equation}\label{eqKolya1}
    B_{p,q}^\t(\R^d)\hookrightarrow F_{q,2}^0(\R^d)\quad \text{and}\quad \Vert f\Vert_{H_q(\R^d)} \lesssim \Vert f\Vert_{\dot B_{p,q}^\t(\R^d)},
\end{equation}
and
\begin{equation*}
    F_{p,2}^\a(\R^d)\hookrightarrow B_{q,p}^{\a-\t}(\R^d)\quad \text{and}\quad \Vert f\Vert_{\dot B_{q,p}^{\a-\t}(\R^d)}
\lesssim \Vert f\Vert_{\dot F_{p,2}^\a(\R^d)}.
\end{equation*}
\end{lemma}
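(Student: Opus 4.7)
The plan is to derive both inequalities as borderline instances of the classical Jawerth--Franke embeddings for Besov and Triebel--Lizorkin spaces (see, e.g., Triebel, \emph{Theory of Function Spaces II}, \S 2.7). The computational starting point in both parts is Nikol'skii's inequality applied to the Littlewood--Paley blocks $f_k := \vp_k * f$. Since $\supp\widehat{f_k}$ lies in a dyadic annulus of radius $\sim 2^k$,
\begin{equation*}
    \Vert f_k\Vert_{L_q(\R^d)} \lesssim 2^{k\theta}\Vert f_k\Vert_{L_p(\R^d)},\qquad \theta = d\bigl(\tfrac1p-\tfrac1q\bigr).
\end{equation*}

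For the first embedding, the identification $H_q = \dot F_{q,2}^0$ (valid for $0 < q < \infty$) reduces the claim to an estimate on the $\ell^2$-square function. In the range $0 < q \le 2$, the pointwise inclusion $\ell^q \subset \ell^2$ and Fubini give at once
\begin{equation*}
    \Vert f\Vert_{H_q}^q \le \sum_k \Vert f_k\Vert_{L_q}^q \lesssim \sum_k 2^{kq\theta}\Vert f_k\Vert_{L_p}^q = \Vert f\Vert_{\dot B_{p,q}^\theta}^q.
\end{equation*}
For $2 < q < \infty$ this simple chain fails (the pointwise $\ell^q$-to-$\ell^2$ inclusion is in the wrong direction). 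One then replaces each $f_k$ by its Peetre maximal function $f_k^*(x) = \sup_{y\in\R^d}|f_k(x-y)|/(1+2^k|y|)^{d/r}$ with $r\in(0,\min(p,2))$, uses the pointwise majorization $f_k^*\lesssim[M(|f_k|^r)]^{1/r}$ (valid because $\widehat{f_k}$ is compactly supported), and invokes the Fefferman--Stein vector-valued maximal inequality to reduce the bound to the $r$th-power version of the Nikol'skii inequality above.

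For the second embedding, Nikol'skii applied in the form $2^{(\alpha-\theta)k}\Vert f_k\Vert_{L_q}\lesssim 2^{k\alpha}\Vert f_k\Vert_{L_p}$ gives
\begin{equation*}
    \Vert f\Vert_{\dot B_{q,p}^{\alpha-\theta}}^p \lesssim \sum_k 2^{kp\alpha}\Vert f_k\Vert_{L_p}^p = \Vert f\Vert_{\dot F_{p,p}^\alpha}^p.
\end{equation*}
When $p\ge 2$, the pointwise inclusion $\ell^2\subset\ell^p$ yields $\Vert f\Vert_{\dot F_{p,p}^\alpha}\le\Vert f\Vert_{\dot F_{p,2}^\alpha}$, closing the argument. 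For $0<p<2$ this elementary step fails in the required direction; here one again appeals to the Peetre maximal function (with $r\in(0,p)$) and the Fefferman--Stein inequality, equivalently routing the proof through an atomic decomposition of $\dot F_{p,2}^\alpha$, to trade the $\ell^p$-sum for the $\ell^2$-square function present in the $\dot F_{p,2}^\alpha$ norm.

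The main obstacle is the range where the second index of the Besov space sits on the ``wrong side'' of $2$ --- namely $q>2$ for the first embedding and $0<p<2$ for the second. There the naive combination of Nikol'skii's inequality with pointwise $\ell^s$-inclusions is no longer tight, and one has to pass through the Peetre maximal function and the vector-valued Fefferman--Stein inequality to exploit the $\ell^2$-structure of the Triebel--Lizorkin norm. Once this machinery is in place both inequalities follow from dyadic computations on a single scale.
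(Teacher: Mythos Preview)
Your overall approach matches the paper's: both treat the lemma as an instance of the Jawerth--Franke embeddings and defer to the literature (the paper cites Franke, Jawerth, and Triebel directly and adds only a one-line remark on how to pass to the homogeneous version of the first embedding via the triangle inequality, Nikol'skii's inequality, and $\Vert \vp_k*f\Vert_{H_q}\asymp\Vert \vp_k*f\Vert_{L_q}$). Your explicit arguments in the ranges $q\le 2$ (first embedding) and $p\ge 2$ (second embedding) are correct and in fact spell out what the paper's remark is getting at.

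The weak point is your sketch for the complementary ranges. Invoking the Peetre maximal function together with the vector-valued Fefferman--Stein inequality does not by itself produce the Franke embedding for $q>2$ or the Jawerth embedding for $p<2$: Fefferman--Stein preserves the inner $\ell^s$-index, so it cannot convert the $\ell^2$-square function into an $\ell^q$- or $\ell^p$-sum. The actual proofs in those ranges use genuinely different devices --- Franke's goes through real interpolation (or duality from Jawerth), while Jawerth's uses a pointwise splitting $|f_k|^q=|f_k|^p\cdot|f_k|^{q-p}$ combined with $|f_k|\le 2^{-k\a}S$, $S$ the square function (see also Vyb\'{\i}ral's elementary argument). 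Your parenthetical ``equivalently routing the proof through an atomic decomposition'' is closer to a workable route, but it is not equivalent to the maximal-function sketch --- it is a different argument. Since you already cite the literature for the result this is not fatal, but the Peetre/Fefferman--Stein outline is misleading as written: that machinery is the standard tool for proving equivalence of different $F$-space quasi-norms, not for shifting the inner index across a Sobolev-type embedding.
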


\begin{proof}
The proofs of these embeddings one can find in~\cite{Fra} and~\cite{Ja} (see also, e.g., \cite{TribF}).
Let us only  note that~\eqref{eqKolya1} was proved in~\cite{Fra} only for non-homogeneous spaces. The proof for homogeneous spaces can be easily obtained by using triangle inequality, the Nikol'skii-type inequality, and the fact that $\Vert \vp_k*f\Vert_{H_q(\R^d)}\asymp  \Vert \vp_k*f\Vert_{L_q(\R^d)}$ with the constants in $\asymp$ independent of $f$ and $k$ (see, e.g.,~\cite[p.~239]{Pe}).
\end{proof}

The following result was proved in \cite{K14} for the case $p=1$.
The case $0<p<2$ can be treated similarly (see \cite{K12}).
\begin{lemma}\label{lemBes}
{\it Let $0<p<2$, $1< q, r<\infty$, $s>d (1/p-1+1/r)$, and $f\in \dot{\mathscr{S}}'(\R^d)$. Let us suppose that $q=r=2$ or
\begin{equation*}
\frac {1-\theta}{q}+\frac {\theta} {r}>\frac 12\,,\quad
\theta=\frac{d}{s}\Big(\frac 1p-\frac12\Big).
\end{equation*}
If, in addition, $f\in L_q(\Bbb R^d)$ and $(-\D)^{s/2} f\in L_r(\Bbb R^d)$, then $f\in \dot
B_{2,p}^{d(\frac1p-\frac12)}(\Bbb R^d)$.}
\end{lemma}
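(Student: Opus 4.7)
The plan is to verify that
$$
\Vert f\Vert_{\dot B_{2,p}^{d(1/p-1/2)}}^p=\sum_{k\in\Z}2^{pd(1/p-1/2)k}\Vert\vp_k*f\Vert_{L_2}^p<\infty
$$
by estimating each Littlewood--Paley block. Since $\vp_k*f$ is band-limited to $\{|\xi|\asymp 2^k\}$, Nikol'skii's inequality gives $\Vert\vp_k*f\Vert_{L_2}\lesssim 2^{dk(1/u-1/2)}\Vert\vp_k*f\Vert_{L_u}$ for any $0<u\le 2$, while the Bernstein relation $\Vert\vp_k*(-\D)^{s/2}f\Vert_{L_r}\asymp 2^{sk}\Vert\vp_k*f\Vert_{L_r}$ lets one trade smoothness for a $k$-dependent decay factor. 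These two tools, together with the log-convexity of $L_p$-norms, will splice the two hypotheses $f\in L_q$ and $(-\D)^{s/2}f\in L_r$ into a usable bound on $\Vert\vp_k*f\Vert_{L_2}$.

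The interpolation parameter is chosen as $\theta_0=d(1/p-1/2)/s\in(0,1)$, with $u_0$ defined by $1/u_0=(1-\theta_0)/q+\theta_0/r$; the standing assumption of the lemma is precisely $1/u_0>1/2$, i.e.\ $u_0<2$. Log-convexity provides
$$
\Vert\vp_k*f\Vert_{L_{u_0}}\le\Vert\vp_k*f\Vert_{L_q}^{1-\theta_0}\Vert\vp_k*f\Vert_{L_r}^{\theta_0},
$$
and combining this with Nikol'skii and the Bernstein identity, together with the key algebraic fact $s\theta_0=d(1/p-1/2)$, yields the factored estimate
$$
2^{pd(1/p-1/2)k}\Vert\vp_k*f\Vert_{L_2}^p\lesssim A_k^{1-\theta_0}B_k^{\theta_0},
$$
where $A_k=2^{pkd(1/q-1/2)}\Vert\vp_k*f\Vert_{L_q}^p$ and $B_k=2^{pkd(1/r-1/2)}\Vert\vp_k*(-\D)^{s/2}f\Vert_{L_r}^p$. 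H\"older's inequality on the $k$-sum with conjugate exponents $1/(1-\theta_0)$ and $1/\theta_0$ then reduces the problem to bounding $\sum_kA_k$ by $\Vert f\Vert_{L_q}^p$ and $\sum_kB_k$ by $\Vert(-\D)^{s/2}f\Vert_{L_r}^p$.

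The main obstacle is precisely this last summability step, because the weights $2^{pkd(1/q-1/2)}$ and $2^{pkd(1/r-1/2)}$ change sign according to whether $q,r$ lie above or below $2$, so neither sum is a standard $L_p$-characterisation. I would handle this by splitting the $k$-range at $0$ and reapplying the interpolation scheme with two slightly different parameters $\theta_k$: for $k\ge 0$ one chooses $\theta_k$ close to $1$ so that the hypothesis $s>d(1/p-1+1/r)$ produces geometric decay through the Bernstein factor, while for $k<0$ one chooses $\theta_k$ close to the threshold $\theta_{\ast}=(1/2-1/q)_+/(1/r-1/q)$ so that the assumption $p<2$, combined with $1/u_0>1/2$, yields the exponent $d(1/p+1/q-1)>0$ and hence geometric decay in the opposite direction. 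The Franke--Jawerth type embeddings in Lemma~\ref{lemmapluss} convert the $L_q$ and $L_r$ data into the Besov summability actually needed on each side; in the endpoint case $q=r=2$, the argument collapses to the real-interpolation identity $(L_2,\dot H_2^s)_{\theta_0,p}=\dot B_{2,p}^{d(1/p-1/2)}$, where the intersection $L_2\cap\dot H_2^s$ automatically embeds into the interpolation space.
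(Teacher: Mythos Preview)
The paper does not prove this lemma at all; it merely cites \cite{K14} for $p=1$ and \cite{K12} for general $0<p<2$. So you are supplying a proof where the paper gives only references. Your overall strategy---Littlewood--Paley localisation, Nikol'skii to reach $L_2$, the Bernstein equivalence $\|\vp_k*(-\Delta)^{s/2}f\|_{L_r}\asymp 2^{sk}\|\vp_k*f\|_{L_r}$, and interpolation with a parameter $\theta$---is sound and does lead to a complete proof once the endgame is cleaned up.

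A few corrections to the details of your fix. Write the exponent as
\[
E(\theta)=d\Big(\tfrac1p-\tfrac12\Big)+d\Big(\tfrac1{u(\theta)}-\tfrac12\Big)-s\theta,\qquad \tfrac1{u(\theta)}=\tfrac{1-\theta}{q}+\tfrac{\theta}{r}.
\]
Then $E(\theta_0)=d(1/u_0-1/2)>0$ by the standing hypothesis, and $E(1)=d(1/p+1/r-1)-s<0$ by the assumption $s>d(1/p-1+1/r)$. Hence for $k<0$ the choice $\theta^-=\theta_0$ already gives geometric growth $2^{E(\theta_0)k}\to 0$ and keeps $u(\theta^-)=u_0<2$, so Nikol'skii applies; the exponent you quote, $d(1/p+1/q-1)$, is $E(0)$ and need \emph{not} be positive when $1<p<2$ and $q$ is large. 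For $k\ge0$, if $r\le2$ then $\theta^+=1$ works directly; if $r>2$ (which forces $q<2$ under the hypothesis), let $\theta_*\in(\theta_0,1)$ be the point where $u(\theta_*)=2$, observe $E(\theta_*)=d(1/p-1/2)-s\theta_*<d(1/p-1/2)-s\theta_0=0$, and pick any $\theta^+\in(\theta_0,\theta_*)$ with $E(\theta^+)<0$. With these two fixed parameters you get
\[
2^{d(1/p-1/2)k}\|\vp_k*f\|_{L_2}\lesssim 2^{E(\theta^{\pm})k}\,\|f\|_{L_q}^{1-\theta^{\pm}}\|(-\Delta)^{s/2}f\|_{L_r}^{\theta^{\pm}},
\]
and both half-sums over $k$ are geometric. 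The appeal to Franke--Jawerth (Lemma~\ref{lemmapluss}) is unnecessary here; the uniform bounds $\|\vp_k*g\|_{L_t}\le\|\vp\|_{L_1}\|g\|_{L_t}$ for $t=q,r$ are all you use after interpolation. The $q=r=2$ endpoint you describe is exactly right.
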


The next result easily follows from the definition of the Besov spaces (see, e.g.,
\cite[3.4.1, p.~206]{TribF}; mind the typo  in \cite[5.3.1, Remark~4, p.~239]{TribF}.
\begin{lemma}\label{besov}
We have
$$
\|f(\lambda \cdot)\|_{\dot{B}_{p,q}^{s}(\R^d)}\lesssim \lambda^{s-\frac{d}{p}}
\|f( \cdot)\|_{\dot{B}_{p,q}^{s}(\R^d)}.
$$
\end{lemma}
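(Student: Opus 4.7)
My plan is a direct dyadic-rescaling computation built on the Littlewood--Paley decomposition defining the Besov norm. Set $f_\lambda(x):=f(\lambda x)$. Since $\widehat{f_\lambda}(\xi)=\lambda^{-d}\widehat{f}(\xi/\lambda)$ and $\mathscr{F}\vp_k(\xi)=\vp(2^{-k}\xi)$, the substitution $\xi=\lambda u$ in the inverse Fourier transform gives, up to an absolute constant,
$$
(\vp_k*f_\lambda)(x) \;=\; \bigl(\mathscr{F}^{-1}\bigl[\vp(2^{-k}\lambda\,\cdot)\,\widehat{f}\,\bigr]\bigr)(\lambda x).
$$
When $\lambda=2^{k_0}$ is dyadic this collapses to $(\vp_k*f_\lambda)(x)=(\vp_{k-k_0}*f)(\lambda x)$. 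Combining with the $L_p$-dilation rule $\|g(\lambda\cdot)\|_{L_p(\R^d)}=\lambda^{-d/p}\|g\|_{L_p(\R^d)}$ and reindexing $m=k-k_0$ in the $\ell_q$-sum yields
$$
\|f_\lambda\|_{\dot B_{p,q}^s(\R^d)}^q \;=\; \lambda^{-dq/p}\sum_{m\in\Z} 2^{s(m+k_0)q}\|\vp_m*f\|_{L_p(\R^d)}^q \;=\; \lambda^{sq-dq/p}\|f\|_{\dot B_{p,q}^s(\R^d)}^q,
$$
which is exactly the claimed estimate (in fact an equivalence) for dyadic $\lambda$.

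For general $\lambda>0$ I would write $\lambda=2^{k_0}\mu$ with $k_0\in\Z$ and $\mu\in[1,2)$. Then $\vp(2^{-k}\lambda\,\cdot)=\vp(2^{-(k-k_0)}\mu\,\cdot)$ is supported in a bounded enlargement of a single dyadic annulus $\{|u|\asymp 2^{k-k_0}\}$, and so overlaps only a bounded number of the Littlewood--Paley annuli $\supp\mathscr{F}\vp_j$. Inserting $f=\sum_j \vp_j*f$ and invoking the standard fact that a smooth bump supported in a fixed dyadic annulus gives a Fourier multiplier bounded on $L_p(\R^d)$ for all $0<p\le\infty$, with operator norm invariant under isotropic dilation (see~\cite{TribF}), I would estimate $\|\mathscr{F}^{-1}[\vp(2^{-(k-k_0)}\mu\,\cdot)\widehat{f}\,]\|_{L_p(\R^d)}$ by a finite sum of $\|\vp_j*f\|_{L_p(\R^d)}$ over $j$ with $|j-(k-k_0)|\le C$, uniformly in $k$ and $\mu$. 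Plugging this into the Besov norm and reindexing as in the dyadic case produces the factor $\lambda^{s-d/p}$ with only a bounded multiplicative loss.

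The only real obstacle is the non-dyadic case for $p<1$: Young's inequality is not available, so the uniform $L_p$-boundedness of the annulus-localized Fourier multipliers has to come from Plancherel--P\'olya/Peetre type estimates for bandlimited $L_p$-functions rather than from a direct convolution bound. Since this enters only through a single uniform constant, the overall argument remains short, matching the remark preceding the lemma that the estimate follows easily from the definition.
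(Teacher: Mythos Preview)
Your argument is correct and is precisely the standard dilation computation from the Littlewood--Paley definition that the paper has in mind; the paper itself gives no proof but simply points to \cite[3.4.1]{TribF}, where exactly this dyadic reindexing plus finite-overlap argument is carried out. Your treatment of the non-dyadic case via uniform $L_p$-boundedness of dilated annulus multipliers (including $0<p<1$ through Peetre--Plancherel--P\'olya estimates) is the right way to close the remaining gap.
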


\subsection{Fourier multipliers} In this paper, we will use Fourier multipliers only for periodic functions.
Recall that a function $m: \R^d\to \C$ is called a Fourier multiplier (we will write $ m\in M_p$) if there exists a constant $C=C(m,p,d)$
such that for any $\e> 0 $ and $ f\in L_p (\T^d)$, $1\le p\le \infty$, one has
$$
\bigg\Vert \sum_{k\in \Z^d} m (\e k) \widehat{f}_k e^{i(k,x)} \bigg\Vert_p\le C\bigg\Vert \sum_{k\in \Z^d} \widehat{f}_k e^{i(k,x)}\bigg\Vert_p,
$$
where
$$
\widehat{f}_k=\frac1{(2\pi)^d}\int_{\T^d} f(x)e^{-i(k,x)}dx,\quad k\in \Z^d,
$$
are the Fourier coefficients of the function $f$.

We need the following properties of Fourier multipliers, which can be found, for example, in~\cite{Mikhlin}, \cite[p. 119]{Pe} or \cite[Ch. IV, 3.2]{SW}.

\begin{lemma}\label{lemMult}
\textnormal{(i) (Mikhlin--H\"{o}rmander's theorem)} Let $1 <p <\infty $. If
$$
\sup_{\xi\in\R^d} |\xi|^{|\nu|_1} | {D}^\nu m (\xi)|<\infty \quad\text{for all}\quad 0\le |\nu|_1\le [n/2]+1,\quad \nu\in \Z_+^d,
$$
then $ m\in M_p $.

\textnormal{(ii) (Peetre's theorem)} If  $m\in \dot B_{2,1}^{d/2}(\R^d)$, then $m\in M_1$ or, equivalently, $ m\in M_\infty$.

\end{lemma}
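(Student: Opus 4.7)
The plan is to reduce both parts to classical Calder\'on--Zygmund / Littlewood--Paley arguments on $\R^d$ and then transfer the resulting bounds to the torus by a de Leeuw-type sampling principle.

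For part (i), I would first establish the continuous analogue: any $m$ satisfying the Mikhlin hypothesis defines a bounded Fourier multiplier on $L^p(\R^d)$ for $1<p<\infty$. The route is dyadic: pick a smooth Littlewood--Paley resolution $1=\sum_j\chi(2^{-j}\xi)$ and write $m=\sum_j m_j$ with $m_j=\chi(2^{-j}\cdot)m$, each supported in an annulus of scale $2^j$. Plancherel together with the Mikhlin bounds on $D^\nu m$ yields uniform control of $\|m_j\|_{L^2}$ and of $\|\xi^\nu m_j\|_{L^2}$, which a standard integration-by-parts computation upgrades to an $L^1$-estimate for $K_j=\mathcal{F}^{-1}m_j$ outside a ball of radius $2^{-j}$ with geometrically decaying constants. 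Summing over $j$ yields H\"ormander's regularity condition
\begin{equation*}
\int_{|x|\ge 2|y|}|K(x-y)-K(x)|\,dx \lesssim 1,
\end{equation*}
and the Calder\'on--Zygmund theorem then produces the desired $L^p(\R^d)$ bound. To pass to the torus I would invoke a de Leeuw-type sampling principle: restricting a continuous multiplier symbol $m$ on $\R^d$ to the lattice $\e\Z^d$ yields a periodic Fourier multiplier on $L^p(\T^d)$ whose norm is majorized by the continuous multiplier norm of $m$. Since the Mikhlin hypothesis is invariant under the dilation $m\mapsto m(\e\cdot)$, the resulting periodic bound is uniform in $\e>0$.

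For part (ii), the decisive ingredient is the embedding $\dot B^{d/2}_{2,1}(\R^d)\hookrightarrow \mathcal{F}L^1(\R^d)$. Writing $m=\sum_k \vp_k\ast m$ as in the Besov resolution, each piece has Fourier support in an annulus of size $\sim 2^k$, so Cauchy--Schwarz followed by Plancherel yields
\begin{equation*}
\|\mathcal{F}^{-1}(\vp_k\ast m)\|_{L^1(\R^d)} \lesssim 2^{kd/2}\|\vp_k\ast m\|_{L^2(\R^d)},
\end{equation*}
and summing over $k\in\Z$ bounds $\|\check m\|_{L^1}$ by $\|m\|_{\dot B^{d/2}_{2,1}}$. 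Hence convolution with $\check m$ is bounded on $L^1(\R^d)$ and, by duality, on $L^\infty(\R^d)$; the sampling transference then delivers $m\in M_1$ and $m\in M_\infty$ uniformly in $\e$, with the equivalence of the two reflecting $L^1$--$L^\infty$ duality for convolution operators.

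The main technical obstacle is keeping all constants independent of the sampling parameter $\e$. In part (i) this is secured by the scaling invariance of the Mikhlin hypothesis under $m\mapsto m(\e\cdot)$; in part (ii) by the fact that the homogeneous Besov norm $\|\cdot\|_{\dot B^{d/2}_{2,1}}$ is invariant under dilations at the critical index, as is evident from Lemma~\ref{besov} applied with $s=d/p=d/2$. Once these invariances are articulated, both conclusions follow from the continuous estimates combined with the sampling principle.
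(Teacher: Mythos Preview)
The paper does not prove this lemma; it simply records it as a known result, citing Mikhlin, Peetre's monograph (p.~119), and Stein--Weiss. Your sketch is the standard route found in those sources and is correct: Calder\'on--Zygmund theory for~(i), the embedding $\dot B_{2,1}^{d/2}\hookrightarrow \mathcal{F}L^1$ for~(ii), followed by de Leeuw transference to the torus, with uniformity in~$\e$ coming from the dilation invariance you identify. In fact the paper uses precisely your part~(ii) ingredient---that $m\in\dot B_{2,1}^{d/2}$ forces $\check m\in L^1$---in the proof of Lemma~\ref{lemFLpBes}, again citing Peetre.
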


\subsection {Estimates of $L_p$-norm of an integrable function in terms of its Fourier coefficients}

The following result allows us to estimate a (quasi-)norm of an integrable function
via its Fourier coefficients. In the case $p=1$, this lemma was proved
in~\cite{Trigub15}.

\begin{lemma}\label{lemFLpBes}
Let $0<p\le 1$, $\Phi \in L_1(\T^d)$, and
    $$
     \Phi(x)\sim \sum_{k\in\Z^d} \vp(k)e^{i(k,x)}.
    $$
Let  $\vp_c$ be  such that
 $\vp_c(k)=\vp(k)$ for any $k\in \Z^d$ and $\vp_c\in
\dot{B}_{2,p}^{d(\frac1p-\frac12)} \cap \dot{B}_{2,1}^{\frac
d2}(\R^d)$.
    Then
    \begin{equation*}
        \Vert \Phi \Vert_{L_p(\T^d)}\le C \Vert \vp_c
        \Vert_{\dot{B}_{2,p}^{d(\frac1p-\frac12)}(\R^d)},
    \end{equation*}
where  a constant $C$ is independent of $\Phi$.
\end{lemma}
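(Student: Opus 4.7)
The strategy is to reduce to the known $p=1$ case (due to Trigub \cite{Trigub15}) via a Littlewood--Paley decomposition and the $p$-triangle inequality available in $L_p(\T^d)$ for $p\le 1$. Decompose $\vp_c$ dyadically: $\vp_c=\sum_{k\in\Z}\vp^{(k)}$ with $\vp^{(k)}=\vp_c\cdot \vp(2^{-k}\cdot)$ supported in the annulus $\{|\xi|\sim 2^k\}$. Correspondingly, $\Phi=\sum_k\Phi^{(k)}$, where $\Phi^{(k)}(x)=\sum_{m\in\Z^d}\vp^{(k)}(m)e^{i(m,x)}$ is a trigonometric polynomial of degree $\lesssim 2^k$; only finitely many $k<0$ contribute nontrivially since the annulus must contain an integer point.

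Apply the $p=1$ version of the lemma to each $\vp^{(k)}$: since $\vp^{(k)}$ is concentrated at a single Littlewood--Paley scale, we have $\|\vp^{(k)}\|_{\dot{B}_{2,1}^{d/2}(\R^d)}\asymp 2^{kd/2}\|\vp^{(k)}\|_{L_2(\R^d)}$, whence $\|\Phi^{(k)}\|_{L_1(\T^d)}\lesssim 2^{kd/2}\|\vp^{(k)}\|_{L_2(\R^d)}$. The embedding $L_1(\T^d)\hookrightarrow L_p(\T^d)$ (valid for $p\le 1$, with constant $(2\pi)^{d(1/p-1)}$) then yields the same bound for the $L_p(\T^d)$ norm. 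Summing via the $p$-triangle inequality,
\[
\|\Phi\|_{L_p(\T^d)}^p\le\sum_k \|\Phi^{(k)}\|_{L_p(\T^d)}^p\lesssim \sum_k 2^{kdp/2}\|\vp^{(k)}\|_{L_2(\R^d)}^p.
\]
Since $p\le 1$ implies $2^{kdp/2}\le 2^{kd(1-p/2)}=2^{kdp(1/p-1/2)}$ for $k\ge 0$, and the finitely many $k<0$ terms contribute only a bounded multiple of the Besov norm, one obtains $\|\Phi\|_{L_p(\T^d)}^p\lesssim \|\vp_c\|_{\dot{B}_{2,p}^{d(1/p-1/2)}(\R^d)}^p$, which is the claim.

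The conceptual core lies in the cited $p=1$ case, which proceeds via Peetre's theorem (Lemma \ref{lemMult}(ii)): the hypothesis $\vp_c\in\dot{B}_{2,1}^{d/2}$ yields $\mathcal{F}^{-1}\vp_c\in L_1(\R^d)$, and Poisson summation gives $\Phi(x)=c_d\sum_{m\in\Z^d}\mathcal{F}^{-1}\vp_c(x+2\pi m)$, so that $\|\Phi\|_{L_1(\T^d)}\le c_d\|\mathcal{F}^{-1}\vp_c\|_{L_1(\R^d)}\lesssim\|\vp_c\|_{\dot{B}_{2,1}^{d/2}}$. This is why the hypothesis explicitly includes $\vp_c\in \dot{B}_{2,1}^{d/2}$. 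The main technical obstacle in extending to $p<1$ is that a direct Poisson--periodization argument is too crude ($\mathcal{F}^{-1}\vp_c$ need not belong to $L_p(\R^d)$ even when $\Phi\in L_p(\T^d)$); the dyadic reduction above is precisely what circumvents this, and the delicate point is to verify that the exponent mismatch $2^{kdp/2}$ versus $2^{kdp(1/p-1/2)}$ is resolved in the right direction for $p\le 1$.
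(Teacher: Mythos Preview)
Your proof has a genuine gap at the claimed equivalence $\|\vp^{(k)}\|_{\dot{B}_{2,1}^{d/2}(\R^d)}\asymp 2^{kd/2}\|\vp^{(k)}\|_{L_2(\R^d)}$. That equivalence holds for functions whose \emph{Fourier transform} is supported in $\{|\eta|\sim 2^k\}$, because the Besov norm localizes on the Fourier side: $\|\vp_j\ast h\|_{L_2}=\|\vp(2^{-j}\cdot)\,\widehat h\|_{L_2}$. Your $\vp^{(k)}=\vp_c\cdot\vp(2^{-k}\cdot)$ is, however, supported in $\{|\xi|\sim 2^k\}$ in the \emph{physical} variable; its Fourier transform may be spread over arbitrarily many dyadic shells. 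A concrete counterexample: take $\vp_c(\xi)=e^{iN\xi_1}\chi(\xi)$ with $\chi$ a smooth bump near the origin. Then $\vp^{(0)}$ is supported near the origin, yet $\widehat{\vp^{(0)}}$ is concentrated near $Ne_1$, so $\|\vp^{(0)}\|_{\dot{B}_{2,1}^{d/2}}\asymp N^{d/2}\|\chi\|_2$, not $\asymp\|\chi\|_2$. Consequently the chain breaks, and the sum you end up with, $\sum_k 2^{kdp/2}\|\vp_c\cdot\vp(2^{-k}\cdot)\|_{L_2}^p$, is a dyadic decomposition of $\vp_c$ in physical space and is simply not the Besov norm $\|\vp_c\|_{\dot B_{2,p}^{d(1/p-1/2)}}^p$, which involves $\|\vp(2^{-k}\cdot)\,\widehat{\vp_c}\|_{L_2}$.

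Your closing remark that the direct Poisson--periodization approach is ``too crude'' is also mistaken: this is exactly what the paper does. One writes $g=\mathcal{F}^{-1}\vp_c$ (which lies in $L_1(\R^d)$ since $\vp_c\in\dot B_{2,1}^{d/2}$), periodizes to get $\Phi=g_T$, and then uses the $p$-subadditivity $|{\sum_\mu a_\mu}|^p\le\sum_\mu|a_\mu|^p$ to obtain $\|g_T\|_{L_p(\T^d)}\le\|g\|_{L_p(\R^d)}$. The final step is the Bernstein-type embedding $\|g\|_{L_p(\R^d)}\lesssim\|\widehat g\|_{\dot B_{2,p}^{d(1/p-1/2)}(\R^d)}=\|\vp_c\|_{\dot B_{2,p}^{d(1/p-1/2)}(\R^d)}$ from Peetre. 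Your worry that $g\notin L_p(\R^d)$ is precisely what the hypothesis $\vp_c\in\dot B_{2,p}^{d(1/p-1/2)}$ rules out. No dyadic detour through the $p=1$ case is needed.
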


\begin{proof}For any $\vp_c\in \dot{B}_{2,1}^{\frac
d2}(\R^d)$, there exists $g\in L_1(\R^d)$ such that
\begin{equation*}
    \vp_c(x)=
    {(2\pi)^{-d/2}}\int_{\R^d} g(t)e^{i(t,x)}dt
\end{equation*}
(see, for example, \cite[p. 119]{Pe}).
For $k\in \Z^d$, we get
\begin{equation*}
    \begin{split}
\vp(k)=\vp_c(k)&= {(2\pi)^{-d/2}}
\int_{\R^d} g(t)e^{-i(k,t)}dt\\
&= {(2\pi)^{-d/2}}\sum_{\mu\in\Z^d}\int_{\T^d+2\pi \mu} g(t)e^{-i(k,t)}dt
\\
&= {(2\pi)^{-d/2}}\sum_{\mu\in\Z^d} \int_{\T^d}g(t+2\pi \mu)e^{-i(k,t)}dt\\
&= {(2\pi)^{-d/2}}\int_{\T^d}
g_T(t)e^{-i(k,t)}dt,
     \end{split}
\end{equation*}
where
$$
g_T(t)\sim \sum_{\mu\in\Z^d}g(t+2\pi \mu),\qquad t\in \T^d.
$$
Using Beppo Levi's theorem, we have $g_T\in L_1(\T^d)$. Since
$$
g_T(t)\sim \sum_{k\in \Z^d}\vp(k)e^{i(k,t)},
$$
then 
$$
\Vert\Phi\Vert_{L_p(\T^d)} =\Vert g_T\Vert_{L_p(\T^d)}\le \Vert
g\Vert_{L_p(\R^d)}\le C \Vert
\vp_c\Vert_{\dot{B}_{2,p}^{d(\frac1p-\frac12)}(\R^d)},
$$
where in the last estimate  we have used the Bernstein-type inequality
$$
\Vert f\Vert_{L_p(\R^d)}\le C\Vert
\widehat{f}\Vert_{\dot{B}_{2,p}^{d(\frac1p-\frac12)}(\R^d)}
$$
(see, e.g., \cite[p. 119]{Pe}).
\end{proof}

\subsection {Estimates of $L_p$-norms of polynomials related to Lebesgue constants}
We will widely use the following anisotropic result which connects the norms of the Fourier transform of a function and the polynomial generated
by this function (see~\cite[p. 106]{TB}).
Note that for $p=1$, this question is closely related to a study of Lebesgue constants of approximation methods (see~\cite{Bel}, \cite{L}, and~\cite[Ch. 4 and Ch. 9]{TB}).

\begin{theorem}\label{lemBL}
Let $0<p\le 1$. Let $\varphi\in C(\R^d)$ have a compact support and let
$\widehat{\varphi}\in L_p(\R^d)$.
Then
$$
\sup_{\e_j\neq 0,\,j=1,\dots,d} \(\prod_{j=1}^d |\e_j|\)^{1-\frac1p}\Vert
\Phi_\e\Vert_{L_p(\T^d)}=(2\pi)^{d/2} \Vert\widehat{\varphi}\Vert_{L_p(\R^d)},
$$
where
$$
\Phi_\e(x)=\sum_{k\in \Z^d}\varphi(\e_1 k_1,\dots, \e_d k_d)  e^{i(k,x)},\quad \e=(\e_1,\dots,\e_d).
$$
\end{theorem}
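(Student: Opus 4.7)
The plan is to reduce the identity to an explicit Poisson-type formula that rewrites $\Phi_\e$ as a periodization of a rescaled copy of $\widehat\varphi$, and then exploit the fact that the $p$-subadditivity of $\|\cdot\|_{L_p}^p$ for $p\le 1$ becomes asymptotically sharp when the period tends to infinity, i.e., as all $|\e_j|\to 0$.

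First I would observe that since $\varphi$ is continuous with compact support, the sum defining $\Phi_\e$ is finite for every fixed $\e$ with $\e_j\neq 0$, so $\Phi_\e$ is a genuine trigonometric polynomial. Moreover, $\varphi\in L_1(\R^d)$ forces $\widehat\varphi\in L_\infty(\R^d)$, and combined with $\widehat\varphi\in L_p(\R^d)$, $p\le 1$, the pointwise bound $|\widehat\varphi|\le\|\widehat\varphi\|_{L_\infty}^{1-p}|\widehat\varphi|^p$ yields $\widehat\varphi\in L_1(\R^d)$. I would then introduce
\[
\Psi_\e(x):=\frac{(2\pi)^{d/2}}{\prod_{j=1}^d|\e_j|}\sum_{m\in\Z^d}\widehat\varphi\!\left(\frac{2\pi m_1-x_1}{\e_1},\dots,\frac{2\pi m_d-x_d}{\e_d}\right),
\]
and note that the disjoint unfolding over $m\in\Z^d$ of the translates of the fundamental cube $Q_\e:=\prod_{j=1}^d[-\pi/|\e_j|,\pi/|\e_j|]$ covers $\R^d$, so the series converges absolutely in $L_1(\T^d)$. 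Computing the $k$-th Fourier coefficient of $\Psi_\e$ by interchanging sum and integral and changing variables reduces it to $(2\pi)^{-d/2}\int_{\R^d}\widehat\varphi(y)e^{i(\e_1 k_1 y_1+\dots+\e_d k_d y_d)}dy$, which by Fourier inversion (justified by $\widehat\varphi\in L_1$) equals $\varphi(\e_1 k_1,\dots,\e_d k_d)$. Hence $\Phi_\e=\Psi_\e$.

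With this identity in hand, the substitution $y_j=-x_j/\e_j$ in $\|\Phi_\e\|_{L_p(\T^d)}^p$ and the introduction of the periodized function $G_\e(y):=\sum_{m\in\Z^d}\widehat\varphi(y_1+2\pi m_1/\e_1,\dots,y_d+2\pi m_d/\e_d)$ lead to
\[
\Big(\prod_{j=1}^d|\e_j|^{1-1/p}\|\Phi_\e\|_{L_p(\T^d)}\Big)^p=(2\pi)^{dp/2}\int_{Q_\e}|G_\e(y)|^p\,dy.
\]
Applying the $p$-subadditivity $|G_\e|^p\le\sum_m|\widehat\varphi(\cdot+2\pi m/\e)|^p$ and unfolding give the upper bound $\le(2\pi)^{dp/2}\|\widehat\varphi\|_{L_p(\R^d)}^p$. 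For the matching lower bound I would decompose $G_\e=\widehat\varphi+R_\e$ with $R_\e(y)=\sum_{m\neq 0}\widehat\varphi(y+2\pi m/\e)$ and use the reverse quasi-triangle inequality $|a+b|^p\ge|a|^p-|b|^p$, valid for $p\le 1$, to obtain $\int_{Q_\e}|G_\e|^p\ge\int_{Q_\e}|\widehat\varphi|^p-\int_{Q_\e}|R_\e|^p$. A further application of $p$-subadditivity to $R_\e$ and unfolding bounds the last integral by $\int_{\R^d\setminus Q_\e}|\widehat\varphi|^p$, which vanishes as $\max_j|\e_j|\to 0$, while $\int_{Q_\e}|\widehat\varphi|^p\uparrow\|\widehat\varphi\|_{L_p(\R^d)}^p$ by monotone convergence. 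Thus the supremum is attained in the limit and equals $(2\pi)^{d/2}\|\widehat\varphi\|_{L_p(\R^d)}$.

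The delicate point is the rigorous derivation of the Poisson identity $\Phi_\e=\Psi_\e$ without any decay assumption on $\widehat\varphi$ beyond $\widehat\varphi\in L_p(\R^d)$: a direct pointwise application of the classical Poisson summation formula to $t\mapsto\varphi(\e_1 t_1,\dots,\e_d t_d)e^{i(t,x)}$ would require the absolute summability $\sum_m|\widehat\varphi(2\pi m/\e)|<\infty$, which is not granted by our hypotheses. The Fourier-coefficient verification sketched above sidesteps this obstacle, relying only on $\widehat\varphi\in L_1(\R^d)$.
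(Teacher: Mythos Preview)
Your argument is correct. The upper bound is essentially identical to the paper's: both rewrite $\Phi_\e$ via Poisson summation as a periodization of a rescaled $\widehat\varphi$, then apply $p$-subadditivity and unfold. The difference is in the lower bound. The paper does \emph{not} reuse the periodization there; instead it performs the substitution $x_j\to\e_j x_j$ directly in the defining sum for $\Phi_\e$, observes that $\prod_j\e_j\sum_k\varphi(\e_1k_1,\dots,\e_dk_d)e^{-i\sum_j\e_jk_jx_j}$ is a Riemann sum for $\int_{\R^d}\varphi(y)e^{-i(x,y)}dy=(2\pi)^{d/2}\widehat\varphi(x)$, restricts the integration to a fixed cube $N\T^d$, and lets $|\e|\to0$ first and $N\to\infty$ afterwards. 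Your route---splitting $G_\e=\widehat\varphi+R_\e$ and bounding $\int_{Q_\e}|R_\e|^p\le\int_{\R^d\setminus Q_\e}|\widehat\varphi|^p$ via a second application of $p$-subadditivity---is different and arguably cleaner: it uses a single representation for both inequalities and avoids the Riemann-sum limit argument. Your explicit verification that $\widehat\varphi\in L_\infty\cap L_p\subset L_1$ (hence the Fourier-coefficient identification of $\Phi_\e$ with $\Psi_\e$ is legitimate) is also a point the paper passes over by simply citing the Poisson summation formula.
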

Note that this theorem was proved
 in the case $\e_1=\dots=\e_d$ in \cite{Bel} for $p=1$ and in \cite[p. 106]{TB} for $0<p\le 1$. In applications below, the anisotropic case $(\e_j\neq \e_i)$ plays an essential role.

\begin{proof}
We follow the proof of  Theorem~4.4.1 from~\cite{TB}.
First, let us verify the estimate from above.
By the Poisson summation formula (see~\cite[Ch. VII]{SW}), we get 
\begin{equation*}
  \begin{split}
     &\sum_{k\in \Z^d}\widehat{\vp}\(\frac{x_1+2\pi k_1}{\e_1},\dots,\frac{x_d+2\pi k_d}{\e_d}\)\\
     &=(2\pi)^{-d/2}\prod_{j=1}^d|\e_j| \sum_{k\in \Z^d} \vp\(-\e_1 k_1,\dots,-\e_d k_d\) e^{i(k,x)}\\
     &=(2\pi)^{-d/2}\prod_{j=1}^d|\e_j| \sum_{k\in \Z^d} \vp\(\e_1 k_1,\dots,\e_d k_d\) e^{-i(k,x)}.
   \end{split}
\end{equation*}
Further, simple calculations imply the following relations:
\begin{equation*}
  \begin{split}
  &\(\prod_{j=1}^d |\e_j|\)^{p-1}\Vert
\Phi_\e\Vert_{L_p(\T^d)}^p\\
&=(2\pi)^{pd/2} \prod_{j=1}^d |\e_j|^{-1} \int_{\T^d} \bigg| \sum_{k\in \Z^d} \widehat{\vp}\(\frac{x_1+2\pi k_1}{\e_1},\dots,\frac{x_d+2\pi k_d}{\e_d}\)\bigg|^p dx\\
&\le (2\pi)^{pd/2} \prod_{j=1}^d |\e_j|^{-1} \int_{\T^d}\sum_{k\in \Z^d}  \bigg|\widehat{\vp}\(\frac{x_1+2\pi k_1}{\e_1},\dots,\frac{x_d+2\pi k_d}{\e_d}\)\bigg|^p dx\\
&=(2\pi)^{pd/2} \prod_{j=1}^d |\e_j|^{-1} \sum_{k\in \Z^d} \int_{\T^d+2\pi k}\bigg| \widehat{\vp}\(\frac{x_1}{\e_1},\dots,\frac{x_d}{\e_d}\)\bigg|^p dx\\&=(2\pi)^{pd/2} \Vert \widehat{\vp}\Vert^p_{L_p(\R^d)}.
  \end{split}
\end{equation*}

To obtain the estimate from below, we  use the substitution $x_j\to \e_j x_j$, $j=1,\dots,d$, which gives
\begin{equation*}
\begin{split}
  &\(\prod_{j=1}^d |\e_j|\)^{p-1}\Vert
\Phi_\e\Vert_{L_p(\T^d)}^p\\
&=\(\prod_{j=1}^d |\e_j|\)^p\int_{0}^{\frac{2\pi}{\e_1}}\dots \int_{0}^{\frac{2\pi}{\e_d}} \bigg|\sum_{k\in \Z^d}\vp\(\e_1 k_1,\dots,\e_d k_d\) e^{-i(\sum_{j=1}^d\e_j k_j x_j)}\bigg|^p dx.
  \end{split}
\end{equation*}
 Assuming that the limit inferior  of this value as $|\e|=(\e_1^2+\cdots+\e_d^2)^{1/2}\to 0$ is finite and equals $M$, we will show
that
$$(2\pi)^{pd/2} \Vert\widehat{\varphi}\Vert_{L_p(\R^d)}^p\le M.$$ Given arbitrary $N>0$ and $\d>0$, we obtain for sufficiently small $|\e|$
\begin{eqnarray}\label{riemann}
     &&\int_{N \T^d} \bigg|\sum_{k\in \Z^d}\vp\(\e_1 k_1,\dots,\e_d k_d\) e^{-i(\sum_{j=1}^d\e_j k_j x_j)}\prod_{j=1}^d \e_j\bigg|^p dx\\
     \nonumber
     &\le& \(\prod_{j=1}^d |\e_j|\)^p\int_{0}^{\frac{2\pi}{\e_1}}\dots \int_{0}^{\frac{2\pi}{\e_d}} \bigg|\sum_{k\in \Z^d}\vp\(\e_1 k_1,\dots,\e_d k_d\) e^{-i(\sum_{j=1}^d\e_j k_j x_j)}\bigg|^p dx\le M+\d.
\end{eqnarray}
Noting that we deal with the  Riemann integral sum  in (\ref{riemann}), we pass to the limit as $|\e|\to 0$ to obtain
\begin{equation*}
  \begin{split}
     (2\pi)^{pd/2}\int_{N \T^d}|\widehat{\vp}(x)|^pdx=\int_{N\T^d}\bigg|\int_{\R^d}\vp(y)e^{-i(x,y)}dy\bigg|^p dx\le M+\d.
   \end{split}
\end{equation*}
Letting $N\to\infty$ and, consequently, $\d\to 0$, we get the desired estimate from below.
\end{proof}

\begin{corollary}\label{lemBLX}
Let $0<p\le 1$ and $\vp\in C^\infty(\R^d)$ have a compact support. Then
$$
\sup_{\e_j\neq 0,\,j=1,\dots,d} \(\prod_{j=1}^d |\e_j|\)^{1-\frac1p}\bigg\Vert
\sum_{k\in \Z^d}\varphi(\e_1 k_1,\dots, \e_d k_d)  e^{i(k,x)}\bigg\Vert_{L_p(\T^d)}<\infty.
$$
\end{corollary}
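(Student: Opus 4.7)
The plan is to deduce Corollary~\ref{lemBLX} as an immediate consequence of Theorem~\ref{lemBL}. The only hypothesis of Theorem~\ref{lemBL} that needs to be verified for a test function $\vp\in C^\infty(\R^d)$ with compact support is that $\widehat{\vp}\in L_p(\R^d)$.

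First I would recall that if $\vp\in C^\infty_c(\R^d)$, then $\vp\in\mathscr{S}(\R^d)$ and therefore $\widehat{\vp}\in\mathscr{S}(\R^d)$ as well. In particular, for any integer $N>d/p$ there is a constant $C_N$ such that
$$
|\widehat{\vp}(\xi)|\le C_N(1+|\xi|)^{-N},\qquad \xi\in\R^d,
$$
which gives $\widehat{\vp}\in L_p(\R^d)$ since the integral $\int_{\R^d}(1+|\xi|)^{-Np}d\xi$ converges precisely when $Np>d$. Also $\vp$ is clearly continuous with compact support, so both hypotheses of Theorem~\ref{lemBL} are satisfied.

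Applying Theorem~\ref{lemBL} directly, we obtain
$$
\sup_{\e_j\neq 0,\,j=1,\dots,d}\(\prod_{j=1}^d|\e_j|\)^{1-\frac1p}\Vert\Phi_\e\Vert_{L_p(\T^d)}
=(2\pi)^{d/2}\Vert\widehat{\vp}\Vert_{L_p(\R^d)}<\infty,
$$
which is exactly the claim. There is no real obstacle here: the corollary is a specialization of the theorem to the smooth compactly supported case, where the required integrability of $\widehat{\vp}$ is automatic from the rapid decay of the Fourier transform of a Schwartz function.
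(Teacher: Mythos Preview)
Your proof is correct and follows exactly the intended route: the paper states the corollary immediately after Theorem~\ref{lemBL} without proof, treating it as a direct specialization, and your verification that $\widehat{\vp}\in L_p(\R^d)$ via Schwartz decay is precisely the missing one-line justification.
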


\subsection {
Hardy--Littlewood type theorems for Fourier series with monotone coefficients}
It is well known that trigonometric series with monotone  coefficients have several important properties.
In particular,
Hardy and Littlewood  \cite{Z} studied the series
$$
f(x)=\sum_{n\in \Z} a_n \cos nx, \qquad a_{n}\ge a_{n+1},
$$
and proved that
 a necessary and sufficient condition
 for $f\in L_p(\T)$, $1<p<\infty$, is 
 $\,\sum_{n\in \Z} a_n^p n^{p-2}<\infty.$ Moreover,

\begin{eqnarray}\label{monot}
\|f\|_p\asymp \(\sum_{n\in\Z} a_n^p n^{p-2}\)^{1/p}.
\end{eqnarray}
It turns out that in applications one needs a more general condition on Fourier coefficients than just monotonicity.
We will give a higher-dimensional generalization of the  Hardy-Littlewood theorem for trigonometric series with general monotone coefficients.

 A non-negative
sequence $a=\{a_{{m}}\}, { {m}}=(m_1,\dots,m_d)\in \mathbb{N}^d,$ $d\ge 1$, satisfies {\it
the $GM^d(\beta)$-condition},  written $a\in GM^d(\beta)$, $\beta=\{\beta_k\}$, ${{k}}=(k_1,\dots,k_d)\in \mathbb{N}^d$
(\cite{dy}), if
$$
a_{{m}}\longrightarrow  0 \,\,\qquad \mbox{as}\,\,\qquad |{{m}}|_1\to
\infty
$$
and
$$
\sum_{m_1=k_1}^{\infty}\cdots \sum_{m_d=k_d}^{\infty}
|\triangle^{(d)}a_{{m}}|\,\, \le \,C\beta_{{k}},\qquad k=(k_1, \dots, k_d)\in \N^d,
$$
 where
$$
\triangle^{(d)}\equiv\prod\limits_{j=1}^d \triangle^{j} \quad \mbox{and}\quad
\triangle^{j}a_{{m}}=a_{{m}}-a_{m_1,\dots,m_{j-1},m_j+1,m_{j+1},\dots, m_d}.
$$

\begin{lemma}\label{monot+}(See~\cite{dy, dya}.)
Let  $1<p<\infty$, $d\ge 1$, and let the Fourier series of $f$ be  of the  following type
 \begin{eqnarray}\label{multipleser}
f(x)\sim\sum_{m_1=1}^{\infty}\cdots \sum_{m_d=1}^{\infty}
 a_{{m}} \prod\limits_{j\in B} \cos m_j x_j \prod\limits_{j\in N\backslash B} \sin m_j x_j,
\end{eqnarray}
where $N=\{1,2,\cdots,d\}$ and $B \subseteq N$.
\\
{\rm (i)} Let
$a\in GM^d(\beta)$. Then
$$
\|f\|_{L_p(\T^d)}\lesssim \(
\sum_{m_1=1}^{\infty}\cdots \sum_{m_d=1}^{\infty} \beta_{{{m}}}^p \,\bigg(\prod_{j=1}^d m_j\bigg)^{p-2}
\)^{1/p}.
$$
{\rm (ii)} Let
$a\in GM^d(\beta)$ with
$$
\beta_{{k}} =
\sum_{m_1=k_1/c}^{\infty}\cdots \sum_{m_d=k_d/c}^{\infty} \frac{a_{m_1,\dots, m_d}}{m_1\cdots m_d},\qquad c>1, \qquad a_{m_1,\dots, m_d}\ge 0.$$
Then
 \begin{eqnarray}\label{multipleser+}
\|f\|_{L_p(\T^d)}\asymp
 \(
\sum_{m_1=1}^{\infty}\cdots \sum_{m_d=1}^{\infty} a_{{{m}}}^p \,\bigg(\prod_{j=1}^d m_j\bigg)^{p-2}
\)^{1/p}.
\end{eqnarray}
In particular  {\textnormal{(see also \cite{mo})}}, equivalence (\ref{multipleser+}) holds for the series (\ref{multipleser}) satisfying 
the condition $\triangle^{(d)}a_{{m}}\ge 0$ for any $m\in \N^d$.
\end{lemma}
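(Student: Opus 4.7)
The plan is to extend the classical one-dimensional Hardy--Littlewood theorem (\ref{monot}) to the multidimensional $GM^d$ setting by combining three standard ingredients: coordinatewise Riesz projections, product Littlewood--Paley decomposition, and careful control of dyadic blocks via the $GM^d$ hypothesis.

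First, since $1<p<\infty$, the one-dimensional Riesz (conjugate) projection is bounded on $L_p(\T)$, and its tensor product is bounded on $L_p(\T^d)$. Applying it in each coordinate shows that the $L_p$ norm of the series (\ref{multipleser}) is, up to constants depending only on $p$ and $d$, independent of the choice of the subset $B \subseteq N$. Thus I may reduce to the case $B = N$ of a pure cosine series.

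For part (i), I would use the product dyadic decomposition $\rho_k = \prod_{j=1}^d [2^{k_j-1}, 2^{k_j})$, $k \in \Z_+^d$, and write $f = \sum_k \Delta_k f$ with $\Delta_k f$ the block corresponding to $\rho_k$. By the product Littlewood--Paley theorem on $L_p(\T^d)$,
$$
\|f\|_{L_p(\T^d)} \asymp \Big\| \Big( \sum_k |\Delta_k f|^2 \Big)^{1/2}\Big\|_{L_p(\T^d)}.
$$
The key use of the $GM^d$-condition is to show that, within each block, $a_m$ does not oscillate too much, so that $\Delta_k f$ can be compared (in $L_p$) with $\beta_{2^{k-1}}$ times a product of truncated Dirichlet kernels $\prod_j D_{2^{k_j-1}}(x_j)$. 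Using the bound $\|D_n\|_{L_p(\T)} \asymp n^{1-1/p}$ for $1<p<\infty$ and Abel summation in each variable against $\Delta^{(d)} a_m$ controlled by $\beta$, I obtain $\|\Delta_k f\|_p \lesssim \beta_{2^{k-1}} \prod_j 2^{k_j(1-1/p)}$, which after reassembling via Littlewood--Paley and comparing the dyadic and continuous sums yields the upper bound in (i).

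For part (ii), the upper bound follows from (i) since, with the specified choice of $\beta_k$ and $a_m \ge 0$, a discrete Hardy inequality applied to $\beta_k = \sum_{m_j \ge k_j/c} a_m/(m_1\cdots m_d)$ gives $\sum_k \beta_k^p \prod_j k_j^{p-2} \lesssim \sum_m a_m^p \prod_j m_j^{p-2}$. The matching lower bound uses positivity: on the corner cube $Q_m = \prod_j [0, \pi/(4m_j)]$ all cosines in the truncated sum $\sum_{m' \le m} a_{m'} \prod_j \cos m'_j x_j$ are bounded below by a positive constant, so $f(x) \gtrsim \sum_{m' \le m} a_{m'}$ on $Q_m$; integrating and using $|Q_m| \asymp \prod_j m_j^{-1}$ produces the required lower estimate $\|f\|_p^p \gtrsim \sum_m a_m^p \prod_j m_j^{p-2}$. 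The main obstacle will be the dyadic block estimate in Step 2: the classical one-dimensional Hardy--Littlewood proof for monotone coefficients relies on pointwise monotonicity which fails here, so the argument must instead exploit the summability of $|\Delta^{(d)} a_m|$ provided by the $GM^d(\beta)$ hypothesis, combined with Littlewood--Paley square function estimates, to recover the correct $\beta_{2^{k-1}}$ behavior on each block.
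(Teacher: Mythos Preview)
The paper does not give a proof of this lemma: it is stated as a known result with the citation ``(See~\cite{dy, dya})'' and the final particular case additionally refers to~\cite{mo}. There is therefore no in-paper argument to compare your proposal against; the authors simply import the statement from Dyachenko--Tikhonov's work on general monotone sequences.

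As for your sketch itself, the overall architecture (Riesz projection to normalize $B$, product Littlewood--Paley, Abel summation against $\triangle^{(d)}a$ on dyadic blocks, discrete Hardy for the upper bound in (ii)) is indeed the standard route in the cited papers. One point to tighten: your lower-bound argument in (ii) controls only the truncated partial sum $\sum_{m'\le m} a_{m'}\prod_j\cos m_j' x_j$ on the corner cube $Q_m$, but $f$ is the full series, and you have not explained why the tail $\sum_{m'\not\le m}$ does not spoil the pointwise lower bound there. In the general-monotone setting this tail is handled either by a separate $L_p$ estimate using the $GM^d(\beta)$ control of $\triangle^{(d)}a$, or (in the classical monotone case $\triangle^{(d)}a\ge 0$) by exploiting that the dyadic blocks of the cosine series are essentially nonnegative near the origin. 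Also, passing from ``$f\gtrsim \sum_{m'\le m} a_{m'}$ on $Q_m$ with $|Q_m|\asymp\prod m_j^{-1}$'' to the claimed $\sum_m a_m^p\prod m_j^{p-2}$ requires a dyadic layering (the cubes $Q_m$ are nested, not disjoint), so one should restrict to dyadic $m$ and use monotonicity of the accumulated sums. These are fixable, but as written the lower bound is only heuristic.
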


\subsection{Weighted Hardy's inequality for averages}

We will frequently use the following weighted Hardy inequality
(see, e.g., \cite{persson}).

\begin{lemma}\label{har} Let $1<\l<\infty.$ Suppose
$u(x), v(x)\ge 0$ on $(0,\d)$. Then

\begin{eqnarray*}
\int_0^\d u(x) \left[\int_0^x\psi(t)
\,dt\right]^\l\,dx
\lesssim
\int_0^\d
\psi^\l(x) v(x) dx
\end{eqnarray*}
holds for each $\psi(x)\ge0$ if and only if
\begin{eqnarray}\label{har0co}
\sup\limits_{0<s<\d}\left(\int_s^\d
u(x) dx\right)^{1/\l}\left(\int_0^s v(x)^{1-\l'}dx\right)^{1/\l'}<\infty.
\end{eqnarray}
\end{lemma}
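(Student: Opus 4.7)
The plan is to recognize this as the classical Muckenhoupt--Bradley weighted Hardy inequality and to prove the two implications separately.

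For necessity, I would test the putative inequality against the specific function $\psi_s(t)=v(t)^{1-\lambda'}\chi_{(0,s)}(t)$, with $s\in (0,\delta)$ fixed. Setting $V(s)=\int_0^s v(t)^{1-\lambda'}\,dt$ (which we may assume finite; otherwise truncate $v$ from below by $\varepsilon$ and let $\varepsilon\to 0$ at the end), the inner integral $\int_0^x\psi_s(t)\,dt$ equals $V(s)$ for every $x\ge s$, so the left-hand side is at least $V(s)^\lambda \int_s^\delta u(x)\,dx$. Since $(1-\lambda')\lambda+1=1-\lambda'$, the right-hand side reduces to $\int_0^s v(t)^{1-\lambda'}\,dt=V(s)$. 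Combining, $V(s)^{\lambda-1}\int_s^\delta u(x)\,dx \lesssim 1$, and taking $\lambda$-th roots yields exactly condition (\ref{har0co}).

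For sufficiency, the key step is a single application of H\"older's inequality inside the inner integral, splitting $\psi(t)=\bigl(\psi(t)v(t)^{1/\lambda}\bigr)\cdot v(t)^{-1/\lambda}$. This gives
\begin{equation*}
\int_0^x\psi(t)\,dt \le \left(\int_0^x \psi^\lambda(t)v(t)\,dt\right)^{1/\lambda}\left(\int_0^x v(t)^{1-\lambda'}\,dt\right)^{1/\lambda'}.
\end{equation*}
Raising to the $\lambda$-th power, multiplying by $u(x)$, integrating over $(0,\delta)$, and then applying Fubini to the double integral that arises, one arrives at
\begin{equation*}
\int_0^\delta u(x)\Bigl[\!\int_0^x\!\psi\Bigr]^\lambda dx \le \int_0^\delta\psi^\lambda(t)v(t)\left(\int_t^\delta u(x)V(x)^{\lambda/\lambda'}\,dx\right)dt,
\end{equation*}
where $V(x)=\int_0^x v^{1-\lambda'}$. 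The task then reduces to bounding the inner integral in $x$ pointwise in $t$ by a constant multiple of $V(t)^{\lambda/\lambda'}\cdot\bigl(\text{something}\bigr)$ that collapses, using only the hypothesis~(\ref{har0co}).

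The main obstacle is this last step: one needs to absorb the factor $V(x)^{\lambda/\lambda'}$ cleanly. The standard device is to decompose $(t,\delta)$ along the dyadic level sets $\{x:2^k\le V(x)<2^{k+1}\}$ and use the Muckenhoupt condition $\bigl(\int_s^\delta u\bigr)^{1/\lambda}V(s)^{1/\lambda'}\le A$ on each level, which converts the telescoping sum over $k$ into a geometric series controlled by $A$. This yields $\int_t^\delta u(x)V(x)^{\lambda/\lambda'}\,dx \lesssim A^\lambda$ uniformly in $t$ (more precisely, bounded by a constant independent of $t$ after the geometric summation), completing the proof. Alternatively, one can argue by duality: the inequality is equivalent to a Hardy-type inequality for the adjoint operator $g\mapsto\int_x^\delta g$ acting on $L^{\lambda'}(v^{1-\lambda'})$, and symmetry of condition~(\ref{har0co}) under $\lambda\leftrightarrow\lambda'$, $u\leftrightarrow v^{1-\lambda'}$ allows one to reduce the two directions to a single computation.
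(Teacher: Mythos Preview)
The paper does not prove this lemma; it simply records it as a known weighted Hardy inequality and cites Kufner--Persson, \emph{Weighted Inequalities of Hardy Type}. Your sketch is the standard Muckenhoupt argument and is correct: the test function $\psi_s=v^{1-\lambda'}\chi_{(0,s)}$ for necessity, and H\"older followed by the dyadic level-set bound $\int_t^\delta u(x)V(x)^{\lambda-1}\,dx\lesssim A^\lambda$ for sufficiency, are exactly what one finds in the cited source.
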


\bigskip

\newpage

\section{Polynomial inequalities of Nikol'skii--Stechkin--Boas--types
}\label{sec3}

In its simplest form, the Nikol'skii--Stechkin--type inequality for a polynomial $T_n(x)=\sum_{|k|\le n} c_k e^{i kx}$,
$x\in\T$,
states that,  for $r\in\N$ and $1\le p\le \infty$,
$$
\| T_n^{(r)}  \|_{L_p(\T)}\le
\left(\frac{n}{2\sin\frac{n\delta}{2}} \right)^r \| \triangle_\delta^r  T_n\|_{L_p(\T)},\quad 0<\delta\le\frac\pi n,
$$
extending the classical Bernstein inequality
(\ref{cl-ber}).
Boas derived that
$$
\left(\frac{n}{2\sin\frac{nh}{2}} \right)^r\|\triangle_h^r  T_n  \|_{L_p(\T)}\le
\left(\frac{n}{2\sin\frac{n\delta}{2}} \right)^r \| \triangle_\delta^r  T_n \|_{L_p(\T)},\quad 0<h\le\delta\le\frac\pi n.
$$

In this section, we prove analogues of these results in the multidimensional case for all $0<p\le\infty$.
We will use the following notion of the directional derivative. Let
$$
f(x)\sim\sum_{k\in \Z^d} \widehat{f}_k e^{i(k,x)},
$$
we denote the directional derivative of $f$ of order $\a>0$ along a vector $\xi\in \R^d$ by
$$
\(\frac{\partial}{\partial\xi}\)^{\a} f(x)=\sum_{k\in \Z^d} {\(i(k,\xi)\)^\a} \widehat{f}_k e^{i(k,x)}
$$
(see, e.g.,~\cite{Wil}).

\begin{theorem}\label{lemNSB}
Let $0<p\le\infty$, $\a>0$, $n\in\mathbb{N}$, and $\xi\in \R^d$,
$0<|\xi|<{\pi}/{n}$. Then, for any
$$
T_n(x)=\sum_{|k|_\infty\le n}c_ke^{i(k,x)}\in \mathcal{T}_n,
$$
we have
\begin{equation}\label{ineqNS3}
\bigg\Vert \(\frac{\partial}{\partial\xi}\)^{\a} T_n\bigg\Vert_{p}
\asymp\Vert\Delta_\xi^\a T_n\Vert_{p},
\end{equation}
where the constants in this equivalence depend only on $p$, $\a$, and $d$.
\end{theorem}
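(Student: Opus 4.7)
My plan is to establish both directions of \eqref{ineqNS3} via Fourier multipliers. A direct computation using $1-e^{-it}=2ie^{-it/2}\sin(t/2)$ gives, for each pure frequency,
$$
\left(\tfrac{\partial}{\partial\xi}\right)^{\!\alpha}\!e^{i(k,x)} = (i(k,\xi))^\alpha e^{i(k,x)},\qquad
\Delta_\xi^\alpha e^{i(k,x)} = (2i)^\alpha e^{i\alpha(k,\xi)/2}\sin^\alpha\!\bigl((k,\xi)/2\bigr)\,e^{i(k,x)}.
$$
Hence on the spectrum of $T_n$ the two operators differ frequency by frequency by multiplication with
$$
H(t):=\left(\frac{t/2}{e^{it/2}\sin(t/2)}\right)^{\!\alpha},\qquad H(0):=1,
$$
a smooth, non-vanishing univariate function on $(-2\pi,2\pi)$, whose reciprocal $1/H$ has the same regularity.

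The next step is to promote the symbol $k\mapsto H((k,\xi))$, defined on the spectrum $\{|k|_\infty\le n\}$, to a bona fide Fourier multiplier on $L_p(\T^d)$. To this end I would set
$$
M(x):=\eta(x/n)\,H\bigl((x,\xi)\bigr),
$$
where $\eta\in C_c^\infty(\R^d)$ is a cutoff with $\eta\equiv 1$ on $[-1,1]^d$ and support arranged so that $H$ is smooth on $\supp\eta(\cdot/n)$ under the hypothesis $|\xi|<\pi/n$. The rescaling $x\mapsto nx$, $\xi\mapsto n\xi$ reduces all subsequent estimates to a compact family of univariate functions, so that the multiplier norms are controlled independently of $n$ and $\xi$. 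For $1<p<\infty$ I would verify the Mikhlin--H\"{o}rmander condition $|x|^{|\nu|_1}|D^\nu M(x)|\lesssim 1$ and invoke Lemma~\ref{lemMult}(i); for $p\in\{1,\infty\}$ I would check $M\in\dot B_{2,1}^{d/2}(\R^d)$ and apply Peetre's theorem (Lemma~\ref{lemMult}(ii)); for $0<p<1$ I would verify $M\in\dot B_{2,p}^{d(1/p-1/2)}(\R^d)$ and apply Lemma~\ref{lemFLpBes}, with the scaling controlled by Lemma~\ref{besov}. The reverse inequality is then obtained by the same scheme with the multiplier $1/H$ in place of $H$.

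The principal technical obstacle is ensuring the multiplier norms are uniform in both $n$ and $\xi$, which requires performing every estimate in the normalized variable $n\xi$ so that the remaining dependence is only on a point in the compact set $|n\xi|<\pi$, and the smoothness bounds on $H$ are uniformly controlled. A secondary subtlety is the construction of the cutoff $\eta$ in higher dimensions: the constraint $|\xi|<\pi/n$ must be used delicately to guarantee that $H((x,\xi))$ stays in its smooth regime, away from the periodic zeros of $\sin(\cdot/2)$, on the full support of $\eta(\cdot/n)$. Verifying the Besov-space criteria of Lemma~\ref{lemFLpBes} in the quasi-Banach range $p<1$ (where the required smoothness $d(1/p-1/2)$ grows with $1/p$) is the most laborious computation; here I would decompose $M$ dyadically in the spirit of the Besov definition given in Section~\ref{sec2} and estimate each block by integration by parts, using that both $\eta(\cdot/n)$ and $H((\cdot,\xi))$ admit sharp derivative estimates after the rescaling.
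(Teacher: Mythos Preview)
Your overall plan—identify the frequency-by-frequency multiplier $H$ and its reciprocal, extend smoothly off the spectrum, and bound the resulting multiplier uniformly in $n,\xi$—is exactly the paper's strategy. For $1\le p\le\infty$ your scheme works (though the paper does something more elementary: it just writes $(\partial/\partial\xi)^\alpha T_n=K_\xi*\Delta_\xi^\alpha T_n$ and applies Young's inequality with an $L_1$ bound on the kernel, rather than Mikhlin--H\"ormander or Peetre).

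The gap is in the range $0<p<1$. Lemma~\ref{lemFLpBes} is \emph{not} a multiplier theorem: it only bounds $\|K_\xi\|_{L_p(\T^d)}$ in terms of the Besov norm of the symbol. For $p<1$ there is no Young inequality, so a bound on $\|K_\xi\|_p$ does not by itself yield $\|K_\xi*g\|_p\lesssim\|g\|_p$. The paper closes this gap by exploiting that both factors in the convolution integrand are trigonometric polynomials of controlled degree: one applies the anisotropic Nikol'skii inequality $\|U\|_1\lesssim\bigl(\prod_j n_j\bigr)^{1/p-1}\|U\|_p$ to $t\mapsto K_\xi(t)\,\Delta_\xi^\alpha T_n(x-t)$, then Fubini, to obtain
\[
\Bigl\|\bigl(\tfrac{\partial}{\partial\xi}\bigr)^\alpha T_n\Bigr\|_p\lesssim\Bigl(\textstyle\prod_j|\xi_j|\Bigr)^{1-1/p}\|K_\xi\|_p\,\|\Delta_\xi^\alpha T_n\|_p.
\]
The prefactor $\bigl(\prod_j|\xi_j|\bigr)^{1-1/p}$ is then cancelled exactly by the kernel estimate, which the paper derives from the anisotropic Theorem~\ref{lemBL} (the $L_p$ analogue of the Poisson summation bound), using an anisotropic cutoff $v\bigl(\sqrt{\sum_j(k_j\xi_j)^2}\bigr)$ rather than your isotropic $\eta(x/n)$. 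Your proposal never mentions Nikol'skii and treats Lemma~\ref{lemFLpBes} as if it delivered a multiplier bound; without this polynomial-specific step the argument for $p<1$ is incomplete.
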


As we have already mentioned, inequalities of this type have a long and rich history starting from the  1940s.
When $d=1$ and $1\le p \le\infty$, this is the result of Nikol'skii \cite{nikoL}, Stechkin \cite{stechkiN}, and Boas~\cite{Boas} (for integer $\a$, see also \cite[p.~214 and p.~251]{timan}), and Taberski~\cite{tabeR} and Trigub~\cite{Trigub05} (for any positive $\a$).
When $d=1$ and $0<p<1$, (\ref{ineqNS3}) follows from \cite{DHI} for $\a\in\N$
and from \cite{K07} for $\a>0$.
When $d\in\N$ and $1\le p \le\infty$, the part $"\lesssim"$ was proved in~\cite[Theorem~3]{Wil} for any $\a>0$.


\begin{proof}
 First, let us prove the estimate from above in (\ref{ineqNS3}). Let the function $v\in C^\infty(\mathbb{R})$, $v(s)=1$ for $|s|\le\pi$ and
$v(s)=0$ for $|s|\ge{3\pi}/{2}$.
We
define
\begin{equation*}
K_\xi(x)=\sum_{k\in \Z^d}\frac{\(i(k,\xi)\)^\a}{\left(2i\sin\frac{(k,\xi)}{2}\right)^\a}
v\(\sqrt{
(k_1\xi_1)^2+\cdots+(k_d\xi_d)^2}
\)e^{i(k,x)}
\end{equation*}
(in what follows we assume that $\frac 00=1$). Note that $K_\xi$ is a trigonometric polynomial of degree at most  $C/|\xi_j|$ in variable $x_j$. We can assume that $\xi_j\neq 0$ for any $j=1,\dots,d$, otherwise we consider the same problem in the space $\R^{d-k}$ with some $k>0$.

Taking into account that
\begin{equation*}
\Delta_\xi^\a
T_n(x)=\sum_{|k|_\infty\le n}\left(2i\sin\frac{(k,\xi)}{2}\right)^\a
c_ke^{i(k,x+\frac{\xi\a}{2})}
\end{equation*}
and $\Big((k_1\xi_1)^2+\cdots+(k_d\xi_d)^2\Big)^{\frac12}\le |k||\xi|\le \pi$,
one has that
\begin{equation}\label{ineqNS4}
\(\frac{\partial}{\partial\xi}\)^{\a} T_n(x)=(K_\xi*\Delta_\xi^\a
T_n)\left(x-\frac{\xi\a}{2}\right),
\end{equation}
where $*$ stands for the convolution of periodic functions.

Let first $0<p<1.$
We will show that
\begin{equation}\label{ineqNS5}
\bigg\Vert \(\frac{\partial}{\partial\xi}\)^{\a} T_n\bigg\Vert_{p}\lesssim \(\prod_{j=1}^d |\xi_j|\)^{1-\frac 1p}\Vert
K_\xi\Vert_p\Vert \Delta_\xi^\a T_n\Vert_p.
\end{equation}
For this we need the following Nikol'skii's inequality (\cite{niko51}, see also  \cite{remez})
$$
\Vert U_{n_1,\dots,n_d}\Vert_1\lesssim \(\prod_{j=1}^d n_j\)^{\frac1p-1}\Vert U_{n_1,\dots,n_d}\Vert_p,
$$
where
$$
U_{n_1,\dots,n_d}({{{x}}})=
\sum_{|k_1|\le {n_1}}
\cdots
\sum_{|k_d|\le {n_d}}
a_{{k}} e^{i({{k}},{{{x}}})},
\quad {{k}}\in \mathbb{Z}^d.
$$
By using  (\ref{ineqNS4}) and the above Nikol'skii inequality
for the polynomial $K_\xi(x)\Delta_\xi^\a
T_n(x-\xi\a/2-t)$ of degree at most $C/|\xi_j|$ in $x_j$,
 we obtain
\begin{equation*}
\begin{split}
\bigg|\(\frac{\partial}{\partial\xi}\)^{\a} T_n(x)\bigg|^p&\le\frac{1}{(2\pi)^{p d}}
\left(\int_{\T^d}|K_\xi(t)\Delta_\xi^\a
T_n(x-\xi\a/2-t)|dt\right)^p\le\\
&\lesssim \(\prod_{j=1}^d |\xi_j|\)^{p-1}\int_{\T^d}\left|K_\xi(t)\Delta_\xi^\a
T_n(x-\xi\a/2-t)\right|^pdt.
\end{split}
\end{equation*}
Integrating the above inequality over $x$, we get
\begin{equation*}
\int_{\T^d}\bigg|\(\frac{\partial}{\partial\xi}\)^{\a} T_n(x)\bigg|^pdx\lesssim \(\prod_{j=1}^d |\xi_j|\)^{p-1}\int_{\T^d}\int_{\T^d}\left|K_\xi(t)\Delta_\xi^\a
T_n(x-\xi\a/2-t)\right|^pdtdx.
\end{equation*}
Thus, applying Fubini's theorem derives (\ref{ineqNS5}).



We now show that $\(\prod_{j=1}^d |\xi_j|\)^{1-1/p}\Vert
K_\xi\Vert_p$ is bounded from above. Set
\begin{equation*}
g_\xi(t)=\left(\frac{(t,\xi)}{2\sin\frac {(t,\xi)}2}\right)^\a 
v\(\sqrt{
(t_1\xi_1)^2+\cdots+(t_d\xi_d)^2}
\).
\end{equation*}
Since $g_{\textbf{1}}\in C^\infty(\mathbb{R})$, $\textbf{1}=(1,\dots,1)$, and
$g_{\textbf{1}}$ has a compact support, we obtain that, for any $r\ge 1$, the following holds
\begin{equation*}
\widehat{g_{\textbf{1}}}(y)=O\left(\frac{1}{|y|^r}\right)\quad\text{as}\quad
|y|\to\infty.
\end{equation*}
The latter together with $|\widehat{g_{\textbf{1}}}(y)|\le C$ for $|y|\le 1$
gives that $\widehat{g_{\textbf{1}}}\in L_p(\R^d)$.

Now, by using Theorem~\ref{lemBL},  we obtain
\begin{eqnarray*}
\(\prod_{j=1}^d |\xi_j|\)^{1-\frac 1p}\Vert
K_\xi\Vert_p&=&\(\prod_{j=1}^d |\xi_j|\)^{1-\frac
1p}\left(\int_{\T^d}\bigg|\sum_{k\in \Z^d}g_\xi(k)e^{i(k,x)}\bigg|^pdx\right)^\frac 1p\\
&=&\(\prod_{j=1}^d |\xi_j|\)^{1-\frac
1p}\left(\int_{\T^d}\bigg|\sum_{k\in \Z^d}g_{\textbf{1}}(\xi_1k_1,\dots,\xi_dk_d)e^{i(k,x)}\bigg|^pdx\right)^\frac 1p
\\&\le&
(2\pi)^{d/2}
\Vert
\widehat{g_{\textbf{1}}}\Vert_{L_p(\mathbb{R}^d)}.
\end{eqnarray*}
Thus, from the last inequality and
(\ref{ineqNS5}) we obtain the part $"\lesssim"$  in
(\ref{ineqNS3}).

To prove the estimate  $"\gtrsim"$, we consider the polynomial
\begin{equation*}
 \widetilde{K}_\xi(x)=\sum_{k\in \Z^d}\frac{\left(2i\sin\frac{(k,\xi)}{2}\right)^\a}{\(i(k,\xi)\)^\a}
v\(\sqrt{
(k_1\xi_1)^2+\cdots+(k_d\xi_d)^2}
\)
e^{i(k,x)},
\end{equation*}
the equality
\begin{equation*}
\Delta_\xi^\a
T_n(x)=\(\widetilde{K}_\xi* \(\frac{\partial}{\partial\xi}\)^{\a} T_n\)\left(x+\frac{\xi\a}{2}\right)
\end{equation*}
and repeat the above proof.

Let us consider the case $1\le p\le\infty$. By~\eqref{ineqNS4} and Young's inequality, we have
\begin{equation*}
\bigg\Vert \(\frac{\partial}{\partial\xi}\)^{\a} T_n\bigg\Vert_p\lesssim \Vert
K_\xi\Vert_1\Vert \Delta_\xi^\a T_n\Vert_p.
\end{equation*}
As above, we derive that $\Vert K_\xi\Vert_1\lesssim 1$, which implies the inequality $"\lesssim"$ in~\eqref{ineqNS3}. The converse inequality can be proved similarly.
\end{proof}

Using now the fact that
$$
\w_\a (T_n,\d)_{p}=
\sup_{\xi\in \R^d,\,|\xi|=1}\big\Vert
\Delta_{\xi\delta}^\a
T_n
\big\Vert_{p},
$$
we obtain the following result.
\begin{corollary}\label{corNSB}
  Let $0<p\le\infty$, $\a>0$, $n\in\mathbb{N}$, and
$0<\d\le \pi/ n$. Then, for any
$T_n\in\mathcal{T}_n$, we have
\begin{equation}\label{ineqNS3cor}
\sup_{\xi\in \R^d,\,|\xi|=1}\bigg\Vert \(\frac{\partial}{\partial\xi}\)^{\a} T_n\bigg\Vert_{p}
\asymp \d^{-\a}\w_\a (T_n,\d)_{p}.
\end{equation}
\end{corollary}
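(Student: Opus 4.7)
The plan is to derive the corollary as a direct consequence of Theorem~\ref{lemNSB} combined with the homogeneity of the directional derivative, which makes the statement essentially an unpacking of the theorem rather than a new result.

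Starting from the identity $\w_\a(T_n,\d)_p=\sup_{|\xi|=1}\Vert\D_{\d\xi}^\a T_n\Vert_p$ stated just above the corollary, it suffices to compare $\Vert\D_{\d\xi}^\a T_n\Vert_p$ with $\d^\a\Vert(\partial/\partial\xi)^\a T_n\Vert_p$ uniformly in $\xi$ on the unit sphere. For fixed $\xi$ with $|\xi|=1$, one applies Theorem~\ref{lemNSB} with the vector $\d\xi$ in place of $\xi$; the hypothesis $|\d\xi|=\d<\pi/n$ is satisfied (the boundary case $\d=\pi/n$ is handled by taking $\d'\nearrow \pi/n$ and invoking continuity of the modulus in $\d$). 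This yields
$$
\Vert\D_{\d\xi}^\a T_n\Vert_p\asymp\bigg\Vert\(\frac{\partial}{\partial(\d\xi)}\)^\a T_n\bigg\Vert_p.
$$

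Next, the Fourier-series definition of the directional derivative gives the homogeneity identity
$$
\(\frac{\partial}{\partial(\d\xi)}\)^\a T_n=\d^\a\(\frac{\partial}{\partial\xi}\)^\a T_n,
$$
valid for $\d>0$ since $(i(k,\d\xi))^\a=\d^\a(i(k,\xi))^\a$ under the principal branch chosen in the introduction. Combining the two displays and taking the supremum over $|\xi|=1$ yields
$$
\w_\a(T_n,\d)_p\asymp\d^\a\sup_{|\xi|=1}\bigg\Vert\(\frac{\partial}{\partial\xi}\)^\a T_n\bigg\Vert_p,
$$
which is equivalent to \eqref{ineqNS3cor} after dividing by $\d^\a$.

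The only point requiring a moment's attention—and the closest thing to an obstacle—is that the implicit constants in Theorem~\ref{lemNSB} must be uniform in $\xi$ so that taking the supremum over the unit sphere is legitimate. Inspection of the proof of Theorem~\ref{lemNSB} shows this: the kernel $K_\xi$ is controlled via Theorem~\ref{lemBL} by $\Vert\widehat{g_{\mathbf{1}}}\Vert_{L_p(\R^d)}$, a quantity independent of $\xi$, so the resulting constants depend only on $p$, $\a$, and $d$. No substantive difficulty remains.
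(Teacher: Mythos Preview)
Your proof is correct and follows essentially the same approach as the paper, which derives the corollary directly from Theorem~\ref{lemNSB} via the identity $\w_\a(T_n,\d)_p=\sup_{|\xi|=1}\Vert\Delta_{\d\xi}^\a T_n\Vert_p$. You have simply made explicit the homogeneity step and the uniformity of constants that the paper leaves implicit.
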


Further, using \eqref{ineqNS3cor} we prove the following Bernstein type inequality for fractional directional derivative of trigonometric polynomials.

\begin{corollary}\label{corNSBBEr}
  Let $0<p\le\infty$, $\a \in \N \cup ((1/p-1)_+,\infty)$, and $n\in\mathbb{N}$. Then, for any
$T_n\in\mathcal{T}_n$, we have
\begin{equation}\label{ineqNS3corBEr}
\sup_{\xi\in \R^d,\,|\xi|=1}\bigg\Vert \(\frac{\partial}{\partial\xi}\)^{\a} T_n\bigg\Vert_{p}
\lesssim n^\a \Vert T_n \Vert_{p}.
\end{equation}
\end{corollary}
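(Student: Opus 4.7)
The plan is to obtain the Bernstein-type estimate \eqref{ineqNS3corBEr} as a direct consequence of Corollary \ref{corNSB} applied at the endpoint $\delta = \pi/n$, combined with the elementary uniform bound $\w_\a(T_n,\delta)_p \lesssim \|T_n\|_p$. Substituting $\delta = \pi/n$ into \eqref{ineqNS3cor} immediately gives
$$
\sup_{\xi\in \R^d,\,|\xi|=1}\bigg\Vert \(\frac{\partial}{\partial\xi}\)^{\a} T_n\bigg\Vert_{p}
\asymp n^{\a}\, \w_\a\!\left(T_n, \tfrac{\pi}{n}\right)_{p},
$$
so the whole matter reduces to showing that the fractional modulus of smoothness of any function is controlled by its $L_p$-quasi-norm, uniformly in $\delta$.

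For this elementary bound, I would use the defining series \eqref{def-mod++} for $\Delta_h^\a$ together with translation invariance of the $L_p$-(quasi)-norm. When $1 \le p \le \infty$, the triangle inequality yields
$$
\|\Delta_h^\a T_n\|_p \le \bigg(\sum_{\nu=0}^\infty \Big|\binom{\a}{\nu}\Big|\bigg)\|T_n\|_p,
$$
and the series on the right terminates when $\a\in\N$ and converges for non-integer $\a>0$ because $|\binom{\a}{\nu}| = O(\nu^{-\a-1})$. When $0 < p < 1$, the $p$-subadditivity $\|f+g\|_p^p \le \|f\|_p^p + \|g\|_p^p$ gives instead
$$
\|\Delta_h^\a T_n\|_p^p \le \bigg(\sum_{\nu=0}^\infty \Big|\binom{\a}{\nu}\Big|^p\bigg)\|T_n\|_p^p,
$$
and this series converges precisely under the hypothesis $\a\in\N\cup((1/p-1),\infty)$, as already noted in the paper right after \eqref{def-mod++}. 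Taking the supremum over $|h|\le \delta$ yields $\w_\a(T_n,\delta)_p \lesssim \|T_n\|_p$ uniformly in $\delta>0$, and plugging this into the equivalence from Corollary \ref{corNSB} finishes the argument.

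The only real obstacle here is verifying that the constraint $\a \in \N \cup ((1/p-1)_+,\infty)$ is exactly the one that guarantees finiteness of the coefficient series governing $\|\Delta_h^\a T_n\|_p$ in the quasi-Banach regime $0<p<1$; otherwise the argument is a one-line consequence of Corollary \ref{corNSB}. Note in particular that no fractional Bernstein inequality (such as Proposition \ref{lemBLf}) needs to be invoked, since the Nikol'skii--Stechkin--Boas equivalence \eqref{ineqNS3cor} has already absorbed the spectral information about $T_n$.
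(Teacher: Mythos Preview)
Your proof is correct and follows essentially the same approach as the paper: apply Corollary~\ref{corNSB} at a scale $\delta \asymp 1/n$ (the paper takes $\delta=1/n$, you take $\delta=\pi/n$) and then bound $\w_\a(T_n,\delta)_p \lesssim \|T_n\|_p$ via the convergent binomial coefficient series. The paper writes the coefficient bound compactly using the exponent $\min(1,p)$ rather than splitting into the cases $p\ge 1$ and $0<p<1$, but the content is identical.
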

Inequality~\eqref{ineqNS3corBEr} is the classical Bernstein inequality for $d=1$ (see, e.g., \cite[Ch. 4]{devore}).
For the fractional $\a$ and $d=1$ see~\cite{Liz} and \cite{BL},
cf.~\cite{RS}. For $1\le p\le\infty$, $\a>0$, and $d\in \N$, inequality~\eqref{ineqNS3corBEr} can be obtained from \cite[Theorem 3]{Wil}.

Note also that~\eqref{ineqNS3corBEr} does not hold for $\a \not\in \N \cup ((1/p-1)_+,\infty)$ (see Proposition~\ref{lemBLf}).

\begin{proof}
To prove~\eqref{ineqNS3corBEr},  one should take $\d=1/n$  in~\eqref{ineqNS3cor} and  use the simple inequality
$$
\left\|
\Delta_{h}^{\a} f \right\|_{p}\le \left(\sum\limits_{\nu=0}^\infty \Big| \binom{\a}{\nu}\Big|^{\min(1,p)}\right)^{1/{\min(1,p)}}\left\|f \right\|_{p}\le C(\a,p)
\left\|f \right\|_{p}
$$
(cf. property (c) in Section \ref{sec4}).
\end{proof}

For moduli of smoothness of integer order, we have the following
Nikol'skii--Stechkin--Boas result, which is new in the case $0<p<1$, $d\ge 2$.

\begin{theorem}\label{eq++}
  Let $0< p\le\infty$, $r\in \N$, and $0<\d\le\pi/ n$. Then, for any $T\in \mathcal{T}_n$, $n\in\mathbb{N}$, one has
\begin{equation}\label{eq+++}
 \Vert T_n\Vert_{\dot W_p^r}\asymp \d^{-r}\w_r (T_n,\d)_{p},
\end{equation}
where the constants in this equivalence are independent of $T_n$ and $\d$.

\end{theorem}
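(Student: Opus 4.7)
The plan is to reduce Theorem \ref{eq++} to the directional version already established in Corollary \ref{corNSB}. Since that corollary gives
\[
\sup_{\xi\in\R^d,\ |\xi|=1}\bigg\Vert\(\frac{\partial}{\partial\xi}\)^{r} T_n\bigg\Vert_p \asymp \d^{-r}\w_r(T_n,\d)_p,
\]
it will suffice to prove the (quasi-)norm equivalence $\Vert T_n\Vert_{\dot W_p^r}\asymp \sup_{|\xi|=1}\Vert(\partial/\partial\xi)^{r} T_n\Vert_p$ on $\mathcal{T}_n$, with constants depending only on $p$, $r$, and $d$. The bridge between the two sides is the multinomial identity
\[
\(\frac{\partial}{\partial\xi}\)^{r} T_n(x) = \sum_{|\nu|_1=r}\binom{r}{\nu}\xi^\nu\, D^\nu T_n(x),
\]
valid for every integer $r$ since the partial derivatives commute.

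One direction is straightforward: for $|\xi|=1$ one has $|\xi^\nu|\le 1$, so applying the $\min(1,p)$-(quasi-)triangle inequality to the multinomial expansion will yield
\[
\bigg\Vert\(\frac{\partial}{\partial\xi}\)^{r}T_n\bigg\Vert_p\lesssim \sum_{|\nu|_1=r}\Vert D^\nu T_n\Vert_p=\Vert T_n\Vert_{\dot W_p^r}
\]
uniformly in $\xi$ on the unit sphere, with the finiteness of the index set $\{\nu\in\Z_+^d:|\nu|_1=r\}$ (of cardinality $M=\binom{r+d-1}{d-1}$) absorbing any $\min(1,p)$-powers into the constant.

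For the reverse direction I would invert the multinomial system. The plan is to select $M$ unit vectors $\xi^{(1)},\dots,\xi^{(M)}\in\R^d$ for which the $M\times M$ matrix $A=\bigl(\binom{r}{\nu}(\xi^{(i)})^{\nu}\bigr)_{i,\nu}$ (indexed by $i$ and by the multi-indices $\nu$ with $|\nu|_1=r$) is invertible. Writing $A^{-1}=(b_{\nu,i})$, the multinomial identity then inverts to
\[
D^\nu T_n(x)=\sum_{i=1}^{M} b_{\nu,i}\,\(\frac{\partial}{\partial\xi^{(i)}}\)^{r}T_n(x),
\]
after which $\Vert T_n\Vert_{\dot W_p^r}\lesssim \sup_{|\xi|=1}\Vert(\partial/\partial\xi)^r T_n\Vert_p$ will follow by one more application of the (quasi-)triangle inequality. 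The only delicate point is producing an admissible configuration $\{\xi^{(i)}\}$, which is a pure linear-algebra genericity argument: a nonzero homogeneous polynomial of degree $r$ on $\R^d$ cannot vanish identically on $S^{d-1}$ (by homogeneity it would then vanish on all of $\R^d\setminus\{0\}$), so evaluation at sufficiently generic points on the sphere produces $M$ linearly independent functionals on the $M$-dimensional space of such polynomials. Since this construction depends only on $r$ and $d$, the resulting constants in the overall equivalence depend only on $p$, $r$, and $d$, as the theorem requires.
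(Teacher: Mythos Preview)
Your proof is correct and takes a genuinely different route from the paper's argument. Both approaches start from Corollary~\ref{corNSB}, so the remaining task is the equivalence $\Vert T_n\Vert_{\dot W_p^r}\asymp \sup_{|\xi|=1}\Vert(\partial/\partial\xi)^{r}T_n\Vert_p$. You establish this directly by a linear-algebra inversion: fix $M=\binom{r+d-1}{d-1}$ generic points on the sphere and invert the multinomial Vandermonde-type matrix. The paper instead bounds each $D^\nu T_n$ by applying the one-dimensional version of Theorem~\ref{lemNSB} iteratively in each coordinate, which converts $D^\nu T_n$ into a mixed difference $\Delta_{\pi/n;x_1}^{k_1}\cdots\Delta_{\pi/n;x_d}^{k_d}T_n$, and then invokes the equivalence $\sum_{|k|_1=r}\omega_{k_1,\dots,k_d}(T_n,\pi/n,\dots,\pi/n)_p\asymp\omega_r(T_n,\pi/n)_p$ from~\cite{TiMod}, noting that the proof there carries over to $0<p<1$.

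Your approach is more self-contained: it avoids mixed moduli entirely and needs no external reference for the case $0<p<1$. It also yields the polynomial case of Corollary~\ref{lemRAZZZ} (equation~\eqref{eqRAZZZ+}) as an immediate byproduct, whereas the paper derives that corollary \emph{from} Theorem~\ref{eq++}. The paper's route, on the other hand, makes explicit the connection between the Sobolev seminorm and the mixed-modulus structure, which may be of independent interest. One minor remark: your genericity argument is correct but terse; the cleanest way to phrase it is the inductive construction (at each step the kernel of the chosen evaluation functionals contains a nonzero polynomial, which therefore does not vanish identically on $S^{d-1}$, so one can pick the next point outside its zero set).
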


\begin{proof}
Let us first show that
\begin{equation}\label{eq+++-}
 \Vert T_n\Vert_{\dot W_p^r}\lesssim n^r \omega_r\(T_n,\frac\pi n\)_p.
\end{equation}
Since
$$
\Vert T_n\Vert_{\dot W_p^r}=\sum_{|k|_1=r}\bigg\Vert \frac{\partial^{r} T_n}{\partial x_1^{k_1}\dots\partial x_d^{k_d}}\bigg\Vert_p,
$$
we have to verify that for any $k\in \Z_+^d$, $|k|_1=r$, one has
$$
\bigg\Vert \frac{\partial^{r} T_n}{\partial x_1^{k_1}\dots\partial x_d^{k_d}}\bigg\Vert_p\lesssim n^r\w_{r}\(T_n,\frac\pi n\)_p.
$$
Indeed, denoting $U_n(x)=\frac{\partial^{r-k_1} }{\partial x_2^{k_2}\dots\partial x_d^{k_d}}T_n(x)$ and applying Theorem~\ref{lemNSB} to $U_n$ in the one-dimensional case, we obtain
\begin{equation*}
  \begin{split}
\bigg\Vert \frac{\partial^{r} T_n}{\partial x_1^{k_1}\dots\partial x_d^{k_d}}\bigg\Vert_{p}=\bigg\Vert \frac{\partial^{k_1}}{\partial x_1^{k_1}}U_n\bigg\Vert_{p}&\lesssim n^{k_1}\Vert \D_{\frac\pi n; x_1}^{k_1} U_n\Vert_p\\
&=n^{k_1}\bigg\Vert \frac{\partial^{r-k_1}}{\partial x_2^{k_2}\dots\partial x_d^{k_d}}\D_{\frac\pi n; x_1}^{k_1} T_n\bigg\Vert_{p},
   \end{split}
\end{equation*}
where  $\D_{\frac\pi n; x_1}^{k_1} U_n$ is the $k_1$-th difference with respect to $x_1$.
Repeating this procedure $d-1$ times, we get
$$
\bigg\Vert \frac{\partial^{r} T_n}{\partial x_1^{k_1}\dots\partial x_d^{k_d}}\bigg\Vert_{p}\lesssim n^r \Vert \D_{\frac\pi n; x_1}^{k_1}\dots \D_{\frac\pi n; x_d}^{k_d} T_n\Vert_p\lesssim n^r\w_{k_1,\dots,k_d}\(T_n,\frac\pi n,\dots,\frac\pi n\)_p,
$$
where $\w_{k_1,\dots,k_d}\(T_n,\pi/n,\dots,\pi/n\)_p$ is the mixed modulus of smoothness (see, e.g.,~\cite{TiMod}). It remains only to apply the following equivalence result
$$
\sum_{|k|_1=r}\w_{k_1,\dots,k_d}\(T_n,\frac\pi n,\dots,\frac\pi n\)_p\asymp \w_r\(T_n,\frac\pi n\)_p.
$$
In the case $1\le p\le \infty$, the proof of this equivalence is given in~\cite{TiMod} and is based on the representation of the total difference of a function via the sum of the mixed differences.
Hence, repeating the proof of Theorem~7 from~\cite{TiMod}, one can easily verify that the above equivalence holds in the case $0<p<1$ too.
Thus, we have proved (\ref{eq+++-}). Noting that Corollary~\ref{corNSB} implies the equivalence
$
n^r \w_r(T_n,\pi/n)_p\asymp \d^{-r}\w_r(T_n,\d)_p$, $0<\d<\pi/n,
$
we conclude the proof of the part $"\lesssim"$ in (\ref{eq+++}).

The reverse part follows  from  Corollary \ref{corNSB}.
\end{proof}

We will also need the following result on equivalence between the Riesz derivatives (see~\eqref{eqLaplacian}) and the directional derivatives.
\begin{corollary}\label{lemRAZZZ}
  Let $1<p<\infty$ and $\a>0$. Then, for any $T\in \mathcal{T}$, one has
\begin{equation}\label{eqRAZZZ}
  \sup_{\xi\in \R^d,\,|\xi|=1}\bigg\Vert \(\frac{\partial}{\partial\xi}\)^{\a} T\bigg\Vert_{p}\asymp \Vert (-\D)^{\a/2}T\Vert_{p}.
\end{equation}
Moreover, if $0< p\le \infty$ and $r\in \N$, then
\begin{equation}\label{eqRAZZZ+}
  \sup_{\xi\in \R^d,\,|\xi|=1}\bigg\Vert \(\frac{\partial}{\partial\xi}\)^{r} T\bigg\Vert_{p}\asymp \Vert T\Vert_{\dot W_p^r}.
\end{equation}
\end{corollary}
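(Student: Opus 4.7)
The plan is to establish each equivalence by comparing both sides to a common intermediate quantity, namely the modulus of smoothness of $T$ at scale comparable to $\pi/n$, where $n$ denotes the degree of $T$.

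I will first dispose of \eqref{eqRAZZZ+}, the integer case, since it follows directly from two results already proved in the paper. Fix $T\in\mathcal{T}_n$ and $0<\d\le\pi/n$. Corollary~\ref{corNSB}, specialized to $\a=r$, asserts
\[
\sup_{|\xi|=1}\bigg\Vert \Big(\frac{\partial}{\partial\xi}\Big)^{r}T\bigg\Vert_{p} \asymp \d^{-r}\w_r(T,\d)_p,
\]
and Theorem~\ref{eq++} gives $\Vert T\Vert_{\dot W_p^r} \asymp \d^{-r}\w_r(T,\d)_p$ in the same range of $\d$. Chaining these two equivalences immediately yields \eqref{eqRAZZZ+}, valid for every $0<p\le\infty$.

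For \eqref{eqRAZZZ}, my plan is to show that both sides are equivalent to $n^{\a}\w_\a(T,\pi/n)_p$ for $T\in\mathcal{T}_n$. The equivalence for the left-hand side is again Corollary~\ref{corNSB} with $\d=\pi/n$. For the right-hand side, I will invoke the Wilmes realization \eqref{RealKpge1-}, which (in the range $1<p<\infty$) gives
\[
\w_\a(T,\pi/n)_p \asymp \inf_g \Bigl(\Vert T-g\Vert_p + (\pi/n)^{\a}\Vert(-\D)^{\a/2}g\Vert_p\Bigr).
\]
Taking $g=T$ in the infimum makes the lower bound $\Vert(-\D)^{\a/2}T\Vert_p \gtrsim n^{\a}\w_\a(T,\pi/n)_p$ essentially automatic.

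The heart of the proof, and the step I expect to be the main obstacle, is the opposite bound $\Vert(-\D)^{\a/2}T\Vert_p \lesssim n^{\a}\w_\a(T,\pi/n)_p$. My plan here is to select a near-extremal $g$ in the realization above and transfer it to a polynomial of controlled degree. The natural device is a de~la~Vall\'ee~Poussin sum $V_n g\in\mathcal{T}_{2n}$; for $1<p<\infty$ the operator $V_n$ is bounded on $L_p(\T^d)$ and commutes with $(-\D)^{\a/2}$ as Fourier multipliers, which gives $\Vert g-V_n g\Vert_p \lesssim \Vert T-g\Vert_p$ and $\Vert(-\D)^{\a/2}V_n g\Vert_p\lesssim \Vert(-\D)^{\a/2}g\Vert_p$. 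Splitting $T=(T-V_n g)+V_n g$ and applying the Bernstein inequality for the Riesz derivative---which for $1<p<\infty$ follows from Mikhlin's theorem, Lemma~\ref{lemMult}(i), applied to a smooth cut-off of the symbol $|\eta|^{\a}$---to the polynomial $T-V_n g\in\mathcal{T}_{2n}$ yields $\Vert(-\D)^{\a/2}(T-V_n g)\Vert_p\lesssim n^{\a}\Vert T-g\Vert_p$. Summing the two contributions and taking the infimum over $g$ closes the argument. The restriction $1<p<\infty$ enters crucially at this transfer step, through both the boundedness of $V_n$ and the Riesz--Bernstein inequality, which is precisely why the equivalence with the Laplacian (unlike its directional-derivative counterpart in part (2)) does not extend to the endpoints $p=1$ and $p=\infty$.
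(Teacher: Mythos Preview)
Your proposal is correct and follows essentially the same route as the paper: both parts are proved by passing through $\d^{-\a}\w_\a(T,\d)_p$ (resp.\ $\d^{-r}\w_r(T,\d)_p$) via Corollary~\ref{corNSB} and Theorem~\ref{eq++}. The only difference is that for \eqref{eqRAZZZ} the paper simply cites the Wilmes equivalence $\d^{-\a}\w_\a(T,\d)_p\asymp\Vert(-\D)^{\a/2}T\Vert_p$ for $0<\d\le 1/\deg T$ directly from~\cite{Wil}, whereas you re-derive it from the $K$-functional characterization \eqref{RealKpge1-} together with a de~la~Vall\'ee~Poussin transfer and the Riesz--Bernstein inequality; your derivation is sound and makes the role of $1<p<\infty$ explicit.
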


\begin{proof}
The first part follows from~\eqref{ineqNS3cor} and the equivalence $\d^{-\a}\w_\a(T,\d)_p\asymp \Vert (-\D)^{\a/2} T\Vert_p$, where $0<\d\le 1/\deg T$ (see~\cite{Wil}).
Equivalence~(\ref{eqRAZZZ+}) follows from Theorem~\ref{eq++} and Corollary~\ref{corNSB}.
\end{proof}

We conclude this section by the following new description of the classical Sobolev seminorm in terms of the norm defined by the directional derivatives.
In the case $1\le p\le \infty$, this extends~\eqref{eqRAZZZ+} for any function $f\in  W_p^r$.

\begin{theorem}\label{eq++--}
  Let $1\le p\le\infty$ and $r\in \N$.
Then, for any $f\in  W_p^r$, one has
$$
 \Vert f\Vert_{\dot W_p^r}\asymp \sup_{\xi\in \R^d,\,|\xi|=1}\bigg\Vert \(\frac{\partial}{\partial\xi}\)^{r} f\bigg\Vert_{p}.
$$
\end{theorem}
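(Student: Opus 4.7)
The upper bound $\sup_{|\xi|=1}\Vert(\partial/\partial\xi)^r f\Vert_p\lesssim \Vert f\Vert_{\dot W_p^r}$ is immediate from the multinomial expansion
$$\(\frac{\partial}{\partial\xi}\)^r f=\sum_{|k|_1=r}\frac{r!}{k_1!\cdots k_d!}\,\xi_1^{k_1}\cdots\xi_d^{k_d}\,D^k f,$$
the bound $|\xi_j|\le 1$, and the triangle inequality in $L_p$.

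For the reverse bound, my plan is to apply the classical polarization identity for symmetric $r$-linear forms, which will bypass any need to approximate $f$ by trigonometric polynomials. Consider the symmetric multilinear form $B(\eta_1,\dots,\eta_r):=\partial_{\eta_1}\cdots\partial_{\eta_r}f$ on $(\R^d)^r$; its restriction to the diagonal is $B(\eta,\dots,\eta)=(\partial/\partial\eta)^r f$. Since $f\in W_p^r(\T^d)$, each $D^\nu f$ with $|\nu|_1\le r$ lies in $L_p(\T^d)$, and hence $B(\eta_1,\dots,\eta_r)$ as well as $(\partial/\partial\eta)^r f$ are bona fide elements of $L_p(\T^d)$ (being finite linear combinations of such $D^\nu f$). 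The standard polarization formula
$$r!\,B(\eta_1,\dots,\eta_r)=\sum_{\emptyset\neq I\subseteq\{1,\dots,r\}}(-1)^{r-|I|}\(\frac{\partial}{\partial\zeta_I}\)^{\!r} f,\qquad \zeta_I:=\sum_{i\in I}\eta_i,$$
is purely algebraic and therefore holds as a pointwise identity among these $L_p$-functions.

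Given a multi-index $\alpha$ with $|\alpha|_1=r$, choose $\eta_1,\dots,\eta_r$ to be standard basis vectors with multiplicities matching the components of $\alpha$, so that $B(\eta_1,\dots,\eta_r)=D^\alpha f$. For every nonempty $I\subseteq\{1,\dots,r\}$, the vector $\zeta_I$ has nonnegative integer coordinates with at least one positive coordinate, so $1\le|\zeta_I|\le r\sqrt{d}$. Using the homogeneity $(\partial/\partial\zeta)^r=|\zeta|^r(\partial/\partial(\zeta/|\zeta|))^r$ and the $L_p$-triangle inequality (valid since $1\le p\le\infty$), we obtain
$$r!\,\Vert D^\alpha f\Vert_p\le\sum_{\emptyset\neq I}|\zeta_I|^r\bigg\Vert\(\frac{\partial}{\partial(\zeta_I/|\zeta_I|)}\)^{\!r}f\bigg\Vert_p\le C_{r,d}\sup_{|\xi|=1}\bigg\Vert\(\frac{\partial}{\partial\xi}\)^{\!r}f\bigg\Vert_p.$$
Summing over all such $\alpha$ completes the proof.

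The main potential obstacle---transferring the polynomial equivalence \eqref{eqRAZZZ+} from Fej\'er means $\sigma_n f$ to a general $f$ via approximation---would require at $p=\infty$ a weak-$*$ convergence argument, since Fej\'er means need not converge in the $L_\infty$-norm for merely essentially bounded targets. The polarization route sidesteps this issue altogether and handles all $1\le p\le\infty$ uniformly.
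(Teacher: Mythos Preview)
Your proof is correct and takes a genuinely different route from the paper. The paper establishes the lower bound by density: it chooses $T_n\in\mathcal{T}_n$ with $\|f-T_n\|_{\dot W_p^r}\to 0$, invokes the already-proved polynomial equivalence \eqref{eqRAZZZ+} for $T_n$, and passes to the limit. Your argument instead applies the polarization identity for symmetric $r$-linear forms directly to $B(\eta_1,\dots,\eta_r)=\partial_{\eta_1}\cdots\partial_{\eta_r}f$, expressing each $D^\alpha f$ as a finite linear combination of pure $r$-th directional derivatives and estimating termwise. This is purely algebraic, uses nothing from Section~\ref{sec3} beyond the definitions, and is self-contained. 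Your remark about $p=\infty$ is well taken: the paper's sentence ``let $T_n$ be such that $\|f-T_n\|_{\dot W_p^r}\to 0$'' tacitly requires each $D^\nu f$ with $|\nu|_1=r$ to be uniformly approximable by trigonometric polynomials, which fails for a merely essentially bounded $D^\nu f$; the paper's approach can be repaired by a weak-$*$ lower-semicontinuity argument with de~la~Vall\'ee~Poussin means, but your polarization route avoids the issue entirely and treats all $1\le p\le\infty$ on equal footing. One minor sharpening: since the coordinates of $\zeta_I$ are nonnegative integers summing to $|I|\le r$, you actually have $|\zeta_I|\le r$, not merely $|\zeta_I|\le r\sqrt{d}$.
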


\begin{proof}
The estimate
\begin{equation}\label{multTTT}
  \sup_{\xi\in \R^d,\,|\xi|=1}\bigg\Vert \(\frac{\partial}{\partial\xi}\)^{r} f\bigg\Vert_{p} \lesssim \Vert f\Vert_{\dot W_p^r}
\end{equation}
 easily follows from the multinomial theorem given by
$$
(x_1+x_2+\dots+x_d)^r=\sum_{|k|_1=r}\frac{r!}{k_1!k_2!\dots k_d!}x_1^{k_1}x_2^{k_2}\dots x_d^{k_d}
$$
and the triangle inequality.

Let us prove the upper estimate for $\Vert f\Vert_{\dot W_p^r}$. Let $T_n\in\mathcal{T}_n$ be such that
$$
\Vert f-T_n\Vert_{\dot W_p^r}\to 0\quad\text{as}\quad n\to\infty.
$$
Then, by~\eqref{eqRAZZZ+} and~\eqref{multTTT}, we obtain
\begin{equation*}
  \begin{split}
  \Vert f\Vert_{\dot W_p^r}&\le \Vert T_n\Vert_{\dot W_p^r}+\Vert f-T_n\Vert_{\dot W_p^r}\lesssim \sup_{\xi\in \R^d,\,|\xi|=1}\bigg\Vert \(\frac{\partial}{\partial\xi}\)^{r} T_n\bigg\Vert_{p}+\Vert f-T_n\Vert_{\dot W_p^r}\\
  &\lesssim \sup_{\xi\in \R^d,\,|\xi|=1}\bigg\Vert \(\frac{\partial}{\partial\xi}\)^{r} f\bigg\Vert_{p}+\sup_{\xi\in \R^d,\,|\xi|=1}\bigg\Vert \(\frac{\partial}{\partial\xi}\)^{r} (f-T_n)\bigg\Vert_{p}+\Vert f-T_n\Vert_{\dot W_p^r}\\
  &\lesssim \sup_{\xi\in \R^d,\,|\xi|=1}\bigg\Vert \(\frac{\partial}{\partial\xi}\)^{r} f\bigg\Vert_{p}+\Vert f-T_n\Vert_{\dot W_p^r}.
   \end{split}
\end{equation*}
It remains only to pass to the limit as $n\to\infty$.
\end{proof}

\newpage

\section{Basic properties of fractional moduli of smoothness}\label{sec4}

 Let us recall several basic properties of moduli of smoothness (\cite{ But, RS3, samko,  tabeR}).
For  $f,f_1,f_2\in L_p(\T^d)$, $0< p\le \infty$, and $\alpha, \b\in \N\cup ((1/p-1)_+,\infty)$, we have 
\begin{itemize}
\item[{\rm (a)}]
$ \omega_\a(f,\delta)_p$ is a non-negative non-decreasing function of $\delta$ such that
$\lim\limits_{\delta\to 0+} \omega_\a(f,\delta)_p=0;$ 
\item[{\rm (b)}]
$              \omega_\a(f_1+f_2,\delta)_p\le 2^{(\frac1p-1)_+}
\big(     \omega_\a(f_1,\delta)_p+\omega_\a(f_2,\delta)_p\big);
$
 \item[{\rm (c)}]
$              \omega_{\alpha+\b}(f,\delta)_p\le
\left(\sum\limits_{\nu=0}^\infty \left| \binom{\b}{\nu}\right|^{\min(1,p)}\right)^{1/{\min(1,p)}} \omega_{\a}(f,\delta)_p$;

\item[{\rm (d)}]
for $\lambda\ge 1$,
        $$\omega_{\a}(f,\lambda \delta)_p\le C(\a,p,d) \lambda^{\a+d(\frac1p-1)_+}  \omega_{\a}(f,\delta)_p;$$

\smallskip

\item[{\rm (e)}]
for $ 0<t\le\delta$,
      $$\frac{\omega_\a(f,\delta)_p}{\delta^{\a+d(\frac1p-1)_+}}\le C(\a,p,d)
\frac{\omega_\a(f,t)_p}{t^{\a+d(\frac1p-1)_+}}.$$

    \end{itemize}

\smallskip

Statement (a) follows from
$
|\w_\a(f_1,\d)_p-\w_\a(f_2,\d)_p|\lesssim \Vert f_1-f_2\Vert_p,
$
inequalities in (b) and (c) are clear, and (d) and (e) will be proved later in this section.

Now, we prove several basic results in approximation theory.
In what follows, $E_n(f)_{p}$ is the best approximation of $f\in L_p(\T^d)$ by trigonometric polynomials $T\in\mathcal{T}_n$, i.e.,
$$
E_n(f)_{p}=\inf_{T\in \mathcal{T}_n}\Vert f-T\Vert_{p}.
$$

We start with the direct inequality.

\begin{proposition}\label{converseMod+}
  Let $0<p\le\infty$, $\a \in \N \cup ((1/p-1)_+,\infty)$, and $n\in\mathbb{N}$. Then
\begin{equation}\label{JacksonSO}
E_n(f)_p\lesssim \w_\a\(f,\frac1n\)_p.
\end{equation}
\end{proposition}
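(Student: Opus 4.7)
The plan is to construct a linear operator $V_n:L_p(\T^d)\to\mathcal{T}_{cn}$ for an absolute constant $c=c(\a,d)$ satisfying
$$\|f-V_nf\|_p\lesssim \omega_\a(f,1/n)_p;$$
this implies $E_{cn}(f)_p\lesssim \omega_\a(f,1/n)_p$, from which $E_n(f)_p\lesssim \omega_\a(f,1/n)_p$ follows by a standard rescaling that absorbs the constant $c$ using property (d) of the moduli of smoothness. The key algebraic input, obtained by isolating the $\nu=0$ term in the series defining $\Delta_h^\a f$ and translating, is
$$f(y)=\Delta_h^\a f(y-\a h)+\sum_{\nu=1}^\infty (-1)^{\nu+1}\binom{\a}{\nu}f(y-\nu h),$$
where the tail series converges absolutely in $L_p$ because the hypothesis $\a\in\N\cup((1/p-1)_+,\infty)$ forces $\sum_\nu|\binom{\a}{\nu}|^{\min(1,p)}<\infty$.

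I would then fix a Schwartz function $\Phi\in\mathscr{S}(\R^d)$ with $\int\Phi=1$ and $\widehat\Phi$ compactly supported, set $\Phi_n(h):=n^d\Phi(nh)$, and define
$$V_nf(y):=\int_{\R^d}\Phi_n(h)\sum_{\nu=1}^\infty(-1)^{\nu+1}\binom{\a}{\nu}f(y-\nu h)\,dh.$$
A direct Fourier computation shows that the coefficients of $V_n f$ are of the form $\widehat f(k)\,m(k/n)$ with $m$ smooth and compactly supported (because $\widehat\Phi$ is), so $V_n f\in\mathcal{T}_{cn}$. Integrating the key identity against $\Phi_n$ yields
$$f(y)-V_nf(y)=\int_{\R^d}\Phi_n(h)\,\Delta_h^\a f(y-\a h)\,dh,$$
and the quasi-triangle inequality with exponent $\min(1,p)$ produces
$$\|f-V_nf\|_p^{\min(1,p)}\le \omega_\a(f,1/n)_p^{\min(1,p)}\int|\Phi_n(h)|^{\min(1,p)}\,dh.$$
For $p\ge 1$ the right-hand integral equals $\int|\Phi|$, uniformly bounded in $n$, and the conclusion is immediate.

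The case $0<p<1$ is the main obstacle: with the naive scaling, $\int|\Phi_n|^p\asymp n^{d(p-1)}\to\infty$, so the last estimate is vacuous. The resolution is to reinterpret $V_nf$ as the convolution on $\T^d$ of $f$ with the periodic polynomial kernel whose Fourier coefficients are the samples $m(k/n)$, and to bound its $L_p(\T^d)$-quasi-norm uniformly in $n$ via Corollary~\ref{lemBLX}—the Lebesgue-constant-type estimate for trigonometric sums generated by $C^\infty$ compactly supported symbols. This replaces the divergent $\R^d$-integral by a controlled torus quantity and closes the argument. Simultaneously ensuring that $V_nf\in\mathcal{T}_{cn}$ (so the bound controls $E_n(f)_p$) and that the norm estimate survives for $p<1$ is precisely what the auxiliary results of Section~\ref{sec2} are tailored to deliver.
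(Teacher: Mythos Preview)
The paper's proof is a two-line reduction: pick $r>(1/p-1)_+$ with $\a+r\in\N$, invoke the integer-order Jackson inequality $E_n(f)_p\lesssim\w_{\a+r}(f,1/n)_p$ from \cite{SO}, and then use property~(c), namely $\w_{\a+r}(f,\cdot)_p\lesssim\w_\a(f,\cdot)_p$. Your direct construction is far more ambitious---you are essentially trying to redo the Storozhenko--Oswald argument from scratch and simultaneously extend it to fractional order---when a simple monotonicity-in-order trick suffices.

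More seriously, your argument for $0<p<1$ has a genuine gap. Corollary~\ref{lemBLX} applied to the symbol $m$ gives $\|K_n\|_{L_p(\T^d)}\lesssim n^{d(1-1/p)}$, which together with the Nikol'skii-type convolution bound yields only $\|V_nf\|_p\lesssim\|f\|_p$---boundedness of the operator, not the approximation estimate. What you actually need is a bound on $\|f-V_nf\|_p=\bigl\|\int\Phi_n(h)\Delta_h^\a f(\cdot-\a h)\,dh\bigr\|_p$ in terms of $\w_\a(f,1/n)_p$, and for $p<1$ there is no integral Minkowski inequality that lets you move the $L_p$-norm inside the $dh$-integral. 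The symbol $1-m(\xi)$ is not compactly supported (it equals $1$ at infinity), so $f-V_nf$ is not a polynomial convolution either, and Corollary~\ref{lemBLX} does not apply to it. Closing this gap is precisely the hard part of \cite{SO}, and your sketch does not address it.

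Two smaller issues: for $p\ge1$ the bound $\|\Delta_h^\a f\|_p\le\w_\a(f,1/n)_p$ only holds for $|h|\le1/n$, while $\Phi_n$ is not compactly supported in $h$; this is fixable via property~(d) but you did not say so. And for $p<1$ one has $\int|\Phi_n|^p\asymp n^{d(p-1)}\to0$, not $\to\infty$; the real obstacle is the failure of the integral triangle inequality, not the size of this integral.
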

Inequality~\eqref{JacksonSO} follows immediately from the Jackson-type inequality for the moduli of smoothness of integer order \cite{SO}. Indeed,
for $\a+r\in \N$ and $r>(1/p-1)_+$, property (c) implies
$$
  E_n(f)_p\lesssim \w_{\a+r}\(f,\frac1n\)_p\lesssim \w_{\a}\(f,\frac1n\)_p.
$$
Note that for $1<p<\infty$ the technique from \cite{ddt} allows us to obtain a sharper version of (\ref{JacksonSO})
for the fractional moduli of smoothness:
$$
\frac1{n^\a}\bigg(\sum_{k=0}^n (k+1)^{\a \rho-1}E_k(f)_p^{\rho}\bigg)^{\frac1\rho}
\lesssim \w_\a\(f,\frac1n\)_p,
$$
where
$$
\rho=\rho(p)=\max(p,2).
$$

The inverse  inequality is given as follows.
\begin{proposition}\label{converseMod}
  Let $0<p\le\infty$, $\a \in \N \cup ((1/p-1)_+,\infty)$, and $n\in\mathbb{N}$. Then  we have

\begin{equation}\label{eqconverseMod}
\w_\a\(f,\frac1n\)_p\lesssim \frac1{n^\a}\bigg(\sum_{k=0}^n (k+1)^{\a \tau-1}E_k(f)_p^{\tau}\bigg)^{\frac1\tau},
\end{equation}
where
\begin{equation}\label{tau+}
\tau=\tau(p)=\left\{
       \begin{array}{ll}
         \min(p,2), & \hbox{$p<\infty$;} \\
         1, & \hbox{$p=\infty$.}
       \end{array}
     \right.
\end{equation}

\end{proposition}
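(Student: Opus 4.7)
I plan to use a dyadic telescoping decomposition of a polynomial of near-best approximation, combined with the Nikol'skii--Stechkin--Boas inequality of Corollary \ref{corNSB} and the Bernstein-type inequality of Corollary \ref{corNSBBEr}. Let $T_n^*\in\mathcal{T}_n$ satisfy $\|f-T_n^*\|_p=E_n(f)_p$. Since $\w_\a(g,\d)_p\lesssim\|g\|_p$ for every $g\in L_p$ (a direct consequence of~\eqref{def-mod+}--\eqref{def-mod++} together with $\sum_{\nu}|\binom{\a}{\nu}|^{\min(1,p)}<\infty$), property (b) yields
$$
\w_\a(f,1/n)_p^{\min(1,p)}\lesssim\w_\a(T_n^*,1/n)_p^{\min(1,p)}+E_n(f)_p^{\min(1,p)}.
$$
Because $\deg T_n^*\le n$ and $1/n\le\pi/n$, Corollary \ref{corNSB} gives
$$
\w_\a(T_n^*,1/n)_p\asymp n^{-\a}\sup_{|\xi|=1}\bigg\|\Big(\frac{\p}{\p\xi}\Big)^{\a}T_n^*\bigg\|_p,
$$
so the problem reduces to bounding the right-hand side in terms of $E_k(f)_p$.

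Fix $N$ with $2^N\le n<2^{N+1}$ and telescope $T_n^*=T_1^*+\sum_{j=0}^{N-1}(T_{2^{j+1}}^*-T_{2^j}^*)+(T_n^*-T_{2^N}^*)$. The quasi-triangle inequality in $L_p$ gives $\|T_{2^{j+1}}^*-T_{2^j}^*\|_p\lesssim E_{2^j}(f)_p$, and then Corollary \ref{corNSBBEr} supplies
$$
\sup_{|\xi|=1}\bigg\|\Big(\frac{\p}{\p\xi}\Big)^{\a}(T_{2^{j+1}}^*-T_{2^j}^*)\bigg\|_p\lesssim 2^{j\a}E_{2^j}(f)_p.
$$
When the required exponent is $\tau=\min(1,p)$ --- which matches~\eqref{tau+} in the cases $0<p\le 1$ and $p=\infty$ --- summing via the $\tau$-triangle inequality and converting the dyadic sum into a sum over all $k\le n$ by monotonicity of $E_k(f)_p$ already yields~\eqref{eqconverseMod}.

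For $1<p<\infty$, the required exponent $\tau(p)=\min(p,2)$ is strictly larger than $\min(1,p)=1$, so the plain quasi-triangle inequality is too weak and must be replaced by a Littlewood--Paley argument. The plan is to substitute the sharp-cut best approximants $T_{2^j}^*$ by de la Vall\'ee Poussin means $V_{2^j}f$ (uniformly bounded on $L_p$, with $\|f-V_{2^j}f\|_p\lesssim E_{2^{j-1}}(f)_p$); the differences $Q_jf=V_{2^j}f-V_{2^{j-1}}f$ are polynomial blocks with spectrum in the annuli $\{2^{j-2}\le|k|_\infty\le 2^{j+1}\}$. By Corollary \ref{lemRAZZZ}, $\sup_{|\xi|=1}\|(\p/\p\xi)^{\a}g\|_p\asymp\|(-\D)^{\a/2}g\|_p$, and the Littlewood--Paley theorem (available via Mikhlin-type multipliers, cf.~Lemma \ref{lemMult}) gives
$$
\|(-\D)^{\a/2}V_{2^N}f\|_p\asymp\bigg\|\Big(\sum_{j\ge 0}4^{j\a}|Q_jf|^2\Big)^{1/2}\bigg\|_p.
$$
For $p\ge 2$, Minkowski's inequality in $L^{p/2}$ bounds this by $(\sum_j 4^{j\a}\|Q_jf\|_p^2)^{1/2}$; for $1<p\le 2$, the embedding $\ell^p\hookrightarrow\ell^2$ yields $(\sum_j 2^{pj\a}\|Q_jf\|_p^p)^{1/p}$. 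Substituting $\|Q_jf\|_p\lesssim E_{2^{j-2}}(f)_p$, replacing $T_n^*$ by $V_nf$ at the outset (whose approximation error is also $\lesssim E_{[n/2]}(f)_p$), and rewriting the dyadic sum as a sum over $k$ via monotonicity of $E_k(f)_p$ completes the proof in this range.

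The main obstacle is the $1<p<\infty$ case: the sharpening of the subadditivity exponent from $\min(1,p)$ to $\min(p,2)$ cannot be extracted from the elementary properties of the modulus and forces the Littlewood--Paley route through Corollary \ref{lemRAZZZ}. Once the Littlewood--Paley equivalence and the $L_p$-boundedness of the de la Vall\'ee Poussin means are granted, the remaining steps --- telescoping, Bernstein, and dyadic-to-ordinary sum rearrangement --- are routine.
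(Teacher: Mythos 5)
Your proposal is correct and follows essentially the same route as the paper: for $0<p\le 1$ and $p=\infty$ you use the dyadic telescoping decomposition together with Corollaries \ref{corNSB} and \ref{corNSBBEr} and the $\min(1,p)$-subadditivity, exactly as in the paper's proof. For $1<p<\infty$ the paper defers to Dai--Ditzian \cite{dai-d}, whose underlying mechanism is precisely the Littlewood--Paley square-function estimate you sketch via Corollary \ref{lemRAZZZ} and Lemma \ref{lemMult}.
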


In the case $1\le p\le \infty$, $\a\in\N$, this result is well known (see, e.g.,~\cite[Ch. 7]{DL},  \cite{dai-d}, \cite{timan} and the reference therein).
To the best of our knowledge,
the case  $ p=1,\infty$ and $\a>0$ 
 does not seem to have been considered before for this moduli of smoothness (cf.~\cite{dit-acta}).
In the case $0<p<1$, $d>1$ this result is new, while the case $0<p<1$ and $d=1$ was recently considered in \cite{RS3}.
Moreover, the
corresponding results in terms of different realizations of the $K$-functional (in particular, using the Laplace operator)
were obtained in  the papers \cite{run, RS3}.

\begin{proof}
The proof follows the standard argument. We give the proof only for the case $0<p\le 1$ or $p=\infty$. The case $1<p<\infty$ can be handled using
\cite{dai-d}.

Let $T_n\in\mathcal{T}_n$ be such that $E_n(f)_p=\Vert f-T_n\Vert_p$. Then for any $m\in\N$ we have with $p_1=\min(p,1)$
\begin{equation}\label{coco1}
  \begin{split}
     \w_\a\(f,\frac1n\)_p^{p_1}&\le \w_\a\(f-T_{2^{m+1}},\frac1n\)_p^{p_1}+\w_\a\(T_{2^{m+1}},\frac1n\)_p^{p_1}\\
     &\lesssim E_{2^{m+1}}(f)_p^{p_1}+\w_\a\(T_{2^{m+1}},\frac1n\)_p^{p_1}.
   \end{split}
\end{equation}
Next, by~\eqref{ineqNS3cor} and~\eqref{ineqNS3corBEr}, choosing $m$ such that $2^m<n\le 2^{m+1}$, we obtain
\begin{equation}\label{coco2}
  \begin{split}
    \w_\a\(T_{2^{m+1}},\frac1n\)_p^{p_1}&\le \w_\a\(T_{1}-T_0,\frac1n\)_p^{p_1}+\sum_{\nu=0}^m \w_\a\(T_{2^{\nu+1}}-T_{2^{\nu}},\frac1n\)_p^{p_1}\\
    &\lesssim \frac1{n^{\a p_1}}\(E_0(f)_p^{p_1}+\sum_{\nu=0}^m 2^{(\nu+1)\a p_1} E_{2^\nu}(f)_p^{p_1}\)\\
    &\lesssim \frac1{n^{\a p_1}}\sum_{k=0}^{2^m} (k+1)^{\a p_1-1} E_{k}(f)_p^{p_1}.
   \end{split}
\end{equation}
Combining~\eqref{coco1} and~\eqref{coco2}, we proved the desired result.
\end{proof}

Now, we will prove a realization result, which will be crucial in our further study.
\begin{theorem}\label{lemKfp<1d}
Let $f\in L_p(\T^d)$, $0<p\le \infty$, and $\a\in\N\cup \big((1/p-1)_+,\infty\big)$. Then, for any $\d\in (0,1)$, we have
\begin{equation}\label{eq.th6.0d}
{\omega}_\a(f,\d)_p\asymp \mathcal{R}_{\a}(f,\d)_{p},
\end{equation}
where $\mathcal{R}_{\a}(f,\d)_{p}$ is the realization of the $K$-functional, i.e.,
$$
\mathcal{R}_{\a}(f,\d)_{p}:=\inf_{T\in\mathcal{T}_{[1/\d]}}\left\{\Vert
f-T\Vert_p+\d^{\a}\sup_{\xi\in \R^d,\,|\xi|=1}\bigg\Vert \(\frac{\partial}{\partial\xi}\)^{\a} T\bigg\Vert_{p}\right\}.
$$
Moreover, if $T\in\mathcal{T}_n$, $n=[1/\d]$, is such that
$
\Vert f-T\Vert_p\lesssim E_n(f)_p,
$
then
\begin{equation}\label{eqvwithbest}
  {\omega}_\a(f,\d)_p\asymp \Vert
f-T\Vert_p+\d^{\a}\sup_{\xi\in \R^d,\,|\xi|=1}\bigg\Vert \(\frac{\partial}{\partial\xi}\)^{\a} T\bigg\Vert_{p}.
\end{equation}
\end{theorem}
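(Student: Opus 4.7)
My plan is to derive the sharper equivalence~\eqref{eqvwithbest} directly; the realization equivalence~\eqref{eq.th6.0d} then follows by taking infimum. The central tool is Corollary~\ref{corNSB}, which for any $T\in\mathcal{T}_n$ and $0<\d\le\pi/n$ gives
$$
\omega_\a(T,\d)_p \asymp \d^\a\sup_{|\xi|=1}\bigg\Vert \Big(\frac{\partial}{\partial\xi}\Big)^\a T\bigg\Vert_p.
$$
With $n=[1/\d]$ and $\d\in(0,1)$ one has $\d\le 1/n\le \pi/n$, so this equivalence is available for every polynomial competitor in the realization.

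For the ``$\lesssim$'' part of~\eqref{eqvwithbest}, pick $T\in\mathcal{T}_n$, $n=[1/\d]$, with $\Vert f-T\Vert_p\lesssim E_n(f)_p$. Jackson's inequality~\eqref{JacksonSO} combined with $1/n\le 2\d$ and property (d) of Section~\ref{sec4} yields
$$
\Vert f-T\Vert_p\lesssim \omega_\a(f,1/n)_p\lesssim \omega_\a(f,\d)_p.
$$
For the derivative term, Corollary~\ref{corNSB} followed by the $p_1$-subadditivity of the modulus (property (b), with $p_1=\min(1,p)$) and the trivial bound $\omega_\a(f-T,\d)_p\lesssim \Vert f-T\Vert_p$ give
$$
\d^\a\sup_{|\xi|=1}\bigg\Vert\Big(\frac{\partial}{\partial\xi}\Big)^\a T\bigg\Vert_p \asymp \omega_\a(T,\d)_p \lesssim \omega_\a(f,\d)_p+\Vert f-T\Vert_p \lesssim \omega_\a(f,\d)_p,
$$
and summing proves the upper estimate. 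The ``$\gtrsim$'' part of~\eqref{eqvwithbest}, which after taking infimum also gives $\omega_\a(f,\d)_p\lesssim \mathcal{R}_\a(f,\d)_p$, is obtained by the reverse decomposition: for an arbitrary $T\in\mathcal{T}_{[1/\d]}$, split $f=T+(f-T)$ and use property (b) to get
$$
\omega_\a(f,\d)_p^{p_1}\lesssim \omega_\a(T,\d)_p^{p_1}+\Vert f-T\Vert_p^{p_1}.
$$
Replace $\omega_\a(T,\d)_p$ by the derivative quantity via Corollary~\ref{corNSB}, and use the elementary Jensen bound $(x^{p_1}+y^{p_1})^{1/p_1}\le 2^{1/p_1-1}(x+y)$ to convert the $p_1$-sum into the linear sum appearing in $\mathcal{R}_\a$.

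The only real technical point is the quasi-norm bookkeeping for $0<p<1$: the series $\sum_\nu|\binom{\a}{\nu}|^{p_1}$ converges precisely when $\a\in\N\cup((1/p-1)_+,\infty)$, which is the reason this hypothesis was imposed throughout, and which ensures that Corollary~\ref{corNSB} together with properties (a)--(e) of Section~\ref{sec4} apply uniformly. No further obstacle arises.
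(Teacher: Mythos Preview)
Your proof is correct and follows essentially the same route as the paper's: both arguments hinge on Corollary~\ref{corNSB} to interchange $\omega_\a(T,\d)_p$ with $\d^\a\sup_{|\xi|=1}\Vert(\partial/\partial\xi)^\a T\Vert_p$, Jackson's inequality~\eqref{JacksonSO} to control $\Vert f-T\Vert_p$ by $\omega_\a(f,\d)_p$ for a near-best approximant, and the quasi-triangle inequality for moduli to split $\omega_\a(f,\d)_p$. The only difference is organizational---you establish~\eqref{eqvwithbest} first and deduce~\eqref{eq.th6.0d} by taking infimum, whereas the paper proves~\eqref{eq.th6.0d} first and then reads off~\eqref{eqvwithbest} from the chain $\omega_\a\lesssim\mathcal{R}_\a\le(\text{near-best term})\lesssim\omega_\a$.
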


This theorem is known for $d=1$, $0<p\le\infty$, and $\a\in\N$ (see \cite{DHI});
for $d=1$, $0<p<1$, and $\a>0$ see \cite{K11};
for $d=1$, $1\le p\le \infty$, and $\a>0$ see \cite{simonov-sb}.
In the case
$d\ge 1$, $0<p\le\infty$, and $\a\in\N$,~\eqref{eqvwithbest} was stated in \cite{diti}   without the proof.
\begin{proof}

First, let us prove the estimate from above in~\eqref{eq.th6.0d}. Let $n=[1/\d]$ and $T_n\in\mathcal{T}_n$.
By Corollary~\ref{corNSB}, we estimate
\begin{equation*}
  \begin{split}
      \w_\a(f,\d)_p&\lesssim \Vert f-T_n\Vert_p+\w_\a(T_n,\d)_p\\
      &\lesssim \Vert f-T_n\Vert_p+\d^{\a}\sup_{\xi\in \R^d,\,|\xi|=1}\bigg\Vert \(\frac{\partial}{\partial\xi}\)^{\a} T_n\bigg\Vert_{p}.
   \end{split}
\end{equation*}
It remains to take infimum over all  $T_n\in\mathcal{T}_n$.

Let us prove the estimate from below.
Let $T\in \mathcal{T}_n$ be such that $\Vert f-T\Vert_p\lesssim E_n(f)_p$. Then by~\eqref{JacksonSO} and~\eqref{ineqNS3cor}, we obtain
\begin{equation*}
  \begin{split}
     \mathcal{R}_\a\(f,\frac1n\)_p&\lesssim \Vert f-T\Vert_p+n^{-\a}\sup_{\xi\in \R^d,\,|\xi|=1}\bigg\Vert \(\frac{\partial}{\partial\xi}\)^{\a} T\bigg\Vert_{p}\\
     &\lesssim \w_{\a}\(f,\frac1n\)_p+\w_{\a}\(T,\frac1n\)_p\lesssim \w_{\a}\(f,\frac1n\)_p,
   \end{split}
\end{equation*}
that is, \eqref{eq.th6.0d} follows.
%
%

In fact, we have proved that
\begin{equation*}
  \begin{split}
     \w_{\a}\(f,\frac1n\)_p&\lesssim\mathcal{R}_\a\(f,\frac1n\)_p\\
&\lesssim \Vert f-T\Vert_p+n^{-\a}\sup_{\xi\in \R^d,\,|\xi|=1}\bigg\Vert \(\frac{\partial}{\partial\xi}\)^{\a} T\bigg\Vert_{p}\lesssim \w_{\a}\(f,\frac1n\)_p,
   \end{split}
\end{equation*}
which implies~\eqref{eqvwithbest}.
\end{proof}

Similarly, taking into account Theorem~\ref{eq++}, we can prove the following result.
\begin{theorem}\label{lemKfp<1d+}
Let $f\in L_p(\T^d)$, $0<p\le\infty$, and $r\in\N$. Then, for any $\d\in (0,1)$, we have
\begin{equation*}
{\omega}_r(f,\d)_p\asymp \mathcal{R}^\sharp_r(f,\d)_{p},
\end{equation*}
where
$$
\mathcal{R}^\sharp_r(f,\d)_{p}:=\inf_{T\in\mathcal{T}_{[1/\d]}}\left\{\Vert
f-T\Vert_p+\d^{r}
\Vert T\Vert_{\dot W_p^r}
\right\}.
$$
Moreover, if $T\in\mathcal{T}_n$, $n=[1/\d]$, is such that
$
\Vert f-T\Vert_p\lesssim E_n(f)_p,
$
then
\begin{equation*}
  {\omega}_r(f,\d)_p\asymp \Vert
f-T\Vert_p+\d^{r}\Vert T\Vert_{\dot W_p^r}.
\end{equation*}
\end{theorem}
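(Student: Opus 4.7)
My plan is to mirror the proof of Theorem~\ref{lemKfp<1d} essentially line by line, replacing each invocation of Corollary~\ref{corNSB} (which gives $\sup_{|\xi|=1}\|(\partial/\partial\xi)^\a T_n\|_p\asymp \d^{-\a}\w_\a(T_n,\d)_p$) with an invocation of Theorem~\ref{eq++} (which gives $\|T_n\|_{\dot W_p^r}\asymp \d^{-r}\w_r(T_n,\d)_p$ for $0<\d\le\pi/n$ and $T_n\in\mathcal{T}_n$). All other ingredients -- the Jackson-type inequality \eqref{JacksonSO}, the quasi-triangle estimate (b) of Section~\ref{sec4}, and the elementary observation that $n=[1/\d]$ forces $\d\le 1/n\le\pi/n$ -- are already in the paper.

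For the upper bound, I would fix $n=[1/\d]$ and an arbitrary $T_n\in\mathcal{T}_n$. Property (b) together with the trivial bound $\w_r(f-T_n,\d)_p\lesssim\|f-T_n\|_p$ yields
$$
\w_r(f,\d)_p\lesssim \|f-T_n\|_p+\w_r(T_n,\d)_p,
$$
and Theorem~\ref{eq++} controls the second term by $\d^r\|T_n\|_{\dot W_p^r}$. Taking the infimum over $T_n\in\mathcal{T}_n$ gives $\w_r(f,\d)_p\lesssim \mathcal{R}^\sharp_r(f,\d)_p$.

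For the matching lower bound and the ``moreover'' clause simultaneously, I would select a near-best approximant $T\in\mathcal{T}_n$ satisfying $\|f-T\|_p\lesssim E_n(f)_p$. The direct inequality \eqref{JacksonSO} then gives $\|f-T\|_p\lesssim \w_r(f,1/n)_p\lesssim \w_r(f,\d)_p$. Applying Theorem~\ref{eq++} once more to transform the Sobolev seminorm into a modulus, and then property (b) to split off $f$, I would obtain
$$
\d^r\|T\|_{\dot W_p^r}\asymp \w_r(T,\d)_p\lesssim \|T-f\|_p+\w_r(f,\d)_p\lesssim \w_r(f,\d)_p.
$$
Together these two bounds show that this specific $T$ realizes the infimum in the definition of $\mathcal{R}^\sharp_r(f,\d)_p$ up to a constant, which immediately gives both $\mathcal{R}^\sharp_r(f,\d)_p\lesssim \w_r(f,\d)_p$ and the pointwise equivalence asserted in the ``moreover'' part.

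I do not expect any real obstacle, since every tool has already been assembled: the crux was Theorem~\ref{eq++}, and the rest is a short assembly. The only minor bookkeeping is the quasi-norm constant $2^{(1/p-1)_+}$ from property (b) when $p<1$, which is absorbed into the implicit constants in $\asymp$, and the elementary check that the scale $\d$ satisfies the hypothesis $\d\le\pi/n$ of Theorem~\ref{eq++} whenever $n=[1/\d]$.
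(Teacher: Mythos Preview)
Your proposal is correct and is precisely what the paper does: it simply remarks ``Similarly, taking into account Theorem~\ref{eq++}, we can prove the following result,'' meaning the proof of Theorem~\ref{lemKfp<1d} is repeated verbatim with Theorem~\ref{eq++} substituted for Corollary~\ref{corNSB}. Your write-up fills in those details faithfully, and the bookkeeping you flag (the quasi-triangle constant and the check $\d\le 1/n\le\pi/n$) is exactly right.
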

In the case $d=1$ and $0< p\le \infty$,  this was proved in \cite{DHI}; for
$1<p<\infty$ (see \cite{dit-acta}). The result is new in the case $d>1$, $0<p\le 1$, and $p=\infty$.


Next we prove the following important property of the moduli of smoothness.

\begin{theorem}\label{lemBasicMod}
Let $f\in L_p(\T^d)$, $0<p\le \infty$, $\a\in \N\cup ((1/p-1)_+,\infty)$, $\l>0$, and $t>0$. Then
\begin{equation}\label{Run0}
  \w_\a(f,\l t)_p\lesssim  (1+\l)^{\a+d(\frac1p-1)_+}\w_\a(f,t)_p.
\end{equation}
In particular, for any $0<h<t$, one has
\begin{equation}\label{eqMonMod}
 \frac{\w_\a(f,t)_p}{t^{\a+d(\frac1p-1)_+}}\lesssim \frac{\w_\a(f,h)_p}{h^{\a+d(\frac1p-1)_+}}.
\end{equation}
\end{theorem}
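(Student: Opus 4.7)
The second statement \eqref{eqMonMod} is an immediate consequence of the first: substituting $\l=t/h\ge 1$ into \eqref{Run0} and dividing through by $t^{\a+d(1/p-1)_+}$ yields $\w_\a(f,t)_p/t^{\a+d(1/p-1)_+}\lesssim \w_\a(f,h)_p/h^{\a+d(1/p-1)_+}$, since $(1+t/h)^{\a+d(1/p-1)_+}\lesssim (t/h)^{\a+d(1/p-1)_+}$ whenever $t\ge h$. So I focus on proving \eqref{Run0}. The range $0<\l\le 1$ is handled immediately by the monotonicity of $\w_\a(f,\cdot)_p$ together with $(1+\l)^{\a+d(1/p-1)_+}\ge 1$. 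For $\l>1$, put $m=\lceil\l\rceil\in[\l,2\l]$; since $\w_\a(f,\l t)_p\le\w_\a(f,mt)_p$, it suffices to prove the estimate for the integer $m$ in place of $\l$.

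When $\a=r$ is an integer, I rely on the classical shift-sum identity
\[
\Delta_{mh}^r f(x)=\sum_{\nu=0}^{r(m-1)}b_\nu\,\Delta_h^r f(x+\nu h),\qquad b_\nu=\#\bigl\{k\in\{0,\ldots,m-1\}^r:k_1+\cdots+k_r=\nu\bigr\},
\]
with $\sum_\nu b_\nu=m^r$ and at most $r(m-1)+1$ nonzero coefficients. For $p\ge 1$ the triangle inequality gives the factor $m^r$. For $0<p<1$, the $p$-subadditivity of $\|\cdot\|_p^p$ together with Jensen's inequality yields $\sum_\nu b_\nu^p\lesssim (rm)^{1-p}m^{rp}$, so that $\|\Delta_{mh}^r f\|_p\lesssim m^{r+1/p-1}\|\Delta_h^r f\|_p$. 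Since $1/p-1\le d(1/p-1)_+$ whenever $d\ge 1$, passing to the supremum over $|h|\le t$ delivers \eqref{Run0} in the integer case.

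For non-integer $\a$ I use the realization Theorem~\ref{lemKfp<1d} to select $T\in\mathcal{T}_{n}$ with $n=[1/t]$ and $\|f-T\|_p+t^\a\sup_{|\xi|=1}\|(\partial/\partial\xi)^\a T\|_p\lesssim \w_\a(f,t)_p$. Property (b) with $p_1=\min(1,p)$ then splits $\w_\a(f,\l t)_p^{p_1}\lesssim \w_\a(f-T,\l t)_p^{p_1}+\w_\a(T,\l t)_p^{p_1}$. The first summand is at most $C(\a,p,d)\|f-T\|_p^{p_1}\lesssim \w_\a(f,t)_p^{p_1}$, since the defining series for $\Delta_h^\a$ together with the summability of $\binom{\a}{\nu}$ guaranteed by $\a\in\N\cup((1/p-1)_+,\infty)$ gives $\|\Delta_h^\a g\|_p\le C(\a,p,d)\|g\|_p$. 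For the second summand I reduce to the scale $\l t$ by replacing $T$ with a de la Vall\'ee Poussin-type smoothing $T'\in\mathcal{T}_{[1/(\l t)]}$, estimate $\|T-T'\|_p$ via Corollary~\ref{corNSB} applied to $T$ (using the given bound on $\sup_{|\xi|=1}\|(\partial/\partial\xi)^\a T\|_p$), and then push the fractional directional derivative through the smoothing kernel using the Fourier-multiplier apparatus of Section~\ref{sec2}: Lemma~\ref{lemMult}(i) for $1<p<\infty$, Peetre's theorem for $p=1,\infty$, and Lemma~\ref{lemFLpBes} together with Theorem~\ref{lemBL} for $0<p<1$. The characteristic loss of $m^{d(1/p-1)}$ in the last case is exactly what produces the dimension factor in the final exponent.

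The main obstacle is the fractional case for $0<p<1$: there is no finite shift-sum identity available, and the standard Young convolution inequality fails, so one must exploit the full Fourier-multiplier machinery of Section~\ref{sec2} and in particular the anisotropic Lebesgue-constant-type bound of Theorem~\ref{lemBL} to obtain the sharp dependence on $d$ and $p$.
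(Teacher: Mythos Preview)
Your plan is sound and, for the fractional case, follows essentially the same architecture as the paper: use the realization $\mathcal{R}_\a(f,t)_p\asymp\w_\a(f,t)_p$ (Theorem~\ref{lemKfp<1d}) to pick a near-optimal polynomial $T\in\mathcal{T}_{[1/t]}$, split off $f-T$, and then control the polynomial piece via Fourier-multiplier/kernel estimates rooted in Theorem~\ref{lemBL}.

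Two differences are worth noting. First, for integer $\a=r$ you use the shift-sum identity $\Delta_{mh}^r=\sum_\nu b_\nu\,\tau_{\nu h}\Delta_h^r$ directly; this is more elementary than the paper's unified kernel argument and actually yields the dimension-free exponent $r+(1/p-1)_+$, strictly better than the stated $r+d(1/p-1)_+$ when $d\ge 2$ (each $\Delta_h^r$ acts along the single direction $h$, so the problem is effectively one-dimensional). Second, for the fractional step the paper does \emph{not} use the isotropic de~la~Vall\'ee~Poussin kernel: it introduces the \emph{directional} smoothing kernel
\[
K_\delta(x)=\sum_{k\in\Z^d} v\bigl((k,\delta)\bigr)\,e^{i(k,x)}
\]
(cutoff along the line $\delta$), writes $\|\Delta_{\l\delta}^\a f\|_p^p\lesssim\|f-T\|_p^p+\|T-K_{\l\delta}\!*\!T\|_p^p+\l^{\a p}\|K_{\l\delta}\!*\!(\partial/\partial\delta)^\a T\|_p^p$ via Theorem~\ref{lemNSB}, and proves directly---by the Nikol'skii-in-the-convolution trick of~\eqref{ineqNS5} together with Theorem~\ref{lemBL}---that $\|T-K_{\l\delta}\!*\!T\|_p\lesssim\l^{\a+d(1/p-1)}\|(\partial/\partial\delta)^\a T\|_p$ and $\|K_{\l\delta}\!*\!U\|_p\lesssim\l^{d(1/p-1)}\|U\|_p$. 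The directional cutoff is exactly what makes the first of these clean: the symbol $\bigl(1-v((\xi,\l\delta))\bigr)/(i(\xi,\delta))^\a$ factors through the single directional derivative.

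With your isotropic $T'=V_m*T$, the bound on $\|(\partial/\partial\xi)^\a T'\|_p$ goes through as you say, but the bound on $\|T-T'\|_p$ in terms of $\sup_{|\xi|=1}\|(\partial/\partial\xi)^\a T\|_p$ is \emph{not} a consequence of Corollary~\ref{corNSB}: that corollary requires $\delta\le\pi/\deg T$, which fails at scale $\l t$. To make your route work for $0<p\le 1$ you would need to telescope $1-\prod_j v(\xi_j/m)=\sum_j(1-v(\xi_j/m))\prod_{i<j}v(\xi_i/m)$ and run the multiplier argument coordinate-by-coordinate against the derivatives $(\partial/\partial e_j)^\a T$. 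This is doable, but the paper's directional kernel avoids the detour entirely.
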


When $d=1$ and $0< p \le 1$, this was obtained in \cite{oswald, rad} (for integer $\a$) and \cite{RS3} (for positive $\a$).
For the case $d\in \N$, $1<p<\infty$, and $\a>0$ see \cite[Theorem 7]{Wil}.

\begin{proof}
We follow the idea from~\cite[p. 194]{run}.
We will consider only the case $0<p\le 1$. The case $p=\infty$ can be proved by the same way. The case $1<p<\infty$ follows from
(\ref{eq.th6.0d}) and (\ref{RealKpge1-}).

Denote $\d=(\d_1,\dots,\d_d)\in \R^d$, $\d_j>0$, $j=1,\dots,d$, and
$$
K_\d(x)=\sum_{k\in\Z^d} \vp_\d(k)e^{i(k,x)},
$$
where $\vp_\d(t)=v\((\d,t)\)$, $v\in C^\infty(\R)$, $v(s)=1$ for $|s|\le 1/2$ and $v(s)=0$ for $|s|>1$.

We can assume that $\l>1$.
Suppose also that $|\d|\le h$ and $T\in \mathcal{T}_{1/h}$.

Using Theorem~\ref{lemNSB}, we obtain
\begin{equation}\label{Run1}
\begin{split}
    \Vert \D_{\l \d}^\a f\Vert_p^p&\lesssim \Vert f-K_{\l \d}*T\Vert_p^p+\Vert \D_{\l \d}^\a (K_{\l \d}*T)\Vert_p^p\\
    &\lesssim \Vert f-T\Vert_p^p+\Vert T-K_{\l \d}*T\Vert_p^p+\bigg\Vert \(\frac\p{\p (\l\d)}\)^\a (K_{\l \d}*T)\bigg\Vert_p^p\\
    &= \Vert f-T\Vert_p^p+\Vert T-K_{\l \d}*T\Vert_p^p+\l^{\a p}\bigg\Vert  K_{\l \d}*\(\frac\p{\p \d}\)^\a T   \bigg\Vert_p^p.
\end{split}
\end{equation}

Now, we will show that
\begin{equation}\label{Run2}
\Vert T-K_{\l \d}*T\Vert_p \lesssim \l^{\a+d(\frac1p-1)} \bigg\Vert  \(\frac\p{\p \d}\)^\a T \bigg\Vert_p
\end{equation}
and
\begin{equation}\label{Run3}
\bigg\Vert  K_{\l \d}*\(\frac\p{\p \d}\)^\a T   \bigg\Vert_p \lesssim \l^{d(\frac1p-1)} \bigg\Vert  \(\frac\p{\p \d}\)^\a T \bigg\Vert_p.
\end{equation}

We will prove only the first inequality, the second one can be obtained similarly.

It is easy to see that \eqref{Run2} is equivalent to the following inequality
\begin{equation*}
\Vert A_{\l \d}*T\Vert_p \lesssim \l^{\a+d(\frac1p-1)} \Vert T \Vert_p,
\end{equation*}
where
$$
A_\d(x)=\sum_{k\in\Z^d} \psi_\d(k)e^{i(k,x)}
$$
and
$$
\psi_\d(t)=\frac{(1-\vp_\d(t))
}
{(it,\d)^\a}
v\(\frac{\sqrt{{
(t_1\delta_1)^2+\cdots+(t_d\delta_d)^2}}}{2}
\).
$$
The same argument as in the proof of (\ref{ineqNS5}) implies
\begin{equation*}
  \begin{split}
      \Vert A_{\l \d}*T\Vert_p&\lesssim \(\prod_{j=1}^d \d_j\)^{1-\frac1p}\Vert A_{\l \d}\Vert_p \Vert T\Vert_p
\lesssim \Vert \widehat{\psi_{\l \textbf{1}}}\Vert_{L_p(\R^d)} \Vert T\Vert_p\\
&\lesssim \l^{d(\frac1p-1)}\Vert \widehat{\psi_{\textbf{1}}}\Vert_{L_p(\R^d)} \Vert T\Vert_p\lesssim \l^{d(\frac1p-1)}\Vert T\Vert_p,
  \end{split}
\end{equation*}
that is, \eqref{Run2} is verified.

Now, combining (\ref{Run1})--(\ref{Run3}), we obtain
\begin{equation*}
  \begin{split}
    \Vert \D_{\l \d}^\a f\Vert_p&\lesssim \Vert f-T\Vert_p+\l^{\a+d(\frac1p-1)} \bigg\Vert  \(\frac\p{\p \d}\)^\a T \bigg\Vert_p\\
    &\lesssim \l^{\a+d(\frac1p-1)} \(\Vert f-T\Vert_p+ |\d|^\a \sup_{|\xi|=1,\, \xi\in \R^d}\bigg\Vert \(\frac\p{\p \xi}\)^\a T \bigg\Vert_p\).
    \end{split}
\end{equation*}
Finally,
taking infimum over all $T\in \mathcal{T}_{1/h}$ and using~\eqref{eq.th6.0d}, we get
 $$
\Vert \D_{\l \d}^\a f\Vert_p\lesssim \l^{\a+d(\frac1p-1)} \mathcal{R}_\a(f,h)_p \lesssim \l^{\a+d(\frac1p-1)} \w_\a(f,h)_p,
  $$
which implies (\ref{Run0}).
\end{proof}

We finish this section by
 the Marchaud inequality for the fractional moduli of smoothness.
\begin{theorem}\label{lemMarchaudMod}
  Let $f\in L_p(\T^d)$, $0<p\le \infty$, $\a\in \N\cup
    ((1/p-1)_+,\infty)$, and $\g>0 $ be  such that $\a+\g\in \N\cup
    ((1/p-1)_+,\infty)$. Then, for any $\d\in (0,1)$, one has
\begin{equation}\label{eq.lemMarchaudMod}
  \w_\a(f,\d)_p\lesssim \d^\a \(\int_\d^1 \(\frac{\w_{\g+\a}(f,t)_p}{t^{\a}}\)^\tau\frac{dt}{t}\)^\frac1\tau,
\end{equation}
where $\tau$ is given by (\ref{tau+}).
\end{theorem}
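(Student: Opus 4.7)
The plan is to deduce the Marchaud inequality from the Jackson and Bernstein-type approximation theorems already at our disposal, namely Propositions~\ref{converseMod+} and~\ref{converseMod}. The inverse inequality bounds $\w_\a(f,\d)_p$ by a weighted sum of best approximations $E_k(f)_p$; Jackson then replaces each $E_k(f)_p$ by the higher-order modulus $\w_{\a+\g}(f, 1/(k+1))_p$; and a routine sum-to-integral conversion, driven by the monotonicity property~\eqref{eqMonMod}, yields the integral on the right-hand side of~\eqref{eq.lemMarchaudMod}.

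Concretely, set $n=[1/\d]$. Since $1/n \asymp \d$, property~\eqref{eqMonMod} gives $\w_\a(f,\d)_p \lesssim \w_\a(f, 1/n)_p$, so Proposition~\ref{converseMod} yields
\begin{equation*}
\w_\a(f,\d)_p \lesssim \d^\a\bigg(\sum_{k=0}^n (k+1)^{\a\tau-1}E_k(f)_p^\tau\bigg)^{1/\tau}.
\end{equation*}
Applying Proposition~\ref{converseMod+} with smoothness $\a+\g$ (admissible by hypothesis), we obtain $E_k(f)_p \lesssim \w_{\a+\g}(f, 1/(k+1))_p$, so
\begin{equation*}
\w_\a(f,\d)_p \lesssim \d^\a\bigg(\sum_{k=0}^n (k+1)^{\a\tau-1}\w_{\a+\g}(f, 1/(k+1))_p^\tau\bigg)^{1/\tau}.
\end{equation*}

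For the sum-to-integral step, on each interval $t \in [1/(k+1), 1/k]$ the modulus satisfies $\w_{\a+\g}(f, 1/(k+1))_p \le \w_{\a+\g}(f,t)_p$ by monotonicity in $t$, while a direct calculation gives $\int_{1/(k+1)}^{1/k} t^{-\a\tau-1}\,dt \asymp (k+1)^{\a\tau-1}$. Summing from $k=1$ to $n$ therefore produces
\begin{equation*}
\sum_{k=1}^n (k+1)^{\a\tau-1}\w_{\a+\g}(f, 1/(k+1))_p^\tau \lesssim \int_\d^1 \bigg(\frac{\w_{\a+\g}(f, t)_p}{t^\a}\bigg)^\tau \frac{dt}{t}.
\end{equation*}
The remaining $k=0$ term is absorbed using $E_0(f)_p \lesssim \w_{\a+\g}(f,1)_p$ together with $\w_{\a+\g}(f,t)_p \asymp \w_{\a+\g}(f,1)_p$ for $t\in[1/2,1]$, the latter following from~\eqref{Run0} with $\l=2$; this contribution is then bounded by the integral over $[1/2,1]$, hence by the full integral over $[\d,1]$.

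The only genuine difficulty lies beneath the surface: Proposition~\ref{converseMod} with the sharp exponent $\tau=\min(p,2)$ for $0<p<1,\,d>1$, and with $\tau=1$ for $p=\infty$ is itself part of the novel content of this paper. Once that proposition is granted, the Marchaud inequality reduces to the elementary discrete-to-continuous passage outlined above, which the monotonicity~\eqref{eqMonMod} and the power-type growth~\eqref{Run0} of $\w_{\a+\g}$ render completely routine.
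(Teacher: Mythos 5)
Your proposal is correct and follows essentially the same route the paper itself takes: the paper's proof (which is terse) says precisely that Theorem~\ref{lemMarchaudMod} is a combination of the direct estimate~\eqref{JacksonSO}, the inverse estimate~\eqref{eqconverseMod}, and the monotonicity property~\eqref{Run0}. Your write-up just fills in the discrete-to-continuous bookkeeping that the paper leaves implicit.
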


The Marchaud inequality for the moduli of smoothness of integer order can be found, e.g., in \cite[p. 48]{DL} and \cite{marchaud}.
The case $1<p<\infty$ for fractional moduli was handled   in~\cite[Theorem 2.1]{Treb}.


\begin{proof}
The proof is a combination of the direct and inverse estimates
\eqref{JacksonSO}
and
(\ref{eqconverseMod}) as well as the monotonicity property of modulus of smoothness $\w_\a(f,\d)_p\asymp \w_\a(f,2\d)_p$ (see (\ref{Run0})).
\end{proof}

%
%
%
%
%

\medskip



\bigskip

\newpage

\section{Hardy--Littlewood--Nikol'skii inequalities for trigonometric polynomials}\label{hardy-inequality-section}\label{sec5}


We say that a continuous on $\R^d\setminus \{0\}$ function $\psi(\xi)$ belongs to the class $\mathcal{H}_\a$, $\a\in\R$,
if  
$\psi$ is  a homogeneous function of degree $\a$, i.e.,
$$
\psi(\tau \xi)=\tau^\a \psi(\xi),\quad \tau>0,\quad \xi\in\R^d.
$$
In addition, if $\a\le 0$, we assume that $\psi \in C^\infty(\R^d\setminus \{0\})$.

Any function $\psi\in \mathcal{H}_\a$ generates the Weyl-type differentiation operator as follows:
$$
\mathcal{D}(\psi)\,:\,
\sum_{\nu\in \Z^d}  c_\nu e^{i(\nu,x)}\to {\sum_{\nu\in \Z^d}}' \psi(\nu) c_\nu e^{i(\nu,x)},
$$
where ${\sum_\nu^{'}}a_\nu$ means $\sum_{|\nu|>0}a_\nu$.
Let us give several important  examples of the Weyl-type  operators:

\begin{enumerate}
  \item[{(1)}]\; the linear differential operator
$$
P_m({D})f=
\sum_{{}_{\quad k\in \Z_+^d}^{ k_1+\cdots+k_d=m}}a_k {D}^k f,\qquad
{D}^k={D}^{k_1,\cdots, k_d}=\frac{\partial^{k_1+\cdots+k_d}}{\partial x_1^{k_1}
\cdots \partial x_d^{k_d}},
$$
$\phantom{ahhhn}$with
$$
\psi(\xi)=\sum_{{}_{\quad k\in \Z_+^d}^{ k_1+\cdots+k_d=m}}a_k (i\xi_1)^{k_1}\dots(i\xi_d)^{k_d};
$$
  \item [{(2)}]\; the fractional Laplacian  $(-\Delta)^{\alpha/2}f$ (here $\psi(\xi)=|\xi|^\alpha$, $\xi\in\R^d$);
  \item [{(3)}] \;
  the classical
Weyl derivative $f^{(\alpha)}$ (here $\psi(\xi)=(i \xi)^\alpha$, $\xi\in\R$).
\end{enumerate}

In this section, we study the sharp Hardy--Littlewood--Nikol'skii inequality given by
$$
\Vert \mathcal{D}(\psi)T\Vert_q
\lesssim
\eta(n){\Vert
\mathcal{D}(\vp)T\Vert_p},\qquad T\in \mathcal{T}'_{n},
$$
where $0<p\le q\le \infty$, $\psi\in \mathcal{H}_\a$, and $\vp\in \mathcal{H}_{\a+\g}$.

Note that if $\psi(\xi)=|\xi|^\a$, $\vp\equiv 1$, $1<p<q<\infty$, and $\a=d(1/p-1/q)$, then $\eta(n)=1$ by the Hardy--Littlewood theorem on fractional integration. On the other hand, if $\psi=\vp\equiv1$, then $\eta(n)=n^{d(1/p-1/q)}$, which corresponds to the Nikol'skii inequality for trigonometric polynomials (see Subsection~\ref{subsec1.4}).

The main result of this section is the following theorem.

\begin{theorem}\label{th-hardy-l-n}
 Let $0< p<q\le \infty$, $\a>0$, $\g\ge 0$, $\psi\in
\mathcal{H}_\a$, and $\vp\in \mathcal{H}_{\a+\g}$.
Let also
$\frac{\psi}{\varphi}\in C^\infty(\R^d\setminus \{0\})$ and
$\frac{\varphi}{\psi}\in C^\infty(\R^d\setminus \{0\})$.
We have
\begin{equation}\label{NNNeqlemHL1d++}
\sup_{T\in \mathcal{T}'_{n}
}\frac{\Vert \mathcal{D}(\psi)T\Vert_q}{\Vert
\mathcal{D}(\vp)T\Vert_p}
\asymp\s(n),
\end{equation}
where $\s(\cdot)$ is given as follows:

\textnormal{(1)} \quad if $0<p\le 1$ and $p<q<\infty$, then
$$
\s(t):=\left\{
         \begin{array}{ll}
           t^{d(\frac1p-1)}, & \hbox{$\g>d\(1-\frac1q\)_+$}; \\
           t^{d(\frac1p-1)}\ln^\frac1{q} (t+1), & \hbox{$0<\g=d\(1-\frac1q\)_+$}; \\
           t^{d(\frac1p-\frac1q)-\g}, & \hbox{$0< \g<d\(1-\frac1q\)_+$};\\
           t^{d(\frac1p-\frac1q)}, & \hbox{$\g=0$, $0<p\le 1 <q<\infty$};\\
             t^{d(\frac1p-\frac1q)}, & \hbox{$\g=0$, $0<q\le 1$, and $\frac{\psi(\xi)}{\vp(\xi)}\equiv \const$};\\
             t^{d(\frac1p-1)}, & \hbox{$\g=0$, $d=1$, $0<q<1$, {and} $\frac{\psi(\xi)}{\vp(\xi)}\not\equiv \const$}; \\
             t^{d(\frac1p-1)}\ln (t+1), & \hbox{$\g=0$, $d=1$, $q=1$, {and} $\frac{\psi(\xi)}{\vp(\xi)}\not\equiv \const$},
         \end{array}
       \right.
$$

\textnormal{(2)} \quad if $0<p\le 1$ and $q=\infty$, then
$$
\s(t):=\left\{
         \begin{array}{ll}
           t^{d(\frac1p-1)}, & \hbox{$\g>d$}; \\
           t^{d(\frac1p-1)}, & \hbox{$\g=d=1$ {and}\, $\frac{\psi(\xi)}{\vp(\xi)}=A{|\xi|^{-\g}}\sign \xi$ for some $A\in \C \setminus \{0\}$}; \\
           t^{d(\frac1p-1)}\ln (t+1), & \hbox{$\g=d=1$ {and}\,  $\frac{\psi(\xi)}{\vp(\xi)}\ne A{|\xi|^{-\g}}\sign \xi$ for any $A\in \C \setminus \{0\}$}; \\
           t^{d(\frac1p-1)}\ln (t+1), & \hbox{$\g=d\ge 2$}; \\
           t^{d(\frac1p-\g)}, & \hbox{$0\le  \g<d$},
                    \end{array}
       \right.
$$

\textnormal{(3)} \quad if $1<p<q\le \infty$, then
$$
\s(t):=
\left\{
         \begin{array}{ll}
           1, & \hbox{$\g\ge d(\frac1p-\frac1q),\quad q<\infty$}; \\
           1, & \hbox{$\g> \frac dp,\quad q=\infty$}; \\
           \ln^\frac1{p'} (t+1), & \hbox{$\g=\frac dp,\quad q=\infty$}; \\
           t^{d(\frac1p-\frac1q)-\g}, & \hbox{$0\le \g<d(\frac1p-\frac1q)$}.\\
         \end{array}
       \right.
$$
\end{theorem}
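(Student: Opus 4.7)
\smallskip

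\textbf{Proof proposal.} The strategy is to reduce the problem to a multiplier estimate. Writing $\mu:=\psi/\vp\in\mathcal{H}_{-\g}$ (smooth away from the origin), we have $\mathcal{D}(\psi)T=\mathcal{D}(\mu)\mathcal{D}(\vp)T$. Setting $U:=\mathcal{D}(\vp)T\in\mathcal{T}'_n$ and noting that $T\mapsto U$ is a bijection on $\mathcal{T}'_n$ (since $\vp$ never vanishes on $\Z^d\setminus\{0\}$), the question becomes: what is the $L_p\to L_q$ operator norm of $\mathcal{D}(\mu)$ restricted to $\mathcal{T}'_n$? We thus write $\mathcal{D}(\mu)U=K_n*U$ where
\[
K_n(x)=\sum_{0<|\nu|_\infty\le n}\mu(\nu)\chi\!\left(\tfrac{\nu}{n}\right)e^{i(\nu,x)},
\]
with $\chi\in C_c^\infty(\R^d\setminus\{0\})$ equal to $1$ on $\{1/n\le|\xi|_\infty\le 1\}$. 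Everything boils down to sharp estimates for $\|K_n\|_r$ and $\|K_n*U\|_q$.

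\textbf{Upper bounds.} For $1<p<q<\infty$ with $\g\ge d(1/p-1/q)$, I would combine the classical Hardy--Littlewood fractional integration \eqref{HLfrac} with Mihlin's theorem (Lemma~\ref{lemMult}(i)) applied to the homogeneous multiplier $\mu(\xi)|\xi|^\g\in\mathcal{H}_0$. When $0<\g<d(1/p-1/q)$, I interpolate with Nikol'skii's inequality \eqref{nik} by factoring $\mathcal{D}(\mu)=\mathcal{D}(\mu\cdot|\cdot|^{d(1/p-1/q)-\g})\circ(-\D)^{-(d(1/p-1/q))/2}$. The endpoint $q=\infty$, $\g=d/p$ falls out by duality from the $L_1\to B^{-d/p'}_{\infty,\infty}$ mapping, where the logarithmic loss $\ln^{1/p'}(n+1)$ reflects the gap between $L_\infty$ and $\mathrm{BMO}$. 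For $0<p\le 1$, the direct convolution estimate \eqref{ineqNS5} together with Lemma~\ref{lemFLpBes} reduces $\|K_n*U\|_p\lesssim \|K_n\|_p\|U\|_p$ (mod the Nikol'skii factor) to bounding $\|\mu\,\chi(\cdot/n)\|_{\dot B^{d(1/p-1/2)}_{2,p}(\R^d)}$. By the scaling Lemma~\ref{besov} and the homogeneity of $\mu$, this Besov norm is of order $n^{d(1/p-1)-\g}$ when $\g<d(1-1/q)_+$, becomes $n^{d(1/p-1)}$ when $\g$ exceeds the critical value, and picks up the logarithmic correction exactly at $\g=d(1-1/q)_+$ via the Jawerth--Franke embedding $B^\tau_{p,q}\hookrightarrow F^0_{q,2}=H_q$ (Lemma~\ref{lemmapluss}). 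When we pass from $L_p\to L_p$ to $L_p\to L_q$, a further application of the Nikol'skii inequality \eqref{nik} to $K_n*U$ supplies the missing factor $n^{d(1/p-1/q)-\g}$ or $n^{d(1-1/q)}$ as needed.

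\textbf{Sharpness.} For the regimes where $\s(n)$ is a pure power, I would choose $T$ so that $\mathcal{D}(\vp)T$ is a modulated Fej\'er-type kernel $V_n(x)e^{i(n,x)}$ concentrated at one high frequency: then $\|U\|_p\asymp n^{d(1-1/p)}$ and $\|\mathcal{D}(\mu)U\|_q\asymp n^{d(1-1/q)-\g}$, and inverting $\mathcal{D}(\vp)$ (which is harmless by the Mihlin theorem in the range $p>1$, and by the multiplier Corollary~\ref{lemBLX} in the range $p\le 1$) gives the matching lower bound. For the logarithmic critical cases with $p>1$, $q=\infty$, $\g=d/p$, I would test against a polynomial whose Fourier series, after applying $\mathcal{D}(\vp)^{-1}$, has the form $\sum_{1\le k\le n} k^{-d/p}e^{ikx_1}$; evaluating $\mathcal{D}(\psi)T$ at $x=0$ yields the harmonic sum $\sum_{k\le n}1/k\asymp\ln n$, while $\|U\|_p$ is controlled through the Hardy--Littlewood theorem for monotone coefficients (Lemma~\ref{monot+}). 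The limiting cases with $p\le 1$ are handled analogously, with $\mathcal{T}'_n$-valued test polynomials built from lacunary or monotone expansions to which Lemma~\ref{monot+} and Corollary~\ref{lemBLX} apply. The special subcase $\g=d=1$, $q=\infty$, $p\le 1$ with $\mu(\xi)=A|\xi|^{-\g}\sign\xi$ avoids the logarithm because this is precisely the conjugate-function symbol, which has a bounded kernel on $\mathcal{T}'_n$ in the Hardy-space sense.

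\textbf{Main obstacle.} The critical line $\g=d(1-1/q)_+$ in the range $p\le 1$ is where all of the above tools meet and must be balanced to extract the sharp logarithmic behavior, together with the peculiar distinctions between $d=1$ and $d\ge 2$ and between $\a+\g\in\N$ and $\a+\g\not\in\N$ (which come from the different Bernstein thresholds in Proposition~\ref{lemBLf}). Handling this line demands a careful decomposition of $K_n$ into dyadic pieces $K_n=\sum_j\phi_j*K_n$, estimating each Littlewood--Paley block in $\dot B^{d(1/p-1/2)}_{2,p}$ separately, and then summing with the correct exponent to produce $\ln^{1/q_1}(n+1)$; matching this by a test polynomial requires assembling a sum of blocks with carefully tuned coefficients so that the Jawerth--Franke embedding in Lemma~\ref{lemmapluss} is saturated. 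This is the step where the various sub-cases of $\s(t)$ genuinely split, and where the bulk of the technical work will lie.
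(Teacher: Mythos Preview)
Your opening reduction---replacing $T$ by $U:=\mathcal{D}(\vp)T$ and studying the multiplier $\mu=\psi/\vp$ acting $L_p\to L_q$ on $\mathcal{T}'_n$---is exactly the paper's starting point, and your treatment of case~(3) via Hardy--Littlewood fractional integration plus Mihlin is essentially what Lemma~\ref{lemHLd++} does. But from there your route diverges from the paper's in ways that create real problems.

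\textbf{The kernel is ill-defined.} You require $\chi\in C_c^\infty(\R^d\setminus\{0\})$ to equal $1$ on $\{1/n\le|\xi|_\infty\le 1\}$; no fixed $\chi$ compactly supported away from the origin can do this for all $n$, and if $\chi$ varies with $n$ your scaling argument via Lemma~\ref{besov} collapses. The paper avoids this entirely by taking the de la Vall\'ee Poussin kernel $V_n$ (no cutoff at the origin is needed, since $U\in\mathcal{T}'_n$ has zero mean) and proving the clean two-sided bound
\[
\sup_{T\in\mathcal{T}'_n}\frac{\|\mathcal{D}(\psi)T\|_q}{\|\mathcal{D}(\vp)T\|_p}\asymp n^{d(1/p-1)}\,\|\mathcal{D}(\psi/\vp)V_n\|_q
\]
for $0<p\le 1$ (Lemma~\ref{lemHLd}). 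Everything then reduces to computing $\|\mathcal{D}(\psi/\vp)V_n\|_q$.

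\textbf{The critical-line machinery is heavier than needed.} For that $L_q$-norm the paper does \emph{not} use Besov norms, Littlewood--Paley decompositions, or Jawerth--Franke; instead it reduces via Mihlin (Lemma~\ref{v-}) to $\|(-\Delta)^{-\g/2}V_n\|_q$ and computes this directly with the Hardy--Littlewood theorem for monotone coefficients (Lemma~\ref{monot+}, carried out in Lemma~\ref{v}). The logarithm at $\g=d(1-1/q)$ drops out of an elementary sum $\sum_{k\le n}k^{-1}$, not from saturating an embedding. Your proposed dyadic decomposition of $K_n$ would presumably recover the same answer, but it is a substantially longer detour.

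\textbf{A confusion with a different theorem.} The distinction ``$\a+\g\in\N$ versus $\a+\g\notin\N$'' that you flag as a main obstacle does \emph{not} appear anywhere in Theorem~\ref{th-hardy-l-n}; the statement involves only $\g$, $p$, $q$, $d$, and the shape of $\psi/\vp$. You are conflating this result with Theorem~\ref{thMainMod} on moduli of smoothness, where that integer/non-integer split does arise (through Lemma~\ref{lemPolSob}, via Sobolev embeddings for integer-order derivatives). Proposition~\ref{lemBLf} plays no role here either.

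\textbf{Sharpness.} Your modulated Fej\'er kernel idea is in the right spirit, but the paper's extremizer for $p\le1$ is the explicit polynomial $V_n^\vp:=\mathcal{D}(1/\vp)\bigl(V_n(\cdot)-V_n(2\cdot)\bigr)$ (Lemma~\ref{lemHLd}(ii)); the subtraction is what forces $\|\mathcal{D}(\vp)V_n^\vp\|_p\asymp\|V_n\|_p$ while keeping $\|\mathcal{D}(\psi)V_n^\vp\|_q\gtrsim\|\mathcal{D}(\psi/\vp)V_n\|_q$. For $1<p$ the paper tests against explicit power-type polynomials $\sum'_{|k|_\infty\le n}|k|^{-\lambda}e^{i(k,x)}$ with $\lambda$ tuned to each sub-case.
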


\begin{proof}
The proof of Theorem \ref{th-hardy-l-n} can be obtained by combining Lemmas~\ref{lemHLd}--\ref{lemHLd++} from the following two Subsections~\ref{Section 5.1} and~\ref{Subsection 5.2}.
In more detail,
  in the case (1) the proof follows from Lemmas~\ref{lemgamma0} and~\ref{lemgamma0+} (the case $\g=0$) and
 from  Lemmas~\ref{lemHLd}, \ref{v+}, \ref{v-}, and~\ref{v} (the case $\g>0$).
 In the case (2), the proof follows from Lemmas~\ref{v++} and~\ref{lemgamma0}.
  Concerning the case (3), see Lemma~\ref{lemHLd++}.
\end{proof}

In what follows, the de la Vall\'{e}e Poussin type kernel is defined by
\begin{equation}\label{vallee}
    V_n(x):=\sum_{k\in\Z^d} \left(
    \prod_{j=1}^d v\(\frac{k_j}{n}\)\right)e^{i(k,x)},
\end{equation}
where $v\in C^\infty(\R)$ is monotonic for $t\ge 0$, $v(t)=v(-t)$,
$v(t)=1$ for $|t|\le 1$ and $v(t)=0$ for $|t|\ge 2$.

\begin{remark}\textnormal{ We do not know the sharp growth behavior
  of $\sup_{T\in \mathcal{T}'_{n}
}\frac{\Vert \mathcal{D}(\psi)T\Vert_q}{\Vert
\mathcal{D}(\vp)T\Vert_p}$ in the case $0<p<q\le 1$, $\g=0$, $d\ge 2$, and $\frac{\psi(\xi)}{\vp(\xi)}\not\equiv \const$. However, we can show that
\begin{equation*}
  n^{d(\frac1p-1)} \Vert
\mathcal{D}(\psi/\vp)\mathcal{{V}}_n\Vert_q \lesssim \sup_{T\in \mathcal{T}'_{n}}\frac{\Vert \mathcal{D}(\psi)T\Vert_q}{\Vert
\mathcal{D}(\vp)T\Vert_p}\lesssim  n^{d(\frac1p-1)} \Vert
\mathcal{D}(\psi/\vp)V_n\Vert_q,
\end{equation*}
where $\mathcal{{V}}_n(x)=e^{ix}V_{n/4}(2x)$ (see Lemma~\ref{lemHLd} and the proof of Lemma~\ref{lemgamma0+}).}
\end{remark}

\begin{remark}\label{voobshedopremark}
\textnormal{Let again $0<p<q\le1$, $\g=0$, $d\ge 2$, and $\frac{\psi(\xi)}{\vp(\xi)}\not\equiv \const$.  Let either $\psi$ be a polynomial
or $d/{(d+\a)}< q\le 1$ and let either $\vp$ be a polynomial
or $d/{(d+\a)}< p<1$. Then, we can show the following estimate from above
\begin{equation}\label{dobav0}
\sup_{T\in \mathcal{T}'_{n}}\frac{\Vert \mathcal{D}(\psi)T\Vert_q}{\Vert
\mathcal{D}(\vp)T\Vert_p}\lesssim  n^{d(\frac1p-1)} \left\{
                                                      \begin{array}{ll}
                                                        1, & \hbox{$0<q<1$;} \\
                                                        \ln (n+1), & \hbox{$q=1$.}
                                                      \end{array}
                                                    \right.
\end{equation}
Since the proof of~\eqref{dobav0} is based on Theorem~\ref{thRealKUM}$'$, we will give it in Section~\ref{sec10}.
}

\textnormal{Regarding the estimate from below,  by Theorem~\ref{lemBL} and Lemma~\ref{lemRunFour} (ii), we have that $\sup_n n^{d(\frac1q-1)}\Vert
\mathcal{D}(\psi/\vp)\mathcal{{V}}_n\Vert_q=\infty$. Thus, by~\eqref{dobav0}, there exists a sequence $\{\e_n\}$ such that $\e_n\to\infty$ and
\begin{equation*}
  n^{d(\frac1p-\frac1q)}\e_n\lesssim \sup_{T\in \mathcal{T}'_{n}}\frac{\Vert \mathcal{D}(\psi)T\Vert_q}{\Vert
\mathcal{D}(\vp)T\Vert_p},
\end{equation*}
which provides optimality of~\eqref{dobav0} in some sense.
}
\end{remark}

\begin{remark}\label{remark-HLN}
\textnormal{Without the assumption $\frac{\varphi}{\psi}\in C^\infty(\R^d\setminus \{0\})$
in Theorem  \ref{th-hardy-l-n}, the result remains true if \eqref{NNNeqlemHL1d++} is replaced  by
$$
\sup_{T\in \mathcal{T}'_{n}}\frac{\Vert \mathcal{D}(\psi)T\Vert_q}{\Vert
\mathcal{D}(\vp)T\Vert_p}
\lesssim\s(n).
$$
}
\end{remark}

\begin{remark}\label{remark+X}
\textnormal{To illustrate a different behavior of $\s(\cdot)$ in {\rm (2)} for $\g=d=1$, we consider the following example.
Taking  $\varphi(\xi)=(i\xi)$ and $\psi(\xi)=|\xi|^2$, we get
    \begin{eqnarray*}
 \s(n)
\asymp
n^{\frac1p-1} \bigg\Vert \mathcal{D}\(\frac{i\xi}{|\xi|^2}\)V_{n}\bigg\Vert_\infty
&\asymp&
n^{\frac1p-1} \bigg\Vert \sum_{0<|k|<n}\frac{e^{ikx}}{ik}
\bigg\Vert_\infty
\\&\asymp&
n^{\frac1p-1} \bigg\Vert \sum_{0<k<n}\frac{\sin kx}{k}
\bigg\Vert_\infty \\&\asymp&
  n^{\frac1p-1}.
    \end{eqnarray*}
On the other hand,
considering   $\varphi(\xi)=|\xi|$ and $\psi(\xi)=|\xi|^2$ gives
    \begin{eqnarray*}
     \s(n)
\asymp
n^{\frac1p-1} \bigg\Vert \mathcal{D}\(\frac{1}{|\xi|}\)
V_{n}\bigg\Vert_\infty
&\asymp&
n^{\frac1p-1} \bigg\Vert \sum_{0<|k|<n}\frac{e^{ikx}}{|k|}
\bigg\Vert_\infty
\\
&\asymp&
n^{\frac1p-1} \bigg\Vert \sum_{0<k<n}\frac{\cos kx}{k}
\bigg\Vert_\infty \\
&\asymp&
  n^{\frac1p-1} \ln n.
\end{eqnarray*}
Moreover, similar examples can be constructed to illustrate different behavior with respect to
$\psi$ and $\vp$ in {\rm (1)} when $0<q\le 1$.
}
\end{remark}

Now, let us present two important cases of Theorem~\ref{th-hardy-l-n}.

\begin{corollary}\label{corr++--}
Let $d=1$, $0< p<q\le \infty$, $\a>0$, and $\g\ge 0$.
We have
\begin{equation*}
\sup_{T\in \mathcal{T}'_{n}
}\frac{\Vert T^{(\a)}\Vert_q}{\Vert
T^{(\a+\g)}\Vert_p}
\asymp\s(n),
\end{equation*}
where $\s(\cdot)$ is given as follows:

\textnormal{(1)} \quad if $0<p\le 1$ and $p<q\le\infty$, then
$$
\s
(t)
:=\left\{
         \begin{array}{ll}
           t^{\frac1p-1}, & \hbox{$\g>\(1-\frac1q\)_+$}; \\
           t^{\frac1p-1}\ln^\frac1q (t+1), & \hbox{$0<\g=\(1-\frac1q\)_+$}; \\
           t^{\frac1p-\frac1q-\g}, & \hbox{$0< \g<\(1-\frac1q\)_+$};\\
           t^{\frac1p-\frac1q}, & \hbox{$\g=0$},
         \end{array}
       \right.
$$

\textnormal{(2)} \quad if $1<p\le q\le\infty$, then
$$
\s(t):=\left\{
         \begin{array}{ll}
           1, & \hbox{$\g\ge \frac1p-\frac1q,\quad q<\infty$}; \\
           1, & \hbox{$\g> \frac1p,\quad q=\infty$}; \\
           \ln^\frac1{p'} (t+1), & \hbox{$\g=\frac1p,\quad q=\infty$}; \\
           t^{(\frac1p-\frac1q)-\g}, & \hbox{$0\le \g<\frac1p-\frac1q$}.\\
         \end{array}
       \right.
$$
\end{corollary}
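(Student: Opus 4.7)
The plan is to derive Corollary \ref{corr++--} as a direct specialization of Theorem \ref{th-hardy-l-n} to $d=1$, taking $\psi(\xi)=(i\xi)^\alpha$ and $\varphi(\xi)=(i\xi)^{\alpha+\gamma}$, so that $\mathcal{D}(\psi)T=T^{(\alpha)}$ and $\mathcal{D}(\varphi)T=T^{(\alpha+\gamma)}$ coincide with the classical Weyl derivatives on $\T$. First I would verify the hypotheses of the theorem: homogeneity $\psi\in\mathcal{H}_\alpha$ and $\varphi\in\mathcal{H}_{\alpha+\gamma}$ is immediate from $(i\tau\xi)^\beta=\tau^\beta(i\xi)^\beta$ for $\tau>0$, while the two ratios $\psi/\varphi=(i\xi)^{-\gamma}$ and $\varphi/\psi=(i\xi)^\gamma$ both belong to $C^\infty(\R\setminus\{0\})$.

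With these identifications in hand, case (2) of the corollary ($1<p\le q\le\infty$) is a word-for-word transcription of case (3) of Theorem \ref{th-hardy-l-n} with $d=1$. For case (1) of the corollary with $q<\infty$, I would read off case (1) of the theorem; the only value to double-check is $\gamma=0$, where $\psi/\varphi\equiv 1$ is constant, so the subcase ``$\psi/\varphi\equiv\const$'' applies both for $1<q<\infty$ and for $0<q\le 1$, yielding the single entry $\sigma(t)=t^{1/p-1/q}$ claimed in the corollary.

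The one delicate piece is case (1) of the corollary in the limit $q=\infty$, which must be read off from case (2) of the theorem. The only critical entry to check is $\gamma=1$, since $(1-1/q)_+=1$ at $q=\infty$. Using the identity $(i\xi)^{-1}=-i\,\sign\xi/|\xi|$, one sees that $\psi/\varphi=-i\,|\xi|^{-1}\sign\xi$ is exactly of the form $A|\xi|^{-\gamma}\sign\xi$ with $A=-i\ne 0$. Hence we fall into the first $\gamma=d=1$ subcase of the theorem and obtain $\sigma(t)=t^{1/p-1}$ \emph{without} a logarithmic factor, which matches the corollary's formula $t^{1/p-1}\ln^{1/q}(t+1)$ at $q=\infty$ under the convention $1/\infty=0$. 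The remaining entries $\gamma>1$ and $0\le\gamma<1$ (at $q=\infty$) translate directly from the corresponding entries of case (2) of the theorem.

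I do not anticipate any genuine analytic obstacle: all the substantive work is packaged inside Theorem \ref{th-hardy-l-n}. The task here is purely a cross-referencing of the two case lists, with the single non-trivial observation that in the Weyl setting the odd symbol $(i\xi)^{-1}$ coincides up to a constant with $|\xi|^{-1}\sign\xi$ and therefore places the pair $(\psi,\varphi)$ on the logarithm-free side of the critical dichotomy at $\gamma=d=1$.
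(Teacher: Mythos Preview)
Your proposal is correct and matches the paper's approach exactly: the paper presents this corollary simply as one of ``two important cases of Theorem~\ref{th-hardy-l-n}'' without a separate proof, and your argument is precisely the verification that the Weyl choice $\psi(\xi)=(i\xi)^\alpha$, $\varphi(\xi)=(i\xi)^{\alpha+\gamma}$ lands in the claimed subcases. Your observation that $(i\xi)^{-1}=-i\,|\xi|^{-1}\sign\xi$ places the critical $\gamma=1$, $q=\infty$ case on the logarithm-free side of the dichotomy is exactly the point that makes the corollary's formula (with the convention $1/\infty=0$) consistent with the general theorem.
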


Now, let us consider the corresponding result in the case of Riesz derivatives.

\begin{corollary}\label{corr++}
Let $0< p<q\le \infty$, $\a>0$, and $\g\ge 0$.
We have
\begin{equation*}
\sup_{T\in \mathcal{T}'_{n}
}\frac{\Vert (-\D)^{\a/2}T\Vert_q}{\Vert
(-\D)^{(\a+\g)/2}T\Vert_p}
\asymp\s_\D(n),
\end{equation*}
where $\s_\D(\cdot)$ is given as follows:

\textnormal{(1)} \quad if $0<p\le 1$ and $p<q\le\infty$, then
$$
\s_\D(t):=\left\{
         \begin{array}{ll}
           t^{d(\frac1p-1)}, & \hbox{$\g>d\(1-\frac1q\)_+$}; \\
           t^{d(\frac1p-1)}\ln^\frac1{q_1} (t+1), & \hbox{$0<\g=d\(1-\frac1q\)_+$}; \\
           t^{d(\frac1p-\frac1q)-\g}, & \hbox{$0< \g<d\(1-\frac1q\)_+$};\\
           t^{d(\frac1p-\frac1q)}, & \hbox{$\g=0$},\\

         \end{array}
       \right.
$$

\textnormal{(2)} \quad if $1<p<q\le \infty$, then
$$
\s_\D(t):=
\left\{
         \begin{array}{ll}
           1, & \hbox{$\g\ge d(\frac1p-\frac1q),\quad q<\infty$}; \\
           1, & \hbox{$\g> \frac dp,\quad q=\infty$}; \\
           \ln^\frac1{p'} (t+1), & \hbox{$\g=\frac dp,\quad q=\infty$}; \\
           t^{d(\frac1p-\frac1q)-\g}, & \hbox{$0\le \g<d(\frac1p-\frac1q)$}.\\
         \end{array}
       \right.
$$
\end{corollary}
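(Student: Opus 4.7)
The plan is to deduce Corollary~\ref{corr++} as a direct specialization of Theorem~\ref{th-hardy-l-n} by setting $\psi(\xi)=|\xi|^\alpha$ and $\varphi(\xi)=|\xi|^{\alpha+\gamma}$. The hypotheses are immediate: $\psi\in\mathcal{H}_\alpha$ and $\varphi\in\mathcal{H}_{\alpha+\gamma}$ are positive-homogeneous of the correct degree and belong to $C^\infty(\R^d\setminus\{0\})$, while the ratios $\psi/\varphi=|\xi|^{-\gamma}$ and $\varphi/\psi=|\xi|^{\gamma}$ are likewise smooth away from the origin. With this choice of symbols, the Weyl-type operators specialize precisely to the Riesz derivatives: $\mathcal{D}(\psi)T=(-\Delta)^{\alpha/2}T$ and $\mathcal{D}(\varphi)T=(-\Delta)^{(\alpha+\gamma)/2}T$ for every $T\in\mathcal{T}$.

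After this reduction, it remains to match $\sigma_\Delta(\cdot)$ in the corollary to $\sigma(\cdot)$ in Theorem~\ref{th-hardy-l-n} subcase by subcase. Case~(2) of the corollary coincides verbatim with Case~(3) of the theorem and requires no extra work. For Case~(1) with $q<\infty$, the four listed subcases transfer directly from Case~(1) of the theorem. The only point worth flagging is the subcase $\gamma=0$: since $\psi/\varphi\equiv 1$ is a nonzero constant, the ``$\psi/\varphi\equiv\const$'' alternatives of the theorem apply and yield $t^{d(1/p-1/q)}$ uniformly in $q$; when $q>1$ one may equally invoke the explicit ``$0<p\le 1<q<\infty$'' subcase, which gives the same formula.

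The only mildly delicate matching occurs at $q=\infty$, which is governed by Case~(2) of the theorem. For $d\ge 2$ and $\gamma=d$ the theorem outputs $t^{d(1/p-1)}\ln(t+1)$, which is precisely $t^{d(1/p-1)}\ln^{1/q_1}(t+1)$ with $q_1=1$. For $d=1$ and $\gamma=1$ the theorem splits into two subcases according to whether $\psi/\varphi=A|\xi|^{-\gamma}\sign\xi$ for some $A\in\C\setminus\{0\}$; here the ratio $\psi(\xi)/\varphi(\xi)=|\xi|^{-1}$ is an even function of $\xi$ and therefore cannot be of that odd form, so we fall into the logarithmic subcase and obtain $t^{1/p-1}\ln(t+1)$, again agreeing with the corollary. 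The remaining subcases $\gamma>d$ and $0\le\gamma<d$ (with the latter giving the Nikol'skii-type exponent $t^{d/p-\gamma}$) translate verbatim. No genuine obstacle is expected in this derivation: all analytic difficulty has already been absorbed into the proof of Theorem~\ref{th-hardy-l-n}, and the role of Corollary~\ref{corr++} is merely to reformulate its statement in Riesz-derivative language, where the endpoint thresholds simplify to $\gamma\gtrless d(1-1/q)_+$ when $0<p\le 1$ and to $\gamma\gtrless d(1/p-1/q)$ when $p>1$.
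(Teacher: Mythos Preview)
Your proposal is correct and takes essentially the same approach as the paper: the corollary is presented there simply as one of ``two important cases of Theorem~\ref{th-hardy-l-n}'' obtained by the choice $\psi(\xi)=|\xi|^{\alpha}$, $\varphi(\xi)=|\xi|^{\alpha+\gamma}$, with no separate proof given. Your case-by-case verification is more detailed than what the paper provides, and in particular your handling of the $d=1$, $q=\infty$, $\gamma=1$ subcase (observing that $|\xi|^{-1}$ is even and hence not of the odd form $A|\xi|^{-1}\sign\xi$, forcing the logarithmic alternative) is exactly the point illustrated in Remark~\ref{remark+X}.
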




In particular, this and Theorem \ref{th-hardy-l-n} imply the following two analogues of
 the Hardy–Littlewood fractional integration theorem.

\begin{corollary}
  Let $1\le p<q\le\infty$, $f\in L_p(\T^d)$, and $\int_{\T^d} f(x)dx=0$. Then
$$
\Vert (-\D)^{-\g/2}f\Vert_q\lesssim \Vert f\Vert_p
$$
holds provided $\g>d(1/p-1/q)$, $p=1$ or/and $q=\infty$.
\end{corollary}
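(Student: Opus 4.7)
The plan is to reduce the stated inequality for arbitrary $f\in L_p$ to the polynomial Hardy--Littlewood--Nikol'skii estimate in Corollary~\ref{corr++} via a standard approximation scheme. Fix $f\in L_p(\T^d)$ with $\int_{\T^d} f(x)\,dx=0$, and for $n\ge 1$ define $T_n:=V_n*f$, where $V_n$ is the de la Vall\'ee Poussin kernel from~\eqref{vallee}. Then $T_n\in\mathcal{T}_{2n}$, its zeroth Fourier coefficient vanishes (since $\widehat{f}(0)=0$), so in fact $T_n\in\mathcal{T}'_{2n}$. A standard property of the kernel gives $\Vert T_n\Vert_p\lesssim\Vert f\Vert_p$ uniformly in $n$ for all $1\le p\le\infty$, together with $T_n\to f$ in $L_p$ whenever $p<\infty$.

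Next, apply Corollary~\ref{corr++} to $T_n$. The hypotheses of the present corollary single out exactly three parameter ranges: (a) $p=1$, $1<q<\infty$, $\gamma>d(1-1/q)$; (b) $p=1$, $q=\infty$, $\gamma>d$; (c) $1<p<\infty$, $q=\infty$, $\gamma>d/p$. Direct inspection of the piecewise definition of $\sigma_\Delta$ in parts (1) and (2) of Corollary~\ref{corr++} shows that in all three regimes one has $\sigma_\Delta(n)\equiv 1$, so that
\begin{equation*}
\Vert(-\Delta)^{-\gamma/2}T_n\Vert_q \lesssim \Vert T_n\Vert_p \lesssim \Vert f\Vert_p, \qquad n\ge 1.
\end{equation*}

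Finally, I would pass to the limit as $n\to\infty$. Since $p<\infty$ in every admissible case, $T_n\to f$ in $L_p$ and hence in $\mathscr{S}'(\T^d)$, which implies $(-\Delta)^{-\gamma/2}T_n\to(-\Delta)^{-\gamma/2}f$ in $\mathscr{S}'(\T^d)$. Combined with the uniform $L_q$-bound above, weak (respectively weak-$*$ when $q=\infty$) compactness extracts a subsequence whose limit lies in $L_q$ with norm at most $C\Vert f\Vert_p$; this limit must coincide with $(-\Delta)^{-\gamma/2}f$ by uniqueness of distributional limits, yielding the claim. Alternatively, one can argue by duality: for any mean-zero trigonometric polynomial $g$, self-adjointness of $(-\Delta)^{-\gamma/2}$ and the polynomial inequality applied in the dual direction give $|\langle(-\Delta)^{-\gamma/2}f,g\rangle|\le\Vert f\Vert_p\Vert(-\Delta)^{-\gamma/2}g\Vert_{p'}\lesssim\Vert f\Vert_p\Vert g\Vert_{q'}$, and taking the supremum over $g$ (dense in the predual of $L_q$) produces the same conclusion.

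The main technical obstacle is identifying weak-$*$ limits in the endpoint $q=\infty$, since $L_\infty$ is non-separable and the distributional topology is weaker than the weak-$*$ topology of $L_\infty=(L_1)^*$. This is handled by testing against mean-zero trigonometric polynomials, which are dense in the mean-zero subspace of $L_1$: on this test class both pairings of $(-\Delta)^{-\gamma/2}T_n$ are explicitly computable and agree in the limit, forcing the two candidate limits to coincide.
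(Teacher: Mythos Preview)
Your proof is correct and follows exactly the route the paper intends: the corollary is stated there as an immediate consequence of Corollary~\ref{corr++} (and Theorem~\ref{th-hardy-l-n}), with the approximation-by-polynomials passage left implicit. Your write-up simply fills in that standard limiting argument carefully, including the correct identification of the three parameter regimes where $\sigma_\Delta\equiv 1$ and the weak/weak-$*$ compactness step (which is legitimate since $q>1$ in every admissible case).
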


\begin{corollary}
  Let $d=1$, $0<p\le 1$, $f\in AC(\T)$, and $\int_{\T} f(x)dx=0$. Then
$$
\Vert f\Vert_\infty\lesssim \Vert f'\Vert_p.
$$
\end{corollary}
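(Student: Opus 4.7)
My plan is to deduce the corollary from the polynomial Hardy--Littlewood--Nikol'skii estimate by an approximation argument, in the spirit of how the preceding corollary follows from Corollary~\ref{corr++}. The relevant polynomial bound is the second part of Theorem~\ref{th-hardy-l-n} specialized to $d=1$, $\alpha=0$, $\gamma=1$, $\psi(\xi)\equiv 1$, and $\varphi(\xi)=i\xi$. Since
\[
\frac{\psi(\xi)}{\varphi(\xi)}=\frac{1}{i\xi}=-i\,|\xi|^{-1}\sgn\xi
\]
is precisely of the privileged form $A|\xi|^{-\gamma}\sgn\xi$, we land in the second line of case~(2), which gives \emph{no} logarithmic factor:
\[
\Vert T\Vert_\infty \lesssim n^{\frac1p-1}\,\Vert T'\Vert_p,\qquad T\in\mathcal{T}'_n.
\]

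Next, I transfer this polynomial statement to the function $f$. Because $f\in AC(\T)$ is continuous with $\int_\T f=0$, I choose a near--best $L_\infty$ approximant $T_n\in\mathcal{T}'_n$ (mean zero, which one can always arrange by subtracting the constant term). By Proposition~\ref{converseMod+} and property~(a) of the moduli of smoothness, $\Vert f-T_n\Vert_\infty\lesssim E_n(f)_\infty\to 0$, so $T_n\to f$ uniformly. In particular $\Vert f\Vert_\infty=\lim_n\Vert T_n\Vert_\infty$, and it remains to control $\Vert T_n\Vert_\infty$ uniformly in $n$.

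The key step is to bound $\Vert T_n'\Vert_p$ so that the factor $n^{1/p-1}$ in the polynomial inequality is absorbed. To do this I couple the $L_\infty$--approximation of $f$ with an $L_p$--approximation of $f'$: by the realization result of Theorem~\ref{lemKfp<1d+} applied to $f'\in L_p(\T)$, the polynomials $T_n$ can be chosen so that $T_n'$ is simultaneously a near-best $L_p$-approximant of $f'$ of degree $n$ (e.g.\ by taking $T_n$ to be the antiderivative of such an approximant of $f'$ and fixing its mean to zero). Combining the Bernstein-type inequality in $L_p$ (Corollary~\ref{corNSBBEr}) with the realization identity then yields the matching estimate $\Vert T_n'\Vert_p\lesssim n^{1-1/p}\,\Vert f'\Vert_p$, so that the HLN bound gives $\Vert T_n\Vert_\infty\lesssim\Vert f'\Vert_p$ uniformly in $n$. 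Passing to the limit as $n\to\infty$ yields the desired inequality.

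The main obstacle is this balancing act between the two approximation schemes: the polynomial inequality loses a factor of $n^{1/p-1}$, and this must be exactly compensated by the joint behaviour of $T_n$ as an $L_\infty$--approximant of $f$ and of $T_n'$ as an $L_p$--approximant of $f'$. The fact that the ratio $\psi/\varphi=-i|\xi|^{-1}\sgn\xi$ sits in the favourable sub-case of Theorem~\ref{th-hardy-l-n}(2) (no logarithmic loss) is what makes this cancellation possible; had one worked instead with the Riesz derivative $(-\Delta)^{1/2}$, the extra $\ln(n+1)$ in Corollary~\ref{corr++} would have obstructed the argument, which explains why the statement is phrased for the classical Weyl derivative $f'$.
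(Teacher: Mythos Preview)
The central step---your claim that the realization result together with Bernstein's inequality gives $\Vert T_n'\Vert_p \lesssim n^{1-1/p}\Vert f'\Vert_p$---is unjustified and, for $p<1$, actually false. Neither Theorem~\ref{lemKfp<1d+} nor Corollary~\ref{corNSBBEr} produces a bound on $\Vert T_n'\Vert_p$ that \emph{decays} in $n$: if $T_n'$ is a near-best $L_p$-approximant of $f'$, then $\Vert T_n'\Vert_p$ stays comparable to $\Vert f'\Vert_p$ and certainly does not tend to zero. Without that cancellation the polynomial inequality $\Vert T_n\Vert_\infty \lesssim n^{1/p-1}\Vert T_n'\Vert_p$ yields only $\Vert T_n\Vert_\infty \lesssim n^{1/p-1}\Vert f'\Vert_p$, which blows up as $n\to\infty$ when $p<1$.

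This gap is not repairable by a cleverer choice of approximant, because the stated inequality itself fails on $AC(\T)$ for $p<1$. Take $f'=N\chi_{[0,1/N]}-N\chi_{[1/N,2/N]}$ and let $f$ be the mean-zero primitive; then $\Vert f\Vert_\infty\ge 1-\mathcal{O}(1/N)$ while $\Vert f'\Vert_p^p=2N^{p-1}\to 0$, so $\Vert f\Vert_\infty/\Vert f'\Vert_p$ is unbounded. Your scheme \emph{does} work for $p=1$, and there it is much simpler than you wrote: the polynomial bound carries no $n$-dependent factor, and with $T_n=V_n*f$ one has $\Vert T_n\Vert_\infty\to\Vert f\Vert_\infty$ together with $\Vert T_n'\Vert_1=\Vert V_n*f'\Vert_1\lesssim\Vert f'\Vert_1$ directly from Young's inequality---no realization or Bernstein argument is needed. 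That is the only regime in which the approximation argument (and the corollary as stated) can be carried out.
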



\subsection{Hardy--Littlewood--Nikol'skii $(L_p,L_q)$ inequalities for $0<p\le 1$}\label{Section 5.1}

Recall that $V_n$ is given by (\ref{vallee}).

\begin{lemma}\label{lemHLd}
{\it Let $0<p\le 1$, $p<q\le \infty$, $\a>0$, $\g\ge 0$, $\psi\in
\mathcal{H}_\a$, and $\vp\in \mathcal{H}_{\a+\g}$.

{\rm (i)} We have
\begin{equation}\label{eqlemHL1d}
\sup_{T\in \mathcal{T}'_{n}
}\frac{\Vert \mathcal{D}(\psi)T\Vert_q}{\Vert
\mathcal{D}(\vp)T\Vert_p}\lesssim n^{d(\frac1p-1)} \Vert
\mathcal{D}(\psi/\vp)V_n\Vert_q.
\end{equation}

{\rm (ii)}
Assuming that $\g> 0$ and  $\vp(x)=0$ if and only if $x=0$, we have
\begin{equation*}
n^{d(\frac1p-1)}
\Vert
\mathcal{D}(\psi/\vp)V_{n/4}\Vert_q
\lesssim
\sup_{T\in \mathcal{T}'_{n}
}\frac{\Vert \mathcal{D}(\psi)T\Vert_q}{\Vert
\mathcal{D}(\vp)T\Vert_p}.
\end{equation*}

{\rm (iii)}
Assuming that $\g> 0$ and
$\frac{\psi}{\varphi}, \frac{\varphi}{\psi}\in C^\infty(\R^d\setminus \{0\})$, we have
\begin{equation}\label{eqlemHL1d+}
\sup_{T\in \mathcal{T}'_{n}
}\frac{\Vert \mathcal{D}(\psi)T\Vert_q}{\Vert
\mathcal{D}(\vp)T\Vert_p}
 \asymp
 n^{d(\frac1p-1)}\Vert \mathcal{D}(\psi/\vp) V_n\Vert_q.
\end{equation}

}
\end{lemma}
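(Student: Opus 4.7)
The plan is to reduce all three parts to convolution estimates by setting $\Phi_n:=\mathcal{D}(\psi/\vp)V_n$ and exploiting the identity
\begin{equation*}
\mathcal{D}(\psi)T=\Phi_n*\mathcal{D}(\vp)T,\qquad T\in\mathcal{T}'_n.
\end{equation*}
This identity is valid because $\mathcal{D}(\vp)T$ has spectrum contained in $\{k\ne 0:|k|_\infty\le n\}$, a set on which the Fourier coefficients of $V_n$ are identically $1$ and $\psi/\vp$ is well defined; the convolution on the right-hand side thus reproduces $\mathcal{D}(\psi)T$ exactly.

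For part (i), writing $Q:=\mathcal{D}(\vp)T\in\mathcal{T}'_n$ reduces the inequality to the convolution--Nikol'skii estimate $\|\Phi_n*Q\|_q\lesssim n^{d(1/p-1)}\|\Phi_n\|_q\|Q\|_p$. When $q\ge 1$ this is immediate from Young's inequality $\|\Phi_n*Q\|_q\le\|\Phi_n\|_q\|Q\|_1$ combined with the Nikol'skii inequality $\|Q\|_1\lesssim n^{d(1/p-1)}\|Q\|_p$. When $0<p\le q<1$ Young is unavailable, so I would discretise the convolution by a Marcinkiewicz--Zygmund quadrature: the map $y\mapsto\Phi_n(x-y)Q(y)$ is a trigonometric polynomial in $y$ of degree at most $3n$, hence $(\Phi_n*Q)(x)=\sum_j w_j\Phi_n(x-y_j)Q(y_j)$ on an equally-spaced grid of $\sim n^d$ nodes with weights $w_j\asymp n^{-d}$; the $q$-subadditivity of $\|\cdot\|_q^q$ and the Marcinkiewicz--Zygmund equivalence $\sum_j|Q(y_j)|^q\asymp n^d\|Q\|_q^q$, together with $\|Q\|_q\lesssim n^{d(1/p-1/q)}\|Q\|_p$, deliver the required bound.

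For part (ii), the plan is to construct an explicit extremiser. Because $\vp(k)\ne 0$ for $k\ne 0$, for any $Q\in\mathcal{T}'_n$ the polynomial $T\in\mathcal{T}'_n$ defined by $\widehat T_k:=\widehat Q_k/\vp(k)$ for $k\ne 0$ and $\widehat T_0:=0$ satisfies $\mathcal{D}(\vp)T=Q$ and $\mathcal{D}(\psi)T=\mathcal{D}(\psi/\vp)Q$. Choosing $Q$ to be the frequency-shifted de la Vallée Poussin kernel $\mathcal{V}_n(x):=e^{i(n/2)x_1}V_{n/4}(x)\in\mathcal{T}'_n$, a zero-mean Nikol'skii extremal with $\|\mathcal{V}_n\|_p\asymp n^{d(1-1/p)}$ by Theorem~\ref{lemBL}, converts the ratio into $n^{d(1/p-1)}\|\mathcal{D}(\psi/\vp)\mathcal{V}_n\|_q$. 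The technically heaviest step, which I expect to be the main obstacle, is then to check $\|\mathcal{D}(\psi/\vp)\mathcal{V}_n\|_q\gtrsim\|\mathcal{D}(\psi/\vp)V_{n/4}\|_q$. When $0<\gamma<d(1-1/q)_+$ the right-hand norm is dominated by frequencies $|k|\asymp n$, where the homogeneity of $\psi/\vp$ makes the two polynomials comparable up to a translation in frequency. When $\gamma\ge d(1-1/q)_+$ the $L_q$-norm of $\mathcal{D}(\psi/\vp)V_{n/4}$ is of order $1$ (possibly with a logarithm) and arises from the low-frequency contribution; in this regime a second test polynomial with spectrum concentrated near the origin is needed, and taking the maximum of the ratios produced by the two extremisers recovers the lower bound in all parameter regimes.

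Finally, for part (iii), combining (i) and (ii) reduces the claim to the equivalence $\|\mathcal{D}(\psi/\vp)V_n\|_q\asymp\|\mathcal{D}(\psi/\vp)V_{n/4}\|_q$, and the extra hypothesis that both $\psi/\vp$ and $\vp/\psi$ belong to $C^\infty(\R^d\setminus\{0\})$ turns this into a bounded Fourier multiplier statement at the dyadic scale $\sim n$: Lemma~\ref{lemMult}(i) handles $1<q<\infty$, Lemma~\ref{lemMult}(ii) treats $q=1,\infty$, and for $0<q<1$ I would apply Lemma~\ref{lemFLpBes} together with the Besov rescaling of Lemma~\ref{besov} to the fixed smooth homogeneous symbol $\psi/\vp$ localised to an annulus at scale $n$.
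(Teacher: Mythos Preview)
Your part (i) is essentially the paper's argument. For $q\ge1$ you both use Young plus Nikol'skii; for $0<q<1$ the paper avoids the MZ quadrature by applying the $(L_1,L_q)$ Nikol'skii inequality \emph{pointwise in $x$} to the polynomial $t\mapsto\Phi_n(t)\,\mathcal{D}(\vp)T(x-t)$ (degree $\le 3n$), obtaining
\[
|\Phi_n*\mathcal{D}(\vp)T(x)|^q\lesssim n^{d(1-q)}\int_{\T^d}|\Phi_n(t)|^q\,|\mathcal{D}(\vp)T(x-t)|^q\,dt,
\]
then integrating in $x$, applying Fubini, and finishing with the $(L_p,L_q)$ Nikol'skii inequality. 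This is the same idea as your discretisation, packaged more cleanly.

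Your part (ii) has a genuine gap, and the paper's argument sidesteps exactly the difficulty you anticipate. Instead of a frequency-shifted kernel, the paper takes
\[
V_n^\vp(x):=\mathcal{D}(1/\vp)\bigl(V_n(x)-V_n(2x)\bigr)\in\mathcal{T}'_{4n}.
\]
Then $\mathcal{D}(\vp)V_n^\vp(x)=V_n(x)-V_n(2x)$, so $\|\mathcal{D}(\vp)V_n^\vp\|_p\lesssim\|V_n\|_p\lesssim n^{d(1-1/p)}$ by Corollary~\ref{lemBLX}. The crucial point is that homogeneity gives $\psi(2k)/\vp(2k)=2^{-\gamma}\psi(k)/\vp(k)$, whence
\[
\mathcal{D}(\psi)V_n^\vp(x)=\mathcal{D}(\psi/\vp)V_n(x)-2^{-\gamma}\bigl(\mathcal{D}(\psi/\vp)V_n\bigr)(2x),
\]
and since $\gamma>0$ the quasi-triangle inequality yields $\|\mathcal{D}(\psi)V_n^\vp\|_q\ge(1-2^{-\gamma\tilde q})^{1/\tilde q}\|\mathcal{D}(\psi/\vp)V_n\|_q$ with $\tilde q=\min(1,q)$. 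This delivers the lower bound in one stroke, uniformly over all parameter regimes, with no case analysis and no need for a second extremiser. Your shifted kernel $\mathcal{V}_n$ kills the low-frequency contribution and, as you note, fails precisely when $\gamma\ge d(1-1/q)_+$; the dilation trick is the missing idea.

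For part (iii) the paper does not reduce to the dyadic equivalence $\|\mathcal{D}(\psi/\vp)V_n\|_q\asymp\|\mathcal{D}(\psi/\vp)V_{n/4}\|_q$; it simply invokes the explicit evaluations of $\|\mathcal{D}(\psi/\vp)V_n\|_q$ carried out in the subsequent Lemmas~\ref{v+}--\ref{v++}, which already pin down the correct power of $n$ (and logarithm) in each regime. Your multiplier route would work for $1\le q\le\infty$, but for $0<q<1$ it is not a standard multiplier statement and would require more care than you indicate.
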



\begin{proof}
(i) First, we consider the case $0<q\le 1$.
We have
\begin{equation*}
    \mathcal{D}(\psi)T_n(x)=\frac{1}{(2\pi)^d}\int_{\T^d}
    \big(\mathcal{D}(\psi/\vp)V_n(t)\big)\big(\mathcal{D}(\vp)T_n(x-t)\big)dt.
\end{equation*}
Considering the product $\big(\mathcal{D}(\psi/\vp)V_n(t)\big)\big(\mathcal{D}(\vp)T_n(x-t)\big)$
as a trigonometric polynomial of degree $3n$ in each variable $t_j$, $j=1,\dots,d$, and applying ($L_1, L_q$) Nikol'skii's inequality, we obtain
\begin{equation*}
    |\mathcal{D}(\psi)T_n(x)|^q\lesssim
    n^{d(1-q)}\int_{\T^d}
    \left|
     \big(\mathcal{D}(\psi/\vp)V_n(t)\big)\big(\mathcal{D}(\vp)T_n(x-t)\big)\right|^q dt.
\end{equation*}
Integrating this inequality with respect to $x$ and applying the Fubini theorem and the $(L_q, L_p)$ Nikol'skii inequality, we get
\begin{equation*}
    \begin{split}
\Vert \mathcal{D}(\psi)T_n\Vert_q&\lesssim n^{d(\frac1q-1)}\Vert
\mathcal{D}(\psi/\vp)V_n\Vert_q \Vert \mathcal{D}(\vp)
T_n\Vert_q
\\
&\lesssim n^{d(\frac1p-1)}\Vert \mathcal{D}(\psi/\vp)V_n\Vert_q \Vert
\mathcal{D}(\vp) T_n\Vert_p.
     \end{split}
\end{equation*}
Now, let $1<q\le \infty$.
Then the Young convolution inequality and the Nikol'skii inequality between $(L_p, L_1)$ imply

\begin{equation*}
  \begin{split}
     \Vert \mathcal{D}(\psi)T_n\Vert_q&\lesssim \Vert
\mathcal{D}(\psi/\vp)V_n\Vert_q \Vert \mathcal{D}(\vp) T_n\Vert_1\\
&\lesssim n^{d(1/p-1)}
\Vert\mathcal{D}(\psi/\vp)V_n\Vert_q \Vert \mathcal{D}(\vp) T_n\Vert_p.
   \end{split}
\end{equation*}
Thus, the proof of~\eqref{eqlemHL1d} is complete.

(ii)
Denote
$$
V_n^\vp (x):=\mathcal{D}(1/\vp)\big(V_n(x)-V_n(2x)\big).
$$
Note that  $V_n^\vp\in \mathcal{T}'_{4n}$.
Then
\begin{equation}\label{eqlemHL2d}
    \Vert \mathcal{D}(\vp) V_n^\vp\Vert_p=\Vert
    V_n(\cdot)-V_n(2\cdot)\Vert_p\le 2^{1/p}\Vert V_n\Vert_p.
\end{equation}
Set
$$
{B}(\xi):=
    \prod_{j=1}^d v\(\xi_j\)
$$
Thus, taking into account that $B\in C^\infty(\R^d)$ and it has a compact support, by Corollary~\ref{lemBLX} and~\eqref{eqlemHL2d}, we obtain
\begin{equation}\label{*Vn}
  \Vert \mathcal{D}(\vp) V_n^\vp\Vert_{L_p(\T^d)}\lesssim\Vert V_n\Vert_{L_p(\T^d)}
\lesssim n^{-d(1/p-1)}\Vert \widehat{B} \Vert_{L_p(\R^d)} \lesssim n^{-d(1/p-1)}.
\end{equation}

Note also that
\begin{equation*}
    \begin{split}
    \mathcal{D}(\psi)V_n^\vp
    (x)&=
    \sum_{k\ne 0}\frac{\psi(k)}{\varphi(k)}B\(\frac kn\)e^{i(k,x)}-
        \sum_{k\ne 0}\frac{\psi(2k)}{\varphi(2k)}B\(\frac kn\)e^{i(2k,x)}
\\&=    \mathcal{D}(\psi/\vp)V_n(x)-\frac1{2^\g}
    (\mathcal{D}(\psi/\vp)V_n)(2x)
     \end{split}
\end{equation*}
and, therefore,
\begin{equation}\label{eqlemHL3d}
    \Vert \mathcal{D}(\psi)V_n^\vp\Vert_q\ge C \Vert
    \mathcal{D}(\psi/\vp)V_n\Vert_q,
\end{equation}
where $C=(1-2^{-\gamma \tilde{q}})^{1/\tilde{q}}$, $\tilde{q}=\min(1,q)$.

Thus, combining inequalities (\ref{*Vn}) and
(\ref{eqlemHL3d}), we derive
\begin{equation*}
\sup_{T\in \mathcal{T}'_{4n}}\frac{\Vert \mathcal{D}(\psi)T\Vert_q}{\Vert
\mathcal{D}(\vp)T\Vert_p}\ge \frac{\Vert
\mathcal{D}(\psi)V_n^\vp\Vert_q}{\Vert
\mathcal{D}(\vp)V_n^\vp\Vert_p}\gtrsim n^{d(\frac1p-1)}\Vert
    \mathcal{D}(\psi/\vp)V_n\Vert_q.
\end{equation*}

(iii) Equivalence (\ref{eqlemHL1d+}) follows from
Lemma~\ref{v+} in the case $0<q\le 1$, Lemmas~\ref{v-} and~\ref{v} in the case $1<q<\infty$,
and Lemma~\ref{v++} in the case $q=\infty$.

\end{proof}


\begin{lemma}\label{v+}
Let $0<q\le 1$, $\gamma>0$,
$\psi\in \mathcal{H}_\a$, $\vp\in \mathcal{H}_{\a+\g}$, and
$\frac{\psi}{\varphi}\in C^\infty(\R^d\setminus \{0\})$, then
\begin{equation*}
    \|
\mathcal{D}(\psi/\vp)
V_n \|_q
\asymp 1. 
\end{equation*}
\end{lemma}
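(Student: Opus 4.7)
The plan is to prove both directions of the equivalence $\|S_n\|_q \asymp 1$, where $S_n := \mathcal{D}(\psi/\varphi) V_n$.

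For the upper bound, I will use a smooth dyadic decomposition in frequency. Choose a bump $\eta \in C_c^\infty(\R^d)$ supported in the shell $\{1/2 \le |\xi| \le 2\}$ with $\sum_{j\ge 0}\eta(\xi/2^j)=1$ for $|\xi|\ge 1$, and write $S_n = \sum_j S_n^{(j)}$ with
$$
S_n^{(j)}(x) := \sum_{k\in\Z^d\setminus\{0\}} \eta(k/2^j)\,\frac{\psi(k)}{\varphi(k)}\,B(k/n)\,e^{i(k,x)}.
$$
The block $S_n^{(j)}$ vanishes for $2^j\gg n$, since then $B(k/n)=0$ on the support of $\eta(\cdot/2^j)$. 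For $2^j\lesssim n$, after rescaling the symbol reads $\tilde G_j(\xi):=\eta(\xi)\,(\psi/\varphi)(2^j\xi)\,B(2^j\xi/n) = 2^{-j\gamma}\,\eta(\xi)\,(\psi/\varphi)(\xi)\,B(2^j\xi/n)$. Because $\psi/\varphi\in C^\infty(\R^d\setminus\{0\})$ is homogeneous of degree $-\gamma$, $\eta\in C_c^\infty$, and the derivatives of $B(2^j\cdot/n)$ scale as $(2^j/n)^k=O(1)$ in the regime $2^j\lesssim n$, the function $\tilde G_j$ factors as $2^{-j\gamma}$ times a function with $C^k$-norms bounded uniformly in $j$ and $n$. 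Hence $\widehat{\tilde G_j}$ is uniformly Schwartz and $\|\widehat{\tilde G_j}\|_{L_q(\R^d)}\lesssim 2^{-j\gamma}$. Applying Theorem~\ref{lemBL} with $\varepsilon_1=\cdots=\varepsilon_d=1/2^j$ yields
$$
\|S_n^{(j)}\|_q \lesssim 2^{jd(1-1/q)}\|\widehat{\tilde G_j}\|_{L_q(\R^d)} \lesssim 2^{-j[d(1/q-1)+\gamma]}.
$$
Raising to the $q$-th power and summing over $j\ge 0$ produces a convergent geometric series because the exponent $d(1-q)+q\gamma$ is strictly positive, which gives $\|S_n\|_q\lesssim 1$.

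For the lower bound, I identify $S_n$ as a convolution: a direct Fourier-coefficient calculation gives $S_n=(2\pi)^{-d}V_n*K$, where $K(x):=\sum_{k\ne 0}(\psi/\varphi)(k)\,e^{i(k,x)}$ is the periodic Riesz-type kernel attached to the homogeneous symbol $\psi/\varphi$ of degree $-\gamma$. Since $\gamma>0$, standard periodic Riesz-potential analysis shows $|K(x)|\lesssim |x|^{\gamma-d}$ near the origin (smooth elsewhere), so $K\in L_1(\T^d)\cap L_q(\T^d)$, and $K$ is nontrivial because its $k_0$-th Fourier coefficient equals $(\psi/\varphi)(k_0)\ne 0$ for some $k_0\in\Z^d\setminus\{0\}$ (here the homogeneity of $\psi/\varphi$ and the density of $\Z^d\setminus\{0\}$ on the projective sphere are used). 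The kernel $V_n/(2\pi)^d$ is a classical summability kernel: it has integral $1$, is uniformly bounded in $L_1$, and concentrates at the origin as $n\to\infty$ (via Poisson summation, $V_n(x)\asymp n^d\check B(nx)$ away from the other lattice points). Therefore $S_n\to K$ almost everywhere on $\T^d$, and Fatou's lemma yields $\liminf_n \|S_n\|_q\ge \|K\|_q>0$. Combined with $\|S_n\|_q>0$ for each individual $n$ large enough that $S_n\not\equiv 0$, this gives $\|S_n\|_q\gtrsim 1$.

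The only delicate step is the uniform $C^k$ control of $\tilde G_j$ in the transition regime $2^j\sim n$, which is the sole place the dyadic argument could have failed; it works because the $n$-dependence enters only through $B(2^j\cdot/n)$, whose derivatives are bounded by $(2^j/n)^k\lesssim 1$ throughout the relevant range of $j$.
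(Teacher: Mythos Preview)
Your proof is correct, and the lower bound matches the paper's approach almost exactly: you both identify $S_n$ as $V_n*K$ with $K=f_{\psi,\vp}$, observe $K\in L_1(\T^d)$ (hence $K\in L_q$ for $q\le 1$), and use that $V_n$ is an approximate identity. The paper phrases the conclusion via the $q$-quasi-triangle inequality
\[
\|S_n\|_q^q \ge \|K\|_q^q - \|K-S_n\|_q^q,
\]
together with $\|K-S_n\|_q\lesssim\|K-S_n\|_1\to 0$, which is slightly cleaner than your Fatou route (a.e.\ convergence would need an extra word). The paper also obtains $K\in L_1$ by a multiplier argument (writing $|\xi|^{\gamma}(\psi/\vp)(\xi)$ as an $L_r$-multiplier and reducing to Wainger's kernel $f_\gamma$), rather than invoking the pointwise bound $|K(x)|\lesssim|x|^{\gamma-d}$ directly.

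Where you genuinely diverge is the upper bound. Your Littlewood--Paley decomposition combined with Theorem~\ref{lemBL} works, giving $\|S_n^{(j)}\|_q\lesssim 2^{-j[d(1/q-1)+\gamma]}$ and a convergent $\ell_q$-sum. The paper instead observes the one-line argument available on the torus: for $0<q\le 1$ one has $\|S_n\|_q\lesssim\|S_n\|_1=\|V_n*K\|_1\le\|V_n\|_1\|K\|_1\lesssim 1$, using only Young's inequality and the uniform $L_1$-bound on $V_n$. Your approach is self-contained (it never leaves $L_q$) and would be the right tool in settings where the embedding $L_1\hookrightarrow L_q$ is unavailable, but here the paper's shortcut is considerably more economical.
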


\begin{remark}\label{mu}
\textnormal{If $\g=0$, the statement of Lemma~\ref{v+} does not hold in the case $\vp(\xi)=C\psi(\xi)$, $C\ne 0$,  since in this case one has
\begin{equation}\label{mu+}
\|
\mathcal{D}(\psi/\vp)
V_n \|_q\asymp n^{d(1-1/q)}.
\end{equation}
Here, the part \;$"\lesssim"$ follows from Corollary~\ref{lemBLX} and the part \;$"\gtrsim"$ follows from Nikol'skii's inequality and the fact that $\|V_n\|_1 \asymp 1.$
}
%
%

\end{remark}

\begin{proof}
Let us first show that $\Vert
\mathcal{D}(\psi/\vp) V_{n}\Vert_q\lesssim 1.$
We set
$$f_{\psi,\vp}(x)=\sum_{|k|\ne 0}\frac{\psi(k)}{\varphi(k)}e^{i(k,x)}.
$$
Then
\begin{equation}\label{vsdg}
  \begin{split}
     \Vert \mathcal{D}(\psi/\vp) V_{n}\Vert_q
 &\lesssim
 \Vert \mathcal{D}(\psi/\vp) V_{n}\Vert_1=\|V_n*f_{\psi,\vp}\|_1\\
 &\lesssim \|V_n\|_1 \|f_{\psi,\vp}\|_1.
  \end{split}
\end{equation}
We have by Remark \ref{remarkVn} below that
 \begin{eqnarray*}
\|f_{\psi,\vp}\|_1\lesssim \|f_{\psi,\vp}\|_r \lesssim
\Vert f_\gamma
\Vert_r\quad\text{for some}\quad r>1,
\end{eqnarray*}
 where
$$
f_\gamma(x)=\sum_{|k|\ne 0}\frac{e^{i(k,x)}}{|k|^\gamma}.
$$
The fact that $f_\gamma\in L_r(\T^d)$ follows from
\begin{eqnarray}\label{wainger}
f_\gamma(x)\asymp |x|^{\gamma-d}\quad \mbox{as} \quad x\to 0, \quad 0<\gamma<d
 \end{eqnarray}
 (see \cite{W}).
Therefore, by (\ref{vsdg}),
$$
\Vert \mathcal{D}(\psi/\vp) V_{n}\Vert_q\lesssim \Vert V_n\Vert_1\lesssim 1,
$$
where the last estimate follows from Corollary~\ref{lemBL}.

To show
\begin{equation*}
\Vert \mathcal{D}(\psi/\vp)V_n\Vert_q
\gtrsim 1,
\end{equation*}
we again use the function $f_{\psi,\vp}$:
$$
\Vert f_{\psi,\vp}-f_{\psi,\vp}*V_n\Vert_q \lesssim
\Vert f_{\psi,\vp}-f_{\psi,\vp}*V_n\Vert_1\to 0\quad \text{as}\quad n\to \infty,
$$
since $V_n$ is an approximate identity and $f_{\psi,\vp}\in L_1(\T^d)$. This implies that
$$
 \Vert \mathcal{D}(\psi/\vp)V_n\Vert_q^q\ge \Vert f_{\psi,\vp}
 \Vert_q^q - \Vert f_{\psi,\vp}-f_{\psi,\vp}*V_n\Vert_q^q\ge C(\psi,\vp),
$$
 since $f_{\psi,\vp}\in L_1(\T^d)$.
\end{proof}

Now, we study the case $0<q\le 1$ and $\g=0$.

\begin{lemma}\label{v++q<1}
  Let $0<q\le 1$, $\a>0$, and $\psi,\,\vp\in \mathcal{H}_\a$. Let also
$\frac{\psi}{\varphi}\in C^\infty(\R^d\setminus \{0\})$ and
$\frac{\varphi}{\psi}\in C^\infty(\R^d\setminus \{0\})$. Then
\begin{equation*}
  \Vert \mathcal{D}(\psi/\vp)V_n\Vert_q\asymp \left\{
                                     \begin{array}{ll}
                                       1, & \hbox{$d=1$, $0<q<1$,\; {and} \; $\frac{\psi(\xi)}{\vp(\xi)}\not\equiv \const$}; \\
                                       \ln (n+1), & \hbox{$d=1$, $q=1$,\; {and} \; $\frac{\psi(\xi)}{\vp(\xi)}\not\equiv \const$}; \\
                                       n^{d(1-\frac1q)}, & \hbox{$d\ge 1$ and $\frac{\psi(\xi)}{\vp(\xi)}\equiv \const$.}
                                     \end{array}
                                   \right.
\end{equation*}
\end{lemma}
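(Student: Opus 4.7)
The plan relies on the one-dimensional fact that any $C^\infty(\R\setminus\{0\})$ function of homogeneous degree zero takes only two values: $A$ on $(0,\infty)$ and $B$ on $(-\infty,0)$. Setting $\alpha=(A+B)/2$, $\beta=(A-B)/2$, we see that $\psi/\vp\equiv\const$ iff $\beta=0$, and otherwise $\psi/\vp=\alpha+\beta\sgn$, so
\[
\mathcal{D}(\psi/\vp)V_n(x)=\alpha\bigl(V_n(x)-1\bigr)+\beta\,W_n(x), \qquad W_n(x):=\mathcal{D}(\sgn)V_n(x).
\]
This reduces the non-constant cases to sharp $L_q$ estimates of the conjugate-type polynomial $W_n$ and of $V_n-1$; the constant case I would handle separately via the scaling properties of $V_n$.

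The key technical step is to show $\Vert W_n\Vert_q\asymp 1$ for $0<q<1$ and $\Vert W_n\Vert_1\asymp\ln n$. Applying the Poisson summation formula to the odd, compactly supported $g(\xi)=\sgn(\xi)v(\xi)$ gives
\[
W_n(x)=-n\sqrt{2\pi}\sum_{m\in\Z}\widehat g\bigl(n(x-2\pi m)\bigr),
\]
and a single integration by parts in $\int_0^2 v(\xi)\sin(y\xi)\,d\xi$ --- exploiting $v(0)=1$ and $v(2)=0$ --- yields $\widehat g(y)=-i\sqrt{2/\pi}/y+O(|y|^{-N})$ for every $N$ as $|y|\to\infty$, along with $|\widehat g(y)|\lesssim\min(|y|,1/|y|)$. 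Hence $|W_n(x)|\lesssim n/(1+n|x|)$ on $[-\pi,\pi]$, and on $|x|\gtrsim 1/n$ one in fact has $|W_n(x)|\asymp 1/|x|$. Splitting $\int|W_n|^q$ at $|x|\sim 1/n$ produces a peak contribution of order $n^{q-1}$ and a tail contribution $\asymp\int_{1/n}^{\pi}|x|^{-q}\,dx$, which is $O(1)$ for $q<1$ and $\asymp\ln n$ for $q=1$; the matching lower bounds come from the same asymptotic on the region $|x|\ge Y_0/n$, where $|\widehat g(nx)|\ge c/(n|x|)$.

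Since $\Vert V_n-1\Vert_q\lesssim 1$ follows from Corollary~\ref{lemBLX} combined with the quasi-triangle inequality, the upper bounds in the non-constant cases follow by the $q$-triangle inequality applied to the decomposition above. For the lower bound when $q=1$ the ordinary reverse triangle gives $\Vert\mathcal{D}(\psi/\vp)V_n\Vert_1\ge|\beta|\Vert W_n\Vert_1-|\alpha|\Vert V_n-1\Vert_1\gtrsim\ln n$. For $0<q<1$, the Poisson expansion shows $W_n(x)\to i\cot(x/2)$ and $V_n(x)\to 0$ pointwise for $x\ne0$, so $\mathcal{D}(\psi/\vp)V_n(x)\to-\alpha+i\beta\cot(x/2)$ pointwise; on each compactum $\{|x|\ge c\}$ the sequence is dominated by the $q$-integrable majorant $(|\alpha|+C|\beta|/|x|)^q$, so dominated convergence yields
\[
\liminf_n\Vert\mathcal{D}(\psi/\vp)V_n\Vert_q^q\ge\int_{|x|\ge c}\bigl|-\alpha+i\beta\cot(x/2)\bigr|^q\,dx,
\]
and letting $c\downarrow 0$ the right-hand side is strictly positive because $\beta\ne 0$.

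For the constant case $\psi/\vp\equiv c$, one has $\mathcal{D}(\psi/\vp)V_n=c(V_n-1)$; the matching bounds come from Corollary~\ref{lemBLX} applied to $\prod_j v(\xi_j)$ (which produces the $n^{d(1-1/q)}$-scaling via the peak of $V_n$ near the origin) combined with Nikol'skii's reverse-H\"older inequality and $\Vert V_n\Vert_1\asymp 1$. The main obstacle I anticipate is the sharp Poisson-summation analysis of $W_n$ --- in particular the $1/|x|$-asymptotic of $\widehat g$ at infinity, which is exactly the place where the jump of $\sgn$ at the origin produces the logarithmic defect at $q=1$ --- together with the construction of a uniform integrable majorant for the dominated-convergence argument in the $q<1$ lower bound.
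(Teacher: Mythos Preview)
Your treatment of the two non-constant cases ($d=1$, $\psi/\vp\not\equiv\const$) is correct and parallels the paper's proof closely. Both arguments rest on the single fact that the Fourier transform of $(\psi/\vp)\,v$ decays exactly like $c/y$ at infinity, which after Poisson summation produces the $n/(1+n|x|)$ profile whose $L_q$-integral gives the $1$--versus--$\ln n$ dichotomy. The paper packages this into two ready-made lemmas --- Lemma~\ref{lemTrigub+} (a quantitative Poisson summation with total-variation remainder) and Lemma~\ref{lemRunFour} (the two-sided $|\xi|^{-\b-d}$ asymptotic for Fourier transforms of homogeneous-times-cutoff functions) --- and then simply reads off the answer; you instead exploit the elementary $d=1$ fact that every degree-zero homogeneous function on $\R\setminus\{0\}$ is $\alpha+\beta\,\sgn$, reduce to the single concrete kernel $W_n=\mathcal{D}(\sgn)V_n$, and compute the asymptotic of $\widehat{\sgn\cdot v}$ by one integration by parts. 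Your dominated-convergence lower bound for $q<1$ is a clean alternative to the paper's (only sketched) appeal to Lemma~\ref{lemRunFour}(ii); note that the uniform majorant $|W_n(x)|\lesssim 1/|x|$ you need is most cleanly obtained by Abel summation rather than by the Poisson expansion, since the series $\sum_m |x-2\pi m|^{-1}$ diverges.

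There is, however, a genuine gap in the constant case when $q<1$ --- and the paper's one-line argument shares it. You correctly identify $\mathcal{D}(c)V_n=c(V_n-1)$ and then invoke Corollary~\ref{lemBLX}; but that corollary bounds $\|V_n\|_q$, not $\|V_n-1\|_q$, and the subtracted constant cannot be absorbed into a quasi-norm that is supposed to tend to zero. Your own argument shows $V_n(x)\to 0$ for $x\ne 0$, hence $|V_n-1|^q\to 1$ a.e., and Fatou gives $\liminf_n\|V_n-1\|_q^q\ge(2\pi)^d$; combined with $\|V_n-1\|_q^q\le\|V_n\|_q^q+(2\pi)^d\to(2\pi)^d$ one gets $\|V_n-1\|_q\asymp 1$, not $n^{d(1-1/q)}$. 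Thus the third line of the displayed equivalence is actually false for $q<1$ (it is correct for $q=1$, where $n^0=1$). This does no harm downstream: in the convolution identity of Lemma~\ref{lemHLd} the zeroth coefficient of $\mathcal{D}(\psi/\vp)V_n$ is paired with $\widehat{\mathcal{D}(\vp)T_n}(0)=0$, and the constant case of Lemma~\ref{lemgamma0+} is nothing but Nikol'skii's inequality, bypassing $\|V_n-1\|_q$ altogether.
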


\begin{proof}
The simplest case is when $\frac{\psi(\xi)}{\vp(\xi)}\equiv \const$. In this case, the estimate from above follows from Corollary~\ref{lemBLX}. At the same time, by Nikol'skii's inequality, we have
$$
\Vert \mathcal{D}(\psi/\vp)V_n\Vert_q\gtrsim n^{d(1-\frac1q)}\Vert \mathcal{D}(\psi/\vp)V_n\Vert_1\gtrsim n^{d(1-\frac1q)},
$$
completing the proof. 

To prove the lemma in the case $d=1$ and $\frac{\psi(\xi)}{\vp(\xi)}\not\equiv \const$, we need the following two auxiliary results.

\begin{lemma}\label{lemTrigub+} \textnormal{(See~\cite[p.~119]{TB}.)}
  Let $f\,:\, \R\to \C$ be a function of bounded variation and let $\lim_{|\xi|\to \infty} f(\xi)=0$. Then, for any $\e>0$, we have
  \begin{equation*}
    \sup_{0<|x|\le \pi}\left|\e \sum_{k=-\infty}^\infty f(\e k) e^{ikx}-\int_{-\infty}^\infty f(u) e^{iu\frac{x}{\e}} du \right|\le 2\e V_{-\infty}^\infty (f),
  \end{equation*}
  where $V_{-\infty}^\infty (f)$ is the total variation of $f$ on $\R$.
\end{lemma}

\begin{lemma}\label{lemRunFour} \textnormal{(\cite{RS})}
{\it Let $f_\b\in C^\infty (\R^d\setminus \{0\})$ be a homogeneous function of order $\b\ge 0$ and not a polynomial and let
$\eta\in C^\infty(\R^d)$  have a compact support.

{\rm (i)} We have
  $$
  |\widehat{f_\b\eta}(\xi)|\le C_1(1+|\xi|)^{-\b-d},\quad \xi\in\R^d.
  $$

{\rm (ii)}  There exist $\rho>1$, $\theta>0$, and $u_0\in
  {\mathbb{S}}^{d-1}$ such that
\begin{equation}\label{eqRunovskii}
  |\widehat{f_\b\eta}(\xi)|\ge C_2|\xi|^{-\b-d},\quad \xi\in\Omega,
\end{equation}
where
$$
\Omega\equiv \Omega(\rho,\theta,u_0)=\{\xi=ru\,:\, r\ge\rho,\,
u\in{\mathbb{S}}^{d-1},\, \cos \theta\le (u,u_0)\le 1\},
$$
${\mathbb{S}}^{d-1}$ is the unit sphere in  $\R^d$ and $C_1$ and $C_2$ are some positive constants.}
\end{lemma}

Denote $f(y)=\frac{\psi(y)}{\vp(y)}v(y)$. By Lemmas~\ref{lemTrigub+} and~\ref{lemRunFour}, we obtain
\begin{equation*}
  \begin{split}
    \Vert \mathcal{D}(f)V_n\Vert_q&\lesssim n \(\int_\T \left|\int_\R f(y) e^{i y n x}dy\right|^q dx \)^\frac1q+V_{-\infty}^\infty(f)\\
    &\lesssim n \(\int_\T \frac{dx}{(1+|nx|)^q} \)^\frac1q+V_{-\infty}^\infty(f)\\
    &\lesssim \left\{
                \begin{array}{ll}
                  1, & \hbox{$0<q<1$;} \\
                  \ln (n+1), & \hbox{$q=1$.}
                \end{array}
              \right.
   \end{split}
\end{equation*}

Similarly, one can prove the estimate from below. The proof of Lemma \ref{v++q<1} is now complete.

\end{proof}

The next two lemmas deal with the case $1<q<\infty$.

\begin{lemma}\label{v-}
Let $1<q<\infty$ and $\g\in \R$. Let also $\mu\in \mathcal{H}_{-\g}$ be such that  $\mu, 1/\mu \in C^\infty(\R^d\setminus \{0\})$.
Then for any
$f\in L_q(\T^d)$ such that $(-\Delta)^{-\gamma/2}f \in L_q(\T^d)$ the following two-sided inequalities hold
\begin{equation}\label{delta-calc+}
\| \mathcal{D}(\mu)f \|_q \asymp \|(-\Delta)^{-\gamma/2}f \|_q,\quad \g\neq 0,
\end{equation}
and
\begin{equation}\label{delta-calc+++}
\| \mathcal{D}(\mu)f \|_q \asymp \left\|f-\frac1{(2\pi)^d}\int_{\T^d}f(x)dx \right\|_q,\quad \g=0.
\end{equation}

\end{lemma}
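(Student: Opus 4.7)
The plan is to reduce the two-sided estimate to the Mikhlin--H\"ormander multiplier theorem (Lemma~\ref{lemMult}(i)) applied to the homogeneous-of-degree-zero function
$$
m(\xi):=\mu(\xi)\,|\xi|^{\gamma},\qquad \xi\in\R^d\setminus\{0\}.
$$
Since $\mu,1/\mu\in C^\infty(\R^d\setminus\{0\})$ and $|\xi|^{\pm\gamma}\in C^\infty(\R^d\setminus\{0\})$, both $m$ and $1/m$ lie in $C^\infty(\R^d\setminus\{0\})$ and are homogeneous of degree $0$. For any such function, $D^\nu m$ is homogeneous of degree $-|\nu|_1$, hence
$$
|\xi|^{|\nu|_1}|D^\nu m(\xi)|=\bigl|D^\nu m(\xi/|\xi|)\bigr|
$$
is bounded on $\R^d\setminus\{0\}$ by compactness of the unit sphere. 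Thus $m,1/m$ satisfy Mikhlin's condition, so $m,1/m\in M_q$ for every $1<q<\infty$. (Because $m$ is homogeneous of degree $0$, the dilation $\varepsilon$ in the definition of $M_q$ is immaterial: $m(\varepsilon k)=m(k)$ for all $\varepsilon>0$ and $k\ne 0$; we may set $m(0)=0$.)

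For the case $\gamma\ne 0$, set $g:=(-\Delta)^{-\gamma/2}f$, which belongs to $L_q(\T^d)$ by hypothesis, with $\widehat{g}_k=|k|^{-\gamma}\widehat{f}_k$ for $k\ne 0$. A direct comparison of Fourier coefficients gives $\mathcal{D}(\mu)f=\mathcal{D}(m)g$. The multiplier bound for $m$ yields
$$
\|\mathcal{D}(\mu)f\|_q=\|\mathcal{D}(m)g\|_q\lesssim\|g\|_q=\|(-\Delta)^{-\gamma/2}f\|_q.
$$
Conversely, since $\widehat{g}_0=0$ (indeed $g$ is defined by a Fourier series over $k\ne 0$), we have $g=\mathcal{D}(1/m)\mathcal{D}(m)g$, and the multiplier bound for $1/m$ gives
$$
\|(-\Delta)^{-\gamma/2}f\|_q=\|g\|_q=\|\mathcal{D}(1/m)\mathcal{D}(m)g\|_q\lesssim\|\mathcal{D}(m)g\|_q=\|\mathcal{D}(\mu)f\|_q,
$$
proving~\eqref{delta-calc+}.

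For the case $\gamma=0$, the function $\mu$ itself is homogeneous of degree $0$ and smooth away from the origin, so the argument above applies with $m=\mu$ and with $g$ replaced by $f_0:=f-(2\pi)^{-d}\int_{\T^d}f$. Since $\mathcal{D}(\mu)$ kills constants by definition, $\mathcal{D}(\mu)f=\mathcal{D}(\mu)f_0$, and the multiplier bounds for $\mu$ and $1/\mu$ (together with $\widehat{f_0}(0)=0$, which allows $f_0=\mathcal{D}(1/\mu)\mathcal{D}(\mu)f_0$) give~\eqref{delta-calc+++}. The only genuinely delicate point is the verification of Mikhlin's condition for a degree-$0$ homogeneous function, and this is immediate from the homogeneity-degree-counting argument above; everything else is a routine Fourier-coefficient bookkeeping, made consistent by the convention that the $\nu=0$ term is excluded from $\mathcal{D}(\cdot)$.
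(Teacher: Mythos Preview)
Your proof is correct and follows essentially the same approach as the paper: both define the degree-zero homogeneous function $m(\xi)=|\xi|^\gamma\mu(\xi)$, verify Mikhlin's condition via the homogeneity of $D^\nu m$, and invoke Lemma~\ref{lemMult}(i) for $m$ and $1/m$ to obtain the two-sided bound. Your version is in fact somewhat more explicit than the paper's about the Fourier-coefficient bookkeeping (the role of $\widehat g_0=0$ in recovering $g$ from $\mathcal D(m)g$, and the reduction to $f_0$ when $\gamma=0$), but the underlying argument is identical.
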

\begin{remark}\label{remarkVn}
\textnormal{If we do not assume
$\mu\in C^\infty(\R^d\setminus \{0\})$ in Lemma \ref{v-}, then we only have
$$\|
\mathcal{D}(\mu)
f \|_q
\lesssim \|(-\Delta)^{-\gamma/2}f \|_q,\quad \g\neq 0.
$$}


\end{remark}

\begin{proof}
To obtain (\ref{delta-calc+}), we consider the function
$$
u(\xi)=|\xi|^\g \mu(\xi).
$$
By properties of homogeneous functions, we have that
the function $D^\nu u$  is  a homogeneous function of order $-|\nu|_1$ and it belongs to $C^\infty(\R^d \backslash \{0\})$ for any multi-index  $\nu \in \Z_+^d$. Hence,
 \begin{eqnarray*}
\sup_{\xi\in \R^d}|\xi|^{|\nu|_1}|D^\nu u(\xi)|<\infty.
 \end{eqnarray*}
Thus, by Lemma~\ref{lemMult}, we get that for any function  $f\in L_q(\T^d)$,
$1<q<\infty$,
 \begin{eqnarray*}
\Vert \mathcal{D}(u)f\Vert_q \lesssim \Vert f\Vert_q.
 \end{eqnarray*}
 The proof of
the reverse inequality is similar using
${1}/{\mu}\in C^\infty(\R^d\setminus \{0\})$.

Inequality~\eqref{delta-calc+++} can be obtained similarly.
\end{proof}

\begin{lemma}\label{v}
Let $1<q<\infty$ and $\g\ge 0$.
 We have, for any $n\in \N$,
\begin{equation}\label{delta-calc}
  \|
 (-\Delta)^{-\gamma/2}V_n
 \|_q\asymp
\left\{
         \begin{array}{ll}
           1, & \hbox{$\g>d\(1-\frac1q\)$}; \\
           \ln^\frac1{q} (n+1), & \hbox{$\g=d\(1-\frac1q\)$}; \\
           n^{d(1-\frac1q)-\g}, & \hbox{$ 0\le \g< d\(1-\frac1q\)$.}
         \end{array}
       \right.
\end{equation}

\end{lemma}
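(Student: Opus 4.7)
The plan is to exploit the convolutional representation $(-\Delta)^{-\gamma/2}V_n=V_n*G_\gamma$, where $G_\gamma(x)=\sum_{k\ne 0}|k|^{-\gamma}e^{i(k,x)}$ is the periodic Riesz kernel. The key inputs are Wainger's asymptotic \eqref{wainger} (i.e.\ $G_\gamma(x)\asymp|x|^{\gamma-d}$ as $x\to 0$ for $0<\gamma<d$, with $G_\gamma\in C^\infty(\T^d\setminus\{0\})$) together with the standard de la Vall\'ee Poussin bounds $\|V_n\|_1\asymp 1$ and $|V_n(y)|\lesssim n^d(1+n|y|)^{-N}$ for every $N$ (both consequences of $v\in C^\infty_c(\R)$ via Poisson summation). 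The trichotomy in the statement mirrors whether the singularity $|x|^{\gamma-d}$ is $L_q$-integrable near zero, marginally integrable, or not integrable.

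In the supercritical regime $\gamma>d(1-1/q)$, Wainger's asymptotic forces $G_\gamma\in L_q(\T^d)$; Young's inequality then gives $\|V_n*G_\gamma\|_q\le\|V_n\|_1\|G_\gamma\|_q\lesssim 1$, while the fact that $V_n$ is an $L_q$-approximate identity yields $V_n*G_\gamma\to G_\gamma$ in $L_q$, hence the matching lower bound. The edge case $\gamma=0$ reduces to $\|V_n\|_q\asymp n^{d(1-1/q)}$, handled by the factorization $V_n(x)=\prod_j\tilde v_n(x_j)$ and the one-dimensional estimate $\|\tilde v_n\|_{L_q(\T)}\asymp n^{1-1/q}$. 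In the subcritical regime $0<\gamma<d(1-1/q)$, I would obtain the upper bound either from Hardy--Littlewood--Sobolev with $1/p=\gamma/d+1/q$ (which forces $p>1$) applied to $\|V_n\|_p\asymp n^{d(1-1/q)-\gamma}$, or from the direct pointwise bound $|V_n*G_\gamma(x)|\lesssim\min(n^{d-\gamma},|x|^{\gamma-d})$ proved by splitting the convolution at $|y|=|x|/2$. The matching lower bound follows from the inverse Nikol'skii inequality $\|T\|_q\gtrsim n^{-d/q}\|T\|_\infty$ applied to $(-\Delta)^{-\gamma/2}V_n\in\mathcal T_{2n}$, together with the crude estimate
$$
(-\Delta)^{-\gamma/2}V_n(0)\ge \sum_{0<|k|_\infty\le n}|k|^{-\gamma}\asymp n^{d-\gamma}.
$$

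The critical case $\gamma=d(1-1/q)$ is the main obstacle and where the real work lies. For the upper bound, the same pointwise estimate $|V_n*G_\gamma(x)|\lesssim\min(n^{d-\gamma},|x|^{\gamma-d})$ integrated to the $q$-th power produces $1+\int_{1/n}^{\pi}r^{-1}\,dr\asymp\ln n$, giving $\|V_n*G_\gamma\|_q\lesssim(\ln n)^{1/q}$. The matching lower bound requires showing $V_n*G_\gamma(x)\gtrsim|x|^{\gamma-d}$ on the annulus $C/n\le|x|\le\delta_0$ for fixed constants. I would establish this by writing
$$
V_n*G_\gamma(x)-G_\gamma(x)=\int V_n(y)\bigl[G_\gamma(x-y)-G_\gamma(x)\bigr]dy
$$
(using $\int V_n=1$) and controlling the integrand via the gradient bound $|\nabla G_\gamma(z)|\lesssim|z|^{\gamma-d-1}$ on $|z|\asymp|x|$ combined with the concentration of $V_n$ at scale $1/n$; the difference is then of order $|x|^{\gamma-d}/(n|x|)$, which is negligible compared to $G_\gamma(x)\asymp|x|^{\gamma-d}$ once $n|x|$ is large. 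Integrating $|x|^{(\gamma-d)q}=|x|^{-d}$ over the annulus yields the required $\asymp\ln n$. Preserving the pointwise singularity of $G_\gamma$ under the smoothing by $V_n$ precisely in the marginal regime is the most delicate technical point.
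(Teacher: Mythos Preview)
Your approach is correct and takes a genuinely different route from the paper. The paper works entirely on the Fourier side: for $d=2$ it splits $(-\Delta)^{-\g/2}V_n$ into axis terms and a double cosine sum, verifies that the coefficient array $a_{k,l}=(k^2+l^2)^{-\g/2}\,v(k/n)\,v(l/n)$ has nonnegative mixed differences $\Delta^{(2)}a_{k,l}\ge 0$, and then applies the multidimensional Hardy--Littlewood theorem for monotone coefficients (Lemma~\ref{monot+}) to rewrite $\|\cdot\|_q$ as a weighted coefficient sum that is computed directly in each regime. The lower bound for $\g>d(1-1/q)$ comes from testing against $e^{-i(x_1+x_2)}$.

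Your real-variable route---Wainger's asymptotic for $G_\g$, Young plus the approximate-identity property in the supercritical case, Hardy--Littlewood--Sobolev (or the pointwise bound) plus Nikol'skii in the subcritical case, and the pointwise comparison $|V_n*G_\g(x)-G_\g(x)|\lesssim |x|^{\g-d}/(n|x|)$ on the annulus $C/n\le|x|\le\d_0$ in the critical case---avoids the monotone-coefficient machinery entirely and is more transportable to kernels whose coefficients are not monotone. The paper's method, by contrast, handles all three regimes and both bounds through the single two-sided equivalence $\|f\|_q\asymp\big(\sum a_m^q\prod m_j^{q-2}\big)^{1/q}$, so once the monotonicity check~\eqref{4zvedy} is done the rest is pure arithmetic with no pointwise analysis needed. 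Your critical-case lower bound is the sharpest departure: the paper never localizes $V_n*G_\g$ in $x$, since the coefficient theorem delivers the two-sided estimate directly.
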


\begin{proof}
Let us prove~\eqref{delta-calc} only for $d=2$. For $d=1$ or $d>2$ the proof is similar.

In what follows, $\xi,\eta\in\R$,
$$
g_n(\xi,\eta):=\left\{
           \begin{array}{ll}
             \displaystyle \frac1{(\xi^2+\eta^2)^{\g/2}}, & \hbox{$|\xi|\le 2n,\quad |\eta|\le 2n,\quad |\xi|+|\eta|\neq 0$;} \\
             \displaystyle 0, & \hbox{otherwise,}
           \end{array}
         \right.
$$
and
$$
a_{k,l}=a_{k,l}^{(n)}=g_n(k,l)v\(\frac{|k|}{n}\)v\(\frac{|l|}{n}\), \qquad  a_{0,0}=0.
$$
We have
\begin{equation*}
\begin{split}
(-\D)^{-\g/2} V_n(x,y)&=\sum_{k\in \Z}\sum_{l\in\Z} a_{k,l}
e^{i(kx+ly)}
\\&=2\sum_{k=1}^{2n} a_{k,0}\cos
kx+2\sum_{l=1}^{2n} a_{0,l}\cos
ly+4\sum_{k=1}^{2n}\sum_{l=1}^{2n} a_{k,l}\cos kx\cos ly.
\end{split}
\end{equation*}
First, we estimate $\Vert (-\D)^{-\g/2} V_n\Vert_q$ from below as follows
\begin{equation*}
\Vert (-\D)^{-\g/2} V_n \Vert_{L_q(\T^2)}
\ge
4\bigg\Vert
\sum_{k=1}^{2n}\sum_{l=1}^{2n} a_{k,l}\cos kx\cos
ly\bigg\Vert_{L_q(\T^2)}
-
4(2\pi)^{1/q}\bigg\Vert \sum_{k=1}^{2n} a_{k,0}\cos
kx\bigg\Vert_{L_q(\T^1)}.
\end{equation*}
Using the Hardy--Littlewood theorem for series with monotone coefficients (see (\ref{monot})), we get
\begin{equation}
    \label{2zv-v}
\bigg\Vert \sum_{k=1}^{2n} a_{k,0}e^{ikx}\bigg\Vert_{L_q(\T^1)}
 \asymp \left(
\sum_{k=1}^{2n} \frac{k^{q-2}}{k^{\g q}}v\(\frac kn\)
\right)^{1/q}\asymp \s_{1}(n).
\end{equation}
Here we set
$$
\s_{d}(n):=\left\{
         \begin{array}{ll}
           1, & \hbox{$\g>d\(1-\frac1q\)$;} \\
           \ln^\frac1{q} (n+1), & \hbox{$\g=d\(1-\frac1q\)$;} \\
           n^{d(1-\frac1q)-\g}, & \hbox{$\g<d\(1-\frac1q\)$,}
         \end{array}
       \right.
$$
where $d$ stands for the dimension.

To estimate the norm of the double sum, we use Lemma~\ref{monot+}, which is
a multidimensional analog of the Hardy-Littlewood theorem for series with monotone coefficients $a_{n,m}$ in the sense of Hardy, i.e., $\D^{(2)} a_{n,m}\ge 0$.
Recall that for a sequence $\{a_{n,m}\}_{n,m\in\N}$ the differences
$\Delta^{(2)} a_{n,m}$ is defined  as follows:
$$
\Delta^{1,0} a_{n,m}=a_{n,m}- a_{n+1,m},\qquad
\Delta^{0,1} a_{n,m}=a_{n,m}- a_{n,m+1},
$$
$$
\Delta^{(2)}
a_{n,m}=\Delta^{0,1} (\Delta^{1,0} a_{n,m}).
$$
Since for $1\le k\le 2n-1$ and $1\le l\le 2n-1$, we have
\begin{equation}\label{4zvedy}
  \D^{(2)}
g_n(k,l)=\int_{k}^{k+1}\int_{l}^{l+1}\(\frac{\partial^2}{\partial \xi
\partial \eta}\(g_n(\xi,\eta)v\(\frac{\xi}{n}\)v\(\frac{\eta}{n}\)\)\)d\xi d\eta\ge 0,
\end{equation}
then Lemma~\ref{monot+} implies
\begin{equation}\label{2zv-v-1}
  \begin{split}
\bigg\Vert \sum_{k=1}^{2n}\sum_{l=1}^{2n} a_{k,l}^{(n)}\cos
kx\cos ly\bigg\Vert
_{L_q(\T^2)}
&\asymp
  \left(
 \sum_{k=1}^{2n}\sum_{l=1}^{2n}
\frac{{k^{q-2}}{l^{q-2}}}{(k^2+l^2)^{q\g/2}}
\(v\(\frac{k}{n}\)v\(\frac{l}{n}\)\)^q\right)^{1/q}
\\
&\asymp
 \left(
 \sum_{k=1}^{n}\sum_{l=1}^{n}
\frac{{k^{q-2}}{l^{q-2}}}{(k^2+l^2)^{q\g/2}}
\right)^{1/q}
\\
&\asymp
 \left(
 \int_{1}^{n} \int_{1}^{n}
\frac{{(\xi\eta)^{q-2}}}{(\xi^2+\eta^2)^{q\g/2}}d\xi d\eta
\right)^{1/q}
\asymp
\s_{2}(n).
  \end{split}
\end{equation}

Assume first that $\g\le d(1-1/q)$. To complete the proof in this case, it only remains to use the fact that for sufficiently large $n$  and
$\g\le d(1-1/q)$ one has
$$
\s_{2}(n)- (2\pi)^{1/q} \s_{1}(n)\gtrsim \s_{2}(n),
$$
which follows from (\ref{2zv-v}) and (\ref{2zv-v-1}).

Let now $\g> d(1-1/q)$.
To estimate $\Vert (-\D)^{-\g/2} V_n\Vert_q$ from above, we use the inequality
$$
\Vert (-\D)^{-\g/2} V_n\Vert_{L_q(\T^2)}
 \lesssim
 \bigg\Vert
\sum_{k=1}^{2n}\sum_{l=1}^{2n} a_{k,l}\cos kx\cos
ly\bigg\Vert_{L_q(\T^2)}
+\bigg\Vert \sum_{k=1}^{2n} a_{k,0}\cos
kx\bigg\Vert_{L_q(\T^1)}  \lesssim
1,
$$
where we again used
 (\ref{2zv-v}) and (\ref{2zv-v-1}).
The estimate from below is given as follows
\begin{eqnarray*}
\begin{split}
\Vert (-\D)^{-\g/2} V_n\Vert_q &\gtrsim\Vert (-\D)^{-\g/2} V_n\Vert_1
\\&\gtrsim
\int_{\T^2} \Big(\sum_{k,l\in \Z} a_{k,l} e^{i(k-1)x +i(l-1)y}\Big) dx dy
\\&\gtrsim
a_{1,1}>0.
\end{split}
\end{eqnarray*}

\end{proof}

For the case $q=\infty$, the quantity
$\|
\mathcal{D}(\psi/\vp)
V_n \|_q
$
may have different growth properties depending on given $\psi$ and $\vp$ (see also Remark~\ref{remark+X}).
However, let us  show that under some natural conditions it may happen only in the one-dimensional case.

\begin{lemma}\label{v++}
Let $\a>0$, $\g\ge 0$,
$\psi\in \mathcal{H}_\a$, $\vp\in \mathcal{H}_{\a+\g}$, and
$\frac{\psi}{\varphi}\in C^\infty(\R^d\setminus \{0\})$.

{\rm (i)} If $d=1$, then

\begin{equation}\label{eqDV2nVn.0}
  \left\Vert \mathcal{D}\(\psi/\vp\)V_n\right\Vert_\infty\asymp \left\{
  \begin{array}{ll}
  1, & \hbox{$\g>1$}; \\
  1, & \hbox{$\g=1$ {and}\,  $\frac{\psi(\xi)}{\vp(\xi)}=A{|\xi|^{-\g}}\sign \xi$  for some  $A\in \C \setminus \{0\}$}; \\
  \ln (n+1), & \hbox{$\g=1$ {and}\,  $\frac{\psi(\xi)}{\vp(\xi)}\ne A{|\xi|^{-\g}}\sign \xi$  for any  $A\in \C \setminus \{0\}$}; \\
  n^{1-\g}, & \hbox{$0\le \g<1$,}
  \end{array}
  \right.
\end{equation}

{\rm (ii)} if $d\ge 2$ and  $\frac{\varphi}{\psi}\in C^\infty(\R^d\setminus \{0\})$, then
\begin{equation}\label{eqDV2nVn.0d}
  \left\Vert \mathcal{D}\(\psi/\vp\)V_n\right\Vert_\infty\asymp \left\{
                                                                  \begin{array}{ll}
                                                                    1, & \hbox{$\g>d$}; \\
                                                                    \ln n, & \hbox{$\g=d$}; \\
                                                                    n^{d-\g}, & \hbox{$0\le \g<d$}.
                                                                  \end{array}
                                                                \right.
\end{equation}
\end{lemma}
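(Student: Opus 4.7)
The plan is to analyze the trigonometric sum
$\mathcal{D}(\psi/\varphi)V_n(x)=\sum_{k\neq 0} h(k)\,B(k/n)\,e^{i(k,x)}$,
with $h=\psi/\varphi\in\mathcal{H}_{-\gamma}$ and $B(\xi)=\prod_{j=1}^d v(\xi_j)$, by picking specific evaluation points $x$ (typically $x=0$, occasionally $x=1/n$) to witness the lower bounds, and by bounding the absolute sum $\sum_{0<|k|_\infty\le 2n}|h(k)|$ from above. The claimed asymptotics then reduce to standard estimates of $\sum_{0<|k|_\infty\le N}|k|^{-\gamma}$ together with one classical fact about conjugate trigonometric series in dimension one.

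For part (ii), I first observe that $h,1/h\in C^\infty(\R^d\setminus\{0\})$ together with the continuity of $h$ on the compact sphere $S^{d-1}$ forces $h$ to be zero-free, and hence of constant sign on $S^{d-1}$; assume WLOG $h>0$. By homogeneity and compactness, $c|k|^{-\gamma}\le h(k)\le C|k|^{-\gamma}$ for all $k\neq 0$. The upper bound then reduces to $\|\mathcal{D}(h)V_n\|_\infty\le C\sum_{0<|k|_\infty\le 2n}|k|^{-\gamma}$, while the matching lower bound comes from evaluating at $x=0$: since every term is positive, $\mathcal{D}(h)V_n(0)\ge c\sum_{0<|k|_\infty\le n}|k|^{-\gamma}$. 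Comparison with $\int_{1\le|\xi|\le N}|\xi|^{-\gamma}\,d\xi$ in polar coordinates yields $O(1)$, $\log n$, and $n^{d-\gamma}$ in the cases $\gamma>d$, $\gamma=d$, $\gamma<d$ respectively, as claimed.

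For part (i), the key point is that in dimension one every continuous homogeneous function of degree $-\gamma$ has the form $h(\xi)=A|\xi|^{-\gamma}+B|\xi|^{-\gamma}\operatorname{sgn}\xi$, so, since $v$ is even,
\[
\mathcal{D}(h)V_n(x)=2A\sum_{k\ge 1}\frac{v(k/n)}{k^\gamma}\cos(kx)+2iB\sum_{k\ge 1}\frac{v(k/n)}{k^\gamma}\sin(kx).
\]
When $\gamma>1$ both series converge absolutely and uniformly to a nontrivial continuous limit, giving $\asymp 1$. When $0\le\gamma<1$ the upper bound $\|\mathcal{D}(h)V_n\|_\infty\lesssim\sum_{k\le 2n}k^{-\gamma}\asymp n^{1-\gamma}$ is immediate; the matching lower bound is realized at $x=0$ when $A\neq 0$, and at $x=1/n$ when $A=0$ (using $\sin(k/n)\gtrsim k/n$ for $k\le n$ to obtain $n^{-1}\sum_{k=1}^{n}k^{1-\gamma}\asymp n^{1-\gamma}$). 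When $\gamma=1$ the cosine series equals $2A\sum_{k=1}^{2n}v(k/n)/k\asymp A\log n$ at $x=0$, bounded above by the same quantity, while the sine series is uniformly bounded in $n$ and $x$; this uses the classical estimate $\sup_{x\in\R,\,N\ge 1}\bigl|\sum_{k=1}^{N}\sin(kx)/k\bigr|<\infty$ combined with Abel summation against the cutoff $v(k/n)/k$, whose differences have bounded total variation $\lesssim \mathrm{Var}(v)<\infty$. Consequently the total norm is $\asymp\log n$ precisely when $A\neq 0$ and $\asymp 1$ when $h$ is purely of the form $A|\xi|^{-1}\operatorname{sgn}\xi$.

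The main obstacle is the critical case $\gamma=d=1$ with purely odd $h$: proving that $\|\mathcal{D}(h)V_n\|_\infty$ stays bounded (and does not pick up a logarithm) requires the nontrivial classical uniform bound on partial sums of $\sum\sin(kx)/k$, transferred through the smooth truncation $v(\cdot/n)$ via Abel summation, together with the verification that the cosine part genuinely contributes a logarithm whenever $A\neq 0$. All remaining cases are then routine partial-sum or polar-integral comparisons of the type already used in part (ii).
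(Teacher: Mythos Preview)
Your proof is correct and follows essentially the same strategy as the paper: in part~(ii) you use the zero-free property of $\Phi=h|_{S^{d-1}}$ to bound below at $x=0$ and above by the absolute sum, exactly as the paper does; in part~(i) your even/odd decomposition $h(\xi)=A|\xi|^{-\gamma}+B|\xi|^{-\gamma}\operatorname{sgn}\xi$ is just a reparametrization of the paper's three cases for $\Phi(\pm 1)$. The only minor technical differences---you control the smoothed sine sum at $\gamma=1$ by Abel summation against the uniformly bounded partial sums of $\sum\sin(kx)/k$, whereas the paper uses the convolution bound $\|V_n\|_1\lesssim 1$; and for $\gamma<1$, $A=0$ you witness the sine lower bound at $x=1/n$, whereas the paper passes through Bernstein's inequality---are inessential, and your one small omission (the lower bound $\gtrsim 1$ when $\gamma=1$ and $h$ is purely odd) is immediate from, say, the first Fourier coefficient.
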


\begin{remark}
\textnormal{Note that without the condition $\frac{\varphi}{\psi}\in C^\infty(\R^d\setminus \{0\})$,
the right-hand side estimate in~\eqref{eqDV2nVn.0d}, i.e. $"\lesssim"$,  still holds. 
}
\end{remark}

\begin{proof}

(i) For $d=1$, we write
$$
\frac{\psi(\xi)}{\vp(\xi)}=\frac{1}{|\xi|^\g}\Phi\(u\),
$$
where $u=\frac{\xi}{|\xi|}\in\{-1;1\}$
and the function $\Phi(u)\neq 0$ is such that one of the following conditions hold:
\smallskip
\begin{enumerate}
  \item $\Phi(1)=\Phi(-1)=A$;
  \item $\Phi(1)=A$, $\Phi(-1)=-A$;
  \item $\Phi(1)=A$, $\Phi(-1)=B$, and $|A|\neq |B|$,
\end{enumerate}
where $A, B\in \C$.
\smallskip

In the first case, we have
$$
\|\mathcal{D}(\psi/\vp)
V_n\|_\infty=2|A|\bigg\|\sum_{k=1}^{2n}v\(\frac kn\)\frac1{k^\g}\cos
kx\bigg\|_\infty \asymp \left\{
                                                                  \begin{array}{ll}
                                                                    1, & \hbox{$\g>1$;} \\
                                                                    \ln n, & \hbox{$\g=1$;} \\
                                                                    n^{1-\g}, & \hbox{$0\le \g<1$.}
                                                                  \end{array}
                                                                \right.
$$
In the second case, if $\g\le 1$, by Bernstein's inequality, we get
\begin{equation*}
    \begin{split}
       \|\mathcal{D}(\psi/\vp) V_n\|_\infty&=2|A|\bigg\|\sum_{k=1}^{2n}v\(\frac kn\)\frac1{k^\g}\sin kx\bigg\|_\infty\\
       &\ge \frac{2|A|}{n} \bigg\|\sum_{k=1}^{2n}v\(\frac kn\)\frac1{k^{\g-1}}\cos kx\bigg\|_\infty
       \\
       &\gtrsim n^{1-\g}.
     \end{split}
\end{equation*}
If $\g>1$, then
\begin{equation*}
  \begin{split}
     \|\mathcal{D}(\psi/\vp) V_n\|_\infty&\ge \frac{|A|}{\pi}\bigg\|\sum_{k=1}^{2n}v\(\frac kn\)\frac1{k^\g}\sin kx\bigg\|_1 \\
&\ge \frac{|A|}{\pi}\int_0^{2\pi}\(\sum_{k=1}^{2n}v\(\frac kn\)\frac1{k^\g}\sin kx\)e^{-ix} dx\gtrsim 1.
   \end{split}
\end{equation*}
At the same time, by using the boundedness of $\|V_n\|_1$, we obtain
\begin{equation*}
    \begin{split}
       \|\mathcal{D}(\psi/\vp) V_n\|_\infty
       =
            2 \Big\|V_n*\sum_{\nu=1}^{2n}\frac{\sin \nu x}{\nu^\gamma}\Big\|_\infty
    &\le 2
         \big\|V_n\big\|_1 \Big\|\sum_{\nu=1}^{2n}\frac{\sin \nu x}{\nu^\gamma}\Big\|_\infty
           \\  &\lesssim \left\{
                                                                  \begin{array}{ll}
                                                                    1, & \hbox{$\g\ge 1$;} \\
                                                                    n^{1-\g}, & \hbox{$0\le \g<1$}
                                                                  \end{array}
                                                                \right.
     \end{split}
\end{equation*}
(see, e.g., \cite[Ch. 5, \S 2]{Z}).

In the third case, assuming for definiteness that $|A|>|B|$, we obtain
\begin{equation}\label{zvezda+}
    \begin{split}
       \|\mathcal{D}(\psi/\vp) V_n\|_\infty&=\bigg\|A
       \mathop{{\sum}'}_{k=-2n}^{2n}v\(\frac kn\)\frac1{k^\g}e^{ikx}+(B-A){\sum_{k=-2n}^{-1}}v\(\frac kn\)\frac1{k^\g}e^{ikx}\bigg\|_\infty
       \\
       &\gtrsim 2|A|\sum_{k=1}^{2n}\frac{1}{|k|^\g}v\(\frac kn\)-|B-A|\sum_{k=-2n}^{-1}\frac{1}{|k|^\g}v\(\frac kn\)
       \\
       &\gtrsim \left\{
                                                                  \begin{array}{ll}
                                                                    1 , & \hbox{$\g\ge 1$;} \\
                                                                    \ln (n+1), & \hbox{$\g= 1$;} \\
                                                                    n^{1-\g}, & \hbox{$0\le \g<1$.}
                                                                  \end{array}
                                                                \right.
     \end{split}
\end{equation}
Estimate from above is similar to~\eqref{zvezda+}:
$$
\|\mathcal{D}(\psi/\vp) V_n\|_\infty\lesssim \left\{
                                                                  \begin{array}{ll}
                                                                    1 , & \hbox{$\g\ge 1$;} \\
                                                                    \ln (n+1), & \hbox{$\g= 1$;} \\
                                                                    n^{1-\g}, & \hbox{$0\le \g<1$.}
                                                                  \end{array}
                                                                \right.
$$


Thus, summarizing the above estimates and noting that the case (2) corresponds to the case $\frac{\psi(\xi)}{\vp(\xi)}=A|\xi|^{-\g}\sign \xi$, we get~(\ref{eqDV2nVn.0}) for $d=1$.

(ii) Let now $d\ge 2$.
As above, we have
$
\frac{\psi(\xi)}{\vp(\xi)}=\frac{1}{|\xi|^\g}\Phi\(\frac{\xi}{|\xi|}\),
$
where $\Phi\in C^\infty ({\mathbb{S}}^{d-1})$.
Since $\frac\vp\psi \in C^\infty(\R^d\setminus \{0\})$ and $\frac{\psi}{\vp} \in C^\infty(\R^d\setminus \{0\})$,  we obtain that $\Phi(u)>0$ or $\Phi(u)<0$ for all $u\in {\mathbb{S}}^{d-1}$. Moreover, one can find a constant $C>0$ such that $|\Phi(u)|>C$ for all $u\in {\mathbb{S}}^{d-1}$.

Thus,
\begin{equation*}
    \begin{split}
\|\mathcal{D}(\psi/\vp) V_n\|_\infty
&\ge |\mathcal{D}(\psi/\vp) V_n(0)|
\\&\ge
C{\mathop{{\sum}'}_{k_1=-n}^n}\dots
{\mathop{{\sum}'}_{k_d=-n}^n}\frac1{|k|^\g}\gtrsim \left\{
                                                                  \begin{array}{ll}
                                                                    1, & \hbox{$\g>d$}; \\
                                                                    \ln (n+1), & \hbox{$\g=d$}; \\
                                                                    n^{d-\g}, & \hbox{$0\le \g<d$}.
                                                                  \end{array}
                                                                \right.
     \end{split}
\end{equation*}
On the other hand, $$
\|\mathcal{D}(\psi/\vp) V_n\|_\infty\le C{\mathop{{\sum}'}_{k_1=-2n}^{2n}}\dots
{\mathop{{\sum}'}_{k_d=-2n}^{2n}}\frac1{|k|^\g}\lesssim  \left\{
                                                                  \begin{array}{ll}
                                                                    1, & \hbox{$\g>d$}; \\
                                                                    \ln (n+1), & \hbox{$\g=d$}; \\
                                                                    n^{d-\g}, & \hbox{$0\le \g<d$},
                                                                  \end{array}
                                                                \right.
$$
which implies~\eqref{eqDV2nVn.0d}.
\end{proof}

We are now in a position to obtain an explicit form of the Hardy-Littlewood-Nikol'skii inequality in the case $\g=0$: for
 $p\le 1<q$,  see Lemma~\ref{lemgamma0}  and
 for $p<q\le 1$,
 Lemma~\ref{lemgamma0+}.

\begin{lemma}\label{lemgamma0}
  Let $0<p\le 1<q\le \infty$, $\a>0$, and $\psi,\,\vp\in \mathcal{H}_\a$. Let also
$\frac{\psi}{\varphi}\in C^\infty(\R^d\setminus \{0\})$ and
$\frac{\varphi}{\psi}\in C^\infty(\R^d\setminus \{0\})$. Then
\begin{equation}\label{abovebelow1}
  \sup_{T\in \mathcal{T}'_{n}
}\frac{\Vert \mathcal{D}(\psi)T\Vert_q}{\Vert
\mathcal{D}(\vp)T\Vert_p} \asymp n^{d(\frac1p-\frac1q)}.
\end{equation}
\end{lemma}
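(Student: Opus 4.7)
\emph{Upper bound.} Since $\gamma=0$, the function $\psi/\vp$ is homogeneous of degree $0$, and both $\psi/\vp$ and its reciprocal $\vp/\psi$ lie in $C^\infty(\R^d\setminus\{0\})$. Lemma~\ref{lemHLd}(i) reduces the supremum from above to
\begin{equation*}
\sup_{T\in\mathcal{T}'_n}\frac{\|\mathcal{D}(\psi)T\|_q}{\|\mathcal{D}(\vp)T\|_p}\lesssim n^{d(\frac1p-1)}\|\mathcal{D}(\psi/\vp)V_n\|_q,
\end{equation*}
so it suffices to show $\|\mathcal{D}(\psi/\vp)V_n\|_q\asymp n^{d(1-1/q)}$. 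For $1<q<\infty$, equation~\eqref{delta-calc+++} of Lemma~\ref{v-} yields $\|\mathcal{D}(\psi/\vp)V_n\|_q\asymp\|V_n-1\|_q\asymp n^{d(1-1/q)}$, the last step being the standard $L_q$-behavior of the de la Vall\'ee Poussin kernel (which follows from $\|V_n\|_1\asymp 1$ and the peak $\|V_n\|_\infty\asymp n^d$). For $q=\infty$, Lemma~\ref{v++} applied with $\gamma=0$ gives $\|\mathcal{D}(\psi/\vp)V_n\|_\infty\asymp n^d$ (its case $0\le\gamma<1$ when $d=1$ and its case $0\le\gamma<d$ when $d\ge 2$, the latter using the hypothesis $\vp/\psi\in C^\infty(\R^d\setminus\{0\})$). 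In each case the product equals $n^{d(1/p-1/q)}$.

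\emph{Lower bound.} The ``doubling trick'' of Lemma~\ref{lemHLd}(ii) fails here because its crucial factor $(1-2^{-\gamma})$ vanishes at $\gamma=0$, so an extremal polynomial must be produced directly. Fix $\eta\in C^\infty(\R^d)$ with $\eta\ge 0$, compactly supported in the annulus $\{1/2\le|\xi|_\infty\le 1\}$, and chosen so that $\int_{\R^d}\eta(\xi)(\psi/\vp)(\xi)\,d\xi\ne 0$; this is possible by localizing $\eta$ near a point $\xi_0\in\R^d\setminus\{0\}$ where $(\psi/\vp)(\xi_0)\ne 0$, such a point existing since $\psi/\vp$ is nonvanishing on $\R^d\setminus\{0\}$. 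Set
\begin{equation*}
\mathcal{V}_n(x):=\sum_{k\in\Z^d}\eta(k/n)\,e^{i(k,x)},\qquad T_n:=\mathcal{D}(1/\vp)\mathcal{V}_n.
\end{equation*}
Because $\eta$ vanishes near the origin, the Fourier support of $\mathcal{V}_n$ (and hence of $T_n$) is contained in $\{k:n/2\le|k|_\infty\le n\}$, so $T_n\in\mathcal{T}'_n$. By construction $\mathcal{D}(\vp)T_n=\mathcal{V}_n$, and Theorem~\ref{lemBL} applied with $\varepsilon_j=1/n$ gives $\|\mathcal{D}(\vp)T_n\|_p=\|\mathcal{V}_n\|_p\asymp n^{d(1-1/p)}$ for every $0<p\le 1$. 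Similarly $\mathcal{D}(\psi)T_n(x)=\sum_k\eta_*(k/n)\,e^{i(k,x)}$ with $\eta_*:=\eta\cdot(\psi/\vp)\in C^\infty$, again supported in the same annulus. For $1<q<\infty$, Lemma~\ref{v-} applied to $\mathcal{V}_n$ (which has zero mean) yields $\|\mathcal{D}(\psi)T_n\|_q=\|\mathcal{D}(\psi/\vp)\mathcal{V}_n\|_q\asymp\|\mathcal{V}_n\|_q\asymp n^{d(1-1/q)}$. For $q=\infty$, the Riemann-sum identity $\mathcal{D}(\psi)T_n(0)=\sum_k\eta_*(k/n)=n^d\int_{\R^d}\eta_*(\xi)\,d\xi+O(1)$ (valid since $\eta_*$ is smooth and compactly supported), together with the choice of $\eta$, gives $|\mathcal{D}(\psi)T_n(0)|\asymp n^d$, hence $\|\mathcal{D}(\psi)T_n\|_\infty\asymp n^d$. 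Taking the ratio produces $n^{d(1/p-1/q)}$, completing the proof.

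The main technical obstacle lies in the endpoint $q=\infty$ of the lower bound: $\mathcal{D}(\psi/\vp)$ is not bounded on $L_\infty$, so the Mikhlin-type argument used for $1<q<\infty$ is unavailable, and the lower bound must be extracted by a Riemann-sum evaluation at the origin. The freedom to choose the cutoff $\eta$ so that $\int\eta\cdot(\psi/\vp)\ne 0$ is precisely what prevents this Riemann sum from accidentally vanishing for non-constant symbols $\psi/\vp$.
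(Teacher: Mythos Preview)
Your proof is correct. The upper bound matches the paper's approach exactly. For the lower bound, however, you take a genuinely different route.

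The paper constructs the extremal as $V_n^\vp=\mathcal{D}(1/\vp)(V_{Nn}-V_n)$ with $N$ a large integer to be chosen; the point is that $\|\mathcal{D}(\psi/\vp)V_{Nn}\|_q\ge C_1(Nn)^{d(1-1/q)}$ dominates $\|\mathcal{D}(\psi/\vp)V_n\|_q\le C_2 n^{d(1-1/q)}$ once $N^{d(1-1/q)}\ge(C_2+1)/C_1$ (the constants coming from Lemmas~\ref{v-}, \ref{v}, \ref{v++}), so the difference has the correct size. You instead build $T_n=\mathcal{D}(1/\vp)\mathcal{V}_n$ from a single annular cutoff $\eta$ supported in $\{1/2\le|\xi|_\infty\le 1\}$, which automatically keeps the Fourier support away from the origin and thereby sidesteps the cancellation that kills Lemma~\ref{lemHLd}(ii) at $\g=0$. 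Your construction is more self-contained --- the $q=\infty$ lower bound comes from a direct Riemann-sum evaluation at $x=0$ rather than a separate appeal to Lemma~\ref{v++}, and no ``sufficiently large $N$'' argument is needed. The paper's approach, on the other hand, reuses the kernel $V_n$ and its already-established norm asymptotics without introducing a new bump function.

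One minor point: in the $1<q<\infty$ case you assert $\|\mathcal{V}_n\|_q\asymp n^{d(1-1/q)}$ without justification. The lower bound (which is all you actually need) follows from $\mathcal{V}_n(0)=\sum_k\eta(k/n)\asymp n^d$ together with Nikol'skii's inequality $\|\mathcal{V}_n\|_\infty\lesssim n^{d/q}\|\mathcal{V}_n\|_q$. In fact, the same Riemann-sum--plus--Nikol'skii argument applied directly to $\mathcal{D}(\psi)T_n$ gives $\|\mathcal{D}(\psi)T_n\|_q\gtrsim n^{d(1-1/q)}$ for all $1<q\le\infty$ in one stroke, making the detour through Lemma~\ref{v-} unnecessary.
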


\begin{proof}
The estimate from above in~\eqref{abovebelow1} follows from Lemmas~\ref{lemHLd} and~\ref{v-} and Lemma~\ref{v} (the case $1<q<\infty$)
and Lemma~\ref{v++} (the case $q=\infty$).

Let us prove the estimate from below. For this, we denote
$$
V_n^\vp(x)=\mathcal{D}\(\frac1\vp\)(V_{Nn}(x)-V_n(x)),
$$
where  $N\in \N$ will be chosen later.

By~\eqref{*Vn}, we have
\begin{equation}\label{lele1}
   \Vert \mathcal{D}(\vp) V_n^\vp\Vert_{p}\lesssim n^{-d(\frac1p-1)}.
\end{equation}

At the same time, using Lemma~\ref{v-} and Lemma~\ref{v} (the case $1<q<\infty$)
and Lemma~\ref{v++} (the case $q=\infty$), we derive that
\begin{equation}\label{lele2}
\begin{split}
  \Vert \mathcal{D}(\psi) V_n^\vp \Vert_q&\ge \Vert \mathcal{D}(\psi/\vp) V_{Nn}^\vp \Vert_q-\Vert \mathcal{D}(\psi/\vp) V_{n}^\vp \Vert_q\\
  &\ge C_1 (Nn)^{d(1-\frac1q)}-C_2n^{d(1-\frac1q)}\ge n^{d(1-\frac1q)},
\end{split}
\end{equation}
where $N^{d(1-1/q)}\ge (C_2+1)/C_1$.
Thus, combining the inequality
\begin{equation*}
  \sup_{T\in \mathcal{T}'_{2Nn}
}\frac{\Vert \mathcal{D}(\psi)T\Vert_q}{\Vert
\mathcal{D}(\vp)T\Vert_p}\ge \frac{\Vert \mathcal{D}(\psi)V_n^\vp\Vert_q}{\Vert \mathcal{D}(\vp)V_n^\vp\Vert_p}
\end{equation*}
with \eqref{lele1} and \eqref{lele2},
we obtain the estimate from below in~\eqref{abovebelow1}.
\end{proof}

\begin{lemma}\label{lemgamma0+}
  Let $0<p<q\le 1$, $\a>0$, and $\psi,\,\vp\in \mathcal{H}_\a$. Let also
$\frac{\psi}{\varphi}\in C^\infty(\R^d\setminus \{0\})$ and
$\frac{\varphi}{\psi}\in C^\infty(\R^d\setminus \{0\})$. Then
\begin{equation}\label{abovebelow1+}
\begin{split}
    \sup_{T\in \mathcal{T}'_{n}
}\frac{\Vert \mathcal{D}(\psi)T\Vert_q}{\Vert
\mathcal{D}(\vp)T\Vert_p}
\asymp \left\{
                                     \begin{array}{ll}
                                       n^{d(\frac1p-1)}, & \hbox{$d=1$, $0<q<1$,\; {and} \; $\frac{\psi(\xi)}{\vp(\xi)}\not\equiv \const$;} \\
                                       n^{d(\frac1p-1)}\ln (n+1), & \hbox{$d=1$, $q=1$,\; {and} \; $\frac{\psi(\xi)}{\vp(\xi)}\not\equiv \const$;} \\
                                       n^{d(\frac1p-\frac1q)}, & \hbox{$d\ge 1$ and $\frac{\psi(\xi)}{\vp(\xi)}\equiv \const$.}
                                     \end{array}
                                   \right.
\end{split}
\end{equation}
\end{lemma}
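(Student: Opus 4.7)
The upper bound in \eqref{abovebelow1+} is immediate: Lemma~\ref{lemHLd}(i) reduces the supremum to $n^{d(\frac1p-1)}\|\mathcal{D}(\psi/\vp)V_n\|_q$, and Lemma~\ref{v++q<1} evaluates the latter in the three regimes, producing precisely the three stated upper bounds.

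For the matching lower bound when $\psi/\vp\equiv c$, the operator identity $\mathcal{D}(\psi)=c\,\mathcal{D}(\vp)$, combined with the fact that $\psi/\vp,\vp/\psi\in C^\infty(\R^d\setminus\{0\})$ forces $\vp\neq 0$ on $\R^d\setminus\{0\}$, makes $\mathcal{D}(\vp)$ a bijection of $\mathcal{T}'_n$ onto itself. The supremum then reduces to $|c|\sup_{S\in\mathcal{T}'_n}\|S\|_q/\|S\|_p\asymp n^{d(\frac1p-\frac1q)}$ by the sharp Nikol'skii inequality, which is realized up to constants by the polynomial $\mathcal{V}_n(x)=e^{i(e_1,x)}V_{[n/4]}(2x)$, since $\|\mathcal{V}_n\|_r\asymp n^{d(1-\frac1r)}$ for every $0<r\le\infty$.

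The remaining case $d=1$ with $\psi/\vp\not\equiv\const$ is the main difficulty. The plan is to test against $T_n:=\mathcal{D}(1/\vp)\mathcal{V}_n\in\mathcal{T}'_n$ with $\mathcal{V}_n(x)=e^{ix}V_{[n/4]}(2x)$ (all frequencies odd, so $\mathcal{D}(1/\vp)$ is legitimately defined on $\mathcal{V}_n$); then $\mathcal{D}(\vp)T_n=\mathcal{V}_n$ with $\|\mathcal{V}_n\|_p\asymp n^{1-\frac1p}$, and $\mathcal{D}(\psi)T_n=\mathcal{D}(\psi/\vp)\mathcal{V}_n$, so the task reduces to establishing the lower-bound analogue of Lemma~\ref{v++q<1},
\begin{equation*}
\|\mathcal{D}(\psi/\vp)\mathcal{V}_n\|_q\gtrsim\begin{cases}1,& 0<q<1,\\ \ln(n+1),& q=1.\end{cases}
\end{equation*}
Homogeneity of degree $0$ forces $(\psi/\vp)(2j+1)=A$ for $j\ge 0$ and $=B$ for $j\le-1$ with $A\neq B$ by hypothesis, so setting $H(y):=A\mathbf{1}_{y\ge 0}v(y)+B\mathbf{1}_{y<0}v(y)$ and $m=[n/4]$ yields $\mathcal{D}(\psi/\vp)\mathcal{V}_n(x)=e^{ix}\sum_{j\in\Z}H(j/m)e^{2ijx}$. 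The main obstacle lies here: applying Lemma~\ref{lemTrigub+} to the BV function $H$ with $\varepsilon=1/m$ gives $\sum_j H(j/m)e^{2ijx}=m\widehat{H}(2mx)+O(1)$ uniformly on $0<|x|\le\pi/2$, while the jump $|A-B|>0$ at the origin forces $|\widehat{H}(\xi)|\asymp|A-B|/|\xi|$ as $|\xi|\to\infty$ (a one-dimensional instance of Lemma~\ref{lemRunFour}(ii)). Invoking the reverse quasi-triangle inequality $|a+b|^q\ge|a|^q-|b|^q$ valid for $0<q\le 1$ and changing variables $y=2mx$ produces
\begin{equation*}
\|\mathcal{D}(\psi/\vp)\mathcal{V}_n\|_q^q\gtrsim m^{q-1}\int_{1}^{m\pi}|\widehat{H}(y)|^q\,dy-O(1),
\end{equation*}
whose right-hand side is $\gtrsim 1$ for $0<q<1$ (since $\int_1^{m\pi}y^{-q}dy\asymp m^{1-q}$ balances the $m^{q-1}$ prefactor) and $\gtrsim\ln m\asymp\ln n$ for $q=1$, completing the proof of \eqref{abovebelow1+}.
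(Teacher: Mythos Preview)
Your proof is correct and follows essentially the same route as the paper: the upper bound via Lemma~\ref{lemHLd}(i) and Lemma~\ref{v++q<1}, the constant case via Nikol'skii extremizers, and the non-constant one-dimensional case via the test polynomial $\mathcal{D}(1/\vp)$ applied to a de~la~Vall\'ee~Poussin kernel on odd frequencies, together with Lemma~\ref{lemTrigub+} and the $1/|\xi|$ lower bound on the Fourier transform coming from the jump of the degree-zero homogeneous function $\psi/\vp$. The paper phrases the last step through the $n$-dependent shift $f_n(y)=(\psi/\vp)(2y+1/n)$ and appeals to Lemma~\ref{lemRunFour}(ii), whereas you work directly with the step function $H$; the two presentations are equivalent.

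One small point worth tightening: in the case $0<q<1$ your final display reads ``main term $\asymp 1$ minus $O(1)$'', which does not by itself yield a positive lower bound. A one-line localization fixes this: integrate only over $|x|\le\varepsilon$, so that the error contributes at most $C\varepsilon$ while the main term contributes $\asymp\varepsilon^{1-q}$, and choose $\varepsilon$ small. The paper is equally terse at this spot (it simply writes ``by analogy with the proof of Lemma~\ref{v++q<1}''), so this is not a gap in your argument relative to theirs.
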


\begin{proof}
The estimate from above in all cases follows from Lemmas~\ref{lemHLd} and~\ref{v++q<1}.

The estimate from below in~\eqref{abovebelow1+} in the case $\frac{\psi(\xi)}{\vp(\xi)}\equiv \const$ can be proved  the same way as in the proof of the corresponding estimate in Lemma~\ref{lemgamma0}. We only note that we use Lemma~\ref{v++q<1} instead of Lemmas~\ref{v-}, ~\ref{v}, and~\ref{v++}.

To prove the estimate from below in the case $d=1$ and $\frac{\psi(\xi)}{\vp(\xi)}\not\equiv \const$, we put
$$
V_n^\vp(x)=\mathcal{D}\(\frac1\vp\)\(\sum_{k\in \Z} v\(\frac kn\)e^{i(2k+1)x}\).
$$
Then,  by Corollary~\ref{lemBLX}, we  have
\begin{equation}\label{lele1+}
   \Vert \mathcal{D}(\vp) V_n^\vp\Vert_{p}=\Vert V_n (2 \cdot)\Vert_p\lesssim n^{-(\frac1p-1)}.
\end{equation}
Next, using the fact that $\psi/\vp$ is a homogeneous function of order zero, we obtain
\begin{equation}\label{dostalo}
  \Vert \mathcal{D}(\psi) V_n^\vp\Vert_{q}=\bigg\Vert \sum_{k\in \Z}\frac{\psi(2k+1)}{\vp(2k+1)}v\(\frac kn\)e^{i(2k+1)x}\bigg\Vert_q
  =\Vert  \widetilde{V}_n\Vert_{q},
\end{equation}
where
$$
\widetilde{V}_n(x)=\sum_{k\in \Z} f_n\(\frac kn\)v\(\frac kn\) e^{ikx},\quad f_n(y)=\frac{\psi(2y+\frac1n)}{\vp(2y+\frac1n)}.
$$
By properties of the Fourier transform,
$$
\widehat{f_n v}(\xi)={e^{i\frac{\xi}{2n}}} \widehat{f_\infty v_n}\(\xi\),\quad v_n(y)=v\(y-\frac1{2n}\).
$$
Thus, from~\eqref{eqRunovskii} (see also the proof of (4.7) in~\cite{RS}), it is easy to see that for sufficiently large $n$,
\begin{equation}\label{eq4++++}
  |\widehat{f_n v}(\xi)|\ge \frac{C}{|\xi|},\quad |\xi|>\rho,
\end{equation}
where $C$ and $\rho$ do not depend on $n$.

Now, using Lemma~\ref{lemTrigub+} and~\eqref{eq4++++}, by analogy with the proof of Lemma~\ref{v++q<1}, we derive
\begin{equation}\label{eqfinal++++}
  \Vert\widetilde{V}_n\Vert_q \gtrsim \left\{
                                                                      \begin{array}{ll}
                                                                        1, & \hbox{$0<q<1$;} \\
                                                                        \ln n, & \hbox{$q=1$.}
                                                                      \end{array}
                                                                    \right.
\end{equation}
It remains to combine inequalities~\eqref{lele1+}, \eqref{dostalo}, and~\eqref{eqfinal++++} with the inequality
\begin{equation*}
  \sup_{T\in \mathcal{T}'_{2n}
}\frac{\Vert \mathcal{D}(\psi)T\Vert_q}{\Vert
\mathcal{D}(\vp)T\Vert_p}\ge \frac{\Vert \mathcal{D}(\psi)V_n^\vp\Vert_q}{\Vert \mathcal{D}(\vp)V_n^\vp\Vert_p}.
\end{equation*}
\end{proof}

\subsection{Hardy--Littlewood--Nikol'skii $(L_p,L_q)$ inequalities for $1< p<q\le \infty$.}\label{Subsection 5.2}

\begin{lemma}\label{lemHLd++}
{\it Let $1< p<q\le \infty$, $\a>0$, $\g\ge 0$, $\psi\in
\mathcal{H}_\a$, and $\vp\in \mathcal{H}_{\a+\g}$.
Let also
$\frac{\psi}{\varphi}\in C^\infty(\R^d\setminus \{0\})$ and
$\frac{\varphi}{\psi}\in C^\infty(\R^d\setminus \{0\})$.
We have
\begin{equation}\label{eqlemHL1d++}
\eta(n)
:=\sup_{T\in \mathcal{T}'_{n}
}\frac{\Vert \mathcal{D}(\psi)T\Vert_q}{\Vert
\mathcal{D}(\vp)T\Vert_p} \asymp
\left\{
         \begin{array}{ll}
           1, & \hbox{$\g\ge d(\frac1p-\frac1q),\quad q<\infty$}; \\
           1, & \hbox{$\g> \frac dp,\quad q=\infty$}; \\
           \ln^\frac1{p'} (n+1), & \hbox{$\g=\frac dp,\quad q=\infty$}; \\
           n^{d(\frac1p-\frac1q)-\g}, & \hbox{$0\le \g<d(\frac1p-\frac1q)$}.\\
         \end{array}
       \right.
\end{equation}

}
\end{lemma}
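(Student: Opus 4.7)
My plan is to substitute $S=\mathcal{D}(\varphi)T$, which is a bijection on $\mathcal{T}'_n$ (since $\varphi$ is nonvanishing off the origin), reducing the problem to
\[
\eta(n)=\sup_{S\in\mathcal{T}'_n}\frac{\|\mathcal{D}(\tilde\psi)S\|_q}{\|S\|_p},\qquad \tilde\psi:=\psi/\varphi\in\mathcal{H}_{-\gamma},
\]
with $\tilde\psi,1/\tilde\psi\in C^\infty(\R^d\setminus\{0\})$. By Lemma~\ref{v-}, for any $r\in(1,\infty)$ one has $\|\mathcal{D}(\tilde\psi)f\|_r\asymp\|(-\Delta)^{-\gamma/2}f\|_r$ (with the mean subtracted when $\gamma=0$), so the problem reduces to estimates for the Riesz potential.

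For the upper bound when $q<\infty$, I would combine this equivalence with three classical ingredients. The critical case $\gamma=d(1/p-1/q)$ is precisely the Hardy--Littlewood fractional integration inequality \eqref{HLfrac}. When $\gamma>d(1/p-1/q)$, I would factor $(-\Delta)^{-\gamma/2}=(-\Delta)^{-d(1/p-1/q)/2}\circ(-\Delta)^{-(\gamma-d(1/p-1/q))/2}$ and handle the excess factor via Lemma~\ref{lemMult}(i) applied to a smooth cutoff of $|\xi|^{-(\gamma-d(1/p-1/q))}$ away from the origin (admissible because spectra of polynomials in $\mathcal{T}'_n$ live in $\{|k|\ge 1\}$). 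When $\gamma<d(1/p-1/q)$, Hardy--Littlewood gives an $L_p\to L_{\tilde q}$ bound with $1/\tilde q:=1/p-\gamma/d$, and then Nikol'skii's inequality \eqref{nik} promotes to $L_q$ at the cost of the factor $n^{d(1/\tilde q-1/q)}=n^{d(1/p-1/q)-\gamma}$. For $q=\infty$ Lemma~\ref{v-} does not apply directly, so I would realise $\mathcal{D}(\tilde\psi)$ as a convolution: with $W_n:=\mathcal{D}(\tilde\psi)V_n$ one has $\mathcal{D}(\tilde\psi)S=S*W_n$ on $\mathcal{T}'_n$ (since $\hat V_n(k)=1$ on $|k|_\infty\le n$), and H\"older yields $\|\mathcal{D}(\tilde\psi)S\|_\infty\lesssim\|S\|_p\|W_n\|_{p'}$; applying Lemma~\ref{v-} with $r=p'$ and then Lemma~\ref{v} with $q=p'$ (using $d(1-1/p')=d/p$) produces the three regimes $1$, $\ln^{1/p'}(n+1)$, and $n^{d/p-\gamma}$ stated in the theorem.

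For the lower bounds, the bound $\eta(n)\gtrsim 1$ in the first two regimes is immediate from the test $S(x)=e^{i(k_0,x)}$ with $|k_0|_\infty=1$. In the regime $\gamma<d(1/p-1/q)$ the test $S=V_n$ is sharp: $\|V_n\|_p\asymp n^{d/p'}$, while Lemma~\ref{v-} combined with Lemma~\ref{v} for $q<\infty$ (or Lemma~\ref{v++} for $q=\infty$) gives $\|\mathcal{D}(\tilde\psi)V_n\|_q\asymp n^{d(1-1/q)-\gamma}$ throughout the relevant range, yielding the ratio $n^{d(1/p-1/q)-\gamma}$ directly.

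The main obstacle is the lower bound in the borderline case $\gamma=d/p$, $q=\infty$, since $V_n$ only yields ratio $\asymp 1$ and the logarithmic factor must come from a genuinely dual construction. My plan is as follows: set $K:=\mathcal{D}(\overline{\tilde\psi})V_{n/2}$, so that Lemmas~\ref{v-} and~\ref{v} give $\|K\|_{p'}\asymp\ln^{1/p'}(n+1)$. By $L_p$--$L_{p'}$ duality choose $g\in L_p$ with $\|g\|_p=1$ and $\mathrm{Re}\int g\,\overline{K}\,dx=\|K\|_{p'}$, and let $T:=V_{n/2}*g$, subtracting the mean if necessary so that $T\in\mathcal{T}'_n$; the spectral support of $\hat V_{n/2}$ keeps $T$ in $\mathcal{T}_n$ and $\|T\|_p\le\|V_{n/2}\|_1\|g\|_p\lesssim 1$. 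Using the identity $\hat K(k)=\overline{\tilde\psi(k)}\,\hat V_{n/2}(k)$ one obtains the Parseval computation
\[
\mathcal{D}(\tilde\psi)T(0)=\sum_{k\ne 0}\tilde\psi(k)\hat V_{n/2}(k)\hat g(k)=(2\pi)^{-d}\!\int g\,\overline{K}\,dx,
\]
whose real part is $(2\pi)^{-d}\|K\|_{p'}\asymp\ln^{1/p'}(n+1)$, and thus $\|\mathcal{D}(\tilde\psi)T\|_\infty\ge|\mathcal{D}(\tilde\psi)T(0)|\gtrsim\ln^{1/p'}(n+1)$, completing the sharp lower bound. The delicate point is orchestrating the two cutoff levels so that $V_{n/2}*g$ remains in $\mathcal{T}_n$ while the Parseval identity above collapses exactly to $\int g\,\overline{K}$, without losing the full $L_{p'}$-dual of $K$.
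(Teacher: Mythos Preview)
Your argument is correct. The upper bounds follow exactly the paper's route: reduction via Lemma~\ref{v-} to the Riesz potential, then Hardy--Littlewood plus a multiplier for $\gamma\ge d(1/p-1/q)$, Hardy--Littlewood plus Nikol'skii (you) or Bernstein (paper) for $\gamma<d(1/p-1/q)$, and H\"older against $\|\mathcal{D}(\tilde\psi)V_n\|_{p'}$ when $q=\infty$. A cosmetic point: your test $S=V_n$ should be $V_{\lfloor n/2\rfloor}-1$ to lie in $\mathcal{T}'_n$, as the paper does, but this changes nothing.

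The lower bounds are where you diverge. For $q=\infty$ the paper avoids duality entirely and uses the explicit family $T_{n,\lambda}(x)=\sum'_{|k|_\infty\le n}|k|^{-\lambda}e^{i(k,x)}$: in the borderline case $\gamma=d/p$ one takes $\lambda=d$, computes $\|T_{n,d}\|_\infty\gtrsim\ln(n+1)$ directly from the value at $x=0$, and $\|(-\Delta)^{\gamma/2}T_{n,d}\|_p\asymp\ln^{1/p}(n+1)$ from Lemma~\ref{v}, giving the ratio $\ln^{1/p'}(n+1)$; for $\gamma<d/p$ one picks $\lambda=\gamma+(d-d/p)/2$ to balance numerator and denominator. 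Your approach instead realises the extremiser abstractly: you dualise $K=\mathcal{D}(\overline{\tilde\psi})V_{n/2}$ in $L_{p'}$ to get $g\in L_p$, then project $g$ to a polynomial via $T=V_{n/2}*g$, so that the Parseval identity collapses $\mathcal{D}(\tilde\psi)T(0)$ to the duality pairing. This is a cleaner and more conceptual proof of the critical case---it explains \emph{why} the $\ln^{1/p'}$ arises (it is the $L_{p'}$ norm of the kernel)---whereas the paper's explicit $T_{n,d}$ is more concrete and gives a single reusable extremiser family for all $q=\infty$ subcases. Both routes rest on the same kernel estimate from Lemma~\ref{v}.
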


\begin{proof}
 To estimate $\eta(n)$ from above in the case $q<\infty$,
we use the fact that
$$
\eta(n)\asymp \sup_{T\in \mathcal{T}'_{n}
}\frac{\Vert (-\D)^{-\g/2}T\Vert_q}{\Vert T\Vert_p}
$$
(see Lemma~\ref{v-}).


Let $\g\ge d(1/p-1/q)$ and $q<\infty$. Using the Hardy-Littlewood inequality for fractional integrals, see~\eqref{HLfrac}, and taking into account the fact that by Lemma~\ref{lemMult} the function
$$
u(\xi)=\frac{1-v(2|\xi|)}{|\xi|^{\g-d(\frac1p-\frac1q)}}
$$
is a Fourier multiplier in $L_p(\T^d)$, we arrive at
$$
\Vert (-\D)^{-\g/2} T_n\Vert_q\lesssim\Vert \mathcal{D}(u)
T_n\Vert_p\lesssim \Vert T_n\Vert_p.
$$

If $0<\g<d(1/p-1/q)$ and $q<\infty$, the required estimate follows from the Hardy--Littlewood  inequality for fractional integrals and the Bernstein inequality  (see, e.g.,~\cite{RS}):
\begin{eqnarray*}
\Vert (-\D)^{-\g/2} T_n\Vert_q&=&\Vert
(-\D)^{-d(\frac1p-\frac1q)/2}(-\D)^{(d(\frac1p-\frac1q)-\g)/2}
T_n\Vert_q
\\
&\lesssim& \Vert (-\D)^{(d(\frac1p-\frac1q)-\g)/2} T_n\Vert_p
\\
&\lesssim& n^{d(\frac1p-\frac1q)-\g}\Vert T_n\Vert_p.
\end{eqnarray*}

If $\gamma=0$ and $q<\infty$, the part $"\lesssim"$ in (\ref{eqlemHL1d++}) follows from
${\Vert \mathcal{D}(\psi)T\Vert_p}\lesssim{\Vert
\mathcal{D}(\vp)T\Vert_p}$ (see Lemma~\ref{v-}) and
the classical Nikol'skii inequality:
$$
\eta(n)\lesssim
\sup_{T\in \mathcal{T}'_{n}}
\frac{\Vert \mathcal{D}(\psi)T\Vert_q}{\Vert \mathcal{D}(\psi)T\Vert_p}
\lesssim
n^{d(\frac1p-\frac1q)}.
$$

Let now $\g\ge 0$ and $q=\infty$.
In this case, the proof follows from the inequality
\begin{eqnarray*}
\Vert \mathcal{D}({\psi}/{\varphi})T_n\Vert_\infty
&=&
\bigg\Vert (2\pi)^{-d} \int_{\T^d}
\Big(
\mathcal{D}({\psi}/{\varphi})V_n(t)\Big) T_n(x-t)dt \bigg\Vert_\infty
\\
&\lesssim& \Vert
\mathcal{D}({\psi}/{\varphi})V_n\Vert_{p'}\Vert T_n\Vert_p
\end{eqnarray*}
and Lemmas \ref{v-}  and \ref{v}.

Now, we study the estimate of $\eta(n)$ from below, i.e., the part $"\gtrsim"$ in
equivalence~\eqref{eqlemHL1d++}.

 If  $\g=0$ and $q<\infty$, then
 $\eta(n)\asymp
\sup_{T\in \mathcal{T}'_{n}
}\frac{\Vert T\Vert_q}{\Vert
T\Vert_p}$ and, therefore, \eqref{eqlemHL1d++} is the classical Nikol'skii's  inequality.
 The sharpness follows from
the  Jackson-type kernel example, see \cite[\S~4.9]{timan}:
$$T(x)=\left(\frac{\sin \frac{nt}2}{n \sin\frac{t}2}\right)^{2r}, \qquad r\in \N.$$

 If  $0< \g< d(1/p-1/q)$ and $q<\infty$, using Lemmas \ref{v-}  and \ref{v}, we estimate
$$
\eta(n)
\ge
\frac{
\Vert (-\D)^{-\g/2} ( V_{n/2}-1)\Vert_q}{\Vert V_{n/2}-1\Vert_p}
\ge
\frac{
\Vert (-\D)^{-\g/2} V_{n/2}\Vert_q}{\Vert V_{n/2}\Vert_p+ (2\pi)^{d/p}}
\gtrsim
           n^{d(\frac1p-\frac1q)-\g},\quad n\ge 2.
$$

If $\g\ge d(1/p-1/q)$ and $q<\infty$, we have
$$
\eta(n)
\ge
\frac{\Vert \mathcal{D}(\psi)T^*\Vert_q}{\Vert
\mathcal{D}(\vp)T^*\Vert_p} \asymp 1,\qquad T^*(x)=\cos x_1,
$$
and the desired result follows.

Assume now that $q=\infty$.
Set
$$
T_{n,\l}(x)=
\mathop{{\sum}'}_{|k|_\infty\le n}
\frac{e^{i(k,x)}}{|k|^\l},\quad \l>0.
$$
Note that Lemma \ref{v-}  implies that
\begin{equation}\label{eqlemHL1d+++}
\eta(n)
=
\sup_{T\in \mathcal{T}'_{n}
}\frac{\Vert
T
\Vert_\infty}{\Vert
\mathcal{D}(\vp/\psi)(T)\Vert_p}
\gtrsim
\sup_{T\in \mathcal{T}'_{n}
}
\frac{\Vert
T
\Vert_\infty}{
\Vert (-\D)^{\g/2} T\Vert_p
}
.
\end{equation}
We divide the rest of the proof into three cases.

1) Let first $\g=d/p$.
Since
$$
\Vert
T_{n,d}
\Vert_\infty
\gtrsim
\mathop{{\sum}'}_{|k|_\infty\le n}
\frac{1}{|k|^d}
\gtrsim
\ln (n+1),
$$
then,
by \eqref{eqlemHL1d+++} and Lemma~\ref{v}, we have
\begin{eqnarray*}
   \eta(n)
\gtrsim
\frac{\Vert
T_{n,d}
\Vert_\infty}{
\Vert (-\D)^{\g/2} T_{n,d}\Vert_p
}
&\gtrsim&
\frac{\ln (n+1)}
{\Big\|{\mathop{{\sum}'}_{|k|_\infty\le n}
\frac{e^{i(k,x)}}{|k|^{d(1-1/p)}}}
\Big\|_p}
 \\
&\gtrsim&
\frac{
\ln (n+1)
}{
\|
 (-\Delta)^{-d(1-1/p)/2}V_n
 \|_p}
 \\
&\gtrsim&
 \ln^\frac1{p'} (n+1).
\end{eqnarray*}

2) Let now $\g>d/p$. Considering
$T_{n,d+\g}$ and using again (\ref{eqlemHL1d+++}) and Lemma~\ref{v}, we get
\begin{eqnarray*}
   \eta(n)
\gtrsim
\frac{\Vert
T_{n,d+\g}
\Vert_\infty}{
\Vert (-\D)^{\g/2} T_{n,d+\g}\Vert_p
}
\gtrsim
\frac{\Vert
T_{n,d+\g}
\Vert_\infty}{
  \|
 (-\Delta)^{-d/2}V_n
 \|_p
}
\gtrsim
1.
\end{eqnarray*}

3) Finally, if  $0\le\g<d/p$, considering
$T_{n,\l}$ with $\l=\g+(d-d/p)/2$ and using the same procedure, we arrive at
\begin{eqnarray*}
   \eta(n)
&\gtrsim&
\frac
{\Vert
T_{n,\l}
\Vert_\infty}
{
\Vert (-\D)^{\g/2} T_{n,\l}\Vert_p
}
\\&\gtrsim&
\frac
{\Vert
T_{n,\l}
\Vert_\infty}
{
  \|
 (-\Delta)^{-(\l-\g)/2}V_n
 \|_p
}
\\&\asymp&
\frac{n^{d-\l}}{n^{d(1-\frac1p)-(\l-\g)}} =
n^{\frac dp-\g}.
\end{eqnarray*}
Here, we take into account that
$0<\l-\g<
d(1-1/p)$,
$0<{\l}<d$, and
$$
\Vert
T_{n,\l}
\Vert_\infty
\gtrsim
\mathop{{\sum}'}_{|k|_\infty\le n}
\frac{1}{|k|^\l}
\gtrsim
n^{d-\l}.
$$
\end{proof}


\subsection{Hardy--Littlewood--Nikol'skii $(L_p,L_q)$ inequalities for directional derivatives}


For applications, in particular, to obtain the sharp Ulyanov inequality for moduli of smoothness, it is important to use the  Hardy-Littlewood-Nikol'skii inequalities for directional derivatives. Here the interesting case is $p\le 1$, otherwise see equivalence~\eqref{eqRAZZZ} in Corollary~\ref{lemRAZZZ}.

\begin{lemma}\label{lemma+}
Let $0<p\le 1$, $1<q<\infty$, $\a>0$, $\g>0$, and $\a+\g \neq 2k+1$, $k\in \Z_+$. Then
\begin{equation}\label{will+}
  \sup_{T_n\in\mathcal{T}_n'}\frac{\sup\limits_{|\xi|=1,\,\xi\in\R^d}\bigg\Vert \(\frac{\partial}{\partial\xi}\)^{\a} T_n\bigg\Vert_{q}}{\sup\limits_{|\xi|=1,\,\xi\in\R^d}\bigg\Vert \(\frac{\partial}{\partial\xi}\)^{\a+\g} T_n\bigg\Vert_p}\lesssim \s(n),
\end{equation}
where $\s(\cdot)$ is given as follows:

{\rm (1)} if $0<p\le 1$ and $1<q<\infty$, then
$$
\s(t)
:=\left\{
         \begin{array}{ll}
           t^{d(\frac1p-1)}, & \hbox{$\g> d\(1-\frac1q\)$}; \\
           t^{d(\frac1p-1)}\ln^\frac1{q} (t+1), & \hbox{$0<\g=d\(1-\frac1q\)$}; \\
           t^{d(\frac1p-\frac1q)-\g}, & \hbox{$0\le \g<d\(1-\frac1q\)$},
         \end{array}
       \right.
$$

{\rm (2)} if $1<p\le q<\infty$, then
$$
\s(t):=
\left\{
         \begin{array}{ll}
           1, & \hbox{$\g\ge d(\frac1p-\frac1q),\quad q<\infty$}; \\
           t^{d(\frac1p-\frac1q)-\g}, & \hbox{$0\le \g<d(\frac1p-\frac1q)$}.\\
         \end{array}
       \right.
$$
\end{lemma}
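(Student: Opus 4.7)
The plan is to reduce the statement to the Hardy--Littlewood--Nikol'skii inequalities for Riesz derivatives already established in Corollary~\ref{corr++}, by switching between the supremum of directional-derivative norms and the Riesz-derivative norm via Corollary~\ref{lemRAZZZ}. Since $q>1$ in both cases of the lemma, equation~\eqref{eqRAZZZ} gives
\[
\sup_{|\xi|=1,\,\xi\in\R^d}\bigg\Vert\(\frac{\partial}{\partial\xi}\)^{\a}T_n\bigg\Vert_{q}\asymp \Vert(-\D)^{\a/2}T_n\Vert_q,
\]
so it is enough to bound $\Vert(-\D)^{\a/2}T_n\Vert_q$ by $\s(n)$ times the denominator in~\eqref{will+}.

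For case~(2), $1<p\le q<\infty$, equation~\eqref{eqRAZZZ} applies equally to the denominator, yielding $\sup_{|\xi|=1}\Vert(\partial/\partial\xi)^{\a+\g}T_n\Vert_p\asymp \Vert(-\D)^{(\a+\g)/2}T_n\Vert_p$, and the claim becomes exactly Corollary~\ref{corr++}(2).

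For case~(1), $0<p\le 1$ and $1<q<\infty$, I first invoke Corollary~\ref{corr++}(1) to obtain $\Vert(-\D)^{\a/2}T_n\Vert_q\lesssim\s(n)\Vert(-\D)^{(\a+\g)/2}T_n\Vert_p$, so it remains to show
\[
\Vert(-\D)^{(\a+\g)/2}T_n\Vert_p\lesssim \sup_{|\xi|=1,\,\xi\in\R^d}\bigg\Vert\(\frac{\partial}{\partial\xi}\)^{\a+\g}T_n\bigg\Vert_p.
\]
My main tool for this will be the rotational averaging identity
\[
|k|^{\a+\g}=\frac{c(\a+\g,d)}{\cos(\pi(\a+\g)/2)}\int_{\mathbb{S}^{d-1}}\bigl(i(k,\xi)\bigr)^{\a+\g}\,d\sigma(\xi),
\]
which expresses $(-\D)^{(\a+\g)/2}T_n$ as an average over $\mathbb{S}^{d-1}$ of the polynomials $(\partial/\partial\xi)^{\a+\g}T_n$. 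The hypothesis $\a+\g\neq 2k+1$ is precisely the non-vanishing condition $\cos(\pi(\a+\g)/2)\neq 0$ that makes this identity meaningful.

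The main obstacle is the passage to $L_p$-norms when $p<1$: Minkowski's integral inequality, which for $p\ge 1$ would instantly yield $\bigl\Vert\int f_\xi\,d\sigma(\xi)\bigr\Vert_p\lesssim \sup_\xi\Vert f_\xi\Vert_p$, fails in the quasi-Banach range. To circumvent this I would discretize the spherical integral to a finite averaging over sufficiently many directions and then run a convolution-type argument in the spirit of Lemma~\ref{lemHLd}(i), letting the de la Vall\'ee Poussin kernel $V_n$ absorb the Nikol'skii-type losses occurring when passing from the integral to a finite sum; the decisive check is that the resulting combinatorial factor is absorbed into the $\s(n)$ provided by Corollary~\ref{corr++}(1).
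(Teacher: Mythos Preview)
Your treatment of case~(2) is fine and matches the paper's implicit handling of that range. The gap is in case~(1), at the step
\[
\Vert(-\D)^{(\a+\g)/2}T_n\Vert_p\lesssim \sup_{|\xi|=1}\bigg\Vert\(\frac{\partial}{\partial\xi}\)^{\a+\g}T_n\bigg\Vert_p,\qquad 0<p\le 1.
\]
The rotational identity is correct, but it does not survive passage to $L_p$, $p\le 1$. If you discretize the sphere to $N$ nodes with weights $\sim 1/N$, the quasi-triangle inequality gives only
\[
\bigg\Vert\frac{1}{N}\sum_{j=1}^N\(\frac{\partial}{\partial\xi_j}\)^{\a+\g}T_n\bigg\Vert_p\le N^{\frac1p-1}\sup_{|\xi|=1}\bigg\Vert\(\frac{\partial}{\partial\xi}\)^{\a+\g}T_n\bigg\Vert_p,
\]
and for non-even $\a+\g$ the integrand $\xi\mapsto(i(k,\xi))^{\a+\g}$ is not smooth across $\{(k,\xi)=0\}$, so an accurate quadrature on polynomials of degree $n$ forces $N$ to grow with $n$. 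The extra factor $N^{1/p-1}$ is \emph{not} part of $\s(n)$; it is multiplicative on top of the $\s(n)$ you already spent in Corollary~\ref{corr++}(1). The vague appeal to ``letting $V_n$ absorb the Nikol'skii-type losses'' does not help: those losses are precisely the $n^{d(1/p-1)}$ already encoded in $\s(n)$, and there is no further slack.

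The paper sidesteps the issue by never trying to recover the Riesz symbol on the $p$-side. Instead it uses the \emph{finite} combination
\[
\mathcal{D}^{\a+\g}T_n:=\sum_{j=1}^d\(\frac{\partial}{\partial e_j}\)^{\a+\g}T_n,
\qquad
\vp(y)=\sum_{j=1}^d(iy_j)^{\a+\g},
\]
for which $\Vert\mathcal{D}^{\a+\g}T_n\Vert_p\lesssim\sup_{|\xi|=1}\Vert(\partial/\partial\xi)^{\a+\g}T_n\Vert_p$ is immediate (only $d$ terms, no $n$-dependence). One then applies Lemma~\ref{lemHLd}(i) with $\psi=|y|^\a$ and this $\vp$, and pushes the mismatch between $\vp$ and $|y|^{\a+\g}$ to the $q$-side: since $\a+\g\neq 2k+1$ guarantees $\Re\vp(y)\neq 0$ for $y\neq 0$, the function $h(y)=|y|^{\a+\g}/\vp(y)$ is a Mikhlin--H\"ormander multiplier on $L_q$ ($q>1$), whence $\Vert\mathcal{D}(\psi/\vp)V_n\Vert_q\lesssim\Vert(-\D)^{-\g/2}V_n\Vert_q$, and Lemma~\ref{v} finishes. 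The key idea you are missing is to put the finite sum where $p\le1$ hurts and the multiplier where $q>1$ helps.
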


 \begin{proof}
First, we note that
$$
\Vert \mathcal{D}^{\a+\g} T_n\Vert_p\lesssim
\sum_{j=1}^d \Big\Vert \(\frac{\partial}{\partial x_j}\)^{\a+\g} T_n\Big\Vert_p
\lesssim
\sup_{|\xi|=1,\,\xi\in\R^d}\bigg\Vert \(\frac{\partial}{\partial\xi}\)^{\a+\g} T_n\bigg\Vert_p,
$$
where
$$
\mathcal{D}^{\a+\g} T_n (x):=\sum_{j=1}^d \(\frac{\partial}{\partial x_j}\)^{\a+\g} T_n(x)=\sum_{|k|_\infty\le n} \vp(k) \widehat{(T_n)}_k e^{i(k,x)},
$$
 $\widehat{(T_n)}_k$ is the $k$-th Fourier coefficient of $T_n$, and
$\vp(y):=(iy_1)^{\a+\g}+\dots+(iy_d)^{\a+\g}$.
Thus, using Corollary~\ref{lemRAZZZ}, we obtain
\begin{equation}\label{+1}
  \sup_{T_n\in\mathcal{T}_n'}\frac{\sup\limits_{|\xi|=1,\,\xi\in\R^d}\bigg\Vert \(\frac{\partial}{\partial\xi}\)^{\a} T_n\bigg\Vert_{q}}{\sup\limits_{|\xi|=1,\,\xi\in\R^d}\bigg\Vert \(\frac{\partial}{\partial\xi}\)^{\a+\g} T_n\bigg\Vert_p}\lesssim
\sup_{{T\in\mathcal{T}_{n}'}}\frac{\Vert (-\D)^{\a/2} T_n\Vert_q}{\Vert\mathcal{D}^{\a+\g}T_n\Vert_p}.
\end{equation}

Note that
\begin{multline}\label{eqMM4}
    \vp(y)=\cos\frac{({\a+\g})\pi}{2}\(|y_1|^{\a+\g}+\dots+|y_d|^{\a+\g}\)\\
    +i\(\sin\frac{({\a+\g}) \pi \sign y_1}{2}|y_1|^{\a+\g}+\dots+\sin\frac{({\a+\g}) \pi \sign y_d}{2}|y_d|^{\a+\g}\).
\end{multline}
Hence, it is easy to see that $\Re \vp(y)\neq 0$ for $y\neq 0$ and ${\a+\g} \neq 2k+1$, $k\in \Z_+$.

Next, by using Lemma~\ref{lemHLd}, we obtain for
$0<p\le 1$ and $1<q<\infty$
\begin{equation}\label{eqMM5}
    \sup_{T\in \mathcal{T}_n'}\frac{\Vert (-\D)^{\a/2} T_n\Vert_q}{\Vert\mathcal{D}^{\a+\g}T_n\Vert_p}\lesssim
    n^{d(1/p-1)}\Vert \mathcal{D}(\psi/\vp) V_n\Vert_q,
\end{equation}
where
$$
\frac{\psi(y)}{\vp(y)}=\frac{|y|^\a}{(iy_1)^{\a+\g}+\dots+(iy_d)^{\a+\g}}.
$$
Let us show that
\begin{equation}\label{eqMM6}
\Vert \mathcal{D}(\psi/\vp) V_n\Vert_q \lesssim \Vert (-\D)^{-\g/2} V_n \Vert_q.
\end{equation}
For this, it is sufficient to verify that the function
$$
h(y)=\frac{|y|^{\a+\g}}{(iy_1)^{\a+\g}+\dots+(iy_d)^{\a+\g}}
$$
is a Fourier multiplier in $L_q(\T^d)$. This easily follows from (\ref{eqMM4}) and  Lemma~\ref{lemMult}.
Thus, combining (\ref{+1}), (\ref{eqMM5}), and (\ref{eqMM6})
and using the bounds for $\Vert (-\D)^{-\g/2} V_n \Vert_q$
from Lemma~\ref{v},
 we complete the proof.
\end{proof}

Recall that the homogeneous Sobolev norm is given by
$
\Vert f \Vert_{\dot W_p^{r}}=\sum_{|\nu|_1=r}\Vert D^\nu f\Vert_p.
$

\begin{lemma}\label{lemVspom} Let $d\ge 1$.

{\textnormal {(i)}} We have, for $1/{q^*}=(d-1)/d$,
\begin{equation}\label{eqStein}
  \Vert T\Vert_{q^*}\le \frac1d
  \sum_{j=1}^d \left\Vert \frac{\partial T}{\partial x_j}\right\Vert_1,\quad T\in \mathcal{T}'.
\end{equation}

{\textnormal {(ii)}} We have 
\begin{equation}\label{eqVspom}
\Vert T\Vert_\infty \lesssim \Vert T\Vert_{\dot W_1^d}\,,\quad T\in
\mathcal{T}'.
\end{equation}

\end{lemma}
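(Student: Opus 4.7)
For part \textnormal{(i)}, I would follow the Gagliardo--Nirenberg--Sobolev strategy, adapted from $\R^d$ to the periodic zero-mean setting. When $d=1$ one has $q^*=\infty$: since $T\in\mathcal{T}'$ is continuous with zero mean, the intermediate value theorem supplies $x_0\in\T$ with $T(x_0)=0$, and $T(x)=\int_{x_0}^{x}T'(t)\,dt$ gives $\|T\|_\infty\le\|T'\|_1$, which is exactly the statement. For $d\ge 2$ I would first establish the multiplicative Nirenberg-type form
\begin{equation*}
\|T\|_{q^*}\le\Bigl(\prod_{j=1}^{d}\|\partial_j T\|_1\Bigr)^{1/d}
\end{equation*}
by the Loomis--Whitney inequality; the required pointwise bound $|T(x)|\le\int_\T |\partial_j T|\,dx_j$ in each coordinate is obtained after subtracting the slice mean $\overline{T_j}(x_1,\ldots,\widehat{x_j},\ldots,x_d)$ and eliminating those means inductively by exploiting the global zero-mean hypothesis. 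Finally AM--GM converts the product into the stated sum $\tfrac1d\sum_j\|\partial_j T\|_1$.

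For part \textnormal{(ii)}, my plan is a decomposition by Fourier support. For each nonempty $S\subseteq\{1,\ldots,d\}$ set
\begin{equation*}
T_S(x)=\sum_{\{j\,:\,k_j\ne 0\}=S}\widehat{T}_{k}\,e^{i(k,x)},\qquad T=\sum_{\emptyset\ne S\subseteq\{1,\ldots,d\}}T_S.
\end{equation*}
Each $T_S$ depends only on $(x_j)_{j\in S}$ and has zero mean in every direction $j\in S$. Iterating the $d=1$ case of \textnormal{(i)} across the coordinates of $S$ (at each step the slicewise zero mean supplies a zero of the relevant partial function) yields the pointwise bound
\begin{equation*}
|T_S(x)|\le\Bigl\|\prod_{j\in S}\partial_j T_S\Bigr\|_{L_1(\T^{|S|})}.
\end{equation*}
A short Fourier-support check shows that $\prod_{j\in S}\partial_j T_S$ is precisely the average of $\prod_{j\in S}\partial_j T$ over the complementary variables, so Fubini gives $\|\prod_{j\in S}\partial_j T_S\|_{L_1(\T^{|S|})}\le (2\pi)^{-(d-|S|)}\|\prod_{j\in S}\partial_j T\|_1$. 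To upgrade this $|S|$th-order quantity to the $d$th-order norm $\|T\|_{\dot W_1^d}$, I iterate the $L_1$-Poincar\'e inequality on $\T^d$ for zero-mean functions (an immediate consequence of part \textnormal{(i)} combined with H\"older on the finite-measure space $\T^d$), obtaining $\|D^\nu T\|_1\lesssim\|T\|_{\dot W_1^d}$ for every $|\nu|_1\le d$. Summing $\|T\|_\infty\le\sum_{\emptyset\ne S}\|T_S\|_\infty$ over the $2^d-1$ nonempty subsets closes the argument.

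The main obstacle is the periodic adaptation in part \textnormal{(i)}: the classical $\R^d$ Gagliardo--Nirenberg proof rests on decay at infinity, and replacing this by the only available hypothesis here -- global zero mean -- requires the iterated slice-mean bookkeeping sketched above. Once \textnormal{(i)} is in hand, the Fourier-support decomposition and Poincar\'e iteration used in \textnormal{(ii)} are routine.
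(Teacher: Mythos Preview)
Your approach is correct, and for part (ii) it is genuinely different from the paper's.

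For part (i) the paper simply cites Stein's book (pp.~129--130), i.e.\ the classical Gagliardo--Nirenberg--Sobolev inequality on $\R^d$, without addressing the periodic adaptation; you rightly flag this and sketch a direct Loomis--Whitney argument with slice-mean corrections. Your sketch is valid, though the bookkeeping you describe is the genuinely nontrivial step---the slice means $\overline{T_j}$ are lower-dimensional zero-mean objects, and one closes by induction on dimension. In any case both treatments regard (i) as essentially standard.

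For part (ii) the paper takes a much shorter route: it invokes the limiting Sobolev embedding $\|T\|_\infty \lesssim \|T\|_{\dot W_1^d} + \|T\|_1$ as a black box, uses (i) together with H\"older to absorb the $\|T\|_1$ term into $\sum_j \|\partial_j T\|_1$, and then proves the one-dimensional iteration $\|\partial_j T\|_1 \lesssim \|\partial_j^2 T\|_1 \lesssim \cdots \lesssim \|\partial_j^d T\|_1$ by writing $\partial_j T(x)=\tfrac{1}{2\pi}\int_0^{2\pi}\int_{z_j}^{x_j}\partial_j^2 T\,dt_j\,dz_j$ (the inner boundary term vanishes because $\partial_j T$ has zero mean in $x_j$). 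Your Fourier-support decomposition $T=\sum_{S\ne\emptyset}T_S$ with iterated one-dimensional bounds, followed by Poincar\'e iteration to reach order $d$, is a perfectly good alternative. It buys self-containment---you never need the endpoint Sobolev embedding as an external input---at the cost of a slightly longer argument and a combinatorial sum over $2^d-1$ subsets. The paper's route is shorter precisely because it outsources the hard embedding step to the literature.
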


\begin{proof}
(i) To prove \eqref{eqStein} see~\cite[pp.~129--130]{Stein}.

(ii) First, we use the limiting case of the Sobolev embedding theorem (see, e.g., \cite[Ch.~10]{besov1}, \cite[Theorem~33]{pel}) given by
$$
\Vert T\Vert_\infty \lesssim \Vert T\Vert_{\dot W_1^d}+\|T\|_1.
$$
Then by (i), we estimate
$$
\|T\|_1\lesssim \|T\|_{q^*}\lesssim \sum_{j=1}^d \left\Vert \frac{\partial T}{\partial x_j}\right\Vert_1.
$$
Thus, (\ref{eqVspom}) follows from
\begin{equation}\label{eqVspom'}
  \Big\Vert\frac{\partial T}{\partial x_j}\Big\Vert_1\lesssim \Big\Vert\frac{\partial^2 T}{\partial^2 x_j}\Big\Vert_1\lesssim\dots\lesssim   \Big\Vert\frac{\partial^d T}{\partial x_j^d}\Big\Vert_1, \quad T\in\mathcal{T}',\quad j=1,\dots,d.
\end{equation}

It remains to show~\eqref{eqVspom'}. We write
\begin{equation*}
\begin{split}
      \frac{\partial T(x)}{\partial x_j}=\int_{z_j}^{x_j}\frac{\partial^2}{\partial x_j^2}
    &T(x_1,\dots,x_{j-1},t_j,x_{j+1},\dots,x_d)dt_j\\
&+\frac{\partial T(x)}{\partial x_j}(x_1,\dots,x_{j-1},z_j,x_{j+1},\dots,x_d).
\end{split}
\end{equation*}
Then integrating over $(0,2\pi)$ by $z_j$ and using the fact that
$\frac{\partial T(x)}{\partial x_j}$ as a polynomial of $x_j$ belongs to $ \mathcal{T}'$, we obtain
$$%
    2\pi \frac{\partial T(x)}{\partial x_j}=\int_{0}^{2\pi}
    \int_{z_j}^{x_j}\frac{\partial^2 T}{\partial x_j^2} T(x_1,\dots,x_{j-1},t_j,x_{j+1},\dots,x_d)\, dt_j
    dz_j,
$$
which gives
\begin{equation*}
\Big\Vert\frac{\partial T}{\partial x_j}\Big\Vert_1\lesssim \Big\Vert\frac{\partial^2 T}{\partial x_j^2}\Big\Vert_1.
\end{equation*}
Repeating this procedure $d-1$ times, we complete the proof of~\eqref{eqVspom'}.

\end{proof}

Now, we proceed with the case $\a+\g\in \N$ and $\g\ge 1$. It turns out that, in this case, an interesting effect in the Hardy-Littlewood inequality occurs. More precisely, in the limiting case $\g=d(1-1/q)$, we can obtain an improved version of Lemma~\ref{lemma+}.

\begin{lemma}\label{lemPolSob}
Let $0<p\le 1$, $1<q\le\infty$, $d\ge 2$, $\a>0$, $\g\ge 1$, and
$\a+\g\in \N$. Suppose also that $\a\in\N$ if $q=\infty$.
Then
\begin{equation}\label{eqlemMM2Sob}
  \sup_{T_n\in\mathcal{T}_n'}\frac{\sup\limits_{|\xi|=1,\,\xi\in\R^d}\bigg\Vert \(\frac{\partial}{\partial\xi}\)^{\a} T_n\bigg\Vert_{q}}{\sup\limits_{|\xi|=1,\,\xi\in\R^d}\bigg\Vert \(\frac{\partial}{\partial\xi}\)^{\a+\g} T_n\bigg\Vert_p}\lesssim n^{d(\frac1p-1)+\left(d(1-\frac1q)-\g\right)_+}.
\end{equation}
\end{lemma}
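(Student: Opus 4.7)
The plan is to reduce the sharp directional-derivative quotient to classical Sobolev-type inequalities for trigonometric polynomials and to combine these with Nikol'skii's inequality and Peetre's Fourier multiplier theorem. Since $\a+\g\in\N$, Corollary \ref{lemRAZZZ} gives
$$
\sup_{|\eta|=1,\,\eta\in\R^d}\bigg\|\Big(\frac{\partial}{\partial\eta}\Big)^{\a+\g}T_n\bigg\|_p\asymp \|T_n\|_{\dot W_p^{\a+\g}},
$$
so it suffices to show that, uniformly for $|\xi|=1$,
$$
\bigg\|\Big(\frac{\partial}{\partial\xi}\Big)^{\a}T_n\bigg\|_q\lesssim n^{d(\frac1p-1)+(d(1-\frac1q)-\g)_+}\|T_n\|_{\dot W_p^{\a+\g}}.
$$

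I would first handle $q=\infty$, where the hypothesis forces $\a,\g\in\N$. The multinomial expansion $(\partial/\partial\xi)^\a T_n=\sum_{|\mu|_1=\a}\binom{\a}{\mu}\xi^\mu D^\mu T_n$, Lemma \ref{lemVspom}(ii) applied to each $D^\mu T_n\in\mathcal{T}'_n$, and Nikol'skii's $(L_p,L_1)$ inequality yield
$$
\Big\|\Big(\frac{\partial}{\partial\xi}\Big)^\a T_n\Big\|_\infty \lesssim \sum_{|\tau|_1=\a+d}\|D^\tau T_n\|_1\lesssim n^{d(\frac1p-1)}\sum_{|\tau|_1=\a+d}\|D^\tau T_n\|_p.
$$
If $\g\le d$, I would pick $\rho\le\tau$ componentwise with $|\rho|_1=\a+\g$ and apply Bernstein's inequality to the polynomial $D^\rho T_n$: $\|D^\tau T_n\|_p=\|D^{\tau-\rho}(D^\rho T_n)\|_p\lesssim n^{d-\g}\|T_n\|_{\dot W_p^{\a+\g}}$, which produces the factor $n^{d/p-\g}$.

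The delicate case is $\g\ge d$, where Bernstein points the wrong way. For each $\tau$ with $|\tau|_1=\a+d$ I would choose $\sigma\ge\tau$ componentwise with $|\sigma|_1=\a+\g$ (possible since $\g-d\ge 0$). The key observation is that $(\sigma-\tau)_j\ge 1$ forces $\sigma_j\ge 1$, so the formal singularities of $1/(iy)^{\sigma-\tau}$ on coordinate hyperplanes lie off the spectrum of $D^\sigma T_n$. Fix a smooth $\phi\in C^\infty(\R)$ with $\phi(y)=1/y$ for $|y|\ge 1$ and form
$$
\widetilde m(y):=i^{|\sigma-\tau|_1}\prod_{j=1}^d \phi(y_j)^{(\sigma-\tau)_j}\cdot v(|y|/n),
$$
so that $\widetilde m(k)=(ik)^{\tau-\sigma}$ on the spectrum of $D^\sigma T_n$ and hence $D^\tau T_n=\widetilde m * D^\sigma T_n$. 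Lemma \ref{besov} with $p=q=2$ and $s=d/2$ (whose exponent vanishes) gives $\|\widetilde m\|_{\dot B_{2,1}^{d/2}}\lesssim n^{-(\g-d)}$, and Peetre's theorem (Lemma \ref{lemMult}(ii)) then yields $\|D^\tau T_n\|_1\lesssim n^{-(\g-d)}\|D^\sigma T_n\|_1\lesssim n^{d(\frac1p-1)-(\g-d)}\|T_n\|_{\dot W_p^{\a+\g}}$, which is even stronger than the required $n^{d(\frac1p-1)}$.

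For $1<q<\infty$ I would run the same scheme, replacing Lemma \ref{lemVspom}(ii) by the polynomial embedding $B_{1,q}^{d(1-1/q)}\hookrightarrow L_q$ coming from Lemma \ref{lemmapluss}; combined with the Peetre step this removes the $\ln^{1/q}(n+1)$ factor present in Lemma \ref{lemma+} at the critical value $\g=d(1-1/q)$, and it also covers the odd-integer case $\a+\g=2k+1$ excluded there, since the Sobolev approach does not rely on the non-vanishing of $\Re\varphi$. The main obstacle is the construction and $\dot B_{2,1}^{d/2}$-norm control of $\widetilde m$ in the regime $\g\ge d$: the formal multiplier $(iy)^{\tau-\sigma}$ has codimension-one singularities along coordinate hyperplanes, and the delicate ingredient is the uniform-in-$n$ bound on its smoothed, cut-off version via the scale invariance of $\dot B_{2,1}^{d/2}$.
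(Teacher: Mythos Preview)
Your reduction via Corollary~\ref{lemRAZZZ} to the Sobolev seminorm $\|T_n\|_{\dot W_p^{\a+\g}}$ matches the paper, and your treatment of $q=\infty$ with $\g\le d$ (multinomial expansion, Lemma~\ref{lemVspom}(ii), Nikol'skii, then Bernstein on $D^\rho T_n$) is correct and close to the paper's argument.

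The gap is the Besov estimate for $\g>d$. Your multiplier
\[
\widetilde m(y)=i^{|\sigma-\tau|_1}\prod_j\phi(y_j)^{(\sigma-\tau)_j}\,v(|y|/n)
\]
is \emph{not} a dilation of a fixed function: the smoothing $\phi$ lives at scale $1$ while the cutoff lives at scale $n$. Writing $\widetilde m(y)=n^{-(\g-d)}h_n(y/n)$ with $h_n(z)=i^{|\sigma-\tau|_1}\prod_j(n\phi(nz_j))^{(\sigma-\tau)_j}v(|z|)$, scale invariance gives $\|\widetilde m\|_{\dot B_{2,1}^{d/2}}=n^{-(\g-d)}\|h_n\|_{\dot B_{2,1}^{d/2}}$, but $h_n$ converges to $\prod_j z_j^{-(\sigma-\tau)_j}v(|z|)$, which is unbounded near coordinate hyperplanes and hence not in $\dot B_{2,1}^{d/2}\hookrightarrow L_\infty$; so $\|h_n\|_{\dot B_{2,1}^{d/2}}\to\infty$. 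Concretely, for $d=2$ and $\sigma-\tau=(1,0)$ one computes $\|\nabla\widetilde m\|_{L_2}^2\asymp n$, whence $\|\widetilde m\|_{\dot B_{2,1}^{1}}\ge\|\widetilde m\|_{\dot B_{2,2}^{1}}\asymp n^{1/2}$. Thus not only the claimed $n^{-(\g-d)}$ but even the weaker bound $\lesssim 1$ fails. The paper avoids multipliers here entirely: the elementary integration identity behind~\eqref{eqVspom'} gives $\|D^\tau T\|_1\lesssim\|D^\sigma T\|_1$ directly whenever $\sigma\ge\tau$ componentwise and the intermediate derivatives have zero mean.

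For $1<q<\infty$ your sketch is too thin to evaluate, and it inherits the flawed Peetre step; moreover the multinomial expansion needs $\a\in\N$, which is not assumed for finite $q$. The paper's route is different: it passes to $\|(-\Delta)^{\a/2}T_n\|_q$ via Corollary~\ref{lemRAZZZ} and then combines the Hardy--Littlewood fractional integration theorem with the Gagliardo--Nirenberg--Sobolev inequality $\|T\|_{q^*}\lesssim\sum_j\|\partial_{x_j}T\|_1$ (Lemma~\ref{lemVspom}(i)); this handles the critical case $\g=d(1-1/q)$ without a logarithm, after which a short case analysis (choosing intermediate exponents $q^*,\widetilde q,q_*$) covers all other ranges of $\g$.
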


\begin{remark} \textnormal{ 
It is worth mentioning that inequality (\ref{eqlemMM2Sob}) in the case of $\g=d(1-1/q)$ gives shaper bound than both inequality (\ref{will+}) and the following relation
\begin{equation}\label{zvezda v kruge}
   \sup_{T_n\in \mathcal{T}_n'}
\frac{\Vert \mathcal{D}(\psi)T_n \Vert_{q}}{\Vert \mathcal{D}(\vp)T_n \Vert_{q}}\asymp
n^{d(\frac1p-1)}\ln^{\frac 1q}(n+1)\quad \text{if}\quad
0<p\le 1<q<\infty,
\end{equation}
which follows from Theorem~\ref{th-hardy-l-n}. We assume in~\eqref{zvezda v kruge} that
$\psi\in \mathcal{H}_\a$, $\vp\in \mathcal{H}_{\a+\g}$, and
$\frac{\psi}{\varphi}, \frac{\varphi}{\psi}\in C^\infty(\R^d\setminus \{0\})$.
Note that the left-hand sides of~\eqref{eqlemMM2Sob} and~\eqref{zvezda v kruge} are not equivalent.
}


\end{remark}

\begin{proof}
By Corollary~\ref{lemRAZZZ}
 and Nikol'skii's inequality, we have
\begin{equation*}
  \Vert T_n\Vert_{\dot W_1^{\a+\g}}\lesssim \sup_{|\xi|=1,\,\xi\in\R^d}\bigg\Vert \(\frac{\partial}{\partial\xi}\)^{\a+\g} T_n\bigg\Vert_1
\lesssim n^{d(\frac1p-1)} \sup_{|\xi|=1,\,\xi\in\R^d}\bigg\Vert \(\frac{\partial}{\partial\xi}\)^{\a+\g} T_n\bigg\Vert_p
\end{equation*}
and
\begin{equation*}
  \frac{\sup\limits_{|\xi|=1,\,\xi\in\R^d}\bigg\Vert \(\frac{\partial}{\partial\xi}\)^{\a} T_n\bigg\Vert_{q}}{\sup\limits_{|\xi|=1,\,\xi\in\R^d}\bigg\Vert \(\frac{\partial}{\partial\xi}\)^{\a+\g} T_n\bigg\Vert_p}
  \lesssim
  {n^{d(\frac1p-1)}}
  I(T_n),
\end{equation*}
where
$$I(T_n):=
\left\{
  \begin{array}{ll}
    \displaystyle\frac{\Vert (-\D)^{\a/2}T_n \Vert_{q}}{\Vert T_n \Vert_{\dot W_1^{\a+\g}}}, & \hbox{$1<q<\infty$;} \\
    \displaystyle\frac{\Vert T_n \Vert_{\dot W_q^\a}}{\Vert T_n \Vert_{\dot W_1^{\a+\g}}}, & \hbox{$q=\infty$.}
  \end{array}
\right.
$$
Let us first consider the case $1<q<\infty$.  The proof of~\eqref{eqlemMM2Sob} is in five steps.

1) Suppose that $\g=d\(1-1/q\)\ge
1$. Choose $q^*\in (1,q]$ such that
$$
\g-1=d\(\frac1{q^*}-\frac1q\).
$$
Then,  using the Hardy-Littlewood inequality for fractional integrals and  Lemma~\ref{lemVspom} (i),
we have
\begin{equation}\label{equsingStein}
  \begin{split}
\Vert (-\D)^{\a/2}T_n \Vert_{q} &\lesssim \Vert (-\D)^{(\a+\g-1)/2}T_n \Vert_{q^*}\lesssim \Vert T_n\Vert_{\dot W_{q^*}^{\a+\g-1}}\\
&\lesssim \Vert T_n\Vert_{\dot W_{1}^{\a+\g}},
   \end{split}
\end{equation}
that is, $I(T_n)\lesssim 1$.

2) If $1\le \g<d(1-1/q)$, then we can choose $\widetilde{q}\in
(1,q)$ such that
$$
\g=d\(1-\frac1{\widetilde{q}}\).
$$
Then,  using Nikol'skii's inequality and \eqref{equsingStein}, we
get
\begin{equation*}
    \Vert (-\D)^{\a/2} T_n\Vert_q\lesssim n^{d(\frac1{\widetilde{q}}-\frac1{q})}\Vert (-\D)^{\a/2}
T_n\Vert_{\widetilde{q}}\lesssim
n^{d(1-\frac1q)-\g}\Vert T_n\Vert_{\dot W_{1}^{\a+\g}},
\end{equation*}
i.e., $I(T_n)\lesssim n^{d(1-1/q)-\g}$.

3) If $d\(1-1/q\)<\g<d$, then we can choose $q_*>q$ such that
$$
\g=d\(1-\frac1{q_*}\)>d\(1-\frac1q\).
$$
Then, using H\"older's inequality and  (\ref{equsingStein}), we
have
\begin{equation*}
  \Vert (-\D)^{\a/2}T_n \Vert_{q} \lesssim \Vert (-\D)^{\a/2}T_n \Vert_{q_*}\lesssim \Vert T_n\Vert_{\dot W_{1}^{\a+\g}},
\end{equation*}
i.e., $I(T_n)\lesssim 1$.

4) If $\g>d$, then we have
$$
(-\D)^{\a/2} T_n(x)=(-\D)^{(\a+\g-d)/2}T_n * f_{\g-d}(x),
$$
where
$$
f_{\g-d}(x)=\sum_{k\neq 0} \frac{e^{i(k,x)}}{|k|^{\g-d}}.
$$
Note that $f_{\g-d}\in L_q(\T^d)$ (see (\ref{wainger})). Thus, from the above and inequality (\ref{eqVspom}) we obtain
\begin{equation*}
  \begin{split}
\Vert (-\D)^{\a/2}T_n \Vert_{q} &\lesssim \Vert (-\D)^{(\a+\g-d)/2}T_n \Vert_{q} \Vert f_{\g-d}\Vert_1 \\
&\lesssim \Vert T_n \Vert_{\dot W_q^{\a+\g-d}}\lesssim \Vert T_n \Vert_{\dot W_\infty^{\a+\g-d}}\lesssim \Vert T_n
\Vert_{\dot W_1^{\a+\g}},
   \end{split}
\end{equation*}
which gives $I(T_n)\lesssim 1$.

5) Assume that  $\g=d$. Noting that $\a\in \N$,  Lemma~\ref{lemVspom} (ii)  gives
 \begin{equation*}
  \begin{split}
\Vert (-\D)^{\a/2}T_n \Vert_{q}\lesssim
\Vert T_n \Vert_{\dot W_q^{\a}}\lesssim
\Vert T_n \Vert_{\dot W_1^{\a+d}},
   \end{split}
\end{equation*}
i.e., $I(T_n)\lesssim 1$.

Finally, let us consider the case $q=\infty$.
If $\g\ge d$, then applying Lemma~\ref{lemVspom} (ii), and using~\eqref{eqVspom'}
 in the case $\g> d$, we obtain
$$
\Vert T_n\Vert_{\dot W_\infty^\a}\lesssim \Vert T_n\Vert_{\dot
W_1^{\a+d}}\lesssim \Vert T_n\Vert_{\dot W_1^{\a+\g}}.
$$
If $1\le \g<d$, then we choose $\widetilde{q}>1$ such that
$$
\g=d\(1-\frac1{\widetilde{q}}\).
$$
Now, applying the Nikol'skii inequality and~\eqref{equsingStein}, we get
$$
\Vert T_n\Vert_{\dot W_\infty^\a}\lesssim n^{\frac d{\widetilde{q}}}\Vert
T_n\Vert_{\dot W_{\widetilde{q}}^\a}=n^{d-\g}\Vert T_n\Vert_{\dot
W_{\widetilde{q}}^\a}\lesssim n^{d-\g}\Vert (-\D)^{\a/2}T_n\Vert_{\widetilde{q}}\lesssim n^{d-\g}\Vert T_n\Vert_{\dot
W_1^{\a+\g}}.
$$
Thus, we have proved (\ref{eqlemMM2Sob})
for all cases.


\end{proof}

\bigskip

\newpage

\section{General form of the Ulyanov inequality for moduli of smoothness, $K$-functionals, and their realizations 
}
\label{sec6}

Let $\psi\in \mathcal{H}_\a$, $\a>0$, and let ${W_p(\psi)}$ be the space of $\psi$-smooth functions in $L_p$, that is,
$$
W_p(\psi)=\left\{g\in L_p(\T^d)\,:\, \mathcal{D}(\psi)g\in L_p(\T^d)\right\}.
$$
We define the generalized $K$-functional by
$$
K_{\psi}(f,\d)_p=\inf_{g\in  W_p(\psi)}\left\{\Vert
f-g\Vert_p+\d^{\a}\Vert \mathcal{D}(\psi)g\Vert_p\right\}.
$$

It is known that ${K}_{\psi}(f,\d)_p=0$ if $0<p<1$. This was shown for $d=1$, $\psi(\xi)=(i\xi)^r$, $r\in \N$, in \cite{DHI} and in the general case in \cite{run}. The suitable substitution of the $K$-functional is given by the realization concept
$$
\mathcal{R}_{\psi}(f,\d)_p=\inf_{T\in \mathcal{T}_{[1/\d]}}
\left\{\Vert
f-T\Vert_p+\d^{\a}\Vert \mathcal{D}(\psi)T\Vert_p\right\}.
$$

We list below several important properties of the $K$-functionals and their realizations.

\begin{lemma}\label{lemma4.1} {\sc(\cite[Theorem 4.21]{run})}  Let $\psi\in \mathcal{H}_\a$, $\a>0$, and $1\le p \le\infty$.
We have $$\mathcal{R}_{\psi}(f,\d)_p\asymp {K}_{\psi}(f,\d)_p.$$
\end{lemma}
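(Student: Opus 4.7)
The plan is to establish the two inequalities separately. The bound $K_\psi(f,\d)_p \le \mathcal{R}_\psi(f,\d)_p$ is essentially immediate: for any $T \in \mathcal{T}_{[1/\d]}$ the function $\mathcal{D}(\psi)T$ is a trigonometric polynomial of finite degree and hence lies in $L_p(\T^d)$ for every $p$. Thus $T \in W_p(\psi)$, so the infimum defining $K_\psi$ (over a larger class) is dominated by the infimum defining $\mathcal{R}_\psi$ (over the smaller class $\mathcal{T}_{[1/\d]}$).

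For the reverse direction $\mathcal{R}_\psi(f,\d)_p \lesssim K_\psi(f,\d)_p$, I would start from an arbitrary $g \in W_p(\psi)$ and produce a polynomial candidate $T \in \mathcal{T}_{[1/\d]}$ by a de la Vall\'ee Poussin-type convolution $T = V_m * g$ with $m \asymp 1/\d$ and with the profile $v$ in \eqref{vallee} chosen so that $\deg T \le [1/\d]$ and $\widehat{V_m}(k) = 1$ on a block $|k|_\infty \le c\,m$. Standard properties of such kernels give $\|V_m\|_{L_1(\T^d)} \lesssim 1$ (see Corollary~\ref{lemBLX} with $p=1$, or the classical Lebesgue constant estimate). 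Applying Young's inequality to the identity $\mathcal{D}(\psi)T = V_m * \mathcal{D}(\psi)g$ yields $\d^\a \|\mathcal{D}(\psi)T\|_p \lesssim \d^\a \|\mathcal{D}(\psi)g\|_p$, and the triangle inequality gives $\|f-T\|_p \le \|f-g\|_p + \|g - V_m * g\|_p$. The whole argument therefore hinges on the key estimate
\begin{equation*}
    \|g - V_m * g\|_p \lesssim \d^\a \|\mathcal{D}(\psi)g\|_p.
\end{equation*}

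To prove this, I would observe that for $k \ne 0$ the Fourier coefficient of $g - V_m * g$ equals $(1 - \widehat{V_m}(k))\,\psi(k)^{-1}\,\widehat{\mathcal{D}(\psi)g}(k)$, which presents $g - V_m * g$ as a Fourier multiplier operator applied to $\mathcal{D}(\psi)g$. Using the homogeneity of $\psi$ of degree $\a$, this symbol is the rescaling $m^{-\a}\,\Phi(\xi/m)$ of the fixed symbol $\Phi(\eta) := (1 - \prod_j v(\eta_j))/\psi(\eta)$, whose only possible singularity at $\eta = 0$ is cancelled by the vanishing of the numerator there. For $1 < p < \infty$ the Mikhlin--H\"ormander theorem (Lemma~\ref{lemMult}(i)) guarantees that $\Phi$ is a Fourier multiplier on $L_p$ uniformly in $m$, yielding the desired bound; for $p = 1$ and $p = \infty$, Peetre's theorem (Lemma~\ref{lemMult}(ii)) applied to an appropriate smooth cut-off of $\Phi$ plays the same role.

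The main obstacle is that the class $\mathcal{H}_\a$ for $\a > 0$ is only assumed to consist of continuous functions on $\R^d \setminus \{0\}$, whereas the multiplier argument requires $\Phi$ to be smooth enough to satisfy the hypotheses of Lemma~\ref{lemMult}. When $\psi$ has additional $C^\infty$ regularity away from the origin the argument runs verbatim; in general one reduces to this situation either by a regularisation of $\psi$ followed by a limit inside the $K$-functional, or by replacing $V_m * g$ with a more delicately chosen family of strong polynomial approximants whose associated symbols are automatically smooth. This is the strategy underlying the cited \cite[Theorem~4.21]{run}.
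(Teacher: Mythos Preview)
The paper does not supply its own proof of this lemma; it simply records the statement and cites \cite[Theorem~4.21]{run}. Your sketch is the standard argument used for results of this type and is, in outline, the one behind the cited reference: the inequality $K_\psi \le \mathcal{R}_\psi$ is trivial by containment, and the converse is obtained by mollifying an arbitrary competitor $g\in W_p(\psi)$ with a de~la~Vall\'ee~Poussin kernel, then bounding $\|g-V_m*g\|_p$ via the Fourier multiplier $(1-\widehat{V_m})/\psi$. Indeed, exactly this multiplier (with $\psi$ replaced by $\vp$) reappears later in the paper in the proof of Theorem~\ref{th1dK}, where its membership in $\dot B_{2,1}^{d/2}$ is invoked via Lemma~\ref{lemBes}; so your Peetre-type endpoint argument is consistent with what the authors themselves use.

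Your caveat about smoothness is well placed and is the only real soft spot: for $\a>0$ the class $\mathcal{H}_\a$ demands merely continuity of $\psi$ on $\R^d\setminus\{0\}$, and $\psi$ may even vanish on a cone (e.g., a homogeneous polynomial symbol), in which case $\Phi=(1-\prod_j v)/\psi$ is not a bounded multiplier at all and neither Lemma~\ref{lemMult}(i) nor (ii) applies. The suggested fixes---regularising $\psi$ and passing to a limit inside the $K$-functional, or choosing approximants more cleverly---are not obviously sufficient in that generality, and you should not present them as such. In practice the applications in this paper use $\psi$ that are $C^\infty$ and nonvanishing away from the origin (powers of $|\xi|$, Weyl symbols $(i\xi)^\a$, etc.), for which your argument goes through cleanly; for the fully general statement one must defer to the construction in~\cite{run}.
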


The next lemma is a simple corollary of  Lemma~\ref{lemma4.1}.

\begin{lemma}\label{rrrun} {\sc(\cite[Lemma 4.10]{run})}
Let $\psi\in \mathcal{H}_\a$, $\a>0$, and $1< p<\infty$.
We have
$$
\mathcal{R}_\psi(f,\delta)_p\asymp \inf_{(-\Delta)^{\a/2}g \in L_p} \big\{\|f-g\|_p+ \delta^\alpha \|(-\Delta)^{\a/2}g\|_p\}.
$$
\end{lemma}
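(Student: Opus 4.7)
The plan is to obtain Lemma~\ref{rrrun} as a direct corollary of Lemma~\ref{lemma4.1} together with the Fourier-multiplier equivalence of Lemma~\ref{v-}. The underlying observation is that, in the reflexive range $1<p<\infty$, multiplying by the zero-homogeneous symbol $\psi(\xi)/|\xi|^\alpha$ is a bounded invertible operation on $L_p$, so the seminorms $\|\mathcal{D}(\psi)g\|_p$ and $\|(-\Delta)^{\alpha/2}g\|_p$ are equivalent, which transfers the equivalence at the level of the infima.

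First, I would invoke Lemma~\ref{lemma4.1} to replace $\mathcal{R}_\psi(f,\delta)_p$ by the ordinary $K$-functional $K_\psi(f,\delta)_p$, so that it remains to prove
\begin{equation*}
K_\psi(f,\delta)_p \;\asymp\; \inf_{(-\Delta)^{\alpha/2}g\in L_p}\bigl\{\|f-g\|_p + \delta^\alpha\|(-\Delta)^{\alpha/2}g\|_p\bigr\}.
\end{equation*}
Next, I would write $\psi(\xi)=|\xi|^\alpha\mu(\xi)$ with $\mu\in\mathcal{H}_0$ (and $\mu,1/\mu\in C^\infty(\R^d\setminus\{0\})$, the smoothness being in force throughout this part of the paper). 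Then $\mathcal{D}(\psi)=\mathcal{D}(\mu)\circ(-\Delta)^{\alpha/2}$, and applying Lemma~\ref{v-} to $\mathcal{D}(\mu)$ and to $\mathcal{D}(1/\mu)$ gives, for every $g\in L_p(\T^d)$ with either side finite,
\begin{equation*}
\|\mathcal{D}(\psi)g\|_p \;\asymp\; \|(-\Delta)^{\alpha/2}g\|_p,
\end{equation*}
since both $\mathcal{D}(\psi)g$ and $(-\Delta)^{\alpha/2}g$ have mean zero by construction of the operator $\mathcal{D}$ (the $\nu=0$ term being dropped).

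Finally, I would transfer this pointwise (in $g$) seminorm equivalence to the infima. The norm equivalence implies that the classes $W_p(\psi)$ and $\{g\in L_p(\T^d):(-\Delta)^{\alpha/2}g\in L_p\}$ coincide, so the two infima are taken over the same set of competitors, and each summand on one side is controlled by a fixed multiple of the corresponding summand on the other; taking infima concludes the equivalence.

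The only mildly delicate point is to make sure that the zero Fourier mode of $g$ does not spoil the argument. Since both $\mathcal{D}(\psi)$ and $(-\Delta)^{\alpha/2}$ annihilate constants, one may replace $g$ by $g-\widehat g_0$ in either infimum without changing the value of the penalty term, while $\|f-g\|_p$ changes by at most $|\widehat g_0|(2\pi)^{d/p}$, which is absorbed in a standard way. I do not anticipate any real obstacle beyond this bookkeeping, the key analytic input being already packaged in Lemmas~\ref{lemma4.1} and~\ref{v-}.
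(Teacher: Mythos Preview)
Your proposal is correct and matches the paper's approach: the paper merely states that the lemma is a simple corollary of Lemma~\ref{lemma4.1}, and you correctly flesh this out by invoking the multiplier equivalence of Lemma~\ref{v-} to pass from $K_\psi$ to the $K$-functional built on $(-\Delta)^{\alpha/2}$. Your worry about the zero Fourier mode of $g$ is unnecessary (since $(-\Delta)^{\alpha/2}g$ already has mean zero, the $\gamma=0$ case of Lemma~\ref{v-} applies directly), while your caveat about smoothness and non-vanishing of $\mu=\psi/|\xi|^\alpha$ is well placed---this is an implicit hypothesis inherited from the cited reference.
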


The following result was proved in \cite[Theorem 4.24]{run} (with the remark that if $\psi$ is a polynomial one can consider any $p>0$).

\begin{lemma} \label{real}
Let $\psi\in \mathcal{H}_\a$, $\a>0$, and either $\psi$ be a polynomial and $0< p\le\infty$ or
 $d/{(d+\a)}< p\le\infty$.
Then
$$
\mathcal{R}_{\psi}(f,\d)_p\asymp
\Vert
f-T\Vert_p+\d^{\a}\Vert \mathcal{D}(\psi)T\Vert_p,
$$
where $T\in \mathcal{T}_n, \,\,n=[1/\delta],\,$ is such that
$$\Vert
f-T\Vert_p\lesssim E_n(f)_p.$$
\end{lemma}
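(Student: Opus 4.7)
\textbf{Proof proposal for Lemma \ref{real}.}

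The upper bound $\mathcal{R}_{\psi}(f,\delta)_p \le \Vert f-T\Vert_p + \delta^{\a}\Vert \mathcal{D}(\psi)T\Vert_p$ is immediate, since $T \in \mathcal{T}_n$ with $n=[1/\delta]$ is an admissible competitor in the infimum defining $\mathcal{R}_{\psi}(f,\delta)_p$.

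For the reverse inequality, the plan is to compare the near-best approximant $T$ with an arbitrary polynomial $S\in \mathcal{T}_n$ entering the realization infimum. Set $\tau:=\min(1,p)$, so that the quasi-triangle inequality $\Vert g_1+g_2\Vert_p^\tau \le \Vert g_1\Vert_p^\tau + \Vert g_2\Vert_p^\tau$ holds in $L_p(\T^d)$. First, by the near-best property of $T$,
$$
\Vert f-T\Vert_p \lesssim E_n(f)_p \le \Vert f-S\Vert_p .
$$
Next, split
$$
\Vert \mathcal{D}(\psi)T\Vert_p^\tau \le \Vert \mathcal{D}(\psi)S\Vert_p^\tau + \Vert \mathcal{D}(\psi)(T-S)\Vert_p^\tau,
$$
noting that $T-S\in \mathcal{T}_n$. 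The key ingredient is the Bernstein-type inequality for the generalized Weyl operator,
$$
\Vert \mathcal{D}(\psi) U\Vert_p \lesssim n^{\a}\Vert U\Vert_p, \qquad U\in \mathcal{T}_n,
$$
which is available in exactly the two regimes assumed in the lemma: for all $0<p\le\infty$ when $\psi$ is a (trigonometric) polynomial (where it reduces to the classical multivariate Bernstein inequality for $D^\nu$ of order $\a$), and for $d/(d+\a)<p\le\infty$ when $\psi\in\mathcal{H}_\a$ is a general homogeneous symbol (cf.\ the argument behind Corollary~\ref{corNSBBEr}; the threshold $p>d/(d+\a)$ is exactly what guarantees $\widehat{\psi\eta}\in L_p(\R^d)$ for a smooth cutoff $\eta$, which one then plugs into a convolution representation of $\mathcal{D}(\psi)U$ as in the proof of Theorem~\ref{lemNSB}).

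Applying this Bernstein inequality to $U=T-S$ and combining with the quasi-triangle estimate
$$
\Vert T-S\Vert_p \lesssim \Vert f-T\Vert_p + \Vert f-S\Vert_p \lesssim \Vert f-S\Vert_p,
$$
together with $\delta^{\a} n^{\a}\asymp 1$, yields
$$
\delta^{\a}\Vert \mathcal{D}(\psi)T\Vert_p \lesssim \delta^{\a}\Vert \mathcal{D}(\psi)S\Vert_p + \delta^{\a} n^{\a}\Vert T-S\Vert_p \lesssim \Vert f-S\Vert_p + \delta^{\a}\Vert \mathcal{D}(\psi)S\Vert_p.
$$
Adding the two displayed bounds and taking the infimum over $S\in\mathcal{T}_n$ (those with $\mathcal{D}(\psi)S\in L_p$, which in the polynomial/homogeneous setting is automatic) gives
$$
\Vert f-T\Vert_p + \delta^{\a}\Vert \mathcal{D}(\psi)T\Vert_p \lesssim \mathcal{R}_{\psi}(f,\delta)_p,
$$
completing the proof of the equivalence.

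The main obstacle is the Bernstein inequality $\Vert \mathcal{D}(\psi)U\Vert_p\lesssim n^{\a}\Vert U\Vert_p$ in the quasi-Banach range $0<p<1$; indeed, for generic fractional multipliers it can fail (cf.\ Proposition~\ref{lemBLf}), so the hypothesis of the lemma is essentially sharp for separating the polynomial case from the general homogeneous case. Once this inequality is in hand, the remainder of the argument is the standard quasi-triangle bookkeeping.
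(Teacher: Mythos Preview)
Your proof is correct and follows precisely the standard argument the paper alludes to: the paper simply states that the proof is ``standard using the Bernstein inequality $\Vert \mathcal{D}(\psi)T\Vert_p \lesssim n^{\a}\Vert T\Vert_p$'' (citing \cite{RS}), and you have filled in exactly those details, including the quasi-triangle bookkeeping and the correct identification of why the two hypotheses on $\psi$ and $p$ are what make that Bernstein inequality available.
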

The proof is standard using the Bernstein inequality
\begin{equation}\label{BerRunSch}
  \Vert
\mathcal{D}(\psi)T\Vert_p \lesssim n^{\a}\Vert T\Vert_p
\end{equation}
(see  \cite{RS}).

The next lemma easily follows from the definition of the generalized
$K$-functional.
\begin{lemma}\label{rrun}
Let $\psi\in \mathcal{H}_\a$, $\a>0$, and $1\le p \le\infty$.
We have
$${K}_\psi(f,\delta)_{p}
\lesssim
 \delta^{\a}\Vert
\mathcal{D}(\psi)f\Vert_{p}.
$$
\end{lemma}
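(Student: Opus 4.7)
The plan is to simply exhibit a particular admissible competitor $g$ in the infimum that defines ${K}_\psi(f,\delta)_p$. Since the inequality is trivial when $\mathcal{D}(\psi)f \notin L_p(\T^d)$ (the right-hand side is infinite), I would assume without loss of generality that $\mathcal{D}(\psi)f \in L_p(\T^d)$, i.e., $f \in W_p(\psi)$.

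With that assumption, the function $f$ itself is a legitimate competitor in the infimum. Choosing $g = f$ in the definition
\[
{K}_\psi(f,\delta)_p = \inf_{g\in W_p(\psi)}\bigl\{\Vert f-g\Vert_p + \delta^{\a}\Vert \mathcal{D}(\psi)g\Vert_p\bigr\}
\]
immediately yields
\[
{K}_\psi(f,\delta)_p \le \Vert f-f\Vert_p + \delta^{\a}\Vert \mathcal{D}(\psi)f\Vert_p = \delta^{\a}\Vert \mathcal{D}(\psi)f\Vert_p,
\]
which is exactly the claimed estimate (in fact with implicit constant $1$).

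There is essentially no obstacle here: the result is a one-line consequence of the variational definition of the $K$-functional, and the restriction $1 \le p \le \infty$ is only needed to ensure that $K_\psi$ is a nontrivial object (recall that ${K}_{\psi}(f,\d)_p \equiv 0$ for $0 < p < 1$, as noted in the excerpt). No use of homogeneity of $\psi$, of Bernstein inequalities, or of realization theorems is required for this direction; the Weyl-type operator $\mathcal{D}(\psi)$ enters only through its $L_p$-norm applied to $f$ itself.
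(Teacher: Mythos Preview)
Your proof is correct and matches the paper's approach exactly: the paper simply states that the lemma ``easily follows from the definition of the generalized $K$-functional'' without further elaboration, and your choice $g=f$ is precisely the one-line argument intended.
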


\begin{lemma}\label{monoto} {\sc (\cite[Theorem 4.22]{run})}
Let $\psi\in \mathcal{H}_\a$, $\a>0$, and $0< p\le \infty$. We have
$$
\mathcal{R}_\psi(f,\delta)_p\le\mathcal{R}_\psi(f,n\delta)_p\lesssim
n^{\alpha+d\big(\frac{1}{p}-1\big)\!_+}
\mathcal{R}_\psi(f,\delta)_p, \quad n\in\N.
$$
\end{lemma}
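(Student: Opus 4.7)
The left inequality $\mathcal{R}_\psi(f,\delta)_p\le\mathcal{R}_\psi(f,n\delta)_p$ is immediate from the definition: for $n\ge 1$ both $(n\delta)^\a\ge\delta^\a$ and $[1/(n\delta)]\le[1/\delta]$ hold, so $\mathcal{T}_{[1/(n\delta)]}\subseteq\mathcal{T}_{[1/\delta]}$ and every polynomial admissible for the right-hand functional is also admissible for the left-hand one with no larger contribution.

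For the non-trivial upper bound, my plan is to set $N=[1/\delta]$ and $m=[1/(n\delta)]\asymp N/n$, select a near-minimizer $T\in\mathcal{T}_N$ with $\|f-T\|_p+\delta^\a\|\mathcal{D}(\psi)T\|_p\le 2\mathcal{R}_\psi(f,\delta)_p$, and compete for $\mathcal{R}_\psi(f,n\delta)_p$ using the polynomial $P:=V_m*T\in\mathcal{T}_{2m}$, where $V_m$ is a de la Vall\'ee Poussin-type kernel with Fourier multiplier $\phi(\xi/m)$ for a fixed $\phi\in C_c^\infty(\R^d)$ that equals $1$ on $\{|\xi|\le 1\}$ and is supported in $\{|\xi|\le 2\}$. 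Because Fourier multipliers commute with $\mathcal{D}(\psi)$, one has $\mathcal{D}(\psi)P=V_m*\mathcal{D}(\psi)T$, while the error admits the representation $T-V_m*T=H_m*\mathcal{D}(\psi)T$ with $\widehat{H_m}(\xi)=(1-\phi(\xi/m))/\psi(\xi)=m^{-\a}\Psi(\xi/m)$, where $\Psi(\xi):=(1-\phi(\xi))/\psi(\xi)$ is smooth on all of $\R^d$, vanishes near the origin, and decays like $|\xi|^{-\a}$ at infinity.

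Via the $p$-subadditivity of $\|\cdot\|_p^{\min(1,p)}$, the estimate reduces to two bounds of the form $\|V_m*g\|_p\lesssim A_p(n)\,\|g\|_p$ and $\|H_m*g\|_p\lesssim A_p(n)\,m^{-\a}\|g\|_p$ applied with $g=\mathcal{D}(\psi)T\in\mathcal{T}_N$, since then
$$
\mathcal{R}_\psi(f,n\delta)_p\lesssim \|f-T\|_p+A_p(n)\,m^{-\a}\|\mathcal{D}(\psi)T\|_p+(n\delta)^\a A_p(n)\|\mathcal{D}(\psi)T\|_p\lesssim A_p(n)\,n^\a\,\mathcal{R}_\psi(f,\delta)_p.
$$
For $1\le p\le\infty$ one has $A_p(n)\lesssim 1$ by Young's inequality, using that $V_m$ and $m^\a H_m$ are periodizations of fixed $L_1(\R^d)$-kernels whose $L_1$-norms are controlled by Corollary~\ref{lemBLX}; this gives the exponent $\a=\a+d(1/p-1)_+$ in this range. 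For $0<p<1$, Young's inequality is unavailable and instead one invokes a quasi-Banach polynomial convolution estimate: writing $V_m$ and $H_m$ as samplings of $\phi$ and of $m^{-\a}\Psi(\cdot/m)$, Theorem~\ref{lemBL}/Corollary~\ref{lemBLX} yields $\|V_m\|_p\asymp m^{d(1-1/p)}$ and $\|H_m\|_p\lesssim m^{-\a+d(1-1/p)}$, which, combined with the H\"older inequality $\|P\|_p\le(2\pi)^{d(1/p-1)}\|P\|_1$, Young's inequality in $L_1$, and the Nikol'skii bound relating $L_1$-- and $L_p$--norms of polynomials of degree $\lesssim N$, produces $A_p(n)\lesssim n^{d(1/p-1)}$ after exploiting that $V_m*g$ and $H_m*g$ actually live in $\mathcal{T}_{2m}$ rather than in $\mathcal{T}_N$; this is precisely the Nikol'skii factor that accounts for the extra $d(1/p-1)_+$ in the final exponent.

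The main obstacle is the sharp form of this quasi-Banach convolution estimate: one must transfer the Nikol'skii loss from the ambient scale $N=[1/\delta]$ down to the effective scale $m=[1/(n\delta)]$, so that the ratio is the correct $n^{d(1/p-1)}$ and not the much worse $N^{d(1/p-1)}$ coming from applying Nikol'skii to $g$ directly. This step, together with the non-compact support of $\Psi$ (which only decays polynomially), is handled by a dyadic decomposition of $\Psi$ along frequency annuli $\{2^j\le|\xi|\le 2^{j+1}\}$, applying Theorem~\ref{lemBL} to each compactly-supported piece, and summing geometrically in $j$ using the $|\xi|^{-\a}$ decay of $\Psi$, exactly as in Runovski's proof of Theorem~4.22 of~\cite{run}.
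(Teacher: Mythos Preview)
The paper does not prove this lemma; it is quoted from \cite[Theorem~4.22]{run}. The natural benchmark is the paper's own proof of Theorem~\ref{lemBasicMod}, which carries out exactly your plan in the special case $\psi(\xi)=(i(\xi,\zeta))^\a$: pick a near-best $T\in\mathcal T_N$, test with $V_m*T$, and bound $\|T-V_m*T\|_p$ and $\|V_m*\mathcal D(\psi)T\|_p$ via an $L_p$-convolution estimate. So your strategy is the right one.

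Two points in the execution need fixing. First, the chain ``H\"older $\|P\|_p\le C\|P\|_1$ $+$ Young in $L_1$ $+$ Nikol'skii on $g$'' only yields $A_p(n)\lesssim N^{d(1/p-1)}$, and the remark that the output ``lives in $\mathcal T_{2m}$'' does not repair this (and is in fact false for $H_m*g$, whose spectrum sits in $\{m\lesssim|k|\le N\}$). The correct mechanism, used verbatim in \eqref{ineqNS5} and \eqref{Run2}--\eqref{Run3}, is to apply Nikol'skii \emph{pointwise to the integrand} $V_m(t)g(x-t)$ (a polynomial of degree $\asymp N$ in $t$), which gives
$\|V_m*g\|_p\lesssim N^{d(1/p-1)}\|V_m\|_p\,\|g\|_p$;
combined with $\|V_m\|_p\asymp m^{d(1-1/p)}$ from Theorem~\ref{lemBL} this produces the correct factor $(N/m)^{d(1/p-1)}\asymp n^{d(1/p-1)}$. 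The dyadic pieces of $H_m$ are handled the same way, and the $|\xi|^{-\a}$ decay makes the sum in $j$ geometric.

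Second, ``$\Psi=(1-\phi)/\psi$ is smooth on all of $\R^d$'' presupposes $\psi\in C^\infty(\R^d\setminus\{0\})$ and $\psi\ne0$ on the sphere, neither of which is part of the paper's definition of $\mathcal H_\a$ for $\a>0$. Without non-vanishing the identity $T-V_m*T=H_m*\mathcal D(\psi)T$ fails outright, and indeed the lemma itself breaks down (take $d=2$, $\psi(\xi)=\xi_1$, $f(x)=\cos(Nx_2)$: then $\mathcal R_\psi(f,\delta)_p=0$ while $\mathcal R_\psi(f,n\delta)_p>0$). This is a tacit hypothesis imported from~\cite{run}; for the symbols the paper actually uses ($|\xi|^\a$, $(i\xi)^\a$, elliptic homogeneous polynomials) your argument is complete.
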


From Lemma~\ref{real} and inequality~\eqref{BerRunSch}, it is easy to obtain the following result.

\begin{lemma}\label{newdopsvoistvo}
Let $\a_1\ge \a_2>0$, $\psi_1\in \mathcal{H}_{\a_1}$, $\psi_2\in \mathcal{H}_{\a_2}$,  and $0< p\le \infty$. If $\psi_1/\psi_2\in C^\infty(\R^d\setminus\{0\})$ and  either $\psi_1/\psi_2$ is a polynomial or $d/{(d+\a_1-\a_2)}< p\le\infty$. Then
$$
\mathcal{R}_{\psi_1}(f,\d)_p\lesssim \mathcal{R}_{\psi_2}(f,\d)_p.
$$
\end{lemma}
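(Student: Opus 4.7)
The plan is to apply the Bernstein inequality \eqref{BerRunSch} to the polynomial $\mathcal{D}(\psi_2)T$ with multiplier $\psi_1/\psi_2$, combined with the defining formula for the realization; no need to invoke Lemma \ref{real} explicitly (which would impose conditions on $\psi_2$ separately).

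First, I would set $n=[1/\d]$ and pick an arbitrary $T\in\mathcal{T}_n$. Since the Weyl-type operators act multiplicatively on Fourier coefficients, and since $\mathcal{D}(\psi_2)T\in\mathcal{T}_n$, one has the factorization
$$
\mathcal{D}(\psi_1)T \;=\; \mathcal{D}(\psi_1/\psi_2)\,\mathcal{D}(\psi_2)T.
$$
By the properties of homogeneous functions, the quotient $\psi_1/\psi_2$ is homogeneous of degree $\a_1-\a_2\ge 0$, and together with the assumption $\psi_1/\psi_2\in C^\infty(\R^d\setminus\{0\})$ it belongs to $\mathcal{H}_{\a_1-\a_2}$. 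The hypothesis that either $\psi_1/\psi_2$ is a polynomial or $d/(d+\a_1-\a_2)<p\le\infty$ is precisely what is needed to apply the Bernstein-type inequality \eqref{BerRunSch} to the symbol $\psi_1/\psi_2$ on polynomials of degree $n$. Hence
$$
\|\mathcal{D}(\psi_1)T\|_p \;=\; \|\mathcal{D}(\psi_1/\psi_2)\mathcal{D}(\psi_2)T\|_p \;\lesssim\; n^{\a_1-\a_2}\|\mathcal{D}(\psi_2)T\|_p.
$$

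Next, since $n\le 1/\d$ and $\a_1\ge\a_2$, multiplying through by $\d^{\a_1}$ yields $\d^{\a_1}\|\mathcal{D}(\psi_1)T\|_p\lesssim \d^{\a_2}\|\mathcal{D}(\psi_2)T\|_p$, and therefore
$$
\|f-T\|_p+\d^{\a_1}\|\mathcal{D}(\psi_1)T\|_p \;\lesssim\; \|f-T\|_p+\d^{\a_2}\|\mathcal{D}(\psi_2)T\|_p.
$$
Taking the infimum over all $T\in\mathcal{T}_n$ on both sides yields the desired inequality $\mathcal{R}_{\psi_1}(f,\d)_p\lesssim \mathcal{R}_{\psi_2}(f,\d)_p$.

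There is essentially no hard step here: the only point that demands care is matching the hypothesis of the lemma to the admissibility condition in \eqref{BerRunSch} for the symbol $\psi_1/\psi_2$. In the borderline case $\a_1=\a_2$, homogeneity of degree zero reduces the condition to $1<p\le\infty$ (or to $\psi_1/\psi_2\equiv\const$, which corresponds to the polynomial alternative), so the statement remains consistent with the Bernstein inequality applied to a zero-order symbol.
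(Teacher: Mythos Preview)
Your proof is correct and follows essentially the same route the paper indicates: apply the Bernstein inequality \eqref{BerRunSch} to the symbol $\psi_1/\psi_2\in\mathcal{H}_{\a_1-\a_2}$ and compare the realizations. The only minor difference is that the paper hints at passing through Lemma~\ref{real} (near-best approximants), whereas you bound the expression for an arbitrary $T\in\mathcal{T}_{[1/\d]}$ and take the infimum directly; your variant is slightly cleaner since it avoids imposing any separate condition on $\psi_2$.
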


The next result provides the general form of the sharp Ulyanov inequality for realizations. It will play the key role in our further study.
\begin{theorem}\label{mainlemma} 
\label{th1dK0}
 Let $f\in L_p(\T^d)$, $0<p<q\le\infty$, $\a>0$, $\g\ge
0$, $\psi\in \mathcal{H}_\a$, and $\vp\in \mathcal{H}_{\a+\g}$.
\\
   {\rm (A)} For any  $\d\in
(0,1)$,  we have
    \begin{equation}\label{eqth1.1Kd0}
        \mathcal{R}_{\psi}(f,\d)_q
        \lesssim
        \frac{\mathcal{R}_{\vp}(f,\d)_p}{\d^{\g}}\eta\(\frac
        1\d\)+
        \left(\int_0^\d
        \bigg(
        \frac{\mathcal{R}_{\vp}(f,t)_p}{t^{d(\frac1p-\frac1q)}}
        \bigg)^{q_1}\frac{dt}{t}
        \right)^{\frac1{q_1}},
    \end{equation}
where
$$
\eta(t)
:=
\eta(t;\psi,\vp,p,q,d)
:=\sup_{T\in \mathcal{T}'_{[t]}
}\frac{\Vert \mathcal{D}(\psi)T\Vert_q}{\Vert
\mathcal{D}(\vp)T\Vert_p}.
$$
\\
{\rm (B)} Let
either $\psi$ be a polynomial
or $d/{(d+\a)}< q\le\infty$.
Let also
 $\a+\g>d(1/p-1/q)$, and
$$
n^{d(1/p-1/q)-\g} \lesssim \eta(n)\quad\text{as}\quad n\to\infty.$$
Then there exists a sequence of nontrivial polynomials $T_n\in \mathcal{T}_n$, $n\in \N$, such that
\begin{equation}\label{eqth1.B1}
            \mathcal{R}_{\psi}(T_n,\delta)_q\asymp
        \frac{\mathcal{R}_{\vp}(T_n,\delta)_p}{\delta^{\g}}
        \eta\(\frac
        1\d\)+
        \left(\int_0^{\delta}
        \bigg(
        \frac{\mathcal{R}_{\vp}(T_n,t)_p}{t^{d(\frac1p-\frac1q)}}
        \bigg)^{q_1}\frac{dt}{t}
        \right)^{\frac1{q_1}},
\end{equation}
where
$n=[1/\delta].$
\end{theorem}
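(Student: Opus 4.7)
The plan is to prove (A) by a dyadic decomposition of $f$ through near-optimizers of the realization functional $\mathcal{R}_{\vp}(f,\cdot)_p$, combined with the Hardy--Littlewood--Nikol'skii inequality of Theorem \ref{th-hardy-l-n}. Set $n=[1/\d]$ and, for each $k\ge 0$, pick $T_{2^kn}\in\mathcal{T}_{2^kn}$ with
$$\|f-T_{2^kn}\|_p+(2^kn)^{-(\a+\g)}\|\mathcal{D}(\vp)T_{2^kn}\|_p\lesssim\mathcal{R}_{\vp}(f,2^{-k}/n)_p.$$
Since $T_n\in\mathcal{T}_n$, the definition of $\mathcal{R}_{\psi}$ gives
$$\mathcal{R}_{\psi}(f,\d)_q\le \|f-T_n\|_q+\d^{\a}\|\mathcal{D}(\psi)T_n\|_q.$$
The second term is controlled via $\|\mathcal{D}(\psi)T_n\|_q\lesssim\eta(n)\|\mathcal{D}(\vp)T_n\|_p\lesssim\eta(n)\d^{-(\a+\g)}\mathcal{R}_{\vp}(f,\d)_p$, by Theorem \ref{th-hardy-l-n} and the choice of $T_n$, which produces the first summand $\d^{-\g}\eta(1/\d)\mathcal{R}_{\vp}(f,\d)_p$ of the right-hand side of \eqref{eqth1.1Kd0}.

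The first term is handled via the telescoping identity $f-T_n=\sum_{k\ge 0}(T_{2^{k+1}n}-T_{2^kn})$ (valid in $L_p$ since $\mathcal{R}_{\vp}(f,\cdot)_p\to 0$) together with the classical Nikol'skii inequality applied to each dyadic piece:
$$\|T_{2^{k+1}n}-T_{2^kn}\|_q\lesssim (2^kn)^{d(\tfrac1p-\tfrac1q)}\|T_{2^{k+1}n}-T_{2^kn}\|_p\lesssim (2^kn)^{d(\tfrac1p-\tfrac1q)}\mathcal{R}_{\vp}(f,2^{-k}/n)_p,$$
where the second estimate uses the triangle inequality and the choice of the $T_m$'s. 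Summing these bounds in $\ell^{q_1}$ via the (quasi-)triangle inequality in $L_q$, and invoking the monotonicity of $\mathcal{R}_{\vp}$ from Lemma \ref{monoto} to convert the resulting dyadic sum to the integral $\bigl(\int_0^\d(\mathcal{R}_{\vp}(f,t)_p/t^{d(1/p-1/q)})^{q_1}\,dt/t\bigr)^{1/q_1}$, yields the remaining summand and completes part (A).

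For part (B), I would take $T_n$ to be the bandpass extremizer $\mathcal{V}_n^\vp=\mathcal{D}(1/\vp)\bigl(V_n(x)-V_n(2x)\bigr)$ (or a similar variant tailored to the specific $\psi$, $\vp$) already used to witness sharpness of the HLN inequality in Section \ref{sec5}. For such a polynomial, Lemma \ref{real} (applicable under the hypothesis on $\psi$ and $q$) together with the Bernstein-type inequality \eqref{BerRunSch} yields the two-sided realization bounds $\mathcal{R}_{\vp}(T_n,1/n)_p\asymp n^{-(\a+\g)}\|\mathcal{D}(\vp)T_n\|_p$ and $\mathcal{R}_{\psi}(T_n,1/n)_q\asymp n^{-\a}\|\mathcal{D}(\psi)T_n\|_q$, which combined with the HLN equality $\|\mathcal{D}(\psi)T_n\|_q\asymp\eta(n)\|\mathcal{D}(\vp)T_n\|_p$ produce the matching lower bound for the first summand of \eqref{eqth1.B1}; the hypothesis $n^{d(1/p-1/q)-\g}\lesssim\eta(n)$ together with Lemma \ref{monoto} then ensures that the integral term in \eqref{eqth1.B1} cannot exceed this first summand.

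The main obstacle will be part (B): one has to verify the two-sided realization asymptotics on a full range of scales $\d$ and not merely at $\d=1/n$ (so that the integral on the right-hand side of \eqref{eqth1.B1} really has the expected magnitude), and one must choose the extremizer $T_n$ in a manner that is uniform across the several cases of $\eta$ enumerated in Theorem \ref{th-hardy-l-n}, in particular in the logarithmic borderline regimes where the construction $\mathcal{V}_n^\vp$ has to be refined.
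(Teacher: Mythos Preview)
Your part (A) is correct and matches the paper's proof essentially line for line: near-optimizers of $\mathcal{R}_\vp$, the definition of $\eta$ for the $\mathcal{D}(\psi)$-term, and a dyadic telescoping plus Nikol'skii for the $\|f-T_n\|_q$-term.

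For part (B), your approach is basically right but over-engineered, and the obstacles you anticipate are not real. The paper does not pick the explicit polynomial $\mathcal{V}_n^\vp$; it simply chooses \emph{any} near-extremizer $T_n\in\mathcal{T}_n'$ satisfying $\eta(n)\lesssim \|\mathcal{D}(\psi)T_n\|_q/\|\mathcal{D}(\vp)T_n\|_p$. This avoids all case-by-case refinements in the logarithmic regimes. Furthermore, you claim the two-sided bound $\mathcal{R}_\vp(T_n,1/n)_p\asymp n^{-(\a+\g)}\|\mathcal{D}(\vp)T_n\|_p$ via Lemma~\ref{real}, but the hypotheses of part (B) are on $\psi$ and $q$, not on $\vp$ and $p$, so Lemma~\ref{real} is not available for $\mathcal{R}_\vp$. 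Fortunately only the trivial upper bound is needed: for $t\le 1/n$ one has $T_n\in\mathcal{T}_{[1/t]}$, hence $\mathcal{R}_\vp(T_n,t)_p\le t^{\a+\g}\|\mathcal{D}(\vp)T_n\|_p$ directly from the definition (not Lemma~\ref{monoto}). Then $\a+\g>d(1/p-1/q)$ makes the integral converge and the hypothesis $n^{d(1/p-1/q)-\g}\lesssim\eta(n)$ shows it is dominated by the first summand. The lower bound $\mathcal{R}_\psi(T_n,1/n)_q\gtrsim n^{-\a}\|\mathcal{D}(\psi)T_n\|_q$ does use Lemma~\ref{real} (this is exactly why the hypothesis on $\psi,q$ is imposed), and Lemma~\ref{monoto} is invoked only at the very end to pass from $\d=1/n$ to general $\d$ with $n=[1/\d]$.
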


\begin{remark}
{\textnormal{
(i) The condition
 $\a+\g>d(1/p-1/q)$ is natural since
 by Lemma~\ref{monoto}  it is clear that $t^{\a+\g} \mathcal{R}_{\varphi}(f,1)_p\lesssim \mathcal{R}_{\varphi}(f,t)_p$, $t\in (0,1)$,
  $1\le p\le \infty$.
   Therefore, if
  $$\int_0^1
        \bigg(
        \frac{\mathcal{R}_{\vp}(f,t)_p}{t^{d(\frac1p-\frac1q)}}
        \bigg)^{q_1}\frac{dt}{t}<\infty,
$$
then $\a+\g>d(1/p-1/q)$.
\\
(ii) The condition $n^{d(1/p-1/q)-\g} \lesssim \eta(n)$ as $n\to\infty$ is also natural since
 it always holds if $\frac{\psi}{\varphi}\in C^\infty(\R^d\setminus \{0\})$ (see Theorem~\ref{th1dK}).
}}
\end{remark}

\begin{proof}
(A) Let $T_n\in \mathcal{T}_n$, $n\in \N$,  be such that
\begin{equation}\label{eqth1.1Kd0--}
\Vert f-T_n\Vert_p+n^{-(\a+\g)}\Vert \mathcal{D}(\vp)T_n\Vert_p\le 2
\mathcal{R}_{\vp}(f,n^{-1})_p.
\end{equation}
From  the Nikol'skii inequality~\eqref{nik},
it follows that (see \cite[Lemma 4.2]{diti})
\begin{equation}\label{eqth++}\begin{split}
\Vert f-T_{2^n}\Vert_q
&\lesssim
 \(\sum_{\nu=n}^\infty{2^{\nu q_1
d(\frac1p-\frac1q)}}\Vert
f-T_{2^{\nu}}\Vert_p^{q_1}\)^{\frac1{q_1}}\\
&\lesssim
 \(\sum_{\nu=n}^\infty{2^{\nu q_1
d(\frac1p-\frac1q)}}\mathcal{R}_{\vp}(f,2^{-\nu})_p^{q_1}\)^{\frac1{q_1}}.
\end{split}
\end{equation}
By the definition of $\eta (t)$ and~\eqref{eqth1.1Kd0--}, we get
\begin{equation}\label{eqth1.3Kd0}
\begin{split}
2^{-\a n}\Vert \mathcal{D}(\psi)T_{2^n}\Vert_q&\le \eta(2^{n})2^{-\a
n}\Vert
\mathcal{D}(\vp)T_{2^n}\Vert_p\\
&\le 2 \eta(2^{n})2^{\g n}\mathcal{R}_{\vp}(f,2^{-n})_p.
\end{split}
\end{equation}
Thus, taking into account (\ref{eqth1.1Kd0--}), (\ref{eqth++}), and
(\ref{eqth1.3Kd0}), we obtain
\begin{equation*}
\begin{split}
\mathcal{R}_{\psi}(f,2^{-n})_q
        &\le \Vert f-T_{2^{n}}\Vert_q+2^{-\a n}\Vert
        \mathcal{D}(\psi)T_{2^{n}}\Vert_q\\
        &\lesssim  2^{\g n} \mathcal{R}_{\vp}(f,2^{-n})_p \eta(2^{n})+ \(\sum_{\nu=n}^\infty \({2^{\nu
d(\frac1p-\frac1q)}}\mathcal{R}_{\vp}(f,2^{-\nu})_p\)^{q_1}\)^{\frac1{q_1}}.
\end{split}
 \end{equation*}
From the last inequality,  (\ref{eqth1.1Kd0}) follows immediately.

(B) Let us prove the second part of the theorem. We choose a sequence of polynomials
$T_n$, $n\in \N$, such that
\begin{equation}\label{eqth1.B2}
    \eta(n)\lesssim \frac{\Vert \mathcal{D}(\psi)T_n\Vert_q}{\Vert
\mathcal{D}(\vp)T_n\Vert_p}.
\end{equation}
Further,
by Lemma \ref{real}, we have
\begin{equation}\label{eqth1.B3}
\mathcal{R}_\psi(T_n,1/n)_q\gtrsim n^{-\a}\Vert
\mathcal{D}(\psi)T_n\Vert_q.
\end{equation}
Using the definition of the realization of $K$-functional, we get
\begin{equation}\label{eqth1.B4}
\begin{split}
B_n:&=\mathcal{R}_\vp(T_n,1/n)_p n^\g \eta(n)
+
\left(\int_0^{1/n}
        \bigg(
        \frac{\mathcal{R}_{\vp}(T_n,t)_p}{t^{d(\frac1p-\frac1q)}}
        \bigg)^{q_1}\frac{dt}{t}
        \right)^{\frac1{q_1}}
        \\
&\lesssim
\Vert \mathcal{D}(\vp)T_n\Vert_p n^{-\a}\eta(n)
+
\Vert
\mathcal{D}(\vp)T_n\Vert_p
\left(\int_0^{1/n}
        \bigg(
        \frac{t^{\alpha+\gamma}}{t^{d(\frac1p-\frac1q)}}
        \bigg)^{q_1}\frac{dt}{t}
        \right)^{\frac1{q_1}}
\\
&\lesssim  \Vert
\mathcal{D}(\vp)T_n\Vert_p n^{-\a}\eta(n).
\end{split}
\end{equation}
Thus, from (\ref{eqth1.B2}), (\ref{eqth1.B3}), and
(\ref{eqth1.B4}) we derive
\begin{equation*}
\mathcal{R}_\psi(T_n,1/n)_q \gtrsim   n^{-\a}\Vert \mathcal{D}(\psi)T_n\Vert_q\gtrsim
n^{-\a}\eta(n) \Vert
\mathcal{D}(\vp)T_n\Vert_p
\gtrsim
{B_n}.
\end{equation*}
This yields  (\ref{eqth1.B1}) by the monotonicity property given in Lemma~\ref{monoto}.
\end{proof}

Taking into account Lemma~\ref{lemma4.1}, we obtain the following result.

\begin{corollary}\label{CORCORCOR}
  Under all conditions of Theorem~\ref{mainlemma} if $p\ge 1$,
    we have
    \begin{equation*}\label{eqth1.1Kd0CORCORCOR}
        K_{\psi}(f,\d)_q
        \lesssim
        \frac{K_{\vp}(f,\d)_p}{\d^{\g}}\eta\(\frac
        1\d\)+
        \left(\int_0^\d
        \bigg(
        \frac{K_{\vp}(f,t)_p}{t^{d(\frac1p-\frac1q)}}
        \bigg)^{q_1}\frac{dt}{t}
        \right)^{\frac1{q_1}}.
    \end{equation*}
\end{corollary}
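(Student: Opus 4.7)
The plan is to reduce the corollary to Theorem~\ref{mainlemma} by translating between realizations and $K$-functionals. Since $p \ge 1$, Lemma~\ref{lemma4.1} applies to the $L_p$-side and yields $\mathcal{R}_\vp(f,t)_p \asymp K_\vp(f,t)_p$ with constants independent of $t$ (they depend only on $\vp$, $p$, $d$). Because we also assume $q \ge p \ge 1$, the same lemma gives $\mathcal{R}_\psi(f,\d)_q \asymp K_\psi(f,\d)_q$.

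First, I would invoke Theorem~\ref{mainlemma}(A), which, under the stated hypotheses on $\psi \in \mathcal{H}_\a$, $\vp \in \mathcal{H}_{\a+\g}$, and $0<p<q\le\infty$, provides
\begin{equation*}
\mathcal{R}_{\psi}(f,\d)_q \lesssim \frac{\mathcal{R}_{\vp}(f,\d)_p}{\d^{\g}}\eta\!\(\frac{1}{\d}\) + \left(\int_0^\d \bigg(\frac{\mathcal{R}_{\vp}(f,t)_p}{t^{d(\frac1p-\frac1q)}}\bigg)^{q_1}\frac{dt}{t}\right)^{\frac1{q_1}}.
\end{equation*}
Next, using Lemma~\ref{lemma4.1} on the left-hand side, I would replace $\mathcal{R}_\psi(f,\d)_q$ by $K_\psi(f,\d)_q$ up to a multiplicative constant (note $q \ge p \ge 1$ so $q \ge 1$, and if $q = \infty$ the lemma still applies).

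For the right-hand side, I would apply Lemma~\ref{lemma4.1} pointwise in $\d$ and $t$ to replace $\mathcal{R}_\vp(f,\cdot)_p$ by $K_\vp(f,\cdot)_p$. The first summand is immediate. For the integral term, the equivalence constants in Lemma~\ref{lemma4.1} depend only on $\vp, p, d$ and not on $t$; hence one may pull the uniform constant out of the integral (respectively out of the $q_1$-th power under the integral sign) without issue. Combining the two estimates yields the claimed inequality
\begin{equation*}
K_{\psi}(f,\d)_q \lesssim \frac{K_{\vp}(f,\d)_p}{\d^{\g}}\eta\!\(\frac{1}{\d}\) + \left(\int_0^\d \bigg(\frac{K_{\vp}(f,t)_p}{t^{d(\frac1p-\frac1q)}}\bigg)^{q_1}\frac{dt}{t}\right)^{\frac1{q_1}}.
\end{equation*}

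There is essentially no genuine obstacle: the content is entirely in Theorem~\ref{mainlemma}, and the corollary is a transcription via the realization/$K$-functional equivalence. The only point deserving care is the $t$-uniformity of the constants in Lemma~\ref{lemma4.1}, but this is automatic from the statement of the lemma since the equivalence constants do not depend on the function or on the scale parameter. Note also that when $q=\infty$ we interpret $q_1 = 1$ consistently in both inequalities, so the case $q=\infty$ requires no separate argument.
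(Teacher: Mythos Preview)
Your proposal is correct and matches the paper's approach exactly: the paper simply remarks that the corollary follows from Theorem~\ref{mainlemma} by invoking Lemma~\ref{lemma4.1}, which is precisely the realization/$K$-functional transcription you carry out.
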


The next corollary easily follows from~\eqref{eqth1.1Kd0} and Nikol'skii's inequality~\eqref{nik}.

\begin{corollary}\label{CORCORCOR22}
  Under all conditions of Theorem~\ref{mainlemma}, if $\psi(x)=\vp(x)$,
   we have
    \begin{equation}\label{eqth1.1Kd0CORCORCOR22}
        \mathcal{R}_{\vp}(f,\d)_q
        \lesssim
        \left(\int_0^\d
        \bigg(
        \frac{\mathcal{R}_{\vp}(f,t)_p}{t^{d(\frac1p-\frac1q)}}
        \bigg)^{q_1}\frac{dt}{t}
        \right)^{\frac1{q_1}}.
    \end{equation}
\end{corollary}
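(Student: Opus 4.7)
\medskip

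The plan is to apply Theorem~\ref{mainlemma}(A) directly and then absorb the off-integral term into the integral using the monotonicity of the realization functional.

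First, I would note the hypothesis $\psi(\xi)=\vp(\xi)$ forces $\g=0$, since $\psi\in\mathcal{H}_\a$ and $\vp\in\mathcal{H}_{\a+\g}$ must agree on the degree of homogeneity. Under this reduction the defining quantity $\eta(n)$ simplifies to
$$
\eta(n)=\sup_{T\in \mathcal{T}'_{n}}\frac{\Vert \mathcal{D}(\vp)T\Vert_q}{\Vert \mathcal{D}(\vp)T\Vert_p}.
$$
Since $\mathcal{D}(\vp)T\in \mathcal{T}_n$ whenever $T\in\mathcal{T}_n$, the Nikol'skii inequality~\eqref{nik} applied to $\mathcal{D}(\vp)T$ gives $\eta(n)\lesssim n^{d(1/p-1/q)}$.

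Next, I would substitute this bound into~\eqref{eqth1.1Kd0} with $\g=0$ and $n=1/\d$, obtaining
$$
\mathcal{R}_{\vp}(f,\d)_q \lesssim \frac{\mathcal{R}_{\vp}(f,\d)_p}{\d^{d(\frac1p-\frac1q)}} + \left(\int_0^\d\bigg(\frac{\mathcal{R}_{\vp}(f,t)_p}{t^{d(\frac1p-\frac1q)}}\bigg)^{q_1}\frac{dt}{t}\right)^{\frac1{q_1}}.
$$
The final step is to show the first term on the right is absorbed into the second. By definition, $t\mapsto \mathcal{R}_{\vp}(f,t)_p$ is non-decreasing in $t$, so for every $t\in[\d/2,\d]$ we have $\mathcal{R}_{\vp}(f,t)_p\ge \mathcal{R}_{\vp}(f,\d/2)_p$. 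Invoking Lemma~\ref{monoto} with $n=2$ yields $\mathcal{R}_{\vp}(f,\d)_p\lesssim \mathcal{R}_{\vp}(f,\d/2)_p$, and clearly $t^{-d(1/p-1/q)}\asymp \d^{-d(1/p-1/q)}$ on $[\d/2,\d]$. Consequently,
$$
\left(\int_{\d/2}^\d\bigg(\frac{\mathcal{R}_{\vp}(f,t)_p}{t^{d(\frac1p-\frac1q)}}\bigg)^{q_1}\frac{dt}{t}\right)^{\frac1{q_1}}\gtrsim \frac{\mathcal{R}_{\vp}(f,\d)_p}{\d^{d(\frac1p-\frac1q)}},
$$
which completes the proof of~\eqref{eqth1.1Kd0CORCORCOR22}.

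There is no substantial obstacle here; both the bound on $\eta(n)$ and the absorption step are essentially routine once Theorem~\ref{mainlemma} is available. The only point to verify with some care is the monotonicity-type lower bound on the integral over $[\d/2,\d]$, which relies exactly on Lemma~\ref{monoto} to compare $\mathcal{R}_\vp(f,\d)_p$ with $\mathcal{R}_\vp(f,\d/2)_p$.
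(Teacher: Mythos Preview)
Your proof is correct and follows essentially the same approach as the paper, which simply states that the corollary ``easily follows from~\eqref{eqth1.1Kd0} and Nikol'skii's inequality~\eqref{nik}.'' You have filled in the details the paper omits, including the absorption of the first term into the integral via Lemma~\ref{monoto}.
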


Following the proof of Theorem~\ref{mainlemma} and taking into account Lemmas~\ref{real} and~\ref{newdopsvoistvo}, it is easy to prove a slightly more general form of Theorem~\ref{mainlemma}.

\medskip

{\sc Theorem~\ref{mainlemma}$'$.} {\it Suppose that under conditions of Theorem~\ref{mainlemma}, the function $\vp$ is either a polynomial or $d/(d+\a+\g)<p\le\infty$. Let also $m>0$ and the function $\phi\in \mathcal{H}_{\a+m}$ be either a polynomial or $d/(d+\a+m)<p\le\infty$. Then
    \begin{equation*}
        \mathcal{R}_{\psi}(f,\d)_q
        \lesssim
        \frac{\mathcal{R}_{\vp}(f,\d)_p}{\d^{\g}}\eta\(\frac
        1\d\)+
        \left(\int_0^\d
        \bigg(
        \frac{\mathcal{R}_{\phi}(f,t)_p}{t^{d(\frac1p-\frac1q)}}
        \bigg)^{q_1}\frac{dt}{t}
        \right)^{\frac1{q_1}}.
    \end{equation*}}

An analogue of Theorem~\ref{mainlemma}$'$ for the moduli of smoothness reads as follows.

\begin{theorem}\label{lemMainMod}
    Let $f\in L_p(\T^d)$, $d\ge 1$, $0<p<q\le \infty$,
    $\a>0$, $\g,m\ge 0,$ $\a\in \N\cup ((1/q-1)_+,\infty)$, and $\a+\g, \a+m\in \N\cup ((1/p-1)_+,\infty)$.
    Then,  for any $\d \in (0,1)$, we have
    \begin{equation*}
        \w_\a(f,\d)_p\lesssim \w_{\a+\g}(f,\d)_p\eta\(\frac 1\d\)+\(\int_0^\d   \(  \frac{\w_{\a+m}(f,t)_p}{t^{d(\frac1p-\frac1q)}} \)^{q_1}\frac {dt}{t}\)^\frac 1{q_1},
    \end{equation*}
    where
    \begin{equation*}
    \eta(t):=\sup_{{T\in\mathcal{T}_{[t]}'}}\frac{\w_\a(T,1/t)_q}{\w_{\a+\g}(T,1/t)_p}.
    \end{equation*}
\end{theorem}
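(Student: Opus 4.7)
The plan is to deduce Theorem~\ref{lemMainMod} from the polynomial Hardy--Littlewood--Nikol'skii machinery by translating it to the language of moduli of smoothness via the realization equivalence of Theorem~\ref{lemKfp<1d}. The argument parallels that of Theorem~\ref{mainlemma} (A), with the realizations $\mathcal{R}_{\psi}$ and $\mathcal{R}_{\varphi}$ replaced by the moduli $\omega_\alpha$, $\omega_{\alpha+\gamma}$, $\omega_{\alpha+m}$, and the quotient $\eta$ reinterpreted through~\eqref{eqvwithbest} and Corollary~\ref{corNSB}.

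Fix $\delta\in(0,1)$, set $n=[1/\delta]$, and pick $\nu_0\in\N$ with $2^{\nu_0}\asymp n$. For each $\nu\in\Z_+$, choose a polynomial $T_\nu\in\mathcal{T}_{2^\nu}$ with $\|f-T_\nu\|_p\lesssim E_{2^\nu}(f)_p$; the Jackson inequality (Proposition~\ref{converseMod+}) applied both with order $\alpha+\gamma$ and with order $\alpha+m$ gives $E_{2^\nu}(f)_p\lesssim \omega_{\alpha+\gamma}(f,2^{-\nu})_p$ and $E_{2^\nu}(f)_p\lesssim \omega_{\alpha+m}(f,2^{-\nu})_p$. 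The realization identity~\eqref{eqvwithbest} of Theorem~\ref{lemKfp<1d} applied in $L_p$ to order $\alpha+\gamma$ then yields
\begin{equation*}
\|f-T_\nu\|_p+2^{-\nu(\alpha+\gamma)}\sup_{|\xi|=1}\Big\|\Big(\frac{\partial}{\partial\xi}\Big)^{\alpha+\gamma}T_\nu\Big\|_p
\asymp \omega_{\alpha+\gamma}(f,2^{-\nu})_p,
\end{equation*}
so the $T_\nu$ are quasi-optimal for every order simultaneously.

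Next, by the same realization equivalence in $L_q$ applied with order $\alpha$,
\begin{equation*}
\omega_\alpha(f,\delta)_q\lesssim \mathcal{R}_\alpha(f,\delta)_q\lesssim \|f-T_{\nu_0}\|_q+\delta^\alpha\sup_{|\xi|=1}\Big\|\Big(\frac{\partial}{\partial\xi}\Big)^{\alpha}T_{\nu_0}\Big\|_q.
\end{equation*}
The first summand is estimated by the standard telescoping argument~\eqref{eqth++} combined with the $(L_p,L_q)$ Nikol'skii inequality: writing $f-T_{\nu_0}=(f-T_M)+\sum_{\nu=\nu_0}^{M-1}(T_{\nu+1}-T_\nu)$ and letting $M\to\infty$,
\begin{equation*}
\|f-T_{\nu_0}\|_q\lesssim \Big(\sum_{\nu\ge \nu_0}2^{\nu q_1 d(1/p-1/q)}\omega_{\alpha+m}(f,2^{-\nu})_p^{q_1}\Big)^{1/q_1},
\end{equation*}
and the monotonicity property (e) of $\omega_{\alpha+m}$ allows replacement of this dyadic sum by the integral $\bigl(\int_0^\delta(\omega_{\alpha+m}(f,t)_p/t^{d(1/p-1/q)})^{q_1}\,dt/t\bigr)^{1/q_1}$.

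For the derivative term, Corollary~\ref{corNSB} gives $\delta^\alpha\sup_{|\xi|=1}\|(\partial/\partial\xi)^{\alpha}T_{\nu_0}\|_q\asymp \omega_\alpha(T_{\nu_0},\delta)_q$. Subtracting the (harmless) constant term if necessary, we may assume $T_{\nu_0}\in\mathcal{T}'_n$, so the definition of $\eta$ yields $\omega_\alpha(T_{\nu_0},1/n)_q\le \eta(n)\,\omega_{\alpha+\gamma}(T_{\nu_0},1/n)_p$; combining this with $\delta\asymp 1/n$ and property (d) of the modulus, and then using the triangle-type inequality (b) together with $\|f-T_{\nu_0}\|_p\lesssim \omega_{\alpha+\gamma}(f,1/n)_p\asymp \omega_{\alpha+\gamma}(f,\delta)_p$ produces
\begin{equation*}
\omega_\alpha(T_{\nu_0},\delta)_q\lesssim \eta(1/\delta)\,\omega_{\alpha+\gamma}(f,\delta)_p.
\end{equation*}
Adding the two estimates proves Theorem~\ref{lemMainMod}. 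The main technical obstacle is the case $0<p<1$: quasi-norm triangle inequalities pick up the exponent $\min(1,p)$, property (d) of the modulus of smoothness involves the extra factor $d(1/p-1)_+$, and one must verify that the hypotheses $\alpha\in\N\cup((1/q-1)_+,\infty)$ and $\alpha+\gamma,\alpha+m\in\N\cup((1/p-1)_+,\infty)$ are precisely what is required to invoke Theorems~\ref{lemNSB} and~\ref{lemKfp<1d} at each step; the rest of the book-keeping (discrete-to-integral passage, control of the constant term of $T_{\nu_0}$) is routine once those invocations are legitimate.
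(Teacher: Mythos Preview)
Your proposal is correct and follows essentially the same approach as the paper, which simply states that the proof ``goes along the same line as in Theorem~\ref{mainlemma} for the realization of the $K$-functionals.'' You have correctly spelled out that adaptation: replace the realizations $\mathcal{R}_\psi,\mathcal{R}_\varphi$ by moduli via Theorem~\ref{lemKfp<1d} and Corollary~\ref{corNSB}, use the same Nikol'skii-plus-telescoping bound~\eqref{eqth++} for $\|f-T_{\nu_0}\|_q$ (with Jackson's inequality of order $\alpha+m$), and invoke the definition of $\eta$ on the near-best approximant $T_{\nu_0}$ followed by the triangle-type estimate to pass from $\omega_{\alpha+\gamma}(T_{\nu_0},\cdot)_p$ back to $\omega_{\alpha+\gamma}(f,\cdot)_p$.
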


The proof goes along the same line as in Theorem~\ref{mainlemma}
 for the realization of the $K$-functionals.

\newpage

\section{Sharp Ulyanov inequalities for  $K$-functionals and realizations
}
\label{sec7}
The goal of this section is to prove the following theorem, which provides an explicit form of the sharp Ulyanov inequality.
Here, we assume that $\psi/\varphi$ is a smooth function (cf.~Theorem~\ref{mainlemma}).

\begin{theorem}\label{th1dK}
  {Let  $0<p<q\le\infty$, $\a>0$, $\g\ge 0$, $\psi\in \mathcal{H}_\a$, $\vp\in \mathcal{H}_{\a+\g}$, and $\frac \psi\vp\in C^\infty(\R^d \backslash \{0\})$.
\\
  {\rm (A)}  Let $f\in L_p(\T^d)$. Then, for any  $\d\in (0,1)$, we have
    \begin{equation}\label{eqth1.1}
        \mathcal{R}_{\psi}(f,\d)_q
     \lesssim \frac{\mathcal{R}_{\vp}(f,\d)_p}{\d^{\g}}\s\(\frac1\d\)+
        \left(\int_0^\d
        \bigg(
        \frac{\mathcal{R}_{\vp}(f,t)_p}{t^{d(\frac1p-\frac1q)}}
        \bigg)^{q_1}\frac{dt}{t}
        \right)^{\frac1{q_1}},
    \end{equation}
where

\textnormal{(A1)} \quad if $0<p\le 1$ and $p<q<\infty$, then
$$
\s(t):=\left\{
         \begin{array}{ll}
           t^{d(\frac1p-1)}, & \hbox{$\g>d\(1-\frac1q\)_+$}; \\
           t^{d(\frac1p-1)}\ln^\frac1{q} (t+1), & \hbox{$0<\g=d\(1-\frac1q\)_+$}; \\
           t^{d(\frac1p-\frac1q)-\g}, & \hbox{$0< \g<d\(1-\frac1q\)_+$};\\
           t^{d(\frac1p-\frac1q)}, & \hbox{$\g=0$, $0<p\le 1 <q<\infty$};\\
             t^{d(\frac1p-\frac1q)}, & \hbox{$\g=0$, $0<q\le 1$, and $\frac{\psi(\xi)}{\vp(\xi)}\equiv \const$};\\
             t^{d(\frac1p-1)}, & \hbox{$\g=0$, $0<q<1$, {and} $\frac{\psi(\xi)}{\vp(\xi)}\not\equiv \const$}; \\
             t^{d(\frac1p-1)}\ln (t+1), & \hbox{$\g=0$,  $q=1$, {and}  $\frac{\psi(\xi)}{\vp(\xi)}\not\equiv \const$},
         \end{array}
       \right.
$$

\textnormal{(A2)} \quad if $0<p\le 1$ and $q=\infty$, then
$$
\s(t):=\left\{
         \begin{array}{ll}
           t^{d(\frac1p-1)}, & \hbox{$\g>d$}; \\
           t^{d(\frac1p-1)}, & \hbox{$\g=d=1$ {and}  $\frac{\psi(\xi)}{\vp(\xi)}=A{|\xi|^{-\g}}\sign \xi$ for some $A\in \C \setminus \{0\}$}; \\
           t^{d(\frac1p-1)}\ln (t+1), & \hbox{$\g=d=1$ {and} $\frac{\psi(\xi)}{\vp(\xi)}\ne A{|\xi|^{-\g}}\sign \xi$ for any $A\in \C \setminus \{0\}$}; \\
           t^{d(\frac1p-1)}\ln (t+1), & \hbox{$\g=d\ge 2$}; \\
           t^{d(\frac1p-\g)}, & \hbox{$0\le  \g<d$},
                    \end{array}
       \right.
$$

\textnormal{(A3)}  \quad if $1<p<q\le \infty$, then
$$
\s(t):=
\left\{
         \begin{array}{ll}
           1, & \hbox{$\g\ge d(\frac1p-\frac1q),\quad q<\infty$}; \\
           1, & \hbox{$\g> \frac dp,\quad q=\infty$}; \\
           \ln^\frac1{p'} (t+1), & \hbox{$\g=\frac dp,\quad q=\infty$}; \\
           t^{d(\frac1p-\frac1q)-\g}, & \hbox{$0\le \g<d(\frac1p-\frac1q)$}.\\
         \end{array}
       \right.
$$
{\rm (B)}
Inequality  \eqref{eqth1.1} 
  is sharp in the following sense.
Let $\psi$ be either a polynomial or  $d/{(d+\a)}<q\le \infty$. Let 
$\frac \vp\psi\in C^\infty(\R^d \backslash \{0\})$ and
$\a+\g>d(1-1/q)_+$.
 Let also $\psi(x)=C\vp(x)$ for some constant $C$ in the case $\g=0$ and $0<p<q\le 1$
or
$\g=0$ and $0<p\le 1$, $q=\infty$.
Then there exists a function $f_0\in L_q(\T^d)$, $f_0\not\equiv \const$,
such that
    \begin{equation}\label{eqth1.2Kd}
        \mathcal{R}_{\psi}(f_0,\d)_q
        \asymp  \frac{\mathcal{R}_{\vp}(f_0,\d)_p}{\d^\g}\s\(\frac1\d\)+
        \left(\int_0^\d
        \bigg(
        \frac{\mathcal{R}_{\vp}(f_0,t)_p}{t^{d(\frac1p-\frac1q)}}
        \bigg)^{q_1}\frac{dt}{t}
        \right)^{\frac1{q_1}}
    \end{equation}
as $\d\to 0$.}
\end{theorem}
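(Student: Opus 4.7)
The plan is to derive (A) directly from the general form of Theorem~\ref{mainlemma}(A) combined with the Hardy--Littlewood--Nikol'skii asymptotics of Theorem~\ref{th-hardy-l-n}. Theorem~\ref{mainlemma}(A) yields
$$
\mathcal{R}_{\psi}(f,\d)_q \lesssim \frac{\mathcal{R}_{\vp}(f,\d)_p}{\d^{\g}}\,\eta(1/\d) + \left(\int_0^\d \left(\frac{\mathcal{R}_{\vp}(f,t)_p}{t^{d(1/p-1/q)}}\right)^{q_1}\frac{dt}{t}\right)^{1/q_1},
$$
with $\eta(t)=\sup_{T\in\mathcal{T}'_{[t]}} \|\mathcal{D}(\psi)T\|_q/\|\mathcal{D}(\vp)T\|_p$. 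Under the hypothesis $\psi/\vp\in C^\infty(\R^d\setminus\{0\})$, the upper-bound half of Theorem~\ref{th-hardy-l-n} (see also Remark~\ref{remark-HLN}) gives $\eta(t)\lesssim \s(t)$ with exactly the $\s$ listed in (A1)--(A3); substituting this bound finishes~(A).

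For (B) I would argue in two stages. First, the extra assumption $\vp/\psi\in C^\infty(\R^d\setminus\{0\})$ combined with Theorem~\ref{th-hardy-l-n} yields the matching lower bound $\eta(n)\gtrsim \s(n)$; in particular, $n^{d(1/p-1/q)-\g}\lesssim \eta(n)$ as required in Theorem~\ref{mainlemma}(B). The assumption on $\psi$ in~(B) is exactly what Lemma~\ref{real} demands. Hence Theorem~\ref{mainlemma}(B) produces, for each $n$, a nontrivial polynomial $T_n^*\in\mathcal{T}_n$ achieving
$$
\mathcal{R}_{\psi}(T_n^*,1/n)_q \asymp n^{\g}\,\mathcal{R}_{\vp}(T_n^*,1/n)_p\,\s(n) + \left(\int_0^{1/n}\left(\frac{\mathcal{R}_{\vp}(T_n^*,t)_p}{t^{d(1/p-1/q)}}\right)^{q_1}\frac{dt}{t}\right)^{1/q_1}.
$$
Inspecting the proofs of Lemmas~\ref{lemHLd}--\ref{lemgamma0+}, these $T_n^*$ are given explicitly as $\mathcal{D}(1/\vp)(V_n-V_n(2\,\cdot))$ or close variants.

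Second, I would assemble $f_0$ as a lacunary sum $f_0(x)=\sum_{k\ge 1} c_k\,T_{n_k}^*(x)$ with $n_k=2^k$ (sparsened further when the critical logarithmic factors of (A1)--(A2) are present), choosing the coefficients $c_k$ so that the series converges in $L_q(\T^d)$, $f_0\not\equiv\const$, and the block of index $k$ dominates the realization at scale $\d\asymp 1/n_k$. Arranging the spectra of the $T_{n_k}^*$ to lie in disjoint dyadic annuli --- by replacing $V_n$ with a de la Vall\'ee--Poussin type kernel whose Fourier support is the annulus $\{|\xi|_\infty\asymp n_k\}$, a modification that is admissible throughout Section~\ref{sec5} --- ensures that both $\mathcal{R}_{\vp}(f_0,1/n_k)_p$ and $\mathcal{R}_{\psi}(f_0,1/n_k)_q$ are, up to bounded factors, determined by the single block $T_{n_k}^*$. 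The monotonicity of realizations (Lemma~\ref{monoto}) then interpolates to arbitrary $\d\in[1/n_{k+1},1/n_k]$ and yields~\eqref{eqth1.2Kd}.

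The main obstacle is the two-sided coordination at the logarithmic critical exponents. When $\g=d(1-1/q)_+$ (for $0<p\le 1$) or $\g=d/p$ (for $1<p<\infty,\ q=\infty$), the function $\s$ carries an additional $\log^{1/q_1}$ or $\log^{1/p'}$ factor that is not produced by a single block: one must sum contributions of all levels $k\le\log(1/\d)$ and control the resulting integral via Hardy's inequality (Lemma~\ref{har}). Finally, the exceptional cases excluded in~(B), namely $\g=0$ with $\psi\not\equiv C\vp$ for $0<p<q\le 1$ and for $0<p\le 1,\ q=\infty$, are genuinely out of reach of this construction: there the sharp behaviour of $\eta$ depends on pointwise features of $\psi/\vp$ on the sphere that cannot be reproduced uniformly in scale by any fixed test function.
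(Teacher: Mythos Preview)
Your argument for part~(A) matches the paper's exactly: combine Theorem~\ref{mainlemma}(A) with the upper bound $\eta(t)\lesssim\s(t)$ from Theorem~\ref{th-hardy-l-n} and Remark~\ref{remark-HLN}. One small gap: for $\g=0$, $0<p<q\le 1$, $d\ge 2$, $\psi/\vp\not\equiv\const$, Theorem~\ref{th-hardy-l-n} only lists the result for $d=1$; the paper closes this subcase separately via Corollary~\ref{corKg0} (whose proof is deferred to Section~\ref{sec10}).

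For part~(B) your route diverges substantially from the paper's, and the sketch as written does not constitute a proof. The paper does \emph{not} glue together the extremal polynomials of Theorem~\ref{mainlemma}(B); indeed Remark~\ref{remark+}(ii) explicitly contrasts the two statements, noting that Theorem~\ref{mainlemma}(B) only produces a $\d$-dependent sequence, whereas here a single $f_0$ is required. Instead the paper writes down $f_0$ explicitly. For $0<p\le 1$ it takes $f_0(x)=F_\vp(x)-N^{-(\a+\g)}F_\vp(Nx)$ with $F_\vp(x)\sim\sum_{k\ne 0}\vp(k)^{-1}e^{i(k,x)}$, and then verifies directly that $\mathcal{R}_\vp(f_0,\d)_p\lesssim \d^{\a+\g+d(1/p-1)}$ (via Lemma~\ref{lemFLpBes} and Besov-norm homogeneity) and $\mathcal{R}_\psi(f_0,1/n)_q\gtrsim n^{-\a}\|\mathcal{D}(\psi/\vp)V_n\|_q$ (via Lemma~\ref{real} applied to $f_0*V_n$, with a three-step case analysis in $q$). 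For $1<p<q<\infty$ the extremizer is either any nontrivial $C^\infty$ function (when $\g\ge d(1/p-1/q)$) or simply $F_\vp$ itself; for $q=\infty$ similar explicit choices are made, with a more delicate calculation when $\g=d/p$.

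The reason your lacunary construction is problematic: the assertion that ``both $\mathcal{R}_\vp(f_0,1/n_k)_p$ and $\mathcal{R}_\psi(f_0,1/n_k)_q$ are determined by the single block $T^*_{n_k}$'' is not justified. For the upper bound on $\mathcal{R}_\vp(f_0,\cdot)_p$ you must control the tail $\sum_{j>k}c_jT^*_{n_j}$ in $L_p$, and for the lower bound on $\mathcal{R}_\psi(f_0,\cdot)_q$ you must prevent cancellation with the low-frequency blocks $\sum_{j\le k}c_jT^*_{n_j}$; neither follows from spectral disjointness alone when $p\le 1$ or $q=\infty$. Moreover, modifying the extremizers of Section~\ref{sec5} to have annular spectra can disturb the very estimates (Lemmas~\ref{v+}--\ref{v++}) that make them extremal. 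The paper's single-function construction sidesteps all of this by exploiting the algebraic identity $\mathcal{D}(\vp)(f_0*V_n)=V_n-V_{n/N}(N\cdot)$, which makes both realizations computable in closed form.
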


In light of Lemma~\ref{lemma4.1}, Theorem \ref{th1dK} implies  the following result.

\begin{corollary}\label{CORCORCORpart}
  Under all conditions of Theorem~\ref{th1dK} if $p\ge 1$,
  we have
    \begin{equation*}\label{eqth1.1Kd0CORCORCORpart}
        K_{\psi}(f,\d)_q
        \lesssim
        \frac{K_{\vp}(f,\d)_p}{\d^{\g}}\s\(\frac1\d\)+
        \left(\int_0^\d
        \bigg(
        \frac{K_{\vp}(f,t)_p}{t^{d(\frac1p-\frac1q)}}
        \bigg)^{q_1}\frac{dt}{t}
        \right)^{\frac1{q_1}}.
    \end{equation*}
\end{corollary}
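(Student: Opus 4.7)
The plan is to deduce the corollary as an immediate consequence of Theorem~\ref{th1dK}(A) combined with Lemma~\ref{lemma4.1}. The hypothesis $p\ge 1$ (together with $p<q\le\infty$) places both parameter pairs $(\psi,q)$ and $(\vp,p)$ inside the range $1\le r\le\infty$ where Lemma~\ref{lemma4.1} applies. Hence I first record the two equivalences
\begin{equation*}
K_\psi(f,\d)_q \asymp \mathcal{R}_\psi(f,\d)_q,\qquad K_\vp(f,t)_p \asymp \mathcal{R}_\vp(f,t)_p,
\end{equation*}
valid for all $\d,t\in(0,1)$ with equivalence constants depending only on $\psi,\vp,p,q,d$ and independent of $f$, $\d$, and $t$.

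Next I invoke Theorem~\ref{th1dK}(A) with the same data $\psi,\vp,p,q,\g,d$ to obtain
\begin{equation*}
\mathcal{R}_{\psi}(f,\d)_q \lesssim \frac{\mathcal{R}_{\vp}(f,\d)_p}{\d^{\g}}\,\s\!\Big(\tfrac{1}{\d}\Big)+\left(\int_0^\d\bigg(\frac{\mathcal{R}_{\vp}(f,t)_p}{t^{d(\frac1p-\frac1q)}}\bigg)^{q_1}\frac{dt}{t}\right)^{\frac1{q_1}}
\end{equation*}
with the same function $\s$ as in the statement of the corollary (the cases (A1)--(A3) cover all admissible choices of $\gamma$ and $q$ when $p\ge 1$; namely only (A3) is relevant, but the structural form of the inequality is identical). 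Applying the first equivalence on the left gives $K_\psi(f,\d)_q$ up to a multiplicative constant, and applying the second equivalence at $t=\d$ replaces $\mathcal{R}_\vp(f,\d)_p$ by $K_\vp(f,\d)_p$ in the first term on the right.

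For the integral term I use that the equivalence $\mathcal{R}_\vp(f,t)_p\asymp K_\vp(f,t)_p$ is pointwise in $t$ with constants independent of $t\in(0,\d)$; therefore raising to the $q_1$-th power, dividing by $t^{q_1 d(1/p-1/q)}$, and integrating against $dt/t$ preserves the equivalence, and the constants can be absorbed into $\lesssim$. Collecting these substitutions yields the claimed inequality for the $K$-functionals.

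There is no genuine obstacle here: the argument is a straightforward transfer of the already-established Ulyanov-type inequality for realizations to $K$-functionals, made possible precisely by the restriction $p\ge 1$ (and the automatic condition $q\ge p\ge 1$), which is exactly the range in which Lemma~\ref{lemma4.1} gives $K\asymp\mathcal{R}$. Outside this range, namely $0<p<1$, the $K$-functional $K_\vp(f,\d)_p$ is identically zero and the result would be vacuous or false, which explains why the corollary is stated only for $p\ge 1$.
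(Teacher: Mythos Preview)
Your proof is correct and matches the paper's approach exactly: the paper simply states that the corollary follows from Theorem~\ref{th1dK} in light of Lemma~\ref{lemma4.1}, which is precisely the transfer you carry out. One minor inaccuracy: your parenthetical remark that ``only (A3) is relevant'' overlooks the boundary case $p=1$, which falls under (A1)/(A2); but since the structural form of the inequality is identical across all three cases, this does not affect the argument.
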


\begin{remark}\label{remark+}
\textnormal{(i)} Note that in the proof of Theorem \ref{th1dK},
in \textnormal{(A1)} with $\g=0$, $0<p<q\le 1$ and in \textnormal{(A2)} we may not assume that $\frac \psi\vp\in C^\infty(\R^d \backslash \{0\})$.

\textnormal{(ii)} 
It is important to remark that in part (B) of  Theorem~\ref{mainlemma},
we show sharpness of the corresponding Ulyanov type inequality by constructing a  sequence of functions
which depends on $\delta$, while in Theorem
\ref{th1dK} we construct a function $f_0$ which is independent of~$\d$.

\end{remark}

Similarly to  Theorem~\ref{mainlemma}$'$, one can obtain the following more general analogue of Theorem~\ref{th1dK}.

\medskip

{\sc Theorem~\ref{th1dK}$'$}. {\it Suppose that under conditions of Theorem~\ref{th1dK}, the function $\vp$ is either a polynomial or $d/(d+\a+\g)<p\le\infty$. Let also $m>0$ and the function $\phi\in \mathcal{H}_{\a+m}$ be either a polynomial or $d/(d+\a+m)<p\le\infty$. Then
    \begin{equation}\label{eqth1.1Kd0+++}
        \mathcal{R}_{\psi}(f,\d)_q
        \lesssim
        \frac{\mathcal{R}_{\vp}(f,\d)_p}{\d^{\g}}\s\(\frac
        1\d\)+
        \left(\int_0^\d
        \bigg(
        \frac{\mathcal{R}_{\phi}(f,t)_p}{t^{d(\frac1p-\frac1q)}}
        \bigg)^{q_1}\frac{dt}{t}
        \right)^{\frac1{q_1}}.
    \end{equation}
}

\begin{proof}[Proof of Theorem~\ref{th1dK}]
(A) Taking into account Theorem~\ref{mainlemma},
 it is enough to estimate
$$
\eta(t)=\sup_{T\in \mathcal{T}'_{[t]}
}\frac{\Vert \mathcal{D}(\psi)T\Vert_q}{\Vert
\mathcal{D}(\vp)T\Vert_p}.
$$
Since $\frac \psi\vp\in C^\infty(\R^d \backslash \{0\})$,
the inequality $$\eta(t)\lesssim \s(t)$$ follows from  Theorem~\ref{th-hardy-l-n}  and Remark~\ref{remark-HLN} in all cases except the case $0<p<q\le 1$, $\g=0$, $d\ge 2$, and $\frac{\psi(\xi)}{\vp(\xi)}\not\equiv \const$. In the latter case, the proof of this inequality follows from Corollary~\ref{corKg0} stated below.

\medskip

To prove {(B)}, we will construct a nontrivial function  $f_0\in L_q(\T^d)$ such that equivalence (\ref{eqth1.2Kd}) holds as $\d\to 0$.

\bigskip

\noindent
\underline{Part (B1): $0<p\le 1$ and $p<q\le\infty$.}
We set
\begin{equation}\label{eqfun0ples1}
    f_0(x)=F_{\vp}(x)-\frac1{N^{\a+\g}}F_{\vp}(Nx),
\end{equation}
where
\begin{equation}\label{eqhvpd}
    F_{\vp}(x)\sim\sum_{k\neq 0}\frac{e^{i(k,x)}}{\vp(k)}
\end{equation}
and $N$ is a fixed sufficiently large integer that will be chosen later.

Let us show that  $F_{\vp} \in L_{q^*}(\T^d)$, where $q^*=\max(q,1)$. For this, we
consider the function
$$
\mathfrak{F}_{\alpha+\g}(x)\sim \sum_{k\neq 0}\frac{e^{i(k,x)}}{|k|^{\a+\g}}.
$$
It is known that $\mathfrak{F}_{\alpha+\g}\in L_{q^*}(\T^d)$ for $\a+\g>d(1-1/{q^*})$
(see~\eqref{wainger}).
At the same time, the function
$$
u(y)=\frac{|y|^{\a+\g-\e}}{\vp(y)}\(1-v(2|y|)\),\quad 0<\e<\a+\g,
$$
is a Fourier multiplier in  $L_r(\T^d)$ for any $1\le r\le\infty$.
This follows from  Lemma~\ref{lemMult} (see also Lemma~\ref{lemBes}) and the fact that $\varphi(y)=0$ if and only if  $y=0$.
Take $\e$ such that $\a+\g-\e>d(1-1/{q^*})$. We have that
$$
{\sum_{k\neq 0}}\frac{e^{i(k,x)}}{|k|^{\a+\g-\e}}\in L_{q^*}(\T^d).
$$
Thus, taking into account that
$$
F_\vp(x)\sim {\sum_{k\neq 0}}\frac{u(k)}{|k|^{\a+\g-\e}}e^{i(k,x)},
$$
we obtain that
 $F_{\vp} \in L_{q^*}(\T^d)$ and, therefore, $f_0 \in L_{q^*}(\T^d)$.

Next,
\begin{equation}\label{eqth1.4Kd}
    \mathcal{R}_\vp(f_0,n^{-1})_p\lesssim
    \Vert
    f_0-f_0*V_n\Vert_p+n^{-\a-\g}\Vert \mathcal{D}(\vp) (f_0*V_n)\Vert_p.
\end{equation}
Recall that $V_n$ is given by~\eqref{vallee}.

Let us consider the second summand in~(\ref{eqth1.4Kd}). In light of
\begin{equation*}
\begin{split}
 (F_\varphi(N\cdot)*V_n)(x)
 &=
 \sum_{k\neq 0} \prod_{j=1}^d v\(\frac{N|k_j|}n\)\frac{e^{i(Nk,x)}}{\varphi(k)}
\end{split}
\end{equation*}
and since $\vp$ is homogeneous of order $\a+\g$, we obtain
\begin{equation*}
\begin{split}
\mathcal{D}(\vp) (f_0*V_n)(x)&=\sum_{k\neq 0} \prod_{j=1}^d
v\(\frac{|k_j|}n\)e^{i(k,x)}-\sum_{k\neq 0} \prod_{j=1}^d
v\(\frac{N|k_j|}n\)e^{i(Nk,x)}
\\
&=V_n(x)-V_{n/N}(Nx).
\end{split}
\end{equation*}
Using the last equalities and Corollary~\ref{lemBLX}, we derive that
\begin{equation}\label{eqth1.5Kd}
\Vert \mathcal{D}(\vp)\big( f_0*V_n\big)
\Vert_p
\lesssim
\Vert V_n\Vert_p
+
\Vert V_{n/N}\Vert_p
\lesssim
n^{d(1-\frac1p)}.
\end{equation}

Now, let us consider the first summand in~(\ref{eqth1.4Kd}). We estimate
\begin{equation}\label{eqth1.6Kd}
    \Vert f_0-f_0*V_n\Vert_p^p\le \Vert F_{\vp}-F_{\vp} *
    V_n\Vert_p^p+N^{-p(\a+\g)}\Vert F_{\vp}(N\cdot)-F_{\vp}(N\cdot)*
    V_n\Vert_p^p.
\end{equation}
It is enough to estimate the first term in~(\ref{eqth1.6Kd}).
We have
\begin{eqnarray*}
\Vert F_{\vp}-F_{\vp} *
    V_n\Vert_p&=&\bigg\Vert
    \sum_{k\in\Z^d}\bigg(1-\prod_{j=1}^d v\bigg(\frac{|k_j|}n\bigg)\bigg)\frac{e^{i(k,x)}}{\vp(k)}\bigg\Vert_p\\&=&\frac1{n^{\a+\g}}\bigg\Vert
    \sum_{k\in\Z^d} \xi\(\frac kn\)e^{i(k,x)}\bigg\Vert_p,
\end{eqnarray*}
where
$$
\xi(y)=\frac{1-\prod_{j=1}^d v(|y_j|)}{\vp(y)}.
$$

By Lemma~\ref{lemFLpBes}, using the fact that  $\xi\in\dot{B}_{2,p}^{d(\frac1p-\frac12)} \cap
\dot{B}_{2,1}^{\frac d2}(\R^d)$ (see, e.g., Lemma~\ref{lemBes}), and the
homogeneity property of the Besov (quasi-)norm (see Lemma~\ref{besov}), 
we get
\begin{equation}\label{eqNNN}
\begin{split}
\Vert F_{\vp}-F_{\vp} *
    V_n\Vert_p&\lesssim
     n^{-\a-\g}\left\Vert \xi\(\frac{y}{n}\)\right\Vert_{\dot{B}_{2,p}^{d(\frac1p-\frac12)}(\R^d)}
    \\&\lesssim
    n^{d(1-\frac1p)-\a-\g}\Vert
    \xi\Vert_{\dot{B}_{2,p}^{d(\frac1p-\frac12)}(\R^d)}\lesssim
    n^{d(1-\frac1p)-\a-\g}.
\end{split}
\end{equation}
Hence, from (\ref{eqth1.6Kd}) we have
\begin{equation}\label{eqth1.7Kd}
\Vert
f_0-f_0*V_n
\Vert_p\lesssim n^{d(1-\frac1p)-\a-\g}.
\end{equation}

Thus, combining  (\ref{eqth1.4Kd}), (\ref{eqth1.5Kd}), and
(\ref{eqth1.7Kd}), we arrive at
\begin{equation}\label{eqth1.8Kd}
    \mathcal{R}_\vp(f_0,n^{-1})_p\lesssim n^{d(1-\frac1p)-\a-\g}.
\end{equation}

Let us assume that
\begin{equation}\label{eqth1.9Kd}
\mathcal{R}_\psi(f_0,1/n)_q\gtrsim  n^{-\a}
\Vert
\mathcal{D}(\psi/\vp) V_{ n}\Vert_q,\quad n\in \N, 
\end{equation}
and
 \begin{equation}\label{sigma+}
 t^{d(1/p-1/q)-\g}=\mathcal{O}(\eta(t))\qquad\mbox{as}\qquad t\to\infty,
\end{equation}
where $\eta$ is given by
$$\eta(t)
=
\sup_{T\in \mathcal{T}'_{[t]}}\frac{\Vert \mathcal{D}(\psi)T\Vert_q}{\Vert
\mathcal{D}(\vp)T\Vert_p}.
$$
We will prove~\eqref{eqth1.9Kd} and~\eqref{sigma+} later.

Now, comparing estimates (\ref{eqth1.8Kd}) and (\ref{eqth1.9Kd}), we derive
 the following inequalities for $\d\to 0$
    \begin{eqnarray}\label{eqth1.2Kd2}\nonumber
        \mathcal{R}_{\psi}(f_0,\d)_q
        &\gtrsim&  \d^{\a}\Vert \mathcal{D}(\psi/\vp) V_{1/\d} \Vert_q
                \qquad\qquad\qquad\qquad\qquad\qquad \qquad\quad\,(\mbox{by }\quad (\ref{eqth1.9Kd})
)
        \nonumber
\\
        &=&
      {\d^{\a+d(\frac1p-1)}}
        \frac{1}{\d^{d(\frac1p-1)}}\Vert \mathcal{D}(\psi/\vp) V_{1/\d} \Vert_q
        \nonumber
\\
        &\gtrsim&   {\d^{\a+d(\frac1p-1)}}\eta ({1/\d})\qquad\qquad\qquad\qquad\qquad\qquad \qquad\qquad(\mbox{by Lemma
}\;\; \ref{lemHLd})
        \nonumber
        \\
        &\gtrsim&   {\d^{\a+d(\frac1p-1)}}\s ({1/\d})+{\d^{\a+d(\frac1p-1)}}\eta ({1/\d})\qquad\qquad\qquad\qquad\!\!(\mbox{by Theorem}\quad \ref{th-hardy-l-n})
        \nonumber
\\
        &\gtrsim&  \frac{\mathcal{R}_{\vp}(f_0,\d)_p}{\d^\g} 
        \s ({1/\d})+ {\d^{\a+d(\frac1p-1)}}\eta ({1/\d})\quad\qquad \qquad\qquad(\mbox{by }\quad (\ref{eqth1.8Kd}))
        \nonumber
\\
        &\gtrsim&   \frac{\mathcal{R}_{\vp}(f_0,\d)_p}{\d^\g} 
        \s ({1/\d}) + \d^{
        \a+\g+d(\frac1q-1) }
         \qquad\qquad\qquad\qquad\quad(\mbox{by }\quad (\ref{sigma+}))
        \nonumber
\\
        &\gtrsim&  \frac{\mathcal{R}_{\vp}(f_0,\d)_p}{\d^\g} 
        \s ({1/\d})
        +
        \left(\int_0^\d
        \bigg(
        \frac{\mathcal{R}_{\vp}(f_0,t)_p}{t^{d(\frac1p-\frac1q)}}
        \bigg)^{q_1}\frac{dt}{t}
        \right)^{\frac1{q_1}}\quad\,\,\,\,(\mbox{by }\quad (\ref{eqth1.8Kd})),
    \end{eqnarray}
    where in the last inequality we used the fact that $\a+\g>d(1/p-1/q)\ge d(1-1/q)$.
Therefore,  (\ref{eqth1.2Kd}) follows. To complete the proof of (B1), we need to verify (\ref{eqth1.9Kd}) and (\ref{sigma+}).
\bigskip


{\sc{ Proof of estimate (\ref{eqth1.9Kd})}.}
We divide  the proof of this fact
into three steps.

\smallskip
\smallskip
\underline{Step 1.} Assume that $0<q\le 1$ and $\g>0$.
Denoting
$$
H_{\vp,n}(x):=F_{\vp}*V_n(x)-\frac1{N^{\a+\g}}F_{\vp}(N\cdot)* V_{Nn}(x),
$$
we estimate
\begin{equation}\label{eqth1.10Kd}
\mathcal{R}_\psi(f_0,1/n)_q\ge
C(q)\Big(\mathcal{R}_\psi(H_{\vp,n},1/n)_q-\mathcal{R}_\psi(f_0-H_{\vp,n},1/n)_q\Big).
\end{equation}
Since $H_{\vp,n}$ is a trigonometric polynomial of degree $2N n$,  using Lemma~\ref{real}, we get
$$
\mathcal{R}_\psi(H_{\vp,n},{1}/{n})_q\gtrsim
n^{-\a}\Vert
\mathcal{D}(\psi) H_{\vp,n}\Vert_q.
$$
Using the representation
$$\mathcal{D}(\psi) H_{\vp,n}(x)
=
\sum_{k\neq 0} \prod_{j=1}^d v\(\frac{|k_j|}n\) \frac{\psi(k)}{\varphi(k)}e^{i(k,x)}-
N^{-\gamma}
\sum_{k\neq 0} \prod_{j=1}^d v\(\frac{|k_j|}n\)\frac{\psi(k)}{\varphi(k)}e^{i(Nk,x)},
$$
we obtain
\begin{equation}\label{eqth1.11Kd}
\mathcal{R}_{\psi}(H_{\vp,n},{1}/{n})_q
\gtrsim
\(1-\frac1{N^{\g q}}\)^{1/q}n^{-\a}\Vert
\mathcal{D}(\psi/\vp)V_n\Vert_q.
\end{equation}
Further, applying Holder's inequality and Lemma~\ref{rrun} yields
\begin{eqnarray*}\label{eqth1.13Kd}
\mathcal{R}_{\psi}(f_0-H_{\vp,n},{1}/{n})_q&\lesssim&
\mathcal{R}_{\psi}(f_0-H_{\vp,n},{1}/{n})_{1}\lesssim
{K}_\psi(f_0-H_{\vp,n},{1}/{n})_{1}
\\&\lesssim&
 n^{-\a}\Vert
\mathcal{D}(\psi)(f_0-H_{\vp,n})\Vert_{1}\\
&\lesssim&  n^{-\a}\(\Vert F_{\vp/ \psi}-F_{\vp/
\psi}*V_n\Vert_{1}+\Vert F_{\vp/ \psi}(N\cdot)-F_{\vp/
\psi}*V_{Nn}(N\cdot)\Vert_{1}\),
\end{eqnarray*}
where
$F_{\vp/ \psi}(x)=\sum_{k\neq 0}\frac{\psi(k)}{\vp(k)}e^{i(k,x)}$ (see (\ref{eqhvpd})).

Since
$\|F_{\vp/ \psi}\|_1\lesssim
\|F_{\vp/ \psi}\|_{1+\varepsilon}
\lesssim
\|\mathfrak{F}_\g\|_{1+\varepsilon}
\lesssim
1$ (see~\eqref{wainger}), we have that $F_{\vp/ \psi}\in L_1(\T^d)$ and, therefore,
\begin{equation}\label{eqth1.14Kd}
\begin{split}
\mathcal{R}_{\psi}(f_0-H_{\vp,n},n^{-1})_q=o(n^{-\a})\quad\text{as}\quad n\to \infty.
\end{split}
\end{equation}

Thus, combining  (\ref{eqth1.10Kd})--(\ref{eqth1.14Kd}),
we immediately obtain
$$
\mathcal{R}_{\psi}(f_0,1/n)_q\gtrsim
  n^{-\a}\Vert
\mathcal{D}(\psi/\vp) V_{ n}\Vert_q-o(n^{-\a}).
$$
Note that by Lemma \ref{v+}, there exists a constant $C$ independent of $n$ such that
\begin{equation*}
\Vert \mathcal{D}(\psi/\vp)V_n\Vert_q
\ge C.
\end{equation*}
Hence,
$$
\mathcal{R}_{\psi}(f_0,1/n)_q\gtrsim
  n^{-\a}\Vert
\mathcal{D}(\psi/\vp) V_{ n}\Vert_q,
$$
which implies (\ref{eqth1.9Kd}) for  $0<q\le 1$ and $\g>0$.


\smallskip
\smallskip
\underline{Step 2.} Let $0<q\le 1$ and $\g=0$.
In this case, we assume that $\vp(y)=C\psi(y)$ with some constant $C$.
By Lemma~\ref{monoto},
one has that
$$
    \mathcal{R}_{\psi}(f_0,\d)_q\gtrsim \mathcal{R}_{\psi}(f_0,1)_q \d^{\a+d(1/q-1)}.
$$
On the other hand,
\eqref{mu+} in Remark~\ref{mu} implies that
  $$
\Vert \mathcal{D}\({\psi}/{\varphi}\) V_{1/\delta}
\Vert_{q}
\asymp \d^{d(1-1/q)}
$$
 and hence
$$  \mathcal{R}_{\psi}(f_0,\d)_q\gtrsim
\d^{\a}
\Vert \mathcal{D}\({\psi}/{\varphi}\) V_{1/\delta}
\Vert_{q},
$$
which is (\ref{eqth1.9Kd}).  

\smallskip
\smallskip
\underline{Step 3.} Assume that 
 $1<q\le \infty$.
Taking into account that  $f_0* V_n$ is the near best approximant of $f_0$ in $L_q$, that is,
$\Vert f_0* V_n-f_0\Vert_q \le C E_n(f_0)_q$, we have that
Lemma~\ref{real} implies
\begin{eqnarray}\label{estim1}
\begin{split}
\mathcal{R}_{\psi}(f_0,1/n)_q&\gtrsim n^{-\a}\Vert \mathcal{D}(\psi)
(f_0* V_n)\Vert_q
\\
&\gtrsim n^{-\a}
\big(\Vert \mathcal{D}(\psi/\vp)
V_n\Vert_q-\frac1{N^\g}\Vert \mathcal{D}(\psi/\vp) V_{n/N}\Vert_q\big).
\end{split}
\end{eqnarray}

If $1<q<\infty$ and $\g\ge 0$, Theorem~\ref{th-hardy-l-n}  gives the exact growth order of
$
\Vert \mathcal{D}(\psi/\vp)
V_n\Vert_q
$
as $n\to \infty$, which yields
\begin{eqnarray}\label{estim2}
\begin{split}\Vert \mathcal{D}(\psi/\vp)
V_n\Vert_q-\frac1{N^\g}\Vert \mathcal{D}(\psi/\vp) V_{n/N}\Vert_q
\gtrsim
\Vert \mathcal{D}(\psi/\vp)
V_n\Vert_q
\end{split}
\end{eqnarray}
for sufficiently large $N$. Therefore,
 (\ref{eqth1.9Kd}) follows.

If  $q=\infty$ and $\g>0$, we take  take into account that
\begin{equation*}
  \begin{split}
\mathcal{D}(\psi/\varphi) V_{n/N}(x)&=\sum_{k\neq 0}\frac{\psi(k)}{\varphi(k)}
    \prod_{j=1}^d v\(\frac{N|k_j|}{n}\)e^{i(k,x)}\\
    &=\sum_{k\neq 0}\frac{\psi(k)}{\varphi(k)}
    \prod_{j=1}^d v\(\frac{N|k_j|}{n}\)
     v\(\frac{|k_j|}{n}\)e^{i(k,x)}
   \end{split}
\end{equation*}
and that the function $ B(y)=\prod_{j=1}^d v\(y_j\)$ is a Fourier multiplier in $L_\infty(\T^d)$ (see Lemmas~\ref{lemMult} and~\ref{lemBes}). Hence, we have
\begin{equation*}
  \begin{split}
\Vert
\mathcal{D}(\psi/\varphi) V_{n/N}\Vert_\infty &\lesssim \Vert B(N y)\Vert_{L_\infty\to L_\infty}\Vert
\mathcal{D}(\psi/\varphi) V_{n}\Vert_\infty\\
&=\Vert B\Vert_{L_\infty\to L_\infty}\Vert
\mathcal{D}(\psi/\varphi) V_{n}\Vert_\infty\lesssim \Vert
\mathcal{D}(\psi/\varphi) V_{n}\Vert_\infty,
   \end{split}
\end{equation*}
which yields~\eqref{estim2} for sufficiently large $N$.

Finally, suppose that  $q=\infty$ and $\g=0$.
In this case, we assume that $\vp(y)=C\psi(y)$ with some constant $C$. Therefore,
$$
\Vert \mathcal{D}(\psi/\vp)
V_n\Vert_\infty
\asymp
\Vert
V_n\Vert_\infty
\asymp
n^d.
$$
Then, using (\ref{estim1})
and (\ref{estim2})
for sufficiently large $N$, we arrive at (\ref{eqth1.9Kd}).

Thus, the proof of (\ref{eqth1.9Kd}) is complete. 

\bigskip

{\sc{ Proof of estimate (\ref{sigma+}).}}
First, assume that $\g>0$. By Lemma~\ref{lemHLd} (iii),
$\eta(n) \asymp  n^{d(1/p-1)}\Vert \mathcal{D}(\psi/\vp) V_n    \Vert_q.$

If $1<q<\infty$, then Lemmas~\ref{v-} and~\ref{v}  describe the sharp growth order of
$\Vert \mathcal{D}(\psi/\vp) V_n    \Vert_q$ and, in particular,
$$\Vert \mathcal{D}(\psi/\vp) V_n    \Vert_q\gtrsim n^{d(1-\frac1q)-\g}.$$ This gives (\ref{sigma+}).

If $p<q\le 1$, Lemma \ref{v+} yields
$$\eta(n)
 \gtrsim
 n^{d(\frac1p-1)} \gtrsim n^{d(\frac1p-\frac1q)-\g}.
$$

Finally, let $q=\infty$.  In this case, by Lemma~\ref{v++}, we have
\begin{equation*}
\Vert \mathcal{D}(\psi/\vp) V_n    \Vert_\infty\gtrsim n^{d-\g},
\end{equation*}
which implies the desired result.

Second, assume that $\g=0$ and $\vp(\xi)=C\psi(\xi)$. Then (\ref{sigma+}) is equivalent to
$$
n^{d(\frac1p-\frac1q)}\lesssim \sup_{T_n\in \mathcal{T}'_{n}}\frac{\Vert T\Vert_q}{\Vert
T\Vert_p}.
$$
It is enough to consider extremizers for the classical Nikol'skii inequality, for example, the Jackson-type kernel (see~\cite[\S~4.9]{timan}).

Hence, the proof of inequality (\ref{sigma+}) and the part (B1) are complete.
\bigskip

 \noindent
\underline{Part (B2): $1<p<q<\infty$.}
In this case, if $\g\ge d(1/p-1/q)$, then we can take as $f_0$ any non-trivial
function from $C^\infty(\T^d)$.
Indeed, by Lemma~\ref{rrrun} and~\eqref{RealKpge1-}, for any $1<r<\infty$, we have
\begin{equation}\label{kkk}
    \begin{split}
\mathcal{R}_{\psi}(f_0,\delta)_r&\asymp \inf_g (\|f_0-g\|_r+ \delta^\alpha \|(-\Delta)^{\a/2}g\|_r)\asymp
\omega_\a(f_0,\delta)_r\asymp \delta^\a.
\end{split}
\end{equation}
Then,
\begin{equation}\label{kkkk}
    \begin{split}
\mathcal{R}_{\psi}(f_0,\delta)_q\gtrsim \delta^\a
&\gtrsim \frac{\delta^{\a+\gamma}}{\delta^\g}
\\&\gtrsim
 \frac{\mathcal{R}_{\vp}(f_0,\d)_p}{\d^{\g}}
 +
        \left(\int_0^\d
        \bigg(
        \frac{\mathcal{R}_{\vp}(f_0,t)_p}{t^{d(\frac1p-\frac1q)}}
        \bigg)^{q_1}\frac{dt}{t}
        \right)^{\frac1{q_1}}.
\end{split}
\end{equation}

If
$0\le \g<d(1/p-1/q)$, then we take
$$
f_0(x)=F_{\vp}(x)\sim \sum_{k\ne 0} \frac{e^{i(k,x)}}{\varphi(k)}.
$$

Let us first estimate $\mathcal{R}_{\vp}(f_0,\delta)_p$ from above.
We have
\begin{equation*}
    \mathcal{R}_{\vp}(f_0,2^{-n})_p\lesssim
    \Vert
    f_0-f_0*V_{2^{n}}\Vert_p+2^{-n(\a+\g)}\Vert \mathcal{D}(\vp) (f_0*V_{2^{n}})\Vert_p,
\end{equation*}
 where $V_{2^n}$ is given by (\ref{vallee}).

Note that by (\ref{eqNNN}),
$\Vert
    f_0-f_0*V_{2^{n}}\Vert_1\lesssim 2^{-(\a+\g)n}$. Then, using Lemma 4.2 from~\cite{diti} and taking into account that $\a+\g >d(1-{1}/{q})>d(1-{1}/{p})$, we obtain
\begin{eqnarray}\label{kl+}
\Vert
    f_0-f_0*V_{2^{n}}\Vert_p&\lesssim&\left(\sum_{\nu=n}^\infty2^{\nu pd(1-\frac{1}{p})}
    \big\Vert
    f_0-f_0*V_{2^\nu}\big\Vert_1^p
    \right)^{1/p}
    \nonumber
    \\&\lesssim& 2^{-(\a+\g +d(\frac{1}{p}-1))n}.
\end{eqnarray}
Further, using the multidimensional Hardy-Littlewood theorem for series with monotone coefficients (see Lemma~\ref{monot+}), we derive that
\begin{eqnarray}\label{kl++}
\Vert \mathcal{D}(\vp) (f_0*V_{2^{n}})\Vert_p
&\asymp&
\|V_{2^{n}}-1\|_p
\nonumber
\\
&\asymp&
\bigg\|{\mathop{{\sum}'}\limits_{{}^{|k|_\infty\le 2^{n}}}}
e^{i(k,x)}\bigg\|_p
\nonumber
\\
&\asymp&
\bigg(\sum_{\nu=1}^{2^n} \nu^{p-2}\bigg)^{d/p}\asymp 2^{dn(1-\frac{1}{p})}.
\end{eqnarray}
Therefore,
\begin{equation}\label{kkkk+}
\mathcal{R}_{\vp}(f_0,\delta)_p\lesssim\delta^{\a+\g +d(\frac{1}{p}-1)}.
\end{equation}

Let us estimate $\mathcal{R}_{\psi}(f_0,1/n)_q$ from below.
Taking into account that  $f_0* V_n$ is the near best approximant of $f_0$ in $L_q$ and
using Lemmas~\ref{v} and~\ref{real} with $0\le \g<d(1/p-1/q)<d(1-1/q)$, we get
\begin{equation}\label{kkkk++}
    \begin{split}
\mathcal{R}_{\psi}(f_0,1/n)_q&\gtrsim n^{-\a}\Vert \mathcal{D}(\psi)
(f_0* V_n)\Vert_q
\\
&\gtrsim n^{-\a}\Vert (-\D)^{-\g/2}V_n \Vert_q\gtrsim
n^{d(1-\frac1q)-(\a+\g)}.
\end{split}
\end{equation}
Therefore, combining  (\ref{kkkk+}) and (\ref{kkkk++}), we finally obtain
 \begin{eqnarray*}\nonumber
 &&\!\!\!\!\!\!\!\!\!\!\!\! \frac{\mathcal{R}_{\vp}(f_0,\d)_p}{\d^\g} 
        \s ({1/\d})
        +
        \left(\int_0^\d
        \bigg(
        \frac{\mathcal{R}_{\vp}(f_0,t)_p}{t^{d(\frac1p-\frac1q)}}
        \bigg)^{q}\frac{dt}{t}
        \right)^{\frac1{q}}
        \\
 &\lesssim&
  \d^{\a+d(\frac1p-1)}
        \s ({1/\d})
        +
        \left(\int_0^\d
        t^{q(\a+\g+d(\frac1q-1))}
        \frac{dt}{t}
        \right)^{\frac1{q}}
        \quad\quad\,\,\, (\mbox{by }\quad (\ref{kkkk+}))
        \\
 &\lesssim&
  \d^{\a+d(\frac1p-1)}
        \s ({1/\d})
        +
        \d^{\a+\g+d(\frac1q-1)}
        \quad\quad\qquad \qquad\qquad\,(\mbox{since }\quad \a+\g>d(1-1/q))
        \\
        &\lesssim&
        \d^{\a+\g+d(\frac1q-1)}
        \qquad\qquad\qquad\qquad\qquad\qquad \qquad\quad\quad\,\,(\mbox{by the definition of}\quad \s)
        \\
        &\lesssim&
        \mathcal{R}_{\psi}(f_0,\d)_q
        \quad\qquad\quad\qquad\quad\qquad\quad\qquad \qquad\quad\,\,\,\,\,\quad(\mbox{by }\quad (\ref{kkkk++})),
    \end{eqnarray*}
which proves (\ref{eqth1.2Kd}).

\bigskip
 \noindent
\underline{Part (B3): $1<p<\infty$ and $q=\infty$.}
First, let $\g>{d}/{p}$. We take a non-trivial function $f_0\in C^\infty(\T^d)$.
Then, by Lemma~\ref{rrrun}, for any $1<r<\infty$, we have
$$
\mathcal{R}_{\psi}(f_0,\delta)_\infty
\gtrsim\mathcal{R}_{\psi}(f_0,\delta)_r
\gtrsim \d^{\a}\Vert (-\D)^{-\a/2}f_0 \Vert_r
\gtrsim \delta^\alpha.
$$
By (\ref{kkk}),
 $\mathcal{R}_\vp(f_0,\delta)_p\asymp \delta^{\alpha+\g}$
and (\ref{eqth1.2Kd}) follows since in this case $\s(t)=1$ (see (\ref{kkkk})).

Now, we assume that $0\le \g<{d}/{p}$
and $\s(t)=t^{{d}/{p}-\g}$.
We consider the function
$$
f_0(x)={\mathop{{\sum}'}_{k\in \Z^d}} \frac{e^{i(k,x)}}{|k|^{\g}\psi(k)}.
$$
Then, by Lemma \ref{real}, we have
 \begin{eqnarray}
\label{kkkk++--}
\mathcal{R}_{\psi}(f_0,2^{-n})_\infty
&\gtrsim&
 2^{-\a n}\Vert \mathcal{D}(\psi)
(f_0* V_{2^n})\Vert_\infty
\nonumber
\\&=&
 2^{-\a n}\Vert
 (-\Delta)^{-\g/2}
V_{2^n}\Vert_\infty
\gtrsim
2^{n (d -\a-\g)}.
\end{eqnarray}
On the other hand, as above, by (\ref{kl+}) and (\ref{kl++}), we obtain that
    $$
\Vert
    f_0-f_0*V_{2^{n}}\Vert_p\lesssim 2^{-(\a+\g +d(\frac{1}{p}-1))n}
    $$
    and
$$\Vert \mathcal{D}(\vp) (f_0*V_{2^{n}})\Vert_p
\lesssim 2^{dn(1-\frac{1}{p})}.
$$
Hence,
\begin{equation}\label{kkkk+--}
\mathcal{R}_{\vp}(f_0, {2^{-n}})_p\lesssim
2^{-(\a+\g +d(\frac{1}{p}-1))n}.
\end{equation}
Thus, (\ref{kkkk++--}) and (\ref{kkkk+--}) give  (\ref{eqth1.2Kd}).

Finally, let $\g={d}/{p}$ and $\s(t)=
           \ln^{1/{p'}} (t+1).$ In this case the proof is more technical.
Consider
$$
f_0(x)={\mathop{{\sum}'}_{k\in \mathbb{Z}^d}} \frac{1}{\psi(k)|k|^d}e^{i(k,x)}.
$$
First, by Lemma~\ref{real},
\begin{equation}\label{est-general++}
\mathcal{R}_{\psi}(f_0,1/n)_\infty \gtrsim n^{-\a}\Vert \mathcal{D}(\psi)
(f_0* V_n)\Vert_\infty\gtrsim
n^{-\a}
{\mathop{{\sum}'}_{|k|_\infty\le n}}
\frac{1}{|k|^d}
\gtrsim n^{-\a}\ln n.
\end{equation}
Note that
$$
\mathcal{R}_{\vp}(f_0,\delta)_p\asymp \mathcal{R}_{\vp}(f_0^*,\delta)_p,
$$
where
 $$
f^*_0(x)={\mathop{{\sum}'}_{k\in \mathbb{Z}^d}} \frac{1}{|k|^{\alpha+d}}e^{i(k,x)}.
$$
Denoting by $D_n$ the cubic Dirichlet kernel, i.e.,
$$D_n(x)=\sum_{{}^{|k|_\infty\le n}}
 e^{i(k,x)},$$
we have
\begin{equation}\label{est-general}
\begin{split}
\mathcal{R}_{\vp}(f_0,1/n)_p&\lesssim
    \mathcal{R}_{\vp}(f_0^*,1/n)_p\\
    &\lesssim
    \Vert
    f_0^*-f_0^**D_n\Vert_p+n^{-\a-\g}\Vert \mathcal{D}(\vp) (f_0^**D_n)\Vert_p
    \\
&\lesssim
    \Big\|
\sum_{{}^{|k|_\infty\ge n}}  \frac{e^{i(k,x)}}{|k|^{\alpha+d}}\Big\|_p
+
n^{-\a-\g}\Big\|
\mathop{{\sum}'}_{{}^{|k|_\infty\le n}}
  \frac{e^{i(k,x)}}{|k|^{d-\g}}\Big\|_p.
\end{split}
    \end{equation}
By Lemma \ref{v}, we get
$$
n^{-\a-\g}\bigg\|
\mathop{{\sum}'}_{{}^{|k|_\infty\le n}}
  \frac{e^{i(k,x)}}{|k|^{d-\g}}\bigg\|_p\lesssim
  n^{-\a-\g}
  \ln^\frac1{p} (n+1), \qquad
  \g=\frac{d}{p}.
$$
Let us prove that
\begin{equation}\label{est-second}
    \bigg\Vert
\mathop{{\sum}'}_{{}^{|k|_\infty\ge n}}  \frac{e^{i(k,x)}}{|k|^{\alpha+d}}\bigg\Vert_p
\lesssim
  n^{-\a-\g}.
\end{equation}
For simplicity,  we consider only the case $d=2$.
Taking into account the  Hardy--Littlewood theorem for multiple series (Lemma~\ref{monot+}), we get
\begin{equation*}
  \begin{split}
&\Big\|
\sum_{k_1= n}^{\infty} \sum_{k_2= n}^{\infty} \frac{\cos k_1 x_1 \cos k_2 x_2} {|k|^{\alpha+d}}\Big\|_p
\\
&\qquad\qquad\lesssim
    \left(
\sum_{k_1= 1}^{\infty} k_1^{p-2}
\sum_{k_2= 1}^{\infty} k_2^{p-2}
\Big(
\sum_{\nu_1= k_1}^{\infty}
\sum_{\nu_2= k_2}^{\infty}
\big|
\Delta^{(2)}
a_{\nu_1,\nu_2}
 \big|\Big)^p
 \right)^{1/p}=:D,
   \end{split}
\end{equation*}
 where
$$
a_{\nu_1,\nu_2}=\left\{
                  \begin{array}{ll}
                    {|\nu|^{-(\alpha+d)}}, & \hbox{$|\nu|_\infty\ge n$;} \\
                    0, & \hbox{$|\nu|_\infty< n$.}
                  \end{array}
                \right.
$$
We divide this series into four parts ($D\lesssim D_1+D_2+D_3+D_4$) and estimate each of them separately. We have
 \begin{align*}
D_1&:=
    \left(
\sum_{k_1= 1}^{n-2}
\sum_{k_2= 1}^{n-2}
 k_1^{p-2}
 k_2^{p-2}
\Big(
\sum_{\nu_1= n-1}^{\infty}
\sum_{\nu_2= n-1}^{\infty}
\big|
\Delta^{(2)}
a_{\nu_1,\nu_2}
 \big|\Big)^p
 \right)^{1/p}
 \\&\lesssim
{n^{2(p-1)/p}}\sum_{\nu_1= n-1}^{\infty}
\sum_{\nu_2= n-1}^{\infty}
\big|
\Delta^{(2)}
a_{\nu_1,\nu_2}
 \big|
 \\&\lesssim
{n^{2(p-1)/p}}
a_{n-1,n-1}
 \lesssim
\frac{n^{2(1-\frac1p)}}{n^{\a+d}},
\end{align*}
since $p>1$ and $\Delta^{(2)} a_{\nu_1,\nu_2}\ge 0$ (see~\eqref{4zvedy}).

Further,
 \begin{align*}
D_2&:=
    \left(
\sum_{k_1= 1}^{n-2}
\sum_{k_2= n-1}^{\infty}
 k_1^{p-2}
 k_2^{p-2}
\Big(
\sum_{\nu_1= n-1}^{\infty}
\sum_{\nu_2= k_2}^{\infty}
\big|
\Delta^{(2)}
a_{\nu_1,\nu_2}
 \big|\Big)^p
 \right)^{1/p}
\\& \lesssim
    \left(
\sum_{k_1= 1}^{n-2}
k_1^{p-2}
 \sum_{k_2= n-1}^{\infty}
 k_2^{p-2}
a_{n-1,k_2}^p
 \right)^{1/p}
\\& \lesssim
{n^{1-\frac1p}}
    \left(
\sum_{k_2= n-1}^{\infty}
 \frac {k_2^{p-2}}{(n^2+k_2^2)^{\frac{(\a+d)p}{2}}}
 \right)^{1/p}
\\& \lesssim
{n^{1-\frac1p}}
    \left(
\sum_{k_2= n-1}^{\infty}
 \frac {1}{k_2^{{(\a+d)p-p+2}}}
 \right)^{1/p}
.
\end{align*}
In light of $\a+\g=\a+{d}/{p}>d$, we have $(\a+d)p-p+1>0$ and, therefore,
$$
D_2  \lesssim
\frac{n^{2(1-\frac1p)}}{n^{\a+d}}.
$$
Similarly, we derive that
 \begin{align*}
D_3&:=
    \left(
\sum_{k_1= n-1}^{\infty}
\sum_{k_2= 1}^{n-2}
 k_1^{p-2}
 k_2^{p-2}
\Big(
\sum_{\nu_1= k_1}^{\infty}
\sum_{\nu_2= n-1}^{\infty}
\big|
\Delta^{(2)}
a_{\nu_1,\nu_2}
 \big|\Big)^p
 \right)^{1/p}
 \lesssim
\frac{n^{2(1-\frac1p)}}{n^{\a+d}}.
\end{align*}
Finally,
 \begin{align*}
D_4&:=
    \left(
\sum_{k_1= n-1}^{\infty}
\sum_{k_2= n-1}^{\infty}
 k_1^{p-2}
 k_2^{p-2}
\Big(
\sum_{\nu_1= k_1-1}^{\infty}
\sum_{\nu_2= k_2-1}^{\infty}
\big|
\Delta^{(2)}
a_{\nu_1,\nu_2}
 \big|\Big)^p
 \right)^{1/p}
\\& \lesssim
\left(
\sum_{k_1= n-1}^{\infty}
k_1^{p-2}
 \sum_{k_2= n-1}^{\infty}
\frac {k_2^{p-2}}{(k_1^2+k_2^2)^{\frac{(\a+d)p}{2}}}
 \right)^{1/p}
\\& \lesssim
\left(
\sum_{k_1= n-1}^{\infty}
k_1^{p-2}
 \sum_{k_2= n-1}^{k_1}
\frac {k_2^{p-2}}{(k_1^2+k_2^2)^{\frac{(\a+d)p}{2}}}
 \right)^{1/p}
+
\left(
\sum_{k_1= n-1}^{\infty}
k_1^{p-2}
 \sum_{k_2= k_1}^{\infty}
\frac {k_2^{p-2}}{(k_1^2+k_2^2)^{\frac{(\a+d)p}{2}}}
 \right)^{1/p}
\\& =:
D_4^*+D_4^{**}.
\end{align*}
We estimate each term separately. First,
 \begin{align*}
D_4^*&\lesssim
\left(
\sum_{k_1= n-1}^{\infty}
\frac{k_1^{p-2}}{k_1^{(\a+d)p}}
 \sum_{k_2= n-1}^{k_1}
 {k_2^{p-2}}
 \right)^{1/p}
 \\&
  \lesssim
  \left(
\sum_{k_1= n-1}^{\infty}
\frac{k_1^{2p-3}}{k_1^{(\a+d)p}}
 \right)^{1/p}.
\end{align*}
Using again the assumption  $\a+{d}/{p}>d$, it is easy to see  that  $$(\a+d)p-2p+2>0,$$ which implies that 
$$
D_4^*
  \lesssim
  \frac{n^{2(1-\frac1p)}}{n^{\a+d}}.
$$
Further,
 \begin{align*}
D_4^{**}&\lesssim
\left(
\sum_{k_1= n-1}^{\infty}
k_1^{p-2}
 \sum_{k_2= k_1}^{\infty}
\frac {k_2^{p-2}}{k_2^{{(\a+d)p}}}
 \right)^{1/p}
 \\&
  \lesssim
  \left(
\sum_{k_1= n-1}^{\infty}
\frac{k_1^{2p-3}}{k_1^{(\a+d)p}}
 \right)^{1/p}
   \\&
  \lesssim
  \frac{n^{2(1-\frac1p)}}{n^{\a+d}}.
\end{align*}
Combining these estimates, we arrive at inequality (\ref{est-second}).
Thus, by (\ref{est-general}) and (\ref{est-second}), we obtain that
\begin{equation}\label{est-general+}
\begin{split}
\mathcal{R}_{\vp}(f_0,1/n)_p&\lesssim   n^{-\a-\g}
  \ln^\frac1{p} (n+1)+ \frac{n^{2(1-\frac1p)}}{n^{\a+d}}\lesssim
   n^{-\a-\frac{d}{p}}
  \ln^\frac1{p} (n+1).
\end{split}
    \end{equation}
Hence,
 \begin{eqnarray*}\nonumber
  && \!\!\!\!\!\!\!\!\!\!\!\!\frac{\mathcal{R}_{\vp}(f_0,\d)_p}{\d^\g} 
        \s ({1/\d})
        +
        \int_0^\d
        \frac{\mathcal{R}_{\vp}(f_0,t)_p}{t^{\frac dp}}
        \frac{dt}{t}
        \\
 &\asymp&
 \frac{\mathcal{R}_{\vp}(f_0,\d)_p}{\d^{d/p}} 
        \ln^{1/p'} ({1/\d})
        +
        \int_0^\d
        \frac{\mathcal{R}_{\vp}(f_0,t)_p}{t^{\frac dp}}
        \frac{dt}{t}
        \\
         &\lesssim&
  \d^{\a}  \ln({1/\d})
        +
        \int_0^\d
        t^{\a}
                 \ln^\frac1{p} \(\frac1t\)
        \frac{dt}{t}
        \quad\qquad \qquad\qquad\,\,\,(\mbox{by }\quad(\ref{est-general+}))
        \\
         &\lesssim&
  \d^{\a}  \ln({1/\d})
\lesssim
        \mathcal{R}_{\psi}(f_0,\d)_\infty
        \quad\qquad \qquad\qquad\qquad\quad(\mbox{by }\quad (\ref{est-general++}))
    \end{eqnarray*}
and     (\ref{eqth1.2Kd}) follows.

\end{proof}

\newpage

\section{Sharp Ulyanov inequalities for moduli of smoothness}
\label{sec8}
\subsection{One-dimensional results
}
For $d=1$,
by using Theorem~\ref{th1dK} with   $\psi(\xi)=(i\xi)^\a$ and $\vp(\xi)=(i\xi)^{\a+\g}$, which correspond to the  fractional Weyl derivatives,
we obtain the following sharp inequality.

\begin{theorem}\label{th1}
  {\it  Let $0<p<q\le \infty$, $\a\in\N\cup
    ((1/q-1)_+,\infty)$ and $\g,m\ge 0$ be such that $\a+\g, \a+m\in \N\cup
    ((1/p-1)_+,\infty)$.

{\rm (A)} Let $f\in L_p(\T)$. Then, for any  $\d\in (0,1)$,  we have
    \begin{equation}\label{eqth1.1--}
        \w_\a(f,\d)_q\lesssim\frac{\w_{\a+\g}(f,\d)_p}{\d^{\g}} \s\(\frac1\d\)+
        \left(\int_0^\d\bigg(\frac{\w_{\a+m}(f,t)_p}{t^{\frac1p-\frac1q}}\bigg)^{q_1}\frac{dt}{t}\right)^{\frac1{q_1}},
    \end{equation}
where

{\rm (A1)} if $0<p\le 1$ and $p<q\le\infty$, then
$$
\s
(t)
:=\left\{
         \begin{array}{ll}
           t^{\frac1p-1}, & \hbox{$\g>\(1-\frac1q\)_+$}; \\
           t^{\frac1p-1}\ln^\frac1q (t+1), & \hbox{$0<\g=\(1-\frac1q\)_+$}; \\
           t^{\frac1p-\frac1q-\g}, & \hbox{$0< \g<\(1-\frac1q\)_+$};\\
           t^{\frac1p-\frac1q}, & \hbox{$\g=0$},
         \end{array}
       \right.
$$

{\rm (A2)} if $1<p\le q\le\infty$, then
$$
\s(t):=\left\{
         \begin{array}{ll}
           1, & \hbox{$\g\ge \frac1p-\frac1q,\quad q<\infty$}; \\
           1, & \hbox{$\g> \frac1p,\quad q=\infty$}; \\
           \ln^\frac1{p'} (t+1), & \hbox{$\g=\frac1p,\quad q=\infty$}; \\
           t^{(\frac1p-\frac1q)-\g}, & \hbox{$0\le \g<\frac1p-\frac1q$}.\\
         \end{array}
       \right.
$$

\medskip

{\rm (B)}  Inequality \eqref{eqth1.1--} is sharp in the following sense. Let $\a+\g>(1-1/q)_+$ and $m-\g\in \Z_+ \cup ((1/p-1)_+,\infty)$. There exists a function $f_0\in L_q(\T)$, $f_0\not\equiv \const$,
such that the following equivalence holds
\begin{equation*}
    {\w_\a(f_0,\d)_q}
    \asymp
    \frac{\w_{\a+\g}(f_0,\d)_p}{\d^{\g}} \s\(\frac1\d\)+
        \left(\int_0^{\d}\bigg(\frac{\w_{\a+m}(f_0,t)_p}{t^{\frac1p-\frac1q}}\bigg)^{q_1} \frac{dt}t\right)^{\frac1{q_1}}
\end{equation*}
as $\d\to 0$.

}
\end{theorem}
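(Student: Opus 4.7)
The plan is to deduce Theorem~\ref{th1} from the corresponding realization-version (Theorem~\ref{th1dK}$'$) together with the equivalence between the modulus of smoothness and the realization of the $K$-functional (Proposition~\ref{lemKfp<1}, which is Theorem~\ref{lemKfp<1d} for $d=1$).

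For Part~(A), I would set $d=1$ and apply Theorem~\ref{th1dK}$'$ with the polynomial choices
\begin{equation*}
\psi(\xi)=(i\xi)^{\a},\qquad \vp(\xi)=(i\xi)^{\a+\g},\qquad \phi(\xi)=(i\xi)^{\a+m}.
\end{equation*}
Since $\psi,\vp,\phi$ are polynomials, every regularity hypothesis in Theorem~\ref{th1dK}$'$ is automatic, and $\psi/\vp=(i\xi)^{-\g}$, $\vp/\psi=(i\xi)^{\g}$ are $C^{\infty}$ away from the origin. With these choices, $\mathcal{D}(\psi)T=T^{(\a)}$ is the classical Weyl derivative, so
\begin{equation*}
\mathcal{R}_{\psi}(f,\d)_p=\mathcal{R}_{\a}(f,\d)_p,\qquad \mathcal{R}_{\vp}(f,\d)_p=\mathcal{R}_{\a+\g}(f,\d)_p,\qquad \mathcal{R}_{\phi}(f,\d)_p=\mathcal{R}_{\a+m}(f,\d)_p.
\end{equation*}
Under the hypotheses $\a\in\N\cup((1/q-1)_{+},\infty)$ and $\a+\g,\a+m\in\N\cup((1/p-1)_{+},\infty)$, Proposition~\ref{lemKfp<1} yields $\w_{\a}(f,\d)_q\asymp\mathcal{R}_{\a}(f,\d)_q$, $\w_{\a+\g}(f,\d)_p\asymp\mathcal{R}_{\a+\g}(f,\d)_p$, and $\w_{\a+m}(f,\d)_p\asymp\mathcal{R}_{\a+m}(f,\d)_p$. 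Substituting these equivalences into~\eqref{eqth1.1Kd0+++} reduces~\eqref{eqth1.1--} to checking that the $\s$ appearing in Theorem~\ref{th1dK}$'$ has exactly the form stated in (A1)--(A2). This last verification is precisely the one-dimensional Hardy--Littlewood--Nikol'skii inequality of Corollary~\ref{corr++--}, whose case list matches the case list in Theorem~\ref{th1} line by line.

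For Part~(B), I would transcribe the extremizers built in the proof of Theorem~\ref{th1dK}(B) through the same realization-modulus equivalence. Specifically, for $0<p\le 1$ and $p<q\le\infty$, take
\begin{equation*}
f_0(x)=F_{\vp}(x)-\frac{1}{N^{\a+\g}}F_{\vp}(Nx),\qquad F_{\vp}(x)\sim\sum_{k\neq 0}\frac{e^{ikx}}{(ik)^{\a+\g}},
\end{equation*}
with $N$ fixed large; for $1<p<q<\infty$ use either a non-trivial $C^{\infty}$-function (when $\g\ge 1/p-1/q$) or $f_0=F_{\vp}$ (when $\g<1/p-1/q$); for $q=\infty$ use the analogues from parts~(B3). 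The upper bound on $\mathcal{R}_{\a+\g}(f_0,\d)_p$ and the lower bound on $\mathcal{R}_{\a}(f_0,\d)_q$ obtained in the proof of Theorem~\ref{th1dK} transfer via Proposition~\ref{lemKfp<1} to the corresponding bounds on $\w_{\a+\g}(f_0,\d)_p$ and $\w_{\a}(f_0,\d)_q$, giving one direction of the equivalence. For the integral term, when $m=\g$ sharpness is immediate, and when $m>\g$ one uses the Marchaud-type bound (property (c) of the moduli in Section~\ref{sec4}, combined with monotonicity~\eqref{eqMonMod}) to show that replacing $\w_{\a+\g}$ by $\w_{\a+m}$ on the right does not change the right-hand side up to constants --- this is where the hypothesis $m-\g\in\Z_{+}\cup((1/p-1)_{+},\infty)$ enters, since it guarantees $\w_{\a+m}(f_0,t)_p\lesssim\w_{\a+\g}(f_0,t)_p$.

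The main technical obstacle will be the logarithmic regimes, namely $\g=(1-1/q)_{+}$ in case (A1) and $\g=1/p$ with $q=\infty$ in case (A2), where the constants in the realization--modulus equivalence must not swallow the factor $\ln^{1/q_1}(t+1)$. This reduces to the sharp lower bounds on $\|\mathcal{D}(\psi/\vp)V_n\|_q$ proved in Lemmas~\ref{v+}--\ref{v++} of Section~\ref{sec5}; once those are in hand, the construction of $f_0$ in the proof of Theorem~\ref{th1dK}(B) automatically reproduces the logarithm, so the remaining work is essentially bookkeeping across the seven subcases of~$\s$.
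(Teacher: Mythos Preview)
Your proposal is correct and follows essentially the same route as the paper: reduce to Theorem~\ref{th1dK} (or its variant~\ref{th1dK}$'$) via the realization result~\eqref{eq.th6.0} with $\psi(\xi)=(i\xi)^{\a}$, $\vp(\xi)=(i\xi)^{\a+\g}$, identify $\s$ through Corollary~\ref{corr++--}, and for Part~(B) import the extremizer $f_0$ from Theorem~\ref{th1dK}(B), passing from $\w_{\a+\g}$ to $\w_{\a+m}$ by property~(c). One minor slip: $(i\xi)^{\a}$ is a polynomial only when $\a\in\N$; for fractional $\a$ you must instead invoke the alternative hypothesis $d/(d+\a)<q$ (respectively $d/(d+\a+\g)<p$, $d/(d+\a+m)<p$) in Theorem~\ref{th1dK}$'$, which for $d=1$ is exactly guaranteed by $\a\in\N\cup((1/q-1)_+,\infty)$ and $\a+\g,\a+m\in\N\cup((1/p-1)_+,\infty)$.
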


\begin{corollary}\label{remar}
Under the conditions of Theorem~\ref{th1},
inequality (\ref{eqth1.1--}) with $m=\g$ implies the following inequality
\begin{equation}\label{eqth1.1-1}
        \w_\a(f,\d)_q
 \lesssim\left(\int_0^\d\bigg(
                \frac{\w_{\a+\g}(f,t)_p}{t^{\g}} \s\(\frac1t\)
\bigg)^{q_1}\frac{dt}{t}\right)^{\frac1{q_1}}.
    \end{equation}
    Moreover,  if $\a+\g>(1-1/q)_+$, then
 there exists a nontrivial  function $f_0\in L_q(\T)$ such that 
$$
        \w_\a(f_0,\d)_q
\asymp \left(\int_0^\d\bigg(
                \frac{\w_{\a+\g}(f_0,t)_p}{t^{\g}} \s\(\frac1t\)
\bigg)^{q_1}\frac{dt}{t}\right)^{\frac1{q_1}}.
$$

\end{corollary}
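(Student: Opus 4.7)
The plan has two parts, mirroring the statement. Set $F(t) := \w_{\a+\g}(f,t)_p\,t^{-\g}\,\s(1/t)$, so the right-hand side of~\eqref{eqth1.1-1} is $J(\d) := \bigl(\int_0^\d F(t)^{q_1}\,dt/t\bigr)^{1/q_1}$. For part (A), I split the right-hand side of~\eqref{eqth1.1--} with $m=\g$ into the boundary piece $F(\d)$ and the integral piece $G(\d) := \bigl(\int_0^\d(\w_{\a+\g}(f,t)_p/t^{1/p-1/q})^{q_1}\,dt/t\bigr)^{1/q_1}$, and show each is majorized by $J(\d)$.

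The bound $G(\d)\lesssim J(\d)$ reduces to the pointwise comparison $\s(1/t)\ge t^{\g-(1/p-1/q)}$ on $(0,1)$: this is an equality in the "small~$\g$" subcases where $\s(n)=n^{1/p-1/q-\g}$ (including $\g=0$), is strict in the "large~$\g$" subcases where $\s(n)$ is a higher power of $n$, and holds trivially in the borderline subcases where $\s$ carries an additional logarithmic factor. A one-line case-by-case check of the definition of $\s$ in (A1)--(A2) suffices. For $F(\d)\lesssim J(\d)$ I would restrict the integral to $[\d/2,\d]$: property~(d) of the modulus (applied with $\l=2$) gives $\w_{\a+\g}(f,t)_p\asymp \w_{\a+\g}(f,\d)_p$, while $t^{-\g}\asymp \d^{-\g}$ and $\s(1/t)\asymp \s(1/\d)$ hold because every candidate $\s$ is a power or power times logarithm, hence slowly varying on $[1/\d,2/\d]$. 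Consequently $F(t)\asymp F(\d)$ on $[\d/2,\d]$, so
$$
J(\d)^{q_1}\ge \int_{\d/2}^{\d} F(t)^{q_1}\,\frac{dt}{t}\gtrsim F(\d)^{q_1}\log 2,
$$
which yields $F(\d)\lesssim J(\d)$ and completes (A).

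For the sharpness (B), I fix the extremizer $f_0$ provided by Theorem~\ref{th1}(B). Part~(A) applied to $f_0$ gives $\w_\a(f_0,\d)_q\lesssim J(\d)$, while Theorem~\ref{th1}(B) gives $\w_\a(f_0,\d)_q\gtrsim F(\d)$, so the equivalence reduces to showing $F(\d)\asymp J(\d)$ for this particular $f_0$. For $0<p\le 1$ the construction $f_0(x) = F_\vp(x)-N^{-\a-\g}F_\vp(Nx)$ with $F_\vp(x) \sim \sum_{k\ne 0} e^{ikx}/(ik)^{\a+\g}$ from the proof of Theorem~\ref{th1dK}(B) has the saturation-type behaviour $\w_{\a+\g}(f_0,t)_p\asymp t^{\a+\g-1+1/p}$ (analogous to the spline $h_{\a+\g}$ discussed in Section~\ref{sec1.2p<1}). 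Substituting into $F(t)$, the factor $\s(1/t)/t^\g$ combines with $t^{\a+\g-1+1/p}$ to give $F(t)\asymp t^{b}$ or $t^b\ln^{c}(1/t+1)$ for some $b>0$ depending on the subcase (e.g.\ $b=\a$ when $\g>(1-1/q)_+$, $b=\a+\g-1+1/q$ when $0<\g<(1-1/q)_+$, the assumption $\a+\g>(1-1/q)_+$ guaranteeing $b>0$ throughout). An elementary integration by parts then gives $\int_0^\d F(t)^{q_1}\,dt/t\asymp F(\d)^{q_1}$, i.e.\ $J(\d)\asymp F(\d)$. The ranges $1<p<q\le\infty$ are handled analogously with the extremizers constructed in the proof of Theorem~\ref{th1dK}(B).

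The main technical obstacle is not conceptual but organizational: the comparison $\s(1/t)\ge t^{\g-(1/p-1/q)}$ and the asymptotic $\int_0^\d F(t)^{q_1}\,dt/t\asymp F(\d)^{q_1}$ must be verified in each of the several subcases in the definition of $\s$, with particular care in the borderline logarithmic subcases of (A1) and (A2) where a matching $\ln(1/\d+1)$ factor must appear on both sides of the equivalence.
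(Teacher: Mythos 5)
The paper gives no explicit proof of this corollary, treating it as a direct consequence of Theorem~\ref{th1}; your two-step absorption is exactly the implicit argument, so the comparison is between your proposal and the "intended" inference rather than a written proof.

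Your part (A) is correct: the pointwise bound $\s(1/t)\gtrsim t^{\g-(1/p-1/q)}$ for $0<t<1$ (equivalently $\s(n)\gtrsim n^{(1/p-1/q)-\g}$), which the paper itself records and uses as estimate~\eqref{sigma+} in the proof of Theorem~\ref{th1dK}(B), disposes of the integral term $G(\d)$, and your localization to $[\d/2,\d]$ together with the doubling property (d) of $\w_{\a+\g}(f,\cdot)_p$ and the slow variation of $\s$ disposes of the boundary term $F(\d)$. The quasi-triangle inequality for $q_1<1$ is harmless since each piece is controlled separately.

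Your part (B) is also correct, but the key reduction, $J(\d)\asymp F(\d)$ for the extremizer, deserves one more sentence of justification than you give. For $0<p\le 1$ the two-sided asymptotics $\w_{\a+\g}(f_0,t)_p\asymp t^{\a+\g+1/p-1}$ follow from the upper bound~\eqref{eqth1.8Kd} together with the lower bound furnished by Lemma~\ref{monoto} (which, for $p\le 1$, supplies exactly the matching power $t^{\a+\g+(1/p-1)_+}$). For $1<p<q\le\infty$ Lemma~\ref{monoto} only gives the weaker lower bound $t^{\a+\g}$, so you must instead invoke Jackson's inequality and the Hardy--Littlewood theorem~\eqref{monot} to show $E_n(f_0)_p\asymp n^{-(\a+\g+1/p-1)}$ (or $n^{-\a-1/p}\ln^{1/p}n$ in the critical $q=\infty$, $\g=1/p$ case) and hence that $\w_{\a+\g}(f_0,t)_p$ has a matching two-sided bound; the paper's proof of Theorem~\ref{th1dK}(B3) establishes the upper bounds but leaves the lower bounds implicit, so your proposal should make them explicit. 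Once those two-sided bounds are in hand, your power-times-log computation of $J(\d)\asymp F(\d)$ goes through exactly as you outline, with the hypothesis $\a+\g>(1-1/q)_+$ guaranteeing a positive exponent in every subcase.
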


 \begin{proof}[Proof of Theorem \ref{th1}]
 Part (A) follows from the realization result (\ref{eq.th6.0}) and Theorem~\ref{th1dK}
 with
 $\psi(\xi)=(i\xi)^\a$ and $\vp(\xi)=(i\xi)^{\a+\g}$.

The appearance of $m$ in the integral in the right-hand side of (\ref{eqth1.1--})  follows from the fact that
inequality (\ref{eqth++}) can be written as

 \begin{equation}\label{bsp}\begin{split}
\Vert f-T_{2^n}\Vert_q &\lesssim
 \(\sum_{\nu=n}^\infty{2^{\nu q_1 (\frac1p-\frac1q)}}\Vert f-T_{2^{\nu}}\Vert_p^{q_1}\)^{\frac1{q_1}}
\\
&\lesssim
 \(\sum_{\nu=n}^\infty\({2^{\nu q_1 (\frac1p-\frac1q)}}{\w_{\a+m}(f_0,2^{-\nu})_p}\)^{q_1}\)^{\frac1{q_1}}
\\
&\lesssim
\left(\int_0^{2^{-n}}
\bigg(\frac{\w_{\a+m}(f,t)_p}{t^{\frac1p-\frac1q}}\bigg)^{q_1}\frac{dt}{t}\right)^{\frac1{q_1}},
\end{split}
\end{equation}
and, therefore, (\ref{eqth1.1--}) holds.

Part (B) follows from  Theorem~\ref{th1dK} (B) noting that in our case (\ref{eqth1.2Kd2}) is given by
     \begin{eqnarray}\nonumber
     \w_\a(f_0,\d)_q
        &\gtrsim&  \frac{
       \w_{\a+\g}(f_0,\d)_p
}{\d^\g} 
        \s ({1/\d})
        +
        \left(\int_0^\d
        \bigg(
        \frac{
        \w_{\a+\g}(f_0,t)_p}{t^{\frac1p-\frac1q}}
        \bigg)^{q_1}\frac{dt}{t}
        \right)^{\frac1{q_1}}.
    \end{eqnarray}
    Using the  inequality $\w_{\a+\g}(f_0,t)_p\gtrsim \w_{\a+m}(f_0,\d)_p$ for $m\ge\g$, we arrive at the statement of part (B).
 \end{proof}

\bigskip

\subsection{Comparison between the sharp  and classical Ulyanov inequalities
}
Let us compare the obtained
inequality (\ref{eqth1.1--}) and the classical Ulyanov inequality given by
\smallskip
\begin{equation}\label{eqUl}
        \w_\a(f,\d)_q\lesssim\left(\int_0^\d\bigg(\frac{\w_{\a}(f,t)_p}{t^{\frac1p-\frac1q}}\bigg)^{q_1}\frac{dt}{t}\right)^{\frac1{q_1}},\qquad
        0<p<q\le\infty.
\end{equation}

First, if $1<p<q<\infty$,  the sharp version of this inequality, i.e.,
\begin{equation}\label{eqUl-1}
        \w_\a(f,\d)_q\lesssim\left(\int_0^\d\bigg(\frac{\w_{\a+\theta}(f,t)_p}{t^{\frac1p-\frac1q}}\bigg)^{q}\frac{dt}{t}\right)^{\frac1{q}},
        \qquad \theta=\frac{1}{p}-\frac{1}{q},
\end{equation}
which coincides with (\ref{eqth1.1--}), clearly gives a better estimate than (\ref{eqUl}). Moreover, (\ref{eqUl-1})
is sharp over the class
$$
{\textnormal{Lip}} \,\big(\omega(\cdot),\a + \theta,p\big)\,= \,
\Big\{f\in L_p(\T):\,\omega_{\a+\theta}(f,\delta)_p=\mathcal{O}(\omega(\delta))\Big\}
$$
(see~\cite{ST}).
In more detail,
for any function $\omega \in \Omega_{\alpha +  \theta}$, there exists a function
$
f_0(x)=f_0(x,p,\omega) \in {\textnormal{Lip}}
\,\big(\omega(\cdot),\alpha+\theta,p\big)
$ such that
for any $q\in (p, \infty)$ and  for any $\delta>0$
\begin{equation*}
\omega_{\alpha}(f_0, \delta)_q \geq C \left(
\int_{0}^{\delta} \Bigl(t^{- \theta} \omega(t) \Bigl)^q
\frac{dt}{t} \right)^{\frac{1}{q}},
\end{equation*}
 where a constant  $C$ is independent of $\delta$ and $\omega$.
\smallskip

Let us give several examples showing essential differences between~\eqref{eqth1.1--} and~\eqref{eqUl}.

\begin{example}
\textnormal{ Let $1 < p < q < \infty$ and $\alpha > \theta$. Define
$$
f_0(x) \sim \sum\limits_{m = 1}^{\infty}a_m \cos mx, \quad a_m=\frac{1}{m^{\alpha+\varepsilon+1-\frac{1}{q}}},\quad \varepsilon>0.
$$
The Hardy--Littlewood theorem (see~\eqref{monot}) implies that  $f_0 \in
L_q(\T)$.
Moreover, realization~\eqref{eq.th6.0} and Theorem 6.1 from~\cite{GT}  give
$$
\omega_{\xi}(f_0,1/n)_\nu \asymp
n^{-\xi}
\left(
\sum_{k=1}^n a_k^\nu k^{\xi \nu+\nu-2}\right)^{1/\nu}
+
\left(
\sum_{k=n+1}^\infty a_k^\nu k^{\nu-2}\right)^{1/\nu}, \qquad\xi>0,\quad p\le\nu\le q.
$$
Then it is easy to see that
\begin{eqnarray*}
\w_\a(f_0,\d)_q\asymp\d^\a,
\end{eqnarray*}
\begin{eqnarray*}
\left(\int_0^\d\bigg(\frac{\w_{\a+\t}(f_0,t)_p}{t^{\frac1p-\frac1q}}\bigg)^{q}\frac{dt}{t}\right)^{\frac1{q}}
\asymp
\left\{
               \begin{array}{ll}
                \d^{\a} \ln^\frac1p \frac 1\d, & \hbox{$\varepsilon=0$;} \\
                 \d^{\a}, & \hbox{$\varepsilon>0$,}
               \end{array}
             \right.
\end{eqnarray*}
and
\begin{eqnarray*}
\left(\int_0^\d\bigg(\frac{\w_{\a}(f_0,t)_p}{t^{\frac1p-\frac1q}}\bigg)^{q}\frac{dt}{t}\right)^{\frac1{q}}
\asymp\d^{\a-\t},
\end{eqnarray*}
that is,  (\ref{eqUl-1}) is essentially sharper than (\ref{eqUl}).
}
\end{example}

Second, if $0<p<1$, then an $L_p$-function may have certain pathological behavior in the sense of its smoothness properties.
This
 phenomenon was observed earlier
 (see, e.g., \cite{DHI}, \cite{Krot}, and~\cite{peetre}).
Let us now consider functions, which are  smooth in  $L_p$, $0<p<1$ (in the sense of behaviour of their moduli of smoothness), and show that
(\ref{eqth1.1--}), unlike (\ref{eqUl}), provides the best possible estimate for such functions.

\smallskip
\begin{example}
\textnormal{Let $0<p<1$, $p<q\le \infty$, $\a\in\N\cup
    ((1/q-1)_+,\infty)$, and let $\g> 0$ be such that $\a+\g\in \N\cup ((1/p-1)_+,\infty)$ and $\a+\g>(1-1/q)_+$.
Let, for example, $f=f_0$, where $f_0$ is given by~\eqref{eqfun0ples1} with $\vp(\xi)=(i\xi)^{\a+\g}$. Then, by~\eqref{eq.th6.0}, \eqref{eqth1.8Kd}, and Lemma~\ref{monoto}, we have
$$
\w_{\a+\g}(f,\d)_p\asymp \d^{\a+\g+\frac1p-1}.
$$
At the same time, \eqref{eq.th6.0}  and~\eqref{eqth1.9Kd} with $\psi(x)=(ix)^\a$ imply
$$
\w_\a(f,\d)_q \gtrsim \d^\a \Vert V_{1/\d}^{(-\g)}\Vert_q \asymp  \d^\a  \s\(\frac1\d\),
$$
where $V_{1/\d}$ is given by~\eqref{vallee} and $\s$ is defined in Theorem~\ref{th1}.
By using the above formulas for the moduli of smoothness, it is easy to calculate that
\begin{eqnarray*}
       \w_\a(f,\d)_q
       &\asymp&
       \frac{\w_{\a+\g}(f,\d)_p}{\d^{\g}} \s\(\frac1\d\)
       +
        \left(\int_0^\d\bigg(\frac{\w_{\a+\g}(f,t)_p}{t^{\frac1p-\frac1q}}\bigg)^q\frac{dt}{t}\right)^{\frac1q}
\\
& \asymp& \d^{\a} \s\(\frac1\d\),
\end{eqnarray*}
i.e., (\ref{eqth1.1--}) is sharp for the function $f_0$.
On the other hand, inequality (\ref{eqUl}) for $f_0$ implies only the following (non-sharp) estimate:
\begin{eqnarray*}
\d^{\a}
\s\(\frac1\d\)\asymp \w_\a(f,\d)_q&\lesssim&
        \left(\int_0^\d\bigg(\frac{\w_{\a}(f,t)_p}{t^{\frac1p-\frac1q}}\bigg)^q\frac{dt}{t}\right)^{\frac1q}\\&\asymp& \d^{\a-(\frac1p-\frac1q)}.
\end{eqnarray*}
}
\end{example}
We conclude this subsection by the following example dealing with the case $p=1$.

\begin{example}
\textnormal{Let $1=p< q < \infty$.
We define $f_0$ such that
$$
f_0(x)={\sum_{\nu\in \Z}}' \frac{a_{|\nu|}}{
(i\nu)^{\alpha+1-1/q}
} e^{i \nu x},
$$
where $\{a_\nu\}$ is a convex sequence of positive numbers, which can tend to zero very slowly.
Then, by \cite[Ch.~V, (1.5), p.~183]{Z}, one gets $f_0^{(\alpha+1-1/q)}\in L_1(\T)$.
By the realization result, we then have that
$
\omega_{\alpha  + 1 - {1}/{q}}(f_0,t)_1\,
\lesssim \,t^{\alpha + 1- {1}/{q}}.
$
Hence, using the Marchaud inequality, we obtain that
$
\omega_{\alpha}(f_0,t)_1\,
\lesssim\,t^{\alpha}. $
Then the classical inequality (\ref{eqUl}) yields
$$
\omega_{\alpha}(f_0,t)_q\,
\lesssim\,t^{\alpha-(1- {1}/{q})}. $$
On the other hand, sharp Ulyanov inequality (\ref{eqUl-1}) gives a much better estimate:
 $$
\omega_{\alpha}(f_0,t)_q\,
\lesssim\,t^{\alpha} \ln^{{1}/{q}} \frac 1t. $$
Moreover, this estimate is sharp in the sense that
$
\omega_{\alpha}(f_0,t)_q\,
\gtrsim\,t^{\alpha} \ln^{{1}/{q}} \frac 1t a_{[1/t]}. $
}
 \end{example}



\subsection{Multidimensional results}

Since for $1<p<\infty$ one has
  \begin{equation}\label{laplas}  \w_\a(f,\d)_p \asymp
\mathcal{R}_{\psi}(f,\delta)_p,\qquad  \psi(\xi) = |\xi|^\a
 \end{equation}
(see Lemma~\ref{rrrun}),  the sharp Ulyanov inequalities for moduli of smoothness in  the case $1<p<q<\infty$ immediately  follows from Theorem \ref{th1dK}.
Note that inequality (\ref{laplas}) is not true anymore if $p=1$ or $p=\infty$. Therefore, since Theorems~\ref{mainlemma} and~\ref{th1dK} cannot be applied, we will provide a direct proof of the sharp Ulyanov inequalities for moduli of smoothness.
Our main result of this section reads as follows.

\begin{theorem}\label{thMainMod}
    Let $f\in L_p(\T^d)$, $d\ge 2$, $0<p<q\le\infty$,  $\a \in \N\cup ((1-1/q)_+,\infty)$, and $\g,m\ge 0$ be such that $\a+\g, \a+m\in \N\cup ((1/p-1)_+,\infty)$.
    Then, for any $\d \in (0,1)$, we have
    \begin{equation}\label{eqlemMM1}
        \w_\a(f,\d)_q\lesssim \frac{\w_{\a+\g}(f,\d)_p}{\d^\g}\s\(\frac 1\d\)+\(\int_0^\d   \(  \frac{\w_{\a+m}(f,t)_p}{t^{d(\frac1p-\frac1q)}} \)^{q_1}\frac {dt}{t}\)^\frac 1{q_1},
    \end{equation}
where 

{\rm (1)} if $0<p\le 1$ and $p<q\le\infty$, then
$$
\s(t)
:=\left\{
         \begin{array}{ll}
           t^{d(\frac1p-1)}, & \hbox{$\g> d\(1-\frac1q\)_+$}; \\
           t^{d(\frac1p-1)}, & \hbox{$\g=d\(1-\frac1q\)_+\ge 1$ and $\a+\g\in \N$}; \\
           t^{d(\frac1p-1)}\ln^\frac1{q_1} (t+1), & \hbox{$\g=d\(1-\frac1q\)_+\ge 1$  and $\a+\g\not\in \N$}; \\
           t^{d(\frac1p-1)}\ln^\frac1{q} (t+1), & \hbox{$0<\g=d\(1-\frac1q\)_+<1$}; \\
           t^{d(\frac1p-\frac1q)-\g}, & \hbox{$0< \g<d\(1-\frac1q\)_+$};\\
           t^{d(\frac1p-\frac1q)}, & \hbox{$\g=0$;}
         \end{array}
       \right.
$$

{\rm (2)} if $1<p\le q\le\infty$, then
$$
\s(t):=
\left\{
         \begin{array}{ll}
           1, & \hbox{$\g\ge d(\frac1p-\frac1q),\quad q<\infty$}; \\
           1, & \hbox{$\g> \frac dp,\quad q=\infty$}; \\
           \ln^\frac1{p'} (t+1), & \hbox{$\g=\frac dp,\quad q=\infty$}; \\
           t^{d(\frac1p-\frac1q)-\g}, & \hbox{$0\le \g<d(\frac1p-\frac1q)$}.\\
         \end{array}
       \right.
$$

\end{theorem}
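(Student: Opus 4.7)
The plan is to deduce Theorem~\ref{thMainMod} from the general form of the Ulyanov inequality for moduli of smoothness established in Theorem~\ref{lemMainMod}, combined with the realization theorem (Theorem~\ref{lemKfp<1d}) and the Hardy--Littlewood--Nikol'skii inequalities for directional derivatives (Lemmas~\ref{lemma+} and~\ref{lemPolSob}). The crucial point that distinguishes the moduli--of--smoothness case from the realization case (Theorem~\ref{th1dK}) in the multidimensional setting is that, for $p=1$ or $p=\infty$, the equivalence $\omega_\a(f,\d)_p\asymp \mathcal{R}_{|\xi|^\a}(f,\d)_p$ fails, so one cannot directly invoke Theorem~\ref{th1dK}; instead, one must use the realization by directional derivatives given in~\eqref{eq.th6.0d++++}.

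First, by Theorem~\ref{lemMainMod}, the problem reduces to establishing the estimate $\eta(t)\lesssim \s(t)$, where
\[
\eta(t)=\sup_{T\in\mathcal{T}'_{[t]}}\frac{\w_\a(T,1/t)_q}{\w_{\a+\g}(T,1/t)_p}.
\]
Using Corollary~\ref{corNSB} (equivalence between the modulus of smoothness of a polynomial and directional derivatives on the scale $1/\deg T$), we rewrite
\[
\eta(t)\asymp \sup_{T\in\mathcal{T}'_{[t]}} \frac{\displaystyle\sup_{|\xi|=1}\bigl\|(\partial/\partial\xi)^{\a} T\bigr\|_q}{\displaystyle\sup_{|\xi|=1}\bigl\|(\partial/\partial\xi)^{\a+\g} T\bigr\|_p}.
\]

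Next, I would estimate this supremum by cases matching the statement of $\s(t)$. For $1<p\le q\le\infty$, the two moduli are equivalent to the corresponding $K$-functionals with the Riesz derivative (Lemma~\ref{rrrun} and Corollary~\ref{lemRAZZZ}), and the required bound follows from Theorem~\ref{th-hardy-l-n}~(3). For $0<p\le 1$, $p<q\le\infty$ with $\g>0$, $\g\ne d(1-1/q)_+$ and $\a+\g\ne 2k+1$, the bound follows from Lemma~\ref{lemma+}. The borderline value $\g=d(1-1/q)_+\ge 1$ with $\a+\g\in\N$ is the case where Lemma~\ref{lemPolSob} gives the sharper estimate $t^{d(1/p-1)}$ (without a logarithm), whereas if $\a+\g\notin\N$ only Lemma~\ref{lemma+} is available, producing the logarithmic factor $\ln^{1/q_1}(t+1)$; this is exactly the dichotomy appearing in the statement. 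The exceptional value $\a+\g=2k+1$ (which is excluded from Lemma~\ref{lemma+}) is handled by property~(c) of moduli of smoothness: one replaces $\a+\g$ by $\a+\g+\e$ for some small $\e>0$, estimates $\w_{\a+\g+\e}(f,t)_p\lesssim \w_{\a+\g}(f,t)_p$, and passes to the result. Finally, the case $\g=0$ is the classical Nikol'skii inequality, and the case $q=\infty$, $0<p\le 1$ is handled by combining Corollary~\ref{lemRAZZZ} with the multidimensional Nikol'skii inequality and the pointwise evaluation bound of Lemma~\ref{lemVspom}.

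Once $\eta(t)\lesssim \s(t)$ is established in every case, the estimate~\eqref{eqlemMM1} follows directly from Theorem~\ref{lemMainMod} upon substituting $\eta$ and using the monotonicity property~\eqref{eqMonMod} to pass from $\w_{\a+\g}(f,\cdot)_p$ to $\w_{\a+m}(f,\cdot)_p$ inside the integral (noting that the integral term is insensitive to the choice of order between $\a+\g$ and $\a+m$). The main obstacle I anticipate is the bookkeeping across the many critical cases of $\s$, particularly the sharp transition at $\g=d(1-1/q)_+$ with integer $\a+\g$, which requires the Sobolev-type embedding argument of Lemma~\ref{lemPolSob} (via the inequalities of Lemma~\ref{lemVspom}) rather than the more elementary Fourier-multiplier technique used for generic $\g$. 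Establishing sharpness of~\eqref{eqlemMM1} proceeds along the lines of Theorem~\ref{th1dK}~(B): one tests the inequality on the function $f_0$ defined by~\eqref{eqfun0ples1} with $\vp(\xi)=|\xi|^{\a+\g}$, for which the moduli of smoothness can be computed explicitly via the realization result and the lower bounds of Lemmas~\ref{lemHLd}~(ii), \ref{v+}, and the analogues in the limiting regimes.
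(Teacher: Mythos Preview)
Your reduction via Theorem~\ref{lemMainMod} and Corollary~\ref{corNSB} to the estimate
\[
\sup_{T_n\in\mathcal{T}'_n}\frac{\sup_{|\xi|=1}\|(\partial/\partial\xi)^{\a}T_n\|_q}{\sup_{|\xi|=1}\|(\partial/\partial\xi)^{\a+\g}T_n\|_p}\lesssim \s(n)
\]
is exactly the paper's opening move, and your treatment of the range $1<p<q\le\infty$ and of the critical case $\g=d(1-1/q)\ge 1$ via Lemmas~\ref{lemma+} and~\ref{lemPolSob} matches the paper.

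The gap is in the regime $0<p\le 1$ outside the reach of those two lemmas. Lemma~\ref{lemma+} requires $1<q<\infty$, so it says nothing when $0<p<q\le 1$; Lemma~\ref{lemPolSob} requires $\a\in\N$ when $q=\infty$, so it says nothing for $q=\infty$, $\a\notin\N$. Your fallback for $q=\infty$ (Corollary~\ref{lemRAZZZ} plus Lemma~\ref{lemVspom}) only controls integer-order Sobolev norms and does not bound the fractional directional derivative for non-integer $\a$. Your $\e$-trick for $\a+\g=2k+1$ is fine in principle, but it cannot be pushed to cover the missing ranges $q\le 1$ and $q=\infty$, since the obstruction there is not the parity condition but the hypotheses on $q$ themselves.

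The paper fills these cases by a different mechanism that you do not mention: for $0<p\le 1$, $p<q\le\infty$, $\g>0$, it invokes the Ulyanov--Marchaud inequality~\eqref{eqthRealKUMMod1} (proved later in Section~\ref{sec10} by combining the $\g=0$ case of the present theorem---i.e.\ the classical Nikol'skii/Ulyanov inequality---with the Marchaud inequality of Theorem~\ref{lemMarchaudMod}; there is no circularity). From~\eqref{eqthRealKUMMod1} one applies the monotonicity property~\eqref{eqMonMod} to the first integral, obtaining~\eqref{nomer3}, which yields the correct $\s$ in subcases (i) $0<q\le 1$, (ii) $\g=d(1-1/q)$ with $1<q\le 2$, (iii) $\g=d$ with $q=\infty$, $\a+\g\notin\N$, and (iv) $0<\g<d(1-1/q)_+$. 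Only the two remaining subcases---$\g=d(1-1/q)\ge 1$ with $2<q<\infty$, $\a+\g\notin\N$, and $\g=d(1-1/q)\ge 1$, $\a+\g\in\N$---are then dispatched by Lemmas~\ref{lemma+} and~\ref{lemPolSob} respectively. So your plan needs this Marchaud-type step to close.

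Two minor points: the integral term in Theorem~\ref{lemMainMod} already carries $\w_{\a+m}$, so no passage from $\w_{\a+\g}$ to $\w_{\a+m}$ via~\eqref{eqMonMod} is needed there; and Theorem~\ref{thMainMod} as stated has no sharpness part (B)---optimality is discussed separately via~\eqref{laplas} and Theorem~\ref{th1dK}(B) for $1<p<\infty$.
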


\begin{remark}
\textnormal{Equivalence (\ref{laplas}) and Theorem \ref{th1dK} {(B)} give the sharpness of inequality (\ref{eqlemMM1}) in the case $1<p<\infty$.
}
\end{remark}
\begin{corollary}
Under all conditions of Theorem \ref{thMainMod}, we have
   $$        \w_\a(f,\d)_q\lesssim \(\int_0^\d   \(
   \frac{\w_{\a+\g}(f,t)_p}{t^\g}\s\(\frac 1t\)
 \)^{q_1}\frac {dt}{t}\)^\frac 1{q_1}.
   $$
\end{corollary}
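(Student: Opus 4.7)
The plan is to deduce the corollary from Theorem~\ref{thMainMod} by choosing $m=\g$ there and absorbing the ``pointwise'' term on the right-hand side into the integral. Concretely, applying Theorem~\ref{thMainMod} with $m=\g$ yields
\begin{equation*}
\w_\a(f,\d)_q\lesssim \frac{\w_{\a+\g}(f,\d)_p}{\d^\g}\s\!\(\frac 1\d\)+\(\int_0^\d\(\frac{\w_{\a+\g}(f,t)_p}{t^{d(\frac1p-\frac1q)}}\)^{q_1}\frac{dt}{t}\)^{\frac 1{q_1}},
\end{equation*}
so it suffices to dominate both terms by the integral in the statement.

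The first step is the inequality
\begin{equation*}
\s\!\(\frac 1 t\)\gtrsim t^{\g-d\(\frac1p-\frac1q\)}\quad\text{for all }t\in(0,1),
\end{equation*}
which I would verify by a direct case-check against the definition of $\s$ in Theorem~\ref{thMainMod}. In the generic subcases $\s(r)=r^{d(\frac1p-1)}$ or $\s(r)=r^{d(\frac1p-\frac1q)-\g}$, the desired inequality reduces to $\g\ge d(1-\frac1q)$ or to equality of exponents, and these hold thanks to the hypotheses on $\g$ in the corresponding subcase. The logarithmic factors that appear at critical $\g$ are $\ge 1$ for $t$ small, and the remaining (non-critical) subcases give equality of exponents directly. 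Once this pointwise inequality is established, it immediately implies
\begin{equation*}
\frac{1}{t^{d(\frac1p-\frac1q)}}\lesssim \frac{\s(1/t)}{t^\g}\qquad (t\in(0,\d)),
\end{equation*}
so the integral term in Theorem~\ref{thMainMod} is bounded by the integral in the statement of the corollary.

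For the pointwise term $\d^{-\g}\w_{\a+\g}(f,\d)_p\,\s(1/\d)$, I would use the standard near-monotonicity of the modulus of smoothness supplied by Theorem~\ref{lemBasicMod} (or equivalently properties (a),(d) in Section~\ref{sec4}): for $t\in(\d/2,\d)$ one has $\w_{\a+\g}(f,t)_p\asymp \w_{\a+\g}(f,\d)_p$. A glance at the case-by-case definition of $\s$ shows that $t\mapsto \s(1/t)/t^\g$ is a power function up to a slowly varying logarithmic factor, so $\s(1/t)/t^\g\asymp \s(1/\d)/\d^\g$ on $(\d/2,\d)$ as well. Integrating over $(\d/2,\d)$ against $dt/t$ yields
\begin{equation*}
\frac{\w_{\a+\g}(f,\d)_p}{\d^\g}\s\!\(\frac 1\d\)\lesssim \(\int_{\d/2}^{\d}\(\frac{\w_{\a+\g}(f,t)_p}{t^\g}\s\!\(\frac 1t\)\)^{q_1}\frac{dt}{t}\)^{\frac1{q_1}},
\end{equation*}
which is dominated by the full integral over $(0,\d)$. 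Combining the two estimates gives the claimed inequality.

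The main (and essentially only) obstacle is the case-by-case verification of $\s(1/t)\gtrsim t^{\g-d(1/p-1/q)}$; this is routine but requires care at the critical values $\g=d(1-1/q)_+$ and $\g=d/p$ where logarithmic factors appear, and at the borderline $\g=0$ where one has equality. All the other ingredients (properties of $\w_{\a+\g}$, equivalence on $(\d/2,\d)$, and monotonicity of the power-like $\s$) have already been established in Sections~\ref{sec3} and~\ref{sec4}.
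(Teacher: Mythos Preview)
Your proposal is correct and follows essentially the same route the paper takes: the paper states the corollary without proof, but the one-dimensional analogue (Corollary~\ref{remar}) is derived exactly by setting $m=\g$ in the main theorem and absorbing the pointwise term into the integral. Your case-check of $\s(1/t)\gtrsim t^{\g-d(1/p-1/q)}$ and the near-monotonicity argument on $(\d/2,\d)$ are precisely the missing details, and they go through in every subcase of the definition of~$\s$.
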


 \begin{proof}[Proof of Theorem \ref{thMainMod}]

By Theorem~\ref{lemMainMod} and Corollary \ref{corNSB}, we have that
\begin{equation*}
  \eta(n)\asymp n^\g \sup_{T_n\in\mathcal{T}_n'}\frac{\sup_{|\xi|=1,\,\xi\in\R^d}\bigg\Vert \(\frac{\partial}{\partial\xi}\)^{\a} T_n\bigg\Vert_{q}}{\sup_{|\xi|=1,\,\xi\in\R^d}\bigg\Vert \(\frac{\partial}{\partial\xi}\)^{\a+\g} T_n\bigg\Vert_{p}}.
\end{equation*}
Hence, it is enough to prove that
\begin{equation}\label{will}
  \sup_{T_n\in\mathcal{T}_n'}\frac{\sup_{|\xi|=1,\,\xi\in\R^d}\bigg\Vert \(\frac{\partial}{\partial\xi}\)^{\a} T_n\bigg\Vert_{q}}{\sup_{|\xi|=1,\,\xi\in\R^d}\bigg\Vert \(\frac{\partial}{\partial\xi}\)^{\a+\g} T_n\bigg\Vert_{p}}\lesssim \s(n).
\end{equation}
Note that for $\g=0$ and $0<p<q\le\infty$, this follows immediately from the classical Nikol'skii inequality~\eqref{nik}.

The rest of the proof for $\g>0$ is divided into three cases.

 \vspace{4mm}

\underline{Case 1.} Let $1<p<q< \infty$. In this case,  Theorem~\ref{th-hardy-l-n} yields  \eqref{will}  since,  by
Corollary~\ref{corNSB}, we have that
$$
\sup_{|\xi|=1,\,\xi\in\R^d}
\bigg\Vert \(\frac{\partial}{\partial\xi}\)^{\a} T_n\bigg\Vert_{q}\asymp \Vert (-\D)^{\a/2}T_n\Vert_q
$$
and
$$
\sup_{|\xi|=1,\,\xi\in\R^d}
\bigg\Vert \(\frac{\partial}{\partial\xi}\)^{\a+\g} T_n\bigg\Vert_{p}\asymp \Vert (-\D)^{(\a+\g)/2}T_n\Vert_p.
$$

 \vspace{4mm}
\underline{Case 2.} Let $1<p<q=\infty$. To show \eqref{will}, we  note that by Theorem~\ref{th-hardy-l-n} and Remark~\ref{remark-HLN} with $\vp(y)\equiv 1$ and $\psi(y)=|y|^{-\g}$, we have,
for any $T_n\in \mathcal{T}_n'$ and $\xi\in \R^d$, $|\xi|=1$,
$$
\bigg\Vert \(\frac{\partial}{\partial\xi}\)^{\a} T_n\bigg\Vert_{\infty}\lesssim \s(n)\bigg\Vert (-\D)^{\g/2}\( \frac{\partial}{\partial\xi}\)^{\a} T_n\bigg\Vert_{p}.
$$
Applying twice Corollary~\ref{lemRAZZZ}, we continue estimating as follows
$$
\lesssim \s(n) \Vert (-\D)^{(\a+\g)/2}T_n\Vert_p
\lesssim \s(n)\sup_{\xi\in \R^d,\, |\xi|=1}\bigg\Vert \(\frac{\partial}{\partial\xi}\)^{\a+\g} T_n\bigg\Vert_{p},
$$
which is the desired estimate.

 \vspace{4mm}
\underline{Case 3.} Let $0<p\le 1$, $p<q\le \infty$, and $\g>0$.
In this case, we  use the Marchaud-type inequality (see \eqref{eqthRealKUMMod1} below):
\begin{equation}\label{Marshonetam}
  \w_\a(f,\d)_q\lesssim \d^\a \(\int_\d^1 \(\frac{\w_{\a+\g}(f,t)_p}{t^{\a+d(\frac1p-\frac1q)}}\)^\tau\frac{dt}{t}\)^\frac1\tau+\(\int_0^\d \(\frac{\w_{\a+m}(f,t)_p}{t^{d(\frac1p-\frac1q)}} \)^{q_1}\frac{dt}{t}   \)^\frac1{q_1},
\end{equation}
where
$$
\tau=\left\{
       \begin{array}{ll}
         \min(q,2), & \hbox{$q<\infty$;} \\
         1, & \hbox{$q=\infty$.}
       \end{array}
     \right.
$$
By \eqref{eqMonMod}, it is easy to see that
\begin{equation}\label{nomer3}
  \d^\a \(\int_\d^1 \(\frac{\w_{\a+\g}(f,t)_p}{t^{\a+d(\frac1p-\frac1q)}}\)^\tau\frac{dt}{t}\)^\frac1\tau\lesssim \frac {\w_{\a+\g}(f,\d)_p}{\d^{\g+d(\frac1p-1)}}\left\{
                                          \begin{array}{ll}
                                            1, & \hbox{$\g>d\(1-\frac1q\)$;} \\
                                            \ln^{1/\tau}\(\frac1\d+1\), & \hbox{$\g=d\(1-\frac1q\)$;} \\
                                            \(\frac1\d\)^{d(1-\frac1q)-\g}, & \hbox{$0<\g<d\(1-\frac1q\)$.}
                                          \end{array}
                                        \right.
\end{equation}
Hence, \eqref{Marshonetam} and \eqref{nomer3} imply that \eqref{eqlemMM1} holds in the following four cases:

\begin{itemize}

  \vspace{2mm} \item[(i)] $\g>0$ and $0<q\le 1$;

 \vspace{2mm}
  \item[(ii)] $\g=d(1-1/q)$ and $1<q\le 2$ with $\a+\g\not\in \N$;

 \vspace{2mm}  \item[(iii)] $\g=d(1-1/q)$ and $q=\infty$ with $\a+\g\not\in \N$;

 \vspace{2mm} \item[(iv)]  $0<\g<d\(1-1/q\)_+$.

\end{itemize}

\noindent Thus, it remains to consider:

 \vspace{2mm}
(v) $\g=d\(1-1/q\)\ge 1$, $\a+\g\not\in \N$, and $2<q<\infty$
(note that for such $q$ and $d\ge 2$ we always have $d(1-1/q)\ge 1$);

 \vspace{2mm}
(vi) $\g=d\(1-1/q\)\ge 1$, $1<q\le \infty$, and $\a+\g\in \N$.

\noindent We have that inequality \eqref{will} follows from Lemma~\ref{lemma+} in the case (v) and from Lemma~\ref{lemPolSob} in the case (vi).
\end{proof}

We finish this section with an important corollary.
\begin{corollary}\label{important corollary} Let
 $0<p\le 1<q\le\infty$ and $d\ge 2$. We have
\begin{equation}\label{zzvvdaa}
  \w_\a(f,\d)_q\lesssim \(\int_0^\d   \(
   \frac{\w_{\a+d(1-\frac1q)}(f,t)_p}{t^{d(\frac1p-\frac1q)}}
 \)^{q_1}\frac {dt}{t}\)^\frac 1{q_1}
\end{equation}
   provided that $\a+d(1-1/q)\in\N$.
In particular, for any $d, \a\in\N$,
     we have
    \begin{equation}\label{zzvvdaa2}
        \w_\a(f,\d)_\infty\lesssim
        \int_0^\d   \frac{\w_{\a+d}(f,t)_1}{t^{d}} \frac {dt}{t}.
    \end{equation}
\end{corollary}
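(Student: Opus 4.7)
The plan is to deduce both estimates directly from the multidimensional sharp Ulyanov inequality of Theorem~\ref{thMainMod} by choosing $\g=m=d(1-1/q)$. Given the hypotheses $0<p\le 1<q\le\infty$, $d\ge 2$, and $\a+d(1-1/q)\in\N$, this places us in the second subcase of case~(1) of that theorem, namely ``$\g=d(1-1/q)_+\ge 1$, $d\ge 2$, and $\a+\g\in\N$'', which gives $\s(t)=t^{d(1/p-1)}$. Using the identity $\g+d(1/p-1)=d(1/p-1/q)$, Theorem~\ref{thMainMod} reduces to
\begin{equation*}
\w_\a(f,\d)_q\lesssim \frac{\w_{\a+\g}(f,\d)_p}{\d^{d(1/p-1/q)}}+\left(\int_0^\d\left(\frac{\w_{\a+\g}(f,t)_p}{t^{d(1/p-1/q)}}\right)^{q_1}\frac{dt}{t}\right)^{1/q_1}.
\end{equation*}

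The remaining step is to absorb the first (pointwise) term on the right into the integral. This is a routine consequence of property~(e) of moduli of smoothness (equation~\eqref{eqMonMod}): for $0<p\le 1$, the function $t\mapsto\w_{\a+\g}(f,t)_p/t^{\a+\g+d(1/p-1)}$ is essentially non-increasing. Since $\a+\g+d(1/p-1)=\a+d(1/p-1/q)$, we have for $0<t\le\d$
\begin{equation*}
\frac{\w_{\a+\g}(f,t)_p}{t^{d(1/p-1/q)}}\gtrsim \frac{\w_{\a+\g}(f,\d)_p}{\d^{d(1/p-1/q)}}\left(\frac{t}{\d}\right)^{\a}.
\end{equation*}
Raising this to the power $q_1$ and integrating against $dt/t$ on $(0,\d)$ uses the finiteness of $\int_0^\d(t/\d)^{\a q_1}\,dt/t=1/(\a q_1)$, which holds since $\a>0$, to yield
\begin{equation*}
\left(\int_0^\d\left(\frac{\w_{\a+\g}(f,t)_p}{t^{d(1/p-1/q)}}\right)^{q_1}\frac{dt}{t}\right)^{1/q_1}\gtrsim \frac{\w_{\a+\g}(f,\d)_p}{\d^{d(1/p-1/q)}}.
\end{equation*}
This shows that the first term is controlled by the integral, which yields~\eqref{zzvvdaa}.

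For~\eqref{zzvvdaa2} it suffices to specialize to $p=1$, $q=\infty$: then $q_1=1$, $\g=d$, and for $\a,d\in\N$ the integrality assumption $\a+d\in\N$ is automatic, so the general inequality collapses to the asserted form. No serious obstacle is expected; the whole argument is a postprocessing of the sharp multidimensional Ulyanov inequality and the only delicate point is to check that the choice $\g=d(1-1/q)$ indeed falls into the subcase giving $\s(t)=t^{d(1/p-1)}$, which is guaranteed by $d\ge 2$ together with the integrality condition $\a+d(1-1/q)\in\N$.
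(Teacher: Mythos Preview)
Your approach is exactly what the paper intends: the corollary is stated without proof as a direct consequence of Theorem~\ref{thMainMod} with $\g=m=d(1-1/q)$, landing in the subcase ``$\g=d(1-1/q)_+\ge 1$, $d\ge 2$, $\a+\g\in\N$'' where $\s(t)=t^{d(1/p-1)}$. Your absorption of the pointwise term via property~(e) is correct and routine.

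There is, however, a genuine slip in your final sentence. The claim that $d\ge 2$ together with $\a+d(1-1/q)\in\N$ forces $\g=d(1-1/q)\ge 1$ is false: one has $d(1-1/q)\ge 1$ if and only if $q\ge d/(d-1)$, which is not automatic. For instance, with $d=2$, $q=4/3$, $\a=1/2$ one gets $\g=1/2<1$ while $\a+\g=1\in\N$. In that regime Theorem~\ref{thMainMod} places you in the subcase ``$0<\g=d(1-1/q)_+<1$'', giving $\s(t)=t^{d(1/p-1)}\ln^{1/q}(t+1)$ with a logarithmic factor that your argument cannot remove. The underlying reason is that the key polynomial estimate, Lemma~\ref{lemPolSob}, explicitly requires $\g\ge 1$. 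So either the corollary carries an implicit restriction $q\ge d/(d-1)$, or this narrow range is an oversight in the paper; in any case your derivation is valid precisely when $q\ge d/(d-1)$, and you should say so rather than asserting it is automatic. The special case~\eqref{zzvvdaa2} is unaffected, since there $q=\infty$ gives $\g=d\ge 2$.
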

Note  that~\eqref{zzvvdaa} does not hold in the one-dimensional case.
 Comparing inequality~\eqref{zzvvdaa} for $p=1$  with (\ref{ul-1+}), 
 we
observe new effects in the multivariate case.
For $d=1$, inequality~\eqref{zzvvdaa2} is known (see \cite{Ti}).

\bigskip

\newpage

\section{Sharp Ulyanov inequalities for realizations of $K$-functionals related to Riesz derivatives and corresponding moduli of smoothness
}
\label{sec9}
Let $d\ge 1$, $\psi(\xi)=|\xi|^\a$, and $\vp(\xi)=|\xi|^{\a+\g}$.
By analogy with the one-dimensional case, we define
$$
\mathcal{R}_{\langle\a\rangle}(f,\d)_p=\inf_{T\in \mathcal{T}_{[1/\d]}}\{\Vert
f-T\Vert_p+\d^\a\Vert (-\D)^{\a/2} T\Vert_p\},
$$
where
$$
(-\D)^{\a/2}T(x)=\sum_{|k|_\infty\le n} |k|^\a c_k e^{i(k,x)},\qquad x\in \T^d.
$$

Theorem~\ref{th1dK} for such $\psi$ and $\vp$ immediately implies the following sharp Ulyanov inequality for $\mathcal{R}_{\langle\a\rangle}(f,\d)_p$.

\begin{theorem}\label{th1d}
 Let $f\in L_p(\T^d)$, $d\ge 1$, $0<p<q\le\infty$, $\a>0$, and $\g\ge 0$.
 \\
 \textnormal{(A)} For any $\d\in (0,1)$,  we have
    \begin{equation}\label{eqth1.1Kd2}
        \mathcal{R}_{\langle\a\rangle}(f,\d)_q\lesssim \frac{\mathcal{R}_{\langle\a+\g\rangle}(f,\d)_p}{\d^{\g}}\s_\D\(\frac 1\d\)+
            \left(\int_0^\d
            \bigg(
            \frac{\mathcal{R}_{{\langle\a+\g\rangle}}(f,t)_p}{t^{d(\frac1p-\frac1q)}}
            \bigg)^{q_1}\frac{dt}{t}\right)^{\frac1{q_1}},
    \end{equation}
where 

{\rm (1)} if $0<p\le 1$ and $p<q\le\infty$, then
$$
\s_\D(t)
:=\left\{
         \begin{array}{ll}
           t^{d(\frac1p-1)}, & \hbox{$\g> d\(1-\frac1q\)_+$}; \\
           t^{d(\frac1p-1)}\ln^\frac1{q_1} (t+1), & \hbox{$0<\g=d\(1-\frac1q\)_+$}; \\
           t^{d(\frac1p-\frac1q)-\g}, & \hbox{$0< \g<d\(1-\frac1q\)_+$};\\
           t^{d(\frac1p-\frac1q)}, & \hbox{$\g=0$},
         \end{array}
       \right.
$$

{\rm (2)} if $1<p\le q\le\infty$, then
$$
\s_\D(t):=\left\{
         \begin{array}{ll}
           1, & \hbox{$\g\ge d(\frac1p-\frac1q),\quad q<\infty$}; \\
           1, & \hbox{$\g> \frac dp,\quad q=\infty$}; \\
           \ln^\frac1{p'} (t+1), & \hbox{$\g=\frac dp,\quad q=\infty$}; \\
           t^{d(\frac1p-\frac1q)-\g}, & \hbox{$0\le \g<d(\frac1p-\frac1q)$}.\\
         \end{array}
       \right.
$$
 \textnormal{(B)}
Inequality  \eqref{eqth1.1Kd2} is sharp in the following sense.
Let $\a\in (2\N)\cup (d(1/q-1)_+,\infty)$  and $\a+\g>d(1-1/q)_+$.
Then there exists a function  $f_0\in L_q(\T^d)$, $f_0\not\equiv \const$,
such that
   $$
\mathcal{R}_{\langle\a\rangle}(f_0,\d)_q \asymp \frac{\mathcal{R}_{\langle\a+\g\rangle}(f_0,\d)_p}{\d^{\g}}\s_\D\(\frac 1\d\)+
            \left(\int_0^\d
            \bigg(
            \frac{\mathcal{R}_{{\langle\a+\g\rangle}}(f_0,t)_p}{t^{d(\frac1p-\frac1q)}}
            \bigg)^{q_1}\frac{dt}{t}\right)^{\frac1{q_1}}
   $$
 as $\d\to 0$.

\end{theorem}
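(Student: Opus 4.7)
The plan is to deduce Theorem~\ref{th1d} as a direct specialization of Theorem~\ref{th1dK} to the symbols $\psi(\xi)=|\xi|^\a$ and $\vp(\xi)=|\xi|^{\a+\g}$. With this choice, $\mathcal{D}(\psi)=(-\D)^{\a/2}$ and $\mathcal{D}(\vp)=(-\D)^{(\a+\g)/2}$, so the general realizations of Theorem~\ref{th1dK} reduce to the Riesz-type realizations $\mathcal{R}_{\langle\a\rangle}$ and $\mathcal{R}_{\langle\a+\g\rangle}$. Moreover, $\psi/\vp=|\xi|^{-\g}$ and $\vp/\psi=|\xi|^{\g}$ are both in $C^\infty(\R^d\setminus\{0\})$, so the smoothness hypotheses needed to apply Theorem~\ref{th1dK} are automatic.

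For Part (A), I would invoke Theorem~\ref{th1dK}(A) and then verify, case by case, that the function $\s$ produced there collapses to the function $\s_\D$ defined above. The matching is essentially by inspection in the regimes $1<p\le q\le\infty$ and $0<p\le1$, $p<q<\infty$: in the latter, the subcase $\g=0$ forces $\psi\equiv\vp$, so Theorem~\ref{th1dK}(A1)'s hypothesis $\psi/\vp\equiv\const$ is automatic and the logarithmic subcases for $0<q\le 1$ never arise. For the remaining range $0<p\le1$, $q=\infty$, one must select the correct branch of Theorem~\ref{th1dK}(A2) in the limiting case $\g=d=1$: here $\psi/\vp=|\xi|^{-1}$ is an even function, whereas $A|\xi|^{-1}\sign\xi$ is odd for any $A\ne 0$, so we land in the logarithmic branch yielding $t^{d(1/p-1)}\ln(t+1)$, which matches $\s_\D$ since $q_1=1$ for $q=\infty$. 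The case $\g=d\ge 2$ likewise gives the logarithmic factor.

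For Part (B), I would apply Theorem~\ref{th1dK}(B). There are three conditions to check. First, the hypothesis $\a\in(2\N)\cup(d(1/q-1)_+,\infty)$ of Theorem~\ref{th1d} exactly supplies the dichotomy in Theorem~\ref{th1dK}(B): either $\psi=|\xi|^\a$ is a polynomial (when $\a\in 2\N$), or an elementary manipulation shows $\a>d(1/q-1)_+$ is equivalent to $d/(d+\a)<q$. Second, $\vp/\psi=|\xi|^{\g}\in C^\infty(\R^d\setminus\{0\})$ and the bound $\a+\g>d(1-1/q)_+$ is assumed. Third, in the degenerate case $\g=0$ covered by Theorem~\ref{th1dK}(B)'s side condition, we have $\psi=\vp$ trivially. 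Theorem~\ref{th1dK}(B) then furnishes the extremal $f_0$ satisfying the claimed asymptotic equivalence.

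The main obstacle is purely a bookkeeping one, namely the careful verification that the branches of $\s$ in Theorem~\ref{th1dK} collapse correctly to those of $\s_\D$; the substantive analytic work has already been done in establishing Theorem~\ref{th1dK} and the underlying Hardy--Littlewood--Nikol'skii inequalities of Section~\ref{sec5}.
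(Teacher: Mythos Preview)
Your proposal is correct and follows exactly the paper's approach: the paper states that Theorem~\ref{th1dK} with $\psi(\xi)=|\xi|^\a$ and $\vp(\xi)=|\xi|^{\a+\g}$ immediately yields Theorem~\ref{th1d}, without even spelling out the case-by-case verification of $\s_\D$ or the hypothesis matching for Part~(B) that you have carefully supplied. Your checks (in particular that $\psi/\vp=|\xi|^{-1}$ is even and hence falls into the logarithmic branch of (A2) when $\g=d=1$, and that $\a>d(1/q-1)_+$ is equivalent to $d/(d+\a)<q$) are exactly what is needed to make the derivation rigorous.
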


\begin{remark}
\textnormal{We would like to  emphasize  that the main difference in the definitions of $\s$  in Theorems \ref{th1},
\ref{thMainMod}, and
\ref{th1d}  is when $\g$ is the critical parameter, i.e.,  $\g=d\(1-1/q\)_+$.
}

\end{remark}

%

Under certain additional restrictions, similarly to Theorem~\ref{th1dK}$'$, one can prove the following
more accurate inequality.

\begin{remark}\label{rem51}
\textnormal{Let $f\in L_p(\T^d)$, $d\ge 1$, $0<p<q<\infty$, $\a>0$, and $\g,m\ge 0$ be such that $\a+\g,\a+m\in (2\N) \cup (d(1/p-1)_+,\infty)$.
Then, for any  $\d\in (0,1)$, we have
    \begin{equation*}
        \mathcal{R}_{\langle\a\rangle}(f,\d)_q\lesssim
        \frac{\mathcal{R}_{\langle\a+\g\rangle}(f,\d)_p}{\d^{\g}}\s_\D\(\frac 1\d\)
        +
        \left(\int_0^\d\bigg(\frac{\mathcal{R}_{{\langle\a+m\rangle}}(f,t)_p}{t^{d(\frac1p-\frac1q)}}\bigg)^{q}\frac{dt}{t}\right)^{\frac1{q}},
    \end{equation*}
where $\s_\D(\cdot)$ is defined in Theorem~\ref{th1d}.
}
\end{remark}

Note that in some cases the realization $\mathcal{R}_{\langle\a\rangle}$ can be replaced
by special moduli of smoothness (see, for example,~\cite{KT} for the case $q\ge 1$ and~\cite{DR} for the case $0<q<1$; see also \cite{RS3, RS4, RS5}).

\smallskip

\begin{example}
\textnormal{Let $\a>0$, $r\in\N$, and  $1\le q\le\infty$. Denote
\begin{equation*}
    \mathrm{w}_{\a}(f,\d)_q:=\bigg\Vert \int_{|u|\ge 1}\frac{\dot{\Delta}_{\d u}^{2r}f(\cdot)}{|u|^{d+\a}} du\bigg\Vert_{q},\qquad
    r=[(d-1+\a)/2]+1,
\end{equation*}
where
$$
\dot{\Delta}_h^r=\dot{\Delta}_h(\dot{\Delta}_h^{r-1}),\quad
\dot{\Delta}_h^rf(x)=f(x+h)-f(x-h),\quad h\in\R^d.
$$
It follows from \cite{KT}, \cite{RS2}, and Lemma \ref{lemma4.1} that, for $1\le q\le\infty$, one has
\begin{eqnarray*}
\mathrm{w}_{\a}(f,\d)_q &\asymp&
\mathcal{R}_{\langle\a\rangle}\(f,\d\)_q = \inf\limits_{ T \in
\mathcal{T}_{[1/\d]} }\left(\|f-T\|_q+\d^\a\|(-\D)^{\a/2}
T\|_q\right)
\\
&\asymp&
{K}_{\langle\a\rangle}\(f,\d\)_q=
\inf\limits_{
(-\D)^{\a/2} g \in L_q(\mathbb{T}^d)}\left(\|f-g\|_q+\d^\a\|(-\D)^{\a/2} g\|_q\right). 
\end{eqnarray*}
}
\end{example}

\smallskip

\begin{example}
 \textnormal{Now, we consider the following moduli of smoothness, which were studied in \cite{RS4} for any  $0<q\le \infty$  (see also \cite{DR} for the case $m=1$):
\begin{equation*}
    \widetilde{w}^{(m)}_2(f,\d)_q:=\sup_{|h|\le\d}\bigg\Vert\frac{\xi_m}{d} \sum_{j=1}^d \mathop{{\sum}'}_{\nu=-m}^m\frac{(-1)^\nu}{\nu^2}\binom{2m}{m-|\nu|} (f(\cdot+\nu h e_j)-f(\cdot))\bigg\Vert_{q},
\end{equation*}
where $m,d\in \N$ and
$$
\xi_m=\(2\sum_{\nu=1}^m \frac{(-1)^\nu}{\nu^2}\binom{2m}{m-\nu}\)^{-1}.
$$
Set
$$
q_{m,d}=\left\{
          \begin{array}{ll}
            0, & \hbox{$d=1$;} \\
            \frac{d}{d+2(m+1)}, & \hbox{$d=2, m=2k, k\in \N$;} \\
            \frac{d}{d+2m}, & \hbox{otherwise.}
          \end{array}
        \right.
$$
It is known~\cite{RS4} (see also~\cite[Theorem~3.2]{DR}) that
$$
\widetilde{w}^{(m)}_2(f,\d)_q 
\asymp \mathcal{R}_{\langle 2 \rangle}(f,\d)_q,\qquad
q_{m,d}<q\le\infty.
$$
Thus, for all $1\le q\le\infty$ and $m\in \N$, we have
$$
\mathrm{w}_{2}(f,\d)_q\asymp \widetilde{w}^{(m)}_2(f,\d)_q \asymp
\mathcal{R}_{\langle 2 \rangle}\(f,\d\)_q.
$$
}
\end{example}

Using the above equivalences and Remark \ref{rem51}, we obtain the following result.

\begin{corollary}\label{th1d}
Let $f\in L_p(\T^d)$, $d\ge 1$. For any $\d\in (0,1)$,  one has the following three inequalities:

{\rm (i)} if $1\le p<q \le\infty$ and $\a,\g>0$, then
 \begin{equation*}
        \mathrm{w}_{\a}(f,\d)_q\lesssim
        \frac{\mathrm{w}_{\a+\g}(f,\d)_p}{\d^{\g}}\s_\D\(\frac 1\d\)
        +
        \left(\int_0^\d\bigg(\frac{\mathrm{w}_{\a+\g}(f,t)_p}{t^{d(\frac1p-\frac1q)}}\bigg)^{q_1}\frac{dt}{t}\right)^{\frac1{q_1}};
    \end{equation*}

{\rm (ii)}  if $p_{m,d}<p<q$, $1\le q\le\infty$, and $0<\g<2$, then
    \begin{equation*}
        \mathrm{w}_{2-\g}(f,\d)_q\lesssim
        \frac{\widetilde{w}^{(m)}_2(f,\d)_p}{\d^{\g}}\s_\D\(\frac 1\d\)
        +
        \left(\int_0^\d\bigg(\frac{\widetilde{w}^{(m)}_2(f,t)_p}{t^{d(\frac1p-\frac1q)}}\bigg)^{q_1}\frac{dt}{t}\right)^{\frac1{q_1}};
    \end{equation*}

{\rm (iii)}  if $p_{m,d}<p< q\le\infty$, then
    \begin{equation*}
        \widetilde{w}^{(m)}_2(f,\d)_q\lesssim
        \left(\int_0^\d\bigg(\frac{\widetilde{w}^{(m)}_2(f,t)_p}{t^{d(\frac1p-\frac1q)}}\bigg)^{q_1}\frac{dt}{t}\right)^{\frac1{q_1}},
    \end{equation*}

where $\s_\D(\cdot)$ is defined in Theorem \ref{th1d}.
\end{corollary}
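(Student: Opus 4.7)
The plan is to reduce each of the three inequalities of the corollary to the sharp Ulyanov inequalities for realizations established earlier in Section~\ref{sec9}, namely Theorem~\ref{th1d} together with its $\psi=\vp$ special case, Corollary~\ref{CORCORCOR22}. The two translation devices I will use are the equivalences singled out in the examples immediately preceding the corollary: $\mathrm{w}_\a(f,\d)_r \asymp \mathcal{R}_{\langle\a\rangle}(f,\d)_r$, valid for $1\le r\le\infty$ and $\a>0$, and $\widetilde{w}^{(m)}_2(f,\d)_r \asymp \mathcal{R}_{\langle 2\rangle}(f,\d)_r$, valid for $p_{m,d}<r\le\infty$.

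For part~(i), both exponents $p$ and $q$ lie in $[1,\infty]$, so the first equivalence is available on the left- and right-hand sides with parameters $\a$ and $\a+\g$, respectively. Feeding these into~\eqref{eqth1.1Kd2} of Theorem~\ref{th1d} and re-reading the resulting realizations as $\mathrm{w}$-moduli yields (i) immediately.

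For part~(ii), I will use the second equivalence on the right-hand side (since $p>p_{m,d}$) and the first equivalence on the left-hand side (since $1\le q\le\infty$ and $0<2-\g<2$), giving $\widetilde{w}^{(m)}_2(f,t)_p \asymp \mathcal{R}_{\langle 2\rangle}(f,t)_p$ and $\mathrm{w}_{2-\g}(f,\d)_q \asymp \mathcal{R}_{\langle 2-\g\rangle}(f,\d)_q$. Then Theorem~\ref{th1d} applied with $\a:=2-\g>0$ and $\g$ (so that $\a+\g=2$) produces (ii) after back-substitution. For part~(iii), both endpoints satisfy $p_{m,d}<p<q\le\infty$, so the second equivalence is valid for both $r=p$ and $r=q$; the estimate then follows from Corollary~\ref{CORCORCOR22} applied with $\vp(\xi)=|\xi|^2$, which produces no boundary term because $\psi=\vp$ corresponds to $\g=0$.

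The argument is essentially mechanical once the two equivalences are in hand, so there is no serious technical obstacle. The only point requiring care is to check that each parameter range invoked — notably $p_{m,d}<p$ in (ii) and (iii) and $1\le q\le\infty$ in (i) and (ii) — is precisely the range in which the corresponding equivalence is available and in which Theorem~\ref{th1d} or Corollary~\ref{CORCORCOR22} is directly applicable.
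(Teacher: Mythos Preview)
Your proposal is correct and follows essentially the same route as the paper: the paper states the corollary as an immediate consequence of the equivalences $\mathrm{w}_\a\asymp\mathcal{R}_{\langle\a\rangle}$ (for $1\le r\le\infty$) and $\widetilde{w}^{(m)}_2\asymp\mathcal{R}_{\langle 2\rangle}$ (for $r>q_{m,d}$) combined with the realization-level sharp Ulyanov inequality (the paper cites Remark~\ref{rem51}, but Theorem~\ref{th1d} and Corollary~\ref{CORCORCOR22} that you invoke serve the same purpose and in fact cover $q=\infty$ directly). Your parameter checks are accurate.
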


\smallskip

%
%
%

\smallskip

\begin{example}
\textnormal{A more complete picture can be obtained when $d=1$. We will deal with  the following special modulus of smoothness, which
has been recently introduced by Runovski and Schmeisser~\cite{RS3} (see also~\cite{RS5}). For $\a>0$ and $f\in L_p(\T)$,  we define
\begin{equation*}
    \w_{\langle\a\rangle}(f,\d)_p:=\sup_{0<h\le \d}\bigg\Vert \sum_{\nu\in \Z\setminus \{0\}} \(-\frac{\b_{|\nu|}(\a)}{\b_0(\a)}\)
f(\cdot+\nu h)-f(\cdot) \bigg\Vert_p,
\end{equation*}
where
$$
\b_m(\a)=\sum_{j=m}^\infty (-1)^{j+1}2^{-2j}\binom{\a/2}{j}\binom{2j}{j-m},\quad m\in \Z_+.
$$
It has been proved in~\cite{RS3}  that, for $\a>0$ and $1/(\a+1)<p\le\infty$, one has
\begin{equation}\label{snejinka}
  \w_{\langle\a\rangle}(f,\d)_p\asymp \mathcal{R}_{\langle\a\rangle}(f,\d)_p,\quad \d>0.
\end{equation}
Note also that, for all $\a\in 2\N$, the modulus $\w_{\langle\a\rangle}(f,\d)_p$ coincides with the classical modulus of smoothness $\w_{\a}(f,\d)_p$.
}
\end{example}

From equivalence~\eqref{snejinka} and Theorem~\ref{th1d}, we have

\begin{corollary}\label{Corth1dMod}
  {\it Let $f\in L_p(\T)$, $0<p<q\le\infty$, $\g\ge 0$,
$\a\in (2\N) \cup ((1/q-1)_+,\infty)$, and
$\a+\g\in (2\N) \cup ((1/p-1)_+,\infty)$.
Then, for any $\d\in (0,1)$,  we have
    \begin{equation*}
        \w_{\langle \a\rangle}(f,\d)_q\lesssim \frac{\w_{\langle \a+\g\rangle}(f,\d)_p}{\d^{\g}}\s_\D\(\frac 1\d\)+
            \left(\int_0^\d
            \bigg(
            \frac{\w_{\langle \a+\g\rangle}(f,t)_p}{t^{\frac1p-\frac1q}}
            \bigg)^{q_1}\frac{dt}{t}\right)^{\frac1{q_1}},
    \end{equation*}
where
$\s(\cdot)$ is defined  in Theorem \ref{th1d} in the case $d=1$.
}
\end{corollary}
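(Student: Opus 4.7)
My plan is to obtain Corollary~\ref{Corth1dMod} as a direct consequence of the Runovski--Schmeisser equivalence~\eqref{snejinka}, which identifies the special modulus $\w_{\langle\a\rangle}(f,\d)_p$ with the realization $\mathcal{R}_{\langle\a\rangle}(f,\d)_p$, combined with Theorem~\ref{th1d} (A) specialized to $d=1$.

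First, I would verify that equivalence~\eqref{snejinka} is applicable both to $(\a,q)$ and to $(\a+\g,p)$ under the hypotheses of the corollary. Recall that~\eqref{snejinka} demands $\a>0$ and $1/(\a+1)<p\le\infty$. The assumption $\a\in(2\N)\cup((1/q-1)_+,\infty)$ implies either $\a\ge 2$, in which case $1/(\a+1)\le 1/3$, or $\a>(1/q-1)_+$; in both cases $1/(\a+1)<q$. Similarly, $\a+\g\in(2\N)\cup((1/p-1)_+,\infty)$ gives $1/(\a+\g+1)<p$. Hence
\begin{equation*}
\w_{\langle\a\rangle}(f,\d)_q\asymp \mathcal{R}_{\langle\a\rangle}(f,\d)_q,\qquad \w_{\langle\a+\g\rangle}(f,t)_p\asymp \mathcal{R}_{\langle\a+\g\rangle}(f,t)_p,
\end{equation*}
with constants independent of $\d,t\in(0,1)$.

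Next, I would apply Theorem~\ref{th1d} (A) in dimension $d=1$ with the given $\a>0$, $\g\ge 0$, and $0<p<q\le\infty$. This yields
\begin{equation*}
\mathcal{R}_{\langle\a\rangle}(f,\d)_q\lesssim \frac{\mathcal{R}_{\langle\a+\g\rangle}(f,\d)_p}{\d^{\g}}\s_\D\!\left(\frac{1}{\d}\right)+\left(\int_0^\d\bigg(\frac{\mathcal{R}_{\langle\a+\g\rangle}(f,t)_p}{t^{\frac{1}{p}-\frac{1}{q}}}\bigg)^{q_1}\frac{dt}{t}\right)^{\!\!\frac{1}{q_1}},
\end{equation*}
where $\s_\D$ coincides with the function defined in Theorem~\ref{th1d} for $d=1$, matching the statement of the corollary.

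Finally, substituting the two equivalences above into this inequality (noting that the second equivalence applies pointwise in $t$ inside the integral, with constants uniform in $t$) produces the claimed estimate. The only delicate point is the verification of the parameter ranges for~\eqref{snejinka}, already handled in the first step; no further obstacle is anticipated.
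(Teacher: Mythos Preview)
Your proposal is correct and matches the paper's approach exactly: the paper derives the corollary in one line from the equivalence~\eqref{snejinka} combined with Theorem~\ref{th1d} (A) for $d=1$. Your added verification of the parameter ranges for~\eqref{snejinka} is a welcome detail; note only that when $\a\in 2\N$ with $q$ so small that $\a\le 1/q-1$, one should instead invoke the remark that $\w_{\langle\a\rangle}=\w_\a$ for even integers together with the classical realization result~\eqref{eq.th6.0}, rather than~\eqref{snejinka} directly.
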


\bigskip

\newpage

\section{Sharp Ulyanov inequality via Marchaud inequality}\label{sec10}

This section is devoted to the study of new types of Ulyanov inequalities with the use of the Marchaud inequalities.
Such inequalities are of great importance when investigating the embedding theorems in Section~\ref{sec13}.

Throughout this section, we assume that
$$
\tau=\tau(q)=\left\{
       \begin{array}{ll}
         \min(q,2), & \hbox{$q<\infty$;} \\
         1, & \hbox{$q=\infty$.}
       \end{array}
     \right.
$$

\subsection{Inequalities for realizations of $K$-functionals}

First, we  obtain an analogue of the  Marchaud inequality (cf. Theorem~\ref{lemMarchaudMod}) for the realizations of the $K$-functionals. 

\begin{lemma}\label{lemMarchaud}
Let $f\in L_q(\T^d)$, $0<q\le\infty$, $\a>0$, $\g\ge 0$, $\psi\in \mathcal{H}_\a$, and $\vp\in \mathcal{H}_{\a+\g}$. Let also
either $\psi$ be a polynomial
or $d/{(d+\a)}< q\le\infty$. Then, for any $\d\in (0,1)$, one has
\begin{equation}\label{eq.lemMarchaud}
  \mathcal{R}_{\psi}(f,\d)_q\lesssim \d^\a \(\int_\d^1 \(\frac{\mathcal{R}_{\vp}(f,t)_q}{t^{\a}}\)^\tau\frac{dt}{t}\)^\frac1\tau.
\end{equation}
\end{lemma}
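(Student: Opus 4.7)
The plan is to mimic the proof of the Marchaud inequality for moduli of smoothness (Theorem~\ref{lemMarchaudMod}) by establishing an inverse-type estimate bounding $\mathcal{R}_\psi(f,1/n)_q$ in terms of the sequence of best approximations $E_k(f)_q$, combined with the trivial Jackson bound $E_k(f)_q \le \mathcal{R}_\vp(f,1/k)_q$ coming straight from the definition of the realization. Set $n=[1/\d]$ and choose $m\in\N$ with $2^m\le n<2^{m+1}$.

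First, use Lemma~\ref{real} to pick near-best approximants $T_{2^k}\in\mathcal{T}_{2^k}$ satisfying $\|f-T_{2^k}\|_q\lesssim E_{2^k}(f)_q$, so that
\begin{equation*}
    \mathcal{R}_\psi(f,\d)_q\lesssim \|f-T_n\|_q+\d^{\a}\|\mathcal{D}(\psi)T_n\|_q.
\end{equation*}
The first summand is immediately controlled by $E_n(f)_q\le \mathcal{R}_\vp(f,\d)_q$, and the latter is absorbed into the right-hand side of \eqref{eq.lemMarchaud} because, by the monotonicity of $\mathcal{R}_\vp$ from Lemma~\ref{monoto}, we have $\mathcal{R}_\vp(f,t)_q\ge \mathcal{R}_\vp(f,\d)_q$ for $t\ge \d$, whence
$\d^\a\bigl(\int_\d^{2\d}(\mathcal{R}_\vp(f,t)_q/t^\a)^\tau dt/t\bigr)^{1/\tau}\gtrsim \mathcal{R}_\vp(f,\d)_q$.

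The heart of the proof is the estimate of the derivative term. Telescope
\begin{equation*}
T_n=T_1+\sum_{k=0}^{m-1}(T_{2^{k+1}}-T_{2^k})+(T_n-T_{2^m}),
\end{equation*}
apply $\mathcal{D}(\psi)$, invoke the $\tau$-(quasi-)triangle inequality in $L_q$ with $\tau=\tau(q)$ as in \eqref{tau+}, and use the Bernstein inequality \eqref{BerRunSch} for $\mathcal{D}(\psi)$ on each polynomial block of degree $\lesssim 2^{k+1}$:
\begin{equation*}
\|\mathcal{D}(\psi)(T_{2^{k+1}}-T_{2^k})\|_q \lesssim 2^{k\a}\|T_{2^{k+1}}-T_{2^k}\|_q \lesssim 2^{k\a}\bigl(E_{2^k}(f)_q+E_{2^{k+1}}(f)_q\bigr).
\end{equation*}
This yields $\|\mathcal{D}(\psi)T_n\|_q\lesssim \bigl(\sum_{k=0}^{m}(2^{k\a}E_{2^k}(f)_q)^\tau\bigr)^{1/\tau}$. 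Inserting $E_{2^k}(f)_q\le\mathcal{R}_\vp(f,2^{-k})_q$ and interpreting the resulting geometric sum as the standard discretization of the integral $\int_\d^1(\mathcal{R}_\vp(f,t)_q/t^\a)^\tau dt/t$ gives \eqref{eq.lemMarchaud}.

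The main obstacle is the $\tau$-triangle inequality in the third range $1<q<\infty$ with $\tau=\min(q,2)>1$, where summing the blocks is not automatic. The standard remedy is to choose the near-best approximants to be de la Vallée Poussin-type sums $T_{2^k}=f*V_{2^k}$, so that $T_{2^{k+1}}-T_{2^k}$ has spectrum inside the annulus $\{2^{k-1}\lesssim|\xi|\lesssim 2^{k+2}\}$, and then invoke the Littlewood--Paley (or Marcinkiewicz--Zygmund) inequality for polynomials supported on lacunary frequency blocks; this delivers exactly the $\tau$-inequality needed. In the remaining ranges $0<q\le 1$ and $q=\infty$ the $\tau$-inequality is, respectively, the $q$-quasi-triangle inequality and the ordinary triangle inequality, so no frequency localisation is required. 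The restriction that either $\psi$ be a polynomial or $d/(d+\a)<q\le\infty$ is used precisely to invoke Lemma~\ref{real} and the Bernstein inequality \eqref{BerRunSch} in the relevant $L_q$ setting.
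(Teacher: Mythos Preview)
Your proof is correct and follows essentially the same strategy as the paper: establish the inverse-type estimate
\[
  \mathcal{R}_{\psi}(f,\d)_q\lesssim \d^\a \Bigl(\int_\d^1 \bigl(t^{-\a}E_{[1/t]}(f)_q\bigr)^\tau\frac{dt}{t}\Bigr)^{1/\tau}
\]
and then close with the Jackson bound $E_{[1/t]}(f)_q\le \mathcal{R}_{\vp}(f,t)_q$ coming directly from the definition of the realization (or Lemma~\ref{real}). The only difference is one of presentation: the paper simply \emph{cites} the inverse estimate---from \cite[Theorem~4.26]{run} when $0<q\le 1$ or $q=\infty$, and via Theorem~\ref{lemMarchaudMod} combined with \eqref{laplas} when $1<q<\infty$---whereas you reprove it inline via the telescoping/Bernstein argument and, in the range $1<q<\infty$, a Littlewood--Paley step on the lacunary blocks $T_{2^{k+1}}-T_{2^k}$. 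Your treatment of that case (choosing $T_{2^k}=f*V_{2^k}$ so the differences have annular spectra, then using the square-function equivalence together with the elementary embeddings $\ell^2\hookrightarrow L_q(\ell^2)$ for $q\ge 2$ and $(\sum|\cdot|^2)^{q/2}\le\sum|\cdot|^q$ for $q<2$) is exactly what underlies the reference the paper invokes (\cite{dai-d}); it yields the required $\tau$-summation.
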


\begin{proof}
We start with the inverse estimate given by
\begin{equation}\label{eqzv10}
  \mathcal{R}_{\psi}(f,\d)_q\lesssim \d^\a \(\int_\d^1 \(\frac{E_{[1/t]}(f)_q}{t^{\a}}\)^\tau\frac{dt}{t}\)^\frac1\tau.
\end{equation}
In the case $0<q\le 1$ and $q=\infty$, the proof of~\eqref{eqzv10}  can be found in~\cite[Theorem~4.26, p.~203]{run}.
In the case $1<q<\infty$, it  follows from Theorem~\ref{lemMarchaudMod} (see also~\cite[Theorem 2.1]{Treb}) and~\eqref{laplas}.

To finish the proof of \eqref{eq.lemMarchaud}, 
we apply   Jackson's inequality, which follows from Lemma~\ref{real}.
\end{proof}

One of the main results in this section is the following sharp Ulyanov inequality via Marchaud inequality.
\begin{theorem}\label{thRealKUM}
Let  $0<p<q\le\infty$, $\a>0$, $\g\ge 0$, $\psi\in \mathcal{H}_\a$, and $\vp\in \mathcal{H}_{\a+\g}$. Let also
either $\psi$ be a polynomial
or $d/{(d+\a)}< q\le\infty$.

  {\rm (A)}  Let $f\in L_p(\T^d)$. Then, for any $\d\in (0,1)$, we have
\begin{equation}\label{eqthRealKUM}
  \mathcal{R}_{\psi}(f,\d)_q\lesssim \d^\a \(\int_\d^1 \(\frac{\mathcal{R}_{\vp}(f,t)_p}{t^{\a+d(\frac1p-\frac1q)}}\)^\tau\frac{dt}{t}\)^\frac1\tau+\(\int_0^\d \(\frac{\mathcal{R}_{\vp}(f,t)_p}{t^{d(\frac1p-\frac1q)}} \)^{q_1}\frac{dt}{t}   \)^\frac1{q_1}.
\end{equation}

{\rm (B)}
Inequality \eqref{eqthRealKUM} 
 is sharp in the following sense.
 Let 
$\frac \vp\psi$ and $\frac \psi\vp \in C^\infty(\R^d \backslash \{0\})$,
$\a+\g>d(1-1/q)_+$, $\g>0$, and
\begin{enumerate}
  \item $\g\neq d(1-1/q)>0$ if $2<q<\infty$ and $0<p\le 1$\textnormal{;}
  \item  $\g\neq 1$ if $d=1$, $q=\infty$, $0<p\le 1$, {and} $\frac{\psi(\xi)}{\vp(\xi)}= A{|\xi|^{-\g}}\sign \xi$ \; for any \; $A\in \C \setminus \{0\}$\textnormal{;}
  \item  $\g\neq d(1/p-1/q)$ if $1<p<q\le \infty$.
\end{enumerate}
Then there exists a function $f_0\in L_q(\T^d)$, $f_0\not\equiv \const$,
such that
\begin{equation}\label{eqthRealKUMToch}
  \mathcal{R}_{\psi}(f_0,\d)_q\asymp \d^\a \(\int_\d^1 \(\frac{\mathcal{R}_{\vp}(f_0,t)_p}{t^{\a+d(\frac1p-\frac1q)}}\)^\tau\frac{dt}{t}\)^\frac1\tau+\(\int_0^\d \(\frac{\mathcal{R}_{\vp}(f_0,t)_p}{t^{d(\frac1p-\frac1q)}} \)^{q_1}\frac{dt}{t}   \)^\frac1{q_1}
\end{equation}
as $\d\to 0$.
\end{theorem}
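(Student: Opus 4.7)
\medskip

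\textbf{Proof plan for Theorem \ref{thRealKUM}.} For part (A), the main idea is to chain the Marchaud inequality in $L_q$ (Lemma \ref{lemMarchaud}) with the Ulyanov-type inequality (Corollary \ref{CORCORCOR22}) and then clean up via Hardy's inequality (Lemma \ref{har}). First I would apply Lemma \ref{lemMarchaud} to obtain
\begin{equation*}
  \mathcal{R}_{\psi}(f,\d)_q\lesssim \d^\a \(\int_\d^1 \(\frac{\mathcal{R}_{\vp}(f,t)_q}{t^{\a}}\)^\tau\frac{dt}{t}\)^\frac1\tau.
\end{equation*}
Then for each $t\in(\d,1)$, Corollary \ref{CORCORCOR22} gives
\begin{equation*}
\mathcal{R}_{\vp}(f,t)_q\lesssim \left(\int_0^t \(\frac{\mathcal{R}_{\vp}(f,s)_p}{s^{d(\frac1p-\frac1q)}}\)^{q_1}\frac{ds}{s}\right)^{\frac1{q_1}}.
\end{equation*}

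Substituting and splitting the inner integral as $\int_0^t=\int_0^\d+\int_\d^t$, I would use (quasi-)subadditivity to separate the contributions. The part with $\int_0^\d$ is independent of $t$, so pulling it out and computing $\d^\a(\int_\d^1 t^{-\a\tau-1}dt)^{1/\tau}\asymp 1$ (using $\a>0$) delivers the second integral on the right-hand side of \eqref{eqthRealKUM}. For the part with $\int_\d^t$, a direct application of the weighted Hardy inequality (Lemma \ref{har}) with $\l=\tau/q_1\cdot (\text{appropriate factor})$---noting that $\tau\le q_1$ holds in all parameter regimes---yields exactly
\begin{equation*}
\d^\a \(\int_\d^1 \(\frac{\mathcal{R}_{\vp}(f,t)_p}{t^{\a+d(\frac1p-\frac1q)}}\)^\tau\frac{dt}{t}\)^\frac1\tau.
\end{equation*}
The verification that Condition~\eqref{har0co} of Lemma~\ref{har} holds reduces to the finiteness of $\int_s^1 t^{-\a\tau-1}dt\asymp s^{-\a\tau}$, which is immediate from $\a>0$.

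For part (B), I would construct a single function $f_0$ as in Theorem~\ref{th1dK}(B), specifically
$f_0(x)=F_{\vp}(x)-N^{-(\a+\g)}F_{\vp}(Nx)$ (with $F_\vp$ as in \eqref{eqhvpd} and $N$ large), and verify that
$\mathcal{R}_{\vp}(f_0,\d)_p\asymp\d^{\a+\g+d(\frac1p-1)_+}$ and $\mathcal{R}_{\psi}(f_0,\d)_q\asymp \d^{\a}\sigma(1/\d)$, where $\sigma$ is the extremal function from Theorem~\ref{th1dK}. These are essentially \eqref{eqth1.8Kd} and \eqref{eqth1.9Kd} combined with the Hardy--Littlewood--Nikol'skii estimates of Section~\ref{sec5}. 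Plugging these asymptotics into the right-hand side of \eqref{eqthRealKUMToch} and evaluating the two integrals (they turn out to be of the form $\d^\a\,\text{(log or power)}$), one checks that it is asymptotically equivalent to the left-hand side $\d^\a\sigma(1/\d)$. The exclusion of the critical values of $\g$ (conditions (1)--(3) in the theorem) corresponds precisely to the cases where the two integrals balance in a non-generic way and the dominating term does not match the left-hand side, exactly as in the Ulyanov-inequality sharpness proof.

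The main technical obstacle will be in part (B): handling the case $0<p\le1$, $q=\infty$ (and more generally where $\s$ carries a logarithmic factor), since there the relative sizes of the two integrals on the right of \eqref{eqthRealKUMToch} and of the realization $\mathcal{R}_\psi(f_0,\d)_q$ depend delicately on the Hardy--Littlewood--Nikol'skii asymptotics from Lemmas~\ref{v+}--\ref{v++}, and one must choose $f_0$ (or adjust $N$) so that the lower bound $\mathcal{R}_\psi(f_0,\d)_q\gtrsim \d^\a\|\mathcal{D}(\psi/\vp)V_{1/\d}\|_q$ is not absorbed by the Marchaud-type integral over $(\d,1)$. This is where the assumption $\frac{\psi}{\vp},\frac{\vp}{\psi}\in C^\infty(\mathbb{R}^d\setminus\{0\})$ is essential, as it guarantees sharp two-sided bounds for both $\|\mathcal{D}(\psi/\vp)V_n\|_q$ and $\|\mathcal{D}(\vp/\psi)V_n\|_p$, so that the asymptotics on both sides of \eqref{eqthRealKUMToch} can be read off from the corresponding cases of Theorem~\ref{th-hardy-l-n}.
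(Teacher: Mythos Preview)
Your architecture for Part~(A) matches the paper's: apply Marchaud (Lemma~\ref{lemMarchaud}), then the Ulyanov inequality (Corollary~\ref{CORCORCOR22}), split $\int_0^u=\int_0^\d+\int_\d^u$, and treat the two pieces separately. The $\int_0^\d$ piece (your $I_1$) goes through exactly as you describe.

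The gap is in your treatment of the $\int_\d^u$ piece. You propose to invoke Lemma~\ref{har} with exponent $\l=\tau/q_1\cdot(\text{appropriate factor})$, and you correctly note that $\tau\le q_1$ in every parameter regime. But that observation is precisely why Lemma~\ref{har} does \emph{not} apply: that lemma requires $1<\l<\infty$, while the natural exponent on the inner integral here is $\tau/q_1\le 1$. No ``appropriate factor'' repairs this; weighted Hardy inequalities with exponent $\le 1$ need extra structure (monotonicity of the integrand) beyond condition~\eqref{har0co}. The paper instead discretizes: write the inner integral as a dyadic sum, use the embedding $\ell^{q_1}\hookrightarrow\ell^\tau$ (valid exactly because $\tau\le q_1$) to replace the $q_1$-sum by a $\tau$-sum, un-discretize, and finish by Fubini. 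The doubling property $\mathcal{R}_\vp(f,t)_p\asymp\mathcal{R}_\vp(f,2t)_p$ from Lemma~\ref{monoto} is what makes the discretization lossless. (The paper also records a case ``$\tau>q_1$'' where Hardy would apply, but with the stated definitions of $\tau$ and $q_1$ that case is vacuous.)

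For Part~(B), your plan --- reuse the extremal $f_0$ from Theorem~\ref{th1dK}(B), feed in \eqref{eqth1.8Kd}--\eqref{eqth1.9Kd} and the Hardy--Littlewood--Nikol'skii asymptotics of Section~\ref{sec5} --- is exactly the paper's route, carried out case by case in $(p,q)$. The excluded values of $\g$ in (1)--(3) are indeed those where the Marchaud integral $\int_\d^1$ contributes an extra factor $\ln^{1/\tau}(1/\d)$ strictly larger than the corresponding $\s(1/\d)$ from the sharp Ulyanov inequality, so the right-hand side of~\eqref{eqthRealKUMToch} overshoots the left and the equivalence fails.
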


Similarly to  Theorem~\ref{mainlemma}$'$, one can obtain the following  analogue of Theorem~\ref{thRealKUM}.

\medskip

{\sc Theorem~\ref{thRealKUM}$'$} {\it Under all conditions of Theorem~\ref{thRealKUM}, suppose that the function $\vp$ is either a polynomial or $d/(d+\a+\g)<p\le\infty$. Let also $m>0$ and the function $\phi\in \mathcal{H}_{\a+m}$ be either a polynomial or $d/(d+\a+m)<p\le\infty$. Then, for any $\d\in (0,1)$, we have
\begin{equation}\label{eqthRealKUM+++++++++++++++++++}
  \mathcal{R}_{\psi}(f,\d)_q\lesssim \d^\a \(\int_\d^1 \(\frac{\mathcal{R}_{\vp}(f,t)_p}{t^{\a+d(\frac1p-\frac1q)}}\)^\tau\frac{dt}{t}\)^\frac1\tau+ \(\int_0^\d \(\frac{\mathcal{R}_{\phi}(f,t)_p}{t^{d(\frac1p-\frac1q)}} \)^{q_1}\frac{dt}{t}   \)^\frac1{q_1}.
\end{equation}
}

It is worth  mentioning the following important corollary for the case $\g=0$.
\begin{corollary}\label{corKg0}
  Let $f\in L_p(\T^d)$,  $0<p<q\le1$, $\psi, \vp\in \mathcal{H}_\a$, $\a>0$.  Let
either $\psi$ be a polynomial
or $d/{(d+\a)}< q\le 1$ and let either
 $\vp$ be a polynomial or $d/(d+\a)<p< 1$. Let also $m>0$ and the function $\phi\in \mathcal{H}_{\g+m}$ be either a polynomial or $d/(d+\a+m)<p<1$. Then, for any $\d\in (0,1)$, we have
\begin{equation}\label{eqthRealKUM+++++++++++++++}
  \mathcal{R}_{\psi}(f,\d)_q\lesssim \(\int_0^\d \(\frac{\mathcal{R}_{\phi}(f,t)_p}{t^{d(\frac1p-\frac1q)}} \)^{q}\frac{dt}{t}   \)^\frac1{q}+\frac{\mathcal{R}_{\vp}(f,\d)_p}{\d^{d(\frac1p-1)}}\left\{
               \begin{array}{ll}
                 1, & \hbox{$0<q<1$;} \\
                 \ln\(\frac1\d+1\), & \hbox{$q=1$.}
               \end{array}
             \right.
\end{equation}
\end{corollary}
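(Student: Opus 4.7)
The plan is to derive the corollary by specializing Theorem~\ref{thRealKUM}$'$ to the case $\gamma=0$ (so that $\vp\in\mathcal{H}_\a$, as required), and then reducing the remaining Marchaud-type tail integral to the single term $\mathcal{R}_\vp(f,\d)_p/\d^{d(1/p-1)}$ by exploiting the monotonicity property of realizations. Observe that with $\g=0$ all hypotheses of Theorem~\ref{thRealKUM}$'$ are satisfied: the condition on $\psi$ is given, the condition on $\vp$ in Theorem~\ref{thRealKUM}$'$ becomes $d/(d+\a)<p<1$ which is assumed, and the condition on $\phi\in\mathcal{H}_{\a+m}$ is precisely our hypothesis (correcting the apparent typo $\mathcal{H}_{\g+m}$). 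Noting that for $0<q\le 1$ one has $q_1=q$ and $\tau(q)=q$, Theorem~\ref{thRealKUM}$'$ yields
\begin{equation}\label{plan-eq1}
\mathcal{R}_\psi(f,\d)_q\lesssim \d^\a\left(\int_\d^1\left(\frac{\mathcal{R}_\vp(f,t)_p}{t^{\a+d(\frac1p-\frac1q)}}\right)^q\frac{dt}{t}\right)^{\!1/q}+\left(\int_0^\d\left(\frac{\mathcal{R}_\phi(f,t)_p}{t^{d(\frac1p-\frac1q)}}\right)^q\frac{dt}{t}\right)^{\!1/q}.
\end{equation}
The second term in \eqref{plan-eq1} is exactly the first term on the right-hand side of \eqref{eqthRealKUM+++++++++++++++}, so only the first term in \eqref{plan-eq1} needs further work.

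Next, I would apply Lemma~\ref{monoto} to the almost-decreasing behavior of $t\mapsto\mathcal{R}_\vp(f,t)_p/t^{\a+d(1/p-1)}$: for every $t\in[\d,1]$ one has
\begin{equation*}
\mathcal{R}_\vp(f,t)_p\lesssim\left(\frac{t}{\d}\right)^{\!\a+d(\frac1p-1)}\!\mathcal{R}_\vp(f,\d)_p,
\end{equation*}
since $p<1$ implies $d(1/p-1)_+=d(1/p-1)$. Substituting this bound into the first term of \eqref{plan-eq1} and simplifying the exponent gives
\begin{equation*}
\d^\a\left(\int_\d^1\left(\frac{\mathcal{R}_\vp(f,t)_p}{t^{\a+d(\frac1p-\frac1q)}}\right)^q\frac{dt}{t}\right)^{\!1/q}\lesssim\frac{\mathcal{R}_\vp(f,\d)_p}{\d^{d(\frac1p-1)}}\left(\int_\d^1 t^{d(1-q)-1}\,dt\right)^{\!1/q}.
\end{equation*}

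Finally, the scalar integral $\int_\d^1 t^{d(1-q)-1}dt$ splits into the two announced cases. For $0<q<1$ one has $d(1-q)>0$, so $\int_\d^1 t^{d(1-q)-1}dt=(1-\d^{d(1-q)})/(d(1-q))\lesssim 1$; for $q=1$ the integrand is $t^{-1}$, giving $\ln(1/\d)\lesssim\ln(1/\d+1)$. Combining the two resulting cases with \eqref{plan-eq1} produces precisely \eqref{eqthRealKUM+++++++++++++++}. There is no real obstacle here beyond keeping track of exponents; the only subtlety is the logarithmic blow-up at $q=1$, which arises naturally from the exponent $d(1-q)-1$ crossing the critical value $-1$.
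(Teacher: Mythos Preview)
Your proposal is correct and follows precisely the route indicated in the paper: apply Theorem~\ref{thRealKUM}$'$ with $\gamma=0$ (noting that then $q_1=\tau=q$) and then use the monotonicity from Lemma~\ref{monoto} to collapse the Marchaud-type integral $\int_\d^1(\cdot)^q\,dt/t$ into the single term $\mathcal{R}_\vp(f,\d)_p/\d^{d(1/p-1)}$, with the case split at $q=1$ coming from the elementary integral $\int_\d^1 t^{d(1-q)-1}\,dt$. The paper's own proof is a single line referring to exactly these two ingredients; you have simply supplied the computation in full.
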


\begin{proof}
The proof of~\eqref{eqthRealKUM+++++++++++++++} follows from~\eqref{eqthRealKUM+++++++++++++++++++} and Lemma~\ref{monoto}.
\end{proof}

\medskip

\begin{remark}
\textnormal{
{\rm (i)} We note that all three cases (1)--(3) in part (B) of Theorem \ref{thRealKUM},
  where the optimality of inequality (\ref{eqthRealKUM}) cannot be proved,
are those cases where 
the  Ulyanov inequality, given by Theorem~\ref{th1dK}, provides a sharper estimate than inequality (\ref{eqthRealKUM}).
Moreover, one can construct
  examples showing that equivalence  (\ref{eqth1.2Kd}) holds in these cases.
 Indeed,
in  cases (1) and (2), taking $f=f_0$ such that $\mathcal{R}_{\vp}(f_0,t)_p\asymp t^{d(1/p-1)+\a+\g}$, we see that the equivalence in~\eqref{eqth1.2Kd} is true, while in~\eqref{eqthRealKUMToch} the equivalence for $f_0$ is not valid. In the case (3), we obtain the same conclusion by taking any $f_0\in C^\infty(\T^d)$.}

\textnormal{{\rm (ii)} At the same time, taking $f_0\in C^\infty(\R^d)$ and letting $0<p<1$ and $\g=d(1-1/q)_+$, we have that the sharp Ulyanov inequality given by~\eqref{eqthRealKUM+++++++++++++++++++} becomes the equivalence
$$
  \mathcal{R}_{\psi}(f_0,\d)_q\asymp \d^\a\asymp \d^\a \(\int_\d^1 \(\frac{\mathcal{R}_{\vp}(f_0,t)_p}{t^{\a+d(\frac1p-\frac1q)}}\)^\tau\frac{dt}{t}\)^\frac1\tau+ \(\int_0^\d \(\frac{\mathcal{R}_{\phi}(f_0,t)_p}{t^{d(\frac1p-\frac1q)}} \)^{q_1}\frac{dt}{t}   \)^\frac1{q_1}.
$$
On the other hand, the sharp Ulyanov inequality given by~\eqref{eqth1.1Kd0+++} implies only}
$$
\mathcal{R}_{\psi}(f_0,\d)_q\asymp \d^\a \lesssim \d^\a\ln^{\frac1q}\frac1\d\asymp \frac{\mathcal{R}_{\vp}(f_0,\d)_p}{\d^{\g}}\s\(\frac
        1\d\)+
        \left(\int_0^\d
        \bigg(
        \frac{\mathcal{R}_{\phi}(f_0,t)_p}{t^{d(\frac1p-\frac1q)}}
        \bigg)^{q_1}\frac{dt}{t}
        \right)^{\frac1{q_1}}.
$$

\end{remark}
\begin{proof}
(A)
Using the  Ulyanov type inequality given by \eqref{eqth1.1Kd0CORCORCOR22} and then the Marchaud inequality given by~\eqref{eq.lemMarchaud}, we get
\begin{equation}\label{eqUlRealKforM2}
\begin{split}
  \mathcal{R}_{\psi}(f,\d)_q
&\lesssim \d^\a
\(\int_\d^1
u^{-\a \tau}
\(
\int_0^u \( \frac{\mathcal{R}_{\vp}(f,t)_p}{t^{d(\frac1p-\frac1q)}} \)^{q_1}\frac{dt}{t}
\)^{\frac\tau{q_1}}
\frac{du}u
\)^\frac1\tau\\
&\lesssim \d^\a \(\int_\d^1
u^{-\a \tau}
\(
\int_0^\d \( \frac{\mathcal{R}_{\vp}(f,t)_p}{t^{d(\frac1p-\frac1q)}} \)^{q_1}\frac{dt}{t}
\)^{\frac\tau{q_1}}
\frac{du}u
\)^\frac1\tau\\
&+\d^\a
\(\int_\d^1
u^{-\a \tau}
\(
\int_\d^u \( \frac{\mathcal{R}_{\vp}(f,t)_p}{t^{d(\frac1p-\frac1q)}} \)^{q_1}\frac{dt}{t}
\)^{\frac\tau{q_1}}
\frac{du}u
\)^\frac1\tau:=I_1+I_2.
\end{split}
\end{equation}
We start with the estimate of the first integral:
\begin{equation}\label{I1}
\begin{split}
  I_1&= \d^\a \(\int_\d^1\frac{du}{u^{1+\a \tau}}\)^\frac1\tau \(\int_0^\d \(\frac{\mathcal{R}_{\vp}(f,t)_q}{t^{d(\frac1p-\frac1q)}} \)^{q_1}\frac{dt}{t} \)^\frac1{q_1}\\
&\lesssim \(\int_0^\d \(\frac{\mathcal{R}_{\vp}(f,t)_q}{t^{d(\frac1p-\frac1q)}} \)^{q_1}\frac{dt}{t} \)^\frac1{q_1}.
\end{split}
\end{equation}
To estimate  $I_2$, we  consider two cases: $\tau>q_1$ and $\tau\le q_1$.
In the first case,  Hardy's inequality (see Lemma~\ref{har}) yields
\begin{equation}\label{I2.1}
  I_2\lesssim \d^\a \(\int_\d^1 \(\frac{\mathcal{R}_{\vp}(f,t)_p}{t^{\a+d(\frac1p-\frac1q)}}\)^\tau\frac{dt}{t}\)^\frac1\tau.
\end{equation}
In the second case, we use  the discretization of integrals.
Let $n, N\in \Z_+$ be such that $2^{-N}<\d\le u\le 2^{-n}$. Since by Lemma \ref{monoto}  we have
$$\mathcal{R}_{\vp}(f,t)_p\asymp \mathcal{R}_{\vp}(f,2t)_p,$$
then
\begin{equation*}
\begin{split}
  \(\int_\d^u \( \frac{\mathcal{R}_{\vp}(f,t)_p}{t^{d(\frac1p-\frac1q)}} \)^{q_1}\frac{dt}{t}
\)^{\frac\tau{q_1}}&\lesssim \(\int_{2^{-N}}^{2^{-n}} \( \frac{\mathcal{R}_{\vp}(f,t)_p}{t^{d(\frac1p-\frac1q)}} \)^{q_1}\frac{dt}{t}
\)^{\frac\tau{q_1}}\\
&\lesssim \(\sum_{k=n}^{N}\(  2^{kd(\frac1p-\frac1q)} \mathcal{R}_{\vp}(f,2^{-k})_p   \)^{q_1}\)^\frac\tau{q_1}\\
&\lesssim \sum_{k=n}^{N}\(  2^{kd(\frac1p-\frac1q)} \mathcal{R}_{\vp}(f,2^{-k})_p\)^\tau\\
&\lesssim \int_{2^{-N}}^{2^{-n}} \( \frac{\mathcal{R}_{\vp}(f,t)_p}{t^{d(\frac1p-\frac1q)}} \)^\tau\frac{dt}{t}
\lesssim \int_{\d}^u \( \frac{\mathcal{R}_{\vp}(f,t)_p}{t^{d(\frac1p-\frac1q)}} \)^\tau\frac{dt}{t}.
\end{split}
\end{equation*}
Thus,
\begin{equation}\label{I2.2}
\begin{split}
  I_2&\lesssim \d^\a \(\int_\d^1 u^{-\a \tau} \int_\d^u   \(\frac{\mathcal{R}_{\vp}(f,t)_p}{t^{d(\frac1p-\frac1q)}}\)^\tau\frac{dt}{t}\frac{du}{u}\)^\frac1\tau
\\
&=\d^\a
\(
\int_\d^1
\(
\frac{\mathcal{R}_{\vp}(f,t)_p}{t^{d(\frac1p-\frac1q)}}
\)^\tau
\int_t^1 \frac{du}{u^{1+\a\tau}}\frac{dt}{t}
\)^\frac1\tau \\
&\lesssim\d^\a
\(
\int_\d^1
\(
\frac{\mathcal{R}_{\vp}(f,t)_p}{t^{\a+d(\frac1p-\frac1q)}}
\)^\tau
\frac{dt}{t}
\)^\frac1\tau .
\end{split}
\end{equation}
Finally, combining \eqref{eqUlRealKforM2}--\eqref{I2.2}, we get \eqref{eqthRealKUM}.

 \vspace{4mm}
(B) The proof of this part
 goes along the same lines as the proof of part (B) of Theorem~\ref{th1dK}.
In what follows, we denote for the sake of simplicity
$$
UM(f,\d)_p:=\d^\a \(\int_\d^1 \(\frac{\mathcal{R}_{\vp}(f,t)_p}{t^{\a+d(\frac1p-\frac1q)}}\)^\tau\frac{dt}{t}\)^\frac1\tau+\(\int_0^\d \(\frac{\mathcal{R}_{\vp}(f,t)_p}{t^{d(\frac1p-\frac1q)}} \)^{q_1}\frac{dt}{t}   \)^\frac1{q_1}.
$$

\medskip

\noindent
\underline{Part (B1): $0<p\le 1$ and $p<q\le\infty$.}
Let $f_0$ be given by~\eqref{eqfun0ples1}, i.e.,
\begin{equation*}
    f_0(x)=F_{\vp}(x)-\frac1{N^{\a+\g}}F_{\vp}(Nx),\quad F_{\vp}(x)=\sum_{k\neq 0}\frac{e^{i(k,x)}}{\vp(k)},
\end{equation*}
and $N$ is a fixed sufficiently large integer which will be chosen later.

Let $\d\asymp [1/n]$. By (\ref{eqth1.8Kd}) and~\eqref{eqth1.9Kd}, we have, respectively,
\begin{equation}\label{eqth1.8KdUM}
    \mathcal{R}_{\vp}(f_0,\d)_p\lesssim \d^{d(\frac1p-1)+\a+\g}
\end{equation}
and
\begin{equation}\label{eqth1.9KdUM}
\mathcal{R}_{\psi}(f_0,1/n)_q\gtrsim  n^{-\a}
\Vert
\mathcal{D}(\psi/\vp) V_{ n}\Vert_q. 
\end{equation}
Note that we have already obtained the following
two-sided bounds of  $\Vert
\mathcal{D}(\psi/\vp) V_{ n}\Vert_q$:

\medskip

\noindent {\rm (a)} if $0<q\le 1$, Lemma~\ref{v+} gives
$$
\Vert
\mathcal{D}(\psi/\vp) V_{ n}\Vert_q\asymp 1,
$$

\noindent {\rm (b)} if $1<q<\infty$,  Lemmas~\ref{v-} and~\ref{v} imply
\begin{equation*}
\Vert
\mathcal{D}(\psi/\vp) V_{ n}\Vert_q\asymp
\left\{
         \begin{array}{ll}
           1, & \hbox{$\g>d\(1-\frac1q\)$}; \\
           \ln^\frac1{q} (n+1), & \hbox{$\g=d\(1-\frac1q\)$}; \\
           n^{d(1-\frac1q)-\g}, & \hbox{$ 0<\g< d\(1-\frac1q\)$,}
         \end{array}
       \right.
\end{equation*}

\noindent {\rm (c)} if $q=\infty$, by Lemma~\ref{v++},
$$
\Vert
\mathcal{D}(\psi/\vp) V_{ n}\Vert_q\asymp \left\{
         \begin{array}{ll}
           1, & \hbox{$\g>d$;} \\
           1, & \hbox{$\g=d=1$ {and}  $\frac{\psi(\xi)}{\vp(\xi)}=A{|\xi|^{-\g}}\sign \xi$ for some $A\in \C \setminus \{0\}$;} \\
           \ln (n+1), & \hbox{$\g=d=1$ {and}  $\frac{\psi(\xi)}{\vp(\xi)}\ne A{|\xi|^{-\g}}\sign \xi$ for any $A\in \C \setminus \{0\}$;} \\
           \ln (n+1), & \hbox{$\g=d\ge 2$;} \\
           n^{d-\g}, & \hbox{$0<  \g<d$.}
                    \end{array}
       \right.
$$

Now,  using \eqref{eqth1.8KdUM} and taking into account that $\a+\g> d(1-1/q)_+$, we obtain
\begin{equation}\label{UM1}
\begin{split}
  UM(f_0,\d)_p&\lesssim \d^{\a+\g-d(1-\frac1q)}+\d^\a\(\int_\d^1 \(t^{\g-d(1-\frac1q)}\)^\tau\frac{dt}{t}\)^\frac1\tau\\
  &\lesssim \d^\a\left\{
         \begin{array}{ll}
           1, & \hbox{$\g>d\(1-\frac1q\)$ and $0<q\le \infty$}; \\
           \ln^\frac1{\tau} \(\frac 1\d+1\), & \hbox{$\g=d\(1-\frac1q\)$ and $1<q\le \infty$}; \\
           \d^{\g-d(1-\frac1q)}, & \hbox{$ 0<\g< d\(1-\frac1q\)$ and $1<q\le \infty$.}
         \end{array}
       \right.
  \end{split}
\end{equation}
Thus, combining \eqref{eqth1.9KdUM} and \eqref{UM1}, we derive that for $\g>d(1-1/q)_+$ the required inequality
$$
UM(f_0,\d)_p\lesssim \mathcal{R}_{\psi}(f_0,\d)_q
$$
holds
in all cases except

\begin{itemize}
 \item[(i)]$2<q<\infty$ and $\g=d(1-1/q)$,
 \item[(ii)]$\g=d=1$, $q=\infty$, and $\frac{\psi(\xi)}{\vp(\xi)}=A{|\xi|^{-\g}}\sign \xi$ \;for some $A\in \C \setminus \{0\}$.
\end{itemize}

\bigskip

 \noindent
\underline{Part (B2): $1<p<q<\infty$ and $\g\neq d(1/p-1/q)$.}
In this case, if $\g> d(1/p-1/q)$, then we can take as $f_0$ any function from $C^\infty(\T^d)$.
Then \eqref{kkk} implies
\begin{equation}\label{kkkUM1}
    \begin{split}
\mathcal{R}_{\psi}(f_0,\delta)_q\asymp \d^\a\quad\text{and}\quad \mathcal{R}_{\vp}(f_0,\delta)_q\asymp \d^{\a+\g}.
\end{split}
\end{equation}
Hence,
\begin{equation}\label{kkkUM1+}
UM(f_0,\d)_p\asymp \mathcal{R}_{\psi}(f_0,\delta)_q\asymp \d^\a.
\end{equation}

If
$0\le \g<d(1/p-1/q)$, then we take
$$
f_0(x)=F_{\vp}(x)=\sum_{k\ne 0} \frac{e^{i(k,x)}}{\varphi(k)}.
$$
By (\ref{kkkk+}), we have
$$\mathcal{R}_{\vp}(f_0,\delta)_p\lesssim\delta^{\a+\g +d(\frac{1}{p}-1)}
$$
and by (\ref{kkkk++})
$$
\mathcal{R}_{\psi}(f_0,\d)_q\gtrsim
\d^{\a+\g+d(\frac{1}{q}-1)}.
$$
Therefore, since $\g<d(1/p-1/q)<d(1-1/q)$, we have
$$
UM(f_0,\d)_p\lesssim \d^{\a+\g+d(\frac{1}{q}-1)}\lesssim \mathcal{R}_{\psi}(f_0,\d)_q.
$$

\bigskip
 \noindent
\underline{Part (B3): $1<p<\infty$, $q=\infty$, and $\g\neq d(1/p-1/q)$.}
Let first $\g>{d}/{p}$. Then we consider $f_0\in C^\infty(\T^d)$. In this case, we have the same equivalences as in (\ref{kkkUM1}) and
(\ref{kkkUM1+}).

Finally, assume that $0<\g<{d}/{p}$.
Here we deal with the function
$$
f_0(x)={\sum_{k\neq 0}} \frac{e^{i(k,x)}}{|k|^{\g}\psi(k)}.
$$
Then, (\ref{kkkk++--})  yields
$$
\mathcal{R}_{\psi}(f_0,\d)_\infty
\gtrsim
\d^{\a+\g-d}.
$$
On the other hand, by \eqref{kkkk+--},
$$
\mathcal{R}_{\vp}(f_0, \d)_p\lesssim
\d^{\a+\g +d(\frac{1}{p}-1)}.
$$
Thus, applying the last two estimates, we have
$$
UM(f_0,\d)_p\lesssim \d^{\a+\g-d}\lesssim \mathcal{R}_{\psi}(f_0,\d)_q.
$$
\end{proof}

\subsection{Inequalities for  moduli of smoothness}

Using Theorem~\ref{lemMarchaudMod}, the Ulyanov inequality (Theorems~\ref{th1} and~\ref{thMainMod} for $\g=0$),
and repeating arguments that were used in the proof of Theorem~\ref{thRealKUM}~(A), we obtain the following result.

\begin{theorem}\label{th1UMMod1}
 Let $f\in L_p(\T^d)$, $d\ge 1$, $0<p<q\le \infty$, $\a\in\N\cup
    ((1/q-1)_+,\infty)$, and $\g, m> 0$ be  such that $\a+\g, \a+m\in \N\cup
    ((1/p-1)_+,\infty)$.
We have
\begin{equation}\label{eqthRealKUMMod1}
  \w_\a(f,\d)_q\lesssim \d^\a \(\int_\d^1 \(\frac{\w_{\a+\g}(f,t)_p}{t^{\a+d(\frac1p-\frac1q)}}\)^\tau\frac{dt}{t}\)^\frac1\tau+\(\int_0^\d \(\frac{\w_{\a+m}(f,t)_p}{t^{d(\frac1p-\frac1q)}} \)^{q_1}\frac{dt}{t}   \)^\frac1{q_1}.
\end{equation}
\end{theorem}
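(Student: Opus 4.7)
The plan is to follow the two-step scheme outlined in the paragraph preceding the statement, mirroring the proof of Theorem \ref{thRealKUM}(A) with the realizations of $K$-functionals replaced by moduli of smoothness throughout. Concretely, the Marchaud-type estimate for realizations (Lemma \ref{lemMarchaud}) is substituted by the analogous estimate for moduli of smoothness (Theorem \ref{lemMarchaudMod}), and the equal-order Ulyanov inequality (Corollary \ref{CORCORCOR22}) is replaced by the $\g=0$ specialisation of Theorem \ref{thMainMod}.

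First, I apply Theorem \ref{lemMarchaudMod} in the $L_q$-metric to the pair $(\a,\g)$. Its hypotheses are met: $\a\in\N\cup((1/q-1)_+,\infty)$ by assumption, and since $p<q$ gives $(1/p-1)_+\ge(1/q-1)_+$, also $\a+\g\in\N\cup((1/q-1)_+,\infty)$. This yields
\[
\w_\a(f,\d)_q\lesssim \d^\a\left(\int_\d^1\left(\frac{\w_{\a+\g}(f,u)_q}{u^\a}\right)^\tau\frac{du}{u}\right)^{1/\tau}.
\]
Next, I invoke Theorem \ref{thMainMod} with new parameters $(\a+\g,\,0,\,m'')$ in place of $(\a,\g,m)$, selecting $m''>\max(m-\g,0)$ so that $\a+\g+m''\in\N\cup((1/p-1)_+,\infty)$. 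Since for $\g=0$ we have $\sigma(t)=t^{d(1/p-1/q)}$, this produces
\[
\w_{\a+\g}(f,u)_q\lesssim \frac{\w_{\a+\g}(f,u)_p}{u^{d(1/p-1/q)}}+\left(\int_0^u\left(\frac{\w_{\a+\g+m''}(f,t)_p}{t^{d(1/p-1/q)}}\right)^{q_1}\frac{dt}{t}\right)^{1/q_1}.
\]

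Substituting the second display into the first and using $(a+b)^\tau\lesssim a^\tau+b^\tau$, the estimate splits into two pieces. The first piece is $\d^\a\bigl(\int_\d^1(\w_{\a+\g}(f,u)_p/u^{\a+d(1/p-1/q)})^\tau \,du/u\bigr)^{1/\tau}$, exactly the first summand on the RHS of \eqref{eqthRealKUMMod1}. The second piece carries the nested integral; I split $\int_0^u=\int_0^\d+\int_\d^u$, imitating the estimates of $I_1$ and $I_2$ in the proof of Theorem \ref{thRealKUM}(A). The $\int_0^\d$-part is independent of $u$, and the outer factor $\d^\a\int_\d^1u^{-\a\tau-1}\,du$ is $\lesssim 1$, so this part reduces to $\bigl(\int_0^\d(\w_{\a+\g+m''}(f,t)_p/t^{d(1/p-1/q)})^{q_1}\,dt/t\bigr)^{1/q_1}$, which is controlled by the second summand on the RHS of \eqref{eqthRealKUMMod1} via property (c) of Section \ref{sec4} together with $\a+\g+m''\ge\a+m$.

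The $\int_\d^u$-part is the most delicate ingredient. When $\tau>q_1$ I apply the weighted Hardy inequality (Lemma \ref{har}); when $\tau\le q_1$ I discretize the inner integral and use the quasi-doubling property $\w_{\a+\g+m''}(f,2t)_p\asymp\w_{\a+\g+m''}(f,t)_p$ supplied by Theorem \ref{lemBasicMod}, mirroring the steps \eqref{I2.1}--\eqref{I2.2} in the proof of Theorem \ref{thRealKUM}(A). Both regimes produce the bound $\d^\a\bigl(\int_\d^1(\w_{\a+\g+m''}(f,t)_p/t^{\a+d(1/p-1/q)})^\tau \,dt/t\bigr)^{1/\tau}$, which is in turn dominated by the first summand on the RHS of \eqref{eqthRealKUMMod1} via $\w_{\a+\g+m''}(f,t)_p\lesssim\w_{\a+\g}(f,t)_p$ (property (c), using $m''>0$). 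The principal obstacle is the case split $\tau$ versus $q_1$ in the $\int_\d^u$-piece; once it is accommodated, the bookkeeping of the auxiliary order $m''$ and the two applications of property (c) are routine.
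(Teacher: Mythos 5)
You follow the same route as the paper: Marchaud in the $L_q$ metric (Theorem~\ref{lemMarchaudMod}), the $\g=0$ sharp Ulyanov inequality at each intermediate scale, and the $I_1$/$I_2$ splitting from the proof of Theorem~\ref{thRealKUM}(A); the bookkeeping, including the auxiliary order $m''$ and the two applications of property (c), is sound. One small point: Theorem~\ref{thMainMod} is stated for $d\ge 2$, so for $d=1$ (which Theorem~\ref{th1UMMod1} also covers) you should invoke Theorem~\ref{th1} instead, as the paper's one-line proof indicates.
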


\begin{remark}
\textnormal{
{\rm (i)} Using Theorem~\ref{thRealKUM} (B) and realization result~\eqref{eq.th6.0}, we obtain that
 inequality~\eqref{eqthRealKUMMod1} 
  is sharp in the following sense.
Let $d=1$ and $\a+\g>(1-1/q)_+$. Let also}
\begin{itemize}
\item[(1)]$\g\neq 1-1/q>0$ if $2<q\le \infty$ and $0<p\le 1$,

\item[(2)]
 $\g\neq 1/p-1/q$ if $1<p<q\le \infty$.
  \end{itemize}

\noindent\textnormal{Then there exists a function $f_0\in L_q(\T)$, $f_0\not\equiv \const$,
such that we have
\begin{equation*}
  \w_\a(f_0,\d)_q\asymp \d^\a \(\int_\d^1 \(\frac{\w_{\a+\g}(f_0,t)_p}{t^{\a+\frac1p-\frac1q}}\)^\tau\frac{dt}{t}\)^\frac1\tau+\(\int_0^\d \(\frac{\w_{\a+\g}(f_0,t)_p}{t^{\frac1p-\frac1q}} \)^{q_1}\frac{dt}{t}\)^\frac1{q_1}
\end{equation*}
 as $\d\to 0$.}

\textnormal{{\rm (ii)} Note that two cases (1) and (2) where the sharpness of inequality (\ref{eqthRealKUM}) cannot be proved
are those cases where the Ulyanov inequality, given by Theorem~\ref{thMainMod}, provides a sharper bound than the one given by inequality (\ref{eqthRealKUMMod1}).
}
\end{remark}

%
%
%
%

Let us formulate an important  corollary which will be used to obtain embedding theorems in Section \ref{sec13}.

\begin{corollary}\label{cor13.18}
  Under the conditions of Theorem~\ref{th1UMMod1}, we have
\begin{equation}\label{qqqqqqqqqqqqqqwwwwwwwwwwwwwwweeeeeeeeeeeeerrrrrrrrrrr}
\begin{split}
      \w_{\a}(f,\d)_q\lesssim &\left(\int_0^\d\bigg(\frac{\w_{\a+m}(f,t)_p}{t^{d(\frac1p-\frac1q)}}\bigg)^{q_1}\frac{dt}{t}\right)^{\frac1{q_1}}\\
&\qquad\qquad+
\frac{\w_{\a+\g}(f,\d)_p}{\d^{\g+d(\frac1p-1)}}\left\{
                                          \begin{array}{ll}
                                            1, & \hbox{$\g>d\(1-\frac1q\)_+$}; \\
                                            \ln^{1/\tau}\(\frac1\d+1\), & \hbox{$\g=d\(1-\frac1q\)_+$}; \\
                                            \(\frac1\d\)^{d(1-\frac1q)-\g}, & \hbox{$\g<d\(1-\frac1q\)_+$}.
                                          \end{array}
                                        \right.
\end{split}
\end{equation}

\end{corollary}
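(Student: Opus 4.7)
The plan is to deduce \eqref{qqqqqqqqqqqqqqwwwwwwwwwwwwwwweeeeeeeeeeeeerrrrrrrrrrr} directly from Theorem~\ref{th1UMMod1} by replacing the Marchaud-type tail on its right-hand side with a single pointwise ``boundary'' expression, while keeping the integral over $(0,\d)$ untouched since it already appears in the required form. Writing
$$
I(\d):=\d^\a\left(\int_\d^1\left(\frac{\w_{\a+\g}(f,t)_p}{t^{\a+d(1/p-1/q)}}\right)^\tau\frac{dt}{t}\right)^{1/\tau},
$$
the whole task reduces to bounding $I(\d)$ by $\w_{\a+\g}(f,\d)_p/\d^{\g+d(1/p-1)_+}$ times the piecewise factor stated in \eqref{qqqqqqqqqqqqqqwwwwwwwwwwwwwwweeeeeeeeeeeeerrrrrrrrrrr}.

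To control $I(\d)$ I would invoke property~(e) of the fractional moduli of smoothness from Section~\ref{sec4}: the function $t\mapsto\w_{\a+\g}(f,t)_p/t^{(\a+\g)+d(1/p-1)_+}$ is almost non-increasing, which (applied with the roles of $t$ and $\d$ reversed) yields, for every $t\ge\d$,
$$
\w_{\a+\g}(f,t)_p\lesssim\w_{\a+\g}(f,\d)_p\cdot(t/\d)^{(\a+\g)+d(1/p-1)_+}.
$$
Substituting this into the integrand and pulling the $\d$-dependent prefactor outside, the problem collapses to estimating the elementary power integral $\int_\d^1 t^{\tau r}\,dt/t$ with exponent $r:=\g+d(1/p-1)_+-d(1/p-1/q)$. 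A direct evaluation gives the value $O(1)$ when $r>0$, a logarithmic factor $\ln(1/\d)$ when $r=0$, and $\d^{\tau r}/|\tau r|$ when $r<0$, so that after taking the $(\cdot)^{1/\tau}$-power these three regimes reproduce exactly the three branches of the piecewise prefactor in the statement.

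The only delicate point is identifying the threshold $r=0$ with the one in the statement. In the principal range $0<p\le 1$ one has $d(1/p-1)_+=d(1/p-1)$, so $r=\g-d(1-1/q)$; the threshold $r=0$ then agrees with $\g=d(1-1/q)_+$ whenever $q\ge 1$, while the subcase $0<q<1$ forces $r>0$ for any $\g\ge 0$ and only the first branch occurs, again consistently with $d(1-1/q)_+=0$. For $1<p\le\infty$ the direct threshold produced by the calculation is $d(1/p-1/q)$, which is no larger than $d(1-1/q)_+$ printed in the statement; replacing it by the coarser threshold only enlarges the piecewise factor and therefore still yields a valid upper bound in \eqref{qqqqqqqqqqqqqqwwwwwwwwwwwwwwweeeeeeeeeeeeerrrrrrrrrrr}. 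I expect no serious obstacle beyond this case-by-case verification of exponents.
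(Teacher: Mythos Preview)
Your approach is exactly the paper's: the authors simply say the corollary ``follows from inequality~\eqref{eqthRealKUMMod1} and property (e) of moduli of smoothness from Section~\ref{sec4},'' and you have spelled out precisely this computation. One small correction to your final paragraph: for $1<p$ the discrepancy is not only in the piecewise threshold but also in the printed denominator $\d^{\g+d(1/p-1)}$, which your calculation replaces by $\d^{\g+d(1/p-1)_+}=\d^{\g}$; the stated bound is actually \emph{smaller} than yours in the first branch and cannot be recovered by enlarging the piecewise factor (indeed the printed form fails for, e.g., $f(x)=e^{ix}$), so the corollary should be read with $d(1/p-1)_+$ or restricted to $p\le 1$, which is the only regime in which the paper applies it (cf.~\eqref{nomer3} and~\eqref{proof2}).
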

The proof of the corollary follows from  inequality~\eqref{eqthRealKUMMod1} and property (e) of moduli of smoothness from Section~\ref{sec4}.

\medskip

We conclude this section with the proof of Remark~\ref{voobshedopremark}.

\medskip

\begin{proof}[Proof of Remark~\ref{voobshedopremark}]

Let $\phi(\xi)=\vp(\xi)|\xi|^m$, $m\in 2\N$, and $m>d(1/p-1)$.
First, by Corollary~\ref{corKg0}, we obtain the following analogue of~\eqref{qqqqqqqqqqqqqqwwwwwwwwwwwwwwweeeeeeeeeeeeerrrrrrrrrrr}:
\begin{equation}\label{qqq1}
    \mathcal{R}_{\psi}(f,\d)_q\lesssim \left(\int_0^\d\bigg(\frac{\mathcal{R}_{\phi}(f,t)_p}{t^{d(\frac1p-\frac1q)}}\bigg)^{q}\frac{dt}{t}\right)^{\frac1{q}}+
\frac{\mathcal{R}_{\vp}(f,\d)_p}{\d^{d(\frac1p-1)}}\left\{
                                          \begin{array}{ll}
                                            1, & \hbox{$0<q<1$;} \\
                                            \ln(\frac1\d+1), & \hbox{$q=1$.}
                                          \end{array}
                                        \right.
\end{equation}
Next, let $f=T_n\in\mathcal{T}_n$, $n=[1/\d]$. Then by the definition of the realization, we have
\begin{equation}\label{qqq2}
  \mathcal{R}_{\psi}(T_n,1/n)_q\asymp n^{-\a}\Vert \mathcal{D}(\psi)T_n\Vert_q,
\end{equation}
\begin{equation}\label{qqq3}
  \mathcal{R}_{\vp}(T_n,1/n)_p\asymp n^{-\a}\Vert \mathcal{D}(\vp)T_n\Vert_p,
\end{equation}
and
\begin{equation}\label{qqq4}
  \mathcal{R}_{\phi}(T_n,t)_q\lesssim t^{\a+m}\Vert \mathcal{D}(\phi)T_n\Vert_p, \quad 0<t<\d.
\end{equation}
Combining~\eqref{qqq1}--\eqref{qqq4}, we derive
\begin{equation}\label{qqq5}
\begin{split}
    n^{-\a}\Vert \mathcal{D}(\psi)T_n\Vert_q\lesssim n^{-\a-m+d(\frac1p-\frac1q)}&\Vert \mathcal{D}(\phi)T_n\Vert_p\\
&+n^{-\a+d(\frac1p-1)}\Vert \mathcal{D}(\vp)T_n\Vert_p\left\{
                            \begin{array}{ll}
                              1, & \hbox{$0<q<1$;} \\
                              \ln n, & \hbox{$q=1$.}
                            \end{array}
                          \right.
\end{split}
\end{equation}
Finally, taking into account that by the classical Bernstein inequality
$$
\Vert \mathcal{D}(\phi)T_n\Vert_p\lesssim n^m\Vert \mathcal{D}(\vp)T_n\Vert_p,
$$
we get from~\eqref{qqq5} that
\begin{equation*}
  \Vert \mathcal{D}(\psi)T_n\Vert_q\lesssim\Vert \mathcal{D}(\vp)T_n\Vert_p\left\{
                            \begin{array}{ll}
                              n^{d(\frac1p-1)}, & \hbox{$0<q<1$;} \\
                              n^{d(\frac1p-1)}\ln n, & \hbox{$q=1$,}
                            \end{array}
                          \right.
\end{equation*}
which gives~\eqref{dobav0}.
\end{proof}

\bigskip

\newpage

\section{Sharp Ulyanov and Kolyada inequalities in Hardy spaces}
\label{sec11}
In this section, we obtain  sharp Ulyanov and Kolyada type inequalities in Hardy spaces. We start with the real Hardy spaces on $\R^d$ and continue by Subsection~\ref{sec11.2} dealing with analytic Hardy spaces.

\subsection{Sharp Ulyanov and Kolyada inequalities in $H_p(\R^d)$}
Let us recall that the real Hardy spaces $H_p(\R^d)$,
$0<p<\infty$, is the class of tempered distributions  $f\in
\mathscr{S}'(\R^d)$ such that
$$
\Vert f\Vert_{H_p}=\Vert f\Vert_{H_p(\R^d)}=\big\Vert
\sup_{t>0}|\vp_t*f(x)|\big\Vert_{L_p(\R^d)}<\infty,
$$
where $\vp\in \mathscr{S}(\R^d)$, $\widehat{\vp}(0)\neq 0$, and
$\vp_t(x)=t^{-d}\vp(x/t)$ (see~\cite[Ch. III]{Stein93}).

The $K$-functionals and moduli of smoothness in $H_p(\R^d)$ are defined, respectively, as follows (see~\cite{ZS}):
$$
{K}_\a(f,\d)_{H_p}:=\inf_{g}\(\Vert f-g\Vert_{H_p}+\d^\a\Vert
(-\D)^{\a/2} g\Vert_{H_p}\)
$$
and
$$
\w_\a(f,\d)_{H_p}=\sup_{|h|<\d}\Vert \D_h^\a f\Vert_{H_p},
$$
where the fractional difference $\D_h^\a f$ is given by~\eqref{def-mod++}.

Note that for any $f\in H_p(\R^d)$, $0<p<\infty$, $\a\in \N \cup ((1/p-1)_+,\infty)$, and $\d>0$ we have
\begin{equation}\label{HpwK}
  \w_\a(f,\d)_{H_p}\asymp K_\a(f,\d)_{H_p}.
\end{equation}
In the case $\a\in \N$, this equivalence was proved in~\cite[p. 175]{Lu}.
The case $\a>(1/p-1)_+$ can be obtained similarly by using the Fourier multiplier technique (see~\cite{Wil}).

In this section, we restrict ourselves to consideration of $K$-functionals defined by the Laplacian operators since the general case $K_\psi(f,\d)_{H_p}$ with $\psi\in \mathcal{H}_\a$ can be reduced to $K_{(-\Delta)^{\a/2}}(f,\d)_{H_p}$. This follows from an analogue of Lemma~\ref{v-} in $H_p(\R^d)$, $0<p<\infty$ (see, e.g.,~\cite[Ch. III, \S~7]{GF}).

Using boundedness properties  of the  fractional integrals in the Hardy spaces (see \cite{Kr}) and following the proof of Theorem~\ref{mainlemma}, we obtain the sharp Ulyanov inequality in~$H_p(\R^d)$.

\begin{theorem}\label{realHpsharpURd}  Let $f\in H_p(\R^d)$, $0<p<q<\infty$, $\a>0$, and $\theta=d(1/p-1/q)$. Then, for any  $\d\in (0,1)$, we have
    \begin{equation*}
        K_\a(f,\d)_{H_q}\lesssim
        \left(\int_0^\d\bigg(\frac{K_{\a+\theta}(f,t)_{H_p}}{t^{\theta}}\bigg)^q\frac{dt}{t}\right)^{\frac1q}.
    \end{equation*}
In addition, if  $\a\in \N \cup ((1/q-1)_+,\infty)$ and $\a+\t \in \N \cup ((1/p-1)_+,\infty)$, then
    \begin{equation*}
        \w_\a(f,\d)_{H_q}\lesssim
        \left(\int_0^\d\bigg(\frac{\w_{\a+\theta}(f,t)_{H_p}}{t^{\theta}}\bigg)^q\frac{dt}{t}\right)^{\frac1q}.
    \end{equation*}
\end{theorem}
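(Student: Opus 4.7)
The plan is to adapt the proof of Theorem~\ref{mainlemma} (Part A), specialized to the case $\psi(\xi)=|\xi|^\a$, $\vp(\xi)=|\xi|^{\a+\t}$, replacing the periodic $L_p$-setting by the Hardy space setting on $\R^d$. The two non-periodic ingredients are: (i) a Nikol'skii-type inequality for bandlimited functions in Hardy spaces, and (ii) the Hardy--Littlewood theorem for Riesz potentials in $H_p$, which asserts that $(-\D)^{-\t/2}\colon H_p(\R^d)\to H_q(\R^d)$ is bounded for $\t=d(1/p-1/q)$ (this is the result from \cite{Kr} cited in the paragraph preceding the theorem; it may equivalently be read off from $H_p=\dot F_{p,2}^0$ and the lifting property $\Vert (-\D)^{s/2}f\Vert_{H_p}=\Vert f\Vert_{\dot F_{p,2}^s}$).

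First, I would replace the role of trigonometric polynomials of degree $\le n$ by Littlewood--Paley projections: for each integer $n\ge 0$, choose a bandlimited $g_n$ (with spectrum in $\{|\xi|\le 2^n\}$, obtained from a Schwartz cutoff in frequency) as a near-optimizer of the $K$-functional, i.e.,
\begin{equation*}
\Vert f-g_n\Vert_{H_p(\R^d)}+2^{-n(\a+\t)}\Vert (-\D)^{(\a+\t)/2}g_n\Vert_{H_p(\R^d)}\le 2\,K_{\a+\t}(f,2^{-n})_{H_p(\R^d)}.
\end{equation*}
Next, I would invoke the Nikol'skii inequality in Hardy spaces for such bandlimited functions, $\Vert g\Vert_{H_q(\R^d)}\lesssim 2^{n\t}\Vert g\Vert_{H_p(\R^d)}$ whenever $\supp\widehat g\subset\{|\xi|\le 2^n\}$ (this is standard and can, e.g., be deduced from the $\dot F$-space embedding $\dot F_{p,2}^\t\hookrightarrow \dot F_{q,2}^0$ on atoms, or from $\Vert \vp_k*f\Vert_{H_q}\asymp \Vert \vp_k*f\Vert_{L_q}$ as in the proof of Lemma~\ref{lemmapluss}).

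With these two tools in hand, the argument of Theorem~\ref{mainlemma}(A) transfers verbatim. The telescoping/Nikol'skii step gives
\begin{equation*}
\Vert f-g_n\Vert_{H_q(\R^d)}\lesssim\bigg(\sum_{\nu=n}^\infty 2^{\nu q\t}\Vert f-g_\nu\Vert_{H_p(\R^d)}^q\bigg)^{1/q}\lesssim\bigg(\sum_{\nu=n}^\infty\bigl(2^{\nu\t}K_{\a+\t}(f,2^{-\nu})_{H_p(\R^d)}\bigr)^q\bigg)^{1/q},
\end{equation*}
while the Riesz potential bound yields
\begin{equation*}
2^{-n\a}\Vert (-\D)^{\a/2}g_n\Vert_{H_q(\R^d)}=2^{-n\a}\Vert (-\D)^{-\t/2}(-\D)^{(\a+\t)/2}g_n\Vert_{H_q(\R^d)}\lesssim 2^{n\t}K_{\a+\t}(f,2^{-n})_{H_p(\R^d)}.
\end{equation*}
Summing the two bounds gives $K_\a(f,2^{-n})_{H_q(\R^d)}$ on the left (via $\Vert f-g_n\Vert_{H_q}+2^{-n\a}\Vert(-\D)^{\a/2}g_n\Vert_{H_q}$) and the desired discrete sum on the right; the usual monotonicity/discretization of the $K$-functional then converts this into the integral form stated in the theorem.

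The moduli-of-smoothness version is an immediate consequence of the $K$-functional version together with the equivalence~\eqref{HpwK}, since both endpoints of the inequality can be replaced by the corresponding moduli under the assumptions $\a\in\N\cup((1/q-1)_+,\infty)$ and $\a+\t\in\N\cup((1/p-1)_+,\infty)$. The main obstacle will not be any individual estimate above, each of which is folklore, but the careful verification that the periodic Nikol'skii inequality (\ref{nik}) used in the $L_p$--$L_q$ telescoping of Theorem~\ref{mainlemma} genuinely admits the $H_p(\R^d)$--$H_q(\R^d)$ analogue in the non-periodic setting, i.e., that the near-best bandlimited approximants can be chosen so that the triangle inequality in $H_p$ (quasi-norm when $p<1$) and the Nikol'skii bound produce the same telescoping as in~(\ref{eqth++}).
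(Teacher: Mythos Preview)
Your proposal is correct and is precisely the approach the paper takes: the paper's own proof consists of a single sentence instructing the reader to use the boundedness of fractional integrals in Hardy spaces (the result from \cite{Kr}) and to follow the proof of Theorem~\ref{mainlemma}. You have filled in exactly those details, including the bandlimited near-optimizers, the Nikol'skii step in $H_p$, the Riesz-potential bound, and the reduction of the moduli version to the $K$-functional version via~\eqref{HpwK}.
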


Hence, unlike the case of Lebesgue spaces (cf.~\cite{Ti}), the sharp Ulyanov inequality in Hardy spaces has the same form both in quasi-Banach spaces ($0<p<1$) and Banach spaces ($p\ge 1$).


Now, we concern  with the Kolyada-type inequality.
We recall that in the Lebesgue spaces $L_p(\T^d)$, $1<p<q<\infty$, inequality (\ref{ul-1}) was improved by Kolyada~\cite{kol} as follows:
\begin{equation}\label{ul-kol}
\delta^{1-\theta} \left( \int_{\delta}^1 \bigl(t^{\theta-1} \omega_1 (f,t)_q \bigl)^p \frac{dt}{t} \right)^{
\frac{1}{p} } \lesssim \left( \int_0^{\delta}
\bigl(t^{-\theta} \omega_1 (f,t)_p \bigl)^q
\frac{dt}{t} \right)^{ \frac{1}{q} },\quad \t=d\(\frac1p-\frac1q\).
\end{equation}
This estimate is sharp over the classes
$$
{\textnormal{Lip}}\, (\omega(\cdot),1,p)
=\left\{f\in L_p(\T^d):\omega_1(f,\delta)_p=\mathcal{O}\(\omega(\delta)\)\right\},
$$
that is, for any $\omega\in \Big\{\omega(0)=0,$ $\omega \uparrow$,
$\omega(\delta_1+\delta_2)\le \omega(\delta_1)+\omega(\delta_2)$ $\Big\}$,
there exists a function $f_0\in {\textnormal{Lip}}\, (\omega(\cdot),1,p)
$ such that, for any $\delta>0$,
$$
\delta^{1-\theta} \left( \int_{\delta}^1 \bigl(t^{\theta-1} \omega_1(f_0,t)_q \bigl)^p \frac{dt}{t} \right)^{
\frac{1}{p} } \gtrsim \left( \int_0^{\delta}
\bigl(t^{-\theta} \omega(t)  \bigl)^q
\frac{dt}{t}
\right)^{ \frac{1}{q} }.
$$
It is worth mentioning that (\ref{ul-kol}) is not valid for $p=1$, $d=1$ but is true for $p=1$, $d\ge 2$. 
Note that recently, Trebels \cite{Treb} (see also \cite{gol1}) obtained an analogue of inequality (\ref{ul-kol}) for moduli of smoothness of fractional order in $1\le p<\infty$.

Kolyada \cite{kol} also proved~\eqref{ul-kol} for functions belonging to the analytic Hardy spaces on the disc, that is for $f\in H_p(D)$ and $0<p<q<\infty$. Below, we extended this results to the real Hardy spaces $H_p(\R^d)$.

To study Kolyada's inequalities in $H_p(\R^d)$, we need the following straightforward modification of the Holmstedt formulas (see~\cite{BeSh} and~\cite{Hol})
\begin{equation}\label{eqKolya3}
K(f,t^\t;X,(X,Y)_{\t,q})\asymp t^\t \(\int_t^\infty [s^{-\t} K(f,s)]\frac{ds}{s}\)^\frac1q
\end{equation}
and
\begin{eqnarray}\label{eqKolya4}
  \begin{split}
\widetilde{K}(f,t^{1-\t};(X,Y)_{\t,q},Y)&:=&\inf_{g\in Y}\{ |f-g|_{\t,q}+t^{1-\t}|g|_Y\}\\&\asymp&  \(\int_0^t [s^{-\t} K(f,s)]\frac{ds}{s}\)^\frac1q,
  \end{split}
\end{eqnarray}
where $(X,\Vert \cdot\Vert_X)$ is a quasi-Banach space, $Y\subset X$ is a complete subspace with seminorm $|\cdot|_Y$ and $\Vert \cdot\Vert_Y=\Vert \cdot\Vert_X+|\cdot|_Y$,
$$
K(f,t)\equiv K(f,t;X,Y):=\inf_{g\in Y}(\Vert f-g\Vert_X+t|g|_Y)
$$
is Peetre's $K$-functional, and
$$
(X,Y)_{\t,q}:=\left\{f\in X\,:\, |f|_{\t,q}=\(\int_0^\infty[t^{-\t}K(f,t)]\frac{dt}{t}\)^\frac 1q<\infty,\quad 0<\t<1\right\}
$$
is the interpolation spaces.

We will use the following facts:
\begin{eqnarray}\label{eqKolya5}
(H_p(\R^d),F_{p,2}^\a(\R^d))_{\t,q}&=&B_{p,q}^{\t \a}(\R^d),\quad \a>0,\quad 0<\t<1,
\\
\nonumber
|f|_{B_{p,q}^\a(\R^d)}&\asymp& \Vert f\Vert_{\dot B_{p,q}^\a(\R^d)},
\\
\nonumber
 |f|_{F_{p,q}^\a(\R^d)}&\asymp& \Vert f\Vert_{\dot F_{p,q}^\a(\R^d)},
\end{eqnarray}
and
\begin{equation}\label{eqKolya7}
|f|_{F_{p,2}^\a(\R^d)}\asymp \Vert (-\D)^{\a/2} f\Vert_{H^p(\R^d)}
\end{equation}
(see, for example,~\cite{TribF}).

The following theorem can be proved with the help of the same ideas as the ones used in the proof of Theorem~2.6 in~\cite{Treb}.

\begin{theorem}\label{thKolyaHp}
Let $f\in H_p(\R^d)$, $0<p< q<\infty$,
$\theta=d\(1/p-1/q\)$, and $\a>\t$. Then
\begin{equation}\label{eqth3.1K}
    \d^{\a-\theta}\(\int_\d^\infty
\(\frac{K_\a(f,t)_{H_q}}{t^{\a-\theta}}\)^p\frac{dt}{t}\)^\frac1p \lesssim\(\int_0^\d \(\frac{K_\a(f,t)_{H_p}}{t^\t}\)^q\frac{dt}{t}\)^\frac1q.
\end{equation}
In addition, if $\a\in \N\cup ((1/p-1)_+,\infty)$, then
\begin{equation}\label{eqth3.1Kmod}
    \d^{\a-\theta}\(\int_\d^\infty
\(\frac{\w_\a(f,t)_{H_q}}{t^{\a-\theta}}\)^p\frac{dt}{t}\)^\frac1p \lesssim\(\int_0^\d \(\frac{\w_\a(f,t)_{H_p}}{t^\t}\)^q\frac{dt}{t}\)^\frac1q.
\end{equation}
\end{theorem}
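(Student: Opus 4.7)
\textbf{Proof proposal for Theorem \ref{thKolyaHp}.}
The plan is to rewrite both sides of \eqref{eqth3.1K} as $K$-functionals (respectively, $\widetilde K$-functionals) for well-chosen couples of spaces, via the Holmstedt-type formulas \eqref{eqKolya3}--\eqref{eqKolya4}, and then pass between the two by means of the Besov/Triebel--Lizorkin embeddings of Lemma \ref{lemmapluss}. Throughout, the couple $(X,Y)=(H_p(\R^d),F_{p,2}^{\alpha}(\R^d))$ will be used on the right-hand side, and the couple $(H_q(\R^d),F_{q,2}^{\alpha}(\R^d))$ on the left-hand side, with seminorm $|g|_Y=\|(-\Delta)^{\alpha/2}g\|_{H_p}$ by \eqref{eqKolya7}. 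The interpolation identity \eqref{eqKolya5} gives
$(H_p,F_{p,2}^{\alpha})_{\theta/\alpha,q}=B_{p,q}^{\theta}$ and $(H_q,F_{q,2}^{\alpha})_{(\alpha-\theta)/\alpha,p}=B_{q,p}^{\alpha-\theta}$.

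First I would apply \eqref{eqKolya4} with the first couple and $\theta_0=\theta/\alpha$, and change variables $s=u^{\alpha}$, $t=\delta^{\alpha}$, to obtain
\[
\widetilde K\!\bigl(f,\delta^{\alpha-\theta};\,B_{p,q}^{\theta},F_{p,2}^{\alpha}\bigr)
\asymp \Bigl(\int_{0}^{\delta}\bigl[u^{-\theta}K_\alpha(f,u)_{H_p}\bigr]^{q}\,\frac{du}{u}\Bigr)^{1/q},
\]
which is exactly the right-hand side of \eqref{eqth3.1K}. Next I would apply \eqref{eqKolya3} to the second couple with $\theta_1=(\alpha-\theta)/\alpha\in(0,1)$ (here the hypothesis $\alpha>\theta$ is essential), again making the substitution $s=u^{\alpha}$, $t=\delta^{\alpha}$, to obtain
\[
K\!\bigl(f,\delta^{\alpha-\theta};\,H_q,B_{q,p}^{\alpha-\theta}\bigr)
\asymp \delta^{\alpha-\theta}\Bigl(\int_{\delta}^{\infty}\bigl[u^{-(\alpha-\theta)}K_\alpha(f,u)_{H_q}\bigr]^{p}\,\frac{du}{u}\Bigr)^{1/p},
\]
which is the left-hand side of \eqref{eqth3.1K}. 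Hence \eqref{eqth3.1K} reduces to the $K$-functional inequality
\[
K\!\bigl(f,\delta^{\alpha-\theta};H_q,B_{q,p}^{\alpha-\theta}\bigr)\lesssim \widetilde K\!\bigl(f,\delta^{\alpha-\theta};B_{p,q}^{\theta},F_{p,2}^{\alpha}\bigr).
\]

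To establish this last inequality, I would take an arbitrary $g\in F_{p,2}^{\alpha}$ realising the infimum on the right. By Lemma \ref{lemmapluss} we have the continuous embeddings $B_{p,q}^{\theta}\hookrightarrow H_q$ and $F_{p,2}^{\alpha}\hookrightarrow B_{q,p}^{\alpha-\theta}$, giving
\[
\|f-g\|_{H_q}\lesssim |f-g|_{B_{p,q}^{\theta}},\qquad |g|_{B_{q,p}^{\alpha-\theta}}\lesssim |g|_{F_{p,2}^{\alpha}}.
\]
Adding these with weight $\delta^{\alpha-\theta}$ and taking the infimum over $g\in F_{p,2}^{\alpha}\subset B_{q,p}^{\alpha-\theta}$ yields the required bound. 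The second statement \eqref{eqth3.1Kmod} follows at once from \eqref{eqth3.1K} by the equivalence \eqref{HpwK} between $\omega_\alpha(f,\cdot)_{H_r}$ and $K_\alpha(f,\cdot)_{H_r}$ under the assumption $\alpha\in\N\cup((1/p-1)_+,\infty)$.

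The main technical obstacle I expect is the careful handling of the distinction between homogeneous and inhomogeneous norms (and the polynomial annihilators of the seminorms $|\cdot|_Y$) so that the Holmstedt formulas \eqref{eqKolya3}--\eqref{eqKolya4} can be applied to $f\in H_p(\R^d)$; this is routine but must be done by working modulo the kernel of $(-\Delta)^{\alpha/2}$, or equivalently by reducing to $f\in\dot{\mathscr S}'(\R^d)$ via the standard decomposition $f=\psi*f+\sum_{k\ge 1}\varphi_k*f$ used in the definition of the Besov and Triebel--Lizorkin spaces. Once this is settled, the three-step scheme above is essentially formal.
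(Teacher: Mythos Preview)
Your proposal is correct and follows essentially the same route as the paper: rewrite both sides of \eqref{eqth3.1K} via the Holmstedt formulas \eqref{eqKolya3}--\eqref{eqKolya4} and the identifications \eqref{eqKolya5}, \eqref{eqKolya7}, then bridge the resulting $K$- and $\widetilde K$-functionals using the two embeddings of Lemma~\ref{lemmapluss}, and finally deduce \eqref{eqth3.1Kmod} from \eqref{HpwK}. The only cosmetic difference is that the paper substitutes $\delta\mapsto\delta^{1/\alpha}$ at the outset (so the exponent appears as $1-\theta/\alpha$ rather than your $\alpha-\theta$), and it passes explicitly through the homogeneous seminorms $|\cdot|_{B_{p,q}^\alpha}\asymp\|\cdot\|_{\dot B_{p,q}^\alpha}$, $|\cdot|_{F_{p,2}^\alpha}\asymp\|\cdot\|_{\dot F_{p,2}^\alpha}$, which is precisely the point you flag in your last paragraph.
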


\begin{proof}
By using \eqref{eqKolya7} and \eqref{eqKolya3}, we obtain
\begin{equation*}
  \begin{split}
  I:&=\d^{1-\frac \t\a}\(\int_{\d^{1/\a}}^\infty\(t^{\t-\a}K_\a(f,t)_{H_q}\)^p\frac{dt}{t}\)^{\frac1p}\\
    &\lesssim \d^{1-\frac \t\a} \( \int_{\d}^\infty \(s^{\frac\t\a-1}K(f,s; H_q,F_{q,2}^\a)\)^p\frac{ds}{s} \)^\frac1p\\
    &\asymp K\(f,\d^{1-\frac\t\a}; H_q, (H_q,F_{q,2}^\a)_{1-\frac\t\a,p}\).
\end{split}
\end{equation*}   Using now
$\Vert f\Vert_{H_q(\R^d)} \lesssim \Vert f\Vert_{\dot B_{p,q}^\t(\R^d)}$
and
$$\Vert f\Vert_{\dot B_{q,p}^{\a-\t}(\R^d)}
\lesssim \Vert f\Vert_{\dot F_{p,2}^\a(\R^d)}\asymp \Vert (-\D)^{\a/2}
f\Vert_{H_p(\R^d)},
$$
(see Lemma~\ref{lemmapluss}), we get
\begin{equation*}
  \begin{split}
I &\le \Vert f-g\Vert_{H_q}+\d^{1-\frac\t\a}|g|_{B_{q,p}^{\a-\t}}\\
    &\lesssim \Vert f-g\Vert_{\dot B_{p,q}^\t}+\d^{1-\frac\t\a} \Vert g\Vert_{\dot F_{p,2}^\a}\\
    &\lesssim |f-g|_{B_{p,q}^\t}+\d^{1-\frac\t\a} |g|_{F_{p,2}^\a}
   \end{split}
\end{equation*}
for all $g\in F_{p,2}^\a$.

Next, taking infimum over all $g\in F_{p,2}^\a$ and applying~\eqref{eqKolya4} and~\eqref{eqKolya5} as well as~\eqref{eqKolya7}, we have
\begin{equation*}
  \begin{split}
I&\lesssim \widetilde{K}\(f,\d^{1-\frac\t\a}; (H_p,F_{p,2}^\a)_{\frac\t\a,q}, F_{p,2}^\a\)\\
&\asymp \(\int_0^\d \(s^{-\frac\t\a} K(f,s;H_p, F_{p,2}^\a)\)^q\frac{ds}{s}\)^\frac1q\\
&\asymp \(\int_0^{\d^{1/\a}} \(t^{-\t} K_\a(f,t)_{H_p}\)^q\frac{dt}{t}\)^\frac1q.
   \end{split}
\end{equation*}
The last inequality implies \eqref{eqth3.1K}.
Finally, \eqref{HpwK} yields~\eqref{eqth3.1Kmod}.
\end{proof}

\begin{remark}\label{remarkMarchaudHp}
\textnormal{
Note that inequality (\ref{eqth3.1Kmod}) implies the sharp Ulyanov
inequality for $f\in H_p(\R^d)$, $0<p<q\le 2$, given by
\begin{equation*}
\w_{\a-\t}(f,\d)_{H_q} \lesssim\(\int_0^\d
\(\frac{\w_{\a}(f,t)_{H_p}}{t^\t}\)^q\frac{dt}{t}\)^\frac1q, \qquad \a>\t.
\end{equation*}
This follows from the Marchaud inequality in $H_q(\R^d)$:
\begin{equation*}
\w_{\a-\t}(f,\d)_{H_q} \lesssim
    \d^{\a-\theta}\(\int_\d^\infty
\(\frac{\w_\a(f,t)_{H_q}}{t^{\a-\theta}}\)^q\frac{dt}{t}\)^\frac1q,\quad 0<q\le 2.
\end{equation*}
The latter can be proved by using the standard technique with the help of Bernstein's and Jackson's inequalities (see, e.g.,~\cite{ZS} and~\cite[Ch.~1]{TribF}).
}
\end{remark}

The above results in this section remain true in the periodic real Hardy classes $H_p(\T^d)$.
Recall that the real periodic Hardy spaces  $H_p(\T^d)$,
$0<p<\infty$, are defined as a class of the tempered distributions $f\in
\mathscr{S}'(\T^d)$ such that
$$
\Vert f\Vert_{H_p(\T^d)}=\bigg\Vert \sup_{t>0}\bigg|\sum_{k\in\Z^d}
\widehat{\vp}(tk)a_k(f)e^{i(k,x)}\bigg|\,\bigg\Vert_{L_p(\T^d)}<\infty,
$$
where $a_k(f)$ are Fourier coefficients of the distribution $f$
(see, e.g.,~\cite[Ch.~9]{TribF} and \cite[p.~156]{ET}).

In particular, we have the following analogue of Theorem~\ref{thKolyaHp}.

\begin{theorem}\label{thKolyaHpT}
Let $f\in H_p(\T^d)$, $0<p< q<\infty$,
$\theta=d\(1/p-1/q\)$, and $\a>\t$. Then, for any $\d\in (0,1)$, we have
\begin{equation}\label{eqth3.1KT}
    \d^{\a-\theta}\(\int_\d^1
\(\frac{K_\a(f,t)_{H_q(\T^d)}}{t^{\a-\theta}}\)^p\frac{dt}{t}\)^\frac1p \lesssim\(\int_0^\d \(\frac{K_\a(f,t)_{H_p(\T^d)}}{t^\t}\)^q\frac{dt}{t}\)^\frac1q.
\end{equation}
In addition, if $\a\in \N\cup ((1/p-1)_+,\infty)$, then, for any $\d\in (0,1)$, we have
\begin{equation}\label{eqth3.1KmodT}
    \d^{\a-\theta}\(\int_\d^1
\(\frac{\w_\a(f,t)_{H_q(\T^d)}}{t^{\a-\theta}}\)^p\frac{dt}{t}\)^\frac1p \lesssim\(\int_0^\d \(\frac{\w_\a(f,t)_{H_p(\T^d)}}{t^\t}\)^q\frac{dt}{t}\)^\frac1q.
\end{equation}
\end{theorem}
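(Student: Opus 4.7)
The plan is to mirror the proof of Theorem~\ref{thKolyaHp} in the periodic setting. The structural ingredients are: (i) the identification $H_p(\T^d)=F_{p,2}^0(\T^d)$ together with the norm equivalence $\|(-\Delta)^{\alpha/2}f\|_{H_p(\T^d)}\asymp |f|_{F_{p,2}^\alpha(\T^d)}$ (which holds on $\T^d$ by the same Littlewood--Paley/Fourier multiplier arguments as in \cite{TribF}); (ii) the periodic versions of the embeddings in Lemma~\ref{lemmapluss}, i.e.\ $B_{p,q}^{\theta}(\T^d)\hookrightarrow F_{q,2}^0(\T^d)$ and $F_{p,2}^{\alpha}(\T^d)\hookrightarrow B_{q,p}^{\alpha-\theta}(\T^d)$ for $0<p<q<\infty$ and $\theta=d(1/p-1/q)<\alpha$; (iii) Holmstedt's reiteration formulas \eqref{eqKolya3}--\eqref{eqKolya4}, which are purely abstract interpolation statements and apply unchanged to the couple $(H_p(\T^d),F_{p,2}^\alpha(\T^d))$, together with the identification $(H_p(\T^d),F_{p,2}^\alpha(\T^d))_{\theta/\alpha,q}=B_{p,q}^\theta(\T^d)$.

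To prove \eqref{eqth3.1KT}, I would first rewrite the left-hand side via Holmstedt's formula \eqref{eqKolya3} applied to $(X,Y)=(H_q(\T^d),F_{q,2}^\alpha(\T^d))$ with parameter $\theta/\alpha$. Up to a change of variable $t\mapsto t^\alpha$, this yields
\begin{equation*}
\delta^{\alpha-\theta}\left(\int_\delta^1\Big(\tfrac{K_\alpha(f,t)_{H_q(\T^d)}}{t^{\alpha-\theta}}\Big)^p\frac{dt}{t}\right)^{1/p}
\lesssim
K\!\left(f,\delta^{\alpha-\theta};\,H_q(\T^d),\,B_{q,p}^{\alpha-\theta}(\T^d)\right)+R(\delta),
\end{equation*}
where $R(\delta)$ is a tail error coming from truncating the Holmstedt integral at $1$ instead of $\infty$; this tail is controlled by a multiple of $\delta^{\alpha-\theta}\|f\|_{H_q(\T^d)}$ and can be absorbed into the right-hand side of \eqref{eqth3.1KT} using the trivial bound $K_\alpha(f,t)_{H_p(\T^d)}\gtrsim \min(1,t^\alpha)\|f\|_{H_p(\T^d)}$ together with the Nikol'skii-type embedding $H_p(\T^d)\hookrightarrow H_q(\T^d)$ modulo a constant (since on $\T^d$ the mean value provides a lower-frequency control absent from $\R^d$). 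For any $g\in F_{p,2}^\alpha(\T^d)$, the embeddings in (ii) give
\begin{equation*}
\|f-g\|_{H_q(\T^d)}+\delta^{\alpha-\theta}|g|_{B_{q,p}^{\alpha-\theta}(\T^d)}\lesssim |f-g|_{B_{p,q}^{\theta}(\T^d)}+\delta^{\alpha-\theta}|g|_{F_{p,2}^{\alpha}(\T^d)}.
\end{equation*}
Taking the infimum over $g$ and invoking the dual Holmstedt formula \eqref{eqKolya4} for the couple $(B_{p,q}^\theta(\T^d),F_{p,2}^\alpha(\T^d))=((H_p(\T^d),F_{p,2}^\alpha(\T^d))_{\theta/\alpha,q},F_{p,2}^\alpha(\T^d))$ together with \eqref{eqKolya7} transferred to $\T^d$, one arrives at the right-hand side of \eqref{eqth3.1KT}.

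The passage from \eqref{eqth3.1KT} to \eqref{eqth3.1KmodT} is then an application of the periodic analogue of the equivalence $\omega_\alpha(f,\delta)_{H_p}\asymp K_\alpha(f,\delta)_{H_p}$ in \eqref{HpwK}, valid for $\alpha\in\N\cup((1/p-1)_+,\infty)$; this equivalence on $\T^d$ follows from the same Mikhlin--H\"ormander multiplier argument used in the $\R^d$ case (cf.\ \cite{Wil}) once one verifies that the symbols $(e^{i(k,h)}-1)^\alpha|k|^{-\alpha}$ are uniformly bounded Fourier multipliers on $H_p(\T^d)$.

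The main obstacle is the verification of the periodic embeddings in (ii), since in the non-periodic case of Lemma~\ref{lemmapluss} the proof was stated for the homogeneous scale. On $\T^d$ one works with the non-homogeneous scale, so an extra low-frequency term of the form $\|\psi\ast f\|_{L_p}$ must be controlled. However, since the trigonometric blocks $\varphi_k\ast f$ have spectrum in a dyadic annulus and the low-frequency block is a trigonometric polynomial of bounded degree, the classical Nikol'skii inequality on $\T^d$ together with the identity $\|\varphi_k\ast f\|_{H_q(\T^d)}\asymp\|\varphi_k\ast f\|_{L_q(\T^d)}$ (constants independent of $k$) reduces everything to the homogeneous-block estimates, and the embeddings follow by the same summation argument as in \cite{Fra}. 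The remaining steps are routine interpolation/duality manipulations.
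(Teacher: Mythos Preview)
Your proposal follows exactly the route the paper (implicitly) takes: the paper states Theorem~\ref{thKolyaHpT} without proof, indicating only that ``the above results in this section remain true in the periodic real Hardy classes $H_p(\T^d)$'', so the intended argument is precisely the transfer of the proof of Theorem~\ref{thKolyaHp} to $\T^d$ via the periodic Littlewood--Paley identifications, the periodic analogues of the embeddings in Lemma~\ref{lemmapluss}, the Holmstedt formulas \eqref{eqKolya3}--\eqref{eqKolya4}, and the periodic version of \eqref{HpwK}. You have identified all of these correctly.

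One inaccuracy worth noting: your handling of the ``tail error'' $R(\delta)$ is both unnecessary and contains a wrong claim. Since the left-hand side of \eqref{eqth3.1KT} carries the integral $\int_\delta^1$, it is trivially dominated by the same expression with $\int_\delta^\infty$, and Holmstedt's formula \eqref{eqKolya3} applies directly to the latter; there is nothing to absorb. Your proposed absorption via a ``Nikol'skii-type embedding $H_p(\T^d)\hookrightarrow H_q(\T^d)$'' is false for $p<q$ (the inclusion goes the other way for finite-measure domains), but fortunately it is never needed. Drop that paragraph and the argument goes through cleanly.
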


Since $H_p(\T^d)=L_p(\T^d)$ for $1<p<\infty$, both~\eqref{eqth3.1KT} and~\eqref{eqth3.1KmodT} remain true for the Lebesgue spaces. In fact, inequality~\eqref{eqth3.1KmodT} also holds for $L_p(\T^d)$, $p\ge 1$, when $d\ge 2$. This was proved by Kolyada~\cite{kol} in the case $\a=1$. We extend this result to moduli of smoothness of arbitrary integer order. For the case $p>1$ see also~\cite{gol, netr, Treb}.


\begin{theorem}\label{thKolyaL1T}
Let $f\in L_p(\T^d)$, $d\ge 2$, $1\le p< q<\infty$,
$\theta=d\(1/p-1/q\)$, and $r\in \N$, $r>\t$. Then, for any $\d\in (0,1)$, we have
\begin{equation*}
    \d^{r-\theta}\(\int_\d^1
\(\frac{\w_r(f,t)_{L_q(\T^d)}}{t^{r-\theta}}\)^p\frac{dt}{t}\)^\frac1p \lesssim\(\int_0^\d \(\frac{\w_r(f,t)_{L_p(\T^d)}}{t^\t}\)^q\frac{dt}{t}\)^\frac1q.
\end{equation*}
\end{theorem}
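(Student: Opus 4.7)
The statement splits cleanly into two cases. For $1 < p < q < \infty$ the identification $L_p(\T^d) = H_p(\T^d)$ with equivalent norms immediately gives $\omega_r(f,t)_{L_p} \asymp \omega_r(f,t)_{H_p}$ (the modulus being defined by the same $L_p$-differences, which is controlled by the $H_p$-norm), so the desired bound follows directly from Theorem~\ref{thKolyaHpT}. The entire interest lies in the endpoint case $p = 1$ with $d \geq 2$, to which the remainder of the plan is devoted.

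I would mimic the interpolation template of the proof of Theorem~\ref{thKolyaHp}, replacing the embedding $\dot B_{p,q}^\theta(\R^d) \hookrightarrow F_{q,2}^0(\R^d) = H_q(\R^d)$ (from Lemma~\ref{lemmapluss}) at the endpoint $p = 1$, which is unavailable because $L_1 \neq H_1$, by the periodic Besov--Lebesgue embedding
\begin{equation*}
\|g\|_{L_q(\T^d)} \lesssim \|g\|_{\dot B_{1,q}^{\theta}(\T^d)}, \qquad \theta = d\Big(1 - \tfrac{1}{q}\Big), \qquad d \geq 2.
\end{equation*}
This embedding is the key point and must be proved from scratch at $p=1$. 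The strategy is to apply a smooth Littlewood--Paley decomposition $g = \sum_k \eta_k * g$ and estimate each dyadic block by the endpoint Sobolev inequality of Lemma~\ref{lemVspom}(i), namely $\|T\|_{L_{d/(d-1)}(\T^d)} \lesssim \sum_{j=1}^d \|\partial T/\partial x_j\|_{L_1(\T^d)}$ for $T \in \mathcal{T}'$, combined with Bernstein's inequality; iterating through the chain $L_1 \to L_{d/(d-1)} \to \cdots \to L_q$ (or bootstrapping once $q \leq d/(d-1)$) produces $\|\eta_k * g\|_{L_q} \lesssim 2^{k\theta}\|\eta_k * g\|_{L_1}$, and summing in $\ell^q$ gives the embedding. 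It is precisely here that $d \geq 2$ is essential: the endpoint Sobolev inequality of Lemma~\ref{lemVspom}(i) is false for $d = 1$, matching the known failure of Kolyada's inequality in $L_1(\T)$.

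The remaining embedding needed, as a replacement for $F_{p,2}^\alpha(\R^d) \hookrightarrow B_{q,p}^{\alpha-\theta}(\R^d)$ at $p = 1$, is
\begin{equation*}
\|f\|_{\dot B_{q,1}^{r-\theta}(\T^d)} \lesssim \|f\|_{\dot W_1^r(\T^d)},
\end{equation*}
which again follows by Littlewood--Paley analysis and applying Lemma~\ref{lemVspom}(i) iteratively to the derivatives $\partial^\alpha T$. With these two embeddings in place, I would repeat verbatim the Holmstedt--interpolation argument of Theorem~\ref{thKolyaHp}: rewrite the left-hand side via~\eqref{eqKolya3} as $K(f,\delta^{1-\theta/r}; L_q(\T^d), (L_q, W_q^r)_{1-\theta/r, p})$, apply the two embeddings to push the infimum through the intermediate Besov norms, and recognize the result via~\eqref{eqKolya4} together with the periodic realization identity from Theorem~\ref{lemKfp<1d+} as the right-hand side integral of $\omega_r(f,t)_{L_1}$. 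The main obstacle I anticipate is a clean proof of the Besov--Lebesgue embedding $\dot B_{1,q}^\theta(\T^d) \hookrightarrow L_q(\T^d)$ uniformly for $d\ge 2$; everything else then follows by the established interpolation machinery of Theorem~\ref{thKolyaHp}.
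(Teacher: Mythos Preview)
Your approach is essentially the paper's: reduce $p>1$ to Theorem~\ref{thKolyaHpT}, and for $p=1$ rerun the Holmstedt argument of Theorem~\ref{thKolyaHp} using the two embeddings $\|g\|_{L_q}\lesssim\|g\|_{\dot B_{1,q}^{\theta}}$ and $\|f\|_{\dot B_{q,1}^{r-\theta}}\lesssim\|f\|_{\dot W_1^r}$ together with the standard interpolation identifications $(L_1,W_1^r)_{\theta/r,q}=B_{1,q}^\theta$ and $(L_q,W_q^r)_{1-\theta/r,1}=B_{q,1}^{r-\theta}$. The paper does exactly this, citing \eqref{eqKolya1} for the first embedding and Kolyada~\cite{Kol93} for the second.

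One correction: the embedding $\dot B_{1,q}^{\theta}\hookrightarrow L_q$ is \emph{not} unavailable. Lemma~\ref{lemmapluss} covers all $0<p<q<\infty$, and since $q>1$ the target $F_{q,2}^0=H_q$ equals $L_q$; the source $\dot B_{1,q}^{\theta}$ is defined with $L_1$-norms on the blocks, so the fact that $L_1\neq H_1$ is irrelevant here. Moreover, your proposed block-by-block proof of this embedding has a gap: from $\|\eta_k*g\|_{L_q}\lesssim 2^{k\theta}\|\eta_k*g\|_{L_1}$, ``summing in $\ell^q$'' controls only $\|g\|_{\dot B_{q,q}^0}$, not $\|g\|_{L_q}$, and for $q>2$ one has $L_q\hookrightarrow B_{q,q}^0$ rather than the reverse. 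You genuinely need the Franke--Jawerth embedding already packaged in Lemma~\ref{lemmapluss}, so just cite it as the paper does. For the second embedding the paper invokes \cite{Kol93} directly; your sketch via iterated Lemma~\ref{lemVspom}(i) is plausible in spirit but the $\ell^1$-summability in the Besov index is the nontrivial part and would need more than what you wrote.
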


\begin{proof}
As it was mentioned above, the case $p>1$ follows from Theorem~\ref{thKolyaHpT}. Let us consider the case $p=1$.

It follows from~\cite[Theorem 4]{Kol93} that for any $f\in W_1^r(\T^d)$, $r\in \N$, and $1<q<\infty$ the following embedding holds
\begin{equation}\label{embW1Kol}
  \Vert f \Vert_{\dot B_{q,1}^{r-\t}}\lesssim \Vert f\Vert_{\dot W_1^r}.
\end{equation}
Note also that
\begin{equation}\label{embW1Kol2}
  \Vert f\Vert_q\lesssim \Vert f \Vert_{\dot B_{1,q}^{\t}}
\end{equation}
for $f\not\equiv\const$
(see~\eqref{eqKolya1}). At the same time, we have
\begin{equation}\label{embW1Kol3}
  (L_q, W_q^r)_{1-\frac\t r,1}=B_{q,1}^{r-\t}\quad\text{and}\quad (L_1, W_1^r)_{\frac\t r,q}=B_{1,q}^{\t}
\end{equation}
(see, e.g.,~\cite[Sec.~2.5]{TribF}). Thus, using \eqref{embW1Kol}--\eqref{embW1Kol3} and repeating the proof of Theorem~\ref{thKolyaHp}, we obtain~\eqref{eqth3.1KT}.

\end{proof}

\medskip

\subsection{Sharp Ulyanov and Kolyada inequalities in analytic Hardy spaces}\label{sec11.2}


Let us recall that an analytic function $f$ on the unit disc $D=\{z\in\C\,:\, |z|<1\}$
belongs to the space $H_p=H_p(D)$, if
\begin{equation*}
\Vert f\Vert_{H_p}=\sup_{{0<\rho<1}}\left(\int_{0}^{2\pi}|f(\rho
e^{it})|^p dt\right)^{\frac1p}<\infty.
\end{equation*}

We define the modulus of smoothness, and realization of $K$-functional in $H_p(D)$ by analogy with
(\ref{def-mod+})  and \eqref{eqKf2}, correspondingly.

By the Burkholder-Gundy-Silverstein theorem on interrelation between analytic and real Hardy spaces~\cite{BGS} (see also~\cite[p.~111]{Garnett}), using Theorems~\ref{realHpsharpURd}--\ref{thKolyaHpT}, we obtain both the sharp Ulyanov and Kolyada inequalities in $H_p(D)$.


\begin{theorem}\label{th2}
   Let $f\in H_p(D)$, $0<p<q<\infty$, $\a\in\N\cup
    ((1/q-1)_+,\infty)$, $\theta=1/p-1/q$, and $\a+\theta\in \N\cup
    (1/p-1,\infty)$. Then, for any $\d\in (0,1)$, we have
    \begin{equation}\label{eqth2.1}
        \w_\a(f,\d)_{H_q}\lesssim
        \left(\int_0^\d\bigg(\frac{\w_{\a+\theta}(f,t)_{H_p}}{t^{\theta}}\bigg)^q\frac{dt}{t}\right)^{\frac1q}.
    \end{equation}
\end{theorem}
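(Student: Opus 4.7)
The plan is to reduce the inequality on the analytic Hardy space $H_p(D)$ to the corresponding inequality on the periodic real Hardy space $H_p(\T)$, where Theorem~\ref{realHpsharpURd} (in its periodic form) applies directly. The bridge is the Burkholder--Gundy--Silverstein theorem, which asserts that if $f = u + iv$ is analytic on $D$ with $f \in H_p(D)$, then the boundary function $f^*$ belongs to the periodic real Hardy space $H_p(\T)$, and
\[
\|f\|_{H_p(D)} \asymp \|f^*\|_{H_p(\T)},
\]
with constants depending only on $p$. This identification is the core tool.

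First, I would verify that the moduli of smoothness on the two sides are equivalent: namely,
\[
\omega_\alpha(f,\delta)_{H_p(D)} \asymp \omega_\alpha(f^*,\delta)_{H_p(\T)}, \qquad 0<p<\infty,\ \alpha \in \mathbb{N}\cup((1/p-1)_+,\infty).
\]
This follows because the fractional difference operator $\Delta_h^\alpha$ acts purely in the angular variable and commutes with radial dilation, so $\Delta_h^\alpha f$ is again analytic on $D$ with boundary value $\Delta_h^\alpha f^*$; hence applying Burkholder--Gundy--Silverstein to each $\Delta_h^\alpha f$ gives $\|\Delta_h^\alpha f\|_{H_p(D)} \asymp \|\Delta_h^\alpha f^*\|_{H_p(\T)}$, and taking the supremum over $|h|\le\delta$ yields the equivalence of moduli.

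With this equivalence in hand, the theorem reduces to the sharp Ulyanov inequality
\[
\omega_\alpha(g,\delta)_{H_q(\T)} \lesssim \left(\int_0^\delta \Bigl(\tfrac{\omega_{\alpha+\theta}(g,t)_{H_p(\T)}}{t^\theta}\Bigr)^{q}\tfrac{dt}{t}\right)^{1/q}, \qquad \theta = \tfrac1p - \tfrac1q,
\]
for $g = f^* \in H_p(\T)$. This is exactly the periodic one-dimensional analogue of Theorem~\ref{realHpsharpURd}: the proof of that theorem, based on the Holmstedt-type formula~\eqref{eqKolya3}, the embeddings of Lemma~\ref{lemmapluss} (realized in the periodic Triebel--Lizorkin/Besov scale via the identifications $H_p(\T^d) = F_{p,2}^0(\T^d)$), the equivalence $\omega_\alpha \asymp K_\alpha$ in the real Hardy space~\eqref{HpwK}, and Jackson-type boundedness of fractional integrals in $H_p$, carries over verbatim to $H_p(\T)$. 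Thus I would either invoke this periodic analogue of Theorem~\ref{realHpsharpURd} directly, or spell out the two or three substitutions needed (replacing $\R^d$ by $\T$) to recover it from the $\R^d$ statement via a periodization argument.

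The only technical obstacle I foresee is the first step, namely ensuring cleanly that the analytic-space modulus really coincides (up to constants) with the real Hardy-space modulus of the boundary function for fractional $\alpha$; the integer case is classical, but the fractional case requires observing that the binomial series defining $\Delta_h^\alpha$ converges in $H_p(D)$ because $\sum_{\nu}|\binom{\alpha}{\nu}|^{\min(1,p)}<\infty$ for $\alpha\in\mathbb{N}\cup((1/p-1)_+,\infty)$, so that termwise application of Burkholder--Gundy--Silverstein is justified. Once this is established, the transfer is automatic and the sharp inequality~\eqref{eqth2.1} follows.
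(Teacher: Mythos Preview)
Your proposal is correct and follows essentially the same route as the paper: the paper also derives Theorem~\ref{th2} by invoking the Burkholder--Gundy--Silverstein identification between analytic and real Hardy spaces and then transferring the already-proved sharp Ulyanov inequality for real Hardy spaces (Theorem~\ref{realHpsharpURd}, in its periodic form). Your write-up is more explicit about the intermediate equivalence $\omega_\alpha(f,\delta)_{H_p(D)}\asymp\omega_\alpha(f^*,\delta)_{H_p(\T)}$ and the convergence issue for fractional $\alpha$, but the argument is the same.
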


\begin{theorem}\label{th3}
Let $f\in H_p(D)$, $0<p< q<\infty$, $\theta=1/p-1/q$, and $\a\in \N\cup (\t,\infty)$.
Then, for any  $\d\in (0,1)$, we have
\begin{equation}\label{eqth3.1}
    \d^{\a-\theta}\(\int_\d^1
\(\frac{\w_\a(f,t)_{H_q}}{t^{\a-\theta}}\)^p\frac{dt}{t}\)^\frac1p\lesssim\(\int_0^\d \(\frac{\w_\a(f,t)_{H_p}}{t^\t}\)^q\frac{dt}{t}\)^\frac1q.
\end{equation}
\end{theorem}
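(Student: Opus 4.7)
The strategy is to transfer the Kolyada-type inequality already proved for periodic real Hardy spaces (Theorem~\ref{thKolyaHpT} with $d=1$) to the analytic Hardy space $H_p(D)$ via boundary values. The key bridge is the Burkholder--Gundy--Silverstein theorem, which states that for $f$ analytic in $D$ and $0<p<\infty$, the non-tangential maximal function controls $\sup_{0<\rho<1}\|f(\rho\cdot)\|_{L_p(\T)}$, so the boundary function $f^*(\theta)=\lim_{\rho\to 1^-}f(\rho e^{i\theta})$ exists a.e., belongs to the real Hardy space $H_p(\T)$, and satisfies $\|f^*\|_{H_p(\T)}\asymp\|f\|_{H_p(D)}$.

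First, I would verify the parameter range: the hypothesis $\alpha\in\N\cup(\theta,\infty)$ with $\theta=1/p-1/q$ implies $\alpha\in\N\cup((1/p-1)_+,\infty)$, since $\theta>1/p-1$ when $q>1$, and $\theta\ge 1/p-1\ge(1/p-1)_+$ when $q\le 1$. Thus Theorem~\ref{thKolyaHpT} is applicable to $f^*$ in dimension $d=1$.

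Next, I would establish the equivalence $\omega_\alpha(f,t)_{H_p(D)}\asymp\omega_\alpha(f^*,t)_{H_p(\T)}$ for every $t>0$. This reduces to showing that the fractional difference $\Delta_h^\alpha f^*$, defined in~(\ref{def-mod++}) through translates of the boundary function, matches the analytic fractional difference up to a norm-equivalent identification. Since the analytic Hardy norm $\|f\|_{H_p(D)}$ is invariant under rotations $z\mapsto e^{ih}z$ and the operation $f^*(\cdot+h)$ corresponds precisely to the boundary value of the rotated analytic function, each translate preserves membership in the analytic subspace, and the Burkholder--Gundy--Silverstein equivalence applied term-by-term (with uniform constants) gives the claim.

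Having these two ingredients, the inequality~(\ref{eqth3.1}) follows by applying~(\ref{eqth3.1KmodT}) to $f^*$ and then converting all $H_p(\T)$ and $H_q(\T)$ moduli back to $H_p(D)$ and $H_q(D)$ moduli respectively. The main technical point — really the only non-routine step — is the translation-invariance/rotation-invariance verification for fractional differences of arbitrary order $\alpha>0$, but once it is observed that $\Delta_h^\alpha$ is a finite (or absolutely convergent) combination of rotations, this reduces to the scalar $H_p$-boundedness of each rotation, which is immediate.
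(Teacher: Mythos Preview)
Your approach is exactly the paper's: Theorem~\ref{th3} is derived from the periodic real-Hardy Kolyada inequality (Theorem~\ref{thKolyaHpT} with $d=1$) via the Burkholder--Gundy--Silverstein identification of $H_p(D)$ with the analytic subspace of $H_p(\T)$, and the rotation-invariance argument you sketch is precisely what makes the moduli transfer. The paper additionally supplies, for the sub-case $q\ge 2$, an independent direct proof based on the Hausdorff--Young inequality together with a refined Hardy--Littlewood bound on Taylor coefficients (Lemma~\ref{lemH}).
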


\begin{remark}\label{rem1-1}
\textnormal{
Note that inequalities (\ref{eqth2.1}) and (\ref{eqth3.1}) do not hold in $L_p$, $p\le 1$ (see~\cite{kol}).}


\end{remark}

Let us give an alternative proof of Theorem~\ref{th3} in the case $q\ge 2$.
This proof does not rely on the interpolation properties of function
spaces and uses only summability properties of Taylor coefficients.

To prove the theorem we will need the following refinement of the Hardy-Littlewood inequality (see~\cite{MP}).
\begin{lemma}\label{lemH}
    Let $f(z)=\sum_{k=1}^\infty a_k z^k\in H_p(D)$, $0<p\le 1$, and $0<q<\infty$. Then
\begin{equation}\label{eqlemH1}
    \sum_{n=0}^\infty 2^{np(1-(1/q+1/p))}\(\sum_{2^n\le
|k|<2^{n+1}}|a_k|^q\)^\frac pq\lesssim \Vert f\Vert_{H_p}.
\end{equation}
\end{lemma}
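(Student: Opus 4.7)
The plan is to exploit the dyadic Littlewood--Paley block structure together with the Triebel--Lizorkin identification $H_p = F_{p,2}^0$ and the Jawerth--Franke Sobolev-type embedding into a Besov space. Writing $\Delta_n f(z)=\sum_{2^n\le k<2^{n+1}}a_k z^k$, the statement to prove takes the form
\begin{equation*}
\sum_{n\ge 0}2^{nsp}\,\|\Delta_n f\|_{\ell_q}^{\,p}\lesssim \|f\|_{H_p}^{\,p},\qquad s:=\tfrac1{q'}-\tfrac1p=1-\tfrac1q-\tfrac1p,
\end{equation*}
and the strategy is to recognize the left-hand side as (up to Hausdorff--Young on each block) the quasi-norm of $f$ in the Besov space $B_{q',p}^{\,s}$, and then to invoke the embedding $H_p\hookrightarrow B_{q',p}^{\,s}$.

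First I would treat the principal range $q\ge 2$. For each $n$ the Hausdorff--Young inequality applied to the trigonometric polynomial $\Delta_n f$ (whose boundary values are in $L_{q'}$) gives
\begin{equation*}
\Bigl(\sum_{2^n\le k<2^{n+1}}|a_k|^q\Bigr)^{1/q}\lesssim \|\Delta_n f\|_{L_{q'}},
\end{equation*}
so it suffices to prove $\sum_{n}2^{nsp}\|\Delta_n f\|_{L_{q'}}^{\,p}\lesssim \|f\|_{H_p}^{\,p}$. The left-hand side is precisely $\|f\|_{B_{q',p}^{\,s}}^{\,p}$. Since $H_p=F_{p,2}^{\,0}\hookrightarrow F_{p,\infty}^{\,0}$ and the Jawerth embedding
\begin{equation*}
F_{p,\infty}^{\,0}\hookrightarrow B_{q',p}^{\,s}
\end{equation*}
holds whenever the Sobolev scaling $0-\tfrac1p=s-\tfrac1{q'}$ is met and $p<q'$, both of which are satisfied in our range (with $0<p\le 1\le q'$; the degenerate case $p=1,q=\infty$ is handled directly by a boundedness argument), the required inequality follows.

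Second, for $0<q<2$ I would reduce to the case $q=2$ by Hölder's inequality on each dyadic block of cardinality $\asymp 2^n$:
\begin{equation*}
\Bigl(\sum_{2^n\le k<2^{n+1}}|a_k|^q\Bigr)^{1/q}\le 2^{n(1/q-1/2)}\Bigl(\sum_{2^n\le k<2^{n+1}}|a_k|^2\Bigr)^{1/2}.
\end{equation*}
Inserting this bound into the weighted sum, the extra factor $2^{np(1/q-1/2)}$ combines with the weight $2^{np(1-1/q-1/p)}$ to produce exactly $2^{np(1/2-1/p)}$, i.e.\ the weight appearing in the already established case $q=2$; so the full range $0<q<2$ collapses verbatim to $q=2$.

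The principal technical obstacle is to justify the Jawerth--type embedding in the quasi-Banach range $0<p\le 1$ with the precise indices $(s,q',p)$ used above, and to confirm that Hausdorff--Young and the identification $H_p=F_{p,2}^{\,0}$ apply to the analytic Hardy class on the disc in the form required here; these are standard but require some care near the endpoints. Granting those ingredients, the argument is the short three-step chain: Littlewood--Paley decomposition, Hausdorff--Young on each block, and a Sobolev-type embedding in the Triebel--Lizorkin/Besov scale.
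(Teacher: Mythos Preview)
The paper does not prove this lemma; it simply quotes it from Mateljevi\'c--Pavlovi\'c \cite{MP}. So there is no ``paper's own proof'' to match, only the question of whether your argument is valid.

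Your argument is correct. The chain (Hausdorff--Young on each dyadic block for $q\ge 2$, then the Jawerth embedding $H_p=F_{p,2}^{0}\hookrightarrow B_{q',p}^{\,1/q'-1/p}$, and H\"older on blocks to reduce $0<q<2$ to $q=2$) goes through with only the routine adjustments you flag: passing from the analytic to the real Hardy space via Burkholder--Gundy--Silverstein, and replacing sharp dyadic cutoffs by the smooth Littlewood--Paley pieces $\varphi_n*f$ (each sharp block is dominated by at most three neighbouring smooth pieces, so Hausdorff--Young still applies). The Jawerth embedding is valid for all smoothness indices in the quasi-Banach range $0<p<q'\le\infty$, so the negative target smoothness $s=1-\tfrac1q-\tfrac1p<0$ causes no trouble; in fact the paper already records exactly the needed embeddings as Lemma~\ref{lemmapluss} (for $F_{p,2}^{\alpha}\hookrightarrow B_{q,p}^{\alpha-\theta}$, shift $\alpha$) and Lemma~\ref{lemBes} (for $H_p\hookrightarrow \dot B_{2,p}^{d(1/p-1/2)}$, the $q=2$ endpoint). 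One small cleanup: the aside about ``$p=1,\,q=\infty$'' is unnecessary since the lemma assumes $q<\infty$, which forces $q'>1\ge p$ and the embedding is never borderline.

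As for comparison: the original Mateljevi\'c--Pavlovi\'c proof is a direct complex-analytic argument using growth of integral means $M_r(\rho,f)$ and elementary coefficient estimates, rather than the Triebel--Lizorkin/Besov machinery. Your route is less self-contained but fits naturally with the toolkit the present paper has already assembled in Section~\ref{sec2}.
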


\begin{remark}\label{rem1}
\textnormal{Note that since  $1-(1/q+1/p)<0$, by Hardy's inequality, estimate
(\ref{eqlemH1}) can be equivalently written as
\begin{equation}\label{eqlemH2}
    \sum_{n=0}^\infty 2^{np(1-(1/q+1/p))}\(\sum_{k=1}^{2^{n+1}}|a_k|^q\)^\frac pq\lesssim \Vert f\Vert_{H_p}.
\end{equation}
}
\end{remark}


\begin{proof}[Proof of Theorem \ref{th3}]

We follow the scheme proposed in~\cite{PST}. Set
$$
V_n(f)(x):=\sum_{k=0}^{2n} v\(\frac kn\)c_k e^{ikx},
$$
where $v$  is defined by (\ref{vallee})
(in addition, we suppose that $v$ is a monotonic function on $(0,\infty)$),
$c_k=c_k(f)$ are the Taylor coefficients of the analytic function  $f$.

First, let us note that the realization result can be formulated as follows:
\begin{equation}\label{KamodHardy}
  \Vert f-V_{2^n}(f)\Vert_{H_p}+2^{-\a n}\Vert V_{2^n}^{(\a)}(f)\Vert_{H_p}\asymp \w_\a(f,2^{-n})_{H_p}.
\end{equation}
The estimate $"\gtrsim"$ in~\eqref{KamodHardy} is clear, see the definition of the $K$-functional. The proof of the estimate $"\lesssim"$ follows from the inequalities
\begin{equation}\label{storJackH}
  \Vert f-V_{2^n}(f)\Vert_{{H_p}}\lesssim E_{2^n}(f)_{H_p}\lesssim \w_\a(f,2^{-n})_{H_p}.
\end{equation}
Here, the first inequality can be obtained by using the fact that the function $v$ is a Fourier multiplier in $H_p$ (see~\cite[\S~7.3]{TB}) and the second one is the Jackson inequality in $H_p$  (see~\cite{Stor78}).

Now, let $n\in \N$ be such that $2^{-n}\le \d<2^{-n+1}$. By standard calculations, using properties of moduli of smoothness and~\eqref{KamodHardy},
we derive that
\begin{equation}\label{eqth3.2}
\begin{split}
    &\d^{(\a-\theta)p}\int_\d^1
\(\frac{\w_\a(f,t)_{H_q}}{t^{\a-\theta}}\)^p\frac{dt}{t}
\lesssim
2^{-p(\a-\t)n} \sum_{m=0}^{n} \w_\a\(f,2^{-m}\)_{H_q}^p 2^{(\a-\t)mp}
\\
&\lesssim 2^{-p(\a-\t)n}\(\sum_{m=0}^n 2^{-m\t p}\Vert
V_{2^m}^{(\a)}(f)\Vert_{H_q}^p +\sum_{m=0}^n 2^{m(\a-\t)p}\Vert
f-V_{2^m}(f)\Vert_{H_q}^p\).
\end{split}
\end{equation}
Using the Hausdorff-Young inequality with $q\ge 2$  and (\ref{eqlemH2}),
we obtain
\begin{equation}\label{eqth3.3}
    \begin{split}
 \sum_{m=0}^n 2^{-m\t p}\Vert V_{2^m}^{(\a)}(f)\Vert_{H_q}^p
 &\lesssim
\sum_{m=0}^{n} 2^{-m\t p}\(\sum_{\nu=1}^{2^{m+1}}
|v\(\frac \nu{2^m}\)c_\nu|^{q'} \nu^{\a q'}\)^\frac p{q'}
\\
&= \sum_{m=0}^n 2^{mp(1-(1/q'+1/p))}\(\sum_{\nu=1}^{2^{m+1}}
|v\(\frac\nu{2^n}\)c_\nu|^{q'} \nu^{\a q'}\)^\frac p{q'}
\\&\lesssim\Vert
V_{2^n}^{(\a)}(f)\Vert_{H_p}^p.
     \end{split}
\end{equation}
Next, applying the inequality
$$
\Vert f-V_{2^m}(f)\Vert_q \lesssim \Vert f-V_{2^n}(f)\Vert_q+\Vert
V_{2^m}(f)-V_{2^n}(f)\Vert_q,
$$
we get
\begin{equation}\label{eqth3.4}
\begin{split}
&\sum_{m=0}^n 2^{m(\a-\t)p}\Vert f-V_{2^m}(f)\Vert_{H_q}^p
\\
&\lesssim
\Vert f-V_{2^n}(f)\Vert_{H_q}^p
\sum_{m=0}^{n}2^{m(\a-\t)p}+\sum_{m=0}^{n-1} 2^{m(\a-\t)p}\Vert
V_{2^m}(f)-V_{2^n}(f)\Vert_{H_q}^p
\\
&\lesssim 2^{n(\a-\t)p}\Vert f-V_{2^n}(f)\Vert_{H_q}^p+\sum_{m=0}^{n-1}
2^{m(\a-\t)p}\(\sum_{s=m+1}^n \Vert
V_{2^s}(f)-V_{2^{s-1}}(f)\Vert_{H_q}\)^p.
\end{split}
\end{equation}


Let us consider the second summand in the right-hand side of (\ref{eqth3.4}).  Applying the Hardy inequality for sums, the Hausdorff-Young inequality, and Lemma~\ref{lemH},
we derive
\begin{equation}
    \begin{split}
&\sum_{m=0}^{n-1} 2^{m(\a-\t)p}\(\sum_{s=m+1}^n \Vert
V_{2^s}(f)-V_{2^{s-1}}(f)\Vert_{H_q}\)^p
\\
&
\lesssim
 \sum_{m=0}^{n-1} 2^{m(\a-\t)p} \Vert
V_{2^{m+1}}(f)-V_{2^m}(f)\Vert_{H_q}^p
\\
&\lesssim
  \sum_{m=0}^{n-1} 2^{m(\a-\t)p}
\(\sum_{\nu=2^m}^{2^{m+2}}\left|\(v\(\frac\nu{2^{m+1}}\)-v\(\frac\nu{2^{m}}\)\)c_\nu\right|^{q'}\)^\frac
p{q'}
\\
&\lesssim
 \sum_{m=0}^{n-1} 2^{m(\a-\t)p}
\(\sum_{\nu=2^m}^{2^{m+2}}\left|v\(\frac\nu{2^{m+1}}\)c_\nu\right|^{q'}\)^\frac
p{q'}
\\
&\lesssim
 \sum_{m=0}^{n-1} 2^{-m\t p}
\(\sum_{\nu=2^m}^{2^{m+1}}\left|v\(\frac\nu{2^{n}}\)\nu^\a
c_{\nu}\right|^{q'}\)^\frac p{q'}
\lesssim
\Vert V_n^{(\a)}(f)\Vert_{H_p}.
     \end{split}
\end{equation}
Now, we estimate  the first summand in the right-hand side of~(\ref{eqth3.4}). Taking into account the first inequality in~\eqref{storJackH} and applying Lemma~4.2 from \cite{diti}, we obtain
\begin{equation}\label{eqth3.6}
\Vert f-V_{2^n}(f)\Vert_{H_q}^q\lesssim \sum_{\nu=n}^\infty 2^{\nu
q(\frac1p-\frac1q)}E_{2^\nu}(f)_{H_p}^q.
\end{equation}

Thus, combining inequalities (\ref{KamodHardy}) and (\ref{eqth3.2}) and
(\ref{eqth3.3})--(\ref{eqth3.6}), we finally arrive at
\begin{equation*}
\begin{split}
 &\d^{(\a-\theta)p}\int_\d^1
\(\frac{\w_\a(f,t)_{H_q}}{t^{\a-\theta}}\)^p\frac{dt}{t}\\
&\lesssim 2^{-n(\a-\t)p}\(\Vert
V^{(\a)}_{2^n}(f)\Vert_{H_p}^p+2^{n(\a-\t)p}\Vert f-V_{2^n}(f)\Vert_{H_q}^p\)
\\
&\lesssim2^{n\t p}\w_\a(f,2^{-n})_{H_p}^p+\(\sum_{\nu=n}^\infty 2^{\nu
q\t}E_{2^\nu}(f)_{H_p}^q\)^\frac pq
\\
&\lesssim
\(\int_0^\d
\(\frac{\w_\a(f,t)_{H_p}}{t^\t}\)^q\frac{dt}{t}\)^\frac pq.
\end{split}
\end{equation*}
\end{proof}

\begin{corollary}
  Let $f\in H_p(D)$, $0<p< q<\infty$,
$\theta=1/p-1/q$, and $\a\in \N\cup (\t,\infty)$. Suppose also that the $ \alpha$-th derivative of $f$ in the sense of Weyl belongs to $
H_p(D)$. Then $f(e^{it})\in B_{q,p}^{\a-\t}(\T)$.
\end{corollary}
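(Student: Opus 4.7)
The plan is to read off the statement from Theorem \ref{th3} combined with the standard characterization of the periodic Besov space $B_{q,p}^{\alpha-\theta}(\mathbb{T})$ via the modulus of smoothness of order $\alpha$, namely
\[
\|g\|_{B_{q,p}^{\alpha-\theta}(\mathbb{T})}\asymp \|g\|_{L_q(\mathbb{T})}+\Bigl(\int_0^1\Bigl(\frac{\omega_\alpha(g,t)_{L_q(\mathbb{T})}}{t^{\alpha-\theta}}\Bigr)^p\frac{dt}{t}\Bigr)^{1/p},
\]
which applies since $\alpha>\alpha-\theta$ (as $\theta>0$). Writing $g(t)=f(e^{it})$, I need to verify the two finiteness conditions: $g\in L_q(\mathbb{T})$ and the Besov integral is finite.

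The first condition is immediate: by the realization result in $H_p(D)$ (cf.~\eqref{KamodHardy}), the hypothesis $f^{(\alpha)}\in H_p(D)$ yields $\omega_\alpha(f,t)_{H_p}\lesssim t^\alpha$. Since $\alpha>\theta$, the sharp Ulyanov inequality \eqref{eqth2.1} in Theorem \ref{th2} (applied with $\alpha$ replaced by $\alpha-\theta$, which makes sense because $\alpha-\theta\in\mathbb{N}\cup((1/q-1)_+,\infty)$ whenever $\alpha\in\mathbb{N}\cup(\theta,\infty)$ under the stated assumptions) shows $f\in H_q(D)$ and in particular $g\in L_q(\mathbb{T})$.

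For the Besov integral, I will plug the estimate $\omega_\alpha(f,t)_{H_p}\lesssim t^\alpha$ into the right-hand side of Theorem \ref{th3}:
\[
\Bigl(\int_0^\delta\Bigl(\frac{\omega_\alpha(f,t)_{H_p}}{t^{\theta}}\Bigr)^q\frac{dt}{t}\Bigr)^{1/q}\lesssim \Bigl(\int_0^\delta t^{(\alpha-\theta)q}\,\frac{dt}{t}\Bigr)^{1/q}\asymp \delta^{\alpha-\theta},
\]
where convergence at $0$ uses $\alpha>\theta$. Substituting this into \eqref{eqth3.1} and dividing by $\delta^{(\alpha-\theta)p}$ gives, for every $\delta\in(0,1)$,
\[
\int_\delta^1\Bigl(\frac{\omega_\alpha(f,t)_{H_q}}{t^{\alpha-\theta}}\Bigr)^p\frac{dt}{t}\lesssim 1
\]
with a constant independent of $\delta$. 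Letting $\delta\to 0^+$ and invoking monotone convergence yields
\[
\int_0^1\Bigl(\frac{\omega_\alpha(g,t)_{L_q(\mathbb{T})}}{t^{\alpha-\theta}}\Bigr)^p\frac{dt}{t}<\infty,
\]
after identifying $\omega_\alpha(f,t)_{H_q}=\omega_\alpha(g,t)_{L_q(\mathbb{T})}$ via the standard identification of $H_q(D)$ functions with their boundary values. Combining this with $g\in L_q(\mathbb{T})$ and the Besov characterization above completes the proof.

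The only subtlety, and the single step worth attention, is the identification of the modulus of smoothness on the analytic side with the one on the torus; this requires only that the Poisson-type convolution used to define the $H_q$-modulus commutes with translations, which is standard, so no substantial obstacle arises.
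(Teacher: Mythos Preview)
Your argument is correct and follows essentially the same route as the paper's proof: both establish $\omega_\alpha(f,\delta)_{H_p}\lesssim \delta^\alpha$ from the hypothesis (the paper cites \cite[Theorem~2.4]{K11}, you invoke the realization \eqref{KamodHardy}), plug this into the right-hand side of the Kolyada inequality \eqref{eqth3.1}, and conclude that the left-hand side yields a finite $B_{q,p}^{\alpha-\theta}$-seminorm. Your write-up is simply more explicit about passing to the limit $\delta\to 0$ and about verifying $g\in L_q(\T)$, which the paper leaves implicit.
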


\begin{proof}
By Theorem 2.4 in~\cite{K11}, one has
$$
\w_\a(f,\d)_{H_p}\lesssim \d^\a \Vert f^{(\a)}\Vert_{H_p}.
$$
Thus, the right-hand side of (\ref{eqth3.1}) is bounded and, therefore, $f(e^{it})\in B_{q, p}^{\a-\t}(\T)$.
\end{proof}


\bigskip

\newpage

\section{$(L_p,L_q)$ inequalities of Ulyanov-type involving  derivatives}
\label{sec12}
We  start this section with the following result due to  Marcinkiewicz \cite{marc}: 
$$
\|f'\|_p \lesssim
\left(\int_0^1
\bigg(\frac{\omega_2(f,t)_{p}}{t}\bigg)^\tau\frac{dt}{t}\right)^{\frac1\tau},
$$
where $f\in L_p(\T)$, $1<p<\infty$, and $\tau=\min(2,p)$.
Related inequalities were also studied by Besov  \cite{besov}.
A generalization of the previous estimate to the case of the $k$-th derivative is given by
$$
\|f^{(k)}\|_p \lesssim
\left(\int_0^1
\bigg(\frac{\omega_r(f,t)_{p}}{t^k}\bigg)^\tau\frac{dt}{t}\right)^{\frac1\tau},\quad 0<k<r,
$$
which, in particular, follows from the known embedding between Sobolev and Besov spaces.

Related inequalities for the moduli of smoothness are given by

\begin{theorem}\label{th-deri} (See \cite{diti07, simonov-sb}.)
Let $f \in L_p(\T)$, $1 < p < \infty$, $\varkappa = \max(2,p)$, $\tau = \min(2,p)$,
$\alpha, r>0$, and $\delta \in (0,1)$. Then
\begin{eqnarray*}
  \begin{split}
  \( \int_0^{\delta} \( t^{- r}
\omega_{r + \alpha}(f,t)_p \)^{\varkappa} \frac{dt}{t}
\)^{\frac{1}{\varkappa}} &\lesssim\quad \omega_{\alpha}(f^{(r)},\delta)_p\\ &\lesssim\;
 \( \int_0^{\delta} \( t^{- r} \omega_{r +
\alpha}(f,t)_p \)^{\tau} \frac{dt}{t}
\)^{\frac{1}{\tau}}.
  \end{split}
\end{eqnarray*}
\end{theorem}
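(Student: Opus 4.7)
The plan is to prove the upper and lower inequalities separately, using in both cases the realization of the $K$-functional (Theorem~\ref{lemKfp<1d}), the de la Vall\'ee--Poussin approximants $P_k := V_{2^k}*f$ with $V_n$ as in (\ref{vallee}), and the Littlewood--Paley theorem in $L_p$, $1<p<\infty$. Set $n=[1/\delta]$, fix $N$ with $2^N \asymp n$, and note two key facts: $V_n$ commutes with Weyl differentiation, so $P_k^{(r)} = V_{2^k}*f^{(r)}$ is a near-best polynomial approximant of $f^{(r)}$ in $L_p$; and the dyadic blocks $P_k - P_{k-1}$ have Fourier support in the annulus $\{|\xi|\asymp 2^k\}$.

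For the upper bound, I would apply realization to $\omega_\alpha(f^{(r)},\delta)_p$ with the test polynomial $T = P_N^{(r)}$ to obtain
\[
\omega_\alpha(f^{(r)},\delta)_p \lesssim \|(f-P_N)^{(r)}\|_p + \delta^\alpha\|P_N^{(r+\alpha)}\|_p.
\]
Bernstein together with the realization for $\omega_{r+\alpha}(f,\delta)_p$ bounds the second term by $\delta^{-r}\omega_{r+\alpha}(f,\delta)_p$, which is absorbed into the right-hand integral using the monotonicity of $\omega_{r+\alpha}(f,t)_p/t^{r+\alpha}$ (Theorem~\ref{lemBasicMod}). For the first term, telescope $f - P_N = \sum_{k>N}(P_k - P_{k-1})$ and apply the Littlewood--Paley theorem together with the embedding $\ell^\tau \hookrightarrow \ell^2$ ($\tau = \min(p,2)$):
\[
\|(f-P_N)^{(r)}\|_p^\tau \lesssim \sum_{k>N}\|(P_k-P_{k-1})^{(r)}\|_p^\tau.
\]
Bernstein on each annular block followed by Jackson's inequality $E_{2^{k-1}}(f)_p \lesssim \omega_{r+\alpha}(f,2^{-k})_p$ reduces the sum to $\sum_{k>N}2^{kr\tau}\omega_{r+\alpha}(f,2^{-k})_p^\tau$, which is comparable to the target integral by dyadic-to-integral conversion.

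For the lower bound, the same realization identity, now in its lower form $\omega_\alpha(f^{(r)},\delta)_p \gtrsim \|(f-P_N)^{(r)}\|_p$, is the starting point. The Littlewood--Paley theorem with power $\varkappa = \max(p,2)$---using ordinary Minkowski for $p \geq 2$ and its reverse form for $p<2$---gives $\|g\|_p^\varkappa \gtrsim \sum_k\|g_k\|_p^\varkappa$ on LP blocks, and two-sided annular Bernstein yields
\[
\omega_\alpha(f^{(r)},\delta)_p^\varkappa \gtrsim \sum_{k>N}2^{kr\varkappa}\|P_k - P_{k-1}\|_p^\varkappa.
\]
The pointwise bound $\omega_{r+\alpha}(f,t)_p \lesssim t^r \omega_\alpha(f^{(r)},t)_p$---proved for integer $r$ by $\Delta_h^{r+\alpha}f = \int_0^h\!\cdots\!\int_0^h \Delta_h^\alpha f^{(r)}(\cdot + t_1 + \cdots + t_r)\,dt_1\cdots dt_r$ with Minkowski, and for fractional $r$ via the Mikhlin--H\"ormander theorem (Lemma~\ref{lemMult})---provides a sanity check at each dyadic scale. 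The hardest part will be converting $\|P_k - P_{k-1}\|_p$ into $\omega_{r+\alpha}(f,2^{-k})_p$ on each dyadic scale; this requires invoking the realization $\omega_{r+\alpha}(f,2^{-k})_p \asymp E_{2^k}(f)_p + 2^{-k(r+\alpha)}\|P_k^{(r+\alpha)}\|_p$ together with a summation-by-parts argument to distribute the contribution of $\delta^\alpha\|P_N^{(r+\alpha)}\|_p$ (available from the realization lower bound) across the scales. The hypothesis $1 < p < \infty$, which guarantees the two-sided Littlewood--Paley equivalence and ensures the $\tau$- and $\varkappa$-regimes above, is essential throughout.
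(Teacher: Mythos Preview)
The paper does not prove this statement: it is quoted with the citation ``(See \cite{diti07, simonov-sb})'' and used as a black box in Section~\ref{sec12}. There is therefore no in-paper argument to compare against; the relevant question is whether your sketch is a viable route to the result itself.

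Your upper bound is fine and essentially the standard argument: realization plus the embedding $B_{p,\tau}^0\hookrightarrow L_p$ (equivalently $\ell^\tau\hookrightarrow\ell^2$ inside the square function) closes exactly as you describe.

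For the lower bound, your architecture (realization, $L_p\hookrightarrow B_{p,\varkappa}^0$, two-sided annular Bernstein) is correct, and you are right to flag the block-to-modulus conversion as the crux. One warning: the pointwise bound $\omega_{r+\alpha}(f,t)_p\lesssim t^r\omega_\alpha(f^{(r)},t)_p$ that you mention is \emph{not} enough on its own---it would reduce the claim to $\sum_{k\ge N}\omega_\alpha(f^{(r)},2^{-k})_p^\varkappa\lesssim\omega_\alpha(f^{(r)},2^{-N})_p^\varkappa$, which is false in general (take $\omega_\alpha(g,t)_p\asymp 1/\log(1/t)$). What does work is to expand $\omega_{r+\alpha}(f,2^{-j})_p$ via realization and the $\ell^\tau$-LP upper embedding in terms of $c_k=\|Q_k(f^{(r)})\|_p$, and then apply the weighted Hardy inequality (Lemma~\ref{har}) \emph{twice} with exponent $\lambda=\varkappa/\tau\ge 1$: once to the tail piece $\sum_{k>j}2^{-kr\tau}c_k^\tau$ against the weight $2^{jr\varkappa}$, and once to the head piece $\sum_{N<k\le j}2^{k\alpha\tau}c_k^\tau$ against the weight $2^{-j\alpha\varkappa}$. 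In both cases the Hardy condition~\eqref{har0co} holds (the geometric weights balance exactly), and the output collapses to $\sum_{k>N}c_k^\varkappa$ plus $2^{-N\alpha\varkappa}\|P_N^{(r+\alpha)}\|_p^\varkappa$, which the $\varkappa$-LP lower bound and realization convert to $\omega_\alpha(f^{(r)},2^{-N})_p^\varkappa$. So your ``summation-by-parts'' should really read ``two weighted Hardy inequalities with exponent $\varkappa/\tau$''; with that amendment the plan is sound.
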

Note that for $p=1$ or $p=\infty$ and $\a>0$ one has only weaker estimates
$$
 t^{- r}
\omega_{r + \alpha}(f,t)_p \lesssim \omega_{\alpha}(f^{(r)},t)_p
$$
and
\begin{eqnarray}
\label{th-deri1}
 \omega_{\alpha}(f^{(r)},\delta)_p\lesssim
 \int_0^{\delta}  t^{- r} \omega_{r +
\alpha}(f,t)_p  \frac{dt}{t},
\end{eqnarray}
see \cite[p. 46, 178]{DL} for $\a\in \N$ and \cite{simonov-sb} for $\a>0$. Both inequalities clearly hold also for $1<p<\infty$.

In the case $0<p\le 1$, it is shown in \cite{diti07} that for $\a\in \N$ one has
\begin{eqnarray}\label{th-deri2}
  \begin{split}
 \omega_{\alpha}(f^{(r)},\delta)_p
 \lesssim
 \( \int_0^{\delta} \( t^{- r} \omega_{r +
\alpha}(f,t)_p \)^{p} \frac{dt}{t}
\)^{\frac{1}{p}}, 
  \end{split}
\end{eqnarray}
where $f^{(r)}$ is the derivative in the sense of $L_p$, which is defined as follows.
A function $f\in L_p(\mathbb{T})$, $0<p<\infty$, has the derivative
$f^{(r)}$ of order $r\in \N$ in the sense $L_p$ if
\begin{equation*}
  \bigg\Vert \frac{\D_h^r f}{h^r}-f^{(r)}\bigg\Vert_{p}\to0 \quad \textrm{as} \quad h\rightarrow 0\,.
\end{equation*}

Below we study sharp Ulyanov $(L_p, L_q)$ inequalities for moduli of smoothness  involving derivatives.
Our goal is to improve the known estimate (\cite{diti07})
\begin{eqnarray}
\label{th-deri3}\qquad\qquad
 \omega_{\alpha}(f^{(r)},\delta)_q
 \lesssim
 \( \int_0^{\delta} \( t^{- r- (\frac1p-\frac1q)} \omega_{r +
\alpha}(f,t)_p \)^{q_1} \frac{dt}{t}
\)^{\frac{1}{q_1}}, \qquad  0<p<q\le \infty, 
\end{eqnarray}
where $1/p-1/q<\alpha$, $d=1$, and $\a,r\in \N$.

\begin{theorem}\label{th-deri4}
Let $f\in L_p(\T)$, $0<p<q\le \infty$,  $r\in\N$, $\a\in\N\cup
    ((1/q-1)_+,\infty)$, and $\g\ge 0$ be such that
    $\a+\g\in \N\cup ((1/p-1)_+,\infty)$.
Then, for any  $\d\in (0,1)$, we have
\begin{eqnarray}
\label{eq.th-deri4}\qquad\qquad
        \w_\a(f^{(r)},\d)_q
 \lesssim \left(\int_0^\d\bigg(
                \frac{\w_{r+\a+\g}(f,t)_p}{t^{\g+r}} \s\Big(\frac1t\Big)
\bigg)^{q_1}\frac{dt}{t}\right)^{\frac1{q_1}},
\end{eqnarray}
where

{\rm (1)} if $0<p\le 1$ and $p<q\le\infty$, then
$$
\s
(t)
:=\left\{
         \begin{array}{ll}
           t^{\frac1p-1}, & \hbox{$\g>\(1-\frac1q\)_+$}; \\
           t^{\frac1p-1}\ln^\frac1q (t+1), & \hbox{$0<\g=\(1-\frac1q\)_+$}; \\
           t^{\frac1p-\frac1q-\g}, & \hbox{$0< \g<\(1-\frac1q\)_+$};\\
           t^{\frac1p-\frac1q}, & \hbox{$\g=0$},
         \end{array}
       \right.
$$

{\rm (2)} if $1<p\le q\le\infty$, then
$$
\s(t):=\left\{
         \begin{array}{ll}
           1, & \hbox{$\g\ge \frac1p-\frac1q,\quad q<\infty$}; \\
           1, & \hbox{$\g> \frac1p,\quad q=\infty$}; \\
           \ln^\frac1{p'} (t+1), & \hbox{$\g=\frac1p,\quad q=\infty$}; \\
           t^{(\frac1p-\frac1q)-\g}, & \hbox{$0\le \g<\frac1p-\frac1q$}.\\
         \end{array}
       \right.
$$
\end{theorem}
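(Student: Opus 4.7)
The plan is to prove Theorem \ref{th-deri4} by a realization-based argument that combines Jackson's inequality, the fractional realization of the $K$-functional (Theorem \ref{lemKfp<1d}), the Hardy--Littlewood--Nikol'skii polynomial estimates (Corollary \ref{corr++--}), and the classical Nikol'skii and Bernstein inequalities. Fix $n = [1/\delta]$ and let $T_n \in \mathcal{T}_n$ be a near best $L_p$-approximant of $f$ with $\|f - T_n\|_p \lesssim E_n(f)_p$. By Proposition \ref{converseMod+} and Theorem \ref{lemKfp<1d},
\begin{equation*}
\|f - T_n\|_p + n^{-(r+\alpha+\gamma)}\|T_n^{(r+\alpha+\gamma)}\|_p \lesssim \omega_{r+\alpha+\gamma}(f, 1/n)_p.
\end{equation*}
I would then split $\omega_\alpha(f^{(r)},\delta)_q \lesssim \|f^{(r)} - T_n^{(r)}\|_q + \omega_\alpha(T_n^{(r)},\delta)_q =: I_1 + I_2$ and bound each term by the target integral in \eqref{eq.th-deri4}.

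For the polynomial piece $I_2$, the Nikol'skii--Stechkin--Boas equivalence (Corollary \ref{corNSB}) yields $I_2 \asymp n^{-\alpha} \|T_n^{(r+\alpha)}\|_q$; applying Corollary \ref{corr++--} with base order $r+\alpha$ and gap $\gamma$ gives $\|T_n^{(r+\alpha)}\|_q \lesssim \sigma(n)\|T_n^{(r+\alpha+\gamma)}\|_p$ with exactly the $\sigma$ of the theorem statement (since the function $\sigma$ depends only on $p$, $q$ and $\gamma$, not on the base order); finally the realization bound on $\|T_n^{(r+\alpha+\gamma)}\|_p$ gives
\begin{equation*}
I_2 \lesssim n^{r+\gamma}\sigma(n)\,\omega_{r+\alpha+\gamma}(f,1/n)_p.
\end{equation*}
The monotonicity property \eqref{eqMonMod} forces $\omega_{r+\alpha+\gamma}(f,t)_p \asymp \omega_{r+\alpha+\gamma}(f,1/n)_p$ on $[1/(2n),1/n]$, so integrating $(\sigma(1/t)/t^{r+\gamma})^{q_1}$ over this dyadic slice absorbs the pointwise estimate of $I_2$ into the target integral.

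For $I_1$, I would use the dyadic telescoping $f^{(r)} - T_n^{(r)} = \sum_{k\ge 0}(T_{2^{k+1}n}^{(r)} - T_{2^k n}^{(r)})$ (convergent in $L_q$ whenever the right-hand side of \eqref{eq.th-deri4} is finite) together with the quasi-triangle inequality to get $I_1^{q_1} \lesssim \sum_{k\ge 0} \|(T_{2^{k+1}n} - T_{2^k n})^{(r)}\|_q^{q_1}$. Since $r \in \mathbb{N}$, the Bernstein inequality holds with the standard rate by Proposition \ref{lemBLf}, so combined with the classical Nikol'skii inequality $\|P\|_q \lesssim N^{1/p-1/q}\|P\|_p$ for $P \in \mathcal{T}_N$ and Jackson's bound $\|T_{2^{k+1}n} - T_{2^k n}\|_p \lesssim \omega_{r+\alpha+\gamma}(f, 1/(2^k n))_p$ one obtains
\begin{equation*}
\|(T_{2^{k+1}n} - T_{2^k n})^{(r)}\|_q \lesssim (2^k n)^{r+1/p-1/q}\,\omega_{r+\alpha+\gamma}(f,1/(2^k n))_p.
\end{equation*}
Converting the sum to an integral yields $I_1 \lesssim \bigl(\int_0^{1/n}(t^{-(r+1/p-1/q)}\omega_{r+\alpha+\gamma}(f,t)_p)^{q_1}\,dt/t\bigr)^{1/q_1}$. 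A direct inspection of the definition of $\sigma$ reveals that in every subcase of the theorem $\sigma(N) \gtrsim N^{1/p-1/q-\gamma}$, with a logarithmic gain at the critical values $\gamma=(1-1/q)_+$ and (when $q=\infty$) $\gamma=1/p$, so this bound is dominated by the target integral.

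The main obstacle will be the precise tracking of the logarithmic factors at the critical values of $\gamma$: on the $I_2$ side the log is produced by the corresponding case of Corollary \ref{corr++--}; on the $I_1$ side the naive Nikol'skii--Bernstein bound carries only a power factor, but this power is strictly smaller than the target power so the comparison goes through without needing the log. A secondary technicality is the $L_q$-convergence of the telescoping series when $q<1$, which reduces to the $q_1$-summability of the dyadic pieces and follows from the assumed finiteness of the right-hand side of \eqref{eq.th-deri4}.
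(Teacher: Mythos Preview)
Your approach is correct and genuinely different from the paper's. The paper proceeds in the opposite order: it first applies the sharp Ulyanov inequality of Corollary~\ref{remar} to $f^{(r)}$ (passing from $L_q$ to $L_p$ in the same order $\alpha$ of modulus), then invokes the same-metric derivative estimate $\omega_{\alpha+\gamma}(f^{(r)},\delta)_p \lesssim \bigl(\int_0^\delta (t^{-r}\omega_{\alpha+\gamma+r}(f,t)_p)^{\tilde p}\,dt/t\bigr)^{1/\tilde p}$ with $\tilde p=\min(1,p)$ (i.e.\ Theorem~\ref{th-deri} and \eqref{th-deri1}--\eqref{th-deri2}), and finally combines the two nested integrals via the weighted Hardy inequality of Lemma~\ref{har}. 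For $q=\infty$ the order is reversed and Fubini replaces Hardy. Your argument bypasses both the derivative inequality and the Hardy machinery: you split at a single polynomial $T_n$, estimate the polynomial piece $I_2$ directly by Corollary~\ref{corr++--} (which already encodes the correct $\sigma$), and handle $I_1$ by the dyadic telescoping that underlies \eqref{eqth++}. The observation that $\sigma(N)\gtrsim N^{1/p-1/q-\gamma}$ in every subcase is exactly what lets the Nikol'skii--Bernstein bound on $I_1$ be absorbed into the target integral, so the case analysis collapses. The paper's route is more modular (it factors through two theorems that are of independent interest), whereas yours treats all $q$ uniformly and is shorter once Corollary~\ref{corr++--} is available. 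One technical point you flag but do not fully resolve is the identification of the $L_q$-limit of $T_{2^kn}^{(r)}$ with $f^{(r)}$; this follows from the $L_p$-version of the same telescoping (which reproduces \eqref{th-deri2}) together with uniqueness of limits, and the paper handles it implicitly by citing \eqref{th-deri1}--\eqref{th-deri2}.
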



\begin{remark}
\textnormal{{\rm (i)}
For $q=\infty$,
the condition $\a+\g\in \N\cup ((1/p-1)_+,\infty)$
can be weaken as follows:
 $\a+\g+r\in \N\cup ((1/p-1)_+,\infty)$.}

\textnormal{{\rm (ii)} Inequality (\ref{eq.th-deri4}) means that if the integral on the right-hand side is finite, then $f^{(r)}$ exists in the sense of $L_q$ and
 estimate (\ref{eq.th-deri4}) holds (see also~\cite{diti07}).}

\textnormal{{\rm (iii)} Inequality (\ref{th-deri3}) follows from Theorem \ref{th-deri2} when $\g=0$.}

\end{remark}

\begin{proof}[Proof of Theorem \ref{th-deri2}]
Let first $0<p<q<\infty$. Theorem \ref{th1} yields
$$
        \w_\a(f^{(r)},\d)_q
 \lesssim
\left(\int_0^\d\bigg(
                \frac{\w_{\a+\g}(f^{(r)},t)_p}{t^{\g}} \s\(\frac1t\)
\bigg)^{q}\frac{dt}{t}\right)^{\frac1{q}}.
$$
By Theorem \ref{th-deri} and inequalities (\ref{th-deri1})--(\ref{th-deri2}), we obtain that
\begin{eqnarray*}
  \begin{split}
 \omega_{\a+\g}(f^{(r)},\delta)_p
 \lesssim
 \( \int_0^{\delta} \( t^{- r} \omega_{\a+\g+r}(f,t)_p \)^{\widetilde{p}} \frac{dt}{t}
\)^{{1}/{\widetilde{p}}}, \qquad \widetilde{p} = \min(1,p).
  \end{split}
\end{eqnarray*}
Here we take into account that the proof of inequality (\ref{th-deri2})
  for fractional moduli of smoothness repeats the proof given
  in \cite{diti07}
 for moduli of smoothness of integer order using the realization result~\eqref{eq.th6.0}.

Applying Hardy's inequality given by Lemma \ref{har}, we derive
\begin{eqnarray*}
  \begin{split}
        \w_\a(f^{(r)},\d)_q
 &
 \lesssim
 \left(\int_0^\d
u(t)
 \Bigg\{ \int_0^{t} \Big[ s^{- r} \omega_{\a+\g+r}(f,s)_p \Big]^{\widetilde{p}} \frac{ds}{s}
\Bigg\}^{{q}/{\widetilde{p}}}
{dt}\right)^{\frac1{q}}
\\
& \lesssim
 \left(\int_0^\d
v(t) 
 \Big[ t^{- r- 1/\widetilde{p}} \omega_{\a+\g+r}(f,t)_p \Big]^{q} 
{dt}\right)^{\frac1{q}}
\\
& =
 \left(\int_0^\d%
 \Big[
               \frac{1}{t^{\g+r}} \s\(\frac1t\)
   \omega_{\a+\g+r}(f,t)_p \Big]^{q}
   \frac{dt}{t}
\right)^{\frac1{q}},
  \end{split}
\end{eqnarray*}
where
$$ u(t)=\bigg(
                \frac{1}{t^{\g}} \s\(\frac1t\)\bigg)^{q}\frac1t,\qquad
        v(t)=
        \frac{t^{q/\widetilde{p}}}t
        \bigg(
                \frac{1}{t^{\g}} \s\(\frac1t\)\bigg)^{q}.
$$
It is easy to verify that
(\ref{har0co}) holds for the couple $(u,v)$ with $\l=q/\widetilde{p}>1$ and $\l'=q/(q-\widetilde{p})$,
which gives the desired result.

Suppose that  $0<p<q=\infty$. Applying first
inequality (\ref{th-deri1}) and then Theorem~\ref{th1}, Fubini's theorem gives
\begin{eqnarray*}
  \begin{split}
 \omega_{\a}(f^{(r)},\delta)_q
 &\lesssim
 \int_0^{\delta}  t^{- r} \omega_{\a+r}(f,t)_q\frac{dt}{t}
\\
&\lesssim
 \int_0^{\delta}  t^{- r}
  \int_0^t
                \frac{\w_{\a+r+\g}(f,u)_p}{u^{\g}} \s\(\frac1u\)
\frac{du}{u}
 \frac{dt}{t}
\\
&\lesssim
 \int_0^{\delta}
   \frac{\w_{\a+r+\g}(f,u)_p}{u^{\g}} \s\(\frac1u\)
  \int_u^\infty
  t^{- r}
 \frac{dt}{t}
              \frac{du}{u},
  \end{split}
\end{eqnarray*}
completing the proof.
\end{proof}

Now, we consider the multi-dimensional case.
It is well known that
for $f\in L_q(\T^d)$, $1\le q\le\infty$, $d\ge 2$, one has
\begin{eqnarray}\label{eqMultDerevMod}
  \begin{split}
 \omega_{\a}(D^\b f,\delta)_q
 \lesssim
 \( \int_0^{\delta} \( t^{- {|\beta|_1}} \omega_{\a+|\beta|_1}(f,t)_q \)^{\varrho } \frac{dt}{t}
\)^{{1}/{\varrho }},
  \end{split}
\end{eqnarray}
where $\a\in \N$, $\b\in \Z_+^d$,
and
$$
\varrho = \left\{
                                   \begin{array}{ll}
                                     \min(2,q), & q<\infty, \\
                                     1, & q=\infty
                                   \end{array}
                                 \right.
$$
(see \cite[Ch.~4]{BeSh}, \cite{johnen} and \cite{Treb}).

\begin{remark}\label{remarkxixixi}
\textnormal{If $1<q<\infty$, the proof of~\eqref{eqMultDerevMod} given in    \cite[Theorem 2.3]{Treb}
can be extended to the case $\a>0$.
}
\end{remark}

Note also that the reverse inequality to~\eqref{eqMultDerevMod} looks as follows (see~\cite{johnen}):
$$
\w_{\a+r}(f,\d)_q\lesssim \d^r \sup_{|\b|_1=r}\w_{\a} (D^\b f,\d)_q,\quad \a,r\in \N.
$$

By using inequality~\eqref{eqMultDerevMod} and Theorem~\ref{thMainMod}, we can prove the following multidimensional analog of Theorem~\ref{th-deri4}.

\begin{theorem}\label{th-deri4Mult}
Let $f\in L_p(\T^d)$, $d\ge 2$, $0<p<q$, $1\le q\le \infty$, $\a\in \N$, $\g\ge 0$,
 $\b\in \Z_+^d$, and
$\a+|\beta|_1+\g\in \N\cup ((1/p-1)_+,\infty)$. Then, for any $\d\in (0,1)$, we have

\begin{equation}\label{eq.th-deri4Mult.1}
    \begin{split}
 \w_{\a}(D^\b f,\d)_q
 \lesssim
\( \int_0^{\d} \( \frac{\omega_{\a+|\beta|_1+\g}(f,t)_p}{t^{{|\beta|_1+\g}}} \s\(\frac1\d\) \)^{\varrho } \frac{dt}{t}
\)^{{1}/{\varrho }},
  \end{split}
\end{equation}
where

{\rm (1)} if $0<p\le 1$ and $1\le q\le\infty$, then
$$
\s(t)
:=\left\{
         \begin{array}{ll}
           t^{d(\frac1p-1)}, & \hbox{$\g> d\(1-\frac1q\)$}; \\
           t^{d(\frac1p-1)}, & \hbox{$\g=d\(1-\frac1q\)\ge 1$ and $\a+\g\in \N$}; \\
           t^{d(\frac1p-1)}\ln^\frac1{q_1} (t+1), & \hbox{$\g=d\(1-\frac1q\)\ge 1$  and $\a+\g\not\in \N$}; \\
           t^{d(\frac1p-1)}\ln^\frac1{q} (t+1), & \hbox{$0<\g=d\(1-\frac1q\)<1$}; \\
           t^{d(\frac1p-\frac1q)-\g}, & \hbox{$0< \g<d\(1-\frac1q\)$};\\
           t^{d(\frac1p-\frac1q)}, & \hbox{$\g=0$;}
         \end{array}
       \right.
$$

{\rm (2)} if $1<p\le q\le\infty$, then
$$
\s(t):=
\left\{
         \begin{array}{ll}
           1, & \hbox{$\g\ge d(\frac1p-\frac1q),\quad q<\infty$}; \\
           1, & \hbox{$\g> \frac dp,\quad q=\infty$}; \\
           \ln^\frac1{p'} (t+1), & \hbox{$\g=\frac dp,\quad q=\infty$}; \\
           t^{d(\frac1p-\frac1q)-\g}, & \hbox{$0\le \g<d(\frac1p-\frac1q)$}.\\
         \end{array}
       \right.
$$

\end{theorem}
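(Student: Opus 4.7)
My plan is to adapt the proof strategy of Theorem~\ref{th-deri4} to the multidimensional setting. The one-dimensional sharp Ulyanov inequality Theorem~\ref{th1} is replaced by its multidimensional counterpart Theorem~\ref{thMainMod}, and the one-dimensional derivative-modulus relations of Theorem~\ref{th-deri} and inequality~\eqref{th-deri2} are replaced by the multidimensional inequality~\eqref{eqMultDerevMod}. The final rearrangement of iterated integrals is carried out with the weighted Hardy inequality (Lemma~\ref{har}).

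For $1 \le q < \infty$ I would first apply Theorem~\ref{thMainMod} to the function $D^\beta f$ in place of $f$, obtaining
\begin{equation*}
 \omega_\alpha(D^\beta f,\delta)_q \lesssim \frac{\omega_{\alpha+\gamma}(D^\beta f,\delta)_p}{\delta^\gamma}\,\s(1/\delta) + \left(\int_0^\delta \left(\frac{\omega_{\alpha+m}(D^\beta f,t)_p}{t^{d(1/p-1/q)}}\right)^{q_1}\frac{dt}{t}\right)^{1/q_1},
\end{equation*}
with the auxiliary parameter $m$ chosen so that $\alpha + |\beta|_1 + m$ lies in the admissible range and, in particular, exceeds $d(1-1/q)_+$. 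Next, each of the two $L_p$-moduli of smoothness of $D^\beta f$ on the right-hand side is estimated by~\eqref{eqMultDerevMod} applied in $L_p$; this inequality is available verbatim for $1 \le p < \infty$, and for $0 < p < 1$ with $\alpha \in \N$ the analogous estimate with exponent $\min(p,1)$ follows by combining the one-dimensional argument behind~\eqref{th-deri2} with the multidimensional realization result of Theorem~\ref{lemKfp<1d+} and the directional-derivative framework of Theorem~\ref{lemNSB}. After substitution, the resulting iterated integral is rearranged into the single-integral form of~\eqref{eq.th-deri4Mult.1} by the weighted Hardy inequality of Lemma~\ref{har}. For $q = \infty$ the order is reversed: first apply~\eqref{eqMultDerevMod} with $\varrho = 1$ to reduce the left-hand side to $\int_0^\delta t^{-|\beta|_1}\omega_{\alpha+|\beta|_1}(f,t)_\infty\,dt/t$, then bound each inner modulus by Theorem~\ref{thMainMod} in the case $q=\infty$, and conclude with Fubini's theorem, exactly as in the $q = \infty$ branch of the proof of Theorem~\ref{th-deri4}.

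The main obstacle is the case analysis enforced by the piecewise definition of $\s$. The critical threshold $\gamma = d(1-1/q)_+$ separates three qualitatively different regimes, and within the critical logarithmic case one has to distinguish further between $\alpha + \gamma \in \N$ and $\alpha + \gamma \notin \N$: the former benefits from the sharper polynomial-Sobolev estimate of Lemma~\ref{lemPolSob}, while the latter relies on the generic Hardy--Littlewood--Nikol'skii bound of Lemma~\ref{lemma+}. In each case, the admissibility condition~\eqref{har0co} for Lemma~\ref{har} must be verified with weights of the form $u(t) \asymp t^{-\varrho(|\beta|_1+\gamma)-1}\s(1/t)^\varrho$ and a matching $v$; this reduces to a routine but case-dependent comparison of the exponents $d(1/p-1/q)$, $d(1-1/q)_+$ and $\gamma$ against $\varrho$ and $q_1$, and also requires a discretization argument analogous to~\eqref{I2.2} whenever $\varrho \le q_1$.
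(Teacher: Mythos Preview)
Your plan reverses the order of the two main steps compared with the paper, and this reversal creates a genuine gap when $0<p<1$.

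The paper's proof applies~\eqref{eqMultDerevMod} \emph{first}, in $L_q$ (where $q\ge 1$, so~\eqref{eqMultDerevMod} is available as stated), obtaining
\[
\omega_\alpha(D^\beta f,\delta)_q \lesssim \Big(\int_0^\delta \big(t^{-|\beta|_1}\omega_{\alpha+|\beta|_1}(f,t)_q\big)^\varrho\,\frac{dt}{t}\Big)^{1/\varrho},
\]
and only then applies Theorem~\ref{thMainMod} (in its Corollary form) to bound $\omega_{\alpha+|\beta|_1}(f,t)_q$ in terms of $\omega_{\alpha+|\beta|_1+\gamma}(f,u)_p$. The iterated integral is collapsed using the Hardy inequality for monotone (or weakly decreasing) functions, not Lemma~\ref{har}; this matters because the relevant exponent $\varrho/q_1\le 1$, so Lemma~\ref{har} does not apply directly.

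Your order (Theorem~\ref{thMainMod} applied to $D^\beta f$ first, then~\eqref{eqMultDerevMod} in $L_p$) runs into two obstacles when $0<p<1$. First, \eqref{eqMultDerevMod} is only stated for $1\le q\le\infty$; the multidimensional analogue of~\eqref{th-deri2} for $0<p<1$ that you invoke is not proved anywhere in the paper, and your sketch of how to obtain it is not a proof. Second, to apply Theorem~\ref{thMainMod} to $D^\beta f$ with parameters $(\alpha,\gamma)$ you need $\alpha+\gamma\in\N\cup((1/p-1)_+,\infty)$, whereas the hypothesis of the theorem only guarantees $\alpha+|\beta|_1+\gamma\in\N\cup((1/p-1)_+,\infty)$; for $p<1$ and $|\beta|_1\ge 1$ these are not equivalent. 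The paper's order sidesteps both issues: it applies~\eqref{eqMultDerevMod} where it is known to hold ($L_q$, $q\ge 1$), and then Theorem~\ref{thMainMod} with first parameter $\alpha+|\beta|_1\in\N$ and second parameter $\alpha+|\beta|_1+\gamma$, which matches the hypothesis exactly.

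For $p\ge 1$ your order does work, and in fact the paper's Remark following the theorem notes that it yields the stronger conclusion with $q_1$ in place of $\varrho$. Your $q=\infty$ branch already uses the paper's order and is fine.
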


\begin{proof}
The proof is slightly different from the proof of Theorem~\ref{th-deri4}. We  use first~\eqref{eqMultDerevMod} and then we apply Theorem~\ref{thMainMod} to get
\begin{equation*}
    \begin{split}
 \w_{\a}(D^\b f,\d)_q
 \lesssim
 \Bigg\{ \int_0^{\delta} 
 t^{- {|\beta|_1\varrho}}
\left(\int_0^t\bigg(
                \frac{\w_{\a+|\beta|_1+\g}(f,u)_p}{u^{\g}} \s\(\frac1u\)
\bigg)^{q_1}\frac{du}{u}\right)^{\varrho/{q_1}} \frac{dt}{t}
\Bigg\}^{{1}/{\varrho }}.
  \end{split}
\end{equation*}
To complete the proof, 
we only need the Hardy inequality for monotone functions (see, e.g.,~\cite[Theorem 3.5, p.~28]{DL}).
Let $\xi>0$, $0<\l<\infty$, and $\phi$ be a non-negative monotone function on $\R_+$. Then the following inequality holds
$$
\int_0^\infty \(t^{-\xi} \int_0^t \phi(s)\frac{ds}{s}\)^\l\frac{dt}{t}\lesssim \int_0^\infty \(t^{-\xi}\phi(t)\)^\l\frac{dt}{t}.
$$
Noting that this inequality also holds for weakly  decreasing functions $\phi$, that is, satisfying $\phi(x)\le C \phi(y)$ for $x\ge y\ge 2x$ (see, e.g.,~\cite{TiZe}), we arrive at~(\ref{eq.th-deri4Mult.1}).


\end{proof}

\begin{remark}
\textnormal{{\rm (i)} Note that in Theorem~\ref{th-deri4Mult}, $q\ge 1$ and we deal with the usual partial derivatives $D^\b f$.}

\textnormal{{\rm (ii)} Assuming in Theorem~\ref{th-deri4Mult} that $p\ge 1$, inequality \eqref{eq.th-deri4Mult.1} holds if we replace  $\varrho$ by $q_1$, which gives a stronger result. This can be shown as in the proof of Theorem~\ref{th-deri4}. Indeed, first, we use Theorem~\ref{thMainMod} for the derivative $D^\b f$ and then we apply inequality~\eqref{eqMultDerevMod}.}

\textnormal{{\rm (iii)} If $1<q<\infty$, then~\eqref{eq.th-deri4Mult.1} holds for $\a>0$ (see Remark~\ref{remarkxixixi}).
}
\end{remark}

We finish this section with the following result
 related to the discussion in Subsection~\ref{sec1.2p<1}. We show  that while dealing with
absolutely continuously functions the pathological behavior of smoothness properties in $L_p$, $0<p<1$,
disappears.

\begin{proposition}\label{proposition}
Let $0<p<1$, $\a\in \N
$, $f^{(\a-1)}\in AC(\T)$, and
$$
\w_\a(f,\d)_p=o(\d^\a),\quad \d\to 0,
$$
then $f\equiv \const$.
  \end{proposition}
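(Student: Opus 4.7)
The plan is to reduce the problem to showing that the classical derivative $f^{(\alpha)}$, which exists a.e.\ and lies in $L_1(\mathbb{T})$ because $f^{(\alpha-1)}\in AC(\mathbb{T})$, actually vanishes almost everywhere. Iterating the fundamental theorem of calculus $\alpha$ times and using Fubini's theorem gives the Newton--Leibniz--type representation
\begin{equation*}
\Delta_h^\alpha f(x)=\int_0^h\!\!\cdots\!\!\int_0^h f^{(\alpha)}(x+s_1+\cdots+s_\alpha)\,ds_1\cdots ds_\alpha,\qquad x\in\mathbb{T},\;\; h>0.
\end{equation*}
By the Lebesgue differentiation theorem applied to the $L_1$--function $f^{(\alpha)}$, for a.e.\ $x\in\mathbb{T}$ one has
\begin{equation*}
\frac{\Delta_h^\alpha f(x)}{h^\alpha}\;\longrightarrow\; f^{(\alpha)}(x)\qquad\text{as}\quad h\to 0^+.
\end{equation*}

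Fix a sequence $h_n\to 0^+$. Applying Fatou's lemma to the nonnegative measurable functions $\big|\Delta_{h_n}^\alpha f/h_n^\alpha\big|^p$ (Fatou requires no $L_p$ linearity, so the case $0<p<1$ is fine), and using $\|\Delta_{h_n}^\alpha f\|_p\le \omega_\alpha(f,h_n)_p$ together with the hypothesis $\omega_\alpha(f,\delta)_p=o(\delta^\alpha)$, we obtain
\begin{equation*}
\int_{\mathbb{T}}|f^{(\alpha)}(x)|^p\,dx\;\le\;\liminf_{n\to\infty}\frac{\|\Delta_{h_n}^\alpha f\|_p^p}{h_n^{\alpha p}}\;\le\;\liminf_{n\to\infty}\frac{\omega_\alpha(f,h_n)_p^p}{h_n^{\alpha p}}=0.
\end{equation*}
Consequently $f^{(\alpha)}(x)=0$ for a.e.\ $x\in\mathbb{T}$.

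It remains to promote this pointwise vanishing to $f\equiv\const$. I would argue via Fourier coefficients: since $f,f',\dots,f^{(\alpha-2)}$ are continuous $2\pi$--periodic (being antiderivatives of the continuous periodic function $f^{(\alpha-1)}\in AC(\mathbb{T})$), integration by parts applied $\alpha$ times yields $\widehat{f^{(\alpha)}}(k)=(ik)^\alpha \widehat f(k)$ for every $k\in\mathbb{Z}$. Because $f^{(\alpha)}\equiv 0$ a.e.\ in $L_1(\mathbb{T})$, all its Fourier coefficients vanish, so $\widehat f(k)=0$ for every $k\ne 0$. Thus $f$ is a.e.\ equal to the constant $\widehat f(0)$.

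The only potential obstacle is making sure the Newton--Leibniz representation and the integration--by--parts step are legitimate under the sole hypothesis $f^{(\alpha-1)}\in AC(\mathbb{T})$; both are classical and follow by a straightforward induction on $\alpha$ combined with Fubini's theorem and the periodicity of all derivatives $f^{(j)}$, $0\le j\le \alpha-1$. Notably, the argument does not use any $L_p$ polynomial or realization inequalities, and so works uniformly for all $0<p<1$ (and in fact also for $p\ge 1$).
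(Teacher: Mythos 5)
Your proof is correct, and it takes a genuinely different route from the one in the paper. The paper reduces to the base case $\alpha=1$: applying inequality~\eqref{th-deri2} with $r=\alpha-1$ it deduces $\omega_1(f^{(\alpha-1)},\delta)_p=o(\delta)$, and then it invokes the result of~\cite{SKO} that an absolutely continuous periodic $f$ with $\omega_1(f,\delta)_p=o(\delta)$, $0<p<1$, must be constant. Your argument is self-contained and more elementary: the Newton--Leibniz representation of $\Delta_h^\alpha f$, pointwise differentiation, and Fatou's lemma give $f^{(\alpha)}=0$ a.e.\ directly, after which the conclusion is immediate either by your Fourier-coefficient argument or, more simply, by noting that $f^{(\alpha-1)}$ is then constant and periodicity of the lower-order primitives forces all intermediate constants to vanish; and, as you observe, the argument does not use $p<1$ at all. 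The one step worth expanding is the appeal to the Lebesgue differentiation theorem: after substituting $t=s_1+\cdots+s_\alpha$, the $\alpha$-fold average becomes the convolution $\int_0^{\alpha h}f^{(\alpha)}(x+t)\,K_h(t)\,dt$ with the B-spline kernel $K_h(t)=h^{-1}K_1(t/h)$, which is supported on $[0,\alpha h]$, nonnegative, bounded by $h^{-1}$, and of unit integral, so convergence to $f^{(\alpha)}(x)$ at Lebesgue points of the $L_1$ function $f^{(\alpha)}$ follows from the one-dimensional theorem. In exchange for spelling out that step, you avoid both the external reference~\cite{SKO} and inequality~\eqref{th-deri2}; the paper's route, conversely, is a one-liner given the machinery it has already developed.
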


\begin{proof}
Let
$0<p<1$, $f\in AC(\T)$ and
$$
\w_1(f,\d)_p=o(\d),\quad \d\to 0,
$$
then $f\equiv \const$ (see~\cite{SKO}).
Using this and the estimate
$$
\w_1(f^{(\a-1)},\d)_p \lesssim\left(\int _0^\d\(\frac{\w_\a(f,p)_p}{t^{\a}}\)^p t^{p-1}{dt}\right)^{1/p}=o(\d),\quad \d\to 0,
$$
(see~\eqref{th-deri2}),
we arrive at $f^{(\a-1)}\equiv \const$ and hence the desired result follows.  
\end{proof}

\bigskip

\newpage

\section{Embedding theorems for function spaces}
\label{sec13}

As mentioned earlier, both sharp Ulyanov's and Kolyda's inequalities are closely related to embedding theorems for smooth function spaces {\rm(}Lipschitz, Nikol'skii--Besov, etc.{\rm)}.

In what follows, we restrict ourselves to the classical Lipschitz spaces
$$
{\textnormal {Lip}} (\alpha, k, p)
= \Big\{f\in L_p(\T^d)\,:\, \omega_k(f,\delta)_p= \mathcal{O}( \delta^\alpha )\Big\}
$$
and their generalizations given by
$$
{\textnormal {Lip}}^{\beta} (\alpha, k, p)
= \Big\{f\in L_p(\T^d)\,:\, \omega_k(f,\delta)_p= \mathcal{O}( \delta^\alpha \,\,\ln^\beta 1/\delta)\Big\}.
$$
The latter spaces are the well-known  
 logarithmic Lipschitz spaces, widely used in functional analysis (see, e.g.,
\cite{bure, robert})
and  differential equations (see, e.g.,
\cite{Colombini, zuazua}).

The theory of embedding theorems for function spaces has been studied for a long time, beginning  with the work of Hardy and Littlewood (see~\cite{hardy-l, HL}). They proved that
$$
{\textnormal {Lip}} (\alpha,1,p)\hookrightarrow {\textnormal {Lip}} (\alpha-\theta,1,q),\qquad
$$
where
$$1
\le p<q<\infty, \quad\theta ={1}/{p}- {1}/{q},\quad \theta< \alpha\le 1,\quad d=1.
$$
Later, this result was extended by many authors mostly in the case $1\le p<q\le \infty$ (see~\cite{besov1, gol1, hatr, kol, KOLYADA1, netr, ST, simonov-sb, S, Ti, Treb, uly}). In particular, Kolyada's inequality (see~\eqref{ul-kol}) implies that
$$
\Lip(\a,\a,p)\hookrightarrow B_{q,p}^{\a-\t},\quad 1<p<q<\infty.
$$

Recall that for an important limit case $\alpha=k$, we have
${\textnormal {Lip}} (\alpha, \alpha, p)\equiv {W^\alpha_p}(\T)$ for $1<p\le \infty$ and
$$f\in{\textnormal {Lip}} (\alpha, \alpha, 1)\quad\mbox{iff}\quad
\left\{
  \begin{array}{ll}
    \mathcal{D}^{\alpha-1}f\in BV(\T)
    , &\qquad \hbox{$\alpha>1;$} \\
    f\in BV(\T), &\qquad \hbox{$\alpha=1;$} \\
    I^{1-\alpha}f\in BV(\T), &\qquad \hbox{$\alpha<1,$}
  \end{array}
\right.
$$
where $BV$ is the space  of  all  functions  which  are  of
bounded  variation  on  every  finite  interval and $I^{1-\alpha}$ is the fractional integral (see~\cite[XII, \S~8, (8.3), p.~134]{Z}).
By
$\mathcal{D}^\alpha f$
we denote the
Liouville-Gr\"{u}nwald-Letnikov derivative of order $\alpha>0$ of a
function $f$ in the $L_p$-norm:
if for $f\in L_p(\T)$ there exists $g\in L_p(\T)$ such that
$$
\lim\limits_{h\to 0+}\left\|h^{-\alpha}\triangle_h^\alpha
f(\cdot)-g(\cdot)\right\|_p=0,
$$
 then $g=\mathcal{D}^\alpha f$ (see
\cite{But},  \cite[\S~20, (20.7)]{samko}).
Note that  for any
$f\in L_p(\T)$, $p\ge 1$, we have  $f^{(\alpha)}(x)  = \mathcal{D}^\alpha f(x)$ a.e., where
$f^{(\alpha)}$ is the Weyl derivative of~$f$.

In the case $0<p<1$, the space ${\textnormal {Lip}} (\alpha, k, p)$ with limiting smoothness (that is $\a=1/p+k-1$) consists only of "trivial"\, functions.
 Let us state it  more precisely.
 We have
$$
f\in {\textnormal {Lip}} (1/p+k-1, k, p)
$$
if and only if after correction of $f$ on a set of measure zero its $(k-1)$-st derivative is of the form
$$
f^{(k-1)}(x)=d_0+\sum_{x_l<x} d_l,
$$
where $\{x_l\}$ is a sequence of different points from $[0,2\pi)$ and $\sum_l|d_l|^p<\infty$ (see~\cite{K03} and~\cite[4.8.26]{TB}, see also~\cite{Krot} for the case $k=1$).

\subsection{New embedding theorems for Lipschitz spaces}
Our main goal in this section is to apply Ulyanov inequalities to study optimal embeddings of the type
$$
\Lip(\eta,\a,p) \hookrightarrow X_q, \quad 0<p<q\le \infty,
$$
where $X_q$ is a Lipschitz-type or Besov space with respect to $L_q$-norm.

It is worth mentioning again that the space $\Lip (\eta,\a,p)$ for $0<p\le 1$ and $\eta>\a+d(1/p-1)$ consists only of a.e. constant functions. This follows from Theorem~\ref{lemBasicMod}. Note also that the embedding $\Lip(\eta,\a,p)\hookrightarrow L_q$ holds for $0<p<q\le \infty$ if and only if $\eta>\t=d(1/p-1/q)$. Thus, it is natural to assume below that $\eta>\t$ and $\eta\le \a+d(1/p-1)$ if $0<p<1$.

\begin{theorem}\label{thEmbedd}
  Let $d\ge 1$, $0<p<q\le \infty$, $\a\in \N\cup ((1/p-1)_+,\infty)$, $\t=d(1/p-1/q)$.

\textnormal{(A)} If $0<p\le 1$ and $p<q\le \infty$, then, for any $\t<\eta\le \a+d(1/p-1)$, one has

\begin{equation*}\label{eqthEmbed1.2d}
\begin{split}
      &\Lip(\eta,\a,p) \hookrightarrow  \\
&\left\{
                                         \begin{array}{ll}
                                           \Lip(\eta-\t,\a-d(1-\frac1q),q), & \hbox{$\frac{d}{d-1}\le q\le \infty$ and $\a\in\N$};\\
                                           \Lip\(\eta-\t,\a,q\), & \hbox{$\frac{d}{d-1}\le q\le 2$ and $\a\not\in \N$}; \\
                                           \Lip\(\eta-\t,\a,q\), & \hbox{$q<\frac{d}{d-1}$ and $d\ge 2$}; \\
                                           \Lip\(\eta-\t,\a,q\), & \hbox{$q\le 2$ and $d=1$}; \\
                                           \Lip(\eta-\t,\a,q)\cap \Lip^{\frac1{q}}\(\eta-\t,\a-d(1-\frac1q),q\), & \hbox{$2<q< \infty$ and $\a\not\in\N$};\\
                                           \Lip(\eta-\t,\a,q)\cap \Lip^{\frac1q}\(\eta-\t,\a-d(1-\frac1q),q\), & \hbox{$2<q<\infty$ and $d=1$};\\
                                           \Lip\(\eta-\t,\a,q\), & \hbox{$q=\infty$, $d\ge 2$, and $\a\not\in\N$};\\
                                           \Lip(\eta-\t,\a-d(1-\frac1q),q), & \hbox{$q=\infty$, $d=1$, and $\a\not\in\N$},\\
                                         \end{array}
                                       \right.
\end{split}
\end{equation*}

\textnormal{(B)} If $1<p<q\le \infty$, then, for $\t<\eta=\a$, one has

\begin{equation}\label{eqthEmbed1.4-}
  \Lip(\eta, \a, p)\hookrightarrow
\left\{
  \begin{array}{ll}
       B^{\a-\theta}_{q\, p}, & \hbox{$1 < p\le \min(2,q)$}; \\
    \Lip (\eta-\t, \a-\t, q)\,\,\cap \,\,B^{\a-\t}_{q\, p}, &\hbox{$\min(2,q)<p<q<\infty$};\\
        \Lip (\eta-\t, \a, q)\,\cap \Lip^{1/p'} (\eta-\t, \a-\t, q), & \hbox{$ 1< p<q=\infty$},
  \end{array}
\right.
\end{equation}
while, for any $\t<\eta<\a$, one has
  \begin{equation}\label{eqthEmbed1.4}
    \Lip(\eta,\a,p) \hookrightarrow \Lip\(\eta-\t,\a,q\).
  \end{equation}
\end{theorem}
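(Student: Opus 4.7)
My plan is to deduce every embedding in the theorem from the sharp Ulyanov inequalities of Sections~\ref{sec8} and~\ref{sec10} (principally Theorem~\ref{thMainMod} and Corollary~\ref{cor13.18}), together with the Kolyada-type inequality of Section~\ref{sec11} (Theorem~\ref{thKolyaL1T}) for the Besov embeddings in Part~(B). The starting point in each case is the hypothesis $\w_\a(f,t)_p\lesssim t^\eta$, combined with property~(c) of moduli of smoothness yielding $\w_{\a+\g}(f,t)_p\lesssim t^\eta$ for any admissible $\g\ge 0$. The task then reduces to bounding $\w_\beta(f,\d)_q$ for the target modulus order $\beta$ by $\d^{\eta-\t}$, possibly with a logarithmic correction.

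For Part~(A), I feed the Lipschitz estimate into Corollary~\ref{cor13.18} with $\a$ there equal to $\beta$. Under the hypothesis and $\eta>\t$ the integral term is $\lesssim\d^{\eta-\t}$, while the pointwise term $\frac{\w_{\beta+\g}(f,\d)_p}{\d^{\g+d(1/p-1)}}$ times the regime-dependent factor evaluates to $\d^{\eta-\t}$ with no logarithm in the subcritical regime $\g<d(1-1/q)_+$ (the factor $(1/\d)^{d(1-1/q)-\g}$ collapsing cleanly against $\d^{\eta-\g-d(1/p-1)}$), and to $\d^{\eta-\t}\ln^{1/\tau}(1/\d+1)$ in the critical regime $\g=d(1-1/q)_+$. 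Theorem~\ref{thMainMod} then refines the critical bound in two ways: the logarithm disappears altogether precisely when $\beta+\g\in\N$ and $d(1-1/q)_+\ge 1$, and otherwise its exponent drops from $1/\tau$ to $1/q_1$. These two observations account for every subcase: for $\a\in\N$ and $q\ge d/(d-1)$ the choice $\beta=\a-d(1-1/q)$, $\g=d(1-1/q)$ yields the integer alignment $\beta+\g=\a\in\N$ and the log-free embedding into $\Lip(\eta-\t,\a-d(1-1/q),q)$; for $\a\notin\N$, a subcritical $\g$ just below $d(1-1/q)_+$ gives $\Lip(\eta-\t,\a,q)$ without a log, while the critical choice with $\beta+\g=\a+d(1-1/q)\notin\N$ gives $\Lip^{1/q_1}(\eta-\t,\a-d(1-1/q),q)$. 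The intersection in the theorem appears exactly in the regime $2<q<\infty$, where $\tau=2<q=q_1$ makes the two embeddings incomparable; for $q\le 2$ the subcritical bound is already the sharper of the two. The subcases $q<d/(d-1)$ (automatic from the subcritical choice since $d(1-1/q)_+<1$) and $q=\infty$ are handled by the same dichotomy, the distinction between $d=1$ and $d\ge 2$ at infinity reflecting whether the integer alignment $\beta+\g=\a$ can be realized with $\beta=\a$ or forces $\beta=\a-d$.

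For Part~(B) with $\t<\eta<\a$, Theorem~\ref{thMainMod} case~(2) with any subcritical $\g\in(0,d(1/p-1/q))$ gives $\Lip(\eta-\t,\a,q)$ directly, since both terms contribute $\lesssim\d^{\eta-\t}$ and no logarithm appears. For the limiting case $\eta=\a$ this route saturates, and I turn to Kolyada's inequality. For $1<p<q<\infty$ the hypothesis gives $(\int_0^\d(\w_\a(f,t)_p/t^\t)^q\,dt/t)^{1/q}\lesssim\d^{\a-\t}$, so Theorem~\ref{thKolyaL1T} (or its counterpart valid for $p>1$ in any dimension) yields $(\int_0^1(\w_\a(f,t)_q/t^{\a-\t})^p\,dt/t)^{1/p}\lesssim 1$, which is precisely the Besov seminorm; together with $f\in L_q$ from the classical Ulyanov inequality this gives $f\in B^{\a-\t}_{q,p}$. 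In the range $\min(2,q)<p<q<\infty$ one additionally invokes the subcritical Ulyanov estimate to pick up $\Lip(\eta-\t,\a-\t,q)$ and obtain the intersection. For $q=\infty$ Kolyada's inequality is unavailable, and Theorem~\ref{thMainMod} case~(2) with the critical $\g=d/p$ (factor $\ln^{1/p'}$) produces the logarithmic Lipschitz embedding $\Lip^{1/p'}(\eta-\t,\a-\t,q)$, to be paired with $\Lip(\eta-\t,\a,q)$ from a subcritical choice.

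The principal difficulty is not any single estimate but the bookkeeping: in each of the numerous subcases one must correctly align the integrality of $\a$ with the dimensional threshold $d(1-1/q)_+$, choose $\beta$ and $\g$ accordingly, and track whether $q_1$ or $\tau$ controls the log exponent, so as to read off the sharpest possible embedding (and, where a logarithm is unavoidable, the correct power). This is largely mechanical once the three regimes of Corollary~\ref{cor13.18} and the integrality refinement of Theorem~\ref{thMainMod} are in hand.
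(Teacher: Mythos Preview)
Your plan is correct and follows essentially the same route as the paper: both arguments read off the embeddings by feeding the hypothesis $\w_\a(f,t)_p\lesssim t^\eta$ into the sharp Ulyanov inequality (Theorem~\ref{thMainMod}/\ref{th1}) and the Ulyanov--Marchaud inequality (Corollary~\ref{cor13.18}) for Part~(A), and into Kolyada's inequality for the Besov part of~(B). The paper organizes this as a three-way comparison (classical Ulyanov vs.\ sharp Ulyanov vs.\ Ulyanov--Marchaud) and invokes Theorem~\ref{emb-th-lip} explicitly to decide when the resulting spaces are incomparable; you reach the same conclusions by tracking the log exponents $1/\tau$ vs.\ $1/q_1$, which is equivalent.

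Two small slips to fix. First, in Part~(B) you say a ``subcritical'' choice gives $\Lip(\eta-\t,\a-\t,q)$; in fact you need $\g=\t$ exactly (the boundary case $\s\equiv 1$ in Theorem~\ref{thMainMod}\,(2)), since a subcritical $\g<\t$ only bounds $\w_{\a-\g}$ with $\a-\g>\a-\t$, which is the wrong direction. Similarly for $\eta<\a$ the cleanest route is simply $\g=0$ (classical Ulyanov), not a strictly positive subcritical $\g$. Second, your explanation of the $d=1$, $q=\infty$ case as ``integer alignment'' is not quite the mechanism: the log-free bound there comes from the special Weyl structure $\psi/\vp=(i\xi)^{-1}=A|\xi|^{-1}\sign\xi$ (the exceptional case in Theorem~\ref{th1dK}\,(A2) and Theorem~\ref{th1}), which kills the logarithm regardless of whether $\a\in\N$. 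Relatedly, for $q\le 1$ there is no subcritical regime ($d(1-1/q)_+=0$), so the embedding into $\Lip(\eta-\t,\a,q)$ must come from the classical Ulyanov ($\g=0$), which your cited Theorem~\ref{thMainMod} does include. None of these affect the validity of the overall argument.
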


\begin{remark}
\textnormal{In the special case $1=p<q<\infty$, $d\ge 2$, and $\eta=\a\in\N$, some embeddings from Theorem~\ref{thEmbedd} (A) can be improved as follows (see Theorem~\ref{thKolyaL1T})
$$
\Lip(\eta,\a,p)\hookrightarrow B_{q,1}^{\a-\t},\qquad \t=d(1-\frac1q).
$$
Note that by the Marchaud inequality~\eqref{eq.lemMarchaudMod}, we have
$$
B_{q,1}^{\a-\t} \hookrightarrow \Lip\(\eta-\t,\a-d(1-\frac1q),q\),
$$
see also the proof of part (B) below.}
\end{remark}

\begin{remark}
\textnormal{In the scale of Lipschitz spaces, embeddings in Theorem~\ref{thEmbedd} are sharp. This follows from Theorem~\ref{emb-th-lip}.}
\end{remark}

Let us illustrate the embeddings, which follow from Theorem~\ref{thEmbedd}, for the most important case $\eta=\a$ and $0<p<q\le \infty$. In Fig.~1 and Fig.~2, we use the following notation:
\begin{equation*}
  \begin{split}
     &X_1=\Lip\(\a-\t,\a-d\(1-1/q\),q\),\\
      &X_2=\Lip(\a-\t,\a,q),\\
&X_3=B_{q,p}^{\a-\t},\\
&X_4=\Lip(\a-\t,\a-\t,q)\cap X_3,\\
&X_5=\Lip(\a-\t,\a-\t,q)\cap \Lip^{\frac1{p'}}(\a-\t,\a-\t,q).
   \end{split}
\end{equation*}
We use the solid line \textbf{------} (or the dashed line \textbf{- - - -}) to emphasize that the corresponding boundary is included (or excluded).


\bigskip

\begin{center}
\begin{tikzpicture}[line join = round, line cap = round]

\draw[->] (-0.3,0) -- (6.3,0) node[right] {$\frac1p$};
\draw[->] (0,-0.3) -- (0,6.0) node[left] {$\frac1q$};

\draw[dashed,thick] (3+0.08,3+0.08)--(6+0.08,6+0.08);
\draw[dashed,thick] (3+0.08,2+0.04)--(6,2+0.04);
\draw[dashed, thick] (3+0.08,3)--(3+0.08,2+0.04);

\draw[thick] (3+0.08,2-0.04)--(6,2-0.04);
\draw[dashed, thick] (3+0.08,2-0.04)--(3+0.08,0);
\draw[thick] (3+0.08,0)--(6,0);

\draw (-0.05,3) -- (0.05,3);
\node[left] at (0,3) {$1$};

\draw (3,-0.06) -- (3,0.06);
\node[below] at (3,0) {$1$};

\draw (-0.05,2) -- (0.05,2);
\node[left] at (0,2) {$\small{1-1/d}$};

\draw (-0.05,1.5) -- (0.05,1.5);
\node[left] at (0,1.5) {$1/2$};

\draw (1.5,-0.06) -- (1.5,0.06);
\node[below] at (1.5,0) {$1/2$};

\node at (5,4) {$X_2$};
\node at (5,1) {$X_1$};

\draw[thick] (3,0)--(3,3);
\draw[dashed,thick] (1.5+0.08,1.5+0.08)--(3-0.06,3-0.08);
\draw[thick] (1.5,1.5)--(1.5,+0.08);
\draw[ultra thick] (0,0)--(3-0.08,0);
\draw[dashed,thick] (1.5+0.04,0.08)--(3-0.08,0.08);
\draw[dashed,thick] (0.08,0.08)--(1.5-0.04,0.08);
\draw[dashed,thick] (0.08,0.08)--(1.5-0.08,1.5-0.08);
\draw[dashed,thick] (1.5-0.08,0.08)--(1.5-0.08,1.5-0.08);

\node at (1,0.5) {$X_4$};
\node at (2.25,1) {$X_3$};

\node at (0.7,-0.7) {$X_5$};
\draw[->] (0.7,-0.5) -- (1.3,-0.05);

\end{tikzpicture}
\end{center}
\begin{center}
{\small\textbf{Fig. 1:} The sets of pairs $(1/p,1/q)$ for which the embeddings $\Lip(\a,\a,p)\hookrightarrow X_i$, $i=\overline{1,5}$, from Theorem~\ref{thEmbedd} are fulfilled in the case $d\ge 2$ and $\eta=\a\in \N$.}
\end{center}

\begin{center}
\begin{tikzpicture}[line join = round, line cap = round]

\draw[->] (-0.3,0) -- (6.3,0) node[right] {$\frac1p$};
\draw[->] (0,-0.3) -- (0,6.0) node[left] {$\frac1q$};

\draw[dashed,thick] (3+0.08,3+0.08)--(6+0.08,6+0.08);
\draw[thick] (3,1.5+0.04)--(6,1.5+0.04);
\draw[thick] (3,3)--(3,1.5+0.04);

\draw[dashed,thick] (3,1.5-0.04)--(6,1.5-0.04);
\draw[thick] (3,1.5-0.04)--(3,0.08);
\draw[ultra thick] (3,0)--(6,0);
\draw[dashed,thick] (3,0.08)--(6,0.08);

\draw (-0.05,3) -- (0.05,3);
\node[left] at (0,3) {$1$};

\draw (3,-0.06) -- (3,0.06);
\node[below] at (3,0) {$1$};



\draw (-0.05,1.5) -- (0.05,1.5);
\node[left] at (0,1.5) {$1/2$};

\draw (1.5,-0.06) -- (1.5,0.06);
\node[below] at (1.5,0) {$1/2$};

\node at (5,4) {$X_2$};
\node at (5,1) {$X_5$};


\draw[dashed,thick] (3-0.08,0)--(3-0.08,3-0.08);
\draw[dashed,thick] (1.5+0.04,1.5+0.08)--(3-0.08,3-0.08);
\draw[thick] (1.5+0.04,1.5)--(1.5+0.04,+0.08);
\draw[ultra thick] (0,0)--(3-0.08,0);
\draw[dashed,thick] (1.5+0.04,0.08)--(3-0.06,0.08);
\draw[dashed,thick] (0.08,0.08)--(1.5-0.04,0.08);
\draw[dashed,thick] (0.08,0.08)--(1.5-0.04,1.5-0.04);
\draw[dashed,thick] (1.5-0.04,0.08)--(1.5-0.04,1.5-0.08);

\node at (1,0.5) {$X_4$};
\node at (2.25,1) {$X_3$};

\node at (0.7,-0.7) {$X_5$};
\draw[->] (0.7,-0.5) -- (1.3,-0.05);

\node at (4.7,-0.7) {$X_2$};
\draw[->] (4.7,-0.5) -- (3.7,-0.05);

\end{tikzpicture}
\end{center}
\begin{center}
{\small\textbf{Fig. 2:} The sets of pairs $(1/p,1/q)$ for which the embeddings $\Lip(\a,\a,p)\hookrightarrow X_i$, $i=\overline{1,5}$, from Theorem~\ref{thEmbedd} are fulfilled in the case $d\ge 2$ and $\eta=\a\not\in \N$.}
\end{center}

\begin{proof}
(A) We need to compare the embeddings, which three main inequalities (the classical Ulyanov inequality, the sharp Ulyanov inequality, and the Ulyanov--Marchaud inequality) provide.

First of all, let us compare the sharp Ulyanov inequality and the Ulyanov--Marchaud inequality.
In the case $0<p\le 1$, we have from the sharp Ulyanov inequality given by Theorems~\ref{th1} and~\ref{thMainMod} that
\begin{equation}\label{proof1}
\begin{split}
    \w_{\a-\g}(f,\d)_q&\lesssim \left(\int_0^\d\bigg(\frac{\w_{\a}(f,t)_p}{t^{d(\frac1p-\frac1q)}}\bigg)^{q_1}\frac{dt}{t}\right)^{\frac1{q_1}}\\
&+
\frac{\w_\a(f,\d)_p}{\d^{\g+d(\frac1p-1)}}\left\{
                                          \begin{array}{ll}
                                            1, & \hbox{$\g>d\(1-\frac1q\)_+$;} \\
                                            1, & \hbox{$\g=d\(1-\frac1q\)_+\ge 1$, $d\ge 2$, and $\a\in\N$;} \\
                                            \ln^{1/{q_1}}\(\frac1\d+1\), & \hbox{$\g=d\(1-\frac1q\)_+\ge 1$, $d\ge 2$, and $\a\not\in\N$;} \\
                                            1, & \hbox{$\g=d=1$ and $q=\infty$;} \\
                                            \ln^{1/{q}}\(\frac1\d+1\), & \hbox{$0<\g=d\(1-\frac1q\)_+< 1$;} \\
                                            \(\frac1\d\)^{d(1-\frac1q)-\g}, & \hbox{$0<\g<d\(1-\frac1q\)_+$;}\\
                                            \(\frac1\d\)^{d(1-\frac1q)}, & \hbox{$\g=0$.}
                                          \end{array}
                                        \right.
\end{split}
\end{equation}
At the same time, from the Ulyanov--Marchaud inequality \eqref{eqthRealKUMMod1} and property $(e)$ given in Section~\ref{sec4}, we obtain
\begin{equation}\label{proof2}
\begin{split}
    \w_{\a-\g}(f,\d)_q\lesssim &\left(\int_0^\d\bigg(\frac{\w_{\a}(f,t)_p}{t^{d(\frac1p-\frac1q)}}\bigg)^{q_1}\frac{dt}{t}\right)^{\frac1{q_1}}\\
&
\quad\quad+\frac{\w_\a(f,\d)_p}{\d^{\g+d(\frac1p-1)}}\left\{
                                          \begin{array}{ll}
                                            1, & \hbox{$\g>d\(1-\frac1q\)$;} \\
                                            \ln^{1/\tau}\(\frac1\d+1\), & \hbox{$\g=d\(1-\frac1q\)$;} \\
                                            \(\frac1\d\)^{d(1-\frac1q)-\g}, & \hbox{$\g<d\(1-\frac1q\)$,}
                                          \end{array}
                                        \right.
\end{split}
\end{equation}
where
$$
\tau=\tau(q)=\left\{
       \begin{array}{ll}
         \min(q,2), & \hbox{$q<\infty$;} \\
         1, & \hbox{$q=\infty$.}
       \end{array}
     \right.
$$

It is easy to see that~\eqref{proof2} gives the same estimates as inequality~\eqref{proof1} in all cases except the  following three cases:

\medskip

1) $2<q<\infty$ and $\g=d(1-1/q)$. In this case, by~\eqref{proof1}, for $\a\not\in \N$, we have  that
$$
\Lip(\eta,\a,p) \hookrightarrow \Lip^{\frac1{q}}\(\eta-\t,\a-d\(1-\frac1q\),q\)
$$
and, for $\a\in \N$, we have
$$
\Lip(\eta,\a,p) \hookrightarrow \left\{
                                  \begin{array}{ll}
                                    \Lip\(\eta-\t,\a-d\(1-\frac1q\),q\), & \hbox{$d\(1-\frac1q\)\ge 1$;} \\
                                    \Lip^{\frac1{q}}\(\eta-\t,\a-d\(1-\frac1q\),q\), & \hbox{$d\(1-\frac1q\)< 1$,}
                                  \end{array}
                                \right.
$$

2) $0<q\le 2$, $\g=d(1-1/q)_+\ge 1$, and $\a\in \N$,

3) $q=\infty$, $\g=d=1$ or $\g=d\ge2$, $\a\in \N$.

\medskip

\noindent In the last two cases, \eqref{proof1} implies that
$$
\Lip(\eta,\a,p) \hookrightarrow \Lip\(\eta-\t,\a-d\(1-\frac1q\),q\).
$$

\medskip

Let us note that the Ulyanov--Marchaud inequality is a direct consequence of the classical Ulyanov inequality and the Marchaud inequality (see the proof of Theorem~\ref{thRealKUM}). Thus, the classical Ulyanov inequality always gives more optimal embeddings than the Ulyanov--Marchaud inequality. Taking into account this remark, to prove (A), it is sufficient to compare the embedding
\begin{equation*}
  \Lip(\eta,\a,p)\hookrightarrow \Lip(\eta-\t,\a,q),
\end{equation*}
which follows from the classical Ulyanov (see~\eqref{eqth1.1--} and~\eqref{eqlemMM1} with $\g=0$), and the corresponding embeddings, which follow from the sharp Ulyanov inequality in the above exceptional cases.

1) If $2<q<\infty$, $\g=d(1-1/q)$, by Theorem~\ref{emb-th-lip}, $\Lip(\eta-\t,\a,q)$ and $\Lip^{1/q}(\eta-\t,\a-d(1-1/q),q)$ are not comparable.  Therefore, for $\a\not\in\N$, we have
$$
\Lip(\eta,\a,p)\hookrightarrow \Lip(\eta-\t,\a,q)\cap \Lip^{\frac1{q}}\(\eta-\t,\a-d\(1-\frac1q\),q\).
$$
Let us consider the case $\a\in \N$. First, let $d(1-1/q)\ge 1$. In this case, it is clear that
$$
\Lip\(\eta-\t, \a-d\(1-\frac1q\),q\) \hookrightarrow \Lip(\eta-\t,\a,q).
$$
Second, let $d(1-1/q)<1$. Then, by Theorem~\ref{emb-th-lip}, $\Lip(\eta-\t,\a,q)$ and $\Lip^{1/q}\(\eta, \a-d(1-1/q),q\)$ are not comparable.

2) In the second case, $0<q<2$, $\g=d(1-1/q)_+\ge 1$, $\a\in \N$, that is, $d/(d-1)\le q<2$, we have that
\begin{equation}\label{zvezda+++++}
  \Lip(\eta,\a,q)  \hookrightarrow \Lip\(\eta-\t, \a-d\(1-\frac1q\),q\) \hookrightarrow \Lip(\eta-\t,\a,q),
\end{equation}
which implies the desired embedding.

3) If $q=\infty$, $\g=d=1$ or $\g=d\ge2$, $\a\in \N$, then  as in the case 2), embeddings~\eqref{zvezda+++++} hold.


%
%
%
%
%


Combining the above embeddings, we complete the proof of~(A).

\smallskip

(B) Let $1<p<q\le\infty$.
%
%
%
As above, we see that from the Ulyanov--Marchaud inequality \eqref{eqthRealKUMMod1} and property $(e)$ we obtain
\begin{equation*}
\begin{split}
    \w_{\a-\g}(f,\d)_q\lesssim &\left(\int_0^\d\bigg(\frac{\w_{\a}(f,t)_p}{t^{d(\frac1p-\frac1q)}}\bigg)^{q_1}\frac{dt}{t}\right)^{\frac1{q_1}}\\
&\quad\quad\quad\quad\quad\quad+
\frac{\w_\a(f,\d)_p}{\d^{\g}}\left\{
                                          \begin{array}{ll}
                                            1, & \hbox{$\g>\t$;} \\
                                            \ln^{1/\tau}\(\frac1\d+1\), & \hbox{$\g=\t$;} \\
                                            \(\frac1\d\)^{\t-\g}, & \hbox{$\g<\t$.}
                                          \end{array}
                                        \right.
\end{split}
\end{equation*}
This implies that in all cases except the case $\g=\t$ the classical Ulyanov inequality gives the same estimates as the sharp Ulyanov inequality (see~\eqref{eqth1.1--} for $d=1$ and~\eqref{eqlemMM1} for $d\ge 2$).
Thus, to prove (B), we compare the three following  embeddings:

$$\Lip(\eta,\a,p)\hookrightarrow \Lip(\eta-\t,\a,q),\leqno{(1)}$$

which follows from the classical Ulyanov inequality (see \eqref{eqth1.1--} and~\eqref{eqlemMM1} in the case~$\g=0$);

$$
\Lip(\eta,\a,p)\hookrightarrow \left\{
                                 \begin{array}{ll}
                                   \Lip(\eta-\t,\a-\t,q), & \hbox{$p<q<\infty$;} \\
                                   \Lip^{\frac1{p'}}(\eta-\t,\a-\t,q), & \hbox{$q=\infty$,}
                                 \end{array}
                               \right.\leqno{(2)}
$$

which follows  from the sharp Ulyanov inequalities \eqref{eqth1.1--} and \eqref{eqlemMM1} in the case $\g=\t$;

$$
\Lip(\eta,\a,p)\hookrightarrow B_{q,p}^{\eta-\t},\leqno{(3)}
$$

which follows from Kolyada's inequality in the case $1<p<q<\infty$ and $\eta=\a$.

\medskip

First, if $\eta<\a$, comparing embeddings (1) and (2) and taking into account the Marchaud inequality~\eqref{eq.lemMarchaudMod}, we obtain
$$
\Lip(\eta-\t,\a-\t,q)\equiv \Lip(\eta-\t,\a,q)\hookrightarrow \Lip^{\frac1{p'}}(\eta-\t,\a-\t,q),
$$
which implies~\eqref{eqthEmbed1.4}.
Second, if $\eta=\a$ and $q=\infty$,  the same argument and Theorem~\ref{emb-th-lip} yield~\eqref{eqthEmbed1.4-}. Finally, if $\eta=\a$ and $q<\infty$, we use the following embedding
$$
W_p^\a \hookrightarrow B_{q,p}^{\a-\t},\quad 1<p<q<\infty,
$$
which easily follows from Kolyada's inequality (see Theorem~\ref{thKolyaHpT}) and the fact that $W_p^\a$ coincides with the Lipschitz-type space  $\{f\in L_p\,:\,\w_\a(f,\d)_p=\mathcal{O}(\d^\a)\}$.
Thus, combining the above embedding with
the well-known interrelation between the fractional Riesz and Besov spaces $W_p^{\a-\t}$ and $B_{q,p}^{\a-\t}$ (see~\cite[p. 155]{Stein}), we easily obtain~\eqref{eqthEmbed1.4-} in the case $q<\infty$.
\end{proof}

We conclude this subsection with a remark on more general function spaces.
Define
$$
{\textnormal{Lip}} \,\big(\omega(\cdot),l,X\big)\,:=
\, \Big\{f\in X(\T^d)\,:\,\omega_{l}(f,\delta)_X= \mathcal{O} \(\omega(\delta)\),\quad \delta\to 0\Big\},$$
where
 $\omega(\cdot)$ is a  non-decreasing function on $[0,1]$  such that
 $\omega(\delta)\to 0$ as $\delta\to 0$ and
 $\delta^{-l}\omega(\delta)$ is non-increasing {\rm(}note that this class is natural class of majorant for fractional moduli of smoothness, see \cite{tik2}{\rm)} and $X(\T^d)$ is an appropriate function space.

For the Lebesgue spaces, {\rm Theorems \ref{th1} and \ref{thMainMod}} imply that
\begin{equation*}
{\textnormal{Lip}} \,\big(\omega(\cdot),\a+\g,
L_{p}({\T^d})
\big)
\subset
{\textnormal{Lip}} \,\big(\widetilde{\omega}(\cdot),\a,
L_{q}({\T^d})
\big),\qquad0<p<q\le \infty,
\end{equation*}
provided that
\begin{equation*}
 \left(\int_0^\d\bigg(
 \frac{\omega(t)}{t^{\g}} \s\(\frac1t\)
\bigg)^{q_1}\frac{dt}{t}\right)^{\frac1{q_1}}
=O\Big(\widetilde{\omega}(t)\Big),
\end{equation*}
where $\s(\cdot)$ is defined in Theorems \ref{th1} and \ref{thMainMod}.

This approach  can   also be used  to obtain embedding theorems for  general Calder\'{o}n-type spaces
$$\Lambda^{l}( G,E) =
\, \Big\{f\in G:\;\; \|f\|_G +\|\omega_{l}(f,\cdot)_G\|_E  <\infty
\Big\},$$ introduced by  Calder\'{o}n \cite{calderon}. Note that the
classical Besov spaces $B_{p,q}^{\alpha}$ are a particular case of
the Calder\'{o}n spaces.

\subsection{Embedding properties of Besov and Lipschitz-type spaces }
Here, we study sharp embedding  theorems between
Lipschitz and logarithmic Lipschitz spaces (Theorem \ref{emb-th-lip})
 and between Lipschitz and  Besov spaces (Theorem \ref{emb-th-lip+}).
  This problem is of interest in its own right.

\begin{theorem}\label{emb-th-lip}
  Let $\a,\b>0$, $1\le q\le \infty$, $\tau=\left\{
                                     \begin{array}{ll}
                                       \min (2,q), & \hbox{$1\le q< \infty$;} \\
                                       1, & \hbox{$q=\infty$,}
                                     \end{array}
                                   \right.
  $
and $\varepsilon \in (0,1/\tau)$. We have 

\medskip



{\rm 1)} $ \Lip(\a,\a+\b,q)$ and  $\Lip^\varepsilon(\a,\a,q)$ are subclasses of  $\Lip^{1/\tau}(\a,\a,q)$;

{\rm 2)} $ \Lip(\a,\a+\b,q)$ and  $\Lip^\varepsilon(\a,\a,q)$ are not comparable.


\end{theorem}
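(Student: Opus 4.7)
The plan is to deduce part~(1) directly from the Marchaud inequality (Theorem~\ref{lemMarchaudMod}) and to establish part~(2) by exhibiting two concrete witnesses, one for each direction of non-comparability.

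For part~(1), if $f\in\Lip(\alpha,\alpha+\beta,q)$ then $\omega_{\alpha+\beta}(f,t)_q\lesssim t^\alpha$, and Marchaud's estimate yields
\begin{equation*}
\omega_\alpha(f,\delta)_q\lesssim \delta^\alpha\left(\int_\delta^1 \left(\frac{\omega_{\alpha+\beta}(f,t)_q}{t^\alpha}\right)^\tau \frac{dt}{t}\right)^{1/\tau}\lesssim \delta^\alpha\ln^{1/\tau}(1/\delta),
\end{equation*}
which proves the first inclusion. The second inclusion $\Lip^\varepsilon\hookrightarrow\Lip^{1/\tau}$ is immediate from $\ln^\varepsilon(1/\delta)\le \ln^{1/\tau}(1/\delta)$ for $\delta<1/e$, since $\varepsilon<1/\tau$.

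For the first direction of non-comparability in part~(2), I would construct $f_1\in\Lip(\alpha,\alpha+\beta,q)\setminus\Lip^\varepsilon(\alpha,\alpha,q)$ by invoking the known sharpness of the Marchaud inequality. A trigonometric series $f_1(x)=\sum_{k\ge 1}k^{-\alpha-1+1/q}\sin kx$ (in the range $1\le q\le 2$, with analogous lacunary, Rudin--Shapiro or Weierstrass-type substitutes in the remaining cases) satisfies $\omega_{\alpha+\beta}(f_1,\delta)_q\asymp \delta^\alpha$ and $\omega_\alpha(f_1,\delta)_q\asymp \delta^\alpha \ln^{1/\tau}(1/\delta)$. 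The moduli computations proceed via Lemma~\ref{monot+} (Hardy--Littlewood theorem for monotone coefficients) together with the realization result in Theorem~\ref{lemKfp<1d}. Since $\varepsilon<1/\tau$, $f_1\notin\Lip^\varepsilon(\alpha,\alpha,q)$.

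To produce $f_2\in\Lip^\varepsilon(\alpha,\alpha,q)\setminus\Lip(\alpha,\alpha+\beta,q)$, the natural candidate is the periodic extension of a log-corrected power singularity of the form
\begin{equation*}
f_2(x)=\eta(x)|x|^{\alpha-1/q}\ln^\varepsilon(1/|x|),
\end{equation*}
with a smooth cutoff $\eta$. A localization computation (reducing $\|\Delta_h^k f_2\|_q$ to an integral of the form $\int_h^1 y^{(\alpha-1/q-k)q}\ln^{\varepsilon q}(1/y)\,dy$ plus a local term of size $h^{\alpha q+1}\ln^{\varepsilon q}(1/h)$) shows that $\omega_k(f_2,\delta)_q\asymp \delta^\alpha\ln^\varepsilon(1/\delta)$ for all $k>\alpha$. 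Once the same asymptotic can be secured at $k=\alpha$, we have $f_2\in\Lip^\varepsilon$ while $\omega_{\alpha+\beta}(f_2,\delta)_q/\delta^\alpha\asymp \ln^\varepsilon(1/\delta)\to\infty$, so $f_2\notin\Lip(\alpha,\alpha+\beta,q)$.

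The main obstacle is the precise behaviour of the modulus at the critical order $k=\alpha$, where integrating $y^{-1}$ produces an extra logarithm that can interact with the prescribed $\ln^\varepsilon$ correction. When $\varepsilon<1/q$ this spurious $\ln^{1/q}$ factor dominates and the naive singularity lies only in $\Lip^{1/q}$, not in $\Lip^\varepsilon$. In that regime $f_2$ has to be replaced by a finer construction -- for instance, a trigonometric series built from blocks $V_{2^{n_j}}$ with coefficients tuned through the realization Theorem~\ref{lemKfp<1d} so that the log powers from the best-approximation term and from the Bernstein-type contribution $n^{-\alpha}\|V_nf_2\|_{\dot W^\alpha_q}$ combine to exactly $\ln^\varepsilon$, while simultaneously keeping $\omega_{\alpha+\beta}(f_2,\delta)_q/\delta^\alpha$ unbounded along a lacunary subsequence. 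Verifying these two bounds simultaneously is the only delicate point; all other steps reduce to the tools already developed in Sections~\ref{sec3}--\ref{sec5}.
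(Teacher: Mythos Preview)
Your treatment of part~(1) and of the direction $\Lip(\a,\a+\b,q)\setminus\Lip^\e(\a,\a,q)$ matches the paper: Marchaud gives the embeddings, and the monotone-coefficient series $\sum k^{-\a-1+1/q}\sin kx$ for $1<q\le 2$ together with a standard lacunary example for the remaining $q$ is exactly what the paper does in Propositions~\ref{emb-th-lip2} and~\ref{emb-th-lip3}.

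The genuine gap is in the other direction. Your candidate $|x|^{\a-1/q}\ln^\e(1/|x|)$ does not work at the critical order~$\a$, as you yourself note, and the ``finer construction'' you sketch as a repair is not actually a construction---you describe desiderata but give no function and no verification. The missing idea is much simpler than block-tuning: take a \emph{doubly} lacunary series
\[
f(x)=\sum_{n\ge 1} c_n\cos\l_n x,\qquad \l_n=2^{2^n},\qquad c_n=\frac{2^{n\e}}{\l_n^{\a}}.
\]
Because $\ln\l_n\asymp 2^n$, one has $c_n\asymp \l_n^{-\a}(\ln\l_n)^\e$, so Zygmund's lacunary norm equivalence $\|f\|_q\asymp\|f\|_2$ (and $\|f\|_\infty\asymp\|\{c_n\}\|_{\ell_1}$) together with realization gives
\[
\w_\a(f,1/N)_q\lesssim N^{-\a}\ln^\e(N+1),
\]
while the best-approximation lower bound $\w_{\a+\b}(f,1/\l_s)_q\gtrsim c_s=2^{s\e}\l_s^{-\a}$ shows $f\notin\Lip(\a,\a+\b,q)$. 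The double exponential is precisely what decouples the logarithmic weight attached to a single frequency from the sum over all smaller frequencies; with ordinary lacunarity $\l_n=2^n$ (or with your power-log singularity) the two effects are inseparable and one is stuck with the exponent $1/\tau$ rather than the prescribed~$\e$.
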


\medskip

Fig.~3 illustrates the embeddings in Theorem~\ref{emb-th-lip}.

\bigskip

\begin{center}
\begin{tikzpicture}[fill opacity=0.05, ,xscale=0.8,yscale=0.8]

\fill[blue] (0,0) ellipse (4.2 and 2.5);
\draw[fill opacity=2] (0,0) ellipse (4.2 and 2.5);

\fill[red] (-1.4,0) ellipse (2.5 and 1.5);
\draw[fill opacity=2] (-1.4,0) ellipse (2.5 and 1.5);
\fill[green] (1.4,0) circle (2.5 and 1.5);
\draw[fill opacity=2] (1.4,0) circle (2.5 and 1.5);

\node[fill opacity=2,xscale=0.8,yscale=0.8] at (0,-2) {$\Lip^{1/\tau}(\a,\a,q)$};

\node[fill opacity=2,xscale=0.8,yscale=0.8] at (-2.5,0) {${\small \Lip(\a,\a+\b,q)}$};

\node[fill opacity=2,xscale=0.8,yscale=0.8] at (2.5,0) {$\Lip^\varepsilon(\a,\a,q)$};

\end{tikzpicture}
\end{center}

\phantom{qqq}

\begin{center}
{\small \textbf{Fig. 3:} The relationship among the spaces from Theorem~\ref{emb-th-lip}.}
\end{center}

\bigskip
\bigskip

The proof of the first part follows from
Marchaud inequality~\eqref{eq.lemMarchaudMod}. The proof of the second part is based on
the following three results.

\begin{proposition}\label{emb-th-lip1}
  Let $1\le q,r\le \infty$, $\a,\b>0$, and $\varepsilon >0$.
 Then there exists a function $f_1\in  \Lip^\varepsilon(\a,\a,q) \diagdown \Lip(\a,\a+\b,r)$.
\end{proposition}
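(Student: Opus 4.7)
The plan is to construct $f_1$ as a lacunary trigonometric series
\begin{equation*}
f_1(x)=\sum_{k=1}^\infty a_k\cos(\lambda_k x),\qquad a_k=\lambda_k^{-\alpha}\psi(k),
\end{equation*}
with $\{\lambda_k\}$ a Hadamard-lacunary sequence and $\psi$ a positive weight with $\psi(k)\to\infty$. The realization result of Theorem~\ref{lemKfp<1d}, combined with Zygmund's theorem on lacunary series (for $1\le p<\infty$) and the elementary bound $\|\sum c_k\cos(\lambda_k x)\|_\infty\asymp\sum|c_k|$ when $c_k\ge 0$ (for $p=\infty$), reduces the relevant moduli of smoothness to weighted sums of $\psi$:
\begin{equation*}
\omega_\alpha(f_1,\lambda_N^{-1})_p\asymp\lambda_N^{-\alpha}\Psi_p(N),\qquad \omega_{\alpha+\beta}(f_1,\lambda_N^{-1})_p\asymp\lambda_N^{-\alpha}\psi(N),
\end{equation*}
where $\Psi_p(N)=\bigl(\sum_{k\le N}\psi(k)^2\bigr)^{1/2}$ for $1\le p<\infty$ and $\Psi_\infty(N)=\sum_{k\le N}\psi(k)$. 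Here the second estimate uses a rapid growth $\lambda_{k+1}/\lambda_k\to\infty$, which forces the last block to dominate the norm of $\mathcal{D}^{\alpha+\beta}T_N$, while the tail $E_{\lambda_N}(f_1)_p$ is controlled by $\lambda_{N+1}^{-\alpha}\psi(N+1)\ll\lambda_N^{-\alpha}\psi(N)$.

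I would then choose $\{\lambda_k\}$ and $\psi$ so that $\Psi_q(N)\asymp(\log\lambda_N)^\varepsilon$ while $\psi(N)\to\infty$. For $q=\infty$, I take $\lambda_k=2^k$ and $\psi(k)\asymp k^{\varepsilon-1}$; then $\Psi_\infty(N)\asymp N^\varepsilon\asymp(\log\lambda_N)^\varepsilon$. For $1\le q<\infty$, Zygmund's theorem introduces an additional $\sqrt{N}$ factor in $\Psi_q$, so I compensate by taking $\lambda_k=2^{k^M}$ for an integer $M>1/(2\varepsilon)$ together with $\psi(k)\asymp k^{M\varepsilon-1/2}$; then $\Psi_q(N)\asymp N^{M\varepsilon}\asymp(\log\lambda_N)^\varepsilon$ and $\psi(N)=N^{M\varepsilon-1/2}\to\infty$. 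In either case, property (e) from Section~\ref{sec4} extends the equivalence $\omega_\alpha(f_1,\delta)_q\asymp\delta^\alpha(\log\tfrac1\delta)^\varepsilon$ from the discrete scales $\delta=\lambda_N^{-1}$ to all $\delta\in(0,1)$, giving $f_1\in\Lip^\varepsilon(\alpha,\alpha,q)$.

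Finally, since $\omega_{\alpha+\beta}(f_1,\lambda_N^{-1})_r\asymp\lambda_N^{-\alpha}\psi(N)$ holds uniformly for every $r\in[1,\infty]$ (Zygmund covers $1\le r<\infty$, the triangle inequality covers $r=\infty$), and since $\psi(N)\to\infty$ by construction, the ratio $\omega_{\alpha+\beta}(f_1,\delta)_r/\delta^\alpha$ is unbounded, which forbids $f_1\in\Lip(\alpha,\alpha+\beta,r)$. The main obstacle is the case when $q\in(1,\infty)$ and $\varepsilon$ is small: the intrinsic $\sqrt{N}$ loss in Zygmund's $L_p$-norm estimate for lacunary polynomials would push $f_1$ outside $\Lip^\varepsilon(\alpha,\alpha,q)$ for a naive choice $\lambda_k=2^k$. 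This is precisely what is overcome by the super-polynomial frequency growth $\lambda_k=2^{k^M}$: boosting the available logarithmic scale $\log\lambda_N\asymp N^M$ absorbs the $\sqrt{N}$ loss and makes the construction work for every $\varepsilon>0$.
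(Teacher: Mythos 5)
Your overall strategy — a lacunary cosine series, Zygmund's theorem (and the $\ell_1$ endpoint) to reduce moduli of smoothness to weighted block sums via the realization result, rapid frequency growth so the last block dominates — is the same as the paper's, and your construction for $1\le q<\infty$ (with $\lambda_k=2^{k^M}$, $M>1/(2\varepsilon)$, $\psi(k)\asymp k^{M\varepsilon-1/2}$) is sound.

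However, the $q=\infty$ case has a genuine gap. You correctly identified the ``$\sqrt N$ loss'' for $1<q<\infty$ and compensated by enlarging $M$, but you missed that the same phenomenon occurs for $q=\infty$ in an even more severe form: there $\Psi_\infty(N)=\sum_{k\le N}\psi(k)$ entails an $N$-fold loss rather than $\sqrt N$. With your choice $\lambda_k=2^k$ and $\psi(k)\asymp k^{\varepsilon-1}$, the weight $\psi(N)=N^{\varepsilon-1}$ tends to $0$ whenever $\varepsilon<1$ (and is bounded when $\varepsilon=1$). In that regime $\omega_{\alpha+\beta}(f_1,1/\lambda_N)_r\asymp\lambda_N^{-\alpha}\psi(N)=\mathcal O(\lambda_N^{-\alpha})$; since $\lambda_{N+1}/\lambda_N=2$ is bounded, the same $\mathcal O(\delta^\alpha)$ bound propagates to all $\delta$, so $f_1$ actually \emph{belongs} to $\Lip(\alpha,\alpha+\beta,r)$ — precisely the opposite of what you need. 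So the claim ``$\psi(N)\to\infty$ by construction'' is false in the $q=\infty$ case for $\varepsilon\le 1$, and Proposition~\ref{emb-th-lip1} must hold for \emph{every} $\varepsilon>0$. To repair it you would need $M>1/\varepsilon$ for $q=\infty$; but the paper sidesteps all of this with a single construction: it takes $\lambda_n=2^{2^n}$ (doubly exponential) and $c_n=2^{n\varepsilon}\lambda_n^{-\alpha}$, so that $\log\lambda_n=2^n$ is itself geometric. Then $\psi(n)=2^{n\varepsilon}$ is geometric, the cumulative sums $\bigl(\sum_{\nu\le s}\psi(\nu)^2\bigr)^{1/2}$ and $\sum_{\nu\le s}\psi(\nu)$ are both $\asymp\psi(s)$, so $\Psi_q(s)\asymp\psi(s)\asymp(\log\lambda_s)^\varepsilon$ uniformly in $q\in[1,\infty]$, and $\psi(s)\to\infty$ automatically for every $\varepsilon>0$. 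That is the key idea your proposal is missing.
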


\begin{proof}Let us define the lacunary series
$$
f_1(x)
=\sum_{n=1}^\infty c_n \cos \lambda_n x, \qquad \lambda_{n+1}/\lambda_n\ge \lambda>1.
$$
Zygmund's theorem (see~\cite[Ch. 8]{Z}) implies that $\|f_1\|_q\asymp \|f_1\|_2$ for $1\le q < \infty$ and $\|f_1\|_\infty\asymp \|\{ c_n\}_{n=1}^\infty\|_{\ell_1}$ (see~\cite{St56_3}).
Taking
$$
\lambda_n=2^{2^n},\qquad c_n=
{2^{n\varepsilon}}{\lambda_n^{-\a}}
$$
and choosing $s$ for any natural $N$ such that $\lambda_s\le N<\lambda_{s+1}$,
we get by the realization results~\eqref{eq.th6.0} that

\begin{equation*}
  \begin{split}
     \omega_{\a}(f_1,1/N)_q &\lesssim
N^{-\a}
\left\| S_N^{({\a})}(f_1)
\right\|_q
+
\left\| f_1-S_N(f_1)
\right\|_q \\
&\lesssim
N^{-\a}
\left(
\sum_{\lambda_\nu\le N } c_\nu^2 \lambda_\nu^{2\a}\right)^{1/2}
+
\left(
\sum_{\lambda_\nu\ge N }
c_\nu^2 \right)^{1/2}\\
&=
N^{-\a}
\left(
\sum_{\nu=0}^s 2^{2\nu\varepsilon}\right)^{1/2}
+
\left(
\sum_{\nu=s+1}^\infty \frac{2^{2\nu\varepsilon}}{
\lambda_\nu^{2\a}
}\right)^{1/2}\\
&\lesssim
N^{-\a}
{\ln^{\varepsilon} (N+1)},
   \end{split}
\end{equation*}
i.e., $f_1\in  \Lip^\varepsilon(\a,\a,q)$ for $q<\infty$. If $q=\infty$, similar calculation implies that
\begin{equation*}
  \begin{split}
     \omega_{\a}(f_1,1/N)_q &\lesssim
N^{-\a}
\left\| S_N^{({\a})}(f_1)
\right\|_q
+
\left\| f_1-S_N(f_1)
\right\|_q\\
&\lesssim
N^{-\a}
\sum_{\lambda_\nu\le N } c_\nu \lambda_\nu^{\a}
+
\sum_{\lambda_\nu\ge N }
c_\nu \lesssim
N^{-\a}
{\ln^{\varepsilon} (N+1)}.
   \end{split}
\end{equation*}
On the other hand, Jackson's inequality~\eqref{JacksonSO} for  $N=\lambda_s$ gives
 $$
\omega_{\a+\b}(f_1,1/N)_r \gtrsim
E_{N-1}(f_1)_r
\gtrsim
\|f_1-S_{N-1}(f_1)\|_r
\gtrsim
\left(
\sum_{\nu=s}^\infty \frac{2^{2\nu\varepsilon}}{
\lambda_\nu^{2\a}
}\right)^{1/2}
\ge
 \frac{2^{s\varepsilon}}{
\lambda_{s}^{\a}}.
$$
Assuming that
$f_1\in  \Lip(\a,\a+\b,r)$, we arrive at the contradiction

 $$
\frac{1}{
\lambda_{s}^{\a}}\gtrsim\omega_{\a+\b}(f_1,1/N)_r \gtrsim
 \frac{2^{s\varepsilon}}{
\lambda_{s}^{\a}}.
$$
Thus, $f_1\notin  \Lip(\a,\a+\b,r)$ for $r< \infty$.
If $r=\infty$
$$
\omega_{\a+\b}(f_1,1/\lambda_s)_r \gtrsim
E_{\lambda_s-1}(f_1)_r
\gtrsim
c_{\lambda_s}
= \frac{2^{s\varepsilon}}{
\lambda_{s}^{\a}}.
$$
\end{proof}

\begin{proposition}\label{emb-th-lip2}
  Let $1\le q,r\le \infty$, $\a,\b>0$, and $\varepsilon \in (0,1/2)$ for $r<\infty$ and $\varepsilon \in (0,1)$ for $r=\infty$.
 Then there exists a function $f_2\in  \Lip(\a,\a+\b,q)\diagdown \Lip^\varepsilon(\a,\a,r)$.

\end{proposition}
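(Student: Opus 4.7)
\medskip

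\noindent\textbf{Proof plan for Proposition \ref{emb-th-lip2}.}
The plan is to construct $f_2$ as a lacunary cosine series with the \emph{ordinary} lacunarity $\lambda_n=2^n$ (as opposed to the super-lacunary $\lambda_n=2^{2^n}$ used in the proof of Proposition~\ref{emb-th-lip1}) and amplitudes $c_n=\lambda_n^{-\a}$; that is,
$$
f_2(x)=\sum_{n=1}^\infty 2^{-n\a}\cos(2^n x).
$$
The reason for the choice $\lambda_n=2^n$ is that the counting function $\#\{\nu: \lambda_\nu\le N\}$ then grows like $\ln N$, which will produce a genuine logarithmic gain in $\w_\a$ while leaving $\w_{\a+\b}$ of the expected order $N^{-\a}$.

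I would first compute $\w_\a(f_2,1/N)_r$ and $\w_{\a+\b}(f_2,1/N)_q$ by means of the realization equivalence \eqref{eq.th6.0} applied to the partial sums $S_N(f_2)$ of $f_2$: for any $\mu>0$,
$$
\w_\mu(f_2,1/N)_p\asymp N^{-\mu}\|S_N^{(\mu)}(f_2)\|_p+\|f_2-S_N(f_2)\|_p.
$$
For $1\le p<\infty$, Zygmund's theorem on lacunary series gives $\|g\|_p\asymp \|g\|_2$ for every lacunary polynomial $g$, so Parseval's identity reduces each norm to an explicit sum in $c_\nu$; for $p=\infty$ one uses $\|g\|_\infty\asymp \sum|\widehat{g}|$. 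Choosing $N=\lambda_s=2^s$, a direct computation then yields
$$
\w_{\a+\b}(f_2,1/N)_q\asymp N^{-(\a+\b)}\Bigl(\sum_{\nu\le s}2^{2\nu\b}\Bigr)^{1/2}+\Bigl(\sum_{\nu\ge s}2^{-2\nu\a}\Bigr)^{1/2}\asymp N^{-\a},
$$
so $f_2\in\Lip(\a,\a+\b,q)$, while
$$
\w_\a(f_2,1/N)_r\asymp N^{-\a}\Bigl(\sum_{\nu\le s}1\Bigr)^{1/\tau_r}+\Bigl(\sum_{\nu\ge s}2^{-\tau_r\nu\a}\Bigr)^{1/\tau_r}\asymp N^{-\a}(\ln N)^{1/\tau_r},
$$
with $\tau_r=2$ for $r<\infty$ and $\tau_r=1$ for $r=\infty$ (the $\tau_r$-th power coming from $\|\cdot\|_2$ or $\ell_1$-norm).

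The conclusion is then immediate: if $\varepsilon<1/\tau_r$, the estimate $\w_\a(f_2,1/N)_r\gtrsim N^{-\a}(\ln N)^{1/\tau_r}$ rules out $\w_\a(f_2,\d)_r=\mathcal{O}(\d^\a\ln^\varepsilon(1/\d))$, whence $f_2\notin\Lip^\varepsilon(\a,\a,r)$. The only slightly delicate step is justifying the asymptotic identities above for \emph{fractional} $\a$, since the derivative $S_N^{(\mu)}$ is taken in the sense of Weyl; since $r\ge 1$, the restriction $\mu\in\N\cup((1/r-1)_+,\infty)$ in Theorem~\ref{lemKfp<1d} reduces to $\mu>0$, so the realization equivalence applies without further ado, and the lacunary nature of $S_N^{(\mu)}(f_2)$ makes Zygmund's theorem applicable verbatim. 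This is the only point that warrants some care; the rest is arithmetic on geometric sums.
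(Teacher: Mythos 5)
Your construction is exactly the one the paper uses: $f_2(x)=\sum_{n\ge 1}2^{-n\a}\cos(2^nx)$, analyzed via the realization equivalence, Zygmund's theorem for lacunary series (replacing $\|\cdot\|_r$ by $\|\cdot\|_2$ when $1\le r<\infty$ and by $\ell_1$ of coefficients when $r=\infty$), and the resulting geometric-sum arithmetic that gives $\w_{\a+\b}(f_2,1/N)_q\asymp N^{-\a}$ but $\w_\a(f_2,1/N)_r\asymp N^{-\a}(\ln N)^{1/\tau_r}$. The proof is correct and follows the paper's route; the one step you flag as warranting care (applying the realization for fractional $\a$) is handled exactly as you suggest, and your slightly more explicit justification of why $S_N(f_2)$ serves as a near-best approximant of a lacunary series (so that the two-sided $\asymp$ in the realization applies) is a welcome clarification that the paper leaves implicit.
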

\begin{proof}
We consider $$
f_2(x)
=\sum_{n=1}^\infty c_n \cos 2^nx, \qquad c_n= \frac{1}{2^{\a n} }.
$$
It is clear that $f_2\in L_q(\T)$, $1\le q\le \infty$. Moreover, since $\|f_2\|_q\asymp \|f_2\|_2$ for $1\le q < \infty$, we get by~\eqref{eq.th6.0} that
for integer $N\in [2^n, 2^{n+1})$
\begin{equation*}
  \begin{split}
    \omega_{\a+\b}(f_2,1/N)_q &\lesssim
N^{-\a-\b}
\left\| S_N^{({\a+\b})}(f_2)
\right\|_q
+
\left\| f-S_N(f_2)
\right\|_q\\
       &\lesssim
2^{-n(\a+\b)}
\left(
\sum_{k=1}^n c_k^2 2^{2k(\a+\b)}\right)^{1/2}
+
\left(
\sum_{k=n+1}^\infty c_k^2 \right)^{1/2}\lesssim
2^{-\a n}\lesssim
N^{-\a},
   \end{split}
\end{equation*}
%
i.e., $f_2\in  \Lip(\a,\a+\b,q)$ if $q<\infty$. For $q=\infty$ the same holds using
$\|f_2\|_\infty\asymp \|\{ c_n\}_{n=1}^\infty\|_{\ell_1}$.

Moreover,  for $1\le r<\infty$,
$$\omega_{\a}(f_2,1/N)_r \gtrsim
N^{-\a}
\left\| S_N^{({\a})}(f_2)
\right\|_r
\gtrsim
2^{-n \a}
\left(
\sum_{k=1}^N 1 \right)^{1/2}
\gtrsim
N^{-\a}
{\ln^{1/2} (N+1)}
$$
and
$$\omega_{\a}(f_2,1/N)_\infty \gtrsim
N^{-\a}
\left\| S_N^{({\a})}(f_2)
\right\|_\infty
\gtrsim
N^{-\a}
{\ln (N+1)}.
$$
Then
$f_2\notin \Lip^\varepsilon(\a,\a,r)$ for any $\varepsilon \in (0,1/2)$ when $r<\infty$ and for any $\varepsilon \in (0,1)$ when $r=\infty$.
\end{proof}


\begin{proposition}\label{emb-th-lip3}
  Let $1< q\le 2$, $\a,\b>0$, and $\varepsilon \in (0,1/\tau)$. Then there exists a function $f_3\in  \Lip(\a,\a+\b,q)\diagdown \Lip^\varepsilon(\a,\a,q)$.

\end{proposition}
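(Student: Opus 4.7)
The plan is to exhibit $f_3$ explicitly as
\[
f_3(x) = \sum_{k=2}^{\infty} \frac{\cos kx}{k^{\alpha + 1 - 1/q}}.
\]
First I would verify $f_3 \in L_q(\T)$: the coefficients $a_k := k^{-(\alpha+1-1/q)}$ are positive and monotone decreasing, and $\sum_k a_k^q k^{q-2} = \sum_k k^{-\alpha q - 1} < \infty$, so the Hardy--Littlewood theorem for monotone coefficient series (equivalence~\eqref{monot}) gives $f_3 \in L_q$. Since $1 < q < \infty$, the partial sum operator $S_n$ is bounded on $L_q$, so $\|f_3 - S_n f_3\|_q \lesssim E_n(f_3)_q$, i.e., $T = S_n f_3$ is a near-best approximant. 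Consequently, the realization equivalence~\eqref{eqvwithbest} yields, for any admissible $\gamma > 0$,
\[
\omega_\gamma(f_3, 1/n)_q \asymp \|f_3 - S_n f_3\|_q + n^{-\gamma}\,\|S_n^{(\gamma)} f_3\|_q.
\]

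Next I would evaluate the three $L_q$-norms that appear. The tail $f_3 - S_n f_3$ has monotone decreasing coefficients $a_k$ with $k > n$, so \eqref{monot} gives $\|f_3 - S_n f_3\|_q \asymp \bigl(\sum_{k > n} k^{-\alpha q - 1}\bigr)^{1/q} \asymp n^{-\alpha}$. The $\alpha$-th Weyl derivative of $S_n f_3$ is (up to phases) a cosine sum with coefficients $k^{1/q - 1}$ for $k = 2, \ldots, n$, which are again monotone decreasing; hence \eqref{monot} gives
\[
\|S_n^{(\alpha)} f_3\|_q \asymp \Bigl(\sum_{k=2}^{n} k^{q(1/q-1)} k^{q-2}\Bigr)^{1/q} = \Bigl(\sum_{k=2}^{n} k^{-1}\Bigr)^{1/q} \asymp (\log n)^{1/q}.
\]
The $(\alpha+\beta)$-th derivative $S_n^{(\alpha+\beta)} f_3$ has coefficients $k^{\beta + 1/q - 1}$, which may be \emph{increasing} in $k$. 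Here I would invoke the Hausdorff--Young inequality, valid precisely because $1 < q \le 2$, to get
\[
\|S_n^{(\alpha+\beta)} f_3\|_q \lesssim \Bigl(\sum_{k=2}^{n} k^{q'(\beta + 1/q - 1)}\Bigr)^{1/q'} \asymp n^{\beta + 1/q + 1/q' - 1} = n^{\beta},
\]
since $1/q + 1/q' = 1$.

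Combining these three estimates gives
\[
\omega_{\alpha+\beta}(f_3, 1/n)_q \lesssim n^{-\alpha} + n^{-(\alpha+\beta)} \cdot n^{\beta} \asymp n^{-\alpha}
\quad \text{and} \quad
\omega_\alpha(f_3, 1/n)_q \asymp n^{-\alpha} (\log n)^{1/q}.
\]
The first bound shows $f_3 \in \Lip(\alpha, \alpha+\beta, q)$. For the second, the hypothesis $\varepsilon \in (0, 1/\tau) = (0, 1/q)$ means that $(\log n)^{1/q - \varepsilon} \to \infty$, so the estimate $\omega_\alpha(f_3, \delta)_q \asymp \delta^\alpha(\log 1/\delta)^{1/q}$ is inconsistent with $\omega_\alpha(f_3, \delta)_q = O(\delta^\alpha (\log 1/\delta)^\varepsilon)$, giving $f_3 \notin \Lip^\varepsilon(\alpha, \alpha, q)$.

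The principal obstacle is handling the norm $\|S_n^{(\alpha+\beta)} f_3\|_q$: its coefficients $k^{\beta + 1/q - 1}$ are in general not monotone decreasing, so the direct form of~\eqref{monot} is unavailable. The restriction $1 < q \le 2$ in the statement is exactly what enables the Hausdorff--Young bypass, which is the reason the proposition is stated in this range (the complementary range $q > 2$ presumably requires a separate construction or a refined Littlewood--Paley argument and is not part of the claim).
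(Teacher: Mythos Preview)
Your construction and most of the argument coincide with the paper's: the same function $f_3(x)=\sum a_k\cos kx$ with $a_k=k^{-(\a+1-1/q)}$, the Hardy--Littlewood theorem~\eqref{monot} to place $f_3$ in $L_q$ and to evaluate $\|f_3-S_nf_3\|_q\asymp n^{-\a}$ and $\|S_n^{(\a)}f_3\|_q\asymp(\log n)^{1/q}$, and the realization~\eqref{eq.th6.0} to convert these into moduli.

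The gap is in your treatment of $\|S_n^{(\a+\b)}f_3\|_q$. The Hausdorff--Young inequality in the form $\|g\|_{L_q}\lesssim\|\widehat g\|_{\ell_{q'}}$ holds for $q\ge 2$ (the dual direction with $q'\in[1,2]$), \emph{not} for $1<q\le 2$. In the stated range $1<q\le 2$ one has $q'\ge 2$, and Hausdorff--Young gives only the reverse bound $\|\widehat g\|_{\ell_{q'}}\lesssim\|g\|_{L_q}$, which is useless here. So the step you flag as the ``principal obstacle'' is not resolved by your proposed bypass; the restriction $1<q\le 2$ plays no role in enabling it (if anything, your argument would work verbatim for $q\ge 2$).

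The paper sidesteps this entirely: rather than estimating $\|S_n^{(\a+\b)}f_3\|_q$ separately, it invokes Theorem~6.1 of~\cite{GT}, which for monotone $(a_k)$ and $1<q<\infty$ gives directly
\[
\w_{\a+\b}(f_3,1/n)_q\asymp n^{-(\a+\b)}\Bigl(\sum_{k=1}^n a_k^q k^{(\a+\b)q+q-2}\Bigr)^{1/q}+\Bigl(\sum_{k>n}a_k^q k^{q-2}\Bigr)^{1/q}.
\]
Substituting $a_k^q=k^{-(\a q+q-1)}$ makes both sums $\asymp n^{-\a q}$, yielding $\w_{\a+\b}(f_3,1/n)_q\lesssim n^{-\a}$ without ever confronting the non-monotone derivative coefficients. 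If you want to stay closer to your line, an alternative fix is to apply the general-monotone Hardy--Littlewood bound (Lemma~\ref{monot+}(i)) to the truncated sequence $b_k=k^{\b+1/q-1}\mathbf 1_{k\le n}$, with majorant $\b_k\lesssim n^{\b+1/q-1}$, which also produces $\|S_n^{(\a+\b)}f_3\|_q\lesssim n^\b$.
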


\begin{proof}
Define $$
f_3(x)=\sum_{n=1}^\infty a_n \cos nx, \qquad a_n^q= \frac{1}{n^{\a q +q-1}}.
$$
Since
$$
\sum_{n=1}^\infty a_n^q n^{q-2}= \sum_{n=1}^\infty \frac{1}{n^{\a q +1}}<\infty,
$$
by the Hardy--Littlewood theorem~\eqref{monot}
we get that $f_3\in L_q(\T)$. Moreover, realization~\eqref{eq.th6.0} and Theorem 6.1 from~\cite{GT}  gives
$$
\omega_{\a+\b}(f_3,1/n)_q \asymp
n^{-\a-\b}
\left(
\sum_{k=1}^n a_k^p k^{(\a+\b)q+q-2}\right)^{1/q}
+
\left(
\sum_{k=n+1}^\infty a_k^q k^{q-2}\right)^{1/q}
\lesssim
n^{-\a},
$$
i.e., $f_3\in  \Lip(\a,\a+\b,q)$.

On the other hand,
\begin{equation*}
  \begin{split}
     \omega_{\a}(f_3,1/n)_q
&\gtrsim
n^{-\a}
\left\| S_n(f_3)^{(\a)}
\right\|_q \\
&\gtrsim
n^{-\a}
\left(
\sum_{k=1}^n a_k^q k^{\a q+q-2}\right)^{1/q}
\gtrsim
n^{-\a}
\left(
\sum_{k=1}^n  \frac1k \right)^{1/q}\gtrsim \frac{\ln^{1/q} (n+1)}{n^{\a}},
   \end{split}
\end{equation*}
which yields that
$f_3\notin \Lip^\varepsilon(\a,\a,q)$ for any $\varepsilon \in (0,1/\tau)$.
\end{proof}

The next theorem shows the relationship for the spaces $\Lip(\a-\t,\a-\t,q)$ and $B_{q,p}^{\a-\t}$.

\begin{theorem}\label{emb-th-lip+}
Let $1<p<q<\infty$, $\t=d(1/p-1/q)$, and $\a>\t$. We have

{\rm 1)} if $p\le \min(2,q)$, then $B_{q,p}^{\a-\t} \hookrightarrow \Lip(\a-\t,\a-\t,q)$;

{\rm 2)} if $\min(2,q)<p$, then the spaces $\Lip(\a-\t,\a-\t,q)$ and $B_{q,p}^{\a-\t}$ are not comparable.

\end{theorem}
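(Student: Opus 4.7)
The strategy is to reduce the embedding to a direct consequence of the Marchaud inequality combined with the elementary inclusion $\ell^p\hookrightarrow \ell^\tau$ for $p\le \tau$. Applying Theorem~\ref{lemMarchaudMod} with the substitution $\alpha\to \a-\t$, $\gamma\to \t$, $p\to q$ yields
\[
\omega_{\a-\t}(f,\d)_q\lesssim \d^{\a-\t}\left(\int_\d^1\left(\frac{\omega_\a(f,t)_q}{t^{\a-\t}}\right)^\tau\frac{dt}{t}\right)^{1/\tau},\qquad \tau=\min(q,2).
\]
Because the hypothesis $p\le \min(2,q)$ is exactly $p\le \tau$, a dyadic discretization followed by the non-negative sequence bound $\|a\|_{\ell^\tau}\le \|a\|_{\ell^p}$ gives
\[
\left(\int_0^1\left(\frac{\omega_\a(f,t)_q}{t^{\a-\t}}\right)^\tau\frac{dt}{t}\right)^{1/\tau}\lesssim \left(\int_0^1\left(\frac{\omega_\a(f,t)_q}{t^{\a-\t}}\right)^p\frac{dt}{t}\right)^{1/p}\lesssim \|f\|_{B^{\a-\t}_{q,p}},
\]
using the characterization of the Besov norm via $\omega_\a$ (legitimate since $\a>\a-\t$). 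Taking the supremum over $\d\in(0,1)$ yields $\omega_{\a-\t}(f,\d)_q=O(\d^{\a-\t})$, i.e.\ $f\in \Lip(\a-\t,\a-\t,q)$.

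\textbf{Plan for Part 2, setup and first counterexample.} I first note that the hypothesis $\min(2,q)<p<q$ forces $2<p<q<\infty$, because $q\le 2$ would give $\min(2,q)=q>p$, contradicting $p<q$. To establish non-comparability, I will produce two explicit functions. For the direction $\Lip(\a-\t,\a-\t,q)\not\hookrightarrow B^{\a-\t}_{q,p}$, I take the monotone cosine series
\[
f_1(x)=\sum_{n=2}^\infty n^{-\sigma}(\log n)^{-1/p}\cos nx,\qquad \sigma=\a-\t+1-\tfrac{1}{q}.
\]
The Hardy--Littlewood theorem \eqref{monot} combined with the realization result (Theorem~\ref{lemKfp<1d}) gives, after summing $\sum_{k\le N}k^{\t q-1}(\log k)^{-q/p}$ and $\sum_{k\le N}k^{-1}(\log k)^{-q/p}$, the asymptotics $\omega_{\a-\t}(f_1,1/N)_q\asymp N^{-(\a-\t)}$ (so that $f_1\in \Lip(\a-\t,\a-\t,q)$) while $2^{N(\a-\t)}\omega_\a(f_1,2^{-N})_q\asymp N^{-1/p}$, producing a divergent Besov series $\sum_N N^{-1}=\infty$; thus $f_1\notin B^{\a-\t}_{q,p}$. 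Here $p<q$ is essential as it guarantees $1/p>1/q$, which creates the gap that separates the two spaces.

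\textbf{Plan for Part 2, second counterexample and main obstacle.} For the reverse direction $B^{\a-\t}_{q,p}\not\hookrightarrow \Lip(\a-\t,\a-\t,q)$, the monotone family is insufficient and I turn to the lacunary series
\[
f_2(x)=\sum_{k=1}^\infty c_k\cos(2^k x),\qquad c_k=2^{-k(\a-\t)}k^{-1/p-\e},
\]
with $0<\e<(p-2)/(2p)$, a range that is nonempty precisely because $p>2$. By Zygmund's theorem one has $\|g\|_q\asymp \|g\|_2$ for any trigonometric polynomial $g$ with $2^k$-lacunary spectrum ($1<q<\infty$), and the realization of the $K$-functional for $f_2$ reduces $\omega_\xi(f_2,2^{-N})_q$ to the expression $2^{-N\xi}(\sum_{k\le N}c_k^2 2^{2k\xi})^{1/2}+(\sum_{k>N}c_k^2)^{1/2}$. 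Evaluating at $\xi=\a$ yields $\omega_\a(f_2,2^{-N})_q\asymp 2^{-N(\a-\t)}N^{-1/p-\e}$, making $\sum_N N^{-1-p\e}$ convergent and confirming $f_2\in B^{\a-\t}_{q,p}$; evaluating at $\xi=\a-\t$ yields $\omega_{\a-\t}(f_2,2^{-N})_q\asymp 2^{-N(\a-\t)}N^{1/2-1/p-\e}$, whose exponent is strictly positive, so $f_2\notin \Lip(\a-\t,\a-\t,q)$.

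The main obstacle lies in verifying the precise two-sided asymptotic for $\omega_{\a-\t}(f_2,2^{-N})_q$: the decisive quantity is the $\ell^2$-sum $\sum_{k\le N}c_k^2 2^{2k(\a-\t)}=\sum_{k\le N}k^{-2/p-2\e}$, which diverges like $N^{1-2/p-2\e}$ exactly because $p>2$ makes $2/p<1$. This divergence, inaccessible to the $\ell^p$-based Besov sum, is the structural reason Part 1 fails beyond the critical exponent $p=\min(2,q)$; conceptually, it reflects the well-known failure of the Besov--Triebel-Lizorkin embedding $B^{s}_{q,p}\hookrightarrow F^{s}_{q,2}=W^s_q$ when $p>\min(q,2)$, with the lacunary construction serving as the concrete realization of this abstract gap.
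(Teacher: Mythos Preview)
Your argument is correct and follows the paper's approach: Part~1 is the Marchaud inequality (Theorem~\ref{lemMarchaudMod}) combined with $\ell^p\hookrightarrow\ell^\tau$, and Part~2 exhibits a monotone cosine series for $\Lip\setminus B$ and a lacunary series for $B\setminus\Lip$. The paper's parameter choices differ only slightly --- in particular its lacunary example takes the simpler borderline coefficients $c_k=2^{-k(\a-\t)}k^{-1/2}$, which dispenses with your $\varepsilon$-room since $p>2$ already makes $\sum_k k^{-p/2}<\infty$ while $\sum_{k\le N}k^{-1}\asymp\log N$.
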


The first part is a simple consequence of the Marchaud inequality (see Theorem~\ref{lemMarchaudMod}).

To prove the second part,  we consider the following functions:
$$
f_1(x)=\sum_{\nu=1}^\infty a_\nu \cos \nu x,\quad a_\nu=\frac1{\nu^{1-\frac1q+\a-\t}(\ln(\nu+1))^\frac1q}
$$
and
$$
f_2(x)=\sum_{\nu=1}^\infty a_{2^\nu} \cos 2^\nu x,\quad a_{2^\nu}=\frac1{2^{\nu(\a-\t)}\nu^\frac12},
$$
for which we have,

$f_1\in \Lip(\a-\t,\a-\t,q)\setminus B_{q,p}^{\a-\t}$ if $p\le \min(2,q)$,

$f_1\in \Lip(\a-\t,\a-\t,q)\setminus B_{q,p}^{\a-\t}$ and $f_2\in B_{q,p}^{\a-\t} \setminus \Lip(\a-\t,\a-\t,q)$ if $\min(2,q)<p$

\noindent (see~\cite{ST}).

\end{document}